\title{Irreducible Symplectic Complex Spaces}
\author{Tim Kirschner}
\address{Lehrstuhl für Komplexe Analysis\\ Universität Bayreuth}
\email{\href{mailto:tim.kirschner@uni-bayreuth.de}{tim.kirschner@uni-bayreuth.de}}
\urladdr{\href{http://www.staff.uni-bayreuth.de/~btm107/}{http://www.staff.uni-bayreuth.de/~btm107/}}
\date{\today}
\begin{document}
\frontmatter
%\subjclass{32Cxx, 32S35}
%\keywords{}
%\thanks{}
\dedicatory{For my parents}
\maketitle
\chapter*{Preface}

This thesis is the product of a period of four years of mathematical study, research, and learning, which started in April of 2008 and lasts to the day. Looking back, I can say that it hasn't been an easy journey from the first discussions of potential research problems with my forever dedicated advisor \emph{Prof.~Dr.~Thomas Peternell} to putting the finishing touches to the text. Now, however, I am very happy with what I can offer you for a read.

There are many people to thank without whom this work could never have come into existence. First and foremost, I would like to express my cordial thanks to Thomas Peternell for seeing this project through calmly, professionally, spiritedly, free-spiritedly, and supportively from beginning to end, for meeting my partly unconventional suggestions and ideas in an open-minded and unbiased way, for letting me force my ``bureaucratic'' mathematical style upon our work, albeit this meant compromising on his own original goals, for offering me a position in his group in the first place and opening so many doors within the mathematical cosmos, for shielding me from tedious non-research duties and letting our collaboration evolve over time into a great partnership.

I would like to thank the ``Deutsche Forschungsgemeinschaft (DFG)'' for financing the vast majority of my work through the ``Forschergruppe 790 -- Classification of Algebraic Surfaces and Compact Complex Manifolds''; the importance of the DFG as a financier and promoter for the on-going production of wealthy mathematical research in Germany can by no means be underestimated. Moreover, would I like to thank the members of the Forschergruppe who constantly invest considerable shares of their time and energy into keeping our group alive.

I would like to thank \emph{Prof.~Dr.~Georg Schumacher} in Marburg who, during the winter of 2010--2011, very kindly and vigorously conversed with me about a possible $L^2$ cohomology approach towards what have become the results of Chapter \ref{ch:peri}; unfortunately, these ideas are not part of the final thesis as I could not bring them to a favorable conclusion. I would like to thank \emph{Professor Keiji Oguiso} of Osaka University for explaining several aspects of the theory of irreducible symplectic manifolds to me during a summer school in Poland back in 2008 as well as during his stays in Bayreuth and for always responding to my e-mailed questions since.

I am very much indebted to my ``early'' teachers of mathematics. Let me mention two people specifically. Firstly, it was \emph{Albrecht Kliem} who fostered and inspired me in a plethora of ways from my first participation at his ``Landeswettbewerb Mathematik Bayern'' onwards through several years of high school filled with mathematical competitions and extracurricular mathematical seminars; his encouragement and confidence were (and are) invaluable. Secondly, I am grateful to \emph{Prof.~Dr.~Gerhard Rein}, my first and at the same time most influential university teacher, whose perfect lectures in real and complex analysis had (and have) a lasting effect on my mathematical style and thinking, most certainly beyond his own knowledge.

Moving on from the professional to the more personal level, I would like to thank my dear colleagues \emph{Florian Schrack} and \emph{Tobias Dorsch}. I would like to thank both of you for discussing and debating mathematics with me, for suggesting solutions and offering advice, and for enduring my pronounced need to molest you with topics like why I feel that derived categories (among other things) do not exist. Yet, what is more, I would like to thank you for enduring my pronounced need to chat non-mathematically, for introducing me to gyokuro and amaranth (respectively), for making my work days fun, and for being friends rather than mere office mates.

Finally, I would like to thank those four people who make my life worthwhile: \emph{Julia}, \emph{Malte}, \emph{Mom}, \emph{Dad}. Jules: even though our ways have parted a certain while ago, let me put this in present tense and say that ``when I'm with you, I am calm, a pearl in your oyster; head on my chest, a silent smile, a private kind of happiness. You see giant proclamations are all very well, but our love is louder than words''\footnote{To all GuttenPlag sort of people out there: you will find this on Google \ldots (told you so)}. M: what we have is unreal---thanks for sticking around all these years! Mom \& Dad: your bringing me up in a spirit of freedom, love, stability, and unconditional support is the basis for everything. Love, hugs, and kisses to all of you.

\vspace{1cm}
April 11, 2012\par Bayreuth \hfill Tim

\tableofcontents

\mainmatter
\chapter*{Introduction}
\label{ch:intr}

\renewcommand{\thesubsection}{\arabic{subsection}}

\subsection*{Symplectic complex spaces}

Let $X$ be a complex space (\resp a finite type scheme over the field of complex numbers). Then we say that $X$ is \emph{symplectic} when $X$ is normal and there exists a symplectic structure on $X$. Here, $\sigma$ is called a \emph{symplectic structure} on $X$ when the following assertions hold:
\begin{enumeratei}
 \item \label{i:symp-cl} $\sigma$ is a closed Kähler $2$-differential on $X$ over $X_\reg$, \iev $\sigma$ is an element of $\Omega^2_X(X_\reg)$ being sent to zero by the mapping
 \[
  (\dd^2_X)_{X_\reg} \colon \Omega^2_X(X_\reg) \to \Omega^3_X(X_\reg).
 \]
 \item \label{i:symp-nondeg} The canonical image of $\sigma$ in $\Omega^2_{X_\reg}(X_\reg)$ is \emph{nondegenerate} on $X_\reg$; our preferred way of formalizing the nondegeneracy, even though uncommon, is to require that the composition of sheaf maps on $X_\reg$
 \[
  \xymatrix{
   \Theta_{X_\reg} \ar[r] & \Theta_{X_\reg} \otimes \O_{X_\reg} \ar[r]^{\id \otimes \sigma} & \Theta_{X_\reg} \otimes \Omega^2_{X_\reg} \ar[r] & \Omega^1_{X_\reg}
  },
 \]
 where the first and last arrows signify respectively the right tensor unit for the tangent sheaf on $X_\reg$ and the contraction morphism, be an isomorphism.
 \item \label{i:symp-ext} For all resolutions of singularities $f\colon \tilde X\to X$, there exists $\tilde\sigma \in \Omega^2_{\tilde X}(\tilde X)$ such that we have
 \[
  f^*(\sigma) = \tilde\sigma | f^{-1}(X_\reg) \in \Omega^2_{\tilde X}(f^{-1}(X_\reg)).
 \]
\end{enumeratei}
Conditions \eqref{i:symp-cl} and \eqref{i:symp-nondeg} are the common conditions of closedness and nondegeneracy which contain, so to speak, the heart of symplecticity. Condition \eqref{i:symp-ext} says that $\sigma$ extends to a global $2$-differential when pulled back along a resolution of singularities; this should be seen as a property moderating the nature of the singularities of $X$.

We modeled our above definition of symplecticity for complex spaces (\resp finite type $\C$-schemes) after two sources: Firstly, A.~Beauville (probably reverting to \cite{Katata82}) introduced a notion of ``symplectic singularities'' in \cite[Definition 1.1]{Be00}. In fact, Beauville's concept of a symplectic singularity is precisely the localization of our (global) notion of symplecticity, that is, one says that $X$ \emph{has a symplectic singularity} at $p$ when there exists an open neighborhood $U$ of $p$ in $X$ such that the open subspace $X|U$ of $X$ is symplectic. Secondly, in \cite{Na01}, Y.~Namikawa defines $X$ to be a \emph{projective symplectic variety} when $X$ is a normal, projective complex algebraic variety with rational Gorenstein singularities such that there exists $\sigma \in \Omega^2_X(X_\reg)$ satisfying the nondegeneracy condition \eqref{i:symp-nondeg}. Moreover, in \cite{Na01a}, Namikawa calls $X$ a \emph{symplectic variety} when $X$ is a symplectic (in our sense), compact complex space of Kähler type.

In our view, historically, the interest in either one of the mentioned forms of symplecticity for finite type $\C$-schemes or complex spaces was triggered by the interest in symplectic complex manifolds, more specifically, the interest in ``irreducible symplectic'' complex manifolds. Let us briefly review this notion. For us, a complex manifold is by definition a smooth complex space. Therefore the already defined concept of symplecticity applies. Note that in case $X$ is a complex manifold, a symplectic structure on $X$ is (in particular) a global $2$-differential on $X$, \iev an element of $\Omega^2_X(X)$, since $X_\reg = X$. Besides, when $X$ is a complex manifold, the extension condition \eqref{i:symp-ext} is fulfilled for any $\sigma \in \Omega^2_X(X)$, so that one may drop it when working exclusively with manifolds. Thus our definition of symplecticity recovers the original definition of symplecticity for complex manifolds established in the 1970s and early 1980s by F.~Bogomolov (\cf \cite{Bo74} and \cite{Bo78}) and A.~Beauville (\cf \cite[``Définition'' in \S4]{Be83}): a symplectic structure on a complex manifold is nothing but a closed, everywhere nondegenerate holomorphic $2$-form on it. Now following Beauville's terminology in \loccit, Proposition 4, a compact complex manifold of Kähler type is said to be \emph{irreducible symplectic} when it is simply connected and there exists, up to scaling, a unique symplectic structure on it. An easy argument shows that for $X$ of strictly positive dimension, \iev $\dim(X)>0$, the uniqueness condition on the symplectic structure may be replaced by requiring that
\[
 \dim_\C(\Omega^2_X(X)) = 1.
\]
In our opinion, the most compelling reason for considering irreducible symplectic manifolds as interesting or special is presented by Beauville's and Bogomolov's decomposition theorem (\cf \cite[Théorème 2]{Be83}), which exhibits irreducible symplectic manifolds as one of two nontrivial building blocks of compact Kähler manifolds with vanishing first real Chern class.

\begin{theorem}[Beauville-Bogomolov Decomposition]
 \label{t:bbdecomp}
 Let $X$ be a connected, compact, Kähler type complex manifold such that $c_1(X)_\RR = 0$ in $\H^2(X,\RR)$. Then there exists a unique natural number $k$ and, up to permutation and isomorphism, unique finite (possibly empty) tuples $(Y_1,\dots,Y_r)$ and $(Z_1,\dots,Z_s)$ of simply connected Calabi-Yau manifolds of dimension $\geq3$ and irreducible symplectic manifolds of dimension $\geq2$, respectively, such that the universal cover of $X$ is isomorphic to the product
 \[
  \C^k \times \prod_{i=1}^r Y_i \times \prod_{j=1}^s Z_j.
 \]
 Moreover, there exists a complex torus $T$ and a finite étale cover $X'\to X$ such that
 \[
  X' = T \times \prod_{i=1}^r Y_i \times \prod_{j=1}^s Z_j.
 \]
\end{theorem}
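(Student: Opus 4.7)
I would deduce the theorem from three classical pillars: Yau's solution of the Calabi conjecture, the Cheeger--Gromoll and de~Rham decomposition theorems for complete Ricci-flat Riemannian manifolds, and Berger's classification of irreducible Riemannian holonomy groups. Since $X$ is compact Kähler with $c_1(X)_\RR = 0$, Yau's theorem supplies a Ricci-flat Kähler metric $g$ on $X$; its pullback $\tilde g := \pi^* g$ to the universal cover $\pi\colon \tilde X \to X$ is complete, simply connected, Kähler, and Ricci-flat.

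Iterating the Cheeger--Gromoll splitting theorem until no further line remains yields an isometric decomposition $\tilde X \cong \RR^{2k} \times M$ with $M$ line-free; parallelism of the complex structure promotes this to a holomorphic splitting $\tilde X \cong \C^k \times M$ in which $\C^k$ carries its standard flat Kähler structure. Compactness of $X$ together with the absence of lines in $M$ forces $M$ to be compact. The de~Rham decomposition applied to $M$ then writes $M = M_1 \times \cdots \times M_t$ as a product of simply connected, compact, irreducible, Ricci-flat Kähler manifolds, and Berger's list---specialized to the Ricci-flat Kähler setting---forces the restricted holonomy of each $M_\ell$ to be either $\mathrm{SU}(n_\ell)$ with $n_\ell = \dim_\C M_\ell \geq 3$ (a simply connected Calabi--Yau factor, labeled $Y_i$) or $\mathrm{Sp}(m_\ell)$ with $\dim_\C M_\ell = 2m_\ell \geq 2$ (a hyperkähler factor on which $\omega_J + \mathrm{i}\,\omega_K$, for two twistor complex structures orthogonal to the given one, yields up to scalar the unique holomorphic symplectic form, making $M_\ell$ an irreducible symplectic factor $Z_j$). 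The uniqueness of $k$ and of the tuples $(Y_i),\,(Z_j)$ up to permutation and isomorphism is inherited from the uniqueness in the de~Rham decomposition.

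For the finite étale cover, set $\Gamma := \pi_1(X)$, acting on $\tilde X$ by holomorphic isometries. By uniqueness of the decomposition, $\Gamma$ preserves $\C^k$ and permutes the $M_\ell$ within each isomorphism class; after passing to a finite index subgroup, every factor is preserved. Each $M_\ell$ is a compact Ricci-flat Kähler manifold of irreducible, nontrivial holonomy, hence admits no parallel tangent vector field and, by the Bochner principle, no Killing field; its isometry group is therefore discrete and compact, hence finite. Replacing $\Gamma$ by the intersection of the kernels of the maps $\Gamma \to \mathrm{Iso}(M_\ell)$, still of finite index, we may assume $\Gamma$ acts trivially on each $M_\ell$. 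What remains is an action on $\C^k$ by holomorphic isometries, and a final application of Bieberbach's theorem provides a finite index subgroup $\Lambda \subset \Gamma$ acting by translations, necessarily as a lattice. The quotient
\[
 \tilde X/\Lambda \;\cong\; (\C^k/\Lambda) \times Y_1 \times \cdots \times Y_r \times Z_1 \times \cdots \times Z_s
\]
is then the desired finite étale cover $X'$ of $X$ with $T := \C^k/\Lambda$ a complex torus.

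The main obstacle I anticipate is transporting the Riemannian splitting statements coherently into the Kähler/holomorphic category---ensuring that the Cheeger--Gromoll and de~Rham splittings genuinely respect the complex structure and that each de~Rham factor inherits a genuine Kähler form---and then executing the successive finite-index reductions of $\Gamma$ ending with the Bieberbach step, which together constitute the Bieberbach-type argument specific to this mixed flat/irreducible setting.
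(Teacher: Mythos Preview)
The paper does not actually prove this theorem: it is stated in the Introduction as background and attributed to Beauville \cite[Théorème 2]{Be83}, with no proof supplied. Your sketch is the standard argument (essentially Beauville's original one, building on Bogomolov): Yau's Ricci-flat metric, the Cheeger--Gromoll/de~Rham splitting of the universal cover, Berger's holonomy classification to identify the irreducible factors as Calabi--Yau or hyperk\"ahler, and a Bieberbach-type reduction for the finite \'etale cover. So there is nothing to compare against in the paper itself, and your outline is correct as a reproduction of the classical proof.
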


For the purposes of Theorem \ref{t:bbdecomp}, a \emph{Calabi-Yau manifold} is understood to be a compact, connected complex manifold $Y$ of Kähler type with trivial canonical bundle and $\H^0(Y',\Omega^p_{Y'}) = 0$ for all $0 < p < \dim(Y)$ and all finite étale covers $Y' \to Y$.

From the moment Theorem \ref{t:bbdecomp} had been proven, irreducible symplectic manifolds became popular objects of study, and so they remain to the day. Possibly the most striking fact about the on-going research is that the producing of ``new'' examples of irreducible symplectic manifolds appears to be the most intractable problem of all. Let us elaborate a little on this point. It is clear from the start that there exist no odd dimensional irreducible symplectic manifolds. In the lowest nontrivial dimension the picture is very clear, too. A compact, connected complex manifold of dimension $2$ admits, up to scaling, a unique symplectic structure if and only if its canonical bundle (or sheaf) is trivial. So, the irreducible symplectic manifolds of dimension $2$ are precisely the K3 surfaces. In particular, we see that any two irreducible symplectic manifolds of dimension $2$ are diffeomorphic, as a matter of fact, even deformation equivalent (\cf \cite[Theorem 13]{Ko64}). In higher dimensions, our knowledge can be subsumed as follows. For any even natural number $n>2$, Beauville constructs irreducible symplectic manifolds $H^n$ and $K^n$ of dimension $n$ (starting from a K3 surface and a $2$-dimensional torus, respectively) such that $\rb_2(H^n) = 23$ and $\rb_2(K^n) = 7$  (\cf \cite{Be83}). Due to the discrepancy in the Betti numbers, $H^n$ and $K^n$ are not homotopically equivalent, whence not homeomorphic, whence not diffeomorphic, whence not deformation equivalent. Moreover, $H^n$ and $K^n$ are not bimeromorphically equivalent as two bimeromorphically equivalent compact complex manifolds with trivial canonical bundles have the same second Betti number. K.~O'Grady constructed in \cite{OG99} and \cite{OG03} irreducible symplectic manifolds $M^{10}$ and $M^6$ of dimensions $10$ and $6$, respectively, such that $\rb_2(M^{10}) \geq 24$ and $\rb_2(M^6) = 8$. It is a standing question whether there exists an irreducible symplectic manifold (of dimension $\geq 4$) which is not deformation equivalent to any of the mentioned examples. On opposite end, examples seeming rather scarce, one might ask whether, for any given (even) natural number $n$ ($\geq 4$), there are only finitely many classes of irreducible symplectic manifolds of dimension $n$ modulo deformation equivalence. For these as well as further questions and conjectures circling around the topic of irreducible symplectic manifolds, we refer to Beauville's beautiful ``problem list'' \cite{Be11}.

Now singular symplectic complex spaces (or else $\C$-schemes) occur naturally in the constructions of irreducible symplectic manifolds. For instance, when $S$ is a K3 surface (either in the analytic or the algebraic sense) and $r$ is a natural number ($\geq 2$), then the Douady space of $0$-dimensional closed subspaces of length $r$ of $S$ (\cf \cite{Do66}) or the $r$-th punctual Hilbert scheme of $S$ (\cf \cite{SB6.221}), denoted $S^{[r]}$ or $\Hilb^r_{S/\C}$, is an irreducible symplectic manifold of dimension $2r$ by \cite[Théorème 3]{Be83}, yielding precisely the $H^{2r}$ alluded to above. In order to prove this assertion, Beauville utilizes, in \loccit, the canonical morphism $f\colon S^{[r]} \to S^{(r)}$, where $S^{(r)}$ stands for the $r$-th symmetric power of $S$. Concretely, he verifies that, for any symplectic structure $\sigma$ on $S$, a restriction of the $2$-form
\[
 \pr_0^*(\sigma) + \dots + \pr_{r-1}^*(\sigma)
\]
on the $r$-fold self-product of $S$ descends to a symplectic structure on the regular locus of $S^{(r)}$. In addition, one observes that $f$ is a resolution of singularities such that the pullback along $f$ of the symplectic structure on $(S^{(r)})_\reg$ admits an extension to a symplectic structure on all of $S^{[r]}$. From this we conclude that $S^{(r)}$ is indeed a nonsmooth symplectic complex space (or else complex algebraic variety). O'Grady's constructions in \cite{OG99} and \cite{OG03} feature similar natural occurances of singular symplectic spaces. In \cite{OG99}, for instance, O'Grady considers the moduli space $M$ of rank-$2$ Gieseker semistable, torsion-free sheaves with Chern classes $c_1=0$ and $c_2=4$ on a (suitably) polarized K3 surface. He shows that $M$ is a projective $\C$-scheme whose regular locus carries a symplectic structure and manages to construct a resolution of singularities $f\colon \tilde M \to M$ such that the pullback along $f$ of a symplectic structure on $M_\reg$ extends to a symplectic structure on $\tilde M$. Therefore, we see that $M$ is a (singular) symplectic $\C$-scheme.

Motivated by the examples of the previous paragraph, we make the following observation. When $X$ is a normal complex space (\resp a normal complex algebraic variety) and $f\colon \tilde X\to X$ is a resolution of singularities such that $\tilde X$ is a symplectic manifold, then $X$ is a symplectic complex space (\resp algebraic variety). The proof is clear. A resolution $f$ like the above is called a \emph{symplectic resolution}. In the business of trying to fabricate new irreducible symplectic manifolds, symplectic resolutions, respectively singular spaces admitting symplectic resolutions, are needless to say very desirable. Mind, however, that our definition of symplecticity allows for much more general spaces. In fact, we like to think about symplectic spaces as being spaces with well-behaved singularities whose regular loci admit symplectic structures. In the algebraic context the following result due to Namikawa makes this intuition precise (\cf \cite[Theorem 6]{Na01a}): A normal, projective complex algebraic variety $X$ is symplectic if and only if $X$ has rational Gorenstein singularities and there exists a nondegenerate (\egv in the sense of condition \eqref{i:symp-nondeg} being satisfied) $2$-form $\sigma$ on $X_\reg$. Putting it differently, on a projective variety the extension condition \eqref{i:symp-ext} and, as a result, also the closedness condition \eqref{i:symp-cl} come for free if we know a priori that the singularities of our variety are mild. For us, the chief reason for allowing spaces not resolvable to a symplectic manifold as symplectic spaces lies in the fact that it is spaces like this which play the role of irreducible symplectic manifolds in conjectural generalized versions of the Beauville-Bogomolov Decomposition Theorem (\cf \egv \cite[Open Problems, \S6]{Katata82}, \cite{GrKePe11}).

\subsection*{The local Torelli theorem}

The starting point of our research was Namikawa's paper \cite{Na01a}, and especially Theorem 8 thereof. We felt that in order to study irreducible symplectic varieties (whatever that was to mean precisely), one had to gain, in the first place, a thorough understanding of the (local) deformation theory of these varieties. Trying to write up a rigorous proof for the assertions made in Theorem 8 (3) of \loccit---unfortunately Namikawa only states that the arguments should be similar to Beauville's classical ones---, we encountered several problems. Our foremost problem was the lack of an adequate extension or analogue of Griffiths's theory of period mappings for families of compact Kähler manifolds, as developed in \cite[II.1]{Gr68a}, to either the context of families of (possibly singular) compact Kähler spaces or the context of (possibly noncompact) Kähler manifolds. When we discovered that N.~Katz and T.~Oda construct, in \cite{KaOd68}, a canonical flat connection on the relative algebraic de Rham module $\sH^n(f) := \R^nf_*(\Omega^\kdot_f)$ for any smooth morphism of $k$-schemes $f\colon X\to S$ with smooth base, $k$ being an arbitrary field and $n$ an integer, we knew we had found the right angle to tackle our problem. As a matter of fact, by transferring Katz's and Oda's ideas from \loccit~and \cite{Ka72} to the analytic category, we were able to devise a theory of period mappings of Hodge-de Rahm type for families of (not necessarily compact) complex manifolds which, in a sense, constitutes a generalization Griffiths's theory. In turn, employing our theory of period mappings, we achieved to prove a local Torelli theorem for symplectic varieties $X$ which, in opposition to Namikawa's \cite[Theorem 8 (3)]{Na01a}, does not rely on the projectivity nor the $\Q$-factoriality of $X$. We rather think that our theorem clearly exhibits the interaction of $\Q$-factoriality with the topology of the local deformations of a projective symplectic variety.

Let $g\colon \cY\to S$ be a submersive (yet not necessarily proper) morphism of complex manifolds. Fix integers $n$ and $p$ as well as an element $t\in S$. Assume that the relative algebraic de Rahm module $\sH^n(g)$ is a vector bundle, \iev a locally finite free module, on $S$ which is compatible with base change in the sense that, for all $s\in S$, the de Rham base change map
\[
 \phi^n_{g,s} \colon \C \otimes_{\O_{S,s}} (\sH^n(g))_s \to \sH^n(\cY_s)
\]
is an isomorphism of complex vector spaces. In Chapter \ref{ch:peri}, we define a map of sheaves on $S_\top$,
\[
 \nabla^n_\GM(g) \colon \sH^n(g) \to \Omega^1_S \otimes_S \sH^n(g),
\]
going by the name of \emph{Gauß-Manin connection}, in the very spirit of Katz-Oda \cite{KaOd68}. We observe that the kernel of $\nabla^n_\GM(g)$ makes up a locally constant sheaf of $\C_S$-modules on $S_\top$ whose stalks are isomorphic to the $n$-th de algebraic Rham cohomologies of the fibers of $g$ via the inclusion $H \subset \sH^n(g)$ and the de Rham base change maps $\phi^n_{g,s}$. That way, in case $S$ is simply connected, one constructs a period mapping $\cP^{p,n}_t(g)$ by transporting the Hodge filtered pieces $\F^p\sH^n(\cY_s) \subset \sH^n(\cY_s)$ to $\sH^n(\cY_t)$ along the global sections of $H$. When we require the relative Hodge filtered piece $\F^p\sH^n(g)$ to be a vector subbundle of $\sH^n(g)$ on $S$ which is compatible with base change (in the appropriate sense), the period mapping is a holomorphic map
\[
 \cP^{p,n}_t(g) \colon S \to \Gr(\sH^n(\cY_t)),
\]
where $\Gr(V)$ denotes the Grassmannian, regarded as a complex space, of a finite dimensional complex vector space $V$.

\begin{theorem}
 \label{t:froeintro}
 Let $f\colon \cX\to S$ be a proper, flat morphism of complex spaces such that $S$ is a complex manifold and, for all $s\in S$, the fiber $\cX_s$ has rational singularities, is of Kähler type, and satisfies $\codim(\Sing(\cX_s),\cX_s)\geq 4$. Define $g\colon \cY\to S$ to be the restriction of $f$ to the set of points of $\cX$ at which $f$ is submersive, and put
 \[
  I := \{(\nu,\mu) \in \Z\times\Z : \nu+\mu \leq 2\}.
 \]
 \begin{enumerate}
  \item For all $(p,q)\in I$, the Hodge module $\sH^{p,q}(g) := \R^qg_*(\Omega^p_g)$ is a locally finite free module on $S$ and compatible with base change.
  \item The Frölicher spectral sequence of $g$ degenerates in entries $I$ at sheet $1$ in $\Mod(S)$.
 \end{enumerate}
\end{theorem}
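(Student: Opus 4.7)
My overall plan is to reduce both parts~(1) and~(2) to fiberwise Hodge-theoretic input for the compact Kähler spaces $\cX_s$ and then to pass from the fibers to the total space via cohomology and base change applied to the \emph{proper} morphism $f$. The bridge between the non-proper $g$ and the proper $f$ is the codimension hypothesis $\codim(\Sing(\cX_s),\cX_s)\geq 4$: via a Flenner-type extension principle it should furnish canonical identifications $\R^q g_*(\Omega^p_g)\cong \R^q f_*(\Omega^{[p]}_f)$ for $(p,q)\in I$, where $\Omega^{[p]}_f$ denotes the reflexive hull on $\cX$ of the sheaf of relative $p$-forms $\Omega^p_f$, originally defined only on $\cY$.

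For fixed $s\in S$, the fiber $\cX_s$ is a compact Kähler space with rational singularities. Namikawa's machinery (resolution of singularities combined with Deligne--Steenbrink mixed Hodge theory) delivers $E_1$-degeneration of the Hodge-to-de Rham spectral sequence of $\cX_s$ in the bidegrees of $I$. The absolute form of the codimension-$4$ extension argument then yields canonical isomorphisms $\H^q(\cY_s,\Omega^p_{\cY_s})\cong \H^q(\cX_s,\Omega^{[p]}_{\cX_s})$ for $(p,q)\in I$ and transports the degeneration from $\cX_s$ to $\cY_s$ in those entries. Moreover, in these bidegrees the dimensions $\dim_\C \H^q(\cY_s,\Omega^p_{\cY_s})$ are topological invariants of $\cX_s$, in particular locally constant in~$s$.

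Passing to the relative picture, one carries out the extension argument for the morphism $f$ itself: using the fiberwise codimension hypothesis, the flatness of $f$, and standard depth/local-cohomology bookkeeping, one obtains the sheaf-level identification $\sH^{p,q}(g)\cong \R^q f_*(\Omega^{[p]}_f)$ for $(p,q)\in I$. Since $f$ is proper and $\Omega^{[p]}_f$ is coherent on $\cX$, Grauert's theorem on cohomology and base change then applies: the rank-constancy established on the fibers upgrades to local finite freeness of $\R^q f_*(\Omega^{[p]}_f)$ on $S$ and to the base-change isomorphism at every $s\in S$. Transporting through the identification proves~(1). For~(2), the Frölicher spectral sequence of $g$ is fiberwise compatible with base change, so the fiberwise $E_1$-degeneration in the entries of $I$, combined with the local freeness from~(1), forces the global differentials on sheet~$1$ in those entries to vanish by a standard rank count.

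The technical heart---and the step I expect to be the main obstacle---is this \emph{relative} extension statement $\sH^{p,q}(g)\cong \R^q f_*(\Omega^{[p]}_f)$ for $(p,q)\in I$ together with its compatibility with the base-change maps. Absolutely, the requisite extensions rest on reflexive-sheaf techniques of Flenner and on vanishing of local cohomology sheaves of $\O$ along analytic subsets of sufficiently large codimension. The subtlety in the relative case is that one controls only the \emph{fiberwise} codimension of the relative singular locus of $f$, so the desired vanishing must be deduced through a local, flat-base-change analysis of the depth behaviour of $\Omega^{[p]}_f$ along the relative singular locus. Once this ingredient is in place, the remainder of the argument is standard Grauert semicontinuity together with spectral-sequence bookkeeping.
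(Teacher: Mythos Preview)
Your strategy has a genuine gap at the step where you claim that the dimensions $\dim_\C \H^q(\cY_s,\Omega^p_{\cY_s})$ are ``topological invariants of $\cX_s$, in particular locally constant in~$s$.'' Even if these numbers can be expressed in terms of Hodge data of a resolution of $\cX_s$, you have no control over the topology of the family $f$: the morphism $f$ is \emph{not} assumed locally topologically trivial, and in fact the paper explicitly allows $\R^2 f_*(\C_\cX)$ to fail to be locally constant (see the discussion preceding the second Local Torelli theorem). Without local constancy of the fiberwise dimensions, Grauert's continuity criterion does not yield local freeness of $\R^q f_*(\Omega^{[p]}_f)$, and the rest of your argument---base change compatibility and the rank-count for degeneration---collapses with it. Your identified ``technical heart'' (the relative extension $\sH^{p,q}(g)\cong \R^q f_*(\Omega^{[p]}_f)$) is actually the lesser difficulty; the real obstruction is this constancy.

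The paper circumvents the issue by an entirely different mechanism. Fiberwise degeneration comes from Ohsawa's theorem for the open Kähler manifold $(\cX_s)_\reg$, valid under the codimension bound. Local freeness and base-change compatibility of $\sH^{p,q}(g)$ are then obtained not by Grauert constancy but via \emph{infinitesimal and formal} methods: one lifts $E_1$-degeneration from the central fiber to every infinitesimal thickening $f_m\colon X_m\to S_m$ by an inductive dimension count (the key being that the de~Rham modules $\sH^n(f_m)$ are free of the correct rank by a topological base-change lemma), and then invokes the B\u{a}nic\u{a}--St\u{a}n\u{a}\c{s}il\u{a} comparison between the formal completion of $\R^q g_*(\Omega^p_g)$ and the limit of the infinitesimal Hodge modules; faithfully flat descent from $\widehat{\O_{S,t}}$ to $\O_{S,t}$ gives local freeness. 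Coherence of $\sH^{p,q}(g)$ (needed as input to the formal comparison) is handled by local cohomology bounds and the Leray spectral sequence for $g = f\circ j$. The global degeneration of the Fr\"olicher spectral sequence is proven separately: degeneration ``from behind'' is automatic from the infinitesimal picture, and full degeneration at the boundary degree uses the surjectivity of $\H^2(\cX_s,\C)\to \H^2(\cX_s,\O_{\cX_s})$, which is where the rational-singularities hypothesis enters decisively.
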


Now let $f$ and $g$ be as in Theorem \ref{t:froeintro}. Then as a corollary of the theorem, we see that, for all integers $n\leq 2$, the de Rahm module $\sH^n(g)$ is a vector bundle on $S$ and compatible with base change; moreover, for all integers $p$, the Hodge filtered piece $\F^p\sH^n(g)$ is a vector subbundle of $\sH^n(g)$ on $S$ and compatible with base change. Thus, in case $S$ is simply connected, the period mappings $\cP^{p,n}_t(g)$ are defined for all $p$ and $n$ as above and all $t\in S$. This enables us to formulate the following local Torelli theorem for irreducible symplectic spaces.

\begin{theorem}[Local Torelli, I]
 \label{t:ltintro}
 Let $X$ be a compact, symplectic complex space of Kähler type such that $\codim(\Sing(X),X) \geq 4$ and $\dim_\C(\Omega^2_X(X_\reg)) = 1$. Let $f\colon\cX\to S$ be a proper, flat morphism of complex spaces and $t\in S$ such that $X \iso \cX_t$ and $f$ is semi-universal in $t$. Assume that $S$ is a simply connected complex manifold and that the fibers of $f$ are of Kähler type, have rational singularities and singular loci of codimension $\geq 4$. Then the period mapping
 \[
  \cP^{2,2}_t(g) \colon S \to \Gr(\sH^2(\cY_t)),
 \]
 where $g\colon \cY\to S$ denotes the restriction of $f$ to the set of points of $\cX$ at which $f$ is submersive, is an immersion in codimension $1$ at $t$.
\end{theorem}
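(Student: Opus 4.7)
My approach adapts Beauville's original proof of local Torelli for smooth irreducible symplectic manifolds to the singular setting, using the codimension-at-least-four assumption on singular loci together with the Hodge--de~Rham machinery developed in Chapter~\ref{ch:peri} to transfer the calculation from $X$ to its regular locus.

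First, Theorem~\ref{t:froeintro} applied to $g$ ensures that $\sH^2(g)$ is a base-change compatible vector bundle on $S$ and that $\F^2\sH^2(g)$ is a base-change compatible vector subbundle, so that the period mapping $\cP^{2,2}_t(g)$ is indeed defined as a holomorphic map. The fiber identification $\F^2\sH^2(\cY_t) \iso H^0(X_\reg, \Omega^2_{X_\reg})$, combined with the assumption $\dim_\C \Omega^2_X(X_\reg) = 1$, shows that $\F^2\sH^2(\cY_t)$ is the line spanned by the class of the symplectic form $\sigma$; hence $\cP^{2,2}_t(g)$ in fact takes values in a projective space.

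Second, I would compute the differential of $\cP^{2,2}_t(g)$ at $t$ via the Gauß--Manin connection $\nabla^2_\GM(g)$. Transposing the classical computation of Griffiths to the Katz--Oda framework of Chapter~\ref{ch:peri}, and using both Griffiths transversality and the Frölicher degeneration supplied by Theorem~\ref{t:froeintro}, one obtains a factorization
\[
 T_t S \xrightarrow{\kappa_t} H^1(X_\reg, \Theta_{X_\reg}) \xrightarrow{\;\lrcorner\,\sigma\;} H^1(X_\reg, \Omega^1_{X_\reg}) \hookrightarrow \sH^2(\cY_t)/\F^2\sH^2(\cY_t),
\]
where $\kappa_t$ denotes the Kodaira--Spencer map of $g$ at $t$. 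Because $\sigma$ induces a sheaf isomorphism $\Theta_{X_\reg} \iso \Omega^1_{X_\reg}$, the second arrow is an isomorphism, so $\ker d_t\cP^{2,2}_t(g) = \ker \kappa_t$.

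The remaining step---which I expect to be the main obstacle---is the control of $\ker \kappa_t$, matching the ``codimension $1$'' provision in the statement. By semi-universality of $f$ at $t$, the tangent space $T_tS$ identifies with the space of first-order deformations of the complex space $X$, while $H^1(X_\reg, \Theta_{X_\reg})$ classifies first-order deformations of the manifold $X_\reg$; thus $\ker\kappa_t$ collects exactly those first-order deformations of $X$ which become locally trivial upon restriction to $X_\reg$. I plan to bound this kernel by means of a local-to-global $\Ext$ spectral sequence comparing $\Ext^1_{\O_X}(\Omega^1_X, \O_X)$ with $H^1(X_\reg, \Theta_{X_\reg})$, combined with local cohomology vanishing along $\Sing(X)$ afforded by the codimension-at-least-four hypothesis and the rational singularities. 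Teasing out the precise relationship between $\Q$-factoriality of $X$ and the size of $\ker\kappa_t$---as foreshadowed in the introduction---is the delicate part.
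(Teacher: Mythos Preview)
Your setup is essentially the paper's: Theorem~\ref{t:froeintro} supplies the Fr\"olicher degeneration and base-change freeness, and Theorem~\ref{t:pmclassic} then yields the factorisation of $\T_t(\cP^{2,2}_t(g))$ through the Kodaira--Spencer map and cup-and-contraction, exactly as you describe. The identification $\ker d_t\cP^{2,2}_t(g) = \ker\kappa_t$ is correct.

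The gap lies in the last step. You misread where the ``codimension~$1$'' comes from and, as a consequence, set yourself a spurious problem. The Kodaira--Spencer map $\kappa_t \colon \T_S(t) \to \H^1(X_\reg,\Theta_{X_\reg})$ is in fact an \emph{isomorphism}, so its kernel is zero and the period map is an honest immersion. The point is that semi-universality identifies $\T_S(t)$ with $\T^1_{X_t} \iso \Ext^1(\Omega^1_{X_t},\O_{X_t})$, and because $X_t$ is Cohen--Macaulay (rational singularities) with $\codim(\Sing(X_t),X_t)\geq 3$, the restriction
\[
 \Ext^1(\Omega^1_{X_t},\O_{X_t}) \longrightarrow \Ext^1(\Omega^1_{(X_t)_\reg},\O_{(X_t)_\reg}) \iso \H^1((X_t)_\reg,\Theta_{(X_t)_\reg})
\]
is already bijective (this is Proposition~\ref{p:cmext} in the paper; it is the local-cohomology input you allude to, but it gives an isomorphism, not merely a bound on the kernel). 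No $\Q$-factoriality is needed or relevant; that issue enters only in the second Torelli statement comparing $\H^2(X,\C)$ with $\H^2(X_\reg,\C)$.

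The $1$-dimensional cokernel comes from the \emph{target} side of your factorisation, which you wrote but did not analyse: the inclusion $\H^1(X_\reg,\Omega^1_{X_\reg}) \hookrightarrow \sH^2(\cY_t)/\F^2\sH^2(\cY_t)$ has cokernel $\sH^2(\cY_t)/\F^1\sH^2(\cY_t) \iso \sH^{0,2}(\cY_t)$, and Ohsawa's Hodge symmetry (Theorem~\ref{t:ohsawa}) gives $\sH^{0,2}(\cY_t) \iso \sH^{2,0}(\cY_t)$, which is $1$-dimensional by hypothesis. That is the entire source of the codimension-$1$ clause.
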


As the period mapping $\cP^{2,2}_t(g)$ which arises in Theorem \ref{t:ltintro} might not be all too tangible at first sight, we would like to enrich the theorem by drafting a supplement to it. For that matter, let $f\colon \cX\to S$ be just any proper morphism of complex spaces with fibers of Kähler type (or, more generally, fibers of Fujiki class $\sC$) and simply connected base. Let $n$ be an integer and suppose that the sheaf $\R^nf_*(\C_\cX)$ is locally constant on $S_\top$. Then, for all $s_0,s_1 \in S$, we obtain an isomorphism of complex vector spaces
\[
 \phi_{s_0,s_1} \colon \H^n(\cX_{s_0},\C) \to \H^n(\cX_{s_1},\C)
\]
by passing through the global sections of $\R^nf_*(\C_\cX)$ and the base change maps
\[
 (\R^nf_*(\C_\cX))_s \to \H^n(\cX_s,\C),
\]
which are bijective due to the properness of $f$. Further on, the cohomologies of the fibers of $f$ carry mixed Hodge structures by \cite[(1.4)]{Fu80} (see also \cite{De71}, \cite{De74}). So, for any integer $p$ and any $t\in S$, we may define a period mapping
\[
 \cP^{p,n}_t(f)_\MHS \colon S \to \Gr(\H^n(\cX_t,\C)), \quad s \mto \phi_{s,t}[\F^p\H^n(\cX_s)],
\]
where $\F^p\H^n(\cX_s) \subset \H^n(\cX_s,\C)$ denotes the $p$-th piece of the Hodge filtration of the mixed Hodge structure on the cohomology in degree $n$ of $\cX_s$.

Observe that in the situation of Theorem \ref{t:ltintro}, the sheaf $\R^2f_*(\C_\cX)$ on $S_\top$ need not be locally constant; the dimension of $\H^2(\cX_s,\C)$ might indeed jump when moving from $s=t$ to a nearby point in $S$. Therefore generally we cannot speak of $\cP^{2,2}_t(f)_\MHS$. If, however, we are lucky and $\R^2f_*(\C_\cX)$ is a locally constant sheaf on $S_\top$, the following theorem applies.

\begin{theorem}[Local Torelli, II] \label{t:lt2intro}
 Let $X$ be as in Theorem \ref{t:ltintro}. Let $f\colon \cX\to S$ be a proper, flat morphism of complex spaces and $t\in S$ such that $X\iso \cX_t$. Assume that $S$ is a simply connected complex manifold and that the fibers of $f$ are of Kähler type, have rational singularities and singular loci of codimension $\geq 4$. Define $g\colon \cY\to S$ as in Theorem \ref{t:ltintro} and assume that the tangent map
 \[
  \T_t(\cP^{2,2}_t(g)) \colon \T_S(t) \to \T_{\Gr(\sH^2(\cY_t))}(\F^2\sH^2(\cY_t))
 \]
 is an injection with $1$-dimensional cokernel. Moreover, assume that $\R^2f_*(\C_\cX)$ is a locally constant sheaf on $S_\top$.
 \begin{enumerate}
  \item The period mapping $\cP := \cP^{2,2}_t(f)_\MHS$ is a holomorphic map
  \[
   \cP \colon S \to \Gr(\H^2(\cX_t,\C)).
  \]
  \item When $Q_{\cX_t} \subset \Gr(1,\H^2(\cX_t,\C))$ denotes the zero locus of the Beauville-Bogomolov form of $\cX_t$ (see below), then $\cP$ factors uniquely through a morphism of complex spaces
  \[
   \bar\cP \colon S \to Q_{\cX_t},
  \]
  which is a biholomorphism at $t$.
  \item The mapping
  \begin{equation} \label{e:lt2intro-h2} \tag{$*$}
   \H^2(X,\C) \to \H^2(X_\reg,\C)
  \end{equation}
  induced by the inclusion $X_\reg \subset X$ is a bijection.
 \end{enumerate}
\end{theorem}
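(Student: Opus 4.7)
The plan is to establish (3) first, since it is intrinsic to $X$, and then to use it to reduce the mixed Hodge period mapping $\cP$ to the algebraic de Rham period mapping $\cP^{2,2}_t(g)$ from Theorem \ref{t:ltintro}. For (3), I would analyze the long exact sequence of local cohomology along $\Sing(X) \subset X$,
\[
 \cdots \to \H^k_{\Sing(X)}(X,\C) \to \H^k(X,\C) \to \H^k(X_\reg,\C) \to \H^{k+1}_{\Sing(X)}(X,\C) \to \cdots,
\]
and show $\H^k_{\Sing(X)}(X,\C) = 0$ for $k \in \{2,3\}$. Since $\Sing(X)$ has complex codimension $\geq 4$, the classical vanishing $\H^k_Z(M,\C) = 0$ for $k < 2\codim(Z,M)$ is available on any smooth ambient space. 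To transport this to the singular $X$, I would pass through a resolution $\pi\colon \tilde X \to X$ with exceptional set $E = \pi^{-1}(\Sing(X))$ and exploit the rational singularities hypothesis, which ensures that the natural map $\C_X \to \R\pi_*\C_{\tilde X}$ is well enough behaved in low degrees to transfer the vanishing from $\H^k_E(\tilde X,\C)$ to $\H^k_{\Sing(X)}(X,\C)$.

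For (1), I would apply (3) fiberwise. Since the fibers of $f$ satisfy the same singularity hypotheses as $X$, for every $s \in S$ the restriction $\H^2(\cX_s,\C) \to \H^2(\cY_s,\C) \iso \sH^2(\cY_s)$ is an isomorphism, and Fujiki's mixed Hodge filtration piece $\F^2\H^2(\cX_s,\C)$ corresponds under it to the Hodge-de Rham piece $\F^2\sH^2(\cY_s)$. These fiberwise identifications globalize to an isomorphism of locally constant sheaves $\R^2f_*(\C_\cX) \iso \R^2g_*(\C_\cY)$ on $S_\top$ which, as one verifies, intertwines the parallel-transport structure on the source with the flat connection $\nabla^2_\GM(g)$ on $\sH^2(g)$ via the de Rham comparison. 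Hence $\cP$ coincides with $\cP^{2,2}_t(g)$ under the induced identification of Grassmannians and is therefore holomorphic; the additional hypothesis $\dim_\C\Omega^2_X(X_\reg) = 1$ forces the target to be $\Gr(1,\H^2(\cX_t,\C))$.

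For (2), the Beauville-Bogomolov form $q$ on $\H^2(\cX_t,\C)$ is defined by the classical integral formula applied on $(\cX_t)_\reg$, using the identification of (3); by type considerations the symplectic class $[\sigma] \in \F^2\H^2(\cX_t,\C)$ satisfies $q([\sigma]) = 0$, so the image of $\cP$ lies in $Q_{\cX_t}$ and the factorization $\bar\cP \colon S \to Q_{\cX_t}$ exists as a morphism of complex spaces. Since $q$ is nondegenerate at $[\sigma]$, the quadric $Q_{\cX_t}$ is smooth of codimension one in $\Gr(1,\H^2(\cX_t,\C))$ at $\F^2\H^2(\cX_t,\C)$; under the identifications of the previous step the hypothesis that $\T_t(\cP^{2,2}_t(g))$ is injective with one-dimensional cokernel precisely matches the dimension of $\T_{Q_{\cX_t}}(\F^2\H^2(\cX_t,\C))$ inside $\T_{\Gr(1,\H^2(\cX_t,\C))}$, so $\T_t(\bar\cP)$ is an isomorphism and the inverse function theorem yields the biholomorphism at $t$.

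The principal obstacle is the proof of (3), where one must pass from the clean codimension-based vanishing on smooth spaces to the singular setting; a secondary delicate point is the compatibility, in the treatment of (1), between Fujiki's mixed Hodge filtration and the Hodge-de Rham filtration, which effectively amounts to the purity of the MHS on $\H^2(\cX_s,\C)$ under the given hypotheses---yet another place where rational singularities and the codimension bound $\geq 4$ are jointly essential.
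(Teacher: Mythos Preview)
Your plan has a genuine gap at its very first step: assertion~(3) is \emph{not} an intrinsic property of $X$ deducible from the stated singularity hypotheses alone, and the local cohomology argument you sketch cannot supply it.

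The vanishing $\H^k_{\Sing(X)}(X,\C)=0$ for $k\in\{2,3\}$ that you want does not follow from the hypotheses. Rational singularities controls $\R\pi_*\O_{\tilde X}$, not $\R\pi_*\C_{\tilde X}$; in fact $\R^2\pi_*\C_{\tilde X}$ is typically nonzero (it carries the classes of the exceptional divisors), so ``transporting the vanishing from $\tilde X$'' via the rational singularities hypothesis is not available. More to the point, the paper itself remarks (immediately after the statement of Theorem~\ref{t:lt2intro}) that spaces $X$ satisfying all the intrinsic hypotheses but with $\dim\H^2(X,\C)\neq\dim\H^2(X_\reg,\C)$ are expected to exist; for such $X$ the family-level hypothesis that $\R^2f_*(\C_\cX)$ is locally constant necessarily fails. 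In other words, (3) is equivalent to, and a genuine \emph{consequence} of, the two family-level hypotheses (local constancy of $\R^2f_*(\C_\cX)$ and the tangent map condition), not something provable before them.

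The paper therefore runs the logic in the opposite order. From rational singularities one obtains only \emph{injectivity} of $\phi_t\colon\H^2(\cX_t,\C)\to\sH^2(\cY_t)$ (Lemma~\ref{l:f2h2compare}). This already suffices for~(1): since $(\phi_t)_*$ is a closed immersion of Grassmannians and $\cP' = (\phi_t)_*\circ\cP^+$ set-theoretically (Proposition~\ref{p:pmcompare}), holomorphicity of $\cP^+$ follows. For~(2), the inclusion $\cP(S)\subset Q_{\cX_t}$ is established by a parallel-transport and integration argument (Proposition~\ref{p:ltquad}); your remark that $q([\sigma])=0$ ``by type considerations'' gives only $\cP(t)\in Q_{\cX_t}$, which is not enough. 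Finally, writing $\T_t(\cP')=\T_{\cP(t)}((\phi_t)_*)\circ\T_{\bar\cP(t)}(j)\circ\T_t(\bar\cP)$ with $j\colon Q_{\cX_t}\hookrightarrow G$ the codimension-one inclusion, the hypothesis that $\T_t(\cP')$ is injective with one-dimensional cokernel forces $\T_t(\bar\cP)$ and $\T_{\cP(t)}((\phi_t)_*)$ to be isomorphisms. The latter yields surjectivity of $\phi_t$, hence~(3), and the former yields the local biholomorphism in~(2). Since your approach to~(1) presupposes~(3) fiberwise, the whole structure collapses once the direct proof of~(3) is removed.
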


In view of Namikawa's approach towards a local Torelli theorem in \cite{Na01a}, it seems quite remarkable that in Theorem \ref{t:lt2intro} we do not presuppose the morphism of complex vector spaces \eqref{e:lt2intro-h2} to be an isomorphism, but rather derive this fact as a consequence. Turning the argument around, we conclude that given a symplectic space $X$ as in Theorem \ref{t:ltintro} or Theorem \ref{t:lt2intro} such that $\H^2(X,\C)$ does not agree with $\H^2(X_\reg,\C)$ (dimensionwise), there exists a deformation of $X$ which changes the topology of $X$. On the other hand, when $\H^2(X,\C)$ and $\H^2(X_\reg,\C)$ do agree, Namikawa's results in \cite{Na06} suggest, at least in case $X$ is projective, that any deformation of $X$ is locally topologically trivial.

\subsection*{The Fujiki relation}

Let $X$ be a compact, connected, and symplectic complex space such that $\Omega^2_X(X_\reg)$ is $1$-dimensional over the field of complex numbers. Then, generalizing Beauville's definition in \cite[p.~772]{Be83}, we introduce a complex quadratic form $q_X$ on $\H^2(X,\C)$, called the \emph{Beauville-Bogomolov form} of $X$, by first passing to a resolution of singularities $\tilde X\to X$ and then using the assignment
\[
 \tilde a \mto \frac{r}{2} \int_{\tilde X} \left(w^{r-1}\bar{w}^{r-1}\tilde a^2\right) + (r-1) \int_{\tilde X} \left(w^{r-1}\bar w^r\tilde a\right) \int_{\tilde X} \left(w^r\bar w^{r-1}\tilde a\right)
\]
for $\tilde a \in \H^2(\tilde X,\C)$, where $w \in \H^2(\tilde X,\C)$ is the class of a closed $2$-differential $\tilde\sigma \in \Omega^2_{\tilde X}(\tilde X)$ which is normed in the sense that
\[
 \int_{\tilde X} w^r\bar w^r = 1
\]
and $r$ denotes the unique natural number satisfying $2r = \dim(X)$; note that we have $r \neq 0$ in consequence.

The following result embodies an extension of A.~Fujiki's classical \cite[Theorem 4.7]{Fu87} to the context of singular symplectic spaces.

\begin{theorem}[Fujiki Relation] \label{t:frintro}
 Let $X$ be a compact, connected, and symplectic complex space of Kähler type such that $\Omega^2_X(X_\reg)$ is of dimension $1$ over the field of complex numbers and $\codim(\Sing(X),X) \geq 4$. Then, for all $a \in \H^2(X,\C)$, we have
 \[
  \int_X a^{2r} = \binom{2r}{r} \cdot (q_X(a))^r,
 \]
 where $r$ denotes half the dimension of $X$.
\end{theorem}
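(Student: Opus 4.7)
The plan is to treat the Fujiki relation as a polynomial identity on $\H^2(X,\C)$ and to verify it by combining a Hodge-type reduction on a resolution with the local Torelli theorem.

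First I would fix a resolution $\pi\colon \tilde X \to X$, extend $\pi^*\sigma$ to $\tilde\sigma \in \Omega^2_{\tilde X}(\tilde X)$, and set $w := [\tilde\sigma] \in \H^2(\tilde X,\C)$, normed so that $\int_{\tilde X} w^r\bar w^r = 1$. For $a \in \H^2(X,\C)$ with pullback $\tilde a := \pi^*a$, both $\int_X a^{2r} = \int_{\tilde X}\tilde a^{2r}$ and $q_X(a)$ are polynomial functions of $\tilde a$, so
\[
F(a) := \int_X a^{2r} - \binom{2r}{r}\,q_X(a)^r
\]
is a homogeneous polynomial of degree $2r$ on the finite-dimensional space $\H^2(X,\C)$; it suffices to verify that $F \equiv 0$ on a subset whose complex-linear span is Zariski-dense.

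Next I would focus on classes whose pullback has the special shape $\tilde a = \alpha w + \beta \bar w + \gamma$ with $\gamma$ of pure Hodge type $(1,1)$ on $\tilde X$. Hodge-type matching on $\tilde X$ kills all but the balanced terms $w^i\bar w^i\gamma^{2r-2i}$ in the multinomial expansion of $\tilde a^{2r}$, and simplifies $q_X(\tilde a)$ to $\alpha\beta$ plus $\tfrac{r}{2}\int_{\tilde X}w^{r-1}\bar w^{r-1}\gamma^2$ (the mixed summands in the defining formula vanishing by type). Expanding $q_X(\tilde a)^r$ binomially and matching coefficients of $(\alpha\beta)^{r-i}$ reduces the Fujiki identity on these classes to a system of Lefschetz-type relations
\[
\int_{\tilde X} w^{r-i}\bar w^{r-i}\gamma^{2i} = c_i \left(\int_{\tilde X} w^{r-1}\bar w^{r-1}\gamma^2\right)^{\!i}, \qquad i=0,\dots,r,
\]
with explicit combinatorial constants $c_i$.

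To establish these relations and to sweep the special classes across a dense subset of $\H^2(X,\C)$, I would place $X = \cX_t$ into a semi-universal deformation $f\colon \cX \to S$ over a smooth, simply-connected base whose fibres satisfy the hypotheses of Theorem \ref{t:ltintro}. By that theorem the period map $\cP^{2,2}_t(g)\colon S \to \Gr(\sH^2(\cY_t))$ is a codim-$1$ immersion at $t$ with image contained (by Hodge type) in the Beauville-Bogomolov quadric $Q_X = \{q_X = 0\}$, so a dimension count upgrades this to an open sweep of $Q_X$. The Gauss-Manin connection on $\sH^2(g)$ identifies the cohomologies of nearby fibres with $\sH^2(\cY_t)$ compatibly with both the top-degree pairing $\int\cdot^{2r}$ and the quadratic form $q_X$; applying the special-form computation at varying $s$ and polarizing in $\gamma$ reduces each Lefschetz identity to a trivial evaluation at $t$, yielding $F \equiv 0$.

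The hardest part will be making the deformation step rigorous: establishing the existence of a semi-universal deformation over a smooth, simply-connected base whose fibres inherit the codim $\geq 4$ plus rationality hypotheses (invoking the unobstructedness results developed earlier in the thesis); confirming that Gauss-Manin parallel transport preserves both $q_X$ and the top-degree pairing on the singular fibres (leveraging the Frölicher degeneration of Theorem \ref{t:froeintro} together with the extension property \eqref{i:symp-ext}); and, most delicately, upgrading the codim-$1$ immersion afforded by Theorem \ref{t:ltintro} to genuine real-analytic openness of the period image inside $Q_X$, which rests on the nondegeneracy of $q_X$ and the a priori containment of the image in the quadric.
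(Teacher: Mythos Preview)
Your overall architecture (semi-universal deformation, local Torelli, period image inside the quadric) matches the paper's, but there are two genuine gaps that the paper has to work around and that your proposal does not address.

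\textbf{First gap: the wrong local system.} You propose to use the Gau\ss--Manin connection on $\sH^2(g)$, where $g$ is the submersive share, to transport classes and claim this is ``compatible with the top-degree pairing $\int\cdot^{2r}$''. But $\sH^2(g)$ parametrizes the de Rham cohomology of the \emph{open} regular loci $(\cX_s)_\reg$, whereas the integral $\int_{\cX_s} a^{2r}$ lives on the \emph{compact} singular fiber $\cX_s$ and requires the full ring $\H^*(\cX_s,\C)$. To get parallel transport that respects the cup-product ring and integration, you need the sheaves $\H^i(f,\C)=\R^if_*\C_\cX$ on the compact family to be locally constant. This is equivalent to local topological triviality of $f$ at $t$, which is \emph{not} automatic for a singular central fiber; indeed $\dim_\C\H^2(\cX_s,\C)$ can jump at $t$. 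The paper's solution is essential: invoke Thom's First Isotopy Lemma to find an open stratum $T\subset S$ with $t\in\bar T$ on which $f$ \emph{is} locally topologically trivial, prove the Fujiki relation for every $\cX_s$ with $s\in T$, and then run a careful continuity argument (showing both $s\mapsto\int_{\cX_s}a_s^{2r}$ and $s\mapsto q_{\cX_s}(a_s)$ are continuous along a sequence $s_\alpha\to t$) to pass to the limit at $t$. Your proposal has no analogue of this step.

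\textbf{Second gap: the Lefschetz-type identities are a detour.} Your reduction to the relations $\int w^{r-i}\bar w^{r-i}\gamma^{2i}=c_i\bigl(\int w^{r-1}\bar w^{r-1}\gamma^2\bigr)^i$ is a correct reformulation (note that by purity of $\H^2(X)$ every class already has the shape $\alpha w+\gamma+\beta\bar w$, so no density argument is needed there), but the sentence ``polarizing in $\gamma$ reduces each Lefschetz identity to a trivial evaluation at $t$'' is not a proof: once you deform to a nearby fiber, your fixed $\gamma\in\H^{1,1}(X_t)$ is no longer of type $(1,1)$, so the special-form computation does not apply. The paper avoids this entirely with a clean algebraic trick. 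It shows (Proposition~\ref{p:frequiv}) that the Fujiki relation is \emph{equivalent} to the set equality $Q_X=\{\ell:\,(\forall v\in\ell)\,v^{2r}=0\}$, and then proves an abstract lemma about quadrics in $\P(V)$ for a commutative algebra $V\subset A$: if a rank-$\geq 2$ quadric $Q$ is contained in $\{v^{r+1}=0\}$, then automatically $Q=\{v^{2r}=0\}$. The containment $Q_X\subset\{v^{r+1}=0\}$ is exactly what the period map gives you (a point of the period image is $\F^2\H^2(\cX_s)$, whose generator $v$ satisfies $v^{r+1}=0$ by Hodge type, and parallel transport in $\H^*(f,\C)$---available over $T$---pushes this vanishing back to $\H^*(\cX_t,\C)$). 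This replaces your entire hierarchy of Lefschetz identities with a single line-geometry argument.
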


The validity of Theorem \ref{t:frintro} for irreducible symplectic manifolds $X$ has proven a valuable asset in a number of efforts to deduce further properties of such $X$, \egv in Matsushita's work on fiber space structures (\cf \cite{Ma99a}, \cite{Ma01a}), so that we hope Theorem \ref{t:frintro} presents a fertile ground for further research in the singular realm too.

\subsection*{Organization of the text}

Our work comprises three chapters labeled ``\ref{ch:peri}'', ``\ref{ch:froe}'', and ``\ref{ch:symp}'' as well as two supplementary chapters (or ``appendices'') labeled ``\ref{ch:foun}'' and ``\ref{ch:tool}''.  In Chapter \ref{ch:peri}, we explain our theory of period mappings of Hodge-de Rham type for families of (not necessarily compact) complex manifolds, which we have already touched upon above. In Chapter \ref{ch:froe}, we work out circumstances under which the Frölicher spectral sequence of a submersive morphism of complex manifolds $g\colon \cY\to S$ degenerates in specific entries. We address the question of the degeneration in an entry $(p,q) \in \Z \times \Z$ in close conjunction with the question of whether the corresponding Hodge module $\sH^{p,q}(g)$ is locally finite free on $S$ and base change compatible. Note that in Chapter \ref{ch:froe}, we prove the above Theorem \ref{t:froeintro}, which is essential in order to apply the results on period mappings of Chapter \ref{ch:peri} to the study of symplectic spaces in Chapter \ref{ch:symp}. In Chapter \ref{ch:symp}, we deal with symplectic complex spaces; we prove the Local Torelli of Theorem \ref{t:ltintro} together with its add-on, Theorem \ref{t:lt2intro}. Moreover, we establish the Fujiki Relation of Theorem \ref{t:frintro}.

Chapter \ref{ch:foun} shall lay the foundations for the formulation of unambiguous statements and rigorous proofs in the bulk of the text (which is made up of Chapters \ref{ch:peri}, \ref{ch:froe}, and \ref{ch:symp}). Thus, in Chapter \ref{ch:foun}, we basically fix terminology and notation; we will not prove anything there. Note that logically, Chapter \ref{ch:foun} ought to be placed in front of Chapters \ref{ch:peri}, \ref{ch:froe}, and \ref{ch:symp} rather than after. Nevertheless, have we decided to supply Chapter \ref{ch:foun} as an appendix so as not to bore readers who want to get right down to business.

In our final Chapter \ref{ch:tool}, we show, for one thing, how to conceptually construct the various base change maps that are used throughout Chapters \ref{ch:peri}, \ref{ch:froe}, and \ref{ch:symp}; for another, we apply methods from mixed Hodge theory in order to establish certain properties of complex spaces with rational singularities. Beware that, as appendices, Chapters \ref{ch:foun} and \ref{ch:tool} are not really intended to be read through in one go. We rather suggest that the reader consult the appendices upon wish or need while studying a different part of the text.

Each of Chapters \ref{ch:peri}, \ref{ch:froe}, and \ref{ch:symp} is virtually self-contained---neglecting occasional references to the appendices (and references to outside sources of course); it should be possible to read any one of these three chapters without having a particular knowledge of the other two. In fact, the sole logical (meta-)dependence between Chapters \ref{ch:peri}, \ref{ch:froe}, and \ref{ch:symp} lies in Chapters \ref{ch:peri} and \ref{ch:froe} (individually) flowing into Chapter \ref{ch:symp}. We have tried to design this logical dependence of Chapter \ref{ch:symp} on Chapters \ref{ch:peri} and \ref{ch:froe} as sharp-edged and condensed as possible, so that essentially only for the deduction of the Local Torelli one has to invoke one theorem from each of the latter two chapters.

\renewcommand{\thesubsection}{\thesection.\arabic{subsection}}

\chapter{Period mappings for families of complex manifolds}
\label{ch:peri}

\setcounter{subsection}{0}

Consider a family of compact complex manifolds $f\colon X\to S$, by which we mean that $X$ and $S$ are complex manifolds and $f$ is a proper, submersive holomorphic map between them. Then by Ehresmann's fibration theorem, $f\colon X\to S$ is a locally topologically trivial family (as a matter of fact, even a locally $\mathscr C^\infty$ trivial family). In particular, for any natural number (or else integer) $n$, we know that $\R^nf_*(\C_X)$ is a locally constant sheaf on the topological space $S_\top$ such that, for any $s\in S$, the ``base change map''
\[
 (\R^nf_*(\C_X))_s \to \H^n(X_s,\C)
\]
is a bijection. Let us assume that the complex manifold $S$ is simply connected. Then $\R^nf_*(\C_X)$ is yet a constant sheaf on $S_\top$ and, for all $s\in S$, the canonical mapping from the set of global sections $(\R^nf_*(\C_X))(S)$ to the stalk $(\R^nf_*(\C_X))_s$ is one-to-one and onto. Thus by passing through base changes and the set $(\R^nf_*(\C_X))(S)$, we obtain, for any two elements $s_0,s_1\in S$, a bijection
\[
 \phi^n_{s_0,s_1} \colon \H^n(X_{s_0},\C) \to \H^n(X_{s_1},\C).
\]
Suppose that, for all $s\in S$, the complex manifold $X_s$ is of Kähler type, and fix an element $t\in S$. We define $\cP^{p,n}_t$, for any natural number (or else integer) $p$, to be the unique function on $S$ satisfying
\[
 \cP^{p,n}_t(s) = \phi^n_{s,t}[\F^p\H^n(X_s)]
\]
for all $s\in S$, where $\F^p\H^n(X_s)$ denotes the $p$-th piece of the Hodge filtration on $n$-th cohomology of $X_s$ and we use, for sake of clarity, square brackets to denote the image of a certain set under a function. $\cP^{p,n}_t$ is called a \emph{period mapping} for the family $f$. The following result is a variant of P.~Griffiths's \cite[Theorem (1.1)]{Gr68a}\footnote{The attentive reader will notice that Griffiths's construction of the period mapping is different from ours, mainly as he directly employs a $\mathscr C^\infty$ trivialization $X_t\times S \to X$ of $f$ over $S$; moreover, several conventions of \cite{Gr68a}, e.g., regarding cohomology, do not match ours.}.

\begin{theorem}
 \label{t:pmhol}
 Under the above hypotheses, $\cP^{p,n}_t$ is a holomorphic mapping from $S$ to the Grassmannian $\Gr(\H^n(X_t,\C))$.
\end{theorem}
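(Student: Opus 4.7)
The plan is to realize $\cP^{p,n}_t$ as the composition of the holomorphic section $s\mto \F^p\sH^n(f)_s$ of the Grassmannian bundle of $\sH^n(f) := \R^nf_*(\Omega^\bullet_f)$ with a holomorphic trivialization of that bundle afforded by the Gauß-Manin connection. This reduces the theorem to three classical ingredients: that $\sH^n(f)$ and its Hodge-filtered pieces $\F^p\sH^n(f)$ are locally finite free $\O_S$-modules compatible with base change; that the horizontal sections of $\nabla^n_\GM(f)$ on $\sH^n(f)$ recover the classical local system $\R^nf_*(\C_X)$; and that on a simply connected base a flat holomorphic vector bundle admits a global holomorphic frame of flat sections.

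For the first ingredient I would argue as follows. Each fiber $X_s$ is a compact Kähler manifold, so its Frölicher spectral sequence $E_1^{p,q} = \H^q(X_s,\Omega^p_{X_s}) \Rightarrow \H^{p+q}(X_s,\C)$ degenerates at the first sheet. Combined with the deformation invariance of the Hodge numbers $h^{p,q}(X_s)$ in compact Kähler families, Grauert's base change and semi-continuity theorem shows that each $\R^qf_*(\Omega^p_f)$ is locally finite free on $S$ with $\C$-fiber $\H^q(X_s,\Omega^p_{X_s})$. A descending induction on $p$, applied to the hypercohomology spectral sequences of the truncations $\Omega^{\geq p}_f$---whose $E_1$-degeneration follows from the rank constancy just established---then yields that $\sH^n(f)$ is locally free of rank $\sum_{p+q=n}h^{p,q}$, that $\F^p\sH^n(f)$ is a locally direct $\O_S$-submodule, and that base change identifies its fiber at $s$ with $\F^p\H^n(X_s)$.

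For the second ingredient I would invoke the relative holomorphic Poincaré lemma, which states that the natural inclusion $f^{-1}(\O_S) \to \Omega^\bullet_f$ is a quasi-isomorphism of complexes of sheaves on the underlying space of $X$. Applying $\R^nf_*$ yields an $\O_S$-linear isomorphism
\[
 \alpha \colon \O_S \otimes_{\C_S} \R^nf_*(\C_X) \to \sH^n(f)
\]
which, by the Katz--Oda construction of $\nabla^n_\GM(f)$, carries the trivial connection on the source to the Gauß-Manin connection on the target. Hence the kernel of $\nabla^n_\GM(f)$ is precisely the image of $\R^nf_*(\C_X)$ under $\alpha$. As $S$ is simply connected, this locally constant sheaf is constant; evaluating a $\C$-basis of its global sections at $t$ produces a global trivialization
\[
 \Phi \colon \O_S \otimes_\C \H^n(X_t,\C) \to \sH^n(f)
\]
whose stalk at $s\in S$, composed with the de Rham base change isomorphism at $s$, is inverse to the parallel transport $\phi^n_{s,t}$ of the statement.

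Assembling these pieces, under the trivialization $\Phi$ the holomorphic subbundle $\F^p\sH^n(f) \subset \sH^n(f)$ becomes a holomorphically varying family of linear subspaces of the fixed vector space $\H^n(X_t,\C)$, \iev a holomorphic map $S \to \Gr(\H^n(X_t,\C))$; by construction its value at $s$ equals $\phi^n_{s,t}[\F^p\H^n(X_s)]$, which is $\cP^{p,n}_t(s)$. The hardest step will be the first one: although the $E_1$-degeneration of the Frölicher spectral sequence is classical fiberwise, upgrading it to rank constancy and base change compatibility for $\F^p\sH^n(f)$ in the relative analytic setting requires careful manipulations with Grauert's theorem and the hypercohomology of the truncations $\Omega^{\geq p}_f$; the compact Kähler hypothesis on the fibers enters the argument precisely at this juncture.
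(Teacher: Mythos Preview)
Your proposal is correct, and its architecture matches the paper's own framework. However, the paper does \emph{not} prove Theorem~\ref{t:pmhol} itself: it is quoted as a classical result of Griffiths, with a footnote remarking that Griffiths's original argument employs a $\mathscr C^\infty$ trivialization rather than the Gau\ss--Manin connection. So there is no direct paper-proof to compare against.

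That said, the paper does prove an abstract version, Proposition~\ref{p:pmhol}: given any flat vector bundle $(\sH,\nabla)$ on a simply connected complex manifold $S$ and a vector subbundle $\sh F\subset\sH$, the associated period map $\cP_t(S,(\sH,\nabla),\sh F)$ is holomorphic. Its proof is precisely your third step, carried out by hand: a global $\C$-basis of horizontal sections (from Proposition~\ref{p:rhc}) and a local $\O_S$-frame of $\sh F$ produce a matrix of holomorphic functions landing in the standard chart of the Grassmannian. Your proposal is exactly this argument, specialized to $\sH=\sH^n(f)$, $\nabla=\nabla^n_\GM(f)$, $\sh F=\F^p\sH^n(f)$, together with the two preliminary ingredients the abstract statement takes as hypotheses. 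Your first ingredient (local freeness and base-change compatibility of the Hodge pieces) is handled in the paper, in a more general setting, in Chapter~\ref{ch:froe} and in the proof of Theorem~\ref{t:pmclassic}; your second ingredient (identifying $\ker\nabla^n_\GM(f)$ with $\R^nf_*(\C_X)$ via the relative Poincar\'e lemma) is not spelled out in the paper but is the standard compatibility between the Katz--Oda connection and the topological local system, and it is exactly what is needed to reconcile the paper's period map $\cP^{p,n}_t(f)$ of Notation~\ref{not:pm} with the classical $\cP^{p,n}_t$ defined via $\R^nf_*(\C_X)$ in the theorem's setup.
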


Note that, as we state it, Theorem \ref{t:pmhol} comprises the fact that the spaces $\F^p\H^n(X_s)$ are of a constant finite dimension when $s$ varies through $S$.

We would like to recall another theorem of Griffiths's which is closely related to Theorem \ref{t:pmhol}. To that end, put $q:=n-p$ and let
\[
 \gamma \colon \H^1(X_t,\Theta_{X_t}) \to \Hom(\H^q(X_t,\Omega^p_{X_t}),\H^{q+1}(X_t,\Omega^{p-1}_{X_t}))
\]
be the morphism of complex vector spaces which is obtained by means of tensor-hom adjunction from the composition
\[
 \H^1(X_t,\Theta_{X_t}) \otimes_\C \H^q(X_t,\Omega^p_{X_t}) \overset\cupp\to \H^{q+1}(X_t,\Theta_{X_t}\otimes _{X_t}\Omega^p_{X_t}) \to \H^{q+1}(X_t,\Omega^{p-1}_{X_t})
\]
of the evident cup product morphism and the $\H^{q+1}(X_t,-)$ of the sheaf-theoretic contraction morphism
\[
 \Theta_{X_t}\otimes_{X_t}\Omega^p_{X_t} \to \Omega^{p-1}_{X_t}.
\]
Since $X_t$ is a compact, Kähler complex manifold, the Frölicher spectral sequence of $X_t$ degenerates at sheet $1$ and we have, for any $\nu\in\Z$, an induced morphism of complex vector spaces
\[
 \psi^{\nu} \colon \F^{\nu}\H^n(X_t)/\F^{\nu+1}\H^n(X_t) \to \H^{n-\nu}(X_t,\Omega^{\nu}_{X_t}),
\]
which is in fact an isomorphism. Define $\alpha$ to be the composition of the quotient morphism
\[
 \F^p\H^n(X_t) \to \F^p\H^n(X_t)/\F^{p+1}\H^n(X_t)
\]
and $\psi^p$. Dually, define $\beta$ to be the composition of $(\psi^{p-1})^{-1}$ and the morphism
\[
 \F^{p-1}\H^n(X_t)/\F^p\H^n(X_t) \to \H^n(X_t,\C)/\F^p\H^n(X_t)
\]
which is obtained from the inclusion $\F^{p-1}\H^n(X_t) \subset \H^n(X_t,\C)$ by quotienting out $\F^p\H^n(X_t)$. Further, denote
\[
 \KS \colon \T_S(t) \to \H^1(X_t,\Theta_{X_t})
\]
the Kodaira-Spencer map for the family $f$ with basepoint $t$ (\cf Notation \ref{not:ksm}) and write
\[
 \theta \colon \T_{\Gr(\H^n(X_t,\C))}(\F^p\H^n(X_t)) \to \Hom(\F^p\H^n(X_t),\H^n(X_t,\C)/\F^p\H^n(X_t))
\]
for the isomorphism which is induced by the canonical open immersion
\[
 \Hom(\F^p\H^n(X_t),E) \to \Gr(\H^n(X_t,\C))
\]
of complex manifolds, where $E$ is a complex vector subspace of $\H^n(X_t,\C)$ such that $\H^n(X_t,\C) = \F^p\H^n(X_t) \oplus E$ (\cf Notation \ref{not:theta}). As an adaptation of \cite[Proposition (1.20) or Theorem (1.22)]{Gr68a} we formulate

\begin{theorem}
 \label{t:pmgriffiths}
 Let $f\colon X\to S$, $n$, $p$, and $t$ be as above and define $\alpha$, $\beta$, $\gamma$, $\KS$, and $\theta$ accordingly. Then the following diagram commutes in $\Mod(\C)$:
 \begin{equation} \label{e:pmgriffiths}
  \xymatrix{
   \T_S(t) \ar[r]^{\KS} \ar[dd]_{\T_t(\cP^{p,n}_t)} & \H^1(X_t,\Theta_{X_t}) \ar[d]^\gamma \\
   & \Hom(\H^q(X_t,\Omega^p_{X_t}),\H^{q+1}(X_t,\Omega^{p-1}_{X_t})) \ar[d]^{\Hom(\alpha,\beta)} \\
   \T_{\Gr(\H^n(X_t,\C))}(\F^p\H^n(X_t)) \ar[r]_-{\theta} & \Hom(\F^p\H^n(X_t),\H^n(X_t,\C)/\F^p\H^n(X_t)) 
  }
 \end{equation}
\end{theorem}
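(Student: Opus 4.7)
The plan is to compute $\T_t(\cP^{p,n}_t)$ via the Gauß-Manin connection $\nabla := \nabla^n_\GM(f)$ on $\sH^n(f)$, which in this setup---$f$ being smooth, proper, and Kähler-fibered---is a locally finite free $\O_S$-module whose Hodge filtered piece $\F^p\sH^n(f)$ is a subbundle compatible with base change. After shrinking $S$ around $t$, I may assume both modules are free. The horizontal sections of $\nabla$ make up $\R^nf_*(\C_X)$; this sheaf is constant on the simply connected $S$, so the transport isomorphism $\phi^n_{s,t}$ entering the construction of $\cP^{p,n}_t$ is nothing but parallel transport along $\nabla$.

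Given $v \in \T_S(t)$, a local extension $V$ of $v$ to a holomorphic vector field, and $\alpha_0 \in \F^p\H^n(X_t)$, pick a local section $\tilde\alpha$ of $\F^p\sH^n(f)$ with $\tilde\alpha(t) = \alpha_0$. Unwinding the parallel-transport description of $\phi^n_{s,t}$ yields
\[
 \theta\bigl(\T_t(\cP^{p,n}_t)(v)\bigr)(\alpha_0) = \bigl[\nabla_V(\tilde\alpha)(t)\bigr] \in \H^n(X_t,\C)/\F^p\H^n(X_t).
\]
Griffiths transversality---\iev the inclusion $\nabla(\F^p\sH^n(f)) \subset \Omega^1_S\otimes\F^{p-1}\sH^n(f)$, immediate from the Katz-Oda construction of $\nabla$ out of the Hodge (stupid) filtration on $\Omega^\kdot_X$---forces the above residue class to lie in $\F^{p-1}\H^n(X_t)/\F^p\H^n(X_t)$, which via the Frölicher degeneration isomorphism $\psi^{p-1}$ is $\H^{q+1}(X_t,\Omega^{p-1}_{X_t})$. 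This explains the role of $\beta$ in the diagram.

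The crux is the identification of $\nabla$ on the graded piece $\F^p\sH^n(f)/\F^{p+1}\sH^n(f)$, restricted to the fiber over $t$, with cup-and-contract against $\KS(v)$. From the Katz-Oda short exact sequence of complexes
\[
 0 \to f^*(\Omega^1_S)\otimes_{\O_X}\Omega^{\kdot-1}_f \to \Omega^\kdot_X/F^2\Omega^\kdot_X \to \Omega^\kdot_f \to 0,
\]
$\nabla$ arises as the connecting homomorphism on $\R^nf_*$. Restricting to $X_t$ and passing to the associated graded of $\F^\kdot$, the connecting homomorphism factors through the extension class in $\Omega^1_S(t)\otimes\H^1(X_t,\Theta_{X_t})$ of the degree-$1$ piece of the above sequence. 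By the very definition of the Kodaira-Spencer map, this extension class pairs with $v$ to produce $\KS(v)\in\H^1(X_t,\Theta_{X_t})$. Cup product with $\KS(v)$ followed by the contraction $\Theta_{X_t}\otimes\Omega^p_{X_t}\to\Omega^{p-1}_{X_t}$ is by definition $\gamma(\KS(v))$, whence the three identifications fit together to prove the commutativity of \eqref{e:pmgriffiths}.

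The main obstacle will be this last step: verifying carefully that the connecting-homomorphism description of $\nabla$ on associated gradeds truly matches, through the Frölicher isomorphisms $\psi^p$ and $\psi^{p-1}$, cup product against $\KS(v)$ composed with the contraction used in the definition of $\gamma$. Once that bookkeeping is in place, the remainder is a formal unwinding of the definitions built up in Chapter \ref{ch:peri}.
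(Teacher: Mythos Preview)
Your approach is correct and in fact coincides with the machinery the paper itself builds: the paper does not give a separate proof of Theorem~\ref{t:pmgriffiths} (it is stated as background, attributed to Griffiths), but its generalization, Theorem~\ref{t:pmclassic}, is proved via exactly the route you describe---the Katz--Oda Gau\ss--Manin connection, Griffiths transversality (Proposition~\ref{p:gtgrgm}), and the identification of the graded $\nabla$ with cup-and-contraction against the Kodaira--Spencer class (Theorem~\ref{t:grgmcc}). The ``bookkeeping'' you flag as the main obstacle is precisely what \S\S\ref{s:kozlambdap}--\ref{s:ksgm} are devoted to, culminating in Theorem~\ref{t:grgmcc0}; your sketch correctly anticipates both the content and the difficulty of that verification.
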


Now, the objective of this chapter is to state and prove a proposition analoguous to Theorem \ref{t:pmgriffiths}---possibly even, in a sense, generalizing Theorem \ref{t:pmgriffiths}---for families of not necessarily compact manifolds, \iev for submersive, yet not necessarily proper, morphisms of complex manifolds $f\colon X\to S$. More specifically, we are interested in submersive morphisms of complex manifolds $f\colon X\to S$ such that the relative algebraic de Rham cohomology sheaf $\sH^n(f)$ ($:= \R^nf_*(\Omega^\kdot_f)$ equipped with its canonical $\O_S$-module structure, \cf Notation \ref{not:dr}), for some fixed integer $n$, is a locally finite free module on $S$ which is compatible with base change in the sense that, for all $s\in S$, the de Rham base change map
\[
 \phi^n_{f,s} \colon \C \otimes_{\O_{S,s}} (\sH^n(f))_s \to \sH^n(X_s)
\]
is an isomorphism of complex vector spaces. We observe that the kernel $H$ of the Gauß-Manin connection
\[
 \nabla^n_\GM(f) \colon \sH^n(f) \to \Omega^1_S \otimes_S \sH^n(f),
\]
which we are going to introduce in the spirit of \cite{KaOd68} (\cf Notation \ref{not:gm}), makes up a locally constant sheaf of $\C_S$-modules on $S_\top$ whose stalks are isomorphic to the $n$-th de Rham cohomologies of the fibers of $f$ via the inclusion $H \subset \sH^n(f)$ and the de Rham base change maps. That way, in case the complex manifold $S$ is simply connected, we construct, for any integer $p$ and any basepoint $t\in S$, a period mapping $\cP^{p,n}_t(f)$ by transporting the Hodge filtered pieces $\F^p\sH^n(X_s) \subset \sH^n(X_s)$ to $\sH^n(X_t)$ along the global sections of $H$. When we require the relative Hodge filtered piece $\F^p\sH^n(f)$ to be a vector subbundle of $\sH^n(f)$ on $S$ which is compatible with base change (in an appropriate sense), the holomorphicity of the period mapping
\[
 \cP^{p,n}_t(f) \colon S \to \Gr(\sH^n(X_t))
\]
is basically automatic.

Eventually, we find that certain properties that can be expressed exclusively as degeneration properties for the Frölicher spectral sequences of $f$ and $X_t$ ensure the possibility to define morphisms $\alpha$ and $\beta$ such that a diagram similar to the one in \eqref{e:pmgriffiths}, namely the one in \eqref{e:pm}, commutes in $\Mod(\C)$. It is noteworthy that we do not assume our family $f\colon X\to S$ to be locally topologically (or $\sC^\infty$) trivial, neither do we assume $\R^nf_*(\C_X)$ to be a locally constant sheaf (which is compatible with base change).

Chapter \ref{ch:peri} is organized as follows. Our ultimate results are Theorem \ref{t:pm} as well as its hopefully more accessible corollary Theorem \ref{t:pmclassic}. The chapter's sections come in two groups: the final \oldS\S\ref{s:pm0} and \ref{s:pm} deal with the concept period mappings whereas the inital \oldS\S\ref{s:kozlambdap}--\ref{s:ksgm} don't. The upshot of \oldS\S\ref{s:kozlambdap}--\ref{s:ksgm}, besides various constructions and notation like, for instance, for the Gauß-Manin connection, is Theorem \ref{t:grgmcc}. Theorem \ref{t:grgmcc} is in turn a special case of Theorem \ref{t:grgmcc0}, whose proof \S\ref{s:cdlemma} is consecrated to. \oldS\S\ref{s:kozlambdap}--\ref{s:connhom} are preparatory for \S\ref{s:cdlemma}.

We would like to point out that the overall point of view we are adopting here is gravely inspired by N.~Katz's and T.~Oda's works \cite{KaOd68} and \cite{Ka72}, which we admittedly fancy a lot. Our \oldS\S\ref{s:kozlambdap}--\ref{s:ksgm} are actually put together along the very lines of \cite[Section 1]{Ka72}. Our view on period mappings and (relative) connections which we present in \S\ref{s:pm0} is inspired by P.~Deligne's lecture notes \cite{De70}.

\section{The \texorpdfstring{$\Lambda^p$}{Lambda-p} construction}
\label{s:kozlambdap}

\emph{For the entire section, let $X$ be a commutative ringed space.} \medskip

In this section we introduce a construction which associates to a right exact triple $t$ of modules on $X$, given some integer $p$, another right exact triple of modules on $X$ denoted $\Lambda^p_X(t)$, \cf Construction \ref{con:lambdap}. This ``$\Lambda^p$ construction'' will play a central role within Chapter \ref{ch:peri} at least up to (and including) \S\ref{s:ksgm}.

The $\Lambda^p$ construction is closely related to and even essentially based upon the following notion of a ``Koszul filtration'' (\cf \cite[(1.2.1.2)]{Ka72}).

\begin{construction}[Koszul filtration]
 \label{con:koz}
 Let $p$ be an integer. Moreover, let $\alpha \colon G\to H$ be a morphism of modules on $X$. We define a $\Z$-sequence $K$ by setting, for all $i\in\Z$:
 \begin{equation} \label{e:koz-filt}
  K^i :=
  \begin{cases}
   \im\left(\wedge^{i,p-i}(H) \circ ({\wedge^i\alpha} \otimes {\wedge^{p-i}\id_H})\right), & i\geq 0, \\
   \wedge^pH, & i<0,
  \end{cases}
 \end{equation}
 where
 \[
  \wedge^{i,p-i}(H) \colon {\wedge^iH} \otimes {\wedge^{p-i}H} \to {\wedge^pH}
 \]
 denotes the wedge product morphism. We refer to $K$ as the \emph{Koszul filtration} in degree $p$ induced by $\alpha$ on $X$.

 Let us shortly verify that $K$ is indeed a descending filtration of $\wedge^pH$ by modules on $X$. Since $K^i$ obviously is a submodule of $\wedge^pH$ on $X$ for all integers $i$, it remains to show that, for all integers $i$ and $j$ with $i\leq j$, we have $K^j\subset K^i$. In case $i<0$, this is clear as then, $K^i=\wedge^pH$. Similarly, when $j>p$, we know that $K^j$ is the zero submodule of $\wedge^pH$, so that $K^j \subset K^i$ is evident. We are left with the case where $0\leq i\leq j\leq p$. To that end, denote $\phi_i$ the composition of the following obvious morphisms in $\Mod(X)$:
 \begin{equation} \label{e:koz-1}
  \begin{aligned}
   \otimes(\overbrace{G,\dots,G}^i,\overbrace{H,\dots,H}^{p-i}) & \to \T^i(G) \otimes \T^{p-i}(H) \to {\wedge^i(G)} \otimes {\wedge^{p-i}(H)} \\
   & \to {\wedge^i(H)} \otimes {\wedge^{p-i}(H)} \to {\wedge^p(H)}.
  \end{aligned}
 \end{equation}
 Then $K^i=\im(\phi_i)$ since the first and second of the morphisms in \eqref{e:koz-1} are an isomorphism and an epimorphism in $\Mod(X)$, respectively. The same holds for $i$ replaced by $j$ given that we define $\phi_j$ accordingly. Thus our claim is implied by the easy to verify identity:
 $$\phi_j = \phi_i \circ \otimes(\underbrace{\id_G,\dots,\id_G}_{i},\underbrace{\alpha,\dots,\alpha}_{j-i},\underbrace{\id_H,\dots,\id_H}_{p-j}).$$
\end{construction}

\begin{definition}\label{d:tripex}
 Let $t\colon G\to H\to F$ be a triple of modules on $X$.
 \begin{enumerate}
  \item We call $t$ \emph{left exact} (\resp \emph{right exact}) on $X$ when
  \begin{align*}
   0 \to & G \to H \to F \\
   ( \text{\resp } \qquad & G \to H \to F \to 0 )
  \end{align*}
  is an exact sequence of modules on $X$.
  \item We call $t$ \emph{short exact} on $X$ when $t$ is both left exact on $X$ and right exact on $X$.
 \end{enumerate}
\end{definition}

The upcomping series of results is preparatory for Construction \ref{con:lambdap}.

\begin{lemma}
 \label{l:wedgep}
 Let $p$ be an integer and $t\colon G\to H\to F$ a right exact triple of modules on $X$. Then the following sequence, where the first and second arrows are given by $\wedge^{1,p-1}(H) \circ (t(0,1) \otimes \wedge^{p-1}(\id_H))$ and $\wedge^p(t(1,2))$, respectively, is exact in $\Mod(X)$:
 \begin{equation} \label{e:wedgepseq}
  G \otimes \wedge^{p-1}H \to \wedge^pH \to \wedge^pF \to 0.   
 \end{equation}
\end{lemma}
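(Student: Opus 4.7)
The plan is to reduce the assertion to one about ordinary modules over a ring, by checking exactness of the sequence \eqref{e:wedgepseq} stalkwise, and then to identify the cokernel of the first arrow with $\wedge^pF$ via the universal property of exterior powers. So fix a stalk and, abusing notation, write $t\colon G\to H\to F\to 0$ for the right exact sequence of modules over a commutative ring $R$ obtained at the stalk; set $\alpha := t(0,1)$ and $\beta := t(1,2)$, and let $\phi_1$ and $\phi_2$ denote the first and second arrows in \eqref{e:wedgepseq}.

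First I would dispatch the easy assertions. Surjectivity of $\phi_2 = \wedge^p\beta$ follows because $\beta$ is surjective: every pure wedge $f_1\wedge\cdots\wedge f_p$ in $\wedge^pF$ is hit by $h_1\wedge\cdots\wedge h_p$ for any lifts $h_i\in H$ of the $f_i$, and pure wedges generate $\wedge^pF$ as an $R$-module. The composition $\phi_2\circ\phi_1$ vanishes because $\beta\alpha=0$ forces $\beta(\alpha(g))\wedge\beta(h_1)\wedge\cdots\wedge\beta(h_{p-1}) = 0$. Together, these give exactness at $\wedge^pF$ and the inclusion $\im\phi_1 \subset \ker\phi_2$.

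The core of the proof is the reverse inclusion $\ker\phi_2\subset\im\phi_1$. Setting $K := \im\phi_1\subset\wedge^pH$, the map $\phi_2$ factors through an epimorphism $\overline\phi_2\colon \wedge^pH/K \to \wedge^pF$, and I propose to construct a two-sided inverse to $\overline\phi_2$ via the universal property of $\wedge^pF$. Concretely, define $\psi\colon F^p \to \wedge^pH/K$ by $\psi(f_1,\dots,f_p) := [h_1\wedge\cdots\wedge h_p]$, where $h_i\in H$ is any lift of $f_i\in F$. Well-definedness, independent of the choice of lifts, is the main calculation: if $h_i'$ is a second lift of $f_i$, then by right exactness of $t$ at $H$ we have $h_i - h_i' \in \ker\beta = \im\alpha$, say $h_i - h_i' = \alpha(g_i)$, and a telescoping expansion writes $h_1\wedge\cdots\wedge h_p - h_1'\wedge\cdots\wedge h_p'$ as a sum of $p$ wedge monomials, each containing some factor of the form $\alpha(g_i)$ and thus lying in $K$. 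Alternating $R$-multilinearity of $\psi$ is inherited from that of the exterior power on $H$, so by the universal property of $\wedge^pF$ the map $\psi$ factors uniquely through an $R$-linear $\widetilde\psi\colon \wedge^pF \to \wedge^pH/K$. Evaluating both compositions on pure wedges shows that $\widetilde\psi$ and $\overline\phi_2$ are mutually inverse, whence $\ker\phi_2 = K = \im\phi_1$, as required.

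The step I expect to cause the only real friction is the well-definedness of $\psi$; this is precisely the step that uses exactness of $t$ in the middle (and not merely surjectivity of $\beta$). Everything else is formal from the universal property of exterior powers and a brief diagram chase, and the stalkwise reduction is safe because exactness of a sequence of modules on the ringed space $X$ is tested on stalks.
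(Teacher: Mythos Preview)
Your proposal is correct and follows the same overall approach as the paper: reduce to stalks and then establish the ring-theoretic statement. The only difference is that the paper simply cites \cite[Proposition A2.2, d]{Ei95} for the exactness over a commutative ring, whereas you supply a direct proof of that fact via the universal property of $\wedge^p F$; your construction of $\widetilde\psi$ is exactly the standard argument behind the cited result.
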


\begin{proof}
 Let $x \in X$ be arbitrary. Then there exist isomorphisms of $\O_{X,x}$-modules rendering commutative in $\Mod(\O_{X,x})$ the following diagram, where the top row is obtained from \eqref{e:wedgepseq} by applying the stalk-at-$x$ functor and the bottom row is obtained from $t_x\colon G_x\to H_x\to F_x$ the same way \eqref{e:wedgepseq} is obtained from $t\colon G\to H\to F$:
 \[
  \xymatrix{
   (G \otimes_X {\wedge^{p-1}_XH})_x \ar[r] \ar[d]_\sim & (\wedge^p_XH)_x \ar[r] \ar[d]_\sim & (\wedge^p_XF)_x \ar[r] \ar[d]^\sim & 0_x \ar[d]^\sim \\
   G_x \otimes_{\O_{X,x}} {\wedge^{p-1}_{\O_{X,x}}H_x} \ar[r] & {\wedge^p_{\O_{X,x}}H_x} \ar[r] & {\wedge^p_{\O_{X,x}}F_x} \ar[r] & 0
  }
 \]
 Now the bottom row of the diagram is exact in $\Mod(\O_{X,x})$ by \cite[Proposition A2.2, d]{Ei95}. Therefore the top row of the diagram is exact in $\Mod(\O_{X,x})$ too, whence the sequence \eqref{e:wedgepseq} is exact in $\Mod(X)$ given that $x$ was an arbitrary element of (the set underlying) $X$.
\end{proof}

\begin{proposition}
 \label{p:wedgep}
 Let $p$ be an integer and $t\colon G\to H\to F$ a right exact triple of modules on $X$. Write $K = (K^i)_{i\in\Z}$ for the Koszul filtration in degree $p$ induced by $t(0,1)\colon G\to H$. Then the following sequence is exact in $\Mod(X)$:
 \begin{equation} \label{e:wedgep-koz}
  \xymatrix@C=3pc{
   0 \ar[r] & K^1 \ar[r]^\subset & {\wedge^pH} \ar[r]^{\wedge^p(t(1,2))} & {\wedge^pF} \ar[r] & 0.
  }
 \end{equation}
\end{proposition}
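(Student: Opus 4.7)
The plan is to deduce the proposition as an almost immediate corollary of Lemma~\ref{l:wedgep} together with the definition of the Koszul filtration $K$.

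First I would unpack $K^1$: by Construction~\ref{con:koz} applied with $i=1$ (and $\alpha = t(0,1)$), we have
\[
 K^1 = \im\bigl(\wedge^{1,p-1}(H) \circ ({\wedge^1 t(0,1)} \otimes {\wedge^{p-1}\id_H})\bigr),
\]
which, via the canonical identification ${\wedge^1 H} = H$, ${\wedge^1 G} = G$, coincides with the image of the first arrow
\[
 \xymatrix@C=2pc{G \otimes {\wedge^{p-1}H} \ar[r]^-{\psi} & {\wedge^pH}}
\]
appearing in the exact sequence of Lemma~\ref{l:wedgep}.

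Given this identification, I would simply assemble the three pieces of the short exact sequence \eqref{e:wedgep-koz} as follows. The inclusion $K^1 \hookrightarrow \wedge^pH$ is a monomorphism in $\Mod(X)$ by construction, so exactness at $K^1$ is automatic. Exactness at $\wedge^pF$, i.e.\ surjectivity of $\wedge^p(t(1,2))$, is the tail end of the exact sequence \eqref{e:wedgepseq} of Lemma~\ref{l:wedgep}. For exactness at $\wedge^pH$, Lemma~\ref{l:wedgep} gives $\ker(\wedge^p(t(1,2))) = \im(\psi)$; by the paragraph above, $\im(\psi) = K^1$, and this equals the image of the inclusion $K^1 \subset \wedge^pH$.

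There is essentially no obstacle here: the content is all in Lemma~\ref{l:wedgep}, and what remains is a bookkeeping step matching the image described by Lemma~\ref{l:wedgep} with the submodule $K^1$ distinguished by Construction~\ref{con:koz}. The only mildly delicate point is making sure the canonical isomorphisms ${\wedge^1G} \cong G$ and ${\wedge^1H} \cong H$ are used consistently, so that the map $\wedge^{1,p-1}(H) \circ ({\wedge^1 t(0,1)} \otimes {\wedge^{p-1}\id_H})$ and the map $\wedge^{1,p-1}(H) \circ (t(0,1)\otimes {\wedge^{p-1}\id_H})$ genuinely have the same image.
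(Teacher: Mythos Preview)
Your proposal is correct and matches the paper's proof essentially verbatim: the paper also invokes Lemma~\ref{l:wedgep}, then observes via Construction~\ref{con:koz} (noting explicitly that $\wedge^1_X$ is the identity functor, so no identification issue arises) that $K^1 \hookrightarrow \wedge^pH$ is an image of the first map in \eqref{e:wedgepseq}, from which the claim follows immediately.
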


\begin{proof}
 By Lemma \ref{l:wedgep}, the sequence \eqref{e:wedgepseq} is exact in $\Mod(X)$. By the definition of the Koszul filtration, \cf Construction \ref{con:koz}, the inclusion morphism $K^1 \to \wedge^pH$ is an image in $\Mod(X)$ of the morphism
 \[\wedge^{1,p-1}(H) \circ (t(0,1) \otimes \wedge^{p-1}(\id_H)) \colon G \otimes \wedge^{p-1}H \to \wedge^pH;\]
 note here that $\wedge^1_X$ equals the identity functor on $\Mod(X)$ by definition. Hence our claim follows.
\end{proof}

\begin{corollary}
 \label{c:natmor1}
 Let $p$ be an integer and $t\colon G\to H\to F$ a right exact triple of modules on $X$. Denote $K = (K^i)_{i\in\Z}$ the Koszul filtration in degree $p$ induced by $t(0,1) \colon G\to H$.
 \begin{enumerate}
  \item \label{c:natmor1-ex} There exists one, and only one, $\psi$ rendering commutative in $\Mod(X)$ the following diagram:
  \begin{equation} \label{e:natmori}
   \xymatrix{
    \wedge^pH \ar[d]_{} \ar[r]^{\wedge^p(t(1,2))} & \wedge^pF \\
    (\wedge^pH)/K^1 \ar@{.>}[ur]_{\psi}
   }  
  \end{equation}
  \item \label{c:natmor1-iso} Let $\psi$ be such that the diagram in \eqref{e:natmori} commutes in $\Mod(X)$. Then $\psi$ is an isomorphism in $\Mod(X)$.
 \end{enumerate}
\end{corollary}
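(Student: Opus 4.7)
The plan is to reduce both assertions directly to Proposition \ref{p:wedgep}, which furnishes the short exact sequence
\[
 0 \to K^1 \overset{\subset}{\longrightarrow} \wedge^pH \xrightarrow{\wedge^p(t(1,2))} \wedge^pF \to 0
\]
in $\Mod(X)$. The point is that this sequence tells us two things at once: first, that $K^1$ is a kernel of $\wedge^p(t(1,2))$; second, that $\wedge^p(t(1,2))$ is an epimorphism in $\Mod(X)$.

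For part \eqref{c:natmor1-ex}, I would let $\pi \colon \wedge^pH \to (\wedge^pH)/K^1$ denote the canonical quotient morphism (which is of course the implicit vertical arrow in \eqref{e:natmori}). Since $\pi$ is by construction a cokernel of the inclusion $K^1 \subset \wedge^pH$, and since by Proposition \ref{p:wedgep} this inclusion is a kernel of $\wedge^p(t(1,2))$, the universal property of the cokernel in the abelian category $\Mod(X)$ yields a unique $\psi$ with $\psi \circ \pi = \wedge^p(t(1,2))$. This is precisely the commutativity of \eqref{e:natmori}.

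For part \eqref{c:natmor1-iso}, I would show that any such $\psi$ is both a monomorphism and an epimorphism in $\Mod(X)$, and then invoke the fact that $\Mod(X)$ is abelian. Epimorphicity is immediate: $\wedge^p(t(1,2)) = \psi \circ \pi$ is an epimorphism (by Proposition \ref{p:wedgep}), hence so is $\psi$. For monomorphicity, the quickest route is to observe that $\psi$ is, by its very defining property, the factorization of $\wedge^p(t(1,2))$ through its cokernel-of-kernel $\pi$; in any abelian category such a factorization through the coimage is a monomorphism precisely because $K^1$ already exhausts the kernel of $\wedge^p(t(1,2))$. Equivalently, one can argue stalkwise: if $\psi_x(\bar a) = 0$ for some $a \in (\wedge^pH)_x$, then $(\wedge^p(t(1,2)))_x(a) = 0$, so $a \in (K^1)_x$ by Proposition \ref{p:wedgep}, whence $\bar a = 0$.

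There is no substantive obstacle here; the entire content of the corollary is already packaged into Proposition \ref{p:wedgep}, and the corollary merely rephrases that proposition as the statement that the induced map from the cokernel of $K^1 \subset \wedge^pH$ to $\wedge^pF$ is an isomorphism. The only thing to be careful about is citing the correct universal property (cokernel, not just quotient of modules) so that the argument is valid inside the abelian category $\Mod(X)$ rather than only on the level of underlying sheaves of abelian groups.
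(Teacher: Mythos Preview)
Your proposal is correct and follows essentially the same approach as the paper: both parts are immediate from Proposition \ref{p:wedgep}. The only cosmetic difference is in part \eqref{c:natmor1-iso}, where the paper observes directly that $\wedge^p(t(1,2))$ is itself a cokernel of the inclusion $K^1 \to \wedge^pH$ (by exactness of \eqref{e:wedgep-koz}), so $\psi$ is the comparison map between two cokernels of the same morphism and hence an isomorphism; your mono-plus-epi argument unpacks this same fact.
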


\begin{proof}
 Both assertions are immediate consequences of Proposition \ref{p:wedgep}. In order to obtain a), exploit the fact that the composition of the inclusion morphism $K^1 \to \wedge^pH$ and $\wedge^p(t(1,2))$ is a zero morphism in $\Mod(X)$. In order to obtain b), make use of the fact that, by the exactness of the sequence \eqref{e:wedgep-koz}, $\wedge^p(t(1,2))$ is a cokernel in $\Mod(X)$ of the inclusion morphism $K^1 \to \wedge^pH$.
\end{proof}

\begin{proposition}
 \label{p:natmorii}
 Let $p$ be an integer and $t\colon G\to H\to F$ a right exact triple of modules on $X$. Denote $K = (K^i)_{i\in\Z}$ the Koszul filtration in degree $p$ induced by $t(0,1) \colon G\to H$.
 \begin{enumerate}
  \item \label{p:natmorii-ex} There exists a unique ordered pair $(\phi_0,\phi)$ such that the following diagram commutes in $\Mod(X)$:
  \begin{equation} \label{e:natmorii}
   \xymatrix{
    H\otimes\wedge^{p-1}H \ar[d]_{\wedge^{1,p-1}(H)} & G\otimes\wedge^{p-1}H \ar[r] \ar@{}[r]<+1ex>^{\id_G\otimes\wedge^{p-1}(t(1,2))} \ar[l] \ar@{}[l]<-1ex>_{t(0,1)\otimes\wedge^{p-1}(\id_H)} \ar@{.>}[d]_{\phi_0} & G\otimes\wedge^{p-1}F \ar@{.>}[d]^{\phi} \\
    \wedge^pH & K^1 \ar[l]^{\supset} \ar[r]_{} & K^1/K^2
   }
  \end{equation}
  \item \label{p:natmorii-epi} Let $(\phi_0,\phi)$ be an ordered pair such that the diagram in \eqref{e:natmorii} commutes in $\Mod(X)$. Then $\phi$ is an epimorphism in $\Mod(X)$.
 \end{enumerate} 
\end{proposition}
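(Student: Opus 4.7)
The plan is to build $\phi_0$ directly from the definition of $K^1$ as a corestriction, and then obtain $\phi$ by a diagram chase that lets $q\circ\phi_0$, with $q\colon K^1\to K^1/K^2$ the canonical quotient, factor through the epimorphism $\id_G\otimes\wedge^{p-1}(t(1,2))$. For $\phi_0$, observe that Construction \ref{con:koz} defines the monomorphism $K^1\hookrightarrow\wedge^pH$ precisely as the image of $\wedge^{1,p-1}(H)\circ(t(0,1)\otimes\wedge^{p-1}(\id_H))$; consequently this composition factors uniquely through $K^1\hookrightarrow\wedge^pH$ as $(\subset)\circ\phi_0$, which simultaneously yields the existence and uniqueness of $\phi_0$ and the commutativity of the left square in \eqref{e:natmorii}. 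Moreover $\phi_0$ is by construction an epimorphism onto $K^1$.

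For $\phi$, I will argue that $\id_G\otimes\wedge^{p-1}(t(1,2))$ is an epimorphism whose kernel is killed by $q\circ\phi_0$, so that $\phi$ arises as the unique factorization through the corresponding cokernel. Applying Lemma \ref{l:wedgep} to the triple $t$ in degree $p-1$ yields the right exact sequence
\[
 G\otimes\wedge^{p-2}H \to \wedge^{p-1}H \xrightarrow{\wedge^{p-1}(t(1,2))} \wedge^{p-1}F \to 0,
\]
and tensoring with $G$ (tensor product being right exact) shows that the kernel of $\id_G\otimes\wedge^{p-1}(t(1,2))$ is the image of the induced map $G\otimes G\otimes\wedge^{p-2}H\to G\otimes\wedge^{p-1}H$. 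Tracing an elementary tensor $g\otimes g'\otimes\omega$ through this map, then through $\phi_0$ and the inclusion into $\wedge^pH$, produces $t(0,1)(g)\wedge t(0,1)(g')\wedge\omega$, which lies in $K^2$ by the very definition of the Koszul filtration. Hence $q\circ\phi_0$ vanishes on the kernel, and the desired $\phi$ exists and is unique. Part \eqref{p:natmorii-epi} is then immediate: $q\circ\phi_0$ is the composition of two epimorphisms, hence an epimorphism, and the commutativity of the right square rewrites this as $\phi\circ(\id_G\otimes\wedge^{p-1}(t(1,2)))$, forcing $\phi$ to be an epimorphism as well.

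The only step that is not purely a manipulation of universal properties is the identification of the kernel of $\id_G\otimes\wedge^{p-1}(t(1,2))$ and the verification that its image under $\phi_0$ lands in $K^2$; I expect this to be the main, though mild, obstacle. Both assertions can be handled stalkwise in $\Mod(\O_{X,x})$ in the spirit of the proof of Lemma \ref{l:wedgep}, using that stalks commute with tensor products and exterior powers. Everything else reduces to the universal properties of images and cokernels in the abelian category $\Mod(X)$.
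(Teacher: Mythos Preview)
Your proposal is correct and follows essentially the same route as the paper's own proof: construct $\phi_0$ as the corestriction furnished by the definition of $K^1$ as an image, then use Lemma~\ref{l:wedgep} in degree $p-1$ tensored with $G$ to see that $\id_G\otimes\wedge^{p-1}(t(1,2))$ is a cokernel of $G\otimes G\otimes\wedge^{p-2}H\to G\otimes\wedge^{p-1}H$, verify that $q\circ\phi_0$ kills this image because it lands in $K^2$, and invoke the universal property; part~\ref{p:natmorii-epi}) is then exactly the epimorphism-composition argument you give. The only cosmetic difference is that the paper phrases the factorization step purely in terms of the universal property of the cokernel rather than speaking of the kernel, and it does not pass to stalks---the verification that the image lands in $K^2$ is read off directly from the definition of the Koszul filtration in $\Mod(X)$, so your stalkwise remark is harmless but unnecessary.
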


\begin{proof}
 \ref{p:natmorii-ex}). By the definition of the Koszul filtration, \cf Construction \ref{con:koz}, the inclusion morphism $K^1 \to \wedge^pH$ is an image in $\Mod(X)$ of the morphism
 $$\wedge^{1,p-1}(H) \circ (t(0,1) \otimes \wedge^{p-1}(\id_H)) \colon G \otimes \wedge^{p-1}H \to \wedge^pH.$$
 Therefore, there exists one, and only one, morphism $\phi_0$ making the left-hand square of the diagram in \eqref{e:natmorii} commute in $\Mod(X)$. By Lemma \ref{l:wedgep}, we know that the sequence \eqref{e:wedgepseq}, where we replace $p$ by $p-1$ and define the arrows as indicated in the text of the lemma, is exact in $\Mod(X)$. Tensorizing the latter sequence with $G$ on the left, we obtain yet another exact sequence in $\Mod(X)$:
 \begin{equation} \label{e:natmor2-1}
  G \otimes (G \otimes \wedge^{p-2}H) \to G \otimes \wedge^{p-1}H \to G \otimes \wedge^{p-1}F \to 0.
 \end{equation}
 The exactness of the sequence \eqref{e:natmor2-1} implies that the morphism
 \[\id_G \otimes {\wedge^{p-1}(t(1,2))} \colon G\otimes {\wedge^{p-1}H} \to G\otimes {\wedge^{p-1}F}\]
 is a cokernel in $\Mod(X)$ of the morphism given by the first arrow in \eqref{e:natmor2-1}. Besides, the definition of the Koszul filtration implies that the composition
 \[G \otimes (G\otimes \wedge^{p-2}H) \to G\otimes \wedge^{p-1}H \to K^1\]
 of the first arrow in \eqref{e:natmor2-1} with $\phi_0$ maps into $K^2 \subset K^1$, whence composing it further with the quotient morphism $K^1 \to K^1/K^2$ yields a zero morphism in $\Mod(X)$. Thus by the universal property of the cokernel, there exists a unique $\phi$ rendering commutative in $\Mod(X)$ the right-hand square in the diagram in \eqref{e:natmorii}.

 \ref{p:natmorii-epi}). Observe that by the commutativity of the left-hand square in \eqref{e:natmorii}, $\phi_0$ is a coimage of the morphism
 $$\wedge^{1,p-1}(H) \circ (t(0,1) \otimes \wedge^{p-1}(\id_H)) \colon G \otimes \wedge^{p-1}H \to \wedge^pH,$$
 whence an epimorphism in $\Mod(X)$. Moreover, the quotient morphsim $K^1\to K^1/K^2$ is an epimorphism in $\Mod(X)$. Thus the composition of $\phi_0$ and $K^1 \to K^1/K^2$ is an epimorphism in $\Mod(X)$. By the commutativity of the right-hand square in \eqref{e:natmorii}, we see that $\phi$ is an epimorphism in $\Mod(X)$.
\end{proof}

\begin{construction}[$\Lambda^p$ construction]
 \label{con:lambdap}
 Let $p$ be an integer.  Moreover, let $t\colon G\to H\to F$ be a right exact triple of modules on $X$. Write $K = (K^i)_{i\in\Z}$ for the Koszul filtration in degree $p$ induced by $t(0,1) \colon G\to H$ on $X$, \cf Construction \ref{con:koz}. Recall that $K$ is a filtration of $\wedge^pH$ by modules on $X$. We define a functor $\Lambda^p(t) \colon \mathbf3 \to \Mod(X)$ by setting, in the first place:
 \begin{align*}
  (\Lambda^p(t))(0) & := G \otimes \wedge^{p-1}F & (\Lambda^p(t))(0,0) & := \id_{G \otimes \wedge^{p-1}F} \\
  (\Lambda^p(t))(1) & := (\wedge^pH)/K^2 & (\Lambda^p(t))(1,1) & := \id_{(\wedge^pH)/K^2} \\
  (\Lambda^p(t))(2) & := \wedge^pF & (\Lambda^p(t))(2,2) & := \id_{\wedge^pF}
 \end{align*}
 Now let $\iota$ and $\pi$ be the unique morphisms such that the following diagram commutes in $\Mod(X)$:
 \begin{equation} \label{e:lambdap-1}
  \xymatrix{
   K^2 \ar[r]^{\subset} & K^1 \ar[r]^{\subset} \ar[d]_{} & \wedge^pH \ar[d]_{} \ar[dr]^{} \\ & K^1/K^2 \ar@{.>}[r]_\iota & (\wedge^pH)/K^2 \ar@{.>}[r]_\pi & (\wedge^pH)/K^1
  }
 \end{equation}
 By Proposition \ref{p:natmorii} \ref{p:natmorii-ex}), we know that there exists a unique ordered pair $(\phi_0,\phi)$ rendering commutative in $\Mod(X)$ the diagram in \eqref{e:natmorii}. Likewise, by Corollary \ref{c:natmor1} a), there exists a unique $\psi$ rendering commutative in $\Mod(X)$ the diagram in \eqref{e:natmori}. We complete our definition of $\Lambda^p(t)$ by setting:
 \begin{align*}
  (\Lambda^p(t))(0,1) & := \iota \circ \phi, & (\Lambda^p(t))(1,2) & := \psi \circ \pi, \\ (\Lambda^p(t))(0,2) & := (\psi \circ \pi) \circ (\iota \circ \phi).
 \end{align*}
 
 It is a straightforward matter to convince oneself that the so defined $\Lambda^p(t)$ is a functor from $\mathbf3$ to $\Mod(X)$, \iev a triple of modules on $X$. We claim that $\Lambda^p(t)$ is even a right exact triple of modules on $X$. To see this, observe that firstly, the bottom row of the diagram in \eqref{e:lambdap-1} makes up a short exact triple of modules on $X$, that secondly, $\psi$ is an isomorphism in $\Mod(X)$ by Corollary \ref{c:natmor1} b), and that thirdly, $\phi$ is an epimorphism in $\Mod(X)$ by Proposition \ref{p:natmorii} \ref{p:natmorii-epi}).

 Naturally, the construction of $\Lambda^p(t)$ depends on the ringed space $X$. So, whenever we feel the need to make the reference to the ringed space $X$ within the above construction visible notationally, we resort to writing $\Lambda^p_X(t)$ instead of $\Lambda^p(t)$.
\end{construction}

We show that the $\Lambda^p$ construction is nicely compatible with the restriction to open subspaces.

\begin{proposition}
 \label{p:lambdaprest}
 Let $U$ be an open subset of $X$, $p$ an integer, and $t\colon G\to H\to F$ a right exact triple of modules on $X$. Then $t|U \colon G|U \to H|U\to F|U$ is a right exact triple of modules on $X|U$ and we have $(\Lambda^p_X(t))|U = \Lambda^p_{X|U}(t|U)$\footnote{Note that in order to get a real equality here, as opposed to only a ``canonical isomorphism'', one has to work with the right sheafification functor.}.
\end{proposition}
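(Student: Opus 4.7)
The plan is to exploit the fact that restriction along an open immersion is an exact functor on modules which, with the appropriate sheafification convention (as alluded to in the footnote), commutes on the nose with every formation operation entering Construction \ref{con:lambdap}: direct sum, tensor product, exterior power, kernel, image, coimage, and cokernel. Once this is granted, the equality $(\Lambda^p_X(t))|U = \Lambda^p_{X|U}(t|U)$ reduces to checking that the various universal morphisms defining $\Lambda^p$ are themselves preserved.

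First I would dispose of right exactness. Since $(-)|U$ is exact, applying it to the exact sequence $G \to H \to F \to 0$ furnished by Definition \ref{d:tripex} yields the exact sequence $G|U \to H|U \to F|U \to 0$; thus $t|U$ is a right exact triple of modules on $X|U$.

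Next I would inspect the Koszul filtration. The Koszul filtration $K$ in degree $p$ induced by $t(0,1)$ is defined in Construction \ref{con:koz} via exterior powers, wedge product morphisms, and images; each of these commutes with $(-)|U$ under the chosen sheafification convention. Consequently, writing $K'$ for the Koszul filtration in degree $p$ induced by $(t|U)(0,1) = t(0,1)|U$ on $X|U$, one gets $K^i|U = K'^i$ for every integer $i$. In particular the objects assigned by the two functors agree in degrees $0$, $1$, and $2$, namely $G|U \otimes \wedge^{p-1}(F|U)$, $(\wedge^p(H|U))/K'^2$, and $\wedge^p(F|U)$.

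Finally I would check that the structural morphisms coincide after restriction. The morphisms $\iota$ and $\pi$ of \eqref{e:lambdap-1} are obtained purely from the Koszul inclusions and the associated quotient morphisms, which are clearly preserved by $(-)|U$. The morphism $\phi_0$ is the (unique) factorization through an image of $\wedge^{1,p-1}(H) \circ (t(0,1) \otimes \wedge^{p-1}(\id_H))$; its restriction satisfies the analogous universal property over $X|U$ and therefore equals the corresponding $\phi_0'$ on $X|U$. The same argument applied to the cokernel in the proof of Proposition \ref{p:natmorii} shows $\phi|U = \phi'$, and Corollary \ref{c:natmor1} applied over $X$ and $X|U$ together with the compatibility of $(-)|U$ with quotients gives $\psi|U = \psi'$. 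Putting everything together, the compositions defining $(\Lambda^p(t))(0,1)$, $(\Lambda^p(t))(1,2)$, and $(\Lambda^p(t))(0,2)$ restrict to the corresponding morphisms for $\Lambda^p_{X|U}(t|U)$, whence the two functors $\mathbf{3} \to \Mod(X|U)$ are literally equal. The sole subtle point is bookkeeping the sheafification convention that turns the natural canonical isomorphisms into equalities, but this is fixed once and for all in the foundational appendix.
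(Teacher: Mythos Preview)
Your proof is correct and follows essentially the same approach as the paper's own proof: both argue that restriction to $U$ is exact and commutes with all the sheaf-theoretic operations (tensor products, exterior powers, images, quotients) entering Construction \ref{con:lambdap}, so that $K^i|U = K'^i$ and each of the structural morphisms $\iota$, $\pi$, $\phi$, $\psi$ restricts to its primed counterpart on $X|U$. Your version is slightly more explicit about invoking the universal properties to identify $\phi_0|U$, $\phi|U$, and $\psi|U$ with their analogues over $X|U$, but the content is the same.
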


\begin{proof}
 The fact that the triple $t|U$ is right exact on $X|U$ is clear since the restriction functor $-|U \colon \Mod(X) \to \Mod(X|U)$ is exact. Denote $K = (K^i)_{i\in\Z}$ and $K' = (K'^i)_{i\in\Z}$ the Koszul filtrations in degree $p$ induced by $t(0,1)\colon G\to H$ and $t(0,1)|U \colon G|U \to H|U$ on $X$ and $X|U$, respectively. Then by the presheaf definitions of the wedge- and tensor products, we see that $K^i|U = K'^i$ for all integers $i$. Now define $\iota$ and $\pi$ just as in Construction \ref{con:lambdap}. Similarly, define $\iota'$ and $\pi'$ using $K'$ instead of $K$ and $X|U$ instead of $X$. Then by the presheaf definition of quotient sheaves, we see that the following diagram commutes in $\Mod(X|U)$:
 \[
  \xymatrix{
   (K^1/K^2)|U \ar[r]^{\iota|U} \ar@{=}[d] & (K^0/K^2)|U \ar[r]^{\pi|U} \ar@{=}[d] & (K^0/K^1)|U \ar@{=}[d] \\
   K'^1/K'^2 \ar[r]_{\iota'} & K'^0/K'^2 \ar[r]_{\pi'} & K'^0/K'^1
  }
 \]
 Defining $\phi$ and $\psi$ just as in Construction \ref{con:lambdap} and defining $\phi'$ and $\psi'$ analogously for $t|U$ instead of $t$ and $X|U$ instead of $X$, we deduce that $\phi|U = \phi'$ and $\psi|U = \psi'$. Hence $(\Lambda^p_X(t))|U = \Lambda^p_{X|U}(t|U)$ holds according to the definitions given in Construction \ref{con:lambdap}.
\end{proof}

The remainder of this section is devoted to investigating the $\Lambda^p$ construction in the special case where the given triple $t$ is a split exact triple of modules on $X$.

\begin{definition}
 \label{d:split}
 Let $t$ a triple of modules on $X$ (for the purposes of the definition $X$ need not necessarily be commutative as a ringed space).
 \begin{enumerate}
  \item \label{d:split-se} The triple $t$ is called \emph{split exact} on $X$ when $t$ is isomorphic in the category of triples of modules on $X$ to a triple of the form
  \[
   B \to B\oplus A \to A,
  \]
  where the first and second arrows stand for the coprojection to the first summand and projection to the second summand, respectively.
  \item \label{d:split-rs} $\phi$ is called a \emph{right splitting} of $t$ on $X$ when $\phi$ is a morphism of modules on $X$, $\phi \colon t(2) \to t(1)$, such that we have $t(1,2) \circ \phi = \id_{t(2)}$ in $\Mod(X)$.
 \end{enumerate} 
\end{definition}

\begin{lemma}
 \label{l:wedgesum}
 Let $\alpha \colon G\to H$ and $\phi \colon F\to H$ be morphisms of modules on $X$ and $p\in\Z$. Assume that $\alpha \oplus \phi \colon G\oplus F\to H$ is an isomorphism in $\Mod(X)$.
 \begin{enumerate}
  \item \label{l:wedgesum-sum} The morphism
  \[
   \bigoplus_{\nu\in\Z}\left(\wedge^{\nu,p-\nu}(H) \circ (\wedge^\nu\alpha \otimes \wedge^{p-\nu}\phi)\right) \colon \bigoplus_{\nu\in\Z}(\wedge^\nu G \otimes \wedge^{p-\nu}F) \to \wedge^pH
  \]
  is an isomorphism in $\Mod(X)$.
  \item \label{l:wedgesum-koz} Let $K=(K^i)_{i\in\Z}$ be the Koszul filtration in degree $p$ induced by $\alpha$. Then, for all integers $i$, $K^i$ corresponds to $\bigoplus_{\nu\geq i}(\wedge^\nu G\otimes \wedge^{p-\nu}F)$ under the above isomorphism.
 \end{enumerate}
\end{lemma}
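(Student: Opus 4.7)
The plan is to first handle (a) by reducing to a stalkwise, purely module-theoretic computation that amounts to the classical splitting $\wedge^p(G \oplus F) \cong \bigoplus_\nu (\wedge^\nu G \otimes \wedge^{p-\nu} F)$, and then to derive (b) by transporting the defining description of the Koszul filtration through this splitting.

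For (a), I would argue exactly as in the proof of Lemma \ref{l:wedgep}: tensor products, exterior powers, direct sums, and the wedge-product morphism $\wedge^{\nu,p-\nu}(H)$ all commute with taking stalks up to canonical isomorphism. Fixing $x \in X$, the hypothesis implies that $(\alpha \oplus \phi)_x \colon G_x \oplus F_x \to H_x$ is an isomorphism of $\O_{X,x}$-modules, and in this situation the assertion is the standard algebraic fact that the exterior algebra functor converts direct sums into the graded-commutative tensor product of graded algebras; taking the $p$-th graded piece yields the isomorphism in question. Since this splitting is natural in the data, it coincides stalkwise with the morphism given in the statement, so the latter is an isomorphism in $\Mod(X)$.

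For (b), I would use the isomorphism from (a) to identify $\wedge^p H$ with $\bigoplus_\nu (\wedge^\nu G \otimes \wedge^{p-\nu} F)$ and, similarly, $\wedge^{p-i} H$ with $\bigoplus_{a+b=p-i}(\wedge^a G \otimes \wedge^b F)$. Under these identifications, the composition $\wedge^{i,p-i}(H) \circ (\wedge^i \alpha \otimes \wedge^{p-i} \id_H)$ whose image defines $K^i$ decomposes, on the summand $\wedge^i G \otimes \wedge^a G \otimes \wedge^b F$ of its source, as the map $\wedge^{i,a}(G) \otimes \id \colon \wedge^i G \otimes \wedge^a G \otimes \wedge^b F \to \wedge^{i+a} G \otimes \wedge^b F$; this reflects the fact that $\wedge^{p-i} \id_H$ acts as the identity on each piece of the decomposition of $\wedge^{p-i} H$ and that multiplication in $\wedge^\cdot H$ restricts, under the splitting, to the obvious bigraded multiplication on the tensor product $\wedge^\cdot G \otimes \wedge^\cdot F$. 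Since $\wedge^{i,a}(G)$ is surjective onto $\wedge^{i+a} G$, summing over all pairs $(a,b)$ with $a+b = p-i$ produces image exactly $\bigoplus_{\nu \geq i}(\wedge^\nu G \otimes \wedge^{p-\nu} F)$. The degenerate ranges $i<0$ and $i>p$ are immediate: both sides coincide with $\wedge^p H$ and $0$, respectively.

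The main obstacle I foresee is purely notational: one has to carefully unwind the definition of $\wedge^p$ on a direct-sum morphism and verify that the wedge-product morphism $\wedge^{i,j}(H)$ is correctly identified under the splitting with the bigraded multiplication on $\wedge^\cdot G \otimes \wedge^\cdot F$. No deeper homological input is required; the whole argument is essentially multilinear algebra together with the stalkwise behavior of the constructions involved.
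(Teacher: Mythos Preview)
Your proposal is correct and follows essentially the same route as the paper: part (a) is reduced to stalks and then to the standard splitting of the exterior algebra of a direct sum (the paper cites Eisenbud, Proposition A2.2 c), and part (b) is obtained by decomposing $\wedge^{p-i}H$ via (a) and tracing the wedge multiplication through the summands. The only cosmetic difference is that the paper first records the easy inclusion $\bigoplus_{\nu\geq i}(\wedge^\nu G\otimes\wedge^{p-\nu}F)\subset K^i$ separately before proving the converse, whereas you wrap both directions into a single image computation.
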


\begin{proof}
 \ref{l:wedgesum-sum}). By considering stalks (just like in the proof of Lemma \ref{l:wedgep}), we find that it suffices to prove the statement in case where $X$ is an ordinary ring. In that case, however, the statement follows from \cite[Proposition A2.2, c]{Ei95}.
 
 \ref{l:wedgesum-koz}). Let $i$ be an integer. Then for all integers $\nu \geq i$, the sheaf morphism
 \[
  \wedge^{\nu,p-\nu}(H) \circ (\wedge^\nu\alpha \otimes \wedge^{p-\nu}\phi) \colon {\wedge^\nu G} \otimes {\wedge^{p-\nu}F} \to \wedge^pH
 \]
 clearly maps into $K^i$. Therefore, the direct sum $\bigoplus_{\nu\geq i}(\wedge^\nu G\otimes \wedge^{p-\nu}F)$ maps into $K^i$ under the given isomorphism. Conversely, any section in $\wedge^pH$ coming from
 \[
  \wedge^{i,p-i}(H) \circ (\wedge^i(\alpha) \otimes \wedge^{p-i}(\id_H)) \colon {\wedge^iG} \otimes \wedge^{p-i}H \to \wedge^pH
 \]
 comes from $\bigoplus_{\nu\geq i}(\wedge^\nu G\otimes \wedge^{p-\nu}F)$ under the given isomorphism as one sees decomposing $\wedge^{p-i}H$ in the form
 \[
  \bigoplus_{\mu\geq0}(\wedge^\mu G \otimes \wedge^{p-i-\mu}F) \iso \wedge^{p-i}H
 \]
 according to part \ref{l:wedgesum-sum}) (with $p$ replaced by $p-i$).
\end{proof}

\begin{proposition}
 \label{p:lambdapsplit}
 Let $p$ be an integer and $t\colon G\to H\to F$ a right exact triple of modules on $X$.
 \begin{enumerate}
  \item \label{p:lambdapsplit-rs} Let $\phi$ be a right splitting of $t$ on $X$. Denote by $K=(K^i)_{i\in\Z}$ the Koszul filtration in degree $p$ induced by $t(0,1)\colon G\to H$ and write $\kappa \colon {\wedge^pH} \to (\wedge^pH)/K^2$ for the evident quotient morphism. Then the composition
  \[
   \kappa \circ \wedge^p(\phi) \colon {\wedge^pF} \to (\wedge^pH)/K^2
  \]
  is a right splitting of $\Lambda^p(t)$ on $X$.
  \item \label{p:lambdapsplit-se} When $t$ is split exact on $X$, then $\Lambda^p(t)$ is split exact on $X$.
 \end{enumerate}
\end{proposition}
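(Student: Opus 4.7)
For part (a), my approach is a direct chase through Construction \ref{con:lambdap}. By definition $(\Lambda^p(t))(1,2) = \psi \circ \pi$. Inspection of diagram \eqref{e:lambdap-1} shows that $\pi \circ \kappa$ equals the canonical quotient morphism $\wedge^pH \to (\wedge^pH)/K^1$, and by the defining commutativity of diagram \eqref{e:natmori} this quotient composed with $\psi$ is $\wedge^p(t(1,2))$. Functoriality of $\wedge^p$ and the splitting hypothesis then give
\[
 (\Lambda^p(t))(1,2) \circ \kappa \circ \wedge^p(\phi) \;=\; \wedge^p(t(1,2)) \circ \wedge^p(\phi) \;=\; \wedge^p\bigl(t(1,2) \circ \phi\bigr) \;=\; \wedge^p(\id_{t(2)}) \;=\; \id_{\wedge^pF},
\]
which is exactly the right-splitting condition for $\Lambda^p(t)$.

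For part (b), assume $t$ is split exact. Then a right splitting $\phi \colon F \to H$ exists, and by the very definition of split exactness $t(0,1) \oplus \phi \colon G \oplus F \to H$ is an isomorphism in $\Mod(X)$. I would then invoke Lemma \ref{l:wedgesum} with $\alpha = t(0,1)$ to obtain an isomorphism
\[
 \wedge^pH \;\iso\; \bigoplus_{\nu \in \Z}\bigl(\wedge^\nu G \otimes \wedge^{p-\nu}F\bigr),
\]
under which the Koszul piece $K^i$ corresponds to $\bigoplus_{\nu \geq i}(\wedge^\nu G \otimes \wedge^{p-\nu}F)$. In particular this yields canonical identifications $K^1/K^2 \iso G \otimes \wedge^{p-1}F$ and $(\wedge^pH)/K^2 \iso \wedge^pF \oplus (G \otimes \wedge^{p-1}F)$.

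The remaining task is to verify that under these identifications $(\Lambda^p(t))(0,1)$ and $(\Lambda^p(t))(1,2)$ become respectively the coprojection of $G \otimes \wedge^{p-1}F$ into the second summand and the projection of the direct sum onto $\wedge^pF$. For $(1,2) = \psi \circ \pi$, this is already implicit in paragraph one: $\pi$ is tautologically the projection onto the $\nu = 0$ summand of $(\wedge^pH)/K^2$, and $\psi$ composed with the further quotient to $(\wedge^pH)/K^1$ is $\wedge^p(t(1,2))$, which under the split decomposition is just the projection onto $\wedge^pF$. For $(0,1) = \iota \circ \phi$, one unwinds diagram \eqref{e:natmorii}: the map $\phi_0$, restricted to the summand $G \otimes \wedge^{p-1}F \subset G \otimes \wedge^{p-1}H$ (picked out by $\id_G \otimes \wedge^{p-1}\phi$), lands by construction in the $\nu = 1$ summand of $K^1$ via the identity, so $\phi$ is the identity on $G \otimes \wedge^{p-1}F$; then $\iota$ is tautologically the inclusion of the $\nu = 1$ summand into the $(\nu = 0, 1)$ direct sum. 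This exhibits $\Lambda^p(t)$ as isomorphic to a canonical split exact triple.

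I expect the main obstacle to be the bookkeeping in the last paragraph, namely the identification of $\phi$ with an identity morphism via diagram \eqref{e:natmorii}. The argument is conceptually routine diagram chasing, but requires careful tracking of which summand in the decomposition of Lemma \ref{l:wedgesum} each piece of the construction selects; once this is set up, everything falls into place. Note also that mere right exactness of $\Lambda^p(t)$ plus the right splitting from (a) is not enough to conclude split exactness (short exactness is needed too), which is why the explicit decomposition via Lemma \ref{l:wedgesum} is unavoidable.
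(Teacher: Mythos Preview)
Your proof of part (a) is essentially identical to the paper's.

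For part (b) your argument is correct, but the paper takes a shorter route that avoids the bookkeeping you flag as the main obstacle. Rather than identifying the entire triple $\Lambda^p(t)$ with a standard split triple (which forces you to track both $(\Lambda^p(t))(0,1)$ and $(\Lambda^p(t))(1,2)$ through the decomposition of Lemma~\ref{l:wedgesum}), the paper observes that it suffices to show $(\Lambda^p(t))(0,1)$ is a \emph{monomorphism}. Indeed, from the commutative square in Construction~\ref{con:lambdap} relating $(\Lambda^p(t))(0,1)$ to $\wedge^{1,p-1}(H) \circ (\alpha \otimes \wedge^{p-1}(\id_H))$, one reads off that
\[
 (\Lambda^p(t))(0,1) \;=\; \kappa \circ \wedge^{1,p-1}(H) \circ \bigl(\alpha \otimes \wedge^{p-1}(\phi)\bigr),
\]
and Lemma~\ref{l:wedgesum} then shows this sheaf map is injective (it embeds $G \otimes \wedge^{p-1}F$ as the $\nu=1$ summand of $(\wedge^pH)/K^2$). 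Since $\Lambda^p(t)$ is already known to be right exact, injectivity of $(0,1)$ upgrades it to short exact; then the right splitting from part (a) makes it split exact. This sidesteps entirely the need to verify that the map $\phi$ of Proposition~\ref{p:natmorii} becomes an identity under the decomposition---you only need one direction of the diagram chase, not both.

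Your remark that ``right exact plus right splitting is not enough'' is exactly right and is the reason Lemma~\ref{l:wedgesum} is needed in either approach; the paper just extracts less from it.
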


\begin{proof}
 \ref{p:lambdapsplit-rs}). By Construction \ref{con:lambdap}, we see that
 $$(\Lambda^p(t))(1,2) \colon (\wedge^pH)/K^2 \to \wedge^pF$$
 is the unique morphism of modules on $X$ which precomposed with the quotient morphism $\wedge^pH \to (\wedge^pH)/K^2$ yields $\wedge^p(t(1,2)) \colon {\wedge^pH} \to \wedge^pF$. Therefore, we have:
 \begin{align*}
  (\Lambda^p(t))(1,2) \circ (\kappa \circ \wedge^p(\phi)) & = \wedge^p(t(1,2)) \circ \wedge^p(\phi) = \wedge^p(t(1,2) \circ \phi) \\ & = \wedge^p(\id_F) = \id_{\wedge^pF}.
 \end{align*}

 \ref{p:lambdapsplit-se}). Write $K=(K^i)_{i\in\Z}$ for the Koszul filtration in degree $p$ induced by
 \[
  \alpha := t(0,1)\colon G\to H.
 \]
 Then by the definition of $(\Lambda^p(t))(0,1)$ in Construction \ref{con:lambdap}, the following diagram commutes in $\Mod(X)$:
 \[
  \xymatrix{
   G \otimes \wedge^{p-1}H \ar[r] \ar@{}@<1ex>[r]^{\id_G \otimes {\wedge^{p-1}(t(1,2))}} \ar[d]_{\wedge^{1,p}(H) \circ (\alpha \otimes {\wedge^{p-1}(\id_H))}} & G \otimes \wedge^{p-1}F \ar[d]^{(\Lambda^p(t))(0,1)} \\ \wedge^pH \ar[r]_-{\kappa} & (\wedge^pH)/K^2
  }
 \]
 Since $t$ is a split exact triple of modules on $X$, there exists a right splitting $\phi$ of $t$ on $X$. Using the commutativity of the diagram, we deduce that
 $$(\Lambda^p(t))(0,1) = \kappa \circ \wedge^{1,p}(H) \circ (\alpha \otimes \wedge^{p-1}(\phi)).$$
 Hence, by Lemma \ref{l:wedgesum}, the sheaf map $(\Lambda^p(t))(0,1)$ is injective. Knowing already that the triple $\Lambda^p(t)$ is right exact on $X$ (Construction \ref{con:lambdap}), we deduce that $\Lambda^p(t)$ is short exact on $X$. Therefore $\Lambda^p(t)$ is split exact on $X$ as by \ref{p:lambdapsplit-rs}) there exists a right splitting of $\Lambda^p(t)$ on $X$.
\end{proof}

\section{Locally split exact triples and their extension classes}
\label{s:lset}

\emph{For the entire section, let $X$ be a commutative ringed space.} \medskip

Let $p$ be an integer. In what follows, we are going to examine the $\Lambda^p$ construction, \cf Construction \ref{con:lambdap}, when applied to locally split exact triples of modules on $X$. So, let $t$ be such a triple. Then, as it turns out, $\Lambda^p(t)$ is a locally split exact triple of modules on $X$, too. Now given that $t$ is in particular a short exact triple of modules on $X$, one may consider its extension class, which is an element of $\Ext^1(F,G)$ writing $t\colon G\to H\to F$. Similarly, the extension class of $\Lambda^p(t)$ is an element of $\Ext^1(\wedge^pF,G\otimes \wedge^{p-1}F)$. The decisive result of \S\ref{s:lset} will be Proposition \ref{p:lsec}, which tells how to obtain the extension class of $\Lambda^p(t)$ from the extension class of $t$ by means of an interior product morphism
$$\iota^p(F,G) \colon \sHom(F,G) \to \sHom(\wedge^pF,G\otimes \wedge^{p-1}F),$$
to be defined in the realm of Construction \ref{con:intprod}. In order to conveniently describe this relationship between the extension classes of $t$ and $\Lambda^p(t)$, we introduce the device of ``locally split extension classes'', \cf Notation \ref{not:lsec}.

First of all, however, let us state local versions of Definition \ref{d:split} and Proposition \ref{p:lambdapsplit}.

\begin{definition}
 \label{d:lset}
 Let $t$ be a triple of modules on $X$.
 \begin{enumerate}
  \item $t$ is called \emph{locally split exact} on $X$ when there exists an open cover $\cover U$ of $X_\top$ such that, for all $U\in\cover U$, the triple $t|U$ (\iev the composition of $t$ with the restriction functor $-|U \colon \Mod(X) \to \Mod(X|U)$) is a split exact triple of modules on $X|U$.
  \item $\phi$ is called a \emph{local right splitting} of $t$ on $X$ when $\phi$ is a function whose domain of definition, call it $\cover U$, is an open cover of $X_\top$ such that, for all $U\in\cover U$, $\phi(U)$ is a right splitting of $t|U$ on $X|U$.
 \end{enumerate}
\end{definition}

\begin{proposition}
 \label{p:lambdaplset}
 Let $p$ be an integer and $t\colon G\to H\to F$ a right exact triple of modules on $X$.
 \begin{enumerate}
  \item \label{p:lambdaplset-lrs} Let $\phi$ be a local right splitting of $t$ on $X$ and let $\phi'$ be a function on $\cover U := \dom(\phi)$ such that, for all $U\in\cover U$, we have
  $$\phi'(U) = \kappa|U \circ \wedge^p(\phi(U)) \colon (\wedge^pF)|U \to ((\wedge^pH)/K^2)|U,$$
  where $\kappa$ denotes the quotient morphism $\wedge^pH \to (\wedge^pH)/K^2$ and $K = (K^i)_{i\in\Z}$ denotes the Koszul filtration in degree $p$ induced by $t(0,1)\colon G\to H$. Then $\phi'$ is a local right splitting of $\Lambda^p(t)$ on $X$.
  \item \label{p:lambdaplset-lset} When $t$ is locally split exact on $X$, then $\Lambda^p(t)$ is locally split exact on $X$.
 \end{enumerate}
\end{proposition}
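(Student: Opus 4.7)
The plan is to derive both parts of Proposition \ref{p:lambdaplset} by reducing to the already-established global analogue (Proposition \ref{p:lambdapsplit}) via the compatibility of the $\Lambda^p$ construction with restriction to open subspaces (Proposition \ref{p:lambdaprest}).

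For part (1), I fix an element $U \in \cover U$ and work over the open subspace $X|U$. By the definition of a local right splitting, $\phi(U)$ is a right splitting of $t|U$ on $X|U$; in particular, $t|U$ is split exact on $X|U$. Proposition \ref{p:lambdapsplit}(1) then asserts that ${\tilde\kappa} \circ \wedge^p(\phi(U))$ is a right splitting of $\Lambda^p_{X|U}(t|U)$ on $X|U$, where $\tilde\kappa$ is the quotient morphism $\wedge^p(H|U) \to (\wedge^p(H|U))/{\tilde K}^2$ associated with the Koszul filtration $\tilde K$ in degree $p$ on $X|U$ induced by $t(0,1)|U$. The remaining step is to identify this right splitting with the purported $\phi'(U) = \kappa|U \circ \wedge^p(\phi(U))$. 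This amounts to checking that $\tilde\kappa = \kappa|U$, which in turn reduces to the identity ${\tilde K}^i = K^i|U$ for all integers $i$---a fact that was already observed (and proved by reading off the presheaf definitions of the tensor and wedge products) inside the proof of Proposition \ref{p:lambdaprest}. Combined with Proposition \ref{p:lambdaprest} itself, which furnishes the equality $\Lambda^p_{X|U}(t|U) = (\Lambda^p_X(t))|U$, this shows that $\phi'(U)$ is a right splitting of $(\Lambda^p_X(t))|U$ on $X|U$; letting $U$ vary through $\cover U$ completes the proof.

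For part (2), I would assume $t$ is locally split exact on $X$ and pick an open cover $\cover U$ of $X_\top$ witnessing this, \iev such that $t|U$ is split exact on $X|U$ for every $U \in \cover U$. Two routes are available. One may either apply Proposition \ref{p:lambdapsplit}(2) to each $t|U$ to conclude that $\Lambda^p_{X|U}(t|U)$ is split exact on $X|U$, and then invoke Proposition \ref{p:lambdaprest} to identify this with $(\Lambda^p_X(t))|U$; or else, perhaps more cleanly, one picks for each $U \in \cover U$ a right splitting of $t|U$ on $X|U$, assembles these into a local right splitting $\phi$ of $t$ on $X$ with $\dom(\phi) = \cover U$, and invokes part (1) to produce a local right splitting of $\Lambda^p(t)$ on $X$---which by definition witnesses that $\Lambda^p(t)$ is locally split exact on $X$.

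I do not foresee any serious obstacle: the whole argument is a formal glueing of the two prior results, and the only mild subtlety---the identification of the Koszul filtration of the restricted morphism with the restriction of the Koszul filtration of the original morphism---has already been dealt with inside the proof of Proposition \ref{p:lambdaprest}.
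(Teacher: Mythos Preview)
Your proposal is correct and follows essentially the same route as the paper: reduce to the global Proposition \ref{p:lambdapsplit} on each $U$, identify the restricted Koszul filtration and quotient map with those of $t|U$, and invoke Proposition \ref{p:lambdaprest} to pass between $(\Lambda^p_X(t))|U$ and $\Lambda^p_{X|U}(t|U)$. For part (2) the paper takes your first route (apply Proposition \ref{p:lambdapsplit}(2) directly on each $U$); your alternative via part (1) is an equally valid minor variant.
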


\begin{proof}
 a). Let $U\in\cover U$. Then $\phi(U)$ is a right splitting of $t|U$ on $X|U$. Thus by Proposition \ref{p:lambdapsplit} a), we know that
 \[
  \kappa' \circ \wedge^p(\phi(U)) \colon {\wedge^p(F|U)} \to (\wedge^p(H|U))/K'^2
 \]
 is a right splitting of $\Lambda^p_{X|U}(t|U)$ on $X|U$, where $\kappa' \colon {\wedge^p(H|U)} \to (\wedge^p(H|U))/K'^2$ denotes the quotient morphism and $K' = (K'^i)_{i\in\Z}$ the Koszul filtration in degree $p$ induced by $\alpha|U \colon G|U \to H|U$. Since $(\wedge^pH)|U = \wedge^p(H|U)$ and $K^2|U = K'^2$, we have $\kappa|U = \kappa'$. Therefore $\phi'(U)$ is a right splitting of $\Lambda^p_{X|U}(t|U)$ on $X|U$. Given that $\Lambda^p_X(t)|U = \Lambda^p_{X|U}(t|U)$, we deduce that $\phi'$ is a local right splitting of $\Lambda^p_X(t)$ on $X$.
 
 b). Since $t$ is a locally split exact triple of modules on $X$, there exists an open cover $\cover U$ of $X_\top$ such that, for all $U\in\cover U$, the triple $t|U$ is split exact on $X|U$. Therefore, by Proposition \ref{p:lambdapsplit}, the triple $\Lambda^p_{X|U}(t|U)$ is split exact on $X|U$ for all $U\in\cover U$. As $(\Lambda^p_X(t))|U = \Lambda^p_{X|U}(t|U)$ for all $U\in\cover U$, we see that $\Lambda^p_X(t)$ is a locally split exact triple of modules on $X$.
\end{proof}

\begin{notation}[Locally split extension class]
 \label{not:lsec}
 Let $t\colon G\to H\to F$ be a short exact triple of modules on $X$ with the property that the triple
 \[
  \sHom(F,t) \colon \sHom(F,G) \to \sHom(F,H) \to \sHom(F,F)
 \]
 is again a short exact triple of modules on $X$. Then we write $\xi_X(t)$ for the image of the identity sheaf morphism $\id_F \colon F\to F$ under the composition of mappings
 \[
  (\sHom(F,F))(X) \overset{\can}\to \H^0(X,\sHom(F,F)) \overset{\delta^0}\to \H^1(X,\sHom(F,G)),
 \]
 where $\delta^0$ stands for the $0$-th connecting homomorphism for the triple $\sHom(F,t)$ with respect to the right derived functor of $\Gamma(X,-) \colon \Mod(X) \to \Mod(\Z)$. Note that as $(\sHom(F,F))(X) = \Hom(F,F)$, we have $\id_F \in (\sHom(F,F))(X)$ so that the above definition makes indeed sense. We call $\xi_X(t)$ the \emph{locally split extension class} of $t$ on $X$. As usual, we will write $\xi(t)$ instead of $\xi_X(t)$ whenever we feel that the reference to the ringed space $X$ is clear from the individual context.
\end{notation}

\begin{remark}
 \label{r:lseclset}
 Let $t\colon G\to H\to F$ be a locally split exact triple of modules on $X$. Then we claim that
 $$\sHom(F,t) \colon \sHom(F,G) \to \sHom(F,H) \to \sHom(F,F)$$
 is a locally split exact triple of modules on $X$, too. In fact, let $\phi$ be a local right splitting of $t$ on $X$. Put $\cover U := \dom(\phi)$. Then, for all $U\in\cover U$, the morphism
 $$\sHom(F|U,\phi(U)) \colon \sHom(F|U,F|U) \to \sHom(F|U,H|U)$$
 is a right splitting of $\sHom(F|U,t(1,2)|U)$ by the ``functoriality'' of $\sHom(F|U,-)$. Since $\sHom(F,t(1,2))|U = \sHom(F|U,t(1,2)|U)$ for all $U\in\cover U$, the assignment $U \mto \sHom(F|U,\phi(U))$, for $U\in\cover U$, constitutes a local right splitting of $\sHom(F,t)$ on $X$. Moreover, since the functor $\sHom(F,-) \colon \Mod(X) \to \Mod(X)$ is left exact, the triple $\sHom(F,t)$ is left exact on $X$. In conclusion, we see that the triple $\sHom(F,t)$ is indeed locally split exact on $X$.
 
 Specifically, $\sHom(F,t)$ is a short exact triple of modules on $X$, which, in view of Notation \ref{not:lsec}, tells us that (gladly) any locally split exact triple of modules on $X$ possesses a locally split extension class on $X$.
\end{remark}

The following remark will explain briefly how our newly coined notion of locally split extension classes relates to the customary extension classes for short exact triples (\iev short exact sequences) on $X$. We point out that though interesting, the contents of Remark \ref{r:lsec} are dispensable for our subsequent exposition.

\begin{remark}
 \label{r:lsec}
 Let $t\colon G\to H\to F$ be a short exact triple of modules on $X$. Recall that the \emph{extension class} of $t$ on $X$ is, by definition, the image of the identity sheaf morphism $\id_F$ under the composition of mappings
 $$\Hom(F,F) \overset{\can}{\to} (\R^0\Hom(F,-))(F) \overset{\delta'^0}{\to} (\R^1\Hom(F,-))(G) = \Ext^1(F,G),$$
 where $\delta' = (\delta'^n)_{n\in\Z}$ stands for the sequence of connecting homomorphisms for the triple $t$ with respect to the right derived functor of $\Hom(F,-) \colon \Mod(X) \to \Mod(\Z)$. Observe that the commutative diagram of categories and functors
 \[
  \xymatrix{
   \Mod(X) \ar[rr]^{\sHom(F,-)} \ar[dr]_{\Hom(F,-)} && \Mod(X) \ar[ld]^{\Gamma(X,-)} \\ & \Mod(\Z)
  }
 \]
 combined with the fact that, for all injective modules $I$ on $X$, the sheaf $\sHom(F,I)$ is a flasque sheaf on $X_\top$, whence an acyclic object for the functor $\Gamma(X,-) \colon \Mod(X) \to \Mod(\Z)$, induces a sequence $\tau = (\tau^q)_{q\in\Z}$ of natural transformations
 $$\tau^q \colon \H^q(X,-) \circ \sHom(F,-) \to \Ext^q(F,-)$$
 of functors from $\Mod(X)$ to $\Mod(\Z)$. The sequence $\tau$ has the property that when $\sHom(F,t)$ is a short exact triple of modules on $X$ and $\delta = (\delta^n)_{n\in\Z}$ denotes the sequence of connecting homomorphisms for the triple $\sHom(F,t)$ with respect to the right derived functor of $\Gamma(X,-) \colon \Mod(X) \to \Mod(\Z)$, then, for any integer $q$, the following diagram commutes in $\Mod(\Z)$:
 \[
  \xymatrix@C=3pc{
   \H^q(X,\sHom(F,F)) \ar[r]^-{\tau^q(F)} \ar[d]_{\delta^q} & \Ext^q(F,F) \ar[d]^{\delta'^q} \\
   \H^{q+1}(X,\sHom(F,G)) \ar[r]_-{\tau^{q+1}(G)} & \Ext^{q+1}(F,G)
  }
 \]
 Moreover, the following diagram commutes in $\Mod(\Z)$:
 \[
  \xymatrix@C=3pc{
   (\sHom(F,F))(X) \ar@{=}[r] \ar[d]_{\can} & \Hom(F,F) \ar[d]^{\can} \\
   \H^0(X,\sHom(F,F)) \ar[r]_-{\tau^0(F)} & \Ext^0(F,F)
  }
 \]
 Hence, we see that the locally split extension class $\xi(t)$ of $t$ is mapped to the extension class of $t$ by the function $\tau^1(G)$. In addition, by means of general homological algebra (Grothendieck spectral sequence), one can show that the mapping $\tau^1(G)$ is one-to-one. Therefore, $\xi(t)$ is the unique element of $\H^1(X,\sHom(F,G))$ which is mapped to the extension class of $t$ by the function $\tau^1(G)$. We think that this observation justifies our referring to $\xi(t)$ as the ``locally split extension class'' of $t$ on $X$.
\end{remark}

The next couple of results are aimed at deriving, for $t$ a locally split exact triple of modules on $X$, from a local right splitting of $t$ a \v Cech representation of the locally split extension class $\xi(t)$. Since the definition of \v Cech cohomology tends to vary from source to source, let us settle once and for all on the following

\begin{conventions}
 \label{cvs:cech}
 Let $\cover U$ be an open cover of $X_\top$ and $n$ a natural number. An \emph{$n$-simplex} of $\cover U$ is an ordered $(n+1)$-tuple of elements of $\cover U$, \iev
 $$u = (u_0,\dots,u_n) \in \cover U \times \dots \times \cover U,$$
 such that $u_0\cap \dots \cap u_n \neq \emptyset$. Note that, by definition, a $0$-simplex of $\cover U$ is nothing but an element of $\cover U$. When $u=(u_0,\dots,u_n)$ is an $n$-simplex of $\cover U$, the intersection $u_0\cap\dots\cap u_n$ is called the \emph{support} of $u$ and denoted $|u|$.
 
 Let $F$ be a sheaf of modules on $X$. Then a \emph{\v Cech $n$-cochain} of $\cover U$ with coefficients in $F$ is a function $c$ defined on the set $S$ of $n$-simplices of $\cover U$ such that, for all $u\in S$, we have $c(u) \in F(|u|)$. We denote $\cC^n_X(\cover U,-) \colon \Mod(X) \to \Mod(\Z)$ the \v Cech $n$-cochain functor, so that $\cC^n_X(\cover U,F)$ is the set of \v Cech $n$-cochains of $\cover U$ with coefficients in $F$ equipped with the obvious addition and $\Z$-scalar multiplication. Similarly, we denote $\cC^\kdot_X(\cover U,-) \colon \Mod(X) \to \Com(\Z)$ the \v Cech complex functor, so that $\cC^\kdot(\cover U,F)$ is the habitual \v Cech complex of $\cover U$ with coefficients in $F$. We write $\cZ^n_X(\cover U,-)$, $\cB^n_X(\cover U,-)$, and $\cH^n_X(\cover U,-)$ for the functors $\Mod(X) \to \Mod(\Z)$ obtained by composing $\cC^\kdot(\cover U,-)$ with the $n$-cocycle, the $n$-coboundary, and the $n$-cohomology functors for complexes over $\Mod(\Z)$, respectively. In any of the expressions $\cC^\kdot_X$, $\cC^n_X$, $\cZ^n_X$, $\cB^n_X$, and $\cH^n_X$, we suppress the subscript ``$X$'' whenever we feel that the correct ringed space can be guessed unambiguously from the context.
 
 In Proposition \ref{p:lseccech} as well as in the proof of Proposition \ref{p:lsec}, we will make use of the familiar sequence $\tau = (\tau^q)_{q\in\Z}$ of natural transformations
 $$\tau^q \colon \cH^q(\cover U,-) \to \H^q(X,-)$$
 of functors from $\Mod(X)$ to $\Mod(\Z)$ which are obtained by considering the \v Cech resolution functors $\shcC^\kdot(\cover U,-) \colon \Mod(X) \to \Com(X)$ together with Lemma \ref{l:iversen}; in fact, the suggested construction yields a natural transformation $\cC^\kdot(\cover U,-) = \Gamma(X,-) \circ \shcC^\kdot(\cover U,-) \to \R\Gamma(X,-)$ of functors from $\Mod(X)$ to $\K^+(\Z)$, from which one derives $\tau^q$, for any $q\in\Z$, by applying the $q$-th cohomology functor $\H^q \colon \K^+(\Z) \to \Mod(\Z)$.
\end{conventions}

\begin{construction}
 \label{con:rscc}
 Let $t\colon G\to H\to F$ be a short exact triple of modules on $X$ and $\phi$ a local right splitting of $t$ on $X$. For the time being, fix a $1$-simplex $u=(u_0,u_1)$ of $\cover U:=\dom(\phi)$. Set $v := |u|$ for better readability. Then calculating in $\Mod(X|v)$, we have:
 \begin{align*}
  t(1,2)|v\circ(\phi(u_1)|v - \phi(u_0)|v) & = (t(1,2)|u_1)|v \circ \phi(u_1)|v - (t(1,2)|u_0)|v \circ \phi(u_0)|v \\
  & = (t(1,2)|u_1 \circ \phi(u_1))|v - (t(1,2)|u_0 \circ \phi(u_0))|v \\
  & = \id_{F|u_1}|v - \id_{F|u_0}|v \\
  & = \id_{F|v} - \id_{F|v} = 0.
 \end{align*}
 Since the triple $t$ is short exact on $X$, we deduce that $t(0,1)|v \colon G|v \to H|v$ is a kernel of $t(1,2)|v \colon H|v \to F|v$ in $\Mod(X|v)$. So, there exists one, and only one, morphism $c(u) \colon F|v \to G|v$ in $\Mod(X|v)$ such that
 \[
  t(0,1)|v \circ c(u) = \phi(u_1)|v - \phi(u_0)|v.
 \]
 Abandoning our fixation of $u$, we define $c$ to be the function on the set of $1$-simplices of $\cover U$ which is given by the assignment $u \mto c(u)$. We call $c$ the \emph{right splitting \v Cech $1$-cochain} of $(t,\phi)$ on $X$.

 By definition, for all $1$-simplices $u$ of $\cover U$, we know that $c(u)$ is a morphism $F||u|\to G||u|$ of modules on $X||u|$, \iev $c(u)\in(\sHom(F,G))(|u|)$. Thus,
 \[
  c\in\cC^1_X(\cover U,\sHom(F,G)).
 \]
 In this regard, we define $\bar c$ to be the residue class of $c$ in the quotient module
 \[
  \cC^1_X(\cover U,\sHom(F,G))/\cB^1_X(\cover U,\sHom(F,G)).
 \]
 We call $\bar c$ the \emph{right splitting \v Cech $1$-class} of $(t,\phi)$ on $X$.
\end{construction}

\begin{lemma}
 \label{l:cechexact}
 Let $t\colon G\to H\to F$ be a short exact triple of modules on $X$ and $\cover U$ an open cover of $X_\top$.
 \begin{enumerate}
  \item Let $n\in\N$. Then $\cC^n(\cover U,t)$ is a short exact triple in $\Mod(\Z)$ if and only if, for all $n$-simplices $u$ of $\cover U$, the mapping $t(1,2)_{|u|}\colon H(|u|)\to F(|u|)$ is surjective.
  \item $\cC^\kdot(\cover U,t)$ is a short exact triple in $\Com(\Z)$ if and only if, for all nonvoid, finite $\cover V\subset\cover U$ such that $V:=\cap\cover V \neq \emptyset$, the mapping $t(1,2)_V\colon H(V)\to F(V)$ is surjective.
  \item Let $\phi$ be a local right splitting of $t$ on $X$ such that $\cover U = \dom(\phi)$. Then $\cC^\kdot(\cover U,t)$ is a short exact triple in $\Com(\Z)$.
 \end{enumerate}
\end{lemma}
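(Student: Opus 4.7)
My plan is to handle the three parts sequentially, deducing (b) from (a) and (c) from (b). The guiding observation is that left exactness on sections is automatic from the short exactness of $t$ as a triple of sheaves, so on the level of \v Cech cochains the only nontrivial condition is surjectivity of the right-hand arrow; everything then reduces to locating enough sections of $t(1,2)$.

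For (a), I would first identify $\cC^n(\cover U, -)$ with the product functor $F \mapsto \prod_{u} F(|u|)$ indexed by $n$-simplices $u$ of $\cover U$; under this identification $\cC^n(\cover U, t)$ becomes the componentwise application of $t_{|u|}$. Since $\Mod(\Z)$ is AB4$^*$, both exactness and surjectivity are detected componentwise, so $\cC^n(\cover U, t)$ is short exact iff each sequence $0 \to G(|u|) \to H(|u|) \to F(|u|) \to 0$ is. The left exactness of each of these comes for free from the short exactness of $t$ as a triple of sheaves and the left exactness of the section functor $\Gamma(|u|, -)$, so only surjectivity of $t(1,2)_{|u|}$ remains as a condition. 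This yields (a).

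For (b), a triple of complexes is short exact in $\Com(\Z)$ iff it is degreewise short exact; applying (a) in each degree shows $\cC^\kdot(\cover U, t)$ is short exact iff $t(1,2)_{|u|}$ is surjective for every $n$-simplex $u$ and every $n \in \N$. To translate into the stated condition, I would note that assigning to an $n$-simplex $u = (u_0, \ldots, u_n)$ the set $\{u_0, \ldots, u_n\}$ produces a surjection onto the nonvoid finite subsets $\cover V \subset \cover U$ with $\cap \cover V \neq \emptyset$, preserving supports; conversely any such $\cover V$ can be ordered (with or without repetitions) to produce a simplex with support $\cap \cover V$. The equivalence of the two surjectivity conditions follows.

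For (c), I apply (b) and verify that for every nonvoid finite $\cover V \subset \cover U$ with $V := \cap \cover V \neq \emptyset$, the map $t(1,2)_V$ is surjective. Picking any $U_0 \in \cover V$ gives $V \subset U_0$, and by hypothesis $\phi(U_0) \colon F|U_0 \to H|U_0$ satisfies $(t(1,2)|U_0) \circ \phi(U_0) = \id_{F|U_0}$. Taking $V$-sections (using $(F|U_0)(V) = F(V)$) produces a right inverse of $t(1,2)_V$, so $t(1,2)_V$ is surjective. No real obstacle arises; the argument is essentially bookkeeping around the definition of \v Cech cochains, with the mild subtlety that simplices are ordered tuples allowing repetitions while the hypothesis in (b) refers to unordered finite subsets.
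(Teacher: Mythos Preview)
Your proposal is correct and follows essentially the same approach as the paper: identify $\cC^n(\cover U,-)$ with a product of section functors, use that products in $\Mod(\Z)$ preserve exactness and that section functors are left exact to reduce (a) to componentwise surjectivity, deduce (b) degreewise together with the observation that supports of simplices are exactly the nonempty finite intersections, and deduce (c) by picking any $U_0\in\cover V$ and restricting the splitting $\phi(U_0)$ to $V$. The paper phrases the product-exactness step as an explicit sublemma rather than invoking AB4$^*$, but the content is identical.
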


\begin{proof}
 a). We denote $S$ the set of $n$-simplices of $\cover U$ and write $\Gamma = (\Gamma_u)_{u\in S}$ for the family of section functors $\Gamma_u := \Gamma_X(|u|,-) \colon \Mod(X) \to \Mod(\Z)$. Then $\cC^n(\cover U,-)$ equals, by definition, the composition of functors
 $${\textstyle\prod \circ \prod(\Gamma)} \colon \Mod(X) \to \Mod(\Z)^S \to \Mod(\Z),$$
 where $\prod(\Gamma)$ signifies the ``external'' product of the family of functors $\Gamma$, whereas the single ``$\prod$'' symbol signifies the standard product functor for the category $\Mod(\Z)$. We formulate a sublemma: Let $\cat C$ be any category of modules and $(M_i\to N_i\to P_i)_{i\in I}$ a family of triples in $\cat C$. Then the triple $\prod M_i \to \prod N_i \to \prod P_i$ is middle exact in $\cat C$ if and only if, for all $i\in I$, the triple $M_i\to N_i\to P_i$ is middle exact $\cat C$. The proof of this is clear. Emplying the sublemma in our situation, we obtain that since, for all $u\in S$, the functor $\Gamma_u$ is left exact, the functor $\cC^n(\cover U,-)$ is left exact, too. Thus the triple $\cC^n(\cover U,t)$ is left exact. Hence, the triple $\cC^n(\cover U,t)$ is short exact if and only if $\cC^n(\cover U,H) \to \cC^n(\cover U,F) \to 0$ is exact. By the sublemma this is equivalent to saying that $\Gamma_u(H)\to\Gamma_u(F)\to0$ is exact for all $u\in S$, but $\Gamma_u(H)\to\Gamma_u(F)\to0$ is exact if and only if $H(|u|)\to F(|u|)$ is surjective.

 b). A triple of complexes of modules is short exact if and only if, for all integers $n$, the triple of modules in degree $n$ is short exact. Since the triple of complexes $\cC^\kdot(\cover U,t)$ is trivial in negative degrees, we see that $\cC^\kdot(\cover U,t)$ is a short exact triple in $\Com(\Z)$ if and only if, for all $n\in\N$, the triple $\cC^n(\cover U,t)$ is short exact in $\Mod(\Z)$, which by a) is the case if and only if, for all nonempty, finite subsets $\cover V\subset \cover U$ with $V := \cap\cover V \neq \emptyset$, the mapping $H(V) \to F(V)$ is surjective.

 c). Let $\cover V$ be a nonvoid, finite subset of $\cover U$ such that $V := \cap\cover V \neq \emptyset$. Then there exists an element $U$ in $\cover V$. By assumption, $\phi(U) \colon F|U \to H|U$ is a morphism of modules on $X|U$ such that $t(1,2)|U \circ \phi(U) = \id_{F|U}$. Thus, given that $V\subset U$, we have $t(1,2)_V \circ \phi(U)_V = \id_{F(V)}$, which entails that $t(1,2)_V \colon H(V) \to F(V)$ is surjective. Therefore, $\cC^\kdot(\cover U,t)$ is a short exact triple in $\Com(\Z)$ by means of b).
\end{proof}

\begin{proposition}
 \label{p:rscc}
 Let $t\colon G\to H\to F$ be a short exact triple of modules on $X$ and $\phi$ a local right splitting of $t$ on $X$. Put $\cover U:=\dom(\phi)$ and denote by $c$ (\resp $\bar c$) the right splitting \v Cech $1$-cochain (\resp right splitting \v Cech $1$-class) of $(t,\phi)$ on $X$. Then the following assertions hold:
 \begin{enumerate}
  \item The triple $\cC^\kdot(\cover U,\sHom(F,t))\colon$
  \begin{equation}
   \label{e:rscc-triple}
   \cC^\kdot(\cover U,\sHom(F,G))\to\cC^\kdot(\cover U,\sHom(F,H))\to\cC^\kdot(\cover U,\sHom(F,F))
  \end{equation}
  is a short exact triple in $\Com(\Z)$.
  \item We have $c\in\cZ^1(\cover U,\sHom(F,G))$ and $\bar c\in\cH^1(\cover U,\sHom(F,G))$.
  \item When $\delta=(\delta^n)_{n\in\Z}$ denotes the sequence of connecting homomorphisms associated to the triple $\cC^\kdot(\cover U,\sHom(F,t))$ of complexes over $\Mod(\Z)$ and $\bar e$ denotes the image of the identity sheaf map $\id_F$ under the canonical function $(\sHom(F,F))(X) \to \cH^0(\cover U,\sHom(F,F))$, then $\delta^0(\bar e)=\bar c$.
 \end{enumerate}
\end{proposition}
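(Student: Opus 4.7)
My overall plan is to dispatch part (a) by reducing to Lemma \ref{l:cechexact}(c), part (b) by a direct cocycle calculation using the defining relation of $c$, and part (c) by unwinding the construction of the connecting homomorphism attached to the short exact triple produced in (a).

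For part (a), the plan is to exhibit a local right splitting of $\sHom(F,t)$ on $X$ and then quote Lemma \ref{l:cechexact}(c). Following the observation in Remark \ref{r:lseclset}, the candidate splitting is the function $\phi'$ on $\cover U$ given by $\phi'(U) := \sHom(F|U,\phi(U))$. The functoriality of $\sHom(F|U,-)$ yields $\sHom(F|U,t(1,2)|U) \circ \phi'(U) = \sHom(F|U,\id_{F|U}) = \id_{\sHom(F|U,F|U)}$, and since $\sHom(F,t(1,2))|U = \sHom(F|U,t(1,2)|U)$, this makes $\phi'$ a local right splitting of $\sHom(F,t)$ on $X$. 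Then Lemma \ref{l:cechexact}(c), applied to $\sHom(F,t)$ and $\phi'$, yields the short exactness of the triple \eqref{e:rscc-triple} in $\Com(\Z)$.

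For part (b), I would fix a $2$-simplex $u=(u_0,u_1,u_2)$ of $\cover U$ and set $v:=|u|$ and $v_{ij}:=u_i\cap u_j$. By the defining identity of Construction \ref{con:rscc}, we have $t(0,1)|v_{ij}\circ c(u_i,u_j) = \phi(u_j)|v_{ij}-\phi(u_i)|v_{ij}$. Restricting to $v$ and combining the three relevant instances yields
\[
 t(0,1)|v\circ\bigl(c(u_1,u_2)|v - c(u_0,u_2)|v + c(u_0,u_1)|v\bigr) = \bigl((\phi(u_2)-\phi(u_1))-(\phi(u_2)-\phi(u_0))+(\phi(u_1)-\phi(u_0))\bigr)|v,
\]
which is zero. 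Since $t$ is short exact, $t(0,1)$, and hence $t(0,1)|v$, is a monomorphism, so the bracketed expression vanishes; that is, the \v Cech coboundary of $c$ is zero. Therefore $c\in\cZ^1(\cover U,\sHom(F,G))$, and consequently $\bar c\in\cH^1(\cover U,\sHom(F,G))$.

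For part (c), I would unwind the snake-lemma construction of $\delta^0$ for the short exact triple \eqref{e:rscc-triple} produced in (a). The image $e$ of $\id_F$ in $\cC^0(\cover U,\sHom(F,F))$ is the $0$-cochain $u\mto \id_{F|u}$. Since $t(1,2)|u\circ\phi(u) = \id_{F|u}$ for every $u\in\cover U$, the $0$-cochain $\phi$ itself is a lift of $e$ along $\cC^0(\cover U,\sHom(F,t(1,2)))$. Applying the \v Cech differential of $\cC^\kdot(\cover U,\sHom(F,H))$ to $\phi$ yields the $1$-cochain sending a $1$-simplex $(u_0,u_1)$ with support $v$ to $\phi(u_1)|v-\phi(u_0)|v$. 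By the very definition of $c$, this $1$-cochain equals the image of $c$ under $\cC^1(\cover U,\sHom(F,t(0,1)))$, so by the standard recipe for the connecting homomorphism we conclude $\delta^0(\bar e)=\bar c$.

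The main obstacle is essentially bookkeeping rather than mathematics: one must be careful that the various restriction identifications $\sHom(F,-)|U=\sHom(F|U,-|U)$ and $\wedge^p(-)|U = \wedge^p(-|U)$ used implicitly really hold as equalities (in the sheafification convention fixed by the paper), and that the sign conventions for both the \v Cech differential and the connecting homomorphism match up; once these are nailed down, each of (a), (b), (c) becomes essentially a diagram chase.
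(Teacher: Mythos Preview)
Your proof is correct and, for parts (a) and (c), essentially identical to the paper's. The one genuine difference is in part (b): you verify the cocycle condition by a direct computation on $2$-simplices, while the paper instead observes that $\cC^1(\cover U,\sHom(F,t(0,1)))$ sends $c$ to the coboundary $d^0(\phi)$ in $\cC^\kdot(\cover U,\sHom(F,H))$, and then transfers the cocycle condition back via the commutativity of the chain map with the differentials together with the injectivity of $\cC^2(\cover U,\sHom(F,t(0,1)))$. The paper's route has the mild advantage that the relation ``$c \mapsto d^0(\phi)$'' is established once in (b) and reused verbatim in (c); in your version you effectively re-derive it. Both arguments are equally elementary.
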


\begin{proof}
 a). By Remark \ref{r:lseclset}, the function on $\cover U$ given by the assignment
 $$\cover U \ni U \mto \sHom(F|U,\phi(U))$$
 constitutes a local right splitting of $\sHom(F,t)$ on $X$. Moreover, the triple $\sHom(F,t)$ is a short exact triple of modules on $X$. Thus $\cC^\kdot(\cover U,\sHom(F,t))$ is a short exact triple in $\Com(\Z)$ by Lemma \ref{l:cechexact} c).

 b). Observe that since $\phi$ is a local right splitting of $t$ on $X$ and $\cover U = \dom(\phi)$, we have $\phi\in\cC^0(\cover U,\sHom(F,H))$. Further on, writing $d = (d^n)_{n\in\Z}$ for the sequence of differentials of the complex $\cC^\kdot(\cover U,\sHom(F,H))$, the mapping
 \[
  \cC^1(\cover U,\sHom(F,t(0,1))) \colon \cC^1(\sHom(F,G)) \to \cC^1(\cover U,\sHom(F,H))
 \]
 sends $c$ to $d^0(\phi)$ since, for all $1$-simplices $u=(u_0,u_1)$ of $\cover U$,
 \[
  (d^0(\phi))(u) = \phi(u_1)||u|-\phi(u_0)||u|
 \]
 and $c(u)$ is, by definition, the unique morphism $F||u|\to G||u|$ such that $t(0,1)||u| \circ c(u) = \phi(u_1)||u|-\phi(u_0)||u|$. Writing $d'' = (d''^n)_{n\in\Z}$ for the sequence of differentials of the complex $\cC^\kdot(\cover U,\sHom(F,G))$, we have
 $$\cC^2(\cover U,\sHom(F,t(0,1))) \circ d''^1 = d^1 \circ \cC^1(\cover U,\sHom(F,t(0,1))).$$
 So, since the mapping $\cC^2(\cover U,\sHom(F,t(0,1)))$ is one-to-one and $d^1(d^0(\phi)) = 0$, we see that $d''^1(c) = 0$, which implies that $c\in\cZ^1(\cover U,\sHom(F,G))$ and, in turn, that $\bar c\in\cH^1(\cover U,\sHom(F,G))$.
 
 c). Write $e$ for the image of $\id_F$ under the canonical function $(\sHom(F,F))(X) \to \cC^0(\cover U,\sHom(F,F))$. Then $\phi$ is sent to $e$ by the mapping
 \[
  \cC^0(\cover U,\sHom(F,t(1,2))) \colon \cC^0(\cover U,\sHom(F,H)) \to \cC^0(\cover U,\sHom(F,F))
 \]
 since, for all $U\in\cover U$, (\iev for all $0$-simplices of $\cover U$) we have:
 \[
  \sHom(F,t(1,2))_U(\phi(U)) = t(1,2)|U\circ\phi(U) = \id_{F|U} = e(U).
 \]
 Combined with the fact that $c$ is sent to $d^0(\phi)$ by $\cC^1(\cover U,\sHom(F,t(0,1)))$, we find that $\delta^0(\bar e) = \bar c$ by the elementary definition of connecting homomorphisms for short exact triples of complexes of modules.
\end{proof}

\begin{proposition}
 \label{p:lseccech}
 Let $t\colon G\to H\to F$ be a short exact triple of modules on $X$, $\phi$ a local right splitting of $t$ on $X$, and $\bar c$ the right splitting \v Cech $1$-class of $(t,\phi)$ on $X$. Put $\cover U:=\dom(\phi)$. Then the canonical mapping
 \begin{equation}
  \label{e:lseccech}
  \cH^1(\cover U,\sHom(F,G)) \to \H^1(X,\sHom(F,G))
 \end{equation}
 sends $\bar c$ to the locally split extension class of $t$ on $X$.
\end{proposition}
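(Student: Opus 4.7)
The plan is to identify both the locally split extension class $\xi(t)$ and the image of $\bar c$ under \eqref{e:lseccech} as the result of applying a $0$-th connecting homomorphism to the ``identity class of $\id_F$''---one in \v Cech cohomology, one in right derived functor cohomology---and then to invoke the compatibility of the comparison $\tau$ from Conventions \ref{cvs:cech} with these connecting homomorphisms.

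First I would recall the two descriptions explicitly. On the \v Cech side, Proposition \ref{p:rscc} tells us that $\cC^\kdot(\cover U, \sHom(F,t))$ is short exact in $\Com(\Z)$ and that $\bar c = \check\delta^0(\bar e)$, where $\bar e$ is the canonical image of $\id_F$ in $\cH^0(\cover U, \sHom(F,F))$ and $\check\delta^0$ is the $0$-th connecting homomorphism for this short exact triple of complexes. On the derived functor side, Remark \ref{r:lseclset} ensures that $\sHom(F,t)$ is short exact in $\Mod(X)$, so Notation \ref{not:lsec} gives $\xi(t) = \delta^0(e)$, where $e$ is the canonical image of $\id_F$ in $\H^0(X, \sHom(F,F))$ and $\delta^0$ is the associated connecting homomorphism for the right derived functor of $\Gamma(X,-)$.

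Next I would establish the commutativity of the square
\[
 \xymatrix@C=3pc{
  \cH^0(\cover U, \sHom(F,F)) \ar[r]^-{\check\delta^0} \ar[d]_{\tau^0(\sHom(F,F))} & \cH^1(\cover U, \sHom(F,G)) \ar[d]^{\tau^1(\sHom(F,G))} \\
  \H^0(X, \sHom(F,F)) \ar[r]_-{\delta^0} & \H^1(X, \sHom(F,G))
 }
\]
in $\Mod(\Z)$. The idea is standard homological algebra: as explained in Conventions \ref{cvs:cech}, $\tau$ stems from an honest morphism of functors $\cC^\kdot(\cover U,-) = \Gamma(X,-)\circ \shcC^\kdot(\cover U,-) \to \R\Gamma(X,-)$ in $\K^+(\Z)$. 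Since the sheafified \v Cech resolution functor $\shcC^\kdot(\cover U,-)$ is term-wise exact, it carries $\sHom(F,t)$ to a short exact triple of complexes in $\Com(X)$, whose image under $\Gamma(X,-)$ is precisely $\cC^\kdot(\cover U,\sHom(F,t))$. Short exact triples of complexes give rise to distinguished triangles in $\K^+(\Z)$, and morphisms in $\K^+(\Z)$ fitting into a morphism of distinguished triangles commute with the associated connecting maps; this delivers the commutativity of the square above.

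Finally, both $\bar e$ and $e$ descend from $\id_F \in (\sHom(F,F))(X)$ via the canonical identifications of $\cH^0(\cover U,-)$ and $\H^0(X,-)$ with $\Gamma(X,-)$, and $\tau^0$ respects these identifications; hence $\tau^0(\sHom(F,F))(\bar e) = e$. A single diagram chase then gives
\[
 \tau^1(\sHom(F,G))(\bar c) = \tau^1(\sHom(F,G))(\check\delta^0(\bar e)) = \delta^0(e) = \xi(t),
\]
which is the claim since $\tau^1(\sHom(F,G))$ is precisely the mapping \eqref{e:lseccech}. The main obstacle I anticipate is the commutativity of the square, \iev the compatibility of $\tau$ with connecting homomorphisms; once that is in hand, the rest reduces to elementary bookkeeping.
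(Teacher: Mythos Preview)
Your proposal is correct and follows essentially the same approach as the paper: both arguments assemble the commutative diagram linking the \v Cech and derived-functor connecting homomorphisms for the short exact triple $\sHom(F,t)$ (the paper's diagram \eqref{e:lseccech-1} is exactly your square together with the degree-$0$ compatibility), then chase $\id_F$ through it using Proposition \ref{p:rscc} c) on the \v Cech side and Notation \ref{not:lsec} on the derived side. If anything, you are slightly more explicit than the paper about why the square commutes; the paper simply asserts the commutativity of \eqref{e:lseccech-1} without further comment.
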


\begin{proof}
 By Proposition \ref{p:rscc} a), the triple $\cC^\kdot(\cover U,\sHom(F,t)) \colon \eqref{e:rscc-triple}$ is a short exact triple in $\Com(\Z)$. So, denote $\delta = (\delta^n)_{n\in\Z}$ the associated sequence of connecting homomorphisms. Likewise, denote $\delta' = (\delta'^n)_{n\in\Z}$ the sequence of connecting homomorphisms for the triple $\sHom(F,t)$ with respect to the right derived functor of the functor $\Gamma(X,-) \colon \Mod(X) \to \Mod(\Z)$ (note that this makes sense given that $\sHom(F,t)$ is a short exact triple of modules on $X$, \cf Remark \ref{r:lseclset}). Then the following diagram commutes in $\Mod(\Z)$, where the unlabeled arrows stand for the respective canonical morphisms:
 \begin{equation}
  \label{e:lseccech-1}
  \xymatrix{
   (\sHom(F,F))(X) \ar@{=}[r] \ar[d] & \Gamma(X,\sHom(F,F)) \ar[d] \\
   \cH^0(\cover U,\sHom(F,F)) \ar[r] \ar[d]_{\delta^0} & \H^0(X,\sHom(F,F)) \ar[d]^{\delta'^0} \\
   \cH^1(\cover U,\sHom(F,G)) \ar[r] & \H^1(X,\sHom(F,G))
  }
 \end{equation}
 By Proposition \ref{p:rscc} c), the identity sheaf morphism $\id_F$ is sent to $\bar c$ by the composition of the two downwards arrows on the left in \eqref{e:lseccech-1}. Moreover, the identity sheaf morphism $\id_F$ is sent to $\xi(t)$ by the composition of the two downwards arrows on the right in \eqref{e:lseccech-1}, \cf Notation \ref{not:lsec}. Therefore, by the commutativity of the diagram in \eqref{e:lseccech-1}, $\bar c$ is sent to $\xi(t)$ by the canonical mapping \eqref{e:lseccech}.
\end{proof}

\begin{construction}[Interior product]
 \label{con:intprod}
 Let $p$ be an integer. Moreover, let $F$ and $G$ be modules on $X$. We define a morphism of modules on $X$,
 \[
  \iota^p_X(F,G) \colon \sHom(F,G) \to \sHom(\wedge^pF,G\otimes\wedge^{p-1}F),
 \]
 called \emph{interior product morphism} in degree $p$ for $F$ and $G$ on $X$, as follows: When $p\leq0$, we define $\iota^p_X(F,G)$ to be the zero morphism (note that we do not actually have a choice here). Assume $p>0$ now. Let $U$ be an open set of $X$ and $\phi$ an element of $(\sHom(F,G))(U)$, \iev a morphism $F|U\to G|U$ of modules on $X|U$. Then there is one, and only one, morphism
 \[\psi\colon(\wedge^pF)|U\to(G\otimes\wedge^{p-1}F)|U\]
 of modules on $X|U$ such that for all open sets $V$ of $X|U$ and all $p$-tuples $(x_0,\dots,x_{p-1})$ of elements of $F(V)$, we have:
 \[
  \psi_V(x_0\wedge\dots\wedge x_{p-1})=\sum_{\nu<p} (-1)^{\nu-1} \cdot \phi_V(x_\nu)\otimes (x_0\wedge\dots\wedge\hat{x_\nu}\wedge\dots\wedge x_{p-1}).
 \]
 We let $(\iota^p_X(F,G))_U$ be the function on $(\sHom(F,G))(U)$ given by the assignment $\phi \mto \psi$, where $\phi$ varies. We let $\iota^p_X(F,G)$ be the function on the set of open sets of $X$ obtained by varying $U$. Then, as one readily verifies, $\iota^p_X(F,G)$ is a morphism of modules on $X$ from $\sHom(F,G)$ to $\sHom(\wedge^pF,G\otimes\wedge^{p-1}F)$. Just as usual, we will write $\iota^p(F,G)$ instead of $\iota^p_X(F,G)$ whenever we feel the ringed space $X$ is clear from the context.
\end{construction}

\begin{proposition}
 \label{p:lsec}
 Let $t\colon G\to H\to F$ be a locally split exact triple of modules on $X$ and $p$ an integer. Then the function
 \[\H^1(X,\iota^p(F,G)) \colon \H^1(X,\sHom(F,G)) \to \H^1(X,\sHom(\wedge^pF,G\otimes \wedge^{p-1}F))\]
 sends $\xi(t)$ to $\xi(\Lambda^p(t))$.
\end{proposition}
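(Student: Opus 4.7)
The plan is to exhibit a common \v Cech $1$-representative for both locally split extension classes. The approach begins by choosing a local right splitting $\phi$ of $t$ and setting $\cover U := \dom(\phi)$; note that Proposition \ref{p:lseccech} then applies to $(t,\phi)$. Invoking Proposition \ref{p:lambdaplset}(a) for $\Lambda^p(t)$ on the same open cover, the function $\phi'$ on $\cover U$ defined by $\phi'(U) := \kappa|U \circ \wedge^p(\phi(U))$---where $\kappa \colon \wedge^pH \to (\wedge^pH)/K^2$ is the quotient morphism and $K=(K^i)_{i\in\Z}$ is the Koszul filtration in degree $p$ induced by $t(0,1)$---is a local right splitting of $\Lambda^p(t)$, so that Proposition \ref{p:lseccech} applies to $(\Lambda^p(t),\phi')$ as well. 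Let $c$ and $c'$ denote the right splitting \v Cech $1$-cochains of $(t,\phi)$ and $(\Lambda^p(t),\phi')$, respectively.

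Next I would verify directly that $\cC^1(\cover U,\iota^p(F,G))(c) = c'$. Fix a $1$-simplex $u = (u_0,u_1)$ of $\cover U$ and write $v := |u|$. Because $t|v$ is split exact (via $\phi(u_0)|v$), Propositions \ref{p:lambdapsplit}(b) and \ref{p:lambdaprest} together guarantee that $\Lambda^p(t)|v$ is short exact on $X|v$; in particular $(\Lambda^p(t))(0,1)|v$ is a monomorphism, so $c'(u)$ is uniquely characterized by the equation $(\Lambda^p(t))(0,1)|v \circ c'(u) = \phi'(u_1)|v - \phi'(u_0)|v$. It therefore suffices to check this equation with $\iota^p(F,G)(c(u))$ in place of $c'(u)$. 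Unwinding the definition of $(\Lambda^p(t))(0,1)$ via Construction \ref{con:lambdap} and Proposition \ref{p:natmorii}, a local section $g\otimes f_1\wedge\dots\wedge f_{p-1}$ is sent to the class of $t(0,1)(g)\wedge h_1\wedge\dots\wedge h_{p-1}$ in $(\wedge^pH)/K^2$ for any choice of lifts $h_i\in H$ of $f_i$; choosing $h_i := \phi(u_0)|v(x_i)$ and combining with the defining relation $t(0,1)|v\circ c(u) = \phi(u_1)|v - \phi(u_0)|v$ reduces the verification to a telescoping identity in $\wedge^pH$ modulo $K^2$ between $\wedge^p(\phi(u_1)|v) - \wedge^p(\phi(u_0)|v)$ and the expansion of $\iota^p(F,G)(c(u))$ dictated by Construction \ref{con:intprod}.

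With $\cC^1(\cover U,\iota^p(F,G))(c) = c'$ in hand, the remainder of the argument is formal. By Proposition \ref{p:rscc}(b), $c$ and $c'$ are \v Cech $1$-cocycles, and the preceding equality passes to \v Cech cohomology as $\cH^1(\cover U,\iota^p(F,G))(\bar c) = \bar{c'}$. Proposition \ref{p:lseccech} then identifies the images of $\bar c$ and $\bar{c'}$ under the canonical \v Cech-to-derived-functor maps with $\xi(t)$ and $\xi(\Lambda^p(t))$, respectively. The proof concludes by invoking the naturality of these canonical maps in the module argument with respect to $\iota^p(F,G)$.

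The hardest step will be the sign and index bookkeeping in the telescoping identity of the middle paragraph: one must carefully reconcile the cyclic rearrangement of wedge factors in $\wedge^pH$ (modulo $K^2$), the telescopic expansion of $\wedge^p(\phi(u_1)|v) - \wedge^p(\phi(u_0)|v)$ using $\phi(u_1)|v - \phi(u_0)|v = t(0,1)|v \circ c(u)$, and the sign convention $(-1)^{\nu-1}$ from Construction \ref{con:intprod}. Everything else is a direct application of the lemmata and propositions already established in \S\ref{s:kozlambdap} and \S\ref{s:lset}.
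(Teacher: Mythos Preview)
Your proposal is correct and follows essentially the same route as the paper: choose a local right splitting $\phi$ of $t$, pass to the induced local right splitting $\phi'$ of $\Lambda^p(t)$ via Proposition \ref{p:lambdaplset}(a), verify the \v Cech cochain identity $\cC^1(\cover U,\iota^p(F,G))(c)=c'$ by an explicit sectionwise computation (the telescoping expansion of $\wedge^p(\phi(u_1)|v)-\wedge^p(\phi(u_0)|v)$ modulo $K^2$), and then conclude via Proposition \ref{p:lseccech} and naturality of the \v Cech-to-derived comparison. The only omission is the trivial case $p\leq 0$, which the paper dispatches at the outset since then $\sHom(\wedge^pF,G\otimes\wedge^{p-1}F)\iso 0$.
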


\begin{proof}
 First of all, we note that since $t$ is a locally split exact triple of modules on $X$, the triple $t':=\Lambda^p(t)$ is a locally split exact triple of modules on $X$ by Proposition \ref{p:lambdaplset} b), whence it makes sense to speak of $\xi(t')$. When $p\leq 0$, we know that $\sHom(\wedge^pF,G \otimes \wedge^{p-1}F) \iso 0$ in $\Mod(X)$ and thus $\H^1(X,\sHom(\wedge^pF,G \otimes \wedge^{p-1}F)) \iso 0$ in $\Mod(\Z)$, so that our assertion is true in this case. So, from now on, we assume that $p$ is a natural number different from $0$. As $t$ is a locally split exact triple of modules on $X$, there exists a local right splitting $\phi$ of $t$ on $X$. Put $\cover U:=\dom(\phi)$. Let $c\in\cC^1(\cover U,\sHom(F,G))$ be the right splitting \v Cech $1$-cochain associated to $(t,\phi)$, \cf Construction \ref{con:rscc}, and denote by $K = (K^i)_{i\in\Z}$ the Koszul filtration in degree $p$ induced by $\alpha := t(0,1) \colon G\to H$. Define $\phi'$ to be the unique function on $\cover U$ such that, for all $U\in\cover U$, we have
 \[
  \phi'(U) = \kappa|U \circ \wedge^p(\phi(U)) \colon (\wedge^pF)|U \to ((\wedge^pH)/K^2)|U,
 \]
 where $\kappa$ denotes the quotient sheaf morphism $\wedge^pH \to (\wedge^pH)/K^2$. Then $\phi'$ is a local right splitting of $t'$ by Proposition \ref{p:lambdaplset} a). Write $c'$ for the right splitting \v Cech $1$-cochain associated to $(t',\phi')$ and abbreviate $\iota^p_X(F,G)$ to $\iota$. We claim that $c$ is sent to $c'$ by the mapping
 \[
  \cC^1(\cover U,\iota) \colon \cC^1(\cover U,\sHom(F,G)) \to \cC^1(\cover U,\sHom(\wedge^pF,G \otimes \wedge^{p-1}F)).
 \]
 In order to check this, let $u=(U_0,U_1)$ be a $1$-simplex of $\cover U$ and $V$ an open set of $X$ which is contained in $|u| = U_0\cap U_1$. Observe that when $h_0,\dots,h_{p-1}$ are elements of $H(V)$ and $g_0,g_1$ are elements of $G(V)$ such that $h_0 = \alpha_V(g_0)$ and $h_1 = \alpha_V(g_1)$ (\iev $p>1$), we have
 \begin{equation}
  \label{e:lsec-1}
  \kappa_V(h_0 \wedge \dots \wedge h_{p-1}) = 0
 \end{equation}
 in $((\wedge^pH)/K^2)(V)$ by the definition of the Koszul filtration. Further, observe that writing $\alpha'$ for $t'(0,1)$ and $\beta$ for $t(1,2)$, the following diagram commutes in $\Mod(X)$ by the definition of $\alpha'$ in the $\Lambda^p$ construction:
 \begin{equation}
  \label{e:lsec-2}
  \xymatrix{
   G \otimes \wedge^{p-1}H \ar[r]\ar@{}@<1ex>[r]^{\id_G \otimes \wedge^{p-1}(\beta)} \ar[d]_{\wedge^{1,p-1}(H) \circ (\alpha \otimes \wedge^{p-1}(\id_H))} & G \otimes \wedge^{p-1}F \ar[d]^{\alpha'} \\
   \wedge^pH \ar[r]_-{\kappa} & (\wedge^pH)/K^2
  }
 \end{equation}
 Let $f_0,\dots,f_{p-1}$ be elements of $F(V)$. Then, on the one hand, we have:
 \begin{align*}
  & \alpha'_V (c'(u)_V (f_0 \wedge \dots \wedge f_{p-1})) = (\alpha'||u| \circ c'(u))_V(f_0 \wedge \dots \wedge f_{p-1}) \\
  & = (\phi'(U_1)||u|-\phi'(U_0)||u|))_V(f_0 \wedge \dots \wedge f_{p-1}) \\
  & = (\phi'(U_1)_V-\phi'(U_0)_V)(f_0 \wedge \dots \wedge f_{p-1}) \\
  & = \kappa_V \bigl(\phi(U_1)_V(f_0) \wedge \dots \wedge \phi(U_1)_V(f_{p-1}) - \phi(U_0)_V(f_0) \wedge \dots \wedge \phi(U_0)_V(f_{p-1})\bigr) \\
  & \begin{aligned} = \kappa_V \Bigl(\bigl(\phi(U_0)_V(f_0) + (\phi(U_1)_V - \phi(U_0)_V)(f_0)\bigr) \wedge \cdots \\ \mathrel\wedge \bigl(\phi(U_0)_V(f_{p-1}) + (\phi(U_1)_V - \phi(U_0)_V)(f_{p-1})\bigr) \\ \mathrel- \phi(U_0)_V(f_0) \wedge \dots \wedge \phi(U_0)_V(f_{p-1})\Bigr) \end{aligned} \\
  & = \kappa_V \Bigl(\bigl(\phi(U_0)_V(f_0) + \alpha_V(c(u)_V(f_0))\bigr) \wedge \cdots \\
  & \quad \mathrel\wedge \bigl(\phi(U_0)_V(f_{p-1}) + \alpha_V(c(u)_V(f_{p-1}))\bigr) - \phi(U_0)_V(f_0) \wedge \dots \wedge \phi(U_0)_V(f_{p-1})\Bigr) \\
  & \begin{aligned} \overset{\eqref{e:lsec-1}}= \kappa_V \biggl(\sum_{i<p} (-1)^i \cdot \alpha_V(c(u)_V(f_i)) \wedge \phi(U_0)_V(f_0) \wedge \cdots \\ \mathrel\wedge \hat{\phi(U_0)_V(f_i)} \wedge \dots \wedge \phi(U_0)_V(f_{p-1})\biggr) \end{aligned} \\
  & \begin{aligned} = (\kappa \circ \wedge^{1,p-1}(H) \circ (\alpha\otimes\id_{\wedge^{p-1}H}))_V \biggl(\sum_{i<p} (-1)^i \cdot c(u)_V(f_i) \otimes (\phi(U_0)_V(f_0) \wedge \cdots \\ \mathrel\wedge \hat{\phi(U_0)_V(f_i)} \wedge \dots \wedge \phi(U_0)_V(f_{p-1}))\biggr) \end{aligned} \\
  & \overset{\eqref{e:lsec-2}}= \left(\alpha' \circ (\id_G \otimes \wedge^{p-1}(\beta))\right)_V \Bigl(\text{same as before}\Bigr) \\
  & = \alpha'_V \biggl(\sum_{i<p} (-1)^i \cdot c(u)_V(f_i) \otimes (f_0 \wedge \dots \wedge \hat{f_i} \wedge \dots \wedge f_{p-1})\biggr)
 \end{align*}
 On the other hand,
 \[
  (\cC^1(\cover U,\iota)(c))(u)=\iota_{|u|}(c(u)),
 \]
 meaning that:
 \begin{align*}
  \left((\cC^1(\cover U,\iota)(c))(u)\right)_V(f_0\wedge\dots\wedge f_{p-1}) = \left(\iota_|u|(c(u))\right)_V(f_0\wedge\dots\wedge f_{p-1}) \\
  = \sum_{i<p} (-1)^i \cdot (c(u))_V(f_i) \otimes (f_0\wedge\dots\wedge\hat{f_i}\wedge\dots\wedge f_{p-1}).
 \end{align*}
 Thus, using the the function $\alpha'_V$ is injective, we see that $(\cC^1(\cover U,\iota)(c))(u)$ and $c'(u)$ agree as sheaf morphisms
 \[
  (\wedge^pF)||u| \to (G\otimes\wedge^{p-1}F)||u|,
 \]
 whence as elements of $(\sHom(\wedge^pF,G\otimes\wedge^{p-1}F))(|u|)$. In turn, as $u$ was an arbitrary $1$-simplex of $\cover U$,
 \begin{equation}
  \label{e:lsec-cechidentity}
  (\cC^1(\cover U,\iota))(c) = c'
 \end{equation}
 as claimed. Write $t'$ as $t'\colon G'\to H'\to F'$. Then the following diagram, where the horizontal arrows altogether stand for the respective canonical morphisms, commutes in $\Mod(\Z)$:
 \begin{equation}
  \label{e:lsec-3}
  \xymatrix{
   \cZ^1(\cover U,\sHom(F,G)) \ar[r]^{} \ar[d]_{\cZ^1(\cover U,\iota)} & \cH^1(\cover U,\sHom(F,G)) \ar[r]^{} \ar[d]_{\cH^1(\cover U,\iota)} & \H^1(X,\sHom(F,G)) \ar[d]^{\H^1(X,\iota)} \\
   \cZ^1(\cover U,\sHom(F',G')) \ar[r]_{} & \cH^1(\cover U,\sHom(F',G')) \ar[r]_{} & \H^1(X,\sHom(F',G'))
  }
 \end{equation}
 By Proposition \ref{p:lseccech}, we know that $c$ (\resp $c'$) is sent to $\xi(t)$ (\resp $\xi(t')$) by the composition of arrows in the upper (\resp lower) row of \eqref{e:lsec-3}. By \eqref{e:lsec-cechidentity}, we have $(\cZ^1(\cover U,\iota))(c)=c'$. Hence, $(\H^1(X,\iota))(\xi(t))=\xi(t')$ by the commutativity of \eqref{e:lsec-1}.
\end{proof}

\section{Connecting homomorphisms}
\label{s:connhom}

Let $f\colon X\to Y$ be a morphism of commutative ringed spaces, $t$ a locally split exact triple of modules on $X$, and $p$ an integer. In what follows, we intend to employ our results of \S\ref{s:lset}, Proposition \ref{p:lsec} specifically, in order to interpret the connecting homomorphisms for the triple $\Lambda^p(t)$ (which is short exact on $X$ by means of Proposition \ref{p:lambdaplset} \ref{p:lambdaplset-lset})) with respect to the right derived functor of $f_*$. The first pivotal outcome of this section will be Proposition \ref{p:connhomrel}. Observe that Proposition \ref{p:lsec} enters the proof of Proposition \ref{p:connhomrel} via Corollary \ref{c:rellseclambdap}. The ultimate aim of \S\ref{s:connhom} however is Proposition \ref{p:connhom} which interprets the connecting homomorphisms for $\Lambda^p(t)$ in terms of a ``cup and contraction with Kodaira-Spencer class'' given the triple $t$ is of the form
$$t\colon f^*G \to H \to F,$$
where $F$ and $G$ are locally finite free modules on $X$ and $Y$, respectively. The Kodaira-Spencer class we use here, see Notation \ref{not:ks0}, presents an abstract prototype of what will later, namely in \S\ref{s:ksgm}, become the familiar Kodaira-Spencer class.

To begin with, let us introduce a relative version of the notion of a locally split extension class which we established in \S\ref{s:lset}, Notation \ref{not:lsec}.

\begin{notation}[Relative locally split extension class]
 \label{not:rellsec}
 Let $f\colon X\to Y$ be a morphism of ringed spaces and $t\colon G\to H\to F$ a short exact triple of modules on $X$ such that the triple
 $$\sHom(F,t) = \sHom(F,-) \circ t \colon \sHom(F,G) \to \sHom(F,H) \to \sHom(F,F)$$
 is again a short exact triple of modules on $X$. Write
 $$\epsilon \colon \O_Y \to f_*(\sHom(F,F))$$
 for the unique morphism of modules on $Y$ sending the $1$ of $\O_Y(Y)$ to the identity sheaf map $\id_F\colon F\to F$, which is, as one notes, an element of $\left(f_*(\sHom(F,F))\right)(Y)$ since
 $$(f_*(\sHom(F,F)))(Y) = (\sHom(F,F))(X) = \Hom(F,F).$$
 Then define $\xi_f(t)$ to be the composition of the following morphisms of modules on $Y$:
 $$\O_Y \overset{\epsilon}\to f_*(\sHom(F,F)) \overset{\can}\to \R^0f_*(\sHom(F,F)) \overset{\delta^0}\to \R^1f_*(\sHom(F,G)),$$
 where $\delta^0$ denotes the $0$-th connecting homomorphism for the triple $\sHom(F,t)$ with respect to the right derived functor of $f_*$. We call $\xi_f(t)$ the \emph{relative locally split extension class} of $t$ with respect to $f$.
\end{notation}

\begin{proposition}
 \label{p:rellseccomp}
 Let $f\colon X\to Y$ and $g\colon Y\to Z$ be morphisms of ringed spaces and $t\colon G\to H\to F$ a short exact triple of modules on $X$ such that the triple $\sHom(F,t)$ is short exact on $X$. Then, setting $h := g\circ f$, the following diagram commutes in $\Mod(Z)$:
 \begin{equation}
  \label{e:rellseccomp}
  \xymatrix@C=3.5pc{
   \O_Z \ar[r]^-{\xi_h(t)} \ar[d]_{g^\sharp} & \R^1h_*(\sHom(F,G)) \ar[d]^{\BC^1} \\ g_*(\O_Y) \ar[r]_-{g_*(\xi_f(t))} & g_*\left(\R^1f_*(\sHom(F,G))\right)
  }
 \end{equation}
\end{proposition}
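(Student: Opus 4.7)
The plan is to decompose both composites appearing in \eqref{e:rellseccomp} into the same three elementary pieces --- the unit morphism $\epsilon$, the canonical comparison $\can$, and the $0$-th connecting homomorphism $\delta^0$ --- and to verify that each piece is natural with respect to the base change family $\BC^\kdot$ in a suitable sense.

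First I would establish a factorisation of the unit map. Writing $\epsilon_f \colon \O_Y \to f_*(\sHom(F,F))$ and $\epsilon_h \colon \O_Z \to h_*(\sHom(F,F))$ for the two morphisms appearing in the definition of $\xi_f(t)$ and $\xi_h(t)$, respectively, the claim is that
\[
 \epsilon_h = g_*(\epsilon_f) \circ g^\sharp.
\]
Indeed, any morphism of $\O_Z$-modules out of $\O_Z$ is determined by its value on the global section $1$, and under the canonical identifications
\[
 (h_*(\sHom(F,F)))(Z) = (f_*(\sHom(F,F)))(Y) = \Hom(F,F)
\]
both sides of the displayed equation send $1$ to $\id_F$.

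Next comes the main compatibility step: the square
\[
 \xymatrix@C=3pc{
  \R^0h_*(\sHom(F,F)) \ar[r]^-{\delta^0_h} \ar[d]_{\BC^0} & \R^1h_*(\sHom(F,G)) \ar[d]^{\BC^1} \\
  g_*(\R^0f_*(\sHom(F,F))) \ar[r]_-{g_*(\delta^0_f)} & g_*(\R^1f_*(\sHom(F,G)))
 }
\]
should commute in $\Mod(Z)$, where $\delta^0_h$ and $\delta^0_f$ denote the connecting homomorphisms for $\sHom(F,t)$ with respect to the right derived functors of $h_*$ and $f_*$, respectively. This expresses the fact that $(\BC^n)_{n\in\Z}$ is a morphism of cohomological $\delta$-functors $\R^\kdot h_* \to g_* \circ \R^\kdot f_*$ on $\Mod(X)$. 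To make this rigorous I would choose a horseshoe-type injective resolution $I^\kdot \to J^\kdot \to K^\kdot$ of the short exact triple $\sHom(F,t)$, exploiting that injective $\O_X$-modules are flasque and flasque sheaves are acyclic for any direct image; thus $f_*(I^\kdot)$, $f_*(J^\kdot)$, $f_*(K^\kdot)$ are complexes of $g_*$-acyclic modules on $Y$ whose cohomology computes $\R^n f_*(\sHom(F,-))$ and whose $g_*$ computes $g_* \circ \R^n f_*(\sHom(F,-))$. Under the identity $h_*(I^\kdot) = g_*(f_*(I^\kdot))$ and its counterparts, $\BC^n$ becomes tautological, and commutativity of the square is immediate from the elementary snake-lemma construction of connecting homomorphisms attached to a short exact sequence of complexes.

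Combining these ingredients with the evident identity $\BC^0 \circ \can_h = g_*(\can_f)$ --- which follows at once from the description of $\BC^0$ as the degree-zero edge morphism of the Grothendieck composition spectral sequence --- the composite $\BC^1 \circ \xi_h(t)$ unfolds as
\begin{align*}
 \BC^1 \circ \xi_h(t) &= \BC^1 \circ \delta^0_h \circ \can_h \circ \epsilon_h \\
 &= g_*(\delta^0_f) \circ \BC^0 \circ \can_h \circ g_*(\epsilon_f) \circ g^\sharp \\
 &= g_*(\delta^0_f \circ \can_f \circ \epsilon_f) \circ g^\sharp = g_*(\xi_f(t)) \circ g^\sharp,
\end{align*}
which is precisely the commutativity of \eqref{e:rellseccomp}. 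I expect the principal obstacle to lie in making the second step fully rigorous: although the $\delta$-functor compatibility is a standard general principle, it depends delicately on the concrete definition of $\BC^n$ (presumably the one set out in Chapter \ref{ch:tool}), and one must verify that the horseshoe-resolution argument matches that definition literally. Once this bookkeeping is in place, the remainder of the argument is a pure diagram chase.
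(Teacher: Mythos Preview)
Your proof is correct and follows essentially the same approach as the paper's: both decompose $\xi_h(t)$ and $\xi_f(t)$ into the unit map $\epsilon$, the canonical comparison $\can$, and the connecting homomorphism $\delta^0$, then verify that each piece is compatible with the base change $\BC^\kdot$. The paper simply asserts the two compatibilities (of $\BC^\kdot$ with connecting homomorphisms and of $\BC^0$ with $\can$) as known facts, while you sketch the horseshoe-resolution justification for the first; your honest remark that this depends on matching the paper's concrete definition of $\BC^n$ is well placed, but no genuine difficulty lurks there.
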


\begin{proof}
 Write $\epsilon\colon \O_Y \to f_*(\sHom(F,F))$ for the unique morphism of modules on $Y$ sending the $1$ of $\O_Y(Y)$ to the identity sheaf map $\id_F \colon F\to F$. Similarly, write $\zeta \colon \O_Z \to h_*(\sHom(F,F))$ for the unique morphism of modules on $Z$ sending the $1$ of $\O_Z(Z)$ to the identity sheaf map $\id_F$. Then, clearly, the following diagram commutes in $\Mod(Z)$:
 \[
  \xymatrix{
   \O_Z \ar[r]^-{\zeta} \ar[d]_{g^\sharp} & h_*(\sHom(F,F)) \ar@{=}[d] \\ g_*\O_Y \ar[r]_-{g_*(\epsilon)} & g_*f_*(\sHom(F,F))
  }
 \]
 Denote by $\delta^0$ and $\delta'^0$ the $0$-th connecting homomorphisms for the triple $\sHom(F,t)$ with respect to the derived functors of $f_*$ and $h_*$, respectively. Then by the compatibility of base change morphisms with connecting homomorphisms and the compatibility of base change morphisms in degree $0$ with the natural transformations $h_* \to \R^0h_*$ and $f_* \to \R^0f_*$ of functors from $\Mod(X)$ to $\Mod(Z)$ and $\Mod(X)$ to $\Mod(Y)$, respectively, we see that the following diagram commutes in $\Mod(Z)$:
 \[
  \xymatrix{
   h_*(\sHom(F,F)) \ar[r]^{\can} \ar@{=}[d] & \R^0h_*(\sHom(F,F)) \ar[r]^{\delta'^0} \ar[d]_{\BC^0} & \R^1h_*(\sHom(F,G)) \ar[d]^{\BC^1} \\
   g_*f_*(\sHom(F,F)) \ar[r]_-{g_*(\can)} & g_*\R^0f_*(\sHom(F,F)) \ar[r]_{g_*(\delta^0)} & g_*\R^1f_*(\sHom(F,F))
  }
 \]
 Now, the commutativity of the diagram in \eqref{e:rellseccomp} follows readily taking into account the definitions of $\xi_f(t)$ and $\xi_h(t)$, \cf Notation \ref{not:rellsec}.
\end{proof}

\begin{corollary}
 \label{c:rellseclambdap}
 Let $f\colon X\to Y$ be a morphism of commutative ringed spaces, $t\colon G\to H\to F$ a locally split exact triple of modules on $X$, and $p$ an integer. Set $t':=\Lambda^p(t)$ and write $t'$ as $t'\colon G'\to H'\to F'$. Then the following diagram commutes in $\Mod(Y)$.
 \begin{equation}
  \label{e:rellseclambdap}
  \xymatrix@C=1.5pc{
   & \O_Y \ar[ld]_{\xi_f(t)} \ar[dr]^{\xi_f(t')} \\ \R^1f_*(\sHom(F,G)) \ar[rr]_{\R^1f_*(\iota^p(F,G))} && \R^1f_*(\sHom(F',G'))
  }
 \end{equation}
\end{corollary}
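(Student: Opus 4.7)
Since any $\O_Y$-linear map $\O_Y \to M$ is uniquely determined by its value at $1 \in \O_Y(Y)$, the commutativity of \eqref{e:rellseclambdap} is equivalent to the single identity
\[
 \R^1 f_*(\iota^p(F,G))_Y\bigl(\xi_f(t)_Y(1)\bigr) \;=\; \xi_f(t')_Y(1)
\]
in $(\R^1 f_*(\sHom(F',G')))(Y)$, where $t' := \Lambda^p(t)$ is locally split exact on $X$ by Proposition \ref{p:lambdaplset} (so that $\xi_f(t')$ is defined, by Remark \ref{r:lseclset}). The plan is to reduce this identity to its absolute counterpart Proposition \ref{p:lsec} by passing through the Leray edge map.

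To this end, I would apply Proposition \ref{p:rellseccomp} with $g \colon Y \to *$ taken to be the canonical morphism from $Y$ to the one-point ringed space $*$; the composite $h := g \circ f$ is then the canonical morphism $X \to *$, and the base-change morphism $\BC^1$ appearing in \eqref{e:rellseccomp}, once evaluated at the unique point of $*$, specializes to the Leray edge map
\[
 e \colon \H^1(X, -) \longrightarrow \Gamma\bigl(Y, \R^1 f_*(-)\bigr).
\]
A direct comparison of Notation \ref{not:lsec} and Notation \ref{not:rellsec} shows that, after evaluation at the unique point of $*$, $\xi_h(t)$ sends $1$ to $\xi_X(t)$ and $\xi_h(t')$ sends $1$ to $\xi_X(t')$, whence the commutativity of \eqref{e:rellseccomp} applied to $t$ and then to $t'$ translates into the two identifications
\[
 e\bigl(\xi_X(t)\bigr) = \xi_f(t)_Y(1), \qquad e\bigl(\xi_X(t')\bigr) = \xi_f(t')_Y(1).
\]

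To conclude, I would invoke Proposition \ref{p:lsec}, which asserts $\H^1(X, \iota^p(F,G))(\xi_X(t)) = \xi_X(t')$, and combine it with the naturality of the Leray edge map $e$ under the sheaf morphism $\iota^p(F,G) \colon \sHom(F,G) \to \sHom(F',G')$, which reads $\R^1 f_*(\iota^p(F,G))_Y \circ e = e \circ \H^1(X, \iota^p(F,G))$. Evaluating at $\xi_X(t)$ then yields
\[
 \R^1 f_*(\iota^p(F,G))_Y\bigl(\xi_f(t)_Y(1)\bigr) = e\bigl(\xi_X(t')\bigr) = \xi_f(t')_Y(1),
\]
as desired. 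The proof is thus essentially a formal diagram chase; the only nontrivial input is Proposition \ref{p:lsec}, and the main conceptual hurdle—the Leray-edge-map compatibility of $\xi_f$ with $\xi_X$—is already handled by Proposition \ref{p:rellseccomp}, so there is no real obstacle beyond unwinding the definitions carefully.
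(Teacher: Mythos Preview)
Your proof is correct and follows essentially the same approach as the paper's: apply Proposition \ref{p:rellseccomp} twice (once for $t$ and once for $t'$) with $g$ the canonical morphism from $Y$ to the terminal ringed space, then conclude via Proposition \ref{p:lsec} and naturality of the base change. The paper's own proof is terser---it takes $Z := \Z$ and $g := a_Y$ in accordance with its conventions and leaves the naturality-of-$\BC^1$ step implicit---but the logical structure is identical.
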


\begin{proof}
 Set $Z := \Z$ and $g := a_Y \colon Y\to \Z$. Then $h := g \circ f = a_X \colon X\to \Z$. The commutativity of the diagram in \eqref{e:rellseclambdap} follows from Proposition \ref{p:rellseccomp} (applied twice, for $t$ and $t'$) in conjuction with Proposition \ref{p:lsec}.
\end{proof}

Many results of this section rely, in their formulation and proof, on the device of the cup product for derived direct image functors. For that matter, we curtly review this concept and state several of its properties.

\begin{construction}
 \label{con:cupp}
 Let $f\colon X\to Y$ be a morphism of commutative ringed spaces and $p$ and $q$ integers. Let $F$ and $G$ be modules on $X$. Then we denote
 $$\cupp^{p,q}_f(F,G) \colon \R^pf_*(F) \otimes_Y \R^qf_*(G) \to \R^{p+q}f_*(F \otimes_X G)$$
 the \emph{cup product} in bidegree $(p,q)$ relative $f$ for $F$ and $G$. For the definition of the cup product we suggest considering the Godement resolutions $\alpha\colon F\to L$ and $\beta\colon G\to M$ of $F$ and $G$, respectively, on $X$. Besides, let $\rho_F\colon F\to I_F$, $\rho_G\colon G\to I_G$, and $\rho_{F\otimes G}\colon F\otimes G\to I_{F\otimes G}$ be the canonical injective resolutions of $F$, $G$, and $F\otimes G$, respectively, on $X$. Then by Lemma \ref{l:iversen}, there exists one, and only one, morphism $\zeta\colon L\to I_F$ (\resp $\eta\colon M\to I_G$) in $\K(X)$ such that we have $\zeta \circ \alpha = \rho_F$ (\resp $\eta \circ \beta =\rho_G$) in $\K(X)$. Since the Godement resolutions are flasque, whence acyclic for the functor $f_*$, we see that $\H^p(f_*\zeta) \colon \H^p(f_*L) \to \H^p(f_*I_F)$ and $\H^q(f_*\eta) \colon \H^q(f_*M) \to \H^q(f_*I_G)$ are isomorphisms in $\Mod(Y)$. Thus we derive an isomorphism
 \begin{equation}\label{e:cupp-1}
  \H^p(f_*L) \otimes \H^q(f_*M) \to \H^p(f_*I_F) \otimes \H^q(f_*I_G).
 \end{equation}
 Moreover, since the Godement resolutions are pointwise homotopically trivial, $\alpha\otimes \beta\colon F\otimes G\to L\otimes M$ is a resolution of $F\otimes G$ on $X$. So, again by Lemma \ref{l:iversen}, there exists one, and only one, morphism $\theta\colon L\otimes M\to I_{F\otimes G}$ in $\K(X)$ such that we have $\theta \circ (\alpha\otimes\beta) = \rho_{F\otimes G}$ in $\K(X)$. Thus by the compatibility of $f_*$ with the respective tensor products on $X$ and $Y$, we obtain the composition
 $$f_*L \otimes f_*M  \to f_*(L\otimes M) \overset{f_*\theta}\to f_*I_{F\otimes G}$$
 in $\K^+(Y)$, which in turn yields a composition
 \begin{equation}\label{e:cupp-2}
  \H^p(f_*L) \otimes \H^q(f_*M) \to \H^{p+q}(f_*L \otimes f_*M) \to \H^{p+q}(f_*I_{F\otimes G})
 \end{equation}
 in $\Mod(Y)$. Now the composition of the inverse of \eqref{e:cupp-1} and \eqref{e:cupp-2} is the cup product $\cupp^{p,q}_f(F,G)$.

 Note that the above construction is principally due to Godement \cite[II, 6.6]{Go73}, although Godement retricts himself to applying global section functors (with supports) instead of the more general direct image functors. Also note that Grothendieck \cite[(12.2.2)]{EGA3.1} defines his cup product in the relative situation $f\colon X\to Y$ by localizing Godement's construction over the base. Of course, our cup product agrees with Grothendieck's.
\end{construction}

\begin{proposition}
 \label{p:cupp}
 Let $f\colon X\to Y$ be a morphism of commutative ringed spaces and let $p$, $q$, and $r$ be integers.
 \begin{enumerate}
  \item \label{p:cupp-nat} \textup{(Naturality)} $\cupp^{p,q}_f$ is a natural transformation
  $$\cupp^{p,q}_f \colon (-\otimes_Y -) \circ (\R^pf_* \times \R^qf_*) \to \R^{p+q}f_* \circ (-\otimes_X -)$$
  of functors from $\Mod(X) \times \Mod(X)$ to $\Mod(Y)$.
  \item \label{p:cupp-connhom} \textup{(Connecting homomorphisms)} Let $t\colon F'' \to F \to F'$ be a short exact triple of modules on $X$ and $G$ a module on $X$ such that $t\otimes G \colon F''\otimes G \to F\otimes G \to F'\otimes G$ is again a short exact triple of modules on $X$. Then the following diagram commutes in $\Mod(Y)$:
  \[
   \xymatrix@C=5pc{
    \R^pf_*(F') \otimes \R^qf_*(G) \ar[r]^{\cupp^{p,q}(F',G)} \ar[d]_{\delta^p(t)\otimes \R^qf_*(G)} & \R^{p+q}f_*(F'\otimes G) \ar[d]^{\delta^{p+q}(t\otimes G)} \\
    \R^{p+1}f_*(F'') \otimes \R^qf_*(G) \ar[r]_{\cupp^{p+1,q}(F'',G)} & \R^{p+q+1}f_*(F''\otimes G)
   }
  \]
  \item \label{p:cupp-unit} \textup{(Units)} Let $G$ be a module on $X$. Then the following diagram commutes in $\Mod(Y)$, where $\phi$ denotes the canonical morphism $f_*\O_X \to \R^0f_*(\O_X)$ of sheaves on $Y_\top$:
  \[
   \xymatrix@C=5pc{
    \O_Y \otimes_Y \R^qf_*(G) \ar[r]^{\lambda_Y(\R^qf_*(G))} \ar[d]_{(\phi \circ f^\sharp) \otimes \id} & \R^qf_*(G) \\
    \R^0f_*(\O_X) \otimes_Y \R^qf_*(G) \ar[r]_{\cupp^{0,q}(\O_X,G)} & \R^qf_*(\O_X \otimes_X G) \ar[u]_{\R^qf_*(\lambda_X(G))}
   }
  \]
  \item \label{p:cupp-ass} \textup{(Associativity)} Let $F$, $G$, and $H$ be modules on $X$. Then the following diagram commutes in $\Mod(Y)$:
  \[
   \xymatrix@C=6pc{
    \R^{p+q}f_*(F\otimes G) \otimes \R^rf_*(H) \ar[r]^{\cupp^{p+q,r}(F\otimes G,H)} & \R^{p+q+r}f_*((F\otimes G)\otimes H) \ar[d]^{\R^{p+q+r}f_*(\alpha_X)} \\
    (\R^pf_*(F) \otimes \R^qf_*(G)) \otimes \R^rf_*(H) \ar[u]^{\cupp^{p+q}(F,G)\otimes \id} \ar[d]_{\alpha_Y} & \R^{p+q+r}f_*(F\otimes (G\otimes H)) \\
    \R^pf_*(F) \otimes (\R^qf_*(G) \otimes \R^rf_*(H)) \ar[r]_{\id\otimes \cupp^{q,r}(G,H)} & \R^pf_*(F) \otimes \R^{q+r}f_*(G\otimes H) \ar[u]_{\cupp^{p,q+r}(F,G\otimes H)}
   }
  \]
 \end{enumerate}
\end{proposition}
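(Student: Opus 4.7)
Each item is a universal compatibility of the cup product that, in view of Construction~\ref{con:cupp}, reduces to properties of the Godement resolution functor combined with the uniqueness clause of Lemma~\ref{l:iversen}. The plan is to (i) identify, on the level of Godement resolutions, a raw version of each assertion that follows from the pointwise behaviour of Godement sheaves and the tensor product, and (ii) transfer these raw identities from the Godement resolutions to the canonical injective resolutions---where the higher direct images $\R^nf_*$ actually live---by exploiting the uniqueness in Lemma~\ref{l:iversen}: any two morphisms in $\K(X)$ lying above the same augmentation must coincide.

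\textbf{Execution.} For \ref{p:cupp-nat}, a pair of morphisms $\phi\colon F\to F'$ and $\psi\colon G\to G'$ induces by functoriality of the Godement construction a compatible pair of morphisms of Godement complexes, and Lemma~\ref{l:iversen} makes these compatible with the comparison maps to the canonical injective resolutions; hence both \eqref{e:cupp-1} and \eqref{e:cupp-2} become natural in $(F,G)$, and so does their composition $\cupp^{p,q}_f$. For \ref{p:cupp-connhom}, the key input is that the Godement resolution of a short exact triple is again a short exact triple of complexes---in fact pointwise split, because Godement sheaves are built from products of skyscrapers; hence after tensoring with the Godement resolution $M$ of $G$ and pushing forward by $f_*$, the snake lemma computes $\delta^p(t)$ and $\delta^{p+q}(t\otimes G)$ simultaneously, and commutativity of the square reduces to a routine snake-lemma verification, which is transferred to the injective-resolution-based connecting homomorphisms by yet another invocation of Lemma~\ref{l:iversen}. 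Assertion \ref{p:cupp-unit} reflects that the Godement resolution of $\O_X$ starts with $\O_X$ itself in degree $0$ and that the left unitor $\lambda_X$ is functorial with respect to the Godement construction; the triangle in the unit diagram already commutes on the resolution level, and $\phi\circ f^\sharp$ is precisely the map induced by the Godement augmentation of $\O_X$. Finally, for \ref{p:cupp-ass} the triple complex $L\otimes M\otimes N$, where $N$ is the Godement resolution of $H$, is again pointwise homotopically trivial (as a tensor product of pointwise homotopically trivial complexes), hence a resolution of $F\otimes G\otimes H$; the two associator paths around the large diagram coincide on the Godement level by associativity of the tensor product of complexes, and Lemma~\ref{l:iversen} identifies them with their injective-resolution counterparts.

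\textbf{Main obstacle.} The conceptual content is minimal; the real hurdle is bookkeeping. In parts \ref{p:cupp-connhom} and \ref{p:cupp-ass} a proliferation of comparison maps between Godement and canonical injective resolutions appears, and one must systematically check their compatibility with tensor products, direct images, and connecting homomorphisms. The uniqueness clause of Lemma~\ref{l:iversen}---whenever two candidate morphisms in $\K(X)$ augment to the same map of modules, they already agree---is the lever that makes this tractable, collapsing most diagram chases to a single computation. A conceptually cleaner packaging, namely that $\R f_*$ is a lax symmetric monoidal functor and that \ref{p:cupp-nat}--\ref{p:cupp-ass} are its standard coherence consequences, would be possible but is not needed here; the direct verification along the sketched lines is sufficient.
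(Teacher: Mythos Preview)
Your sketch is correct and follows exactly the route the paper has in mind: the paper's own proof gives no details at all, referring instead to Godement \cite[II, 6.5--6.6]{Go73}, where the cross product satisfies the analogous properties and these carry over to the cup product defined in Construction~\ref{con:cupp}. Your outline---Godement resolutions plus the uniqueness clause of Lemma~\ref{l:iversen} to transfer identities to the canonical injective resolutions---is precisely how Godement's arguments unpack in the present framework, so you have simply supplied the details the paper omits.
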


\begin{proof}
 We refrain from giving details here. Instead we refer our reader to Godement's summary of properties of the cross product \cite[II, 6.5]{Go73} and note that these properties carry over to the cup product almost word by word as pointed out in \cite[II, 6.6]{Go73}.
\end{proof}

\begin{notation}
 \label{not:eval}
 Let $X$ be a commutative ringed space. Let $F$ and $G$ be modules on $X$. Then we write
 $$\epsilon_X(F,G) \colon \sHom(F,G) \otimes F \to G$$
 for the familiar \emph{evaluation morphism}: when $U$ is an open set of $X$, $\phi \colon F|U \to G|U$ is a morphism of sheaves of modules on $X|U$, \iev an element of $(\sHom(F,G))(U)$, and $s \in F(U)$, then
 $$(\epsilon_X(F,G))_U(\phi \otimes s) = \phi_U(s).$$
 Varying $G$, we may view $\epsilon_X(F,-)$ as a function on the class of modules on $X$. That way $\epsilon_X(F,-)$ is a natural transformation
 $$\epsilon_X(F,-) \colon (-\otimes F) \circ \sHom(F,-) \to \id_{\Mod(X)}$$
 of endofunctors on $\Mod(X)$. We will write $\epsilon$ instead of $\epsilon_X$ when we feel that the ringed space $X$ is clear from the context.
\end{notation}

\begin{proposition}
 \label{p:connhomlsec}
 Let $f\colon X\to Y$ be a morphism of commutative ringed spaces, $t\colon G\to H\to F$ a locally split exact triple of modules on $X$, and $q$ an integer. Then the following diagram commutes in $\Mod(Y)$:
 \begin{equation}\label{e:connhomlsec}
  \xymatrix{
   \R^qf_*(F) \ar[r]^{\delta^q(t)} \ar[d]_{\xi_f(t)\otimes} & \R^{q+1}f_*(G) \\
   \R^1f_*(\sHom(F,G)) \otimes \R^qf_*(F) \ar[r] \ar@{}@<-1ex>[r]_{\cupp^{1,q}(\sHom(F,G),F)} & \R^{q+1}f_*(\sHom(F,G) \otimes F) \ar[u]_{\R^{q+1}f_*(\epsilon(F,G))}
  }
 \end{equation}
\end{proposition}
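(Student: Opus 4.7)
The plan is to decompose the locally split extension class $\xi_f(t)$ according to its definition (Notation \ref{not:rellsec}) as
\[
\xi_f(t) = \delta^0(\sHom(F,t)) \circ \can \circ \epsilon_Y,
\]
where $\epsilon_Y\colon \O_Y \to f_*(\sHom(F,F))$ sends $1$ to $\id_F$, and then move the connecting homomorphism $\delta^0(\sHom(F,t))$ past the cup product using Proposition~\ref{p:cupp}\ref{p:cupp-connhom}. Afterwards naturality of connecting homomorphisms applied to the evaluation morphism of triples $\epsilon(F,-) \colon \sHom(F,t) \otimes F \to t$ brings $\delta^q(t)$ into the picture, and the unit property of the cup product, Proposition~\ref{p:cupp}\ref{p:cupp-unit}, absorbs the remaining $\O_Y$-tensor factor.

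First I would verify the prerequisite that $\sHom(F,t) \otimes F$ is a short exact triple of modules on $X$, so that Proposition~\ref{p:cupp}\ref{p:cupp-connhom} applies. By Remark~\ref{r:lseclset} the triple $\sHom(F,t)$ is locally split exact on $X$; locally it looks like $A \to A \oplus B \to B$, and tensoring with (the restriction of) $F$ preserves this form. Hence $\sHom(F,t) \otimes F$ is locally split exact, in particular short exact. Proposition~\ref{p:cupp}\ref{p:cupp-connhom} then yields
\[
\cupp^{1,q}(\sHom(F,G),F) \circ \bigl(\delta^0(\sHom(F,t)) \otimes \id_{\R^qf_*F}\bigr) = \delta^q(\sHom(F,t) \otimes F) \circ \cupp^{0,q}(\sHom(F,F),F).
\]

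Second, the evaluation morphism $\epsilon(F,-)$ induces a morphism of short exact triples $\sHom(F,t) \otimes F \to t$ in $\Mod(X)$, whose three components are the maps $\epsilon(F,G)$, $\epsilon(F,H)$, and $\epsilon(F,F)$. Applying the naturality of the connecting homomorphisms for $\R f_*$ with respect to this morphism of triples gives
\[
\R^{q+1}f_*(\epsilon(F,G)) \circ \delta^q(\sHom(F,t) \otimes F) = \delta^q(t) \circ \R^qf_*(\epsilon(F,F)).
\]

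Third, I would combine these two identities and reduce the remaining composition to the identity on $\R^qf_*(F)$ via the unit property. Let $\eta\colon \O_X \to \sHom(F,F)$ be the morphism $1 \mapsto \id_F$, so that $\epsilon_Y = f_*(\eta) \circ f^\sharp$ and moreover $\epsilon(F,F) \circ (\eta \otimes \id_F) = \lambda_X(F)$. Naturality of the cup product (Proposition~\ref{p:cupp}\ref{p:cupp-nat}) applied to the pair $(\eta, \id_F)$ together with the functoriality of $\R^0f_*$ and $f^\sharp$ shows that the composition
\[
\O_Y \otimes \R^qf_*(F) \xrightarrow{(\can \circ \epsilon_Y) \otimes \id} \R^0f_*(\sHom(F,F)) \otimes \R^qf_*(F) \xrightarrow{\cupp^{0,q}(\sHom(F,F),F)} \R^qf_*(\sHom(F,F) \otimes F)
\]
followed by $\R^qf_*(\epsilon(F,F))$ equals $\R^qf_*(\lambda_X(F)) \circ \cupp^{0,q}(\O_X,F) \circ ((\can \circ f^\sharp) \otimes \id)$, which by Proposition~\ref{p:cupp}\ref{p:cupp-unit} is $\lambda_Y(\R^qf_*(F))$. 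Chaining the three identities along the composition $\R^qf_*(F) \cong \O_Y \otimes \R^qf_*(F) \to \R^1f_*(\sHom(F,G)) \otimes \R^qf_*(F) \to \R^{q+1}f_*(\sHom(F,G) \otimes F) \to \R^{q+1}f_*(G)$ then produces $\delta^q(t)$, as required.

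The principal obstacle will be the bookkeeping: ensuring the various naturality squares, the cup product axioms, and the identification $\xi_f(t) = \delta^0 \circ \can \circ \epsilon_Y$ line up correctly across the five successive arrows in the bottom row of diagram~\eqref{e:connhomlsec}. Beyond that, the argument is purely formal, relying only on the already established machinery of $\cupp$ and on locally split exactness, which guarantees the needed tensor-exactness of $\sHom(F,t)\otimes F$.
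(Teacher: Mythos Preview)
Your proposal is correct and follows essentially the same approach as the paper's proof: both use Proposition~\ref{p:cupp}\ref{p:cupp-connhom}) to slide $\delta^0(\sHom(F,t))$ past the cup product, then invoke naturality of connecting homomorphisms with respect to the evaluation morphism of triples $\epsilon(F,-)\colon \sHom(F,t)\otimes F \to t$, and finally apply Proposition~\ref{p:cupp}\ref{p:cupp-unit}) to collapse the remaining $\O_Y$-factor. Your third step spells out the intermediate use of cup-product naturality along $\eta\colon \O_X \to \sHom(F,F)$ a bit more explicitly than the paper does, but the structure of the argument is the same.
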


\begin{proof}
 Since $t$ is a locally split exact triple of modules on $X$, we know that the triples
 $$\sHom(F,t) \colon \sHom(F,G) \to \sHom(F,H) \to \sHom(F,F)$$
 as well as
 $$\sHom(F,t) \otimes F \colon \sHom(F,G)\otimes F \to \sHom(F,H)\otimes F \to \sHom(F,F)\otimes F$$
 are locally split exact triples of modules on $X$, \cf Remark \ref{r:lseclset}. Thus by Proposition \ref{p:cupp} \ref{p:cupp-connhom}), the following diagram commutes in $\Mod(Y)$:
 \begin{equation}\label{e:connhomlsec-1}
  \xymatrix@C=5.5pc{
   \R^0f_*(\sHom(F,F)) \otimes \R^qf_*(F) \ar[r]^-{\cupp^{0,q}(\sHom(F,F),F)} \ar[d]_{\delta^0(\sHom(F,t))\otimes \R^qf_*(F)} & \R^qf_*(\sHom(F,F)\otimes F) \ar[d]^{\delta^q(\sHom(F,t)\otimes F)} \\
   \R^1f_*(\sHom(F,G)) \otimes \R^qf_*(F) \ar[r]_-{\cupp^{1,q}(\sHom(F,G),F)} & \R^{q+1}f_*(\sHom(F,G)\otimes F)
  }
 \end{equation}
 By the naturality of the evaluation morphism, \cf Notation \ref{not:eval}, the composition $\epsilon(F,-) \circ t_0$ (recall that $t_0$ denotes the object function of the functor $t$) is a morphism
 $$\epsilon(F,-) \circ t_0 \colon \sHom(F,t)\otimes F \to t$$
 of triples in $\Mod(X)$ (\iev a natural transformation of functors $\mathbf3 \to \Mod(X)$). In consequence, by the naturality of $\delta^q$ the following diagram commutes in $\Mod(Y)$:
 \begin{equation}\label{e:connhomlsec-2}
  \xymatrix@C=5pc{
   \R^qf_*(\sHom(F,F) \otimes F) \ar[r]^-{\R^qf_*(\epsilon(F,F))} \ar[d]_{\delta^q(\sHom(F,t)\otimes F)} & \R^qf_*(F) \ar[d]^{\delta^q(t)} \\
   \R^{q+1}f_*(\sHom(F,G) \otimes F) \ar[r]_-{\R^{q+1}f_*(\epsilon(F,G))} & \R^{q+1}f_*(G)
  }
 \end{equation}
 Denote $\phi$ the composition
 $$\O_Y \to f_*(\sHom(F,F)) \xrightarrow{\can} \R^0f_*(\sHom(F,F))$$
 of morphisms in $\Mod(Y)$, where the first arrow stands for the unique morphism of modules on $Y$ which sends the $1$ of $\O_Y(Y)$ to the identity sheaf map $\id_F$ in
 \[
  \left(f_*(\sHom(F,F))\right)(Y) = \Hom(F,F).
 \]
 Then from the commutativity of the diagrams in \eqref{e:connhomlsec-1} and \eqref{e:connhomlsec-2} and the definition of $\xi_f(t)$, \cf Notation \ref{not:rellsec}, we deduce that
 \begin{align*}
  & \R^{q+1}f_*(\epsilon(F,G)) \circ \cupp^{1,q}(\sHom(F,G),F) \circ (\xi_f(t)\otimes) \\ & = \delta^q(t) \circ \R^qf_*(\epsilon(F,F)) \circ \cupp^{0,q}(\sHom(F,F),F) \circ (\phi\otimes \R^qf_*(F)) \circ \lambda(\R^qf_*(F))^{-1}
 \end{align*}
 in $\Mod(Y)$. In addition, using Proposition \ref{p:cupp} \ref{p:cupp-unit}), one shows that:
 $$\R^qf_*(\epsilon(F,F)) \circ \cupp^{0,q}(\sHom(F,F),F) \circ (\phi\otimes \R^qf_*(F)) = \lambda(\R^qf_*(F)).$$
 Hence, we see that the diagram in \eqref{e:connhomlsec} commutes in $\Mod(Y)$.
\end{proof}

\begin{notation}[Adjoint interior product]
 \label{not:adjint}
 Let $X$ be a commutative ringed space. Let $p$ be an integer and $F$ and $G$ modules on $X$. Then by adjunction between the functors $-\otimes\wedge^pF$ and $\sHom(\wedge^pF,-)$, both going from $\Mod(X)$ to $\Mod(X)$, the interior product
 \[
  \iota^p_X(F,G) \colon \sHom(F,G) \to \sHom(\wedge^pF,G\otimes \wedge^{p-1}F)
 \]
 of Construction \ref{con:intprod} corresponds to a morphism
 \[
  \tilde\iota^p_X(F,G) \colon \sHom(F,G) \otimes \wedge^pF \to G \otimes \wedge^{p-1}F
 \]
 of modules on $X$, which we christen the \emph{adjoint interior product} in degree $p$ for $F$ and $G$ on $X$. Explicitly, this means that $\tilde\iota^p_X(F,G)$ equals the composition
 \begin{align*}
  & \epsilon(\wedge^pF,G\otimes \wedge^{p-1}F) \circ (\iota^p_X(F,G)\otimes \id_{\wedge^pF}) \colon \\ & \sHom(F,G) \otimes \wedge^pF \to \sHom(\wedge^pF,G\otimes \wedge^{p-1}F) \otimes \wedge^pF \to G\otimes \wedge^{p-1}F.
 \end{align*}
 Just as we did with $\iota^p_X(F,G)$, among others, we omit the subscript ``$X$'' in expressions like $\tilde\iota^p_X(F,G)$ whenever we feel this is appropriate.
\end{notation}

\begin{proposition}
 \label{p:connhomrel}
 Let $f\colon X\to Y$ be a morphism of commutative ringed spaces, $t\colon G\to H\to F$ a locally split exact triple of modules on $X$, and $p$ and $q$ integers. Then the following diagram commutes in $\Mod(Y)$:
 \begin{equation}
  \label{e:connhomrel}
  \xymatrix{
   \R^qf_*(\wedge^pF) \ar[r]^-{\delta^q(\Lambda^p(t))} \ar[d]_{\xi_f(t)\otimes} & \R^{q+1}f_*(G\otimes\wedge^{p-1}F) \\
   \R^1f_*(\sHom(F,G))\otimes\R^qf_*(\wedge^pF) \ar[r] \ar@{}@<-1ex>[r]_{\cupp^{1,q}(\sHom(F,G),\wedge^pF)} & \R^{q+1}f_*(\sHom(F,G)\otimes\wedge^pF) \ar[u]_{\R^{q+1}f_*(\tilde\iota^p(F,G))}
  }  
 \end{equation}
\end{proposition}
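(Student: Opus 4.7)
The plan is to deduce Proposition \ref{p:connhomrel} by combining three tools developed earlier in the section: Proposition \ref{p:connhomlsec} (applied to the triple $\Lambda^p(t)$), Corollary \ref{c:rellseclambdap} (which relates $\xi_f(\Lambda^p(t))$ to $\xi_f(t)$ via the interior product), and the naturality of the relative cup product from Proposition \ref{p:cupp} \ref{p:cupp-nat}).

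First I would note that by Proposition \ref{p:lambdaplset} \ref{p:lambdaplset-lset}) the triple $t' := \Lambda^p(t)$, which I write as $t' \colon G' \to H' \to F'$ with $G' = G \otimes \wedge^{p-1}F$ and $F' = \wedge^pF$, is a locally split exact triple of modules on $X$. Therefore Proposition \ref{p:connhomlsec} applies to $t'$ (with the same integer $q$) and yields that the square
\[
\xymatrix{
 \R^qf_*(F') \ar[r]^{\delta^q(t')} \ar[d]_{\xi_f(t')\otimes} & \R^{q+1}f_*(G') \\
 \R^1f_*(\sHom(F',G')) \otimes \R^qf_*(F') \ar[r]_-{\cupp^{1,q}(\sHom(F',G'),F')} & \R^{q+1}f_*(\sHom(F',G') \otimes F') \ar[u]_{\R^{q+1}f_*(\epsilon(F',G'))}
}
\]
commutes in $\Mod(Y)$. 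In particular, $\delta^q(\Lambda^p(t))$ is expressed as the bottom-right composition precomposed with $\xi_f(t')\otimes$.

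Next, Corollary \ref{c:rellseclambdap} gives the factorization
\[
 \xi_f(t') = \R^1f_*(\iota^p(F,G)) \circ \xi_f(t)
\]
in $\Mod(Y)$. Substituting this into the previous identity lets me rewrite $\delta^q(\Lambda^p(t))$ applied after tensoring with $\xi_f(t)$. Specifically, the morphism $\xi_f(t')\otimes\id_{\R^qf_*(F')}$ factors as $(\R^1f_*(\iota^p(F,G)) \otimes \id) \circ (\xi_f(t)\otimes\id)$, so the question reduces to verifying that the composition
\[
 \R^{q+1}f_*(\epsilon(F',G')) \circ \cupp^{1,q}(\sHom(F',G'),F') \circ (\R^1f_*(\iota^p(F,G)) \otimes \id_{\R^qf_*(F')})
\]
equals $\R^{q+1}f_*(\tilde\iota^p(F,G)) \circ \cupp^{1,q}(\sHom(F,G),F')$.

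The last equality is the only real content left, and it follows from naturality of the cup product together with the definition of the adjoint interior product. By Proposition \ref{p:cupp} \ref{p:cupp-nat}), applied to the morphism $\iota^p(F,G)\colon \sHom(F,G) \to \sHom(F',G')$ in the first argument and $\id_{F'}$ in the second, one obtains
\[
 \cupp^{1,q}(\sHom(F',G'),F') \circ (\R^1f_*(\iota^p(F,G)) \otimes \id) = \R^{q+1}f_*(\iota^p(F,G) \otimes \id_{F'}) \circ \cupp^{1,q}(\sHom(F,G),F').
\]
Postcomposing with $\R^{q+1}f_*(\epsilon(F',G'))$ and collapsing $\R^{q+1}f_*$ on a composition of sheaf morphisms, the claim reduces to the sheaf-level identity
\[
 \epsilon(F',G') \circ (\iota^p(F,G) \otimes \id_{F'}) = \tilde\iota^p(F,G),
\]
which is literally the definition of the adjoint interior product given in Notation \ref{not:adjint}. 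Chaining these three steps together gives the commutativity of \eqref{e:connhomrel}.

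I do not expect any real obstacle: the heavy lifting has already been done in Proposition \ref{p:connhomlsec}, Proposition \ref{p:lsec} (via Corollary \ref{c:rellseclambdap}), and the formal properties of cup product recorded in Proposition \ref{p:cupp}. The only point that deserves a sentence of care is making sure that the adjunction used to define $\tilde\iota^p(F,G)$ is compatible with the sheaf-level evaluation $\epsilon(F',G')$, but this is immediate from Notation \ref{not:adjint}.
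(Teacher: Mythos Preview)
Your proof is correct and follows essentially the same approach as the paper: both apply Proposition \ref{p:connhomlsec} to $t' = \Lambda^p(t)$, invoke Corollary \ref{c:rellseclambdap} to factor $\xi_f(t')$ through $\xi_f(t)$ via $\R^1f_*(\iota^p(F,G))$, use the naturality of the cup product from Proposition \ref{p:cupp} \ref{p:cupp-nat}) to pull $\iota^p(F,G)\otimes\id_{F'}$ through $\cupp^{1,q}$, and finish with the identity $\tilde\iota^p(F,G) = \epsilon(F',G') \circ (\iota^p(F,G)\otimes\id_{F'})$ from Notation \ref{not:adjint}. The only difference is the order in which you present these steps.
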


\begin{proof}
 Set $t' := \Lambda^p(t)$ and write $t'$ as
 $$t' \colon G' \to H' \to F'.$$
 By the definition of $\tilde\iota^p(F,G)$ via tensor-hom adjunction, \cf Notation \ref{not:adjint}, we know that
 \begin{equation}\label{e:connhomrel-1}
  \tilde\iota^p(F,G) = \epsilon(F',G') \circ (\iota^p(F,G) \otimes \id_{F'})
 \end{equation}
 holds in $\Mod(X)$. Due to the naturality of the cup product relative $f$, \cf Propostion \ref{p:cupp} \ref{p:cupp-nat}), the following diagram commutes in $\Mod(Y)$:
 \begin{equation}\label{e:connhomrel-2}
  \xymatrix{
   \R^1f_*(\sHom(F,G))\otimes_Y \R^qf_*(F') \ar[r] \ar@{}@<1ex>[r]^{\cupp^{1,q}(\sHom(F,G),F')} \ar[d]_{\R^1f_*(\iota^p(F,G))\otimes \R^qf_*(\id_{F'})} & \R^{q+1}f_*(\sHom(F,G)\otimes_X F') \ar[d]^{\R^{q+1}f_*(\iota^p(F,G)\otimes \id_{F'})} \\
   \R^1f_*(\sHom(F',G'))\otimes_Y \R^qf_*(F') \ar[r] \ar@{}@<-1ex>[r]_{\cupp^{1,q}(\sHom(F',G'),F')} & \R^{q+1}f_*(\sHom(F',G')\otimes_X F')
  }
 \end{equation}
 Now, by Proposition \ref{p:lambdaplset} we know that since $t$ is a locally split exact triple of modules on $X$, also $t'$ is a locally split exact triple of modules on $X$. Thus it makes sense to speak of the relative locally split extension class of $t'$ with respect to $f$, \cf Notation \ref{not:rellsec}. By Corollary \ref{c:rellseclambdap}, the following diagram commutes in $\Mod(Y)$:
 \begin{equation}\label{e:connhomrel-3}
  \xymatrix@C=1.5pc{
   & \O_Y \ar[ld]_{\xi_f(t)} \ar[dr]^{\xi_f(t')} \\ \R^1f_*(\sHom(F,G)) \ar[rr]_{\R^1f_*(\iota^p(F,G))} && \R^1f_*(\sHom(F',G'))
  }
 \end{equation}
 By Proposition \ref{p:connhomlsec}, this next diagram commutes in $\Mod(Y)$:
 \begin{equation}\label{e:connhomrel-4}
  \xymatrix{
   \R^qf_*(F') \ar[r]^{\delta^q(t')} \ar[d]_{\xi_f(t')\otimes} & \R^{q+1}f_*(G') \\
   \R^1f_*(\sHom(F',G')) \otimes \R^qf_*(F') \ar[r] \ar@{}@<-1ex>[r]_{\cupp^{1,q}(\sHom(F',G'),F')} & \R^{q+1}f_*(\sHom(F',G') \otimes F') \ar[u]_{\R^{q+1}f_*(\epsilon(F',G'))}
  }
 \end{equation}
 All in all, we obtain:
 \begin{align*}
  \delta^q(t') & \overset{\eqref{e:connhomrel-4}}= \R^{q+1}f_*(\epsilon (F',G')) \circ \cupp^{1,q}(\sHom(F',G'),F') \circ (\xi_f(t')\otimes) \\
  & \begin{aligned} \overset{\eqref{e:connhomrel-3}}= \R^{q+1}f_*(\epsilon (F',G')) \circ \cupp^{1,q}(\sHom(F',G'),F') \\ \mathrel\circ (\R^1f_*(\iota^p(F,G))\otimes \R^qf_*(\id_{F'})) \circ (\xi_f(t)\otimes) \end{aligned} \\
  & \begin{aligned} \overset{\eqref{e:connhomrel-2}}= \R^{q+1}f_*(\epsilon (F',G')) \circ \R^{q+1}f_*(\iota^p(F,G)\otimes \id_{F'}) \\ \mathrel\circ \cupp^{1,q}(\sHom(F,G),F') \circ (\xi_f(t)\otimes) \end{aligned} \\
  & \overset{\eqref{e:connhomrel-1}}= \R^{q+1}f_*(\tilde\iota^p(F,G)) \circ \cupp^{1,q}(\sHom(F,G),F') \circ (\xi_f(t)\otimes).
 \end{align*}
 This yields precisely the commutativity in $\Mod(Y)$ of the diagram in \eqref{e:connhomrel}.
\end{proof}

\begin{notation}[Contraction morphism]
 \label{not:cont}
 Let $X$ be a commutative ringed space and $F$ a module on $X$. We set
 \[
  \gamma^p_X(F) := \lambda_X(\wedge^{p-1}F) \circ \tilde\iota^p_X(F,\O_X) \colon F^\vee \otimes \wedge^pF \to \wedge^{p-1}F,
 \]
 where we view $\O_X$ as a module on $X$. We call $\gamma^p_X(F)$ the \emph{contraction morphism} in degree $p$ for $F$ on $X$.
\end{notation}

\begin{notation}
 \label{not:mu}
 Let $X$ be a commutative ringed space. Moreover, let $F$ and $G$ be modules on $X$. We define a morphism
 \[
  \mu_X(F,G) \colon G \otimes F^\vee \to \sHom(F,G)
 \]
 of modules on $X$ by requiring that, for all open sets $U$ of $X$, all $\theta \in (F^\vee)(U)$, and all $y \in G(U)$, the function $(\mu_X(F,G))_U$ send $y \otimes \theta \in (G\otimes F^\vee)(U)$ to the composition
 \[
  \psi \circ \theta \colon F|U \to \O_X|U \to G|U
 \]
 of morphisms of modules on $X|U$, where $\psi$ denotes the unique morphism of modules on $X|U$ from $\O_X|U$ to $G|U$ mapping the $1$ of $(\O_X|U)(U)$ to $y \in G(U)$. It is an easy matter to check that one, and only one, such morphism $\mu_X(F,G)$ exists. When the ringed space $X$ is clear from the context, we shall occasionally write $\mu$ instead of $\mu_X$.
\end{notation}

\begin{proposition}
 \label{p:intcont}
 Let $X$ be a commutative ringed space, $p$ an integer, and $F$ and $G$ modules on $X$. Then the following diagram commutes in $\Mod(X)$:
 \[
  \xymatrix@C=5pc{
   (G\otimes F^\vee)\otimes\wedge^pF \ar[r]^{\mu(F,G)\otimes\id_{\wedge^pF}} \ar[d]_{\alpha(G,F^\vee,\wedge^pF)} & \sHom(F,G)\otimes\wedge^pF \ar [d]^{\tilde\iota^p(F,G)} \\
   G\otimes(F^\vee\otimes\wedge^pF) \ar[r]_{\id_G\otimes \gamma^p(F)} & G\otimes\wedge^{p-1}F
  }
 \]
\end{proposition}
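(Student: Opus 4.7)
My plan is to verify the commutativity by reducing to an explicit computation on generating sections, since all maps involved are defined at the level of sheaves of $\O_X$-modules and the target and source are built from tensor and exterior products, whose sections are generated by elementary tensors and wedges of local sections of $F$, $G$, and $F^\vee$. Thus it suffices, for an arbitrary open $U \subset X$, to test equality on elements of the form $(y \otimes \theta) \otimes (x_0 \wedge \dots \wedge x_{p-1})$ with $y \in G(U)$, $\theta \in F^\vee(U)$, $x_0, \dots, x_{p-1} \in F(U)$.

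Tracing the right-upper path: first $\mu(F,G)$ sends $y \otimes \theta$ to the sheaf morphism $\phi \colon F|U \to G|U$ characterized by $\phi_V(f) = \theta_V(f) \cdot y|V$, by Notation \ref{not:mu}. Unwinding Notation \ref{not:adjint}, $\tilde\iota^p(F,G) = \epsilon(\wedge^pF, G \otimes \wedge^{p-1}F) \circ (\iota^p(F,G) \otimes \id_{\wedge^pF})$, so applying $\tilde\iota^p(F,G)$ to $\phi \otimes (x_0 \wedge \dots \wedge x_{p-1})$ amounts to evaluating the morphism $\iota^p(F,G)_U(\phi)$ at $x_0 \wedge \dots \wedge x_{p-1}$. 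By Construction \ref{con:intprod} this evaluation yields
\[
 \sum_{\nu<p} (-1)^{\nu-1} \phi_U(x_\nu) \otimes (x_0 \wedge \dots \wedge \hat{x_\nu} \wedge \dots \wedge x_{p-1}) = \sum_{\nu<p} (-1)^{\nu-1} \theta_U(x_\nu) y \otimes (x_0 \wedge \dots \wedge \hat{x_\nu} \wedge \dots \wedge x_{p-1}).
\]

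For the left-lower path, the associator $\alpha(G,F^\vee,\wedge^pF)$ rebrackets $(y \otimes \theta) \otimes (x_0 \wedge \dots \wedge x_{p-1})$ as $y \otimes (\theta \otimes (x_0 \wedge \dots \wedge x_{p-1}))$. Next $\id_G \otimes \gamma^p(F)$ is applied; by Notation \ref{not:cont}, $\gamma^p(F) = \lambda(\wedge^{p-1}F) \circ \tilde\iota^p(F,\O_X)$, and the same unwinding as above (now with $G$ replaced by $\O_X$ and the ``scalar'' $1$ in place of $y$) yields
\[
 \tilde\iota^p(F,\O_X)_U(\theta \otimes (x_0 \wedge \dots \wedge x_{p-1})) = \sum_{\nu<p} (-1)^{\nu-1} \theta_U(x_\nu) \otimes (x_0 \wedge \dots \wedge \hat{x_\nu} \wedge \dots \wedge x_{p-1}).
\]
Applying the left unit isomorphism $\lambda(\wedge^{p-1}F)_U$ and then tensoring on the left with $y$ gives exactly the expression computed above for the right-upper path.

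The two routes therefore agree on a generating family of sections, and by the standard sheaf-theoretic argument (compatibility with restriction, bilinearity/multilinearity of the constructions, and the universal property of tensor and exterior products) this implies the desired equality of sheaf morphisms. The only genuine care needed is bookkeeping: keeping the sign convention of Construction \ref{con:intprod} consistent, correctly inserting $\lambda$ to identify $\O_X \otimes \wedge^{p-1}F$ with $\wedge^{p-1}F$, and applying the associator before the second component is contracted; no deeper obstacle arises, since the identity is essentially the compatibility of scalar extraction through an $\O_X$-linear form with the formation of the interior product.
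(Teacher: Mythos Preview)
Your proof is correct and follows essentially the same approach as the paper: both reduce to checking equality on elementary sections $(y\otimes\theta)\otimes(x_0\wedge\dots\wedge x_{p-1})$ and invoke the universal property of sheafification. The paper separates out the trivial case $p\leq 0$ (where the target $G\otimes\wedge^{p-1}F$ vanishes) before doing the section-level check, whereas you treat all $p$ uniformly; both are fine since for $p\leq 0$ your sums are empty and both paths give zero.
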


\begin{proof}
 For $p\leq 0$ the assertion is clear since $G\otimes \wedge^{p-1}F \iso 0$ in $\Mod(X)$. So, assume that $p>0$. Then for all open sets $U$ of $X$, all $p$-tuples $x=(x_0,\dots,x_{p-1})$ of elements of $F(U)$, all morphisms $\theta \colon F|U \to \O_X|U$ of modules on $X|U$, \iev $\theta \in (\F^\vee)(U)$, and all $y\in G(U)$, one verifies easily, given the definitions of $\mu$, $\tilde\iota^p$, and $\gamma^p$, that $(y \otimes \theta) \otimes (x_0 \wedge \dots \wedge x_{p-1}) \in ((G\otimes F^\vee)\otimes \wedge^pF)(U)$ is mapped to one and the same element of $(G\otimes \wedge^{p-1}F)(U)$ by either of the two paths from the upper left to the lower right corner in the above diagram. Therefore, the diagram commutes in $\Mod(X)$ by the universal property of the sheaf associated to a presheaf.
\end{proof}

\begin{construction}[Projection morphism]
 \label{con:projmor}
 Let $n$ be an integer and $f\colon X\to Y$ a morphism of commutative ringed spaces. Moreover, let $F$ and $G$ be modules on $X$ and $Y$, respectively. Then we define the $n$-th \emph{projection morphism} relative $f$ for $F$ and $G$, denoted
 $$\pi^n_f(G,F) \colon G \otimes \R^nf_*(F) \to \R^nf_*(f^*G\otimes F),$$
 to be the morphism of modules on $Y$ which is obtained by first going along the composition
 $$G \to f_*(f^*G) \overset{\can}\to \R^0f_*(f^*G),$$
 where the former arrow stands for the familiar adjunction morphism for $G$ with respect to $f$, tensorized on the right with the identity of $\R^nf_*(F)$ and then applying the cup product:
 $$\cupp^{0,n}_f(f^*G,F) \colon \R^0f_*(f^*G) \otimes \R^nf_*(F) \to \R^nf_*(f^*G \otimes F).$$
 Observe that this construction is suggested by \cite[Proposition (12.2.3)]{EGA3.1}. Letting $F$ and $G$ vary, we may view $\pi^n_f$ as a function defined on the class of objects of the product category $\Mod(Y) \times \Mod(X)$. That way, it follows essentially from Proposition \ref{p:cupp} \ref{p:cupp-nat}), \iev the naturality of the cup product, that $\pi^n_f$ is a natural transformation
 $$(- \otimes_Y -) \circ (\id_{\Mod(Y)} \times \R^nf_*) \to \R^nf_* \circ (- \otimes_X -) \circ (f^* \times \id_{\Mod(X)})$$
 of functors from $\Mod(Y) \times \Mod(X)$ to $\Mod(Y)$. As usual, we will write $\pi^n$ instead of $\pi^n_f$ when we think this is appropriate.
\end{construction}

\begin{proposition}
 \label{p:projmoriso}
 Let $n$ be an integer, $f\colon X\to Y$ a morphism of commutative ringed spaces, $F$ a module on $X$, and $G$ a locally finite free module on $Y$. Then the projection morphism
 $$\pi^n_f(G,F) \colon G \otimes \R^nf_*(F) \to \R^nf_*(f^*G\otimes F)$$
 is an isomorphism in $\Mod(Y)$.
\end{proposition}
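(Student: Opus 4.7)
The plan is to reduce the statement to the case $G = \O_Y$, where the claim follows almost immediately from Proposition \ref{p:cupp} \ref{p:cupp-unit}. Since $G$ is locally finite free on $Y$, there exists an open cover $\cover V$ of $Y_\top$ such that, for each $V \in \cover V$, the restriction $G|V$ is isomorphic to $(\O_Y|V)^r$ as a module on $Y|V$, for some natural number $r$ depending on $V$. First I would verify that the formation of the projection morphism is compatible with passage to open subsets of $Y$; concretely, writing $U := f^{-1}(V)$, one has $\pi^n_f(G,F)|V = \pi^n_{f|U}(G|V, F|U)$ under the canonical identifications of $\R^nf_*(F)|V$ with $\R^n(f|U)_*(F|U)$ and of $(f^*G)|U$ with $(f|U)^*(G|V)$. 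This amounts to checking that the adjunction morphism $G \to f_*f^*G$ and the cup product $\cupp^{0,n}_f$ restrict nicely, which is straightforward from their explicit constructions --- for the cup product, from the construction via Godement resolutions described in Construction \ref{con:cupp}, since Godement resolutions and injective resolutions restrict to Godement resolutions and (sufficiently acyclic) resolutions on opens.

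With this localization, it suffices to prove the proposition when $G \cong \O_Y^r$ for some $r \in \N$. Next I would argue by additivity to reduce further to the case $r = 1$. The formation of $\pi^n_f(G,F)$ is additive in $G$: the adjunction unit $G \to f_*f^*G$ is additive in $G$ because $f^*$ and $f_*$ are additive functors; the cup product $\cupp^{0,n}_f$ is additive in each argument by its construction via tensor products of complexes (\cf Proposition \ref{p:cupp} \ref{p:cupp-nat}); and $- \otimes_Y \R^nf_*(F)$ is additive. Since $\R^nf_*$ itself is additive, the source and target of $\pi^n_f(\O_Y^r,F)$ decompose as $r$-fold direct sums of those of $\pi^n_f(\O_Y,F)$, under which $\pi^n_f(\O_Y^r,F)$ corresponds to the $r$-fold direct sum of copies of $\pi^n_f(\O_Y,F)$. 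Hence the former is an isomorphism precisely if the latter is.

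It remains to treat the case $G = \O_Y$. Here $f^*\O_Y = \O_X$ canonically, and the adjunction morphism $\O_Y \to f_*f^*\O_Y$ agrees with the structural morphism $f^\sharp \colon \O_Y \to f_*\O_X$. Unwinding Construction \ref{con:projmor}, one sees that $\pi^n_f(\O_Y,F)$ equals the composition traversing the down-then-right path from $\O_Y \otimes_Y \R^nf_*(F)$ to $\R^nf_*(\O_X \otimes_X F)$ in the diagram of Proposition \ref{p:cupp} \ref{p:cupp-unit} (with the $G$ and $q$ appearing there replaced by our $F$ and $n$). By the commutativity of that diagram, this composition equals $\R^nf_*(\lambda_X(F))^{-1} \circ \lambda_Y(\R^nf_*(F))$, a composition of two isomorphisms and hence itself an isomorphism. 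I expect the principal obstacle to lie not in the mathematical core of the argument --- which is in the end just the cup product's unit law --- but in the bookkeeping of the first step, namely pinning down exactly the canonical identifications under which the projection morphism restricts to open subsets of $Y$.
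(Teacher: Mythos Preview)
Your proposal is correct and follows the standard reduction-to-$\O_Y$ strategy. The paper itself does not give a proof at all: it simply cites \cite[Proposition (12.2.3)]{EGA3.1}. Your argument is essentially the argument one finds there (localize to make $G$ finite free, use additivity in $G$, then the unit identity for the cup product), so you have in effect supplied the details the paper omits. Your identification of the bookkeeping in the localization step as the only nontrivial point is accurate; the rest is exactly as you describe, with Proposition~\ref{p:cupp}~\ref{p:cupp-unit}) furnishing the base case $G=\O_Y$.
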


\begin{proof}
 See \cite[Proposition (12.2.3)]{EGA3.1}.
\end{proof}

\begin{proposition}
 \label{p:projmorcup}
 Let $f\colon X\to Y$ be a morphism of commutative ringed spaces. Let $q$ and $q'$ be integers, $F$ and $F'$ modules on $X$, and $G$ a module on $Y$. Then the following diagram commutes in $\Mod(Y)$:
 \[
 \xymatrix{
  \R^qf_*(f^*G\otimes F)\otimes\R^{q'}f_*(F') \ar[r] \ar@{}[r]<1ex>^{\cupp^{q,q'}_f(f^*G\otimes F,F')} & \R^{q+q'}f_*((f^*G\otimes F)\otimes F') \ar[d]^{\R^{q+q'}f_*(\alpha_X(f^*G,F,F'))} \\
  (G\otimes\R^qf_*(F)) \otimes\R^{q'}f_*(F') \ar[u]^{\pi^q_f(G,F)\otimes\id_{\R^{q'}f_*(F')}} \ar[d]_{\alpha_Y(G,\R^qf_*(F),\R^{q'}f_*(F'))} & \R^{q+q'}f_*(f^*G\otimes(F\otimes F')) \\
  G\otimes(\R^qf_*(F)\otimes\R^{q'}f_*(F')) \ar[r]_{\id_G\otimes\cupp^{q,q'}_f(F,F')} & G\otimes\R^{q+q'}f_*(F\otimes F') \ar[u]_{\pi^{q+q'}_f(G,F\otimes F')}
 }
 \]
\end{proposition}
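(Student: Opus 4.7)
The plan is to reduce the statement to the associativity of the cup product (Proposition \ref{p:cupp}\ref{p:cupp-ass}) by unfolding the definition of the projection morphism. Write $\eta \colon G \to \R^0f_*(f^*G)$ for the composition of the adjunction morphism $G \to f_*(f^*G)$ with the canonical morphism $f_*(f^*G) \to \R^0f_*(f^*G)$. Then, by Construction \ref{con:projmor}, we have
\[
 \pi^q_f(G,F) = \cupp^{0,q}_f(f^*G,F) \circ \bigl(\eta \otimes \id_{\R^qf_*(F)}\bigr)
\]
and analogously for $\pi^{q+q'}_f(G,F\otimes F')$.

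First, I would substitute these expressions into the diagram. The upper path from $(G \otimes \R^qf_*(F)) \otimes \R^{q'}f_*(F')$ to $\R^{q+q'}f_*(f^*G \otimes (F \otimes F'))$ then reads as the composition
\[
 (\eta \otimes \id) \otimes \id,\ \cupp^{0,q}_f(f^*G,F) \otimes \id,\ \cupp^{q,q'}_f(f^*G\otimes F,F'),\ \R^{q+q'}f_*(\alpha_X(f^*G,F,F')),
\]
while the lower path reads
\[
 \alpha_Y(G,\R^qf_*(F),\R^{q'}f_*(F')),\ \id \otimes \cupp^{q,q'}_f(F,F'),\ \eta \otimes \id,\ \cupp^{0,q+q'}_f(f^*G,F\otimes F').
\]
Next I would invoke the naturality of the associator $\alpha_Y$ in $\Mod(Y)$ to commute $\alpha_Y$ past the morphism $(\eta \otimes \id) \otimes \id$. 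After this manoeuver, both paths begin with $(\eta \otimes \id) \otimes \id$ and the remaining task is to verify the commutativity of the diagram obtained by erasing this common prefix, namely the diagram starting at $(\R^0f_*(f^*G) \otimes \R^qf_*(F)) \otimes \R^{q'}f_*(F')$.

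This remaining diagram is precisely the associativity diagram for the cup product relative $f$ applied with the triple of modules $(f^*G,F,F')$ in bidegrees $(0,q,q')$. So invoking Proposition \ref{p:cupp}\ref{p:cupp-ass} directly closes the argument. There is no substantive obstacle here; the only care required is bookkeeping: making sure that the naturality of $\alpha_Y$ produces the correct rearrangement, and that the factor $\R^qf_*(\id_F) = \id_{\R^qf_*(F)}$ (which appears implicitly when turning $\pi^q_f(G,F) \otimes \id$ into a tensor product of morphisms) is tracked correctly. Both are mechanical.
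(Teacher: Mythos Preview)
Your proof is correct and follows exactly the approach the paper indicates: the paper's own proof simply says ``This follows with ease from the associativity of the cup product as stated in Proposition \ref{p:cupp} \ref{p:cupp-ass}),'' and you have spelled out precisely the unfolding-and-rearranging steps (via the definition of $\pi^n_f$ and naturality of $\alpha_Y$) that reduce the diagram to the instance of cup-product associativity with modules $(f^*G,F,F')$ in degrees $(0,q,q')$.
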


\begin{proof}
 This follows with ease from the associativity of the cup product as stated in Proposition \ref{p:cupp} \ref{p:cupp-ass}).
\end{proof}

\begin{construction}
 \label{con:fibered}
 Let $\cat C$, $\cat D$, and $\cat E$ be categories and $S\colon\cat C\to\cat E$ and $T\colon\cat D\to\cat E$ functors. Then, the \emph{fibered product} of $\cat C$ and $\cat D$ over $\cat E$ with respect to $S$ and $T$, customarily ambiguously(!) denoted $\cat C\times_{\cat E}\cat D$, is by definition the subcategory of the ordinary product category $\cat C\times\cat D$ whose class of objects is given by those ordered pairs $(x,y)$ satisfying $Sx=Ty$; moreover, for two such ordered pairs $(x,y)$ and $(x',y')$ a morphism $(\alpha,\beta) \colon (x,y) \to (x',y')$ in $\cat C\times \cat D$ is a morphism in $\cat C\times_{\cat E}\cat D$ if and only if $S\alpha = T\beta$. Two easy observations show that, for one, for all objects $(x,y)$ of $\cat C\times_{\cat E}\cat D$, the identity $\id_{(x,y)}\colon (x,y)\to (x,y)$ in $\cat C\times\cat D$ is a morphism in $\cat C\times_{\cat E}\cat D$ and that, for another, for all objects $(x,y)$, $(x',y')$, and $(x'',y'')$ and morphisms $(\alpha,\beta) \colon (x,y)\to (x',y')$ and $(\alpha',\beta')\colon (x',y')\to (x'',y'')$ of $\cat C\times_{\cat E}\cat D$, the composition $(\alpha',\beta') \circ (\alpha,\beta) \colon (x,y) \to (x'',y'')$ in $\cat C\times \cat D$ is again a morphism in $\cat C\times_{\cat E}\cat D$.

 We apply the fibered product construction in the following situation: Let $f\colon X\to Y$ be a morphism of ringed spaces, and consider the functors
 \[
  f^* \colon \Mod(Y) \to \Mod(X) \quad \text{and} \quad p_0 \colon \Mod(X)^{\mathbf3} \to \Mod(X),
 \]
 where $p_0$ stands for the ``projection to $0$'', \iev $p_0$ takes an object $t$ of $\Mod(X)^{\mathbf3}$ to $t(0)$ and a morphism $\alpha \colon t\to t'$ in $\Mod(X)^{\mathbf3}$ to $\alpha(0)$. Then, define $\cat D_f$ to be the fibered product of $\Mod(Y)$ and $\Mod(X)^{\mathbf3}$ over $\Mod(X)$ with respect to $f^*$ and $p_0$, symbolically: $$\cat D_f := \Mod(Y) \times_{\Mod(X)} \Mod(X)^{\mathbf3}.$$
\end{construction}

\begin{notation}[Kodaira-Spencer class, I]
 \label{not:ks0}
 Let $f\colon X\to Y$ be a morphism of commutative ringed spaces and $(G,t)$ an object of $\cat D_f$ such that $t$ is a short exact triple of modules on $X$ and $F:=t2$ and $G$ are locally finite free modules on $X$ and $Y$, respectively. We associate to $(G,t)$ a morphism
 \[
  \xi_{\KS,f}(G,t) \colon \O_Y \to G\otimes \R^1f_*(F^\vee)
 \]
 in $\Mod(Y)$, which we call, from that time on, the \emph{Kodaira-Spencer class} relative $f$ of $(G,t)$. For the definition of $\xi_{\KS,f}(G,t)$, we remark, to begin with, that since $F$ is a locally finite free module on $X$, the triple $t$ is not only short exact, but locally split exact on $X$. Thus we may consider its relative locally split extension class with respect to $f$:
 \[
  \xi_f(t) \colon \O_Y \to \R^1f_*(\sHom(F,f^*G)),
 \]
 \cf Notation \ref{not:rellsec}. Set
 \[
  \mu := \mu_X(F,f^*G) \colon f^*G\otimes F^\vee \to \sHom(F,f^*G),
 \]
 \cf Notation \ref{not:mu}. Then, again by the local finite freeness of $F$ on $X$, we know that $\mu$ is an isomorphism in $\Mod(X)$. Given that $G$ is a locally finite free module on $Y$, the projection morphism
 \[
  \pi := \pi^1_f(G,F^\vee) \colon G\otimes \R^1f_*(F^\vee) \to \R^1f_*(f^*G\otimes F^\vee)
 \]
 is an isomorphism in $\Mod(Y)$ by means of Proposition \ref{p:projmoriso}. Composing, we obtain an isomorphism in $\Mod(Y)$:
 \[
  \R^1f_*(\mu) \circ \pi \colon G\otimes \R^1f_*(F^\vee) \to \R^1f_*(\sHom(F,f^*G)).
 \]
 Therefore, there exists a unique $\xi_{\KS,f}(G,t)$ rendering commutative in $\Mod(Y)$ the following diagram:
 \[
  \xymatrix{
   & \O_Y \ar[ld]_{\xi_{\KS,f}(G,t)} \ar[dr]^{\xi_f(t)} \\ G\otimes \R^1f_*(F^\vee) \ar[rr]_-{\R^1f_*(\mu) \circ \pi} && \R^1f_*(\sHom(F,f^*G))
  } 
 \]
\end{notation}

\begin{notation}[Cup and contraction, I]
 \label{not:cc0}
 We proceed in the situation of Notation \ref{not:ks0}, that is, we assume that a morphism of commutative ringed spaces $f\colon X\to Y$ as well as an object $(G,t)$ of $\cat D_f$ be given such that $t$ is a short exact triple of modules on $X$ and $F:=t2$ and $G$ are locally finite free modules on $X$ and $Y$, respectively. Additionally, let us fix two integers $p$ and $q$. Then we write $\gamma^{p,q}_{\KS,f}(G,t)$ for the composition of the following morphisms in $\Mod(Y)$:
 \begin{align*}
  \R^qf_*(\wedge^pF) & \xrightarrow{\lambda(\R^qf_*(\wedge^pF))^{-1}} && \O_Y \otimes \R^qf_*(\wedge^pF) \\
  &\xrightarrow{\xi_{\KS,f}(G,t) \otimes \id_{\R^qf_*(\wedge^pF)}} && (G \otimes \R^1f_*(F^\vee)) \otimes \R^qf_*(\wedge^pF) \\
  &\xrightarrow{\alpha(G,\R^1f_*(F^\vee),\R^qf_*(\wedge^pF))} && G \otimes (\R^1f_*(F^\vee) \otimes \R^qf_*(\wedge^pF)) \\
  &\xrightarrow{\id_G \otimes \cupp^{1,q}(F^\vee,\wedge^pF)} && G \otimes \R^{q+1}f_*(F^\vee \otimes \wedge^pF) \\
  &\xrightarrow{\id_G \otimes \R^{q+1}f_*(\gamma^p(F))} && G \otimes \R^{q+1}f_*(\wedge^{p-1}F).
 \end{align*}
 The resulting morphism of modules on $Y$,
 \[
  \gamma^{p,q}_{\KS,f}(G,t)\colon\R^qf_*(\wedge^pF)\to G\otimes\R^{q+1}f_*(\wedge^{p-1}F),
 \]
 goes by the name of \emph{cup and contraction with Kodaira-Spencer class} in bidegree $(p,q)$ relative $f$ for $(G,t)$ (the name should be self-explanatory looking at the definition of $\gamma^{p,q}_{\KS,f}(G,t)$ above: first, we tensorize with the Kodaira-Spencer class $\xi_{\KS,f}(G,t)$, \cf Notation \ref{not:ks0}, then we ``cup'', then we ``contract'').
\end{notation}

\begin{figure}[ht]
 \centerline{
  \xymatrix@C=1pc@R=3pc{
   \R^qf_*(\wedge^pF) \ar[r]^-{\delta^q(\Lambda^p(t))} \ar[d]|{\xi(t)\otimes} \ar@/_8.4pc/[ddd]|{\xi_\KS(G,t)\otimes} \ar@{}[dr]|{\diagcircled1} & \R^{q+1}f_*(f^*G\otimes\wedge^{p-1}F) \\
   \R^1f_*(\sHom(F,f^*G))\otimes\R^qf_*(\wedge^pF) \ar[r]^-{\cupp^{1,q}} \ar@{}[dr]|{\diagcircled{3}} \ar@{}@<-4.5pc>[d]|{\diagcircled2} & \R^{q+1}f_*(\sHom(F,f^*G)\otimes\wedge^pF) \ar[u]|{\R^{q+1}f_*(\tilde\iota^p(F,f^*G))} \ar@{}@<3.5pc>[d]|{\diagcircled{4}} \\
   \R^1f_*(f^*G\otimes F^\vee)\otimes\R^qf_*(\wedge^pF) \ar[r]^-{\cupp^{1,q}} \ar[u]|{\R^1f_*(\mu)\otimes \R^qf_*(\id)} \ar@{}[ddr]|{\diagcircled5} & \R^{q+1}f_*((f^*G\otimes F^\vee)\otimes\wedge^pF) \ar[u]|{\R^{q+1}f_*(\mu\otimes\id)} \ar[d]|{\R^{q+1}f_*(\alpha_X)} & G\otimes\R^{q+1}f_*(\wedge^{p-1}F) \ar@/_1.2pc/[uul]_(.4){\pi^{q+1}(G,\wedge^{p-1}F)} \\
   (G\otimes\R^1f_*(F^\vee))\otimes\R^qf_*(\wedge^pF) \ar[u]|{\pi^1(G,F^\vee)\otimes\id} \ar[d]|{\alpha_Y} & \R^{q+1}f_*(f^*G\otimes(F^\vee\otimes\wedge^pF)) \ar@/_7.3pc/[uuu]|{\R^{q+1}f_*(\id\otimes \gamma^p(F))} \ar@{}@<-2ex>[ur]|{\diagcircled{6}} \\
   G\otimes(\R^1f_*(F^\vee)\otimes\R^qf_*(\wedge^pF)) \ar[r]_-{\id\otimes\cupp^{1,q}} & G\otimes\R^{q+1}f_*(F^\vee\otimes\wedge^pF) \ar[u]|{\pi^{q+1}(G,F^\vee\otimes\wedge^pF)} \ar@/_1.2pc/[uur]_(.6){\id\otimes\R^{q+1}f_*(\gamma^p(F))} \\
  }
 }
 \caption{Diagram for the proof of Proposition \ref{p:connhom}.}
 \label{fig:ks}
\end{figure}

\begin{proposition}
 \label{p:connhom}
 Let $f\colon X\to Y$ be a morphism of commutative ringed spaces and $(G,t)$ an object of $\cat D_f$ such that $t$ is a short exact triple of modules on $X$ and $F:=t2$ and $G$ are locally finite free modules on $X$ and $Y$, respectively. Moreover, let $p$ and $q$ be integers. Then we have:
 \[
  \delta^q_f(\Lambda^p(t)) = \pi^{q+1}_f(G,\wedge^{p-1}F) \circ \gamma^{p,q}_{\KS,f}(G,t).
 \]
 In other words, the following diagram commutes in $\Mod(Y)$:
 \[
  \xymatrix@C=4pc{
   \R^qf_*(\wedge^pF) \ar[r]^-{\delta^q_f(\Lambda^p(t))} \ar@/_1.5pc/[rr]_{\gamma^{p,q}_{\KS,f}(G,t)} & \R^{q+1}f_*(f^*G\otimes\wedge^{p-1}F) & G\otimes\R^{q+1}f_*(\wedge^{p-1}F) \ar[l]_-{\pi^{q+1}_f(G,\wedge^{p-1}F)}
  }
 \]
\end{proposition}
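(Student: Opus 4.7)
The plan is to verify the commutativity of the large diagram displayed in Figure \ref{fig:ks}, whose outer boundary encodes precisely the identity asserted by the proposition. The top cell of that diagram is furnished by Proposition \ref{p:connhomrel} applied to $t$ (which is locally split exact by Proposition \ref{p:lambdaplset} \ref{p:lambdaplset-lset}), since $F$ is locally finite free): this identifies $\delta^q_f(\Lambda^p(t))$ with
\[
\R^{q+1}f_*(\tilde\iota^p(F,f^*G)) \circ \cupp^{1,q}(\sHom(F,f^*G),\wedge^pF) \circ (\xi_f(t)\otimes\id).
\]
The task is then to transform this expression into $\pi^{q+1}_f(G,\wedge^{p-1}F) \circ \gamma^{p,q}_{\KS,f}(G,t)$ by a sequence of diagram chases.

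The first move is to factor $\xi_f(t)$ through the Kodaira-Spencer class: by Notation \ref{not:ks0},
\[
\xi_f(t) = \R^1f_*(\mu) \circ \pi^1_f(G,F^\vee) \circ \xi_{\KS,f}(G,t),
\]
where $\mu := \mu_X(F,f^*G)$ is an isomorphism because $F$ is locally finite free on $X$. Next, one invokes the naturality of the cup product (Proposition \ref{p:cupp} \ref{p:cupp-nat})) for the morphism $\mu$, which lets one slide $\R^1f_*(\mu)\otimes\id$ past $\cupp^{1,q}(-,\wedge^pF)$ at the cost of rewriting the left argument as $f^*G\otimes F^\vee$.

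The algebraic heart of the argument is Proposition \ref{p:intcont}, applied with the pair $(F,f^*G)$: it asserts that
\[
\tilde\iota^p(F,f^*G) \circ \bigl(\mu\otimes\id_{\wedge^pF}\bigr) = \bigl(\id_{f^*G}\otimes\gamma^p(F)\bigr) \circ \alpha_X(f^*G,F^\vee,\wedge^pF)
\]
holds in $\Mod(X)$. Pushing this identity forward by $\R^{q+1}f_*$ replaces the composition $\R^{q+1}f_*(\tilde\iota^p(F,f^*G)) \circ \R^{q+1}f_*(\mu\otimes\id)$ by $\R^{q+1}f_*(\id_{f^*G}\otimes\gamma^p(F)) \circ \R^{q+1}f_*(\alpha_X)$, which is exactly the format required on the target side of $\gamma^{p,q}_{\KS,f}(G,t)$.

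The remaining assembly is routine. Proposition \ref{p:projmorcup} commutes the projection morphism $\pi^1_f(G,F^\vee)$ through the cup product, producing $\pi^{q+1}_f(G,F^\vee\otimes\wedge^pF)$ on the opposite side of $\id_G\otimes\cupp^{1,q}(F^\vee,\wedge^pF)$, modulo the associators $\alpha_X$ and $\alpha_Y$. A final appeal to the naturality of $\pi^{q+1}_f$ (Construction \ref{con:projmor}), applied to the contraction $\gamma^p(F)\colon F^\vee\otimes\wedge^pF\to\wedge^{p-1}F$, shifts this projection morphism outward to $\pi^{q+1}_f(G,\wedge^{p-1}F)$. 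Concatenating the transformations, and incorporating the unitor $\lambda_Y(\R^qf_*(\wedge^pF))^{-1}$ that opens the definition of $\gamma^{p,q}_{\KS,f}(G,t)$ in Notation \ref{not:cc0}, one recovers $\pi^{q+1}_f(G,\wedge^{p-1}F)\circ\gamma^{p,q}_{\KS,f}(G,t)$ verbatim. The only obstacle is bookkeeping of associators, unitors, and naturality transports; the substantive input has already been stockpiled in Propositions \ref{p:connhomrel}, \ref{p:intcont}, and \ref{p:projmorcup}.
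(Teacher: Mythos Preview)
Your proposal is correct and follows the paper's proof essentially verbatim: both arguments walk through the subdiagrams of Figure~\ref{fig:ks}, invoking Proposition~\ref{p:connhomrel} for the top cell, the definition of $\xi_{\KS,f}(G,t)$ (Notation~\ref{not:ks0}), cup-product naturality (Proposition~\ref{p:cupp}~\ref{p:cupp-nat})), Proposition~\ref{p:intcont} pushed forward by $\R^{q+1}f_*$, Proposition~\ref{p:projmorcup}, and finally the naturality of $\pi^{q+1}_f$. One minor quibble: the local split exactness of $t$ follows directly from $F=t(2)$ being locally finite free (short exact sequences with locally free quotient split locally), not from Proposition~\ref{p:lambdaplset}~\ref{p:lambdaplset-lset}), which concludes local split exactness of $\Lambda^p(t)$ \emph{from} that of $t$.
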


\begin{proof}
 Set $\mu := \mu_X(F,f^*G)$, \cf Notation \ref{not:mu}, and consider the diagram in Figure \ref{fig:ks}, where we have abstained from specifying the cup products $\cupp^{1,q}$, the tensor associativity morphisms $\alpha_X$ and $\alpha_Y$, as well as the identity morphisms $\id$ further. We show that the subdiagrams labeled \diagcircled1--\diagcircled6 commute in $\Mod(Y)$ (this is indeed equivalent to saying that the diagram commutes in $\Mod(Y)$ as such, but we will merely use the commutativity of the mentioned subdiagrams afterwards). 
 We know that the triple $t$ is locally split exact on $X$. Hence the commutativity of \diagcircled1 is implied by Proposition \ref{p:connhomrel}.
 The commutativity of \diagcircled2 follows immediately from the definition of the Kodaira-Spencer class $\xi_\KS(G,t)$, \cf Notation \ref{not:ks0}.
 \diagcircled3 commutes by the naturality of the cup product, Proposition \ref{p:cupp} \ref{p:cupp-nat}).
 The commutativity of \diagcircled4 follows from Proposition \ref{p:intcont} coupled with the fact that $\R^{q+1}f_*$ is a functor from $\Mod(X)$ to $\Mod(Y)$.
 \diagcircled5 commutes due to Proposition \ref{p:projmorcup}.
 Last but not least, the commutativity of \diagcircled6 follows from the fact that $\pi^{q+1}_f$ is a natural transformation
 $$(- \otimes_Y -) \circ (\id_{\Mod(Y)} \times \R^{q+1}f_*) \to \R^{q+1}f_* \circ (- \otimes_X -) \circ (f^* \times \id_{\Mod(X)})$$
 of functors from $\Mod(Y) \times \Mod(X)$ to $\Mod(Y)$, \cf Construction \ref{con:projmor}.

 Recalling the definition of the cup and contraction with Kodaira-Spencer class, \cf Notation \ref{not:cc0}, we see that the commutativity of \diagcircled1--\diagcircled6 implies that
 $$\delta^q(\Lambda^p(t)) = \pi^{q+1}(G,\wedge^{p-1}F) \circ \gamma^{p,q}_{\KS}(G,t)$$
 holds in $\Mod(Y)$ (go through the subdiagrams one by one in the given order).
\end{proof}

\section{A framework for studying the Gauß-Manin connection}
\label{s:cdlemma}

This section makes up the technical heart of Chapter \ref{ch:peri}. In fact, Theorem \ref{t:grgmcc} of the subsequent \S\ref{s:ksgm}, which turns out to be crucial in view of our aspired study of period mappings in \S\ref{s:pm}, is a mere special case of Theorem \ref{t:grgmcc0}. One should regard this section as presenting a framework designed to study, in \S\ref{s:ksgm}, the traditional Gauß-Manin connection which is associated to a submersive morphism of complex spaces with smooth base. The results of section \S\ref{s:cdlemma} are all based upon Setup \ref{set:cdlemma}. Let us note that we are well aware of the fact that Setup \ref{set:cdlemma} might seem odd at first sight, yet looking at \S\ref{s:ksgm}, the reader will find that it is just the right thing to consider.

To begin with, we introduce the auxiliary device of what we christened ``augmented triples''.

\begin{notation}[Augmented triples]
 \label{not:augtrip}
 Let $b\colon X\to X'$ be a morphism of ringed spaces. Temporarily denote $\cat D$ the category of short exact triples of bounded below complexes of modules on $X'$; note that $\cat D$ is a full subcategory of $(\Com^+(X'))^{\mathbf3}$. Now consider the following diagram of categories and functors:
 \begin{equation} \label{e:aug-cat}
  \Com^+(X) \times \Com^+(X) \xrightarrow{b_*\times b_*} \Com^+(X') \times \Com^+(X') \xleftarrow{p_2\times p_0} \cat D,
 \end{equation}
 where $p_2$ signifies the ``projection to $2$'', \iev $p_2(t)=t(2)$ for all objects $t$ of $\cat D$ and $(p_2(t,t'))(\alpha)=\alpha(2)$ for all morphisms $\alpha\colon t\to t'$ in $\cat D$; $p_0$ signifies the ``projection to $0$'', which is declared analogously. Now, define $\cat E_b$ to be the fibered product category over the diagram \eqref{e:aug-cat}, \iev
 \[
  \cat E_b := (\Com^+(X) \times \Com^+(X)) \times_{\Com^+(X') \times \Com^+(X')} \cat D,
 \]
 \cf Construction \ref{con:fibered}. We refer to $\cat E_b$ as the \emph{category of augmented triples} with respect to $b$. An object $l_+$ of $\cat E_b$ is called an \emph{augmented triple} with respect to $b$. Note an augmented triple with respect to $b$ can always be written in the form $((K,L),l)$, where $K$ and $L$ are bounded below complexes of modules on $X$ and $l$ is a triple of bounded below complexes of modules on $X'$ such that $l(0) = b_*(L)$ and $l(2) = b_*(K)$.
\end{notation}

In this as well as in the next section we will frequently encounter the situation where two modules, say $F$ and $G$, on a ringed space $X$ together with a map $\alpha \colon F\to G$ of sheaves on $X_\top$ are given, yet $\alpha$ is not a morphism of sheaves of modules on $X$ but satisfies only a weaker linearity property, e.g., $X \to \C$ is a complex space and $\alpha$ is not $\O_X$-linear but merely $\C$-linear. For these purposes it will come in handy to establish the following

\begin{notation}
 \label{not:modrel}
 Let $f\colon X\to S$ be a morphism of ringed spaces. Then we write $\Mod(X/f)$ or else $\Mod(X/S)$ for the following large category: The class of objects of $\Mod(X/f)$ is simply the class of sheaves of modules on $X$, \iev the class of objects of $\Mod(X)$. For any ordered pair $(F,G)$ of objects of $\Mod(X/f)$, $\alpha$ is a morphism from $F$ to $G$ in $\Mod(X/f)$ if and only if $\alpha$ is a morphism of sheaves of $f^{-1}\O_S$-modules on $X_\top$ from $\bar F$ to $\bar G$, where $\bar F$ (\resp $\bar G$) stands for the sheaf of $f^{-1}\O_S$-modules on $X_\top$ which is obtained from $F$ (\resp $G$) by relaxing the scalar multiplication via the morphism $f^\sharp \colon f^{-1}\O_S \to \O_X$ of sheaves of rings on $X_\top$. The identity function of $\Mod(X/f)$ sends an object $F$ of $\Mod(X/f)$ to the identity sheaf map $\id_F$ on $X_\top$. The composition in $\Mod(X/f)$ is given by the composition of sheaf maps on $X_\top$. Note that $\Mod(X)$ is a subcategory of $\Mod(X/f)$. We omit the verification that the so defined $\Mod(X/f)$ is indeed a category.
 
 Note that $\Mod(X)$ is a subcategory of $\Mod(X/f)$. In fact, the classes of objects of $\Mod(X)$ and $\Mod(X/f)$ agree, yet the hom-sets of $\Mod(X/f)$ are generally larger than the corresponding hom-sets of $\Mod(X)$.
 
 Defining additions on the hom-sets of $\Mod(X/f)$ as usual, $\Mod(X/f)$ becomes an additive category. Thus we can speak of complexes over $\Mod(X/f)$. In this regard, we set $\Com(X/f) := \Com(\Mod(X/f))$ and $\Com^+(X/f) := \Com^+(\Mod(X/f))$.
\end{notation}

\begin{construction}
 \label{con:augconnhom}
 Suppose we are given a commutative square in the category of ringed spaces as follows:
 \begin{equation} \label{e:chom-sq}
  \xysquare{X}{X'}{S}{S'}{b}{f}{f'}{c}
 \end{equation}
 Assume that $b_\top=\id_{X_\top}$ and $c_\top=\id_{S_\top}$. Then the functors $b_*$ and $c_*$ are exact. Fix an integer $n$, and denote
 \[
  \kappa^n \colon \R^nf'_* \circ b_* \to c_* \circ \R^nf_*
 \]
 the natural transformation of functors from $\Com^+(X)$ to $\Mod(S')$ which is induced by the base change (in degree $n$) associated to the square \eqref{e:chom-sq}, \cf Construction \ref{con:bc}. Since the functors $b_*$ and $c_*$ are exact, we know that, for all $F \in \Com^+(X)$, the morphism
 \[
  \kappa^n(F) \colon \R^nf'_*(b_*(F)) \to c_*(\R^nf_*(F))
 \]
 is an isomorphism in $\Mod(S')$. That is, $\kappa^n$ is a natural equivalence between the mentioned functors.
 
 Let $l_+ = ((K,L),l)$ be an augmented triple with respect to $b$, \iev an object of $\cat E_b$, \cf  Notation \ref{not:augtrip}. We define $\delta^n_+(l_+)$ to be the composition of the following morphisms of modules on $S'$:
 \begin{equation} \label{e:chom-1}
  c_*(\R^nf_*(K)) \overset{\kappa^n(K)}\to \R^nf'_*(b_*(K)) \overset{\delta^n_{f'}(l)}\to \R^{n+1}f'_*(b_*(L)) \xrightarrow{(\kappa^{n+1}(L))^{-1}} c_*(\R^{n+1}f_*(L));
 \end{equation}
 note here that $l(2) = b_*(K)$ and $l(0) = b_*(L)$. Then $\delta^n_+(l_+)$ is a morphism
 \[
  \delta^n_+(l_+) \colon \R^nf_*(K) \to \R^{n+1}f_*(L)
 \]
 in $\Mod(S/c)$, \cf Notation \ref{not:modrel}.

 Letting $l_+$ vary, we obtain a function $\delta^n_+$ defined on the class of objects of $\cat E_b$. We call $\delta^n_+$ the $n$-th \emph{augmented connecting homomorphism} associated to the square \eqref{e:chom-sq}. Since $\kappa^n$, $\delta^n$, and $(\kappa^{n+1})^{-1}$ are altogether natural transformations (of approriate functors between appropriate categories), one infers that $\delta^n_+$ is a natural transformation
 \[
  \delta^n_+ \colon \R^nf_* \circ q_0 \to \R^{n+1}f_* \circ q_1
 \]
 of functors from $\cat E_b$ to $\Mod(S/c)$, where $q_0$ (\resp $q_1$) stands for the functor $\cat E_b \to \Com^+(X)$ which is given as the composition of the projection $\cat E_b \to \Com^+(X) \times \Com^+(X)$ to the first factor in the representation of $\cat E_b$ as a fibered product followed by the projection $\Com^+(X) \times \Com^+(X) \to \Com^+(X)$ to the first (\resp second) factor.
\end{construction}

Next, we describe the basic situation to which any of the upcoming results of this section will refer.

\begin{setup}
 \label{set:cdlemma}
 Let $f\colon X\to S$ and $g\colon S\to T$ be two morphisms of commutative ringed spaces, and put $h:=g\circ f$. Let $(G,t)$ be an object of $\cat D_f$, \cf Construction \ref{con:fibered}, such that $t$ is a short exact triple of modules on $X$ and $F:=t2$ and $G$ are locally finite free modules on $X$ and $S$, respectively. Moreover, let
 $$l\colon L \overset{\alpha}{\to} M \overset{\beta}{\to} K$$
 be a triple in $\Com(X/h)$ such that $K$ and $L$ are in fact objects of $\Com(X/f)$ (note that $\Com(X/f)$ is a subcategory of $\Com(X/h)$ since $h$ factors through $f$ in the category of ringed spaces) and, for all integers $p$, we have:
 \begin{equation} \label{e:lambdaptlp}
  l^p = \Lambda^p_X(t),
 \end{equation}
 where $l^p$ stands for the triple in $\Mod(X/h)$ obtained by extracting the degree-$p$ part from the triple of complexes $l$. Morally, requiring that \eqref{e:lambdaptlp} holds for all integers $p$ means that the only new information when passing from $(G,t)$ to $l$ lies in the presence of differentials in the complexes $L$, $M$, and $K$. We would like to impose yet another condition on the triple $l$ which relates the differentials of $K$ to the differentials of $L$. In order to do this conveniently, we need to introduce some pieces of notation first.

 As to that, put
 \begin{align*}
  \bar X & := (X_\top,f^{-1}\O_S) & u & := (\id_{|X|},f^\sharp\colon f^{-1}\O_S\to\O_X) \\
  X'     & := (X_\top,f^{-1}g^{-1}\O_T) & b & := (\id_{|X|},f^{-1}(g^\sharp)\colon f^{-1}g^{-1}\O_T\to f^{-1}\O_S) \\
  S'     & := (S_\top,g^{-1}\O_T) & c & := (\id_{|S|},g^\sharp\colon g^{-1}\O_T\to\O_S)
 \end{align*}
 and moreover:
 \begin{align*}
  \bar f & := (|f|,\eta_{\O_S}\colon \O_S \to {f_\top}_*f^{-1}\O_S) \\
  f'     & := (|f|,\eta_{g^{-1}\O_T}\colon g^{-1}\O_T \to {f_\top}_*f^{-1}g^{-1}\O_T),
 \end{align*}
 where we temporarily use $\eta$ to denote the adjunction morphism for sheaves of sets on $S_\top$ with respect to the morphism of topological spaces $f_\top$. Given the above notation, it is easy to verify that the following diagram commutes in the category of ringed spaces:
 \begin{equation}
  \label{e:gmsetup-sq}
  \xymatrix{
   X \ar[r]^u \ar[d]_f & \bar X \ar[r]^b \ar[d]_{\bar f} & X' \ar[d]^{f'} \\
   S \ar[r]_{\id_S} & S \ar[r]_c & S'
  }
 \end{equation}
 We have $b_\top=\id_{(\bar X)_\top}$ and $c_\top=\id_{S_\top}$. For any integer $n$, we let $\delta^n_+$ signify the $n$-th augmented connecting homomorphism with respect to the right square in \eqref{e:gmsetup-sq}, \cf Construction \ref{con:augconnhom}. For any integer $n$, we denote by $\delta^n$ the ordinary $n$-th connecting homomorphism for the derived functor $\R f_*$, viewed either as defined on the class of short exact triples in $\Mod(X)$ or the class of short exact triples in $\Com^+(X)$. For any integer $n$, we denote by
 $$\kappa^n \colon \R^n\bar f_*\circ u_* \to (\id_S)_*\circ\R^nf_* = \R^nf_*$$
the $n$-th base change natural transformation of functors from $\Com^+(X)$ to $\Mod(S)$ which is associated to the left square in \eqref{e:gmsetup-sq}.

 By $\bar K$ and $\bar L$ we denote the complexes of modules on $\bar X$ which are obtained by relaxing the module multiplication of the terms of the complexes $K$ and $L$, respectively, via the morphism of rings $f^\sharp\colon f^{-1}\O_S \to \O_X$ on $X_\top$. Note that just because we had required $K$ and $L$ to be objects of $\Com(X/f)$, and not only objects of $\Com(X/h)$, it is that $\bar K$ and $\bar L$ are indeed complexes of modules on $\bar X$, \iev the differentials are $f^{-1}\O_S$-linear. We define $l'$ to be the triple of complexes over $\Mod(X')$ which is obtained by relaxing the module multiplication of any of the terms in the complexes $L$, $M$, and $K$ via the composition of morphisms of rings on $X_\top$:
 $$f^\sharp \circ f^{-1}(g^\sharp) \colon f^{-1}g^{-1}(\O_T) \to f^{-1}\O_S \to \O_X.$$
 Furthermore, we set $l_+:=((\bar K,\bar L),l')$. Observe that, for all integers $p$, since \eqref{e:lambdaptlp} holds, $l^p$ is a short exact triple of modules on $X$ by Proposition \ref{p:lambdaplset} \ref{p:lambdaplset-lset}). Thus $l'$ is a short exact triple of complexes of modules on $X'$. Moreover, any of the complexes $L$, $M$, and $K$ is bounded below. Therefore, the complexes $\bar K$ and $\bar L$ are bounded below and $l'$ is a triple of bounded below complexes of modules on $X'$, whence $l_+$ is an augmented triple with respect to $b$, \iev an object of $\cat E_b$, \cf Notation \ref{not:augtrip}.
 
 For any integer $p$, we denote $\gamma^p$ the composition of the following evident morphisms in $\Mod(\bar X)$:
 \begin{align*}
  \bar f^*G \otimes_{\bar X} u_*(K^{p-1}) & \to u_*(u^*\bar f^*G) \otimes_{\bar X} u_*(K^{p-1}) \\ & \to u_*(f^*G) \otimes_{\bar X} u_*(K^{p-1}) \to u_*(f^*G\otimes_X K^{p-1}).
 \end{align*}
 We write $\gamma$ to denote the $\Z$-sequence $(\gamma^p)_{p\in\Z}$. What we want to assume is that $\gamma$ is a morphism of complexes over $\Mod(\bar X)$:
 \[
  \gamma \colon \bar f^*G\otimes_{\bar X} (\bar K[-1]) \to \bar L.
 \]
 For any integer $p$, we define
 \[
  \gamma^{\geq p} \colon \bar f^*G\otimes_{\bar X} ((\sigma^{\geq p-1}\bar K)[-1]) \to \sigma^{\geq p}\bar L
 \]
 to be the morphism in $\Com^+(\bar X)$ which is given by $\gamma^{p'}$ in degree $p'$ for all $p'\in\Z_{\geq p}$ and by the zero morphism in degrees $<p$. Similarly, for any $p\in\Z$, we define
 \[
  \gamma^{=p} \colon \bar f^*G\otimes_{\bar X}((\sigma^{=p-1}\bar K)[-1]) \to \sigma^{=p}\bar L
 \]
 to be the morphism in $\Com^+(\bar X)$ which is given by $\gamma^p$ in degree $p$ and the zero morphism in degrees $\neq p$.
\end{setup}

\begin{sidewaysfigure}
 \centerline{
  \xymatrix@R=6pc@C=5pc{
   \R^n\bar f_*(\bar K) \ar[r]^{\delta^n_+(l_+)} \ar@{}[dr]|{\diagcircled1} & \R^{n+1}\bar f_*(\bar L)  \ar@{}[dr]|{\diagcircled3} & \R^n\bar f_*(\bar f^*G\otimes\bar K) \ar[l]_{\R^{n+1}\bar f_*(\gamma)} \ar@{}[dr]|{\diagcircled5} & G\otimes\R^n\bar f_*(\bar K) \ar[l]_{\pi^n_{\bar f}(G,\bar K)} \\
   \R^n\bar f_*(\sigma^{\geq p}\bar K) \ar[r]^{\delta^n_+(\sigma^{\geq p}l_+)} \ar[u]^{\R^n\bar f_*(i^{\geq p}\bar K)} \ar[d]|{\R^n\bar f_*(j^{\leq p}(\sigma^{\geq p}\bar K))} \ar@{}[dr]|{\diagcircled2} & \R^{n+1}\bar f_*(\sigma^{\geq p}\bar L) \ar[u]|{\R^{n+1}\bar f_*(i^{\geq p}\bar L)} \ar[d]|{\R^{n+1}\bar f_*(j^{\leq p}(\sigma^{\geq p}\bar L))} \ar@{}[dr]|{\diagcircled4} & \R^n\bar f_*(\bar f^*G\otimes\sigma^{\geq p-1}\bar K) \ar[u]|{\R^n\bar f_*(\bar f^*\id_G\otimes i^{\geq p-1}\bar K)} \ar[l]_{\R^{n+1}\bar f_*(\gamma^{\geq p})} \ar[d]|{\R^n\bar f_*(\bar f^*\id_G\otimes j^{\leq p-1}(\sigma^{\geq p-1}\bar K))} \ar@{}[dr]|{\diagcircled6} & G\otimes\R^n\bar f_*(\sigma^{\geq p-1}\bar K) \ar[u]_{\id_G\otimes\R^n\bar f_*(i^{\geq p-1}\bar K)} \ar[l]_{\pi^n_{\bar f}(G,\sigma^{\geq p-1}\bar K)} \ar[d]|{\id_G\otimes\R^n\bar f_*(j^{\leq p-1}(\sigma^{\geq p-1}\bar K))} \\
   \R^n\bar f_*(\sigma^{=p}\bar K) \ar[r]^{\delta^n_+(\sigma^{=p}l_+)} \ar[d]_{\kappa^n(\sigma^{=p}K)} & \R^{n+1}\bar f_*(\sigma^{=p}\bar L) \ar[d]|{\kappa^{n+1}(\sigma^{=p}L)} & \R^n\bar f_*(\bar f^*G\otimes\sigma^{=p-1}\bar K) \ar[l]_{\R^{n+1}\bar f_*(\gamma^{=p})} & G\otimes\R^n\bar f_*(\sigma^{=p-1}\bar K) \ar[l]_{\pi^n_{\bar f}(G,\sigma^{=p-1}\bar K)} \ar[d]^{\id_G\otimes\kappa^n(\sigma^{=p-1}K)} \\
   \R^{n-p}f_*(K^p) \ar[r]_{\delta^{n-p}(l^p)} \ar@{}[ur]|{\diagcircled7} & \R^{n-p+1}f_*(L^p) \ar@{}[urr]|{\diagcircled8} && G\otimes\R^{n-p+1}f_*(K^{p-1}) \ar[ll]^{\pi^{n-p+1}_f(G,K^{p-1})}
  }
 }
 \caption{Diagram whose commutativity in $\Mod(S/g)$ is asserted in Lemma \ref{l:cdlemma}.}
 \label{fig:cdlemma}
\end{sidewaysfigure}

The following lemma is the key step towards proving Theorem \ref{t:grgmcc0} later.

\begin{lemma}
 \label{l:cdlemma}
 Suppose we are in the situation of Setup \ref{set:cdlemma}. Then, for all integers $n$ and $p$, the diagram in Figure \ref{fig:cdlemma} commutes in $\Mod(S/g)$.
\end{lemma}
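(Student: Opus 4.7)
The plan is to verify that each of the eight subdiagrams \diagcircled1--\diagcircled8 commutes in $\Mod(S/g)$; the commutativity of the entire diagram then follows by pasting. I would group the verifications by their character.

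First, subdiagrams \diagcircled1 and \diagcircled2 (the leftmost column) express nothing but the naturality of the augmented connecting homomorphism $\delta^n_+$ from Construction \ref{con:augconnhom}. The inclusion $i^{\geq p}\bar K$ and the projection $j^{\leq p}(\sigma^{\geq p}\bar K)$ associated to the stupid truncations $\sigma^{\geq p}$ and $\sigma^{=p}$ are componentwise morphisms of complexes of modules on $\bar X$ and lift (together with their analogues for $\bar L$ and $M$) to morphisms in the category $\cat E_b$ of augmented triples. Once this is noted, \diagcircled1 and \diagcircled2 are immediate applications of the natural transformation clause that closes Construction \ref{con:augconnhom}. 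In the same spirit, subdiagrams \diagcircled5 and \diagcircled6 (the rightmost column) follow from the naturality of the projection morphism $\pi^n_{\bar f}$, recalled in Construction \ref{con:projmor}, applied to the morphisms $i^{\geq p-1}$ and $j^{\leq p-1}$ in the $\bar K$-slot with $\id_G$ in the $G$-slot.

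Next, subdiagrams \diagcircled3 and \diagcircled4 (the middle column) are purely formal: they record that the morphisms $\gamma^{\geq p}$ and $\gamma^{=p}$ are, by their very definitions in Setup \ref{set:cdlemma}, the restrictions of $\gamma$ to the degrees selected by $\sigma^{\geq p-1}$ respectively $\sigma^{=p-1}$. Hence the underlying squares of complexes over $\Mod(\bar X)$ commute strictly, and one obtains \diagcircled3 and \diagcircled4 by applying the functor $\R^{n+1}\bar f_*$.

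The substantive step is the reduction \diagcircled7--\diagcircled8, where a triple of complexes concentrated in a single degree $p$ has to be identified with the triple $l^p = \Lambda^p_X(t)$ of modules. For \diagcircled7, I would unfold $\delta^n_+(\sigma^{=p}l_+)$ along \eqref{e:chom-1} of Construction \ref{con:augconnhom} as the conjugate of the ordinary connecting homomorphism $\delta^n_{f'}((\sigma^{=p}l)')$ by the (now invertible) base change maps $\kappa^n(\sigma^{=p}K)$ and $\kappa^{n+1}(\sigma^{=p}L)$. Since $\sigma^{=p}K$ and $\sigma^{=p}L$ are concentrated in degree $p$, their hypercohomology in degree $n$ is canonically the cohomology in degree $n-p$ of the single term, and under this identification $\delta^n_{f'}((\sigma^{=p}l)')$ matches $\delta^{n-p}_{f'}(b_*(l^p))$; transporting across $c$ via the factorization of $\kappa$ through $\mathrm{id}_S$ in the left square of \eqref{e:gmsetup-sq} yields $\delta^{n-p}_f(l^p)$. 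Subdiagram \diagcircled8 is handled by exactly the same kind of degree-concentration argument applied to the morphism $\gamma^{=p}$ and to the projection morphism $\pi^n_{\bar f}(G,\sigma^{=p-1}\bar K)$.

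The main obstacle I expect is not conceptual but bookkeeping: pinning down the canonical identifications between $\R^n\bar f_*(\sigma^{=p}\bar K)$ and $\R^{n-p}f_*(K^p)$ (and their analogues for $L$ and for $\bar f^*G\otimes-$), and then checking that connecting homomorphisms, projection morphisms and base change transformations are all mutually compatible with these identifications. Once one organizes the bottom two rows of Figure \ref{fig:cdlemma} around this concentration-in-degree-$p$ identification, each of the twelve arrows involved in \diagcircled7 and \diagcircled8 unfolds to a combination of \eqref{e:chom-1}, the naturality of $\pi^{\bullet}_{\bar f}$, and the construction of $\gamma^{=p}$, and commutativity drops out.
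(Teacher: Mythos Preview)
Your proposal is correct and follows essentially the same approach as the paper: both verify the eight subdiagrams individually, handling \diagcircled1, \diagcircled2 via naturality of $\delta^n_+$, \diagcircled5, \diagcircled6 via naturality of $\pi^n_{\bar f}$, \diagcircled3, \diagcircled4 via the defining identities for $\gamma^{\geq p}$ and $\gamma^{=p}$, and \diagcircled7, \diagcircled8 via degree-concentration arguments. The paper's treatment of \diagcircled7 and \diagcircled8 is more explicit---introducing auxiliary base change transformations $\kappa'^n$, $\kappa''^n$ and a three-dimensional diagram for \diagcircled7, and specific morphisms $\omega$, $\psi$ together with a separate auxiliary figure for \diagcircled8---but your outline correctly anticipates that this is where the bookkeeping lies and identifies the right ingredients.
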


\begin{proof}
 Fix $n,p\in\Z$. The commutativity of the diagram in Figure \ref{fig:cdlemma} is equivalent to the commutativity of its subdiagrams \diagcircled1--\diagcircled8. We treat the subdiagrams case by case.
 
 The subdiagrams \diagcircled1 and \diagcircled2 commute for
 $$\delta^n_+ \colon \R^nf_* \circ q_0 \to \R^{n+1}f_* \circ q_1$$
 is a natural transformation of functors from $\cat E_b$ to $\Mod(S/c)$, \cf Construction \ref{con:augconnhom}. Additionally, one should point out that the projection functors $q_0$ and $q_1$ commute with the stupid filtration functors $\sigma^{\geq p}$ and $\sigma^{\leq p}$ as well as with the natural transformations $i^{\geq p}$ and $j^{\leq p}$. In particular, we have
 \begin{gather*}
  \begin{aligned}
   q_0(\sigma^{\geq p}(l_+)) & =\sigma^{\geq p}(q_0(l_+))=\sigma^{\geq p}\bar K, & q_1(\sigma^{\geq p}l_+) & =\sigma^{\geq p}(q_1(l_+))=\sigma^{\geq p}\bar L, \\
  q_0(i^{\geq p}(l_+)) & = i^{\geq p}(q_0(l_+))=i^{\geq p}(\bar K), & q_1(i^{\geq p}(l_+)) & =i^{\geq p}(q_1(l_+))=i^{\geq p}(\bar L),
  \end{aligned} \\
  \begin{aligned}
   q_0(\sigma^{=p}(l_+)) & =q_0(\sigma^{\leq p}\sigma^{\geq p}(l_+))=\sigma^{\leq p}(q_0(\sigma^{\geq p}(l_+)))=\sigma^{\leq p}\sigma^{\geq p}(q_0(l_+))=\sigma^{=p}\bar K, \\
   q_1(\sigma^{=p}(l_+)) & =q_1(\sigma^{\leq p}\sigma^{\geq p}(l_+))=\sigma^{\leq p}(q_1(\sigma^{\geq p}(l_+)))=\sigma^{\leq p}\sigma^{\geq p}(q_1(l_+))=\sigma^{=p}\bar L,
  \end{aligned} \\
  q_0(j^{\leq p}(\sigma^{\geq p}l_+))=j^{\leq p}(q_0(\sigma^{\geq p}l_+))=j^{\leq p}(\sigma^{\geq p}(q_0(l_+)))=j^{\leq p}(\sigma^{\geq p}\bar K), \intertext{and}
  q_1(j^{\leq p}(\sigma^{\geq p}l_+))=j^{\leq p}(q_1(\sigma^{\geq p}l_+))=j^{\leq p}(\sigma^{\geq p}(q_1(l_+)))=j^{\leq p}(\sigma^{\geq p}\bar L).
 \end{gather*}

 The commutativity of \diagcircled3 follows now from the identity
 \[
  i^{\geq p}(\bar L)\circ\gamma^{\geq p}=\gamma\circ(\bar f^*\id_G\otimes i^{\geq p-1}(\bar K)[-1])
 \]
 in $\Mod(\bar X)$, which is easily checked degree-wise, and the fact that $\R^{n+1}\bar f_*$ is a functor going from $\Com^+(\bar X)$ to $\Mod(S)$. Similarly, the commutativity of \diagcircled4 follows from the identity
 \[
  j^{\leq p}(\sigma^{\geq p}(\bar L))\circ\gamma^{\geq p}=\gamma^{=p}\circ(\bar f^*\id_G\otimes j^{\leq p-1}(\sigma^{\geq p-1}(\bar K))[-1]).
 \]
 Here, we note that for all objects $F$ and all morphisms $\zeta$ of $\Com^+(\bar X)$, we have
 \[
  \R^{n+1}\bar f_*(F[-1])=\R^n\bar f_*(F) \quad \text{and} \quad \R^{n+1}\bar f_*(\zeta[-1])=\R^n\bar f_*(\zeta).
 \]

 The subdiagrams \diagcircled5 and \diagcircled6 commute for
 $$\pi^n_{\bar f} \colon (- \otimes_S -) \circ (\id_{\Mod(S)} \times \R^n\bar f_*) \to \R^n\bar f_* \circ (- \otimes_{\bar X} -) \circ (\bar f^* \times \id_{\Com^+(\bar X)})$$
 is a natural transformation of functors going from $\Mod(S) \times \Com^+(\bar X)$ to $\Mod(S)$, \cf Construction \ref{con:projmor}.

 Moving on to subdiagram \diagcircled7, we first remark that
 \[
  \R^{n-p}f_*(K^p)=\R^{n-p}f_*(K^p[0])=\R^nf_*((K^p[0])[-p])=\R^nf_*(\sigma^{=p}K)
 \]
 and, in a similar fashion,
 $$\R^{n-p+1}f_*(L^p) = \R^{n+1}f_*(\sigma^{=p}L).$$
 Moreover, $\sigma^{=p}K$ (\resp $\sigma^{=p}L$) is an object of $\Com^+(X)$ and we have $u_*(\sigma^{=p}K) = \sigma^{=p}\bar K$ (\resp $u_*(\sigma^{=p}L)  = \sigma^{=p}(\bar L)$). Thus we see that the domains and codomains which are given for the morphisms $\kappa^n(\sigma^{=p}K)$ and $\kappa^{n+1}(\sigma^{=p}L)$ in the diagram are the correct ones. We need some additional notation: Let $\kappa'^n$ and $\kappa''^n$ denote the $n$-th base change natural transformations which are associated to the right square and outer rectangle of \eqref{e:gmsetup-sq}, respectively. Furthermore, let $\delta'^n$ be the $n$-th connecting homomorphism associated to $f'$. We assert that the following diagram commutes in $\Mod(S')$:
 \begin{equation}
  \label{e:gm0-7}
  \xymatrix@C=6pt{
   \R^nf'_*(b_*(\sigma^{=p}\bar K)) \ar[rrr]^{\kappa''^n(\sigma^{=p}K)} \ar[dd]_{\delta'^n(\sigma^{=p}l')} \ar[rd]^{\kappa'^n(\sigma^{=p}\bar K)} &&& c_*(\R^nf_*(\sigma^{=p}K)) \ar[dd]^{c_*(\delta^n(\sigma^{=p}l))} \\
   & c_*(\R^n\bar f_*(\sigma^{=p}\bar K)) \ar[rru]^{c_*(\kappa^n(\sigma^{=p}K))} \ar[dd]_(0.35){c_*(\delta^n_+(\sigma^{=p}l_+))} \\   
   \R^{n+1}f'_*(b_*(\sigma^{=p}\bar L)) \ar'[r][rrr]_(0){\kappa''^{n+1}(\sigma^{=p}L)} \ar[rd]_{\kappa'^{n+1}(\sigma^{=p}\bar L)} &&& c_*(\R^{n+1}f_*(\sigma^{=p}L)) \\
   & c_*(\R^{n+1}\bar f_*(\sigma^{=p}\bar L)) \ar[rru]_{c_*(\kappa^{n+1}(\sigma^{=p}L))}
  }
 \end{equation}
 Indeed, the upper and lower triangles in \eqref{e:gm0-7} commute due to the functoriality of the construction of the base change natural transformations $\kappa$. The background rectangle in \eqref{e:gm0-7} commutes due to the compatibility of the base change $\kappa''$ with connecting homomorphisms noting that
 $$(b \circ u)_*(\sigma^{=p}l) = \sigma^{=p}l'.$$
 The left foreground rectangle (or ``parallelogram'') in \eqref{e:gm0-7} commutes by the very definition of the augmented connecting homomorphism $\delta^n_+$ noting that the triple underlying $\sigma^{=p}l_+$ is nothing but $\sigma^{=p}l'$, \cf Construction \ref{con:augconnhom}. Now since $\kappa'^n(\sigma^{=p}\bar L)$ is a monomorphism---it is in fact even an isomorphism---, we obtain the commutativity of the right foreground rectangle (or ``parallelogram'') of \eqref{e:gm0-7}. This, in turn, implies the commutativity of \diagcircled7 in $\Mod(S)$ since the functor $c_* \colon \Mod(S) \to \Mod(S')$ is faithful and we have $\sigma^{=p}l = (l^p[0])[-p]$, whence
 $$\delta^n(\sigma^{=p}l) = \delta^n((l^p[0])[-p]) = \delta^{n-p}(l^p[0]) = \delta^{n-p}(l^p).$$

 We are left with \diagcircled8. To this end, define
 \[
  \omega \colon f^*G \otimes (\sigma^{=p-1}K) \to (\sigma^{=p}L)[1]
 \]
 to be the morphism in $\Com^+(S)$ which is given by $\id_{f^*G\otimes K^{p-1}}$ in degree $p-1$ (recall that $L^p = f^*G \otimes \wedge^{p-1}F = f^*G \otimes K^{p-1}$) and by the zero morphism in degrees $\neq p-1$. In addition, define
 \[
  \psi \colon \bar f_*G \otimes (\sigma^{=p-1}\bar K) = \bar f^*G \otimes u_*(\sigma^{=p-1}K) \to u_*(f^*G \otimes (\sigma^{=p-1}K))
 \]
 the obvious base extension morphism in $\Com^+(\bar X)$. We consider the auxiliary diagram in Figure \ref{fig:aux}.
 \begin{figure}[ht]
  \centerline{
  \xymatrix@C=.5pc{
   \R^{n+1}\bar f_*(\sigma^{=p}\bar L) \ar[dd]|{\kappa^{n+1}(\sigma^{=p}L)} && \R^n\bar f_*(\bar f^*G\otimes\sigma^{=p-1}\bar K) \ar[ll]_{\R^{n+1}\bar f_*(\gamma^{=p})} \ar[dl]_{\R^n\bar f_*(\psi)} & G\otimes\R^n\bar f_*(\sigma^{=p-1}\bar K) \ar[l] \ar@{}[l]<-1ex>_{\pi^n_{\bar f}(G,\sigma^{=p-1}\bar K)} \ar[dd]|{\id_G\otimes\kappa^n(\sigma^{=p-1}K)} \\
   & \R^n\bar f_*(u_*(f^*G\otimes\sigma^{=p-1}K)) \ar[ul]_{\R^n\bar f_*(u_*(\omega))} \ar[dd]_(0.35){\kappa^n(f^*G\otimes (\sigma^{=p-1}K))} \\
   \R^{n-p+1}f_*(L^p) &&& G\otimes\R^{n-p+1}f_*(K^{p-1}) \ar'[ll]^(.75){\pi^{n-p+1}_f(G,K^{p-1})}[lll] \ar[dll]^{\pi^n_f(G,\sigma^{=p-1}K)} \\
   & \R^nf_*(f^*G\otimes\sigma^{=p-1}K) \ar[ul]^{\R^nf_*(\omega)}
  }
  }
  \caption{Auxiliary diagram for the proof of Lemma \ref{l:cdlemma}.}
  \label{fig:aux}
 \end{figure}
 By the definition of the projection morphism, \cf Construction \ref{con:projmor}, we have
 \begin{align*}
  & \pi^{n-p+1}_f(G,K^{p-1})=\R^{n-p+1}f_*(\omega[p-1])\circ\pi^{n-p+1}_f(G,K^{p-1}[0]) \\
  & = \R^nf_*(\omega)\circ\pi^n_f(G,(K^{p-1}[0])[-(p-1)])=\R^nf_*(\omega)\circ\pi^n_f(G,\sigma^{=p-1}K).
 \end{align*}
 Hence the bottom triangle of the diagram in Figure \ref{fig:aux} commutes. Taking into account the fact that $\kappa^{n+1}(\sigma^{=p}L)=\kappa^n((\sigma^{=p}L)[1])$, we see that the left foreground rectangle (or ``parallelogram'') commutes as
 $$\kappa^n \colon \R^n\bar f_*\circ u_* \to (\id_S)_*\circ\R^nf_* = \R^nf_*$$
 is a natural transformation of functors going from $\Com^+(X)$ to $\Mod(S)$. The pentagon in the right foreground commutes by compatibility of the projection morphisms with base change. The top triangle commutes since firstly, we have
 \[
  \gamma^{=p}[1]=u_*(\omega)\circ\psi,
 \]
 in $\Com^+(\bar X)$, as is easily checked degree-wise, secondly, $\R^n\bar f_*$ is a functor going from $\Com(\bar X)$ to $\Mod(S)$, and thirdly, $\R^{n+1}\bar f_*(\gamma^{=p})=\R^n\bar f_*(\gamma^{=p}[1])$. Therefore, we have established the commutativity of rectangle in the background of Figure \ref{fig:aux}, which is, however, nothing but \diagcircled8.
\end{proof}

In order to proceed further, we need to establish two more pieces of notation.

\begin{notation}
 \label{not:gm0}
 Assume we are in the situation of Setup \ref{set:cdlemma}. Then for all integers $p$, we know that
 $$\gamma^p \colon \bar f^*G\otimes_{\bar X}\bar K^{p-1} \to \bar L^p$$
 is an isomorphism in $\Mod(\bar X)$ (note here that $\bar K^{p-1} = u_*(K^{p-1})$ and $\bar L^p = u_*(L^p) = u_*(f^*G \otimes \wedge^{p-1}F) = u_*(f^*G \otimes K^{p-1})$). In turn,
 $$\gamma \colon \bar f^*G\otimes_{\bar X}(\bar K[-1]) \to \bar L$$
 is an isomorphism in $\Com^+(\bar X)$ and, for all integers $p$,
 \begin{align*}
  \gamma^{\geq p} & \colon \bar f^*G\otimes_{\bar X} ((\sigma^{\geq p-1}\bar K)[-1]) \to \sigma^{\geq p}\bar L, \\
  \gamma^{=p} & \colon \bar f^*G\otimes_{\bar X}((\sigma^{=p-1}\bar K)[-1]) \to \sigma^{=p}\bar L
 \end{align*}
 are isomorphisms in $\Com^+(\bar X)$. Furthermore, as $G$ is a locally finite free module on $S$, Proposition \ref{p:projmoriso} implies that
 $$\pi^n_{\bar f}(G,-) \colon (G \otimes -) \circ \R^n\bar f_* \to \R^n\bar f_* \circ (\bar f^*G \otimes -)$$
 is a natural equivalence of functors from $\Com^+(\bar X)$ to $\Mod(S)$ for all integers $n$. Thus, for all integers $n$ and $p$, it makes sense to set:
 \begin{align}\label{e:nabla}
  \nabla^n & := (\pi^n_{\bar f}(G,\bar K))^{-1} \circ (\R^{n+1}\bar f_*(\gamma))^{-1} \circ \delta^n_+(l_+), \\ \label{e:nablageq}
  \nabla^{\geq p,n} & := (\pi^n_{\bar f}(G,\sigma^{\geq p-1}\bar K))^{-1} \circ (\R^{n+1}\bar f_*(\gamma^{\geq p}))^{-1} \circ \delta^n_+(\sigma^{\geq p}l_+), \intertext{and} \label{e:nablaeq}
  \nabla^{=p,n} & := (\pi^n_{\bar f}(G,\sigma^{=p-1}\bar K))^{-1} \circ (\R^{n+1}\bar f_*(\gamma^{=p}))^{-1} \circ \delta^n_+(\sigma^{=p}l_+),
 \end{align}
 where we compose in $\Mod(S/g) = \Mod(S/c)$. Observe that \eqref{e:nabla}, \eqref{e:nablageq}, and \eqref{e:nablaeq} correspond to the first, second, and third left-to-right horizontal row of arrows in the diagram in Figure \ref{fig:cdlemma}, respectively.
\end{notation}

\begin{notation}
 \label{not:hodgefilt0}
 Assume we are in the situation of Setup \ref{set:cdlemma} (even though for our immediate concerns it would suffice that an arbitrary morphism $\bar f\colon \bar X\to S$ of commutative ringed spaces as well as an object $\bar K$ of $\Com^+(\bar X)$ be given). For integers $n$ and $p$ we set:
 \[
  F^{p,n} := \im(\R^n\bar f_*(i^{\geq p}\bar K) \colon \R^n\bar f_*(\sigma^{\geq p}\bar K) \to \R^n\bar f_*(\bar K));
 \]
 moreover, we write
 \[
  \iota^n(p) \colon F^{p,n} \to \R^n\bar f_*(\bar K)
 \]
 for the corresponding inclusion morphism of sheaves on $S_\top$, and we write $\lambda^n(p)$ for the unique morphism such that the following diagram commutes in $\Mod(S)$:
 \[
  \xymatrix{
   \R^n\bar f_*(\sigma^{\geq p}\bar K) \ar[rr]^{\R^n\bar f_*(i^{\geq p}\bar K)} \ar@{.>}[dr]_{\lambda^n(p)} && \R^n\bar f_*(\bar K) \\
   & F^{p,n} \ar[ru]_{\iota^n(p)}
  }
 \]
 For all $n\in\Z$, the sequence $(F^{p,n})_{p\in\Z}$ clearly constitutes a descending sequence of submodules of $\R^n\bar f_*(\bar K)$ on $S$. In more formal terms one may express this observation by saying that, for all integers $n,p,p'$ such that $p\leq p'$, there exists a unique morphism $\iota^n(p,p')$ in $\Mod(S)$ such that the following diagram commutes in $\Mod(S)$:
\[
 \xymatrix{
  & \R^n\bar f_*(\bar K) \\
  F^{p,n} \ar[ru]^{\iota^n(p)} && F^{p',n} \ar[ul]_{\iota^n(p')} \ar@{.>}[ll]^{\iota^n(p,p')}
 }
\]
\end{notation}

\begin{proposition}
 \label{p:gt0}
 Suppose we are in the situation of Setup \ref{set:cdlemma}. Let $n$ and $p$ be integers. Then there exists one, and only one, $\zeta$ such that the following diagram commutes in $\Mod(S/g)$:
 \begin{equation} \label{e:gt0}
  \xymatrix{
   \R^n\bar f_*(\bar K) \ar[r]^-{\nabla^n} & G\otimes\R^n\bar f_*(\bar K) \\
   F^{p,n} \ar@{.>}[r]_-{\zeta} \ar[u]^{\iota^n(p)} & G\otimes F^{p-1,n} \ar[u]_{\id_G\otimes\iota^n(p-1)}
  }
 \end{equation}
\end{proposition}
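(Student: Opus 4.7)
The plan is to dispose of uniqueness first. The morphism $\iota^n(p-1)$ is a monomorphism in $\Mod(S)$ by construction---it is the image-inclusion factor of $\R^n\bar f_*(i^{\geq p-1}\bar K)$. Since $G$ is locally finite free on $S$, the functor $G\otimes_S -$ is exact, so $\id_G\otimes\iota^n(p-1)$ is again a monomorphism in $\Mod(S)$. Relaxing the $\O_S$-action to the merely $g^{-1}\O_T$-linear action does not affect injectivity on underlying sheaves of sets, so $\id_G\otimes\iota^n(p-1)$ is also a monomorphism in $\Mod(S/g)$, making the factorization $\zeta$ unique whenever it exists.

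For existence, I would exploit the technical content of Lemma \ref{l:cdlemma}. Pasting the subdiagrams \diagcircled1, \diagcircled3, and \diagcircled5 of Figure \ref{fig:cdlemma} and invoking the definitions \eqref{e:nabla} and \eqref{e:nablageq} of $\nabla^n$ and $\nabla^{\geq p,n}$ yields the identity
\[
 \nabla^n\circ\R^n\bar f_*(i^{\geq p}\bar K) = (\id_G\otimes\R^n\bar f_*(i^{\geq p-1}\bar K))\circ\nabla^{\geq p,n}
\]
in $\Mod(S/g)$. Substituting $\R^n\bar f_*(i^{\geq q}\bar K) = \iota^n(q)\circ\lambda^n(q)$ for $q\in\{p-1,p\}$ turns this into
\[
 \nabla^n\circ\iota^n(p)\circ\lambda^n(p) = (\id_G\otimes\iota^n(p-1))\circ\eta,
\]
where $\eta := (\id_G\otimes\lambda^n(p-1))\circ\nabla^{\geq p,n}$.

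It then remains to descend $\eta$ through the epimorphism $\lambda^n(p)$. Because $\id_G\otimes\iota^n(p-1)$ is a monomorphism, the right-hand side of the previous equation vanishes on $\ker(\lambda^n(p))$, hence so does $\eta$. The cokernel property of $\lambda^n(p)\colon \R^n\bar f_*(\sigma^{\geq p}\bar K)\to F^{p,n}$, which transfers from $\Mod(S)$ to the larger category $\Mod(S/g)$ (the universal property being at heart a statement about sheaves of sets plus a linearity check that is preserved when scalars are relaxed), produces a unique morphism $\zeta\colon F^{p,n}\to G\otimes F^{p-1,n}$ in $\Mod(S/g)$ with $\zeta\circ\lambda^n(p) = \eta$. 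Composing with $\id_G\otimes\iota^n(p-1)$ on the left and using the right-cancellability of $\lambda^n(p)$ one last time, one reads off the desired identity $(\id_G\otimes\iota^n(p-1))\circ\zeta = \nabla^n\circ\iota^n(p)$.

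The main obstacle, really more of a bookkeeping concern, is verifying that the standard epi/mono reasoning---monomorphisms detected by injectivity, cokernel factorizations, and right-cancellability of epimorphisms---survives the passage from $\Mod(S)$ to the larger category $\Mod(S/g)$ in which $\nabla^n$ and $\nabla^{\geq p,n}$ actually live. This boils down to the routine observation that all three properties can be tested on the underlying morphisms of sheaves of abelian groups, and are consequently preserved by the forgetful functor $\Mod(S)\to\Mod(S/g)$.
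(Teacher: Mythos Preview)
Your proof is correct and follows essentially the same route as the paper: both derive the identity
\[
 \nabla^n\circ\R^n\bar f_*(i^{\geq p}\bar K) = (\id_G\otimes\R^n\bar f_*(i^{\geq p-1}\bar K))\circ\nabla^{\geq p,n}
\]
from Lemma~\ref{l:cdlemma} and then use flatness of $G$ (so that $\id_G\otimes\iota^n(p-1)$ is the image inclusion of $\id_G\otimes\R^n\bar f_*(i^{\geq p-1}\bar K)$) to obtain the factorization. Your version is more explicit about the epi/mono bookkeeping and the passage from $\Mod(S)$ to $\Mod(S/g)$, which the paper compresses into ``our claim follows readily''.
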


\begin{proof}
 Comparing \eqref{e:nabla} and \eqref{e:nablageq} with the diagram in Figure \ref{fig:cdlemma}, we find that Lemma \ref{l:cdlemma} implies the following identity in $\Mod(S/g)$:
 \begin{equation}
  \label{e:gt0-1}
  \nabla^n\circ\R^n\bar f_*(i^{\geq p}\bar K)=(\id_G\otimes\R^n\bar f_*(i^{\geq p-1}\bar K))\circ\nabla^{\geq p,n}.
 \end{equation}
 Now since $G$ is a locally finite free and hence flat module on $S$, the functor $G \otimes - \colon \Mod(S) \to \Mod(S)$ is exact and thus, in particular, transforms images into images. Specifically, $\id_G\otimes\iota^n(p-1)$ is an image in $\Mod(S)$ of $\id_G\otimes\R^n\bar f_*(i^{\geq p-1}\bar K)$. This in mind, our claim follows readily from \eqref{e:gt0-1}.
\end{proof}

\begin{proposition}
 \label{p:grgt0}
 Suppose we are in the situation of Setup \ref{set:cdlemma}. Let $n$ and $p$ be integers and let $\zeta$ be such that the diagram in \eqref{e:gt0} commutes in $\Mod(S/g)$. Then there exists one, and only one, $\bar\zeta$ rendering commutative in $\Mod(S/g)$ the following diagram:
 \begin{equation} \label{e:grgt0}
  \xymatrix{
   F^{p,n} \ar[r]^-{\zeta} \ar[d]_{\coker(\iota^n(p,p+1))} & G\otimes F^{p-1,n} \ar[d]^{\id_G\otimes\coker(\iota^n(p-1,p))} \\
   F^{p,n}/F^{p+1,n} \ar@{.>}[r]_-{\bar\zeta} & G\otimes (F^{p-1,n}/F^{p,n})
  }
 \end{equation}
\end{proposition}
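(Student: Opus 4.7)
The plan is to derive $\bar\zeta$ directly from the universal property of the cokernel, using that the morphism $\zeta$ from Proposition \ref{p:gt0} is itself obtained from $\nabla^n$ by a similar cokernel factorization one step up the filtration. Uniqueness of $\bar\zeta$ is immediate since $\coker(\iota^n(p,p+1))$ is an epimorphism in $\Mod(S)$ and hence in $\Mod(S/g)$ (the underlying sheaf map is surjective, so it is epic in any larger linear category with the same objects). So the task reduces to establishing existence, which by the universal property of the cokernel amounts to checking the single identity
\[
 (\id_G \otimes \coker(\iota^n(p-1,p))) \circ \zeta \circ \iota^n(p,p+1) = 0
\]
in $\Mod(S/g)$.

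To do this, I would apply Proposition \ref{p:gt0} a second time, now with $p$ replaced by $p+1$. This yields a (unique) morphism $\zeta' \colon F^{p+1,n} \to G \otimes F^{p,n}$ satisfying $(\id_G \otimes \iota^n(p)) \circ \zeta' = \nabla^n \circ \iota^n(p+1)$. Combining this with the analogous defining relation for $\zeta$, together with the fact that $\iota^n(p+1) = \iota^n(p) \circ \iota^n(p,p+1)$ and $\iota^n(p) = \iota^n(p-1) \circ \iota^n(p-1,p)$ from Notation \ref{not:hodgefilt0}, one obtains
\[
 (\id_G \otimes \iota^n(p-1)) \circ \zeta \circ \iota^n(p,p+1) = (\id_G \otimes \iota^n(p-1)) \circ (\id_G \otimes \iota^n(p-1,p)) \circ \zeta'.
\]

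Since $G$ is locally finite free on $S$, the functor $G \otimes_S -$ is exact on $\Mod(S)$, so $\id_G \otimes \iota^n(p-1)$ is a monomorphism in $\Mod(S)$ and a fortiori in $\Mod(S/g)$. Cancelling it on the left yields the key identity
\[
 \zeta \circ \iota^n(p,p+1) = (\id_G \otimes \iota^n(p-1,p)) \circ \zeta'.
\]
Postcomposing with $\id_G \otimes \coker(\iota^n(p-1,p))$ then gives zero because $\coker(\iota^n(p-1,p)) \circ \iota^n(p-1,p) = 0$ and tensoring with $\id_G$ preserves this vanishing. The required factorization through the quotient $F^{p,n}/F^{p+1,n}$ follows.

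The only nontrivial point in this argument is the left-cancellation step, and here the hypotheses do precisely the work needed: local finite freeness of $G$ secures the exactness of $G \otimes_S -$, which in turn makes $\id_G \otimes \iota^n(p-1)$ a monomorphism (recall that $\iota^n(p-1)$ is an inclusion morphism of sheaves by construction in Notation \ref{not:hodgefilt0}). Everything else is formal manipulation with the universal properties of images, cokernels, and the filtration morphisms $\iota^n(p,p')$; no new cohomological input is required beyond what Proposition \ref{p:gt0} already provides.
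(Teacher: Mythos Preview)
Your proof is correct and follows essentially the same approach as the paper's: both apply Proposition \ref{p:gt0} at level $p+1$ to obtain $\zeta'$, use the monomorphism $\id_G\otimes\iota^n(p-1)$ (via local finite freeness of $G$) to cancel on the left and deduce $\zeta\circ\iota^n(p,p+1)=(\id_G\otimes\iota^n(p-1,p))\circ\zeta'$, and then conclude by postcomposing with the cokernel. The only difference is presentational---the paper packages the same chain of identities into a prism-shaped diagram \eqref{e:grgt0-aux}, whereas you write out the equations directly.
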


\begin{proof}
 By Proposition \ref{p:gt0}, there exists $\zeta'$ such that the upper foreground trapezoid in the following diagram commutes in $\Mod(S/g)$:
 \begin{equation}
  \label{e:grgt0-aux}
  \xymatrix@R=3pc@C=12pt{
   & \R^n\bar f_*(\bar K) \ar[rr]^-{\nabla^n} && G\otimes\R^n\bar f_*(\bar K) \\
   F^{p+1,n} \ar[rrrr]^-{\zeta'} \ar[ru]^{\iota^n(p+1)} \ar[dr]_{\iota^n(p,p+1)} &&&& G\otimes F^{p,n} \ar[ul]_{\id_G \otimes \iota^n(p)} \ar[dl]^{\id_G\otimes\iota^n(p-1,p)} \\
   & F^{p,n} \ar[rr]_-{\zeta} \ar[uu]|\hole^(0.6){\iota^n(p)} && G\otimes F^{p-1,n} \ar[uu]|\hole_(0.4){\id_G\otimes\iota^n(p-1)}
  }
 \end{equation}
 We claim that the diagram in \eqref{e:grgt0-aux} commutes in $\Mod(S/g)$ as such. In fact, the left and right triangles commute by the very definitions of $\iota^n(p,p+1)$ and $\iota^n(p-1,p)$, respectively. The background square commutes by our assumption on $\zeta$, \cf \eqref{e:gt0}. The lower trapezoid commutes as a consequence of the already established commutativities taking into account that $\id_G\otimes\iota^n(p-1)$ is a monomorphism, which is due to the flatness of $G$. Using the commutativity of the lower trapezoid in \eqref{e:grgt0-aux}, we obtain:
 \begin{align*}
  & \left(\left(\id_G \otimes \coker(\iota^n(p-1,p))\right) \circ \zeta\right) \circ \iota^n(p,p+1) \\ & = \left(\id_G \otimes \coker(\iota^n(p-1,p))\right) \circ (\id_G \otimes \iota^n(p-1,p)) \circ \zeta' = 0.
 \end{align*}
 Hence, there exists one, and only one, $\bar\zeta$ rendering the diagram in \eqref{e:grgt0} commutative in $\Mod(S/g)$.
\end{proof}

\begin{proposition}
 \label{p:grgm0}
 Suppose we are in the situation of Setup \ref{set:cdlemma}. Let $n$ and $p$ be integers and $\zeta$ and $\bar\zeta$ morphisms such that the diagrams \eqref{e:gt0} and \eqref{e:grgt0} commute in $\Mod(S/g)$. Moreover, let $\psi^p$ and $\psi^{p-1}$ morphisms such that the following diagram commutes in $\Mod(S)$ for $\nu=p,p-1$:
 \begin{equation} \label{e:degen-psi}
  \xymatrix{
   F^{\nu,n} \ar[d]_{\coker(\iota^n(\nu,\nu+1))} & \R^n\bar f_*(\sigma^{\geq \nu}\bar K) \ar[l]_-{\lambda^n(\nu)} \ar[d]^{\R^n\bar f_*(j^{\leq \nu}(\sigma^{\geq \nu}\bar K))} \\
   F^{\nu,n}/F^{\nu+1,n} \ar@{.>}[r]_-{\psi^{\nu}} & \R^n\bar f_*(\sigma^{=\nu}\bar K)
  }
 \end{equation} 
 Then the following diagram commutes in $\Mod(S/g)$:
 \begin{equation} \label{e:grgm0}
  \xymatrix@C=3pc{
   F^{p,n}/F^{p+1,n} \ar[r]^-{\bar\zeta} \ar[d]_{\psi^p} & G\otimes (F^{p-1,n}/F^{p,n}) \ar[d]^{\id_G\otimes\psi^{p-1}} \\
   \R^n\bar f_*(\sigma^{=p}\bar K) \ar[r]_-{\nabla^{=p,n}} & G\otimes\R^n\bar f_*(\sigma^{=p-1}\bar K)
  }
 \end{equation}
\end{proposition}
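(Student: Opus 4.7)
The plan is to verify commutativity of \eqref{e:grgm0} by pre-composing both compositions with a suitable epimorphism, reducing everything to identities that are either given or have been established in Lemma \ref{l:cdlemma}. The crucial observation is that the canonical morphism $\lambda^n(p)\colon\R^n\bar f_*(\sigma^{\geq p}\bar K)\to F^{p,n}$ is an epimorphism of sheaves on $S_\top$ (it arises by factoring $\R^n\bar f_*(i^{\geq p}\bar K)$ through its image, with $\iota^n(p)$ being the monomorphism part of the factorization), and likewise $\coker(\iota^n(p,p+1))$ is an epimorphism. Hence $\coker(\iota^n(p,p+1))\circ\lambda^n(p)$ is an epimorphism in $\Mod(S)$, and in turn in $\Mod(S/g)$. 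It will therefore suffice to show that the two paths in \eqref{e:grgm0} agree after this precomposition.

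The key intermediate identity to establish is
\[
 \zeta\circ\lambda^n(p) = (\id_G\otimes\lambda^n(p-1))\circ\nabla^{\geq p,n}
\]
in $\Mod(S/g)$. To prove it, I would post-compose both sides with the monomorphism $\id_G\otimes\iota^n(p-1)$ (the codomain factor $G$ is flat, so tensoring preserves the monomorphism $\iota^n(p-1)$). By \eqref{e:gt0}, the left-hand side becomes $\nabla^n\circ\iota^n(p)\circ\lambda^n(p) = \nabla^n\circ\R^n\bar f_*(i^{\geq p}\bar K)$. By Lemma \ref{l:cdlemma} (applied to the upper rectangle of Figure \ref{fig:cdlemma}, which couples $\nabla^n$ and $\nabla^{\geq p,n}$ via the $i^{\geq\cdot}$-morphisms), this equals $(\id_G\otimes\R^n\bar f_*(i^{\geq p-1}\bar K))\circ\nabla^{\geq p,n} = (\id_G\otimes\iota^n(p-1))\circ(\id_G\otimes\lambda^n(p-1))\circ\nabla^{\geq p,n}$. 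Cancelling the monomorphism $\id_G\otimes\iota^n(p-1)$ yields the claim.

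With this identity in hand, I would compute both paths. The right-hand path, after precomposition, reads
\[
 \nabla^{=p,n}\circ\psi^p\circ\coker(\iota^n(p,p+1))\circ\lambda^n(p) = \nabla^{=p,n}\circ\R^n\bar f_*(j^{\leq p}(\sigma^{\geq p}\bar K))
\]
by \eqref{e:degen-psi} for $\nu = p$, which in turn equals $(\id_G\otimes\R^n\bar f_*(j^{\leq p-1}(\sigma^{\geq p-1}\bar K)))\circ\nabla^{\geq p,n}$ by Lemma \ref{l:cdlemma} applied to the rectangle linking the second and third rows of Figure \ref{fig:cdlemma} via the $j^{\leq\cdot}$-morphisms. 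The left-hand path, after precomposition, simplifies via \eqref{e:grgt0} and the intermediate identity above to
\[
 (\id_G\otimes\psi^{p-1})\circ(\id_G\otimes\coker(\iota^n(p-1,p)))\circ(\id_G\otimes\lambda^n(p-1))\circ\nabla^{\geq p,n},
\]
and applying \eqref{e:degen-psi} for $\nu = p-1$ rewrites this as $(\id_G\otimes\R^n\bar f_*(j^{\leq p-1}(\sigma^{\geq p-1}\bar K)))\circ\nabla^{\geq p,n}$, matching the right-hand path.

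The principal obstacle is purely bookkeeping: one must extract precisely the right commutative subrectangles from the large diagram of Figure \ref{fig:cdlemma}, and one must be vigilant that all compositions are taken and all cancellations performed inside $\Mod(S/g)$ rather than $\Mod(S)$, since $\nabla^n$, $\nabla^{\geq p,n}$, and $\nabla^{=p,n}$ are only $g^{-1}\O_T$-linear in general. Flatness of $G$ on $S$ (which permits the crucial cancellation of $\id_G\otimes\iota^n(p-1)$) and the epi/mono factorization supplied by the image construction of $F^{p,n}$ are the two structural ingredients that make the reduction to Lemma \ref{l:cdlemma} possible.
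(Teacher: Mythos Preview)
Your proof is correct and follows essentially the same approach as the paper's. The paper organizes the argument as three successive ``prism diagram'' verifications, each extracting a commutative trapezoid by cancelling an epimorphism or monomorphism; your intermediate identity is precisely the lower trapezoid of the first prism, and your direct computation of both paths after precomposing with $\coker(\iota^n(p,p+1))\circ\lambda^n(p)$ collapses the paper's second and third prisms into a single calculation, using the same ingredients (Lemma~\ref{l:cdlemma}, \eqref{e:gt0}, \eqref{e:grgt0}, \eqref{e:degen-psi}, and flatness of $G$) in the same way.
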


\begin{proof}
 We proceed in three steps. In each step we derive the commutativity of a certain square-shaped (or maybe better ``trapezoid-shaped'') diagram by means of a ``prism diagram argument''. To begin with, consider the following diagram (``prism''):
 \begin{equation} \label{e:grgm0-aux1}
  \xymatrix@R=3pc@C=12pt{
   & \R^n\bar f_*(\bar K) \ar[rr]^-{\nabla^n} && G\otimes\R^n\bar f_*(\bar K) \\
   F^{p,n} \ar[rrrr]^-{\zeta} \ar[ru]^{\iota^n(p)} &&&& G\otimes F^{p-1,n} \ar[ul]_{\id_G\otimes\iota^n(p-1)} \\
   & \R^n\bar f_*(\sigma^{\geq p}\bar K) \ar[rr]_-{\nabla^{\geq p,n}} \ar[uu]|\hole|(0.65){\R^n\bar f_*(i^{\geq p}\bar K)} \ar[ul]^{\lambda^n(p)} && G\otimes\R^n\bar f_*(\sigma^{\geq p-1}\bar K) \ar[ur]_{\id_G\otimes\lambda^n(p-1)} \ar[uu]|(0.35){\id_G\otimes\R^n\bar f_*(i^{\geq p-1}\bar K)}|\hole
  }
 \end{equation}
 The above diagram is, in fact, commutative: the left and right triangles commute according to the definitions of $\lambda^n(p)$ and $\lambda^n(p-1)$, respectively; the back square (or rectangle) commutes due to Lemma \ref{l:cdlemma}; the upper trapezoid commutes by our assumption on $\zeta$, \cf \eqref{e:gt0}; therefore, the lower trapezoid commutes, too, taking into account that $\id_G\otimes\iota^n(p-1)$ is a monomorphism, which is due to the fact that $G$ is a flat module on $S$. Next, we claim that the diagram
 \begin{equation} \label{e:grgm0-aux2}
  \xymatrix@R=3pc@C=12pt{
   & \R^n\bar f_*(\sigma^{\geq p}\bar K) \ar[rr]^-{\nabla^{\geq p,n}} \ar[dd]|(0.35){\R^n\bar f_*(j^{\leq p}(\sigma^{\geq p}\bar K))}|\hole \ar[dl]_{\lambda^n(p)} && G\otimes\R^n\bar f_*(\sigma^{\geq p-1}\bar K) \ar[dr]^{\id_G\otimes\lambda^n(p-1)} \ar[dd]|\hole|(0.65){\id_G\otimes\R^n\bar f_*(j^{\leq p-1}(\sigma^{\geq p-1}\bar K))} \\
   F^{p,n} \ar[rrrr]^-{\zeta} \ar[dr]_{\phi^p} &&&& G\otimes F^{p-1,n} \ar[dl]^{\id_G\otimes\phi^{p-1}} \\
   & \R^n\bar f_*(\sigma^{=p}\bar K) \ar[rr]_-{\nabla^{=p,n}} && G\otimes\R^n\bar f_*(\sigma^{=p-1}\bar K)
  }
 \end{equation}
 commutes in $\Mod(S/g)$. The left and right triangles commute according to the definitions of $\phi^p$ and $\phi^{p-1}$, respectively. The back square commutes by means of Lemma \ref{l:cdlemma}. The upper trapezoid commutes by the commutativity of the lower trapezoid in the preceding diagram \eqref{e:grgm0-aux1}. Therefore, the lower trapezoid of the diagram in \eqref{e:grgm0-aux2} commutes taking into account the fact that $\lambda^n(p)$ is an epimorphism. Finally, we deduce the commutativity of:
 \begin{equation} \label{e:grgm0-aux3}
  \xymatrix@R=3pc@C=6pt{
   & F^{p,n} \ar[rr]^-{\zeta} \ar[dd]|(0.35){\phi^p}|\hole \ar[dl]_{\coker(\iota^n(p,p+1))} && G\otimes F^{p-1,n} \ar[dr]^{\id_G\otimes\coker(\iota^n(p,p-1))} \ar[dd]|\hole|(0.65){\id_G\otimes\phi^{p-1}} \\
   F^{p,n}/F^{p+1,n} \ar[rrrr]^-{\bar\zeta} \ar[dr]_{\psi^p} &&&& G\otimes F^{p-1,n}/F^{p,n} \ar[dl]^{\id_G\otimes\psi^{p-1}} \\
   & \R^n\bar f_*(\sigma^{=p}\bar K) \ar[rr]_-{\nabla^{=p,n}} && G\otimes\R^n\bar f_*(\sigma^{=p-1}\bar K)
  }
 \end{equation}
 Here, the left and right triangles commute by the definitions of $\psi^p$ and $\psi^{p-1}$, respectively. The back square commutes by the commutativity of the lower trapezoid of the preceding diagram in \eqref{e:grgm0-aux2}. The upper trapezoid of the diagram in \eqref{e:grgm0-aux3} commutes by our assumption on $\bar\zeta$, \cf \eqref{e:grgt0}. Therefore, the lower trapezoid, which is nothing but \eqref{e:grgm0}, commutes taking into account the fact that $\coker(\iota^n(p,p+1))$ is an epimorphism.
\end{proof}

\begin{theorem}
 \label{t:grgmcc0}
 Suppose we are in the situation of Setup \ref{set:cdlemma}. Let $n$ and $p$ be integers and $\zeta$ and $\bar\zeta$ morphisms such that the diagrams \eqref{e:gt0} and \eqref{e:grgm0} commute in $\Mod(S/g)$. Moreover, let $\psi^p$ and $\psi^{p-1}$ be morphisms such that the diagram \eqref{e:degen-psi} commutes in $\Mod(S)$ for $\nu=p,p-1$. Then the following diagram commutes in $\Mod(S/g)$:
 \begin{equation}
  \xymatrix@C=4pc{
   F^{p,n}/F^{p+1,n} \ar[r]^{\bar\zeta} \ar[d]_{\kappa^n(\sigma^{=p}K) \circ \psi^p} & G\otimes (F^{p-1,n}/F^{p,n}) \ar[d]^{\id_G \otimes (\kappa^n(\sigma^{=p-1}K) \circ \psi^{p-1})} \\
   \R^{n-p}f_*(\wedge^pF) \ar[r]_{\gamma^{p,n-p}_{\KS,f}(G,t)} & G\otimes\R^{n-p+1}f_*(\wedge^{p-1}F)
  }
 \end{equation}
\end{theorem}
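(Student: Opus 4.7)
The plan is to stack three pieces that have already been assembled: the commutativity of the bottom square from Proposition~\ref{p:grgm0} (connecting $\bar\zeta$ to $\nabla^{=p,n}$), the bottom row of the big diagram in Figure~\ref{fig:cdlemma} (transporting $\nabla^{=p,n}$ through the base change natural transformation $\kappa^n$ to the classical relative connecting homomorphism $\delta^{n-p}_f(l^p)$), and finally Proposition~\ref{p:connhom} (rewriting $\delta^{n-p}_f(\Lambda^p(t))$ as the cup and contraction with Kodaira--Spencer class postcomposed with the projection morphism $\pi^{n-p+1}_f(G,\wedge^{p-1}F)$).

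First, Proposition~\ref{p:grgm0} applied to the morphisms $\zeta,\bar\zeta,\psi^p,\psi^{p-1}$ at hand yields the identity
\[
 (\id_G\otimes\psi^{p-1})\circ\bar\zeta = \nabla^{=p,n}\circ\psi^p
\]
in $\Mod(S/g)$. Second, Lemma~\ref{l:cdlemma} tells us that subdiagrams $\diagcircled 7$ and $\diagcircled 8$ of Figure~\ref{fig:cdlemma} commute in $\Mod(S/g)$. Untangling the definition of $\nabla^{=p,n}$ in \eqref{e:nablaeq} and using that $\pi^n_{\bar f}(G,\sigma^{=p-1}\bar K)$ and $\R^{n+1}\bar f_*(\gamma^{=p})$ are isomorphisms (as noted in Notation~\ref{not:gm0}), these two subdiagrams combine to the identity
\[
 \pi^{n-p+1}_f(G,K^{p-1})\circ (\id_G\otimes \kappa^n(\sigma^{=p-1}K))\circ\nabla^{=p,n} = \delta^{n-p}_f(l^p)\circ \kappa^n(\sigma^{=p}K)
\]
in $\Mod(S/g)$. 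Third, because $l^p = \Lambda^p(t)$ and $K^{p-1}=\wedge^{p-1}F$, Proposition~\ref{p:connhom} applied with $q:=n-p$ converts this into
\[
 \pi^{n-p+1}_f(G,\wedge^{p-1}F)\circ (\id_G\otimes \kappa^n(\sigma^{=p-1}K))\circ\nabla^{=p,n} = \pi^{n-p+1}_f(G,\wedge^{p-1}F)\circ\gamma^{p,n-p}_{\KS,f}(G,t)\circ \kappa^n(\sigma^{=p}K).
\]
Since $G$ is locally finite free on $S$, Proposition~\ref{p:projmoriso} ensures that $\pi^{n-p+1}_f(G,\wedge^{p-1}F)$ is an isomorphism, so one may cancel it on the left and obtain
\[
 (\id_G\otimes \kappa^n(\sigma^{=p-1}K))\circ\nabla^{=p,n} = \gamma^{p,n-p}_{\KS,f}(G,t)\circ \kappa^n(\sigma^{=p}K).
\]

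To conclude, precompose the first displayed equation with $\id_G\otimes \kappa^n(\sigma^{=p-1}K)$ on the left; the right-hand side becomes $\gamma^{p,n-p}_{\KS,f}(G,t)\circ\kappa^n(\sigma^{=p}K)\circ\psi^p$ by the last identity, and the left-hand side is $(\id_G\otimes(\kappa^n(\sigma^{=p-1}K)\circ\psi^{p-1}))\circ\bar\zeta$. This is exactly the commutativity of the asserted square.

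There is no genuine obstacle: the theorem is essentially the paste-up of three already established results. The only care required lies in the bookkeeping---making sure the indices match (in particular, that $K^{p-1}=\wedge^{p-1}F$ under the hypothesis $l^p=\Lambda^p(t)$), that the isomorphism properties of $\pi^{n-p+1}_f(G,\wedge^{p-1}F)$, $\pi^n_{\bar f}(G,\sigma^{=p-1}\bar K)$ and $\R^{n+1}\bar f_*(\gamma^{=p})$ are available at the right places, and that all composites are taken in $\Mod(S/g)$ rather than in $\Mod(S)$ (so that the non-$\O_S$-linear but $g^{-1}\O_T$-linear morphism $\kappa^n$ composed with $\psi^p$ may legitimately be treated as a single arrow).
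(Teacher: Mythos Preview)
Your proof is correct and follows essentially the same approach as the paper: invoke Proposition~\ref{p:grgm0} to relate $\bar\zeta$ to $\nabla^{=p,n}$, then use subdiagrams~\diagcircled7 and~\diagcircled8 of Lemma~\ref{l:cdlemma} to pass through $\kappa^n$ to $\delta^{n-p}_f(l^p)$, and finally apply Proposition~\ref{p:connhom} together with the invertibility of $\pi^{n-p+1}_f(G,\wedge^{p-1}F)$ to land on $\gamma^{p,n-p}_{\KS,f}(G,t)$. One small inaccuracy in your closing commentary: $\kappa^n(\sigma^{=p}K)$ is in fact $\O_S$-linear (it is a morphism in $\Mod(S)$); the morphisms that are merely $g^{-1}\O_T$-linear are $\nabla^{=p,n}$ and $\bar\zeta$, but this does not affect the argument.
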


\begin{proof}
 By Proposition \ref{p:grgm0}, we know that the diagram \eqref{e:grgm0} commutes in $\Mod(S/g)$, \iev
 \[
 (\id_G\otimes\psi^{p-1}) \circ \bar\zeta = \nabla^{=p,n} \circ \psi^p.
 \]
 We know too that
 \[
  \kappa^n \colon \R^n\bar f_* \circ u_* \to (\id_S)_* \circ \R^nf_* = \R^nf_*
 \]
 is a natural equivalence functors going from $\Com^+(X)$ to $\Mod(S)$. Proposition \ref{p:projmoriso} implies that $\pi^{n-p+1}_f(G,K^{p-1})$ is an isomorphism in $\Mod(S)$. Hence by Lemma \ref{l:cdlemma}, specifically the commutativity of subdiagram \diagcircled8 of the diagram in Figure \ref{fig:cdlemma}, we have:
 \[
 (\id_G \otimes \kappa^n(\sigma^{=p-1}K)) \circ \nabla^{=p,n} = (\pi^{n-p+1}_f(G,K^{p-1}))^{-1} \circ \delta^{n-p}(l^p) \circ \kappa^n(\sigma^{=p}(K)).
 \]
 By Proposition \ref{p:connhom}, the following identity holds in $\Mod(S)$ recalling that $K^{p-1}=\wedge^{p-1}F$ and $l^p=\Lambda^p_X(t)$:
 \[
  \gamma^{p,n-p}_{\KS,f}(G,t) = (\pi^{n-p+1}_f(G,K^{p-1}))^{-1} \circ \delta^{n-p}(l^p).
 \]
 Summing up, we obtain a chain of equalities:
 \begin{align*}
  \bigl(\id_G & \otimes (\kappa^n(\sigma^{=p-1}K) \circ \psi^{p-1})\bigr) \circ \bar\zeta \\
  & = (\id_G \otimes \kappa^n(\sigma^{=p-1}K)) \circ (\id_G \otimes \psi^{p-1}) \circ \bar\zeta \\
  & = (\id_G \otimes \kappa^n(\sigma^{=p-1}K)) \circ \nabla^{=p,n} \circ \psi^p \\
  & = (\pi^{n-p+1}_f(G,K^{p-1}))^{-1} \circ \delta^{n-p}(l^p) \circ \kappa^n(\sigma^{=p}(K)) \circ \psi^p \\
  & = \gamma^{p,n-p}_{\KS,f}(G,t) \circ (\kappa^n(\sigma^{=p}K) \circ \psi^p),
 \end{align*}
 which is precisely what we needed to prove.
\end{proof}

\section{The Gauß-Manin connection}
\label{s:ksgm}

In what follows, we are basically applying the results that we have established in the preceding section in a more concrete situation. Our predominant goal here is to prove Theorem \ref{t:grgmcc}, which corresponds to Theorem \ref{t:grgmcc0} of \S\ref{s:cdlemma}.

\begin{notation}
 \label{not:trippdiff}
 Let $(f,g)$ be a composable pair of morphisms of complex spaces, \iev an ordered pair of morphisms such that codomain of $f$ equals the domain of $g$. Put $h:=g\circ f$. Then we denote $\Omega^1(f,g)$ the triple of modules on $X$,
 \begin{equation} \label{e:omega1}
  f^*\Omega^1_g \overset{\alpha}{\to} \Omega^1_{h} \overset{\beta}{\to} \Omega^1_{f},
 \end{equation}
 which we have associated to $(f,g)$ according to \cite[Section 2]{SHC13.2.14}.  We call $\Omega^1(f,g)$ the \emph{triple of $1$-differentials} associated to $(f,g)$. Note that the ordered pair $(\Omega^1_g,\Omega^1(f,g))$ is an object of $\cat D_f$, \cf Construction \ref{con:fibered}.

 By \cite[Corollaire 4.5]{SHC13.2.14} we know that $\Omega^1(f,g)$ is a right exact triple of modules on $X$. Moreover, by \cite[Remarque 4.6]{SHC13.2.14}, $\Omega^1(f,g)$ is a short exact triple of modules on $X$ whenever the morphism $f$ is submersive.
\end{notation}

\begin{notation}[Kodaira-Spencer class, II]
 \label{not:kscc}
 Let $(f,g)$ be a composable pair of submersive morphisms of complex spaces. Write $f\colon X\to S$. Then $(\Omega^1_g,\Omega^1(f,g))$ is an object of $\cat D_f$ such that $\Omega^1(f,g)$ is a short exact triple of modules on $X$. Besides, $(\Omega^1(f,g))(2) = \Omega^1_f$ and $\Omega^1_g$ are locally finite free modules on $X$ and $S$, respectively. Therefore it makes sense to define
 \begin{equation} \label{e:ks}
  \xi_\KS(f,g) := \xi_{\KS,f}(\Omega^1_g,\Omega^1(f,g)),
 \end{equation}
 where the right hand side is understood in the sense of Notation \ref{not:ks0}. Observe that by definition $\xi_\KS(f,g)$ is a morphism
 $$\xi_\KS(f,g) \colon \O_S \to \Omega^1_g \otimes_S \R^1f_*(\Theta_f)$$
 of modules on $S$. Furthermore, we set $\xi_\KS(f) := \xi_\KS(f,a_S)$, where $a_S \colon S\to \C$ denotes the canonical morphism of complex spaces. We call $\xi_\KS(f,g)$ (\resp $\xi_\KS(f)$) the \emph{Kodaira-Spencer class} of $(f,g)$ (\resp $f$).
 
 Fixing in addition to $(f,g)$ two integers $p$ and $q$, we define
 \begin{equation} \label{e:cc}
  \gamma^{p,q}_\KS(f,g) := \gamma^{p,q}_{\KS,f}(\Omega^1_g,\Omega^1(f,g)),
 \end{equation} 
 where we interpret the right hand side in the sense of Notation \ref{not:cc0}. Thus, $\gamma^{p,q}_\KS(f,g)$ is a morphism
 $$\gamma^{p,q}_\KS(f,g) \colon \R^qf_*(\Omega^p_f) \to \Omega^1_g \otimes_S \R^{q+1}f_*(\Omega^{p-1}_f)$$
 of modules on $S$, which we call the \emph{cup and contraction with Kodaira-Spencer class} in bidegree $(p,q)$ for $(f,g)$. As a special case, we set $\gamma^{p,q}_\KS(f) := \gamma^{p,q}_\KS(f,a_S)$ and call this the \emph{cup and contraction with Kodaira-Spencer class} in bidegree $(p,q)$ for $f$.
\end{notation}

\begin{proposition}
 \label{p:ksgeo}
 Let $(f,g)$ be a composable pair of submersive morphisms of complex spaces and $p$ and $q$ integers. Then the following identity holds in $\Mod(S)$, where $S:=\cod(f)$:
 \[
  \delta^q_f\left(\Lambda^p(\Omega^1(f,g))\right) = \pi^{q+1}_f(\Omega^1_g,\Omega^{p-1}_f) \circ \gamma^{p,q}_\KS(f,g).
 \]
\end{proposition}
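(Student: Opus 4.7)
The plan is to recognize this proposition as a direct specialization of Proposition \ref{p:connhom} to the particular object $(G,t) = (\Omega^1_g, \Omega^1(f,g))$ of $\cat D_f$. So the proof is essentially a bookkeeping exercise matching up definitions rather than anything substantive.

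First I would set $t := \Omega^1(f,g)$ and $G := \Omega^1_g$, write $f \colon X \to S$, and check that the hypotheses of Proposition \ref{p:connhom} are satisfied. By Notation \ref{not:trippdiff}, the pair $(G,t)$ is an object of $\cat D_f$; moreover, since $f$ is submersive, the cited \cite[Remarque 4.6]{SHC13.2.14} guarantees that $t$ is short exact on $X$. The submersivity of $f$ also gives that $F := t(2) = \Omega^1_f$ is a locally finite free module on $X$, and the submersivity of $g$ gives that $G = \Omega^1_g$ is locally finite free on $S$. Thus the hypotheses of Proposition \ref{p:connhom} are met.

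Next I would invoke Proposition \ref{p:connhom}, which yields the identity
\[
 \delta^q_f(\Lambda^p(t)) = \pi^{q+1}_f(G, \wedge^{p-1}F) \circ \gamma^{p,q}_{\KS,f}(G,t)
\]
in $\Mod(S)$. Then I would unwind the notational conventions. On the right-hand side, $\wedge^{p-1}F = \wedge^{p-1}\Omega^1_f = \Omega^{p-1}_f$ by the very definition of the sheaf of relative $p$-differentials, so the projection morphism becomes $\pi^{q+1}_f(\Omega^1_g, \Omega^{p-1}_f)$. Furthermore, the definition \eqref{e:cc} in Notation \ref{not:kscc} reads $\gamma^{p,q}_\KS(f,g) = \gamma^{p,q}_{\KS,f}(\Omega^1_g, \Omega^1(f,g)) = \gamma^{p,q}_{\KS,f}(G,t)$. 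Substituting these identifications on both sides yields exactly
\[
 \delta^q_f(\Lambda^p(\Omega^1(f,g))) = \pi^{q+1}_f(\Omega^1_g, \Omega^{p-1}_f) \circ \gamma^{p,q}_\KS(f,g),
\]
which is the claim.

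There is no serious obstacle here; the statement is purely a restatement of Proposition \ref{p:connhom} in the geometric language of differentials once one notes that $(\Omega^1_g, \Omega^1(f,g))$ fits the abstract template of $(G,t)$. The only thing worth double-checking is that the module-on-$X$ structures on $\wedge^{p-1}\Omega^1_f$ and $\Omega^{p-1}_f$ really do agree (not merely up to a canonical isomorphism), but this is just the standard convention for the definition of the sheaf of higher relative differentials as the exterior power of $\Omega^1_f$.
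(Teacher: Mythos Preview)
Your proposal is correct and follows exactly the same approach as the paper: the paper's proof is the single sentence ``Apply Proposition \ref{p:connhom} to the morphism of ringed spaces $f$ and the object $(\Omega^1_g,\Omega^1(f,g))$ of $\cat D_f$,'' and you have simply unpacked this by verifying the hypotheses and matching the notation.
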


\begin{proof}
 Apply Proposition \ref{p:connhom} to the morphism of ringed spaces $f$ and the object $(\Omega^1_g,\Omega^1(f,g))$ of $\cat D_f$.
\end{proof}

\begin{notation}
 \label{not:fbar}
 Let $f\colon X\to S$ be a morphism of complex spaces (or else a morphism of ringed spaces). We recall here a notational device that we had established implicitly already in \S\ref{s:cdlemma}. Namely, we define a morphism of ringed spaces $\bar f\colon \bar X \to S$ by setting $\bar X := (X_\top,f^{-1}\O_S)$ and $\bar f := (|f|,\eta_{\O_S} \colon \O_S \to {f_\top}_*f^{-1}\O_S)$, where $\eta_{\O_S}$ denotes the evident adjunction morphism.
\end{notation}

\begin{construction}
 \label{con:tripdr}
 Let $f\colon X\to S$ and $g\colon S\to T$ be morphisms of complex spaces. Set $h:=g\circ f$. We intend to construct a functor
 \[
  \Omega^\kdot(f,g) \colon \mathbf3 \to \Com^+(X/h),
 \]
 \iev a triple of bounded below complexes over $\Mod(X/h)$, which we call the \emph{triple of de Rham complexes} associated to $(f,g)$. In order to simplify notation, we shorten $\Omega^\kdot(f,g)$ to $\Omega^\kdot$ in what follows. To begin with, we define the object function of the functor $\Omega^\kdot$. Recall the set of objects of the category $\mathbf3$ is the set $3 = \{0,1,2\}$. We define $\Omega^\kdot(0)$ to be the unique complex over $\Mod(X/f)$ such that, for all integers $p$, firstly, we have
 \[
  (\Omega^\kdot(0))^p = f^*\Omega^1_g\otimes_X\Omega^{p-1}_f
 \]
 and, secondly, the following diagram commutes in $\Mod(\bar X)$:
 \[
  \xymatrix@C=3pc{
   \bar f^*\Omega^1_g\otimes_{\bar X}\bar\Omega^{p-1}_f \ar[r]^{\id_{\bar f^*\Omega^1_g}\otimes \dd^{p-1}_f} \ar[d]_{\gamma^p} & \bar f^*\Omega^1_g\otimes_{\bar X}\bar\Omega^p_f \ar[d]^{\gamma^{p+1}} \\
   u_*(f^*\Omega^1_g\otimes_X\Omega^{p-1}_f) \ar@{.>}[r]_{\dd^p_{\Omega^\kdot(0)}} & u_*(f^*\Omega^1_g\otimes_X\Omega^p_f)
  }
 \]
 Here, $u \colon X\to \bar X$ stands for the morphism of ringed spaces which is given by $\id_{|X|}$ and $f^\sharp\colon f^{-1}\O_S \to \O_X$, and, for any integer $\nu$, $\gamma^\nu$ signifies the composition of the following morphisms in $\Mod(\bar X)$:
 \begin{align*}
  \bar f^*\Omega^1_g\otimes_{\bar X}\bar\Omega^{\nu-1}_f & \to u_*(u^*\bar f^*\Omega^1_g) \otimes_{\bar X} u_*(\Omega^{\nu-1}_f) \\ & \to u_*(u^*\bar f^*\Omega^1_g\otimes_X\Omega^{\nu-1}_f) \to u_*(f^*\Omega^1_g\otimes_X\Omega^{\nu-1}_f).
 \end{align*}
 Notice that we have $\bar\Omega^{\nu-1}_f = u_*(\Omega^{\nu-1}_f)$. In order to define $\Omega^\kdot(1)$, denote $K^p = (K^{p,i})_{i\in\Z}$ the Koszul filtration in degree $p$ which is induced by
 $$\Omega^1(f,g)|\mathbf2 \colon f^*\Omega^1_g \to \Omega^1_h,$$
 \cf Construction \ref{con:koz}. Then, for all integers $p$ and $i$, one easily verifies that the differential $\dd^p_h$ of the complex $\Omega^\kdot_h$ maps $K^{p,i}$ into $K^{p+1,i}$. Thus we may dipose of a quotient complex:
 \[
  \Omega^\kdot(1) := \Omega^\kdot_h/K^{\kdot,2}.
 \]
 Finally we set:
 \[
  \Omega^\kdot(2) := \Omega^\kdot_f.
 \]
 Moving on to the morphism function of $\Omega^\kdot$, we define, for all ordered pairs $(x,y)$ of objects of $\mathbf3$, \iev all $(x,y) \in 3\times 3$, $\Omega^\kdot(x,y)$ to be the unique function on $\hom_{\mathbf3}(x,y)$ which assigns to all morphisms $a\colon x\to y$ in $\mathbf3$ the $\Z$-sequence
 $$p \mto \left(\Lambda^p_X(\Omega^1(f,g))\right)(x,y).$$
 To verify that the so defined $\Omega^\kdot$ is a functor from $\mathbf3$ to $\Com^+(X/h)$, essentially one has to check that $\Omega^\kdot(0,1)$ (\resp $\Omega^\kdot(1,2)$) constitutes a morphism $\Omega^\kdot(0) \to \Omega^\kdot(1)$ (\resp $\Omega^\kdot(1) \to \Omega^\kdot(2)$) of complexes over $\Mod(X/h)$. This amounts to checking that the morphisms defined by the $\Lambda^p$ construction commute with the differentials of the respective complexes $\Omega^\kdot(x)$, for $x\in 3$, introduced here. In case of $\Omega^\kdot(1,2)$ the desired commutativity is rather clear since the wedge powers of the morphism
 \[
  (\Omega^1(f,g))(1,2) \colon \Omega^1_h \to \Omega^1_f
 \]
 form a morphism $\Omega^\kdot_h \to \Omega^\kdot_f$ of complexes over $\Mod(X/h)$. In case of $\Omega^\kdot(0,1)$ the compatibility is harder to establish as the definition of $(\Lambda^p(t))(0,1)$, for some right exact triple $t$ of modules on $X$, is more involved, \cf Construction \ref{con:lambdap}. Nonetheless, we dare omit this tedious task.
\end{construction}

\begin{remark}
 \label{r:identgm}
 Let $f\colon X\to S$ and $g\colon S\to T$ be submersive morphisms of complex spaces. By abuse of notation we write $f$ and $g$ also for the morphisms of ringed spaces obtained from $f$ and $g$, respectively, by applying the forgetful functor from the category of complex spaces to the category of ringed spaces. Set $G:=\Omega^1_g$ and $t:=\Omega^1(f,g)$, \cf Notation \ref{not:trippdiff}. Then $(G,t)$ clearly is an object of $\cat D_f$, \cf Construction \ref{con:fibered}. As $f$ is a submersive morphism of complex spaces, $t(2) = \Omega^1_f$ is a locally finite free module on $X$ and $t$ is a short exact triple of modules on $X$. Since $g$ is a submersive morphism of complex spaces, $G$ is a locally finite free module on $S$. Further on, set $l:=\Omega^\kdot(f,g)$, \cf Construction \ref{con:tripdr}. Then $l \colon L\to M\to K$ is a triple in $\Com^+(X/h)$, where $h:=g\circ f$, such that $K$ and $L$ are objects of $\Com^+(X/f)$. Moreover, for all integers $p$, we have $l^p = \Lambda^p_X(t)$, where $l^p$ stands for the triple in $\Mod(X/h)$ which is obtained by extracting the degree-$p$ part from the triple of complexes $l$. Define $\gamma$, $\bar f\colon \bar X\to S$, $\bar K$, and $\bar L$ just as in Setup \ref{set:cdlemma}. Then $\gamma$ is a morphism in $\Com^+(\bar X)$,
 \[
  \gamma \colon \bar f^*G\otimes_{\bar X}(\bar K[-1]) \to \bar L,
 \]
 by the very definition of the differentials of the complex $L = l(0) = (\Omega^\kdot(f,g))(0)$, \cf Construction \ref{con:tripdr}.
 Summing up, we see that with the morphisms of ringed spaces $f$ and $g$, with $(G,t)$, and with $l$, we are in the situation of Setup \ref{set:cdlemma}.
\end{remark}

\begin{notation}[Gauß-Manin connection]
 \label{not:gm} \strut
 \begin{enumerate}
  \item Let $(f,g)$ be a composable pair of submersive morphisms of complex spaces and $n$ an integer. Then we set:
  \[
   \nabla^n_\GM(f,g) := \nabla^n,
  \]
  where $\nabla^n$ on the right hand side is given by \eqref{e:nabla} and $G$, $t$, and $l$ are defined precisely as in Remark \ref{r:identgm}. Note that we may use the ``$\nabla^n$'' from Notation \ref{not:gm0} since we are in the situation of Setup \ref{set:cdlemma} as pointed out in Remark \ref{r:identgm}. We call $\nabla^n_\GM(f,g)$ the $n$-th \emph{Gauß-Manin connection} of $(f,g)$.
  \item Let $f\colon X\to S$ be a submersive morphism of complex spaces such that the complex space $S$ is smooth. Let $n$ be an integer. Then we set:
  \[
   \nabla^n_\GM(f) := \nabla^n_\GM(f,a_S),
  \]
  where $a_S \colon S\to \C$ denotes the unique morphism of complex spaces from $S$ to the distinguished one-point complex space. Observe that it makes sense to employ the terminology ``$\nabla^n_\GM(f,g)$'' of part a) since given that the complex space $S$ is smooth, the morphism of complex spaces $a_S$ is submersive.  We call $\nabla^n_\GM(f)$ the $n$-th \emph{Gauß-Manin connection} of $f$.
 \end{enumerate}
\end{notation}

\begin{notation}[Algebraic de Rham module]
 \label{not:dr}
 Let $f$ be a morphism of complex spaces. Let $n$ be an integer. Then we put:
 \[
  \sH^n(f) := \R^n\bar f_*(\bar\Omega^\kdot_f).
 \]
 We call $\sH^n(f)$ the $n$-th \emph{algebraic de Rham module} of $f$.
\end{notation}

\begin{notation}
 \label{not:hodgefilt}
 Let $f\colon X\to S$ be a morphism of complex spaces and $n$ an integer. Then for any integer $p$ we set:
 \[
  \F^p\sH^n(f) := \im(\R^n\bar f_*(i^{\geq p}\bar\Omega^\kdot_f)\colon \R^n\bar f_*(\sigma^{\geq p}\bar\Omega^\kdot_f) \to \R^n\bar f_*(\bar\Omega^\kdot_f))
 \]
 in the sense that $\F^p\sH^n(f)$ is a submodule of $\sH^n(f)$ on $S$; moreover, we write
 \[
  \iota^n_f(p) \colon \F^p\sH^n(f) \to \sH^n(f)
 \]
 for the corresponding inclusion morphism of sheaves on $S_\top$ (note that $\sH^n(f)=\R^n\bar f_*(\bar\Omega^\kdot_f)$ according to Notation \ref{not:dr}). Further on, for any integer $p$, we denote by $\lambda^n_f(p)$ the unique morphism such that the following diagram commutes in $\Mod(S)$:
 \[
  \xymatrix@C1pc{
   \R^n\bar f_*(\sigma^{\geq p}\bar\Omega^\kdot_f) \ar[rr]^{\R^n\bar f_*(i^{\geq p}\bar\Omega^\kdot_f)} \ar@{.>}[dr]_{\lambda^n_f(p)} && \R^n\bar f_*(\bar\Omega^\kdot_f) \\
   & \F^p\sH^n(f) \ar[ru]_{\iota^n_f(p)}
  }
 \]
 Obviously, the sequence $(\F^p\sH^n(f))_{p\in\Z}$ makes up a descending sequence of submodules of $\sH^n(f)$ on $S$. In more formal terms we may express this observation by saying that, for all integers $p,p'$ such that $p\leq p'$, there exists a unique morphism $\iota^n_f(p,p')$ such that the following diagram commutes in $\Mod(S)$:
 \[
  \xymatrix@C1pc{
   & \sH^n(f) \\
   \F^p\sH^n(f) \ar[ru]^{\iota^n_f(p)} && \F^{p'}\sH^n(f) \ar[ul]_{\iota^n_f(p')} \ar@{.>}[ll]^{\iota^n_f(p,p')}
  }
 \]
\end{notation}

\begin{proposition}
 \label{p:gtgrgm}
 Let $n$ and $p$ be integers and $(f,g)$ a composable pair of submersive morphisms of complex spaces. Then there exists one, and only one, ordered pair $(\zeta,\bar\zeta)$ such that abbreviating $\F^*\sH^n(f)$ to $F^*$, the following diagram commutes in $\Mod(S/g)$, where $S:=\dom(g)$:
 \begin{equation}
  \label{e:gtgrgm}
  \xymatrix@C=3pc{
   \sH^n(f) \ar[r]^{\nabla^n_\GM(f,g)} & \Omega^1_g\otimes\sH^n(f) \\
   F^p \ar@{.>}[r]^{\zeta} \ar[u]^{\iota^n_f(p)} \ar[d]_{\coker(\iota^n_f(p,p+1))} & \Omega^1_g\otimes F^{p-1} \ar[u]_{\id_{\Omega^1_g}\otimes\iota^n_f(p-1)} \ar[d]^{\id_{\Omega^1_g}\otimes\coker(\iota^n_f(p-1,p))} \\
  F^p/F^{p+1} \ar@{.>}[r]_{\bar\zeta} & \Omega^1_g\otimes (F^{p-1}/F^p)
  }
 \end{equation}
\end{proposition}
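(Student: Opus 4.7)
The plan is to reduce the statement to the two key technical results of the preceding section, namely Proposition \ref{p:gt0} and Proposition \ref{p:grgt0}, by identifying the present situation with the abstract framework of Setup \ref{set:cdlemma}. This identification has essentially already been carried out in Remark \ref{r:identgm}: put $G := \Omega^1_g$, $t := \Omega^1(f,g)$, and $l := \Omega^\kdot(f,g)$, and view $f$ and $g$ as morphisms of ringed spaces. Then Remark \ref{r:identgm} tells us that Setup \ref{set:cdlemma} is satisfied.

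Under this identification, I would match up the ``abstract'' objects of \S\ref{s:cdlemma} with their ``geometric'' counterparts of the present section. Concretely, comparing the definitions, one has $\bar K = \bar\Omega^\kdot_f$, hence $\R^n\bar f_*(\bar K) = \sH^n(f)$ by Notation \ref{not:dr}; the submodule $F^{p,n}$ of Notation \ref{not:hodgefilt0} coincides with $\F^p\sH^n(f)$ of Notation \ref{not:hodgefilt}, the inclusion $\iota^n(p)$ with $\iota^n_f(p)$, and the comparison morphism $\iota^n(p,p+1)$ with $\iota^n_f(p,p+1)$. Moreover, by the very definition given in Notation \ref{not:gm}, the morphism $\nabla^n$ of \eqref{e:nabla} equals $\nabla^n_\GM(f,g)$.

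With these translations in place, the first (upper) square of \eqref{e:gtgrgm} is exactly the diagram \eqref{e:gt0} of Proposition \ref{p:gt0}. Since Proposition \ref{p:gt0} asserts the existence and uniqueness of a morphism $\zeta$ rendering that diagram commutative in $\Mod(S/g)$, we obtain a unique $\zeta$ making the upper square of \eqref{e:gtgrgm} commute. Given this $\zeta$, the lower square of \eqref{e:gtgrgm} is exactly the diagram \eqref{e:grgt0} of Proposition \ref{p:grgt0}, and that proposition supplies a unique $\bar\zeta$ filling it in commutatively. Combining the two uniqueness assertions yields a unique ordered pair $(\zeta,\bar\zeta)$ with the desired property.

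I do not expect a genuine obstacle here; the content of the statement is a direct specialization of the abstract results already proven in \S\ref{s:cdlemma}. The only point requiring care is the bookkeeping of the translation between the abstract notation ($\bar K$, $F^{p,n}$, $\nabla^n$, $\iota^n(p)$, etc.) and the geometric notation ($\bar\Omega^\kdot_f$, $\F^p\sH^n(f)$, $\nabla^n_\GM(f,g)$, $\iota^n_f(p)$, etc.); in particular one should note that the flatness of $\Omega^1_g$ on $S$, which is used in Propositions \ref{p:gt0} and \ref{p:grgt0}, is available because $g$ is submersive.
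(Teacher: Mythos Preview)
Your proposal is correct and follows essentially the same approach as the paper: set $G := \Omega^1_g$, $t := \Omega^1(f,g)$, $l := \Omega^\kdot(f,g)$, invoke Remark \ref{r:identgm} to place yourself in Setup \ref{set:cdlemma}, and then appeal to Propositions \ref{p:gt0} and \ref{p:grgt0}. The paper's proof is more terse, but the content is identical; your additional remarks on the dictionary between the abstract and geometric notation and on the flatness of $\Omega^1_g$ are accurate and harmless.
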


\begin{proof}
 Set $G := \Omega^1_g$, $t := \Omega^1(f,g)$, and $l := \Omega^\kdot(f,g)$ and write $f$ and $g$ also for the morphisms of ringed spaces obtained from $f$ and $g$, respectively, by applying the forgetful functor from the category of complex spaces to the category of ringed spaces. Then according to Remark \ref{r:identgm}, we are in the situation of Setup \ref{set:cdlemma}. Thus our assertion is implied by Proposition \ref{p:gt0} and Proposition \ref{p:grgt0}.
\end{proof}

\begin{notation}
 \label{not:grgm}
 Let $(f,g)$ be a composable pair of submersive morphisms of complex spaces. Then for any integers $n$ and $p$ we set:
 \[
  \bar\nabla^{p,n}_\GM(f,g) := \bar\zeta,
 \]
 where $(\zeta,\bar\zeta)$ is the unique ordered pair such that the diagram \eqref{e:gtgrgm} commutes in $\Mod(S/g)$, where we set $S:=\dom(g)$ and abbreviated $\F^*\sH^n(f)$ to $F^*$, \cf Proposition \ref{p:gtgrgm}.
\end{notation}

\begin{notation}[Hodge module]
 \label{not:hodge}
 Let $f$ be a morphism of complex spaces and $p$ and $q$ integers. Then we put:
 \[
  \sH^{p,q}(f) := \R^qf_*(\Omega^p_f).
 \]
 We call $\sH^{p,q}(f)$ the \emph{Hodge module} in bidegree $(p,q)$ of $f$.
\end{notation}

\begin{theorem}
 \label{t:grgmcc}
 Let $n$ and $p$ be integers and $(f,g)$ a composable pair of submersive morphisms of complex spaces. Let $\psi^p$ and $\psi^{p-1}$ be morphisms in $\Mod(S)$, where $S:=\dom(g)$, such that abbreviating $\F^*\sH^n(f)$ to $F^*$, the following diagram commutes in $\Mod(S)$ for $\nu=p,p-1$:
 \begin{equation}
  \label{e:grgmcc-psi}
  \xymatrix{
    \R^n\bar f_*(\sigma^{\geq\nu}{\bar\Omega}^\kdot_f) \ar[r]^-{\lambda^n_f(\nu)} \ar[d]_{\R^n\bar f_*(j^{\leq\nu}(\sigma^{\geq\nu}\bar\Omega^\kdot_f))} & F^\nu \ar[d]^{\coker(\iota^n_f(\nu,\nu+1))} \\ \R^n\bar f_*(\sigma^{=\nu}\bar\Omega^\kdot_f) & F^\nu/F^{\nu+1} \ar@{.>}[l]^-{\psi^\nu} 
  }
 \end{equation}
 Then the following diagram commutes in $\Mod(S)$:
 \begin{equation}
  \label{e:grgmcc}
  \xymatrix@C4pc{
   F^p/F^{p+1} \ar[r]^-{\bar\nabla^{p,n}_\GM(f,g)} \ar[d]_{\kappa^n_f(\sigma^{=p}\Omega^\kdot_f)\circ\psi^p} & \Omega^1_g \otimes (F^{p-1}/F^p) \ar[d]^{\id_{\Omega^1_g}\otimes(\kappa^n_f(\sigma^{=p-1}\Omega^\kdot_f)\circ\psi^{p-1})} \\
   \sH^{p,n-p}(f) \ar[r]_-{\gamma^{p,n-p}_\KS(f,g)} & \Omega^1_g\otimes\sH^{p-1,n-p+1}(f)
  }
 \end{equation}
\end{theorem}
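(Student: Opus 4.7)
The plan is to apply Theorem \ref{t:grgmcc0} directly via the identifications spelled out in Remark \ref{r:identgm}. First, I set $G := \Omega^1_g$, $t := \Omega^1(f,g)$, and $l := \Omega^\kdot(f,g)$, regarding $f$ and $g$ (by abuse of notation) as morphisms of commutative ringed spaces. As observed in Remark \ref{r:identgm}, all hypotheses of Setup \ref{set:cdlemma} are then met: $(G,t)$ is an object of $\cat D_f$ with $t$ short exact (since $f$ is submersive), $t(2) = \Omega^1_f$ and $G = \Omega^1_g$ are locally finite free, the triple $l$ lives in $\Com^+(X/h)$ with $l^p = \Lambda^p_X(t)$ for every integer $p$, and the sequence $\gamma$ of Setup \ref{set:cdlemma} is a morphism of complexes by the very definition of the differentials of $\Omega^\kdot(f,g)(0)$ in Construction \ref{con:tripdr}.

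Second, I translate the Hodge filtration and Gauß--Manin data. With $\bar K = \bar\Omega^\kdot_f$, the modules $F^{p,n}$ of Notation \ref{not:hodgefilt0} coincide with $\F^p\sH^n(f)$, and likewise $\iota^n(p)$, $\lambda^n(p)$, $\iota^n(p,p+1)$ coincide with $\iota^n_f(p)$, $\lambda^n_f(p)$, $\iota^n_f(p,p+1)$; also $\nabla^n$ from Notation \ref{not:gm0} equals $\nabla^n_\GM(f,g)$ by Notation \ref{not:gm}. Invoking Proposition \ref{p:gtgrgm} together with Notation \ref{not:grgm}, the unique ordered pair $(\zeta,\bar\zeta)$ rendering \eqref{e:gtgrgm} commutative satisfies $\bar\zeta = \bar\nabla^{p,n}_\GM(f,g)$; in particular the diagrams \eqref{e:gt0} and \eqref{e:grgt0} of Propositions \ref{p:gt0} and \ref{p:grgt0} hold for this $\zeta$ and $\bar\zeta$.

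Third, the hypothesis that $\psi^p$ and $\psi^{p-1}$ make \eqref{e:grgmcc-psi} commute in $\Mod(S)$ is precisely the hypothesis \eqref{e:degen-psi} of Proposition \ref{p:grgm0}, again with $\bar K = \bar\Omega^\kdot_f$ (the square is merely reflected). Thus every hypothesis of Theorem \ref{t:grgmcc0} is verified, and its conclusion yields the commutativity in $\Mod(S/g)$ of a diagram whose bottom arrow is $\gamma^{p,n-p}_{\KS,f}(G,t)$; by Notation \ref{not:kscc} this arrow is exactly $\gamma^{p,n-p}_\KS(f,g)$, so the resulting diagram is \eqref{e:grgmcc}.

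Fourth and last, commutativity in $\Mod(S)$ rather than merely in $\Mod(S/g)$ is automatic: equality of two parallel arrows is equality of the underlying sheaf maps, which is the same condition in either category, and $\Mod(S)$ is a subcategory of $\Mod(S/g)$ with the same objects. The main obstacle I anticipate is not mathematical but notational: verifying that every symbol from \S\ref{s:cdlemma} translates correctly into its counterpart in \S\ref{s:ksgm}. In particular, the identities $l^p = \Lambda^p_X(t)$ and the matching of the differentials of $L = \Omega^\kdot(f,g)(0)$ with the map induced by $\gamma$ require unpacking Construction \ref{con:tripdr}, but this verification has already been carried out inside Remark \ref{r:identgm} and can be invoked wholesale.
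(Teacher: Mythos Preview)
Your proposal is correct and follows essentially the same approach as the paper: set $G:=\Omega^1_g$, $t:=\Omega^1(f,g)$, $l:=\Omega^\kdot(f,g)$, invoke Remark \ref{r:identgm} to land in Setup \ref{set:cdlemma}, use Proposition \ref{p:gtgrgm} and Notation \ref{not:grgm} to identify $\bar\zeta$ with $\bar\nabla^{p,n}_\GM(f,g)$, and then apply Theorem \ref{t:grgmcc0}. Your explicit discussion of the passage from commutativity in $\Mod(S/g)$ to commutativity in $\Mod(S)$ is in fact more careful than the paper, which simply asserts the conclusion in $\Mod(S)$.
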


\begin{proof}
 Set $G := \Omega^1_g$, $t := \Omega^1(f,g)$, and $l := \Omega^\kdot(f,g)$, and write $f$ and $g$ also for the morphisms of ringed spaces obtained from $f$ and $g$, respectively, by applying the forgetful functor from the category of complex spaces to the category of ringed spaces. Then according to Remark \ref{r:identgm}, we are in the situation of Setup \ref{set:cdlemma}. By Proposition \ref{p:gtgrgm} there exists an ordered pair $(\zeta,\bar\zeta)$ such that the diagram \eqref{e:gtgrgm} commutes in $\Mod(S/g)$. Therefore, Theorem \ref{t:grgmcc0} implies that the diagram in \eqref{e:grgmcc} commutes in $\Mod(S)$ since $\bar\nabla^{p,n}_\GM(f,g) = \bar\zeta$ by Notation \ref{not:grgm}.
\end{proof}

\section{Generalities on period mappings}
\label{s:pm0}

This and the next section are devoted to the study of certain ``period mappings''. The common basis for any sort of period mapping that we consider in our exposition is captured by Construction \ref{con:pmrep}. Observe that we like the point of view of defining period mappings in the situation where a representation
\[
 \rho \colon \Pi(X) \to \Mod(A)
\]
of the fundamental groupoid of some topological space $X$ is given ($A$ being some ring), as opposed to the situation where an $A$-local system on $X$, \iev a certain locally constant sheaf of $A_X$-modules on $X$, is given. The reason for this is of technical nature: When working with local systems in the sense of sheaves, one is bound to use the stalks of the given sheaf as reference spaces for the period mappings. When working with representations of the fundamental groupoid on the other hand, one has the liberty of choosing these reference spaces freely (freely meaning up to isomorphism of course). The more familiar setting of working with local systems becomes a special case of the representation setting by means of Construction \ref{con:locsysrep} and Remark \ref{r:locsysrep}. Eventually, we are interested predominantly in ``holomorphic period mappings'' arising from Construction \ref{con:pmhol}, where the local system comes about as the module of horizontal sections associated to a flat vector bundle. Lemma \ref{l:pm0} will give a preliminary, conceptual interpretation of the tangent morphism of such a period mapping. This interpretation will be exploited in the subsequent \S\ref{s:pm} in order to derive the concluding theorems of Chapter \ref{ch:peri} from Theorem \ref{t:grgmcc}.

\begin{notation}
 \label{not:fundgrpd}
 Let $X$ be a topological space. Then we denote by $\Pi(X)$ the \emph{fundamental groupoid} of $X$, \cf \cite[Chapter 2, \S5]{Ma99} for instance.
\end{notation}

\begin{definition}
 \label{d:repdist}
 Let $A$ be a ring and $G$ a groupoid (or just any category for that matter).
 \begin{enumerate}
  \item \label{d:repdist-rep} We say that $\rho$ is an \emph{$A$-representation} of $G$ when $\rho$ is a functor from $G$ to $\Mod(A)$.
  \item \label{d:repdist-dist} Let $\rho$ be an $A$-representation of $G$. Then $F$ is called an \emph{$A$-distribution} in $\rho$ when $F$ is a function whose domain of definition equals $\dom(\rho_0)$ (which in turn equals $G_0$, \iev the set of objects of the category $G$) such that, for all $s\in\dom(\rho_0)$, $F(s)$ is an $A$-submodule of $\rho_0(s)$.
 \end{enumerate}
\end{definition}

\begin{construction}
 \label{con:pmrep}
 Let $A$ be a ring, $S$ a simply connected topological space, $\rho$ an $A$-representation of $\Pi(S)$, $F$ an $A$-distribution in $\rho$, and $t\in S$. Since $S$ is simply connected, we know that for all $s\in S$ there exists a unique morphism $a_{s,t}$ from $s$ to $t$ in $\Pi(S)$, \iev $a_{s,t}$ is the unique element of $(\Pi(S))_1(s,t)$. We define $\cP^A_t(S,\rho,F)$ to be the unique function on $|S|$ such that, for all $s\in S$, we have:
 \[
  (\cP^A_t(S,\rho,F))(s) = \left((\rho_1(s,t))(a_{s,t})\right)[F(t)],
 \]
 where we use square brackets to emphasize that we are referring to the image of a set under a given function. $\cP^A_t(S,\rho,F)$ is called the $A$-\emph{period mapping} on $S$ with basepoint $t$ associated to $\rho$ and $F$.
\end{construction}

\begin{construction}
 \label{con:locsysrep}
 Let $A$ be a ring and $X$ a connected topological space. Let $F$ be a constant sheaf of $A_X$-modules on $X$. We define a functor
 \[
  \rho \colon \Pi(X) \to \Mod(A)
 \]
 as follows: In the first place, we let $\rho_0$ be the unique function on $(\Pi(X))_0$ ($=|X|$) such that, for all $x\in X$, we have:
 \[
  \rho_0(x) = F_x,
 \]
 where the stalk $F_x$ is understood to be equipped with its canonical $A$-module structure. In the second place, we observe that, for all $x\in X$, the evident function
 \[
  \theta_x \colon F(X) \to F_x
 \]
 is a bijection since $F$ is a constant sheaf on $X$ and the topological space $X$ is connected. For all ordered pairs $(x,y)$ of elements of $|X|$, we define $\rho_1(x,y)$ to be the constant function on $\Pi(X)_1(x,y)$ with value $\theta_y\circ(\theta_x)^{-1}$, \iev for all morphisms $a\colon x\to y$ in $\Pi(X)$, we have:
 \[
  (\rho_1(x,y))(a) = \theta_y \circ (\theta_x)^{-1}.
 \]
 Finally, set $\rho := (\rho_0,\rho_1)$. It is an easy matter to verify that the so defined $\rho$ is actually a functor from $\Pi(X)$ to $\Mod(A)$.
\end{construction}

\begin{remark}
 \label{r:locsysrep}
 Construction \ref{con:locsysrep} is in fact a special case of a construction which allows to associate---given a ring $A$ and an arbitrary topological space $X$---to a locally constant sheaf $F$ of $A_X$-modules on $X$ an $A$-representation $\rho$ of the fundamental groupoid of $X$. We briefly sketch how this can be achieved: The object function of $\rho$ is defined just as before, that is, we set $\rho_0(x) := F_x$ for all $x\in X$. However, the morphism function of $\rho$ is harder to define when $F$ is not a constant sheaf but only a locally constant sheaf on $X$. Let $x,y\in X$ and $a\in \Pi(X)_1(x,y)$. Let $\gamma \in a$, \iev $\gamma\colon I \to X$ is a path in $X$ representing $a$, where $I$ stands for the unit interval topologized by the Euclidean topology of $\RR$. Then $\gamma^*F$ is a constant sheaf on $I$. Therefore, one obtains a mapping $(\gamma^*F)_0 \to (\gamma^*F)_1$ in the same fashion as in Construction \ref{con:locsysrep} by passing through the set of global sections of $\gamma^*F$ on $I$. Working in the canonical bijections $(\gamma^*F)_0 \to F_x$ and $(\gamma^*F)_1 \to F_y$, we arrive at a function $F_x \to F_y$. After checking that the latter function $F_x\to F_y$ is independent of the choice $\gamma$ in $a$, we may define $(\rho_1(x,y))(a)$ accordingly. As the reader might imagine, verifying that $(\rho_1(x,y))(a)$ is independent of $\gamma$ is a little tedious, hence we omit it. Next, one has to verify that the $\rho$ defined here is a functor from $\Pi(X)$ to $\Mod(A)$, which again turns out to be a little less obvious than in the ``baby case'' of Construction \ref{con:locsysrep}. Finally, one should convince oneself that in case $F$ is a constant sheaf on $X$ and $X$ is a connected topological space the $\rho$ defined here agrees with the $\rho$ of Construction \ref{con:locsysrep}.
\end{remark}

\begin{definition}
 \label{d:conn}
 Let $S$ be a complex space and $\sH$ a module on $S$.
 \begin{enumerate}
  \item \label{d:conn-rel} Let $g\colon S\to T$ be a morphism of complex spaces. Then $\nabla$ is called a \emph{$g$-connection} on $\sH$ when $\nabla$ is a morphism in $\Mod(S/g)$
  \[
   \nabla \colon \sH \to \Omega^1_g\otimes_S\sH
  \]
  such that for all open sets $U$ of $S$, all $\lambda\in\O_S(U)$, and all $\sigma\in\sH(U)$ Leibniz's rule holds:
  \[
   \nabla_U(\lambda\cdot\sigma) = (\dd_g)_U(\lambda)\otimes\sigma + \lambda\cdot\nabla_U(\sigma).
  \]
  \item \label{d:conn-abs} $\nabla$ is called an \emph{$S$-connection} on $\sH$ when $\nabla$ is a $a_S$-connection on $\sH$ in the sense of part a), where $a_S\colon S\to \C$ denotes the unique morphism of complex spaces from $S$ to the distinguished one-point complex space.
 \end{enumerate}
\end{definition}

\begin{notation}[Module of horizontal sections]
 \label{not:hor}
 Let $g\colon S\to T$ be a morphism of complex spaces, $\sH$ a module on $S$, and $\nabla$ a $g$-connection on $\sH$. Put $S':=(S_\top,g^{-1}\O_T)$ and let
 \[
  c \colon S \to S'
 \]
 be the morphism of ringed spaces given by:
 \[
  (\id_{|S|},g^\sharp \colon g^{-1}\O_T \to \O_S).
 \]
 Then $\nabla$ is a morphism of modules on $S'$ from $c_*(\sH)$ to $c_*(\Omega^1_g\otimes_S \sH)$. Thus it makes sense to set:
 \[
  \Hor_g(\sH,\nabla) := \ker_{S'}(\nabla \colon c_*(\sH) \to c_*(\Omega^1_g\otimes_S \sH)).
 \]
 Note that by definition $\Hor_g(\sH,\nabla)$ is a module on $S'$. We call $\Hor_g(\sH,\nabla)$ the \emph{module of horizontal sections} of $(\sH,\nabla)$ relative $g$. When instead of $g\colon S\to T$ merely a single complex space $S$ is given and $\nabla$ is an $S$-connection on $\sH$, we set $\Hor_S(\sH,\nabla) := \Hor_{a_S}(\sH,\nabla)$, where the right hand side is understood in the already defined sense. $\Hor_S(\sH,\nabla)$ is then called the \emph{module of horizontal sections} of $(\sH,\nabla)$ on $S$.
\end{notation}

\begin{definition}
 \label{d:flat}
 Let $g\colon S\to T$ be a morphism of complex spaces, $\sH$ a module on $S$, and $\nabla$ a $g$-connection on $\sH$. Let $p$ be a natural number. Then there exists a unique morphism
 \[
  \nabla^p \colon \Omega^p_g\otimes\sH \to \Omega^{p+1}_g \otimes \sH
 \]
 in $\Mod(S/g)$ such that for all open sets $U$ of $S$, all $\alpha \in \Omega^p_g(U)$, and all $\sigma \in \sH(U)$, we have:
 \[
  (\nabla^p)_U(\alpha\otimes\sigma) = (\dd^p_g)_U(\alpha) \otimes \sigma + (-1)^p\Lambda_U(\alpha \otimes \nabla_U(\sigma)),
 \]
 where $\Lambda$ stands for the composition of the following morphisms in $\Mod(S)$:
 \[
  \Omega^p_g \otimes (\Omega^1_g \otimes \sH) \to (\Omega^p_g \otimes \Omega^1_g) \otimes \sH \to \Omega^{p+1}_g \otimes \sH.
 \]
 The existence of $\nabla^p$ is in fact not completely obvious, \cf \cite[2.10]{De70}, yet we take it for granted here. We say that $\nabla$ is \emph{flat} (as a $g$-connection on $\sH$) when the composition
 \[
  \nabla^1\circ\nabla \colon \sH \to \Omega^2_g \otimes \sH
 \]
 is the zero morphism in $\Mod(S/g)$.
\end{definition}

\begin{definition}
 \label{d:bundles}
 Let $S$ be a complex space.
 \begin{enumerate}
  \item By a \emph{vector bundle} on $S$ we understand a locally finite free module on $S$.
  \item A \emph{flat vector bundle} on $S$ is an ordered pair $(\sH,\nabla)$ such that $\sH$ is a vector bundle on $S$ and $\nabla$ is a flat $S$-connection on $\sH$.
  \item Let $\sH$ be a vector bundle on $S$. Then $\sh F$ is a \emph{vector subbundle} of $\sH$ on $S$ when $\sh F$ is a locally finite free submodule of $\sH$ on $S$ such that for all $s\in S$ the function
  \[
   \iota(s) \colon \sh F(s) \to \sH(s)
  \]
  is one-to-one, where $\iota \colon \sh F \to \sH$ denotes the inclusion morphism.
 \end{enumerate}
\end{definition}

\begin{proposition}
 \label{p:rhc}
 Let $S$ be a complex manifold and $(\sH,\nabla)$ a flat vector bundle on $S$. Then:
 \begin{enumerate}
  \item \label{p:rhc-locsys} $H:=\Hor_S(\sH,\nabla)$ is a $\C$-local system on $S_\top$.
  \item \label{p:rhc-iso} The sheaf map
  \[
   \O_S \otimes_{\C_S} H \to \O_S \otimes_{\C_S} \sH \to \sH
  \]
  induced by the inclusion $H \subset \sH$ and the $\O_S$-scalar multiplication of $\sH$ is an isomorphism of modules on $S$.
 \end{enumerate}
\end{proposition}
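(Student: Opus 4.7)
The plan is to reduce both assertions to a local statement on a sufficiently small polydisk and then invoke the holomorphic Frobenius theorem. Both assertions are local on $S$, so it suffices to produce, for every $s_0\in S$, an open neighborhood $U$ of $s_0$ such that $H|U$ is a constant sheaf of finite rank on $U_\top$ and the morphism $(\O_S\otimes_{\C_S} H)|U \to \sH|U$ of the statement restricts to an isomorphism. I would choose $U$ so that $S|U$ is biholomorphic to an open polydisk $\Delta\subset\C^n$ (hence $U_\top$ is simply connected) and $\sH|U \iso \O_U^{\oplus r}$ for some $r\in\N$. Through such a trivialization, $\nabla$ assumes the form $\nabla = \dd + A$, where $\dd$ denotes the componentwise exterior differential and $A$ is an $r\times r$ matrix of holomorphic $1$-forms on $U$. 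A direct unwinding of Definition \ref{d:flat} translates the flatness of $\nabla$ into the Maurer--Cartan type integrability identity $\dd A + A\wedge A = 0$.

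The analytic heart of the argument is the holomorphic Frobenius theorem applied to the Pfaffian system $\dd f + A\cdot f = 0$ for $\C^r$-valued holomorphic functions $f$ on $\Delta$: the integrability identity just derived, combined with the simple connectedness of $\Delta$, ensures that for each $v\in\C^r$ there exists a unique holomorphic $f_v\colon \Delta\to\C^r$ with $\dd f_v + A\cdot f_v = 0$ and $f_v(s_0)=v$, and the assignment $v\mto f_v$ is $\C$-linear. Letting $v$ run through the standard basis of $\C^r$ yields $r$ horizontal global sections $e_1,\dots,e_r$ of $\sH|U$ whose values at $s_0$ form a basis of the stalk $\sH(s_0)$; by openness of the non-vanishing locus of their determinant and possibly shrinking $U$, the germs of $e_1,\dots,e_r$ form an $\O_{S,x}$-basis of $\sH_x$ at every $x\in U$.

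From this I would deduce, using the uniqueness part of Frobenius applied on arbitrary connected open $V\subset U$ with a chosen base-point, that for every open $V\subset U$ one has $H(V) = \C\cdot(e_1|V) \oplus \dots \oplus \C\cdot(e_r|V)$, so that $H|U$ is isomorphic to the constant sheaf $\C^r_U$; this establishes assertion \ref{p:rhc-locsys}). For assertion \ref{p:rhc-iso}), since $e_1,\dots,e_r$ simultaneously constitute an $\O_S|U$-basis of $\sH|U$, the morphism under consideration becomes, in the bases $(e_i)$ on both sides, literally the identity of $\O_U^{\oplus r}$, whence an isomorphism. The principal obstacle is the analytic input, namely the existence of a fundamental holomorphic solution matrix for the Pfaffian system above; this may be supplied either by appealing to the treatment in \cite{De70} or by a direct application of Cauchy--Kovalevskaya (used inductively in the coordinates of $\Delta$) to prove the holomorphic Frobenius theorem.
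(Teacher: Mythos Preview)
Your proposal is correct. The paper's own proof consists entirely of the sentence ``This is implied by \cite[Th\'eor\`eme 2.17]{De70}.''; you have unpacked that citation and written out the standard local argument (trivialize, translate flatness into $\dd A + A\wedge A = 0$, invoke holomorphic Frobenius to produce a horizontal frame). One small imprecision: your displayed description of $H(V)$ as the $\C$-span of the $e_i|V$ is literally correct only for connected $V$; for general $V\subset U$ the sections of $H$ are locally constant in the $e_i$, but this is of course exactly what is needed to identify $H|U$ with the constant sheaf $\C^r_U$, so your conclusion stands.
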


\begin{proof}
 This is implied by \cite[Théorème 2.17]{De70}.
\end{proof}

\begin{construction}
 \label{con:pmhol}
 Let $S$ be a simply connected complex manifold, $(\sH,\nabla)$ a flat vector bundle on $S$, $\sh F$ a submodule of $\sH$ on $S$, and $t\in S$. Put $H:=\Hor_S(\sH,\nabla)$. Then by Proposition \ref{p:rhc}, $H$ is a locally constant sheaf of $\C_S$-modules on $S_\top$. As the topological space $S_\top$ is simply connected, $H$ is even a constant sheaf of $\C_S$-modules on $S_\top$. Thus by means of Construction \ref{con:locsysrep}, we obtain a $\C$-representation $\rho$ of $\Pi(S)$:
 \[
  \rho \colon \Pi(S) \to \Mod(\C).
 \]
 Now for all $s\in S$, we set $\sH(s) := \C\otimes_{\O_{S,s}}\sH_s$ (considered a $\C$-module) and let
 \[
  \psi_s \colon H_s \to \sH(s)
 \]
 denote the evident morphism of $\C$-modules. We define a new functor
 \[
  \rho' \colon \Pi(S) \to \Mod(\C)
 \]
 by composing $\rho$ with the family $(\psi_s)_{s\in S}$; explicitly, that is, we set
 \[
  \rho'_0(s) := \sH(s)
 \]
 for all $s\in S$ and
 \[
  (\rho'_1(x,y))(a) := \psi_y \circ (\rho_1(x,y))(a) \circ (\psi_x)^{-1}
 \]
 for all $x,y\in S$ and all morphisms $a\colon x\to y$ in $\Pi(S)$. One checks without effort that the so declared $\rho'$ is in fact a functor from $\Pi(S)$ to $\Mod(\C)$. Next, define $F$ to be the unique function on $|S|$ such that, for all $s\in S$, we have
 \[
  F(s) = \im(\iota(s) \colon \sh F(s) \to \sH(s)),
 \]
 where $\sh F(s) := \C\otimes_{\O_{S,s}}\sh F_s$ and $\iota(s)$ stands for the morphism derived from the inclusion morphism $\sh F\to \sH$. Then clearly $F$ is a $\C$-distribution in $\rho'$, \cf Definition \ref{d:repdist} b). Therefore it makes sense to set:
 \[
  \cP_t(S,(\sH,\nabla),\sh F) := \cP^\C_t(S_\top,\rho',F)
 \]
 where the right hand side is to be understood in the sense of Construction \ref{con:pmrep}.
\end{construction}

\begin{notation}[Grassmannians]
 \label{not:gr}
 Let $V$ be a finite dimensional $\C$-vector space. Then by $\Gr(V)$ we denote the \emph{Grassmannian} of $V$ regarded as a complex space. Let us elaborate a little on this terminology. First of all, set-theoretically $\Gr(V)$ is plainly the set of all $\C$-vector subspaces of $V$, \iev
 \[
  |\Gr(V)| = \{W: W \text{ is a $\C$-linear subset of } V\}.
 \]
 Note that, in contrast to us, many authors only look at subspaces $W$ of $V$ which are of a certain prescribed dimension $d$. Second of all, a topology as well as a complex structure (in the sense of the theory of complex manifolds, \iev via charts) is defined on $|\Gr(V)|$ by means of \cite[Proposition 10.5]{Vo02}. Lastly, we transform the thus obtained object into a complex space by means of the standard procedure: $\O_{\Gr(V)}$ is the sheaf of holomorphic functions; the morphism of ringed spaces $\Gr(V) \to \C$ is the canonical one.
\end{notation}

\begin{proposition}
 \label{p:pmhol}
 Let $S$ be a simply connected complex manifold, $(\sH,\nabla)$ a flat vector bundle on $S$, $\sh F$ a vector subbundle of $\sH$ on $S$, and $t\in S$. Then $\cP := \cP_t(S,(\sH,\nabla),\sh F)$ is a holomorphic map from $S$ to $\Gr(\sH(t))$.
\end{proposition}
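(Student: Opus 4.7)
The plan is to reduce the holomorphicity of $\cP$ to a local verification in a standard chart of the Grassmannian, using the flatness of $\nabla$ to produce local frames of $\sH$ by horizontal sections. The key advantage to be exploited is that parallel transport along the canonical homotopy classes $a_{s,t} \in (\Pi(S))_1(s,t)$ becomes completely explicit once we work with a frame of horizontal sections: it simply identifies the value $e(s)$ of a horizontal section $e$ at $s$ with its value $e(t)$ at $t$.

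First I would fix a point $s_0 \in S$ and choose a simply connected open neighborhood $U$ of $s_0$ in $S_\top$ on which $\sh F$ admits a free generating system as an $\O_S|U$-module; this is possible because $\sh F$, being a vector subbundle in the sense of Definition \ref{d:bundles} (c), is locally finite free. By Proposition \ref{p:rhc} \ref{p:rhc-locsys} the sheaf $H := \Hor_S(\sH,\nabla)$ is a $\C$-local system on $S_\top$, so after shrinking $U$ we may assume $H|U$ is constant. Pick a $\C$-basis $(e_1,\dots,e_r)$ of $H(U)$. By Proposition \ref{p:rhc} \ref{p:rhc-iso} this family yields an $\O_S(U)$-basis of $\sH(U)$, and for every $s \in U$ the corresponding family $(e_i(s))_{i=1,\dots,r}$ is a $\C$-basis of $\sH(s)$.

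Next I would choose generators $f_1,\dots,f_d \in \sh F(U)$ of $\sh F|U$ and expand them as $f_j = \sum_{i=1}^r h_{ij}\cdot e_i$ with $h_{ij} \in \O_S(U)$. The subbundle condition on $\sh F$ (injectivity of $\iota(s)$ for all $s$) guarantees that the images $f_j(s) \in \sH(s)$ remain $\C$-linearly independent at every $s \in U$, so they form a basis of $F(s) \subset \sH(s)$. Now the isomorphism $\rho'_1(s,t)(a_{s,t}) \colon \sH(s) \to \sH(t)$ defined in Construction \ref{con:pmhol} is, by construction, the one that passes through $H_s \iso H(U) \iso H_t$; since horizontal sections are the distinguished global sections used to trivialize $H|U$ via Construction \ref{con:locsysrep}, this map sends $e_i(s) \mto e_i(t)$. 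Consequently the period mapping reads, for every $s \in U$,
\[
 \cP(s) = \operatorname{span}_\C\biggl\{\sum_{i=1}^r h_{ij}(s)\cdot e_i(t) : j = 1,\dots,d\biggr\} \subset \sH(t).
\]

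Finally I would invoke the standard chart structure of the Grassmannian. Fix a $\C$-linear complement $E$ of $\cP(s_0)$ in $\sH(t)$; the set of $W \subset \sH(t)$ with $\sH(t) = W \oplus E$ is then an open neighborhood of $\cP(s_0)$ in $\Gr(\sH(t))$ biholomorphic to $\Hom(\cP(s_0),E)$ via the graph map, by the construction of the complex structure on $\Gr(\sH(t))$ recalled in Notation \ref{not:gr}. After shrinking $U$ once more so that $\cP(s)$ lies in this chart for all $s \in U$, the matrix of $\cP(s)$ in the chart is obtained from $(h_{ij}(s))$ by Gaussian elimination against a fixed adapted basis of $\sH(t)$: its entries are rational expressions in the $h_{ij}(s)$ whose denominators are nonvanishing on $U$. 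Since the $h_{ij}$ are holomorphic on $U$, the resulting map $U \to \Hom(\cP(s_0),E)$ is holomorphic, proving that $\cP$ is holomorphic at $s_0$. As $s_0 \in S$ was arbitrary, the claim follows. The main obstacle in this scheme is bookkeeping rather than content: one must unwind the double identification $H_s \iso H(U) \iso H_t$ and check that it really does send $e_i(s)$ to $e_i(t)$ in $\sH(t)$, and one must verify that the Gaussian-elimination step producing the Grassmannian chart coordinates from the $h_{ij}(s)$ yields holomorphic functions on a sufficiently small neighborhood of $s_0$.
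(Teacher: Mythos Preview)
Your proposal is correct and follows essentially the same route as the paper's proof: trivialize $H=\Hor_S(\sH,\nabla)$ by a basis of horizontal sections, expand a local frame of $\sh F$ in that basis to obtain a holomorphic matrix $(h_{ij})$, and then read off chart coordinates on the Grassmannian via Gaussian elimination on this matrix. The only point worth tightening is that you pick the horizontal basis $(e_i)$ in $H(U)$ and then need $e_i(t)$ for $t$ possibly outside $U$; the paper avoids this by taking a global basis of $H(S)$ from the outset (available since $S$ is simply connected, so $H$ is globally constant), which makes the parallel-transport identification $e_i(s)\mapsto e_i(t)$ immediate.
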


\begin{proof}
 Set $H := \Hor_S(\sH,\nabla)$. Then $H$ is a $\C$-local system on $S$ by Proposition \ref{p:rhc} a). Since $S$ is simply connected, there exists thus a natural number $r$ as well as an isomorphism $(\C_S)^{\oplus r} \to H$ of $\C_S$-modules on $S_\top$. Denote by $e = (e_0,\dots,e_{r-1})$ the thereby induced ordered $\C$-basis of $H(S)$. Let $s_0$ be an arbitrary element of $S$. Then, as $\sh F$ is a locally finite free module on $S$, there exist an open neighborhood $U$ of $s_0$ in $S$, a natural number $d$, as well as an isomorphism
 $$\phi \colon (\O_S|U)^{\oplus d} \to \sh F|U$$
 of modules on $S|U$. Denote, for any $j\in d$, by $\sigma_j$ the image of the $j$-th unit vector in $((\O_S|U)^{\oplus d})(U) = (\O_S(U))^{\oplus d}$ under the function $\phi_U$. Then exploiting the fact that by Proposition \ref{p:rhc} b) the canonical sheaf map
 $$\O_S \otimes_{\C_S} H \to \sH$$
 is an isomorphism of modules on $S$, we see that there exists an $r\times d$-matrix $\lambda = (\lambda_{ij})$ with values in $\O_S(U)$ such that, for all $j\in d$, we have:
 $$\sigma_j = \sum_{i\in r}\lambda_{ij}\cdot (e_i|U),$$
 where we add and multiply in the $\O_S(U)$-module $\sH(U)$. Clearly, for all $s\in U$, the $d$-tuple $(\sigma_0(s),\dots,\sigma_{d-1}(s))$ makes up an ordered $\C$-basis for $\sh F(s)$. Since $\sh F$ is a vector subbundle of $\sH$ on $S$, we know that, for all $s\in S$, the map
 $$\iota(s) \colon \sh F(s) \to \sH(s)$$
 is one-to-one, where $\iota \colon \sh F \to \sH$ stands for the inclusion morphism. Thus, for all $s\in U$, the $d$-tuple given by the association
 $$j \mto \sum_{i\in r}\lambda_{ij}(s) \cdot e_i(s)$$
 constitutes a $\C$-basis of
 $$F(s) := \im(\iota(s) \colon \sh F(s) \to \sH(s)).$$
 Define:
 $$L \colon U \to \C^{r\times d}, \quad L(s) = (\lambda_{ij}(s))_{i\in r,j\in c}.$$
 Then, for all $s\in U$, the columns of the matrix $L(s)$ are linearly independent. In particular, without loss of generality, we may assume that the matrix $L(s_0)|d\times d$ is invertible. Since the functions $s \mto \lambda_{ij}(s)$ are altogether continuous (from $U$ to $\C$), the set $U'$ of elements $s$ of $U$ such that $L(s)|d\times d$ is invertible, is an open neighborhood of $s_0$ in $S$. We set:
 $$L' \colon U' \to \C^{r\times d}, \quad L'(s) = L(s) \cdot (L(s)|d\times d)^{-1}.$$
 Then, for all $s\in U'$, $\cP(s)$ is the linear span in $\sH(t)$ of the following elements:
 $$e_j(t) + \sum_{d\leq i<r} (L'(s))_{ij} \cdot e_i(t),$$
 where $j$ varies through $d$. In other words, setting $c := r - d$ and
 $$L'' \colon U' \to \C^{c\times d}, \quad (L''(s))_{ij} = (L'(s))_{i+d,j},$$
 and letting $h$ signify the mapping which associates to a matrix $M \in \C^{c\times d}$ the linear span in $\sH(t)$ of the elements
 $$e_j(t) + \sum_{i<c} M_{ij} \cdot e_{i+d}(t),$$
 where $j$ varies through $d$, the following diagram commutes in $\Set$:
 \[
  \xymatrix{
   & U' \ar[ld]_{L''} \ar[dr]^{\cP|U'} \\ \C^{c\times d} \ar[rr]_h && \Gr(\sH(t))
  }
 \]
 Since the tuple $(e_0(t),\dots,e_{r-1}(t))$ forms a $\C$-basis of $\sH(t)$, we see that $h$ is one-to-one and $h^{-1}$ composed with the canonical function $\C^{c\times d} \to \C^{cd}$ is a holomorphic chart on the Grassmannian $\Gr(\sH(t))$, \cf Notation \ref{not:gr}. Moreover, the components of $L''$ are holomorphic functions on $S|U'$. This shows that $\cP|U'$ is a holomorphic map from $S|U'$ to $\Gr(\sH(t))$. Since $s_0$ was an arbitrary element of $S$, we infer that $\cP$ is a holomorphic map from $S$ to $\Gr(\sH(t))$.
\end{proof}

\begin{notation}
 \label{not:pmhol}
 Let $S$ be a simply connected complex manifold, $(\sH,\nabla)$ a flat vector bundle on $S$, $\sh F$ a vector subbundle of $\sH$ on $S$, and $t\in S$. Then Proposition \ref{p:pmhol} implies that $\cP_t(S,(\sH,\nabla),\sh F)$ is a holomorphic map from $S$ to $\Gr(\sH(t))$. Therefore, since the complex space $S$ is reduced, there exists one, and only one, morphism of complex spaces
 \[
  \cP^+ \colon S \to \Gr(\sH(t))
 \]
 such that the function underlying $\cP^+$ is precisely $\cP_t(S,(\sH,\nabla),\sh F)$. We agree on setting:
 \[
  \cP_t(S,(\sH,\nabla),\sh F) := \cP^+.
 \]
 Observe that in view of Construction \ref{con:pmhol} this notation is slightly ambiguous. In fact, now $\cP_t(S,(\sH,\nabla),\sh F)$ may refer to a morphism of complex spaces as well as to the function underlying this morphism of complex spaces. Nonetheless, we are confident that this ambiguity will not irritate our readers.
\end{notation}

\begin{construction}
 \label{con:A}
 Let $S$ be a complex space and $t\in S$. Moreover, let $F$ and $H$ be two modules on $S$. We intend to fabricate a mapping
 \[
  \A_{S,t}(F,H) \colon \Hom_S(F,\Omega^1_S\otimes H) \to \Hom_\C(\T_S(t),\Hom(F(t),H(t))).
 \]
 For that matter, let
 $$\phi \colon F \to \Omega^1_S \otimes H$$
 be a morphism of modules on $S$. Then we obtain a morphism
 $$\Theta_S \otimes F \to H$$
 as the composition:
 \begin{gather*}
  \Theta_S \otimes F \to \Theta_S \otimes (\Omega^1_S \otimes H) \to (\Theta_S \otimes \Omega^1_S) \otimes H \to \O_S \otimes H \to H.
 \end{gather*}
 By means of tensor-hom adjunction on $S$ (with respect to the modules $\Theta_S$, $F$, and $H$) the latter morphism corresponds to a morphism
 $$\Theta_S \to \sHom(F,H).$$
 Now evaluating at $t$ and composing with the canonical map
 $$(\sHom(F,H))(t) \to \Hom(F(t),H(t))$$
 yields a morphism in $\Mod(\C)$:
 $$\Theta_S(t) \to \Hom(F(t),H(t)).$$
 Finally, precomposing with the inverse of the canonical isomorphism
 $$\Theta_S(t) \to \T_S(t),$$
 we end up with a morphism
 $$\T_S(t) \to \Hom(F(t),H(t))$$
 in $\Mod(\C)$, which we define to be the image of $\phi$ under $\A_{S,t}(F,H)$. That way we obtain our desired function $\A_{S,t}(F,H)$. Observing that $\A_{S,t}(F,H)$ is a set, we may let $(F,H)$ vary and view $\A_{S,t}$ itself as a function defined on the class of ordered pairs of modules on $S$.
\end{construction}

\begin{proposition}
 \label{p:A}
 Let $S$ be a complex space and $t\in S$. Then $\A_{S,t}$ is a natural transformation of functors from $\Mod(S)^\op \times \Mod(S)$ to $\Set$:
 \[
  \A_{S,t} \colon \Hom_S(-,\Omega^1_S\otimes-) \to \Hom_\C(\T_S(t),\Hom(-(t),-(t))).
 \]
\end{proposition}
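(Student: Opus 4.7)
The plan is to verify naturality by decomposing $\A_{S,t}(F,H)$ into a finite chain of operations, each of which is itself natural in $(F,H)$, and then invoking the fact that compositions of natural transformations are again natural transformations.

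To that end, I would first write down precisely, for a morphism $(\phi,\psi) \colon (F,H) \to (F',H')$ in $\Mod(S)^\op \times \Mod(S)$, the square that expresses naturality of $\A_{S,t}$: its top row is $\Hom_S(\phi,\Omega^1_S \otimes \psi) \colon \Hom_S(F,\Omega^1_S \otimes H) \to \Hom_S(F',\Omega^1_S \otimes H')$, its bottom row is $\Hom_\C(\T_S(t), \Hom(\phi(t),\psi(t))) \colon \Hom_\C(\T_S(t),\Hom(F(t),H(t))) \to \Hom_\C(\T_S(t),\Hom(F'(t),H'(t)))$, and the vertical arrows are $\A_{S,t}(F,H)$ and $\A_{S,t}(F',H')$. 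By the bifunctoriality of $\Hom$, it suffices to treat the case where $\psi = \id_H$ and the case where $\phi = \id_F$ separately.

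Next, I would view the construction of $\A_{S,t}(F,H)$ from Construction \ref{con:A} as a composition of five elementary steps: (i) tensoring on the left with $\Theta_S$ and applying the canonical morphism $\Theta_S \otimes (\Omega^1_S \otimes H) \to (\Theta_S \otimes \Omega^1_S) \otimes H \to \O_S \otimes H \to H$, yielding a morphism $\Theta_S \otimes F \to H$; (ii) passing to the tensor-hom adjoint in $\Mod(S)$ to obtain $\Theta_S \to \sHom(F,H)$; (iii) taking the stalk at $t$; (iv) composing with the canonical map $(\sHom(F,H))_t \otimes_{\O_{S,t}} \C \to \Hom_\C(F(t),H(t))$, which is the usual fiber map; (v) precomposing with the inverse of the canonical isomorphism $\T_S(t) \to \Theta_S(t)$. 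Each of these steps is either a specific instance of a natural transformation of bifunctors on $\Mod(S)^\op \times \Mod(S)$ (the unit/counit morphisms of tensor-hom adjunction, the associator, the cancellation $\O_S \otimes - \iso \id$, the evaluation $\Theta_S \otimes \Omega^1_S \to \O_S$, the stalk functor, and the fiber functor) or entirely independent of $(F,H)$ (step (v)). The naturality of each individual constituent is either standard or routine.

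The resulting composition is a natural transformation between the composite functors obtained by chaining these intermediate bifunctors. Tracing the domain through the chain recovers $\Hom_S(-,\Omega^1_S \otimes -)$, while tracing the codomain recovers $\Hom_\C(\T_S(t),\Hom(-(t),-(t)))$; this identification is the one implicitly used in the definition of $\A_{S,t}(F,H)$, and it preserves the bifunctorial structure. Hence $\A_{S,t}$ is itself a natural transformation of the required functors to $\Set$.

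The only real obstacle I foresee is bookkeeping: one must be careful to separate contravariance in $F$ from covariance in $H$ and to keep track of where the two variables enter in each of steps (i)--(iv). In particular, step (ii) uses tensor-hom adjunction contravariantly in the inner variable $F$, while $H$ propagates covariantly through steps (i) and (iv); since these are orthogonal to one another, no compatibility beyond the standard naturality of adjunction units is required, and the verification ultimately reduces to a diagram chase that could, if desired, be carried out stalk-wise at $t$.
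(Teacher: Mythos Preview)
Your proposal is correct and follows exactly the approach the paper indicates: the paper's own proof merely states that one needs to verify the individual steps of Construction~\ref{con:A} are natural transformations of appropriate functors and then explicitly omits the details. You have supplied a reasonable outline of those details, in fact more than the paper does.
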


\begin{proof}
 One needs to verify that the individual steps applied in Construction \ref{con:A} are altogether natural transformations of appropriate functors from $\Mod(S)^\op \times \Mod(S)$ to $\Set$. We dare omit the details here.
\end{proof}

\begin{remark}
 \label{r:A}
 We would like to give a more down-to-earth interpretation of Proposition \ref{p:A}. So, let $S$ be a complex space and $t\in S$. Let $(F,H)$ and $(F',H')$ be two ordered pairs of modules on $S$ and let
 \[
  (\alpha,\gamma) \colon (F,H) \to (F',H')
 \]
 be a morphism in $\Mod(S)^\op \times \Mod(S)$, \iev $\alpha \colon F' \to F$ and $\gamma \colon H \to H'$ are morphisms in $\Mod(S)$. Moreover, let $\phi$ and $\phi'$ be such that the following diagram commutes in $\Mod(S)$:
 \[
  \xymatrix{
   F \ar[r]^-\phi & \Omega^1_S\otimes H \ar[d]^{\id_{\Omega^1_S}\otimes\gamma} \\
   F' \ar[r]_-{\phi'} \ar[u]^\alpha & \Omega^1_S\otimes H'
  }
 \]
 Then $\phi'$ is the image of $\phi$ under the function:
 \[
  \Hom_S(\alpha,\id_{\Omega^1_S}\otimes\gamma) \colon \Hom_S(F,\Omega^1_S\otimes H) \to \Hom_S(F',\Omega^1_S\otimes H').
 \]
 Therefore, by Proposition \ref{p:A}, $(\A_{S,t}(F',H'))(\phi')$ is the image of $(\A_{S,t}(F,H))(\phi)$ under the function:
 \begin{align*}
  & \Hom_\C(\T_S(t),\Hom(\alpha(t),\gamma(t))) \colon \\ & \Hom_\C(\T_S(t),\Hom(F(t),H(t))) \to \Hom_\C(\T_S(t),\Hom(F'(t),H'(t))),
 \end{align*}
 which translates as the commutativity in $\Mod(\C)$ of the following diagram:
 \[
  \xymatrix@C=1pc{
   & \T_S(t) \ar[ld]_{(\A_{S,t}(F,H))(\phi)} \ar[dr]^{(\A_{S,t}(F',H'))(\phi')} \\
   \Hom(F(t),H(t)) \ar[rr]_{\Hom(\alpha(t),\gamma(t))} && \Hom(F'(t),H'(t))
  }
 \]
 This line of reasoning will be exploited heavily in the proof of Proposition \ref{p:pm'} in the subsequent \S\ref{s:pm}.
\end{remark}

\begin{proposition}
 \label{p:nablaiota}
 Let $S$ be a complex space, $\iota \colon \sh F \to \sH$ a morphism of modules on $S$, and $\nabla$ an $S$-connection on $\sH$. Then
 \begin{equation}
  \label{e:nablaiota}
  \nabla_\iota := (\id_{\Omega^1_S}\otimes\coker(\iota)) \circ \nabla \circ \iota
 \end{equation}
 is a morphism
 $$\nabla_\iota \colon \sh F \to \Omega^1_S \otimes (\sH/\sh F)$$
 in $\Mod(S)$.
\end{proposition}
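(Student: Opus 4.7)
The subtlety is that $\nabla$ a priori lives in the larger category $\Mod(S/a_S)$: Leibniz's rule means $\nabla$ is only $\C$-linear, and in general fails to be $\O_S$-linear by exactly the inhomogeneous term $\dd \lambda \otimes \sigma$. Thus composing $\iota$, $\nabla$, and $\id_{\Omega^1_S} \otimes \coker(\iota)$ in $\Mod(S/a_S)$ certainly produces a morphism of sheaves of $\C_S$-modules from $\sh F$ to $\Omega^1_S \otimes (\sH/\sh F)$; the only thing to check is that this composition is in fact $\O_S$-linear, hence belongs to $\Mod(S)$.

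My plan is a direct section-level computation. Let $U$ be an open set of $S$, $\lambda \in \O_S(U)$, and $\tau \in \sh F(U)$. Using that $\iota$ is a morphism in $\Mod(S)$, and then applying Leibniz's rule for the $S$-connection $\nabla$ on $\sH$ (Definition \ref{d:conn}), I compute in $(\Omega^1_S \otimes (\sH/\sh F))(U)$:
\begin{align*}
 (\nabla_\iota)_U(\lambda \cdot \tau)
 &= (\id \otimes \coker(\iota))_U\bigl(\nabla_U(\lambda \cdot \iota_U(\tau))\bigr) \\
 &= (\id \otimes \coker(\iota))_U\bigl((\dd_S)_U(\lambda) \otimes \iota_U(\tau) + \lambda \cdot \nabla_U(\iota_U(\tau))\bigr) \\
 &= (\dd_S)_U(\lambda) \otimes (\coker(\iota))_U(\iota_U(\tau)) + \lambda \cdot (\nabla_\iota)_U(\tau).
\end{align*}
Since $\coker(\iota) \circ \iota = 0$ in $\Mod(S)$, the first summand vanishes, so $(\nabla_\iota)_U(\lambda \cdot \tau) = \lambda \cdot (\nabla_\iota)_U(\tau)$. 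Additivity is clear since each of the three factors is additive. Hence $\nabla_\iota$ is $\O_S$-linear on sections over every open $U$, which means it is a morphism in $\Mod(S)$.

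There is no real obstacle beyond being careful about which category each arrow lives in; the proof rests entirely on the cancellation $\coker(\iota) \circ \iota = 0$ killing the non-tensorial part of Leibniz's rule. The domain and codomain are read off directly from the definition \eqref{e:nablaiota} since $\iota$ has codomain $\sH$, $\nabla$ goes from $\sH$ to $\Omega^1_S \otimes \sH$, and $\id_{\Omega^1_S} \otimes \coker(\iota)$ lands in $\Omega^1_S \otimes (\sH/\sh F)$.
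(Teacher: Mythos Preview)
Your proof is correct and follows essentially the same approach as the paper: a section-level Leibniz computation showing that the inhomogeneous term $\dd\lambda \otimes \iota(\tau)$ is annihilated by $\id \otimes \coker(\iota)$. The paper's version is slightly terser, leaving the application of $\id \otimes \coker(\iota)$ implicit in the final step, whereas you spell it out explicitly.
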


\begin{proof}
 A priori $\nabla_\iota$ is a morphism from $\sh F$ to $\Omega^1_S \otimes (\sH/\sh F)$ in $\Mod(S/\C)$. That $\nabla_\iota$ is a morphism in $\Mod(S)$ is equivalent to saying that it is compatible with the $\O_S$-scalar multiplications of $\sh F$ and $\Omega^1_S \otimes (\sH/\sh F)$. Let $U$ be an open set of $S$, $\sigma\in\sh F(U)$, and $\lambda\in\O_S(U)$. Then we have:
\[
 (\nabla\circ\iota)_U(\lambda\cdot\sigma) = \nabla_U(\lambda\cdot\iota_U(\sigma)) = \dd_U(\lambda)\otimes\iota_U(\sigma)+\lambda\cdot\nabla_U(\iota_U(\sigma)),
\]
where $\dd$ is short for the differential $\dd^0_S \colon \O_S \to \Omega^1_S$. Thus,
\[
 (\nabla_\iota)_U(\lambda\cdot\sigma) = \lambda\cdot(\nabla_\iota)_U(\sigma),
\]
which proves our claim.
\end{proof}

\begin{notation}
 \label{not:theta}
 Let $V$ be a finite dimensional $\C$-vector space and $F$ a $\C$-linear subset of $V$, \iev $F \in \Gr(V)$. Then we write
 \[
  \theta(V,F) \colon \T_{\Gr(V)}(F) \to \Hom(F,V/F)
 \]
 for the function which is introduced in \cite[Lemme 10.7]{Vo02}. Let us briefly indicate how one defines $\theta(V,F)$. For that matter, be $E$ a $\C$-vector subspace of $V$ such that $V = F \oplus E$. Then one has a morphism of complex spaces
 $$g_E \colon \Hom(F,E) \to \Gr(V)$$
 such that $g_E$ sends a homomorphism $\alpha \colon F \to E$ to the image of the function $\alpha' \colon F \to V$, $\alpha'(x) := x + \alpha(x)$. In fact, $g_E$ is an open immersion which maps the $0$ of $\Hom(F,E)$ to $F$ in $\Gr(V)$. Hence, the tangential map
 $$\T_0(g_E) \colon \T_{\Hom(F,E)}(0) \to \T_{\Gr(V)}(F)$$
 is an isomorphism in $\Mod(\C)$. Moreover, we have a canonical isomorphism
 $$\T_{\Hom(F,E)}(0) \to \Hom(F,E)$$
 as well as the morphism
 $$\Hom(F,E) \to \Hom(F,V/F)$$
 which is induced by the restriction of the quotient mapping $V \to V/F$ to $E$.
\end{notation}

\begin{lemma}
 \label{l:pm0}
 Let $S$ be a simply connected complex manifold, $(\sH,\nabla)$ a flat vector bundle on $S$, $\sh F$ a vector subbundle of $\sH$ on $S$, and $t\in S$. Set
 \[
  \cP := \cP_t(S,(\sH,\nabla),\sh F)
 \]
 and define $\nabla_\iota$ by \eqref{e:nablaiota}, where $\iota \colon \sh F \to \sH$ denotes the inclusion morphism. Moreover, set
 \[
  F(t) := \im(\iota(t) \colon \sh F(t) \to \sH(t))
 \]
 and write
 \begin{align*}
  \sigma & \colon \sh F(t) \to F(t) \\
  \tau & \colon \sH(t)/F(t) \to (\sH/\sh F)(t)
 \end{align*}
 for the obvious mappings. Then $\sigma$ and $\tau$ are isomorphisms in $\Mod(\C)$ and the following diagram commutes in $\Mod(\C)$:
 \begin{equation} \label{e:pm0}
  \xymatrix@C=5pc{
   \T_S(t) \ar[r]^-{\A_{S,t}(\sh F,\sH/\sh F)(\nabla_\iota)} \ar[d]_{\T_t(\cP)} & \Hom(\sh F(t),(\sH/\sh F)(t)) \\
   \T_{\Gr(\sH(t))}(F(t)) \ar[r]_-{\theta(\sH(t),F(t))} & \Hom(F(t),\sH(t)/F(t)) \ar[u]_{\Hom(\sigma,\tau)}
  }
 \end{equation}
\end{lemma}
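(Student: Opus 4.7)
The approach is to verify the two isomorphism claims and then reduce the commutativity of \eqref{e:pm0} to an explicit local computation using a frame of horizontal sections of $\nabla$ around $t$.

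For the isomorphisms, $\sigma$ is an isomorphism by the very definition of a vector subbundle in Definition \ref{d:bundles} c), applied at $s=t$. For $\tau$, I apply the right exact functor $(-)(t)=\C\otimes_{\O_{S,t}}(-)_t$ to the exact sequence $\sh F\to\sH\to\sH/\sh F\to 0$ of modules on $S$; the image of $\sh F(t)\to\sH(t)$ is $F(t)$ by definition, so the induced morphism $\sH(t)/F(t)\to(\sH/\sh F)(t)$ is an isomorphism.

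For the commutativity of \eqref{e:pm0}, I argue locally around $t$. By Proposition \ref{p:rhc}, the sheaf $H:=\Hor_S(\sH,\nabla)$ is a locally constant $\C_S$-module with $\O_S\otimes_{\C_S}H\iso\sH$; since $S$ is simply connected, $H$ is even constant. Fixing a $\C$-basis of $H(S)$ produces a global frame $(e_1,\dots,e_r)$ of $\sH$ by horizontal sections whose evaluations form a basis of $\sH(t)$. I choose this basis so that $(e_1(t),\dots,e_d(t))$ spans $F(t)$, and set $E:=\mathrm{span}(e_{d+1}(t),\dots,e_r(t))$ so $\sH(t)=F(t)\oplus E$. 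On a small enough neighborhood $V$ of $t$ the subbundle $\sh F$ admits a frame $(\sigma_1,\dots,\sigma_d)$, which after a constant change of basis I may normalize to satisfy $\sigma_j(t)=e_j(t)$ for $1\leq j\leq d$. Writing $\sigma_j=\sum_i\lambda_{ij}e_i$ with $\lambda_{ij}\in\O_S(V)$, this normalization forces $\lambda_{ij}(t)=\delta_{ij}$ for all $1\leq i\leq r$ and $1\leq j\leq d$.

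Horizontality of the $e_i$ yields $\nabla\iota(\sigma_j)=\sum_i\dd\lambda_{ij}\otimes e_i$, hence $\nabla_\iota(\sigma_j)=\sum_i\dd\lambda_{ij}\otimes[e_i]$ in $\Omega^1_S\otimes(\sH/\sh F)$. Unwinding Construction \ref{con:A}, the morphism $\A_{S,t}(\sh F,\sH/\sh F)(\nabla_\iota)$ sends $v\in\T_S(t)$ to the linear map $\sh F(t)\to(\sH/\sh F)(t)$, $\sigma_j(t)\mapsto\sum_i v(\lambda_{ij})\cdot[e_i](t)$. For the other route, I follow the model of the proof of Proposition \ref{p:pmhol} using the chart $g_E$ at $F(t)$ from Notation \ref{not:theta}: the period mapping reads in this chart as $s\mapsto L''(s)$, the lower $(r-d)\times d$ block of $L(s)\cdot(L(s)|d\times d)^{-1}$ with $L(s)=(\lambda_{ij}(s))$. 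The normalization makes $L(t)|d\times d=I_d$ and the last $r-d$ rows of $L(t)$ equal to zero, so a direct calculation of $\dd(L\cdot M^{-1})$ at $t$ gives $\dd L''|_t=(\dd\lambda_{ij}|_t)_{i>d,\,j\leq d}$. Transporting through $\theta(\sH(t),F(t))$ and $\Hom(\sigma,\tau)$ converts this into the linear map $\sh F(t)\to(\sH/\sh F)(t)$, $\sigma_j(t)\mapsto\sum_{i>d}v(\lambda_{ij})\cdot[e_i](t)$.

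The two expressions differ by the term $\sum_{i\leq d}v(\lambda_{ij})\cdot[e_i](t)$, and I finish by showing that this term vanishes identically. Indeed, for each $i\leq d$ the relation $\sigma_i=\sum_k\lambda_{ki}e_k\in\sh F$ produces $\sum_k(\lambda_{ki})_t\cdot[e_k]_t=0$ in $(\sH/\sh F)_t$; reducing modulo the maximal ideal at $t$ and using $\lambda_{ki}(t)=\delta_{ki}$ gives $[e_i](t)=0$ in $(\sH/\sh F)(t)$, as required. The main obstacle will be the tangent computation with the chart $g_E$ together with the simultaneous bookkeeping of $\A_{S,t}$, $\theta(\sH(t),F(t))$, $\sigma$, and $\tau$; once the horizontal frame is chosen so as to match the chosen complement $E$ of $F(t)$, the rest is linear algebra and unwinding of definitions.
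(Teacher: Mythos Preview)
Your proof is correct and follows essentially the same strategy as the paper: pick a horizontal frame of $\sH$ adapted to $F(t)$, trivialize $\sh F$ locally, and compare the two routes in the chart $g_E$ on the Grassmannian. The one notable difference is in frame normalization: the paper first row-reduces the local frame of $\sh F$ against the horizontal basis (as in the proof of Proposition~\ref{p:pmhol}) and then shifts the horizontal basis so that the sections $\alpha_j$ take the form $f_j+\sum_{i<c}\lambda_{ij}e_i$ with the $f$-block equal to the identity on all of $U$ and $\lambda_{ij}(t)=0$. This stronger normalization means the paper never has to differentiate $L\cdot(L|d\times d)^{-1}$ and never encounters the extra $\sum_{i\leq d}v(\lambda_{ij})[e_i](t)$ term, whereas you handle both of these directly (and correctly). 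Either route is fine; the paper's choice trades a small amount of upfront linear algebra for a cleaner endgame.
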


\begin{proof}
 The fact that $\sigma$ and $\tau$ are isomorphisms is pretty obvious: since $\sh F$ is a vector subbundle of $\sH$ on $S$, $\sigma$ is injective; $\sigma$ is surjective by the definition of $F(t)$. $\tau$ is an isomorphism since both $\sH(t) \to \sH(t)/F(t)$ and $\sH(t) \to (\sH/\sh F)(t)$ are cokernels in $\Mod(\C)$ of $\iota(t) \colon \sh F(t) \to \sH(t)$ taking into account particularly the right exactness of the evaluation functor ``$-(t)$''.

 Set $H := \Hor_S(\sH,\nabla)$. Then by the same method as in the proof of Proposition \ref{p:pmhol} we deduce that there exist an ordered $\C$-basis $e' = (e'_0,\dots,e'_{r-1})$ for $H(S)$, an open neighborhood $U$ of $t$ in $S$, as well as a $c\times d$-matrix $\lambda'$ with values in $\O_S(U)$ such that the tuple
 $$\alpha = (\alpha_0,\dots,\alpha_{d-1})$$
 with
 $$\alpha_j = e'_j|U + \sum_{i<c} \lambda'_{ij} \cdot (e'_{d+i}|U)$$
 for all $j\in d$, where we add in multiply within the $\O_S(U)$-module $\sH(U)$, trivializes the module $\sh F$ on $S$ over $U$, \iev the unique morphism $(\O_S|U)^{\oplus d} \to \sh F|U$ of modules on $S|U$ which sends the standard basis of $(\O_S(U))^{\oplus d}$ to $\alpha$ is an isomorphism. Define a $c$-tuple $e$ and a $d$-tuple $f$ by setting
 \begin{align*}
  e_i & := e'_{d+i}, \\
  f_j & := e'_j + \sum_{i<c} \lambda'_{ij}(t) \cdot e'_{d+i}
 \end{align*}
 for all $i\in c$ and $j\in d$, respectively. Moreover, define a $c\times d$-matrix $\lambda$ by setting for $(i,j) \in c\times d$:
 $$\lambda_{ij} := \lambda'_{ij} - \lambda'_{ij}(t).$$
 Then the concatenated tuple
 $$(f_0,\dots,f_{d-1},e_0,\dots,e_{c-1})$$
 is an ordered $\C$-basis for $H(S)$, and
 $$\alpha_j = f_j|U + \sum_{i<c} \lambda_{ij} \cdot (e_i|U)$$
 for all $j\in d$. Thus, for all $s\in U$, $\cP(s)$ equals the $\C$-linear span of the elements
 $$f_j(t) + \sum_{i<c} \lambda_{ij}(s) \cdot e_i(t),$$
 $j$ ranging through $d$, in $\sH(t)$. Specifically, as $\lambda_{ij}(t) = 0$ for all $(i,j) \in c\times d$, we see that $F(t)$ equals the $\C$-linear span of $f_0(t),\dots,f_{d-1}(t)$ in $\sH(t)$. Define $E$ to be the $\C$-linear span of $e_0(t),\dots,e_{c-1}(t)$ in $\sH(t)$. Let
 $$g \colon \Hom(F(t),E) \to \Gr(\sH(t))$$
 be the morphism of complex spaces which sends an element $\phi \in \Hom(F(t),E)$ to the range of the homomorphism $\id_{F(t)} + \phi \colon F(t) \to \sH(t)$. Let
 $$\bar\cP \colon S|U \to \Hom(F(t),E)$$
 be the morphism of complex spaces which sends $s$ to the homomorphism $F(t) \to E$ which is represented by the $c\times d$-matrix
 $$(i,j) \mto \lambda_{ij}(s)$$
 with respect to the bases $(f_0(t),\dots,f_{d-1}(t))$ and $(e_0(t),\dots,e_{c-1}(t))$ of $F(t)$ and $E$, respectively. Then the following diagram commutes in the category of complex spaces:
 \[
  \xymatrix@C=0pc{
   S|U \ar[rr]^-{\cP|U} \ar[dr]_{\bar\cP} && \Gr(\sH(t)) \\ & \Hom(F(t),E) \ar[ru]_{g}
  }
 \]
 Let $v$ be an arbitrary element of $\T_S(t)$. Then by the explicit description of $\bar\cP$, we see that the image of $v$ under the composition
 $$\can \circ \T_t(\bar\cP) \colon \T_S(t) \to \T_{\Hom(F(t),E)}(0) \to \Hom(F(t),E)$$
 is represented by the matrix
 \begin{equation}
  \label{e:pm0-diffmat}
  c\times d \ni (i,j) \mto v \cont (\dd_S)_U(\lambda_{ij})
 \end{equation}
 with respect to the bases $(f_0(t),\dots,f_{d-1}(t))$ and $(e_0(t),\dots,e_{c-1}(t))$, where, for any $\omega \in \Omega^1_S(U)$,
 $$v \cont \omega := v(\omega(t))$$
 (note that $v$ is a linear functional $v \colon \Omega^1_S(t) = \C \otimes_{\O_{S,s}} \Omega^1_{S,s} \to \C$). By the definition of $\theta(-,-)$ in Notation \ref{not:theta}, when $\pi \colon \sH(t) \to \sH(t)/F(t)$ denotes the residue class mapping, the following diagram commutes in $\Mod(\C)$:
 \[
  \xymatrix@C=4pc{
   \T_{\Hom(F(t),E)}(0) \ar[r]^-{\can} \ar[d]_{\T_0(g)} & \Hom(F(t),E) \ar[d]^{\Hom(\id_{F(t)},\pi|E)} \\
   \T_{\Gr(\sH(t))}(F(t)) \ar[r]_-{\theta(\sH(t),F(t))} & \Hom(F(t),\sH(t)/F(t))
  }
 \]
 Hence, the image of $v$ under the composition $$\theta(\sH(t),F(t)) \circ \T_t(\cP)$$ is represented by the matrix \eqref{e:pm0-diffmat} with respect to the bases $(f_0(t),\dots,f_{d-1}(t))$ and $(\pi(e_0(t)),\dots,\pi(e_{c-1}(t)))$. On the other hand, we have for all $j\in d$:
 \[
  \nabla_U(\alpha_j) = \sum_{i<c} (\dd_S)_U(\lambda_{ij}) \otimes (e_i|U),
 \]
 whence
 \[
  (\nabla_\iota)_U(\alpha_j) = \sum_{i<c} (\dd_S)_U(\lambda_{ij}) \otimes \bar{e_i},
 \]
 where $\bar{e_i}$ denotes the image of $e_i|U$ under the mapping $\sH(U) \to (\sH/\sh F)(U)$. Put $A := \A_{S,t}(\sh F,\sH/\sh F)(\nabla_\iota)$. Then by Contruction \ref{con:A}, we have:
 $$
  Av(\alpha_j(t)) = \sum_{i<c} (v \cont (\dd_S)_U(\lambda_{ij})) \cdot \bar{e_i}(t).
 $$
 Evidently, for all $j\in d$, the mapping $\iota(t) \colon \sh F(t) \to \sH(t)$ sends $\alpha_j(t)$ (evaluation in $\sh F$) to
 $$\alpha_j(t) = f_j(t) + \sum_{i<c} \lambda_{ij}(t) \cdot e_i(t) = f_j(t)$$
 (evaluation in $\sH$); thus $\sigma(\alpha_j(t)) = f_j(t)$. Likewise, for all $i\in c$, the mapping $(\coker\iota)(t) \colon \sH(t) \to (\sH/\sh F)(t)$ sends $e_i(t)$ to $\bar{e_i}(t)$; thus $\tau(\pi(e_i(t))) = \bar{e_i}(t)$. This proves the commutativity of \eqref{e:pm0}.
\end{proof}

\section{Period mappings of Hodge-de Rham type}
\label{s:pm}

After the ground laying work of the previous \S, we are now in the position to analyze period mappings of ``Hodge-de Rham type''; the concept will be made precise in the realm of Notation \ref{not:pm} \ref{not:pm-bc}) below. As a preparation we need the following result.

\begin{proposition}
 \label{p:gmconn}
 Let $n$ be an integer.
 \begin{enumerate}
  \item \label{p:gmconn-fg} Let $(f,g)$ be a composable pair of submersive morphisms of complex spaces. Then $\nabla^n_\GM(f,g)$ is a flat $g$-connection on $\sH^n(f)$.
  \item \label{p:gmconn-f} Let $f\colon X\to S$ be a submersive morphism of complex spaces such that $S$ is a complex manifold. Then $\nabla^n_\GM(f)$ is a flat $S$-connection on $\sH^n(f)$.
  \end{enumerate}
\end{proposition}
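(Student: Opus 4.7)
I would deduce part (b) immediately from part (a). Indeed, when $S$ is a complex manifold, the canonical morphism $a_S\colon S\to\C$ is submersive, so $(f,a_S)$ is a composable pair of submersive morphisms of complex spaces; moreover $\nabla^n_\GM(f)=\nabla^n_\GM(f,a_S)$ by Notation \ref{not:gm}, and the notion of an $S$-connection coincides with that of an $a_S$-connection by Definition \ref{d:conn}. For (a) there are two independent claims to establish: (i) $\nabla^n_\GM(f,g)$ satisfies Leibniz's rule, so that it is a $g$-connection; (ii) this $g$-connection is flat.

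For (i), I would work locally on $S$ and unravel the composition \eqref{e:nabla} defining $\nabla^n_\GM(f,g)$. The augmented connecting homomorphism $\delta^n_+(l_+)$ of Construction \ref{con:augconnhom} admits the following explicit description: given a class $[\omega]\in\sH^n(f)(U)$ over a sufficiently small open set $U\subset S$ represented by a cocycle $\omega$ in a suitable acyclic (e.g., Godement) resolution of $\bar\Omega^\kdot_f$, one lifts $\omega$ against the surjection $\Omega^\kdot_h/K^{\kdot,2}\twoheadrightarrow\Omega^\kdot_f$ to a section $\tilde\omega$, applies the differential $\dd_h$ of $\Omega^\kdot_h$, and takes the resulting cocycle, which automatically lies in $K^{\kdot,1}/K^{\kdot,2}$. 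Composing with $(\R^{n+1}\bar f_*(\gamma))^{-1}$ and the inverse projection morphism yields a class in $(\Omega^1_g\otimes\sH^n(f))(U)$. For $\lambda\in\O_S(U)$, the Leibniz rule $\dd_h(\lambda\tilde\omega)=\dd^0_h(\lambda)\wedge\tilde\omega+\lambda\cdot\dd_h(\tilde\omega)$, combined with the fact that $\dd^0_h(\lambda)$ lies in $f^*\Omega^1_g(U)$ by the very construction of the triple $\Omega^1(f,g)$ of Notation \ref{not:trippdiff}, translates after the above identifications into $\nabla^n_U(\lambda\cdot[\omega])=(\dd_g)_U(\lambda)\otimes[\omega]+\lambda\cdot\nabla^n_U([\omega])$, as required.

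For (ii), I would adapt the classical Katz--Oda argument from \cite{KaOd68}. Extend the Koszul filtration $(K^{\kdot,i})_{i=0,1,2}$ of \S\ref{s:cdlemma} to the full filtration $(K^{\kdot,i})_{i\geq 0}$ of $\Omega^\kdot_h$ by subcomplexes; generalising Construction \ref{con:tripdr}, each graded piece $K^{\kdot,i}/K^{\kdot,i+1}$ is canonically isomorphic to a complex whose degree-$p$ term is $f^*\Omega^i_g\otimes\Omega^{p-i}_f$ and whose differential is induced by $\dd_h$. The filtered-complex spectral sequence computing $\R\bar f_*(\bar\Omega^\kdot_h)$ has $E_1$-term, via projection morphisms in the spirit of Notation \ref{not:gm0}, naturally identified with $E_1^{i,q}\iso\Omega^i_g\otimes\sH^q(f)$, and its $d_1$-differential is a suitable ``iterated Gauß--Manin connection'' whose restriction to $i=0$ coincides with $\nabla^n_\GM(f,g)$ and whose restriction to $i=1$ coincides with $\nabla^{n,1}_\GM(f,g)$ under the identifications of Definition \ref{d:flat}. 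The composite $\nabla^{n,1}_\GM(f,g)\circ\nabla^n_\GM(f,g)$ is then identified with $d_1\circ d_1$ in the spectral sequence, whence it vanishes tautologically.

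The principal obstacle lies entirely in step (ii), more precisely in the bookkeeping required to extend the results of \S\ref{s:cdlemma}---in particular Proposition \ref{p:gt0}, Proposition \ref{p:grgt0}, and the central Lemma \ref{l:cdlemma}---from the two-step triple framework used there to the three-step subquotients $K^{\kdot,i}/K^{\kdot,i+3}$ needed to concatenate two successive applications of the augmented connecting homomorphism and to recognise their composite as a single $d_1^2$ in a spectral sequence. The Leibniz rule (i), by contrast, is essentially formal once the local cocycle description of $\nabla^n_\GM(f,g)$ has been made explicit from the ingredients of Construction \ref{con:augconnhom} and \eqref{e:nabla}.
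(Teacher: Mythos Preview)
Your proposal is correct and follows exactly the route the paper takes: part (b) is reduced to part (a) by setting $g=a_S$, and for part (a) the paper simply refers the reader to \cite[Section 2]{KaOd68} for both the Leibniz rule and flatness, without giving any further detail. Your outline of the Katz--Oda argument (local cocycle description for Leibniz, and the $d_1\circ d_1=0$ spectral-sequence identification for flatness) is precisely what lies behind that citation, so you have in fact provided more than the paper itself does.
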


\begin{proof}
 Clearly, assertion \ref{p:gmconn-f}) follows from assertion \ref{p:gmconn-fg}) letting $g = a_S$. For the verification of Leibniz's rule and the flatness condition for connections we refer our reader to \cite[Section 2]{KaOd68}.
\end{proof}

\begin{notation}
 \label{not:pm}
 Let $f\colon X\to S$ be a submersive morphism of complex spaces such that $S$ is a simply connected complex manifold. Let $n$ and $p$ be integers and $t\in S$. Assume that $\sH^n(f)$ is a locally finite free module on $S$ and $\F^p\sH^n(f)$ is a vector subbundle of $\sH^n(f)$ on $S$.
 \begin{enumerate}
  \item \label{not:pm-wobc} We put:
  \begin{equation} \label{e:pm'def}
   \cP'^{p,n}_t(f) := \cP_t(S,(\sH^n(f),\nabla^n_\GM(f)),\F^p\sH^n(f)),
  \end{equation}
  where the right hand side is to be interpreted in the sense of Construction \ref{con:pmhol}. Note that \eqref{e:pm'def} makes sense in particular because by Proposition \ref{p:gmconn} b) $\nabla^n_\GM(f)$ is a flat $S$-connection on $\sH^n(f)$, whence $(\sH^n(f),\nabla^n_\GM(f))$ is a flat vector bundle on $S$. Note that by means of Proposition \ref{p:pmhol} we may regard $\cP'^{p,n}_t(f)$ as a morphism of complex spaces:
  \[
   \cP'^{p,n}_t(f) \colon S \to \Gr((\sH^n(f))(t)).
  \]
  \item \label{not:pm-bc} Assume that for all $s\in S$ the base change maps
  \begin{align*}
   \phi^n_{f,s} & \colon (\sH^n(f))(s) \to \sH^n(X_s) \\
   \phi^{p,n}_{f,s} & \colon (\F^p\sH^n(f))(s) \to \F^p\sH^n(X_s)
  \end{align*}
  are isomorphisms in $\Mod(\C)$. Write
  \[
   \rho' \colon \Pi(S) \to \Mod(\C)
  \]
  for the $\C$-representation of the fundamental groupoid of $S$ which is defined for $(\sH,\nabla) := (\sH^n(f),\nabla^n_\GM(f))$ in Construction \ref{con:pmhol}. Let
  \[
   \rho \colon \Pi(S) \to \Mod(\C)
  \]
  be the functor which is obtained by ``composing'' $\rho'$ with the family of isomorphisms $\phi := (\phi^n_{f,s})_{s\in S}$. Define $F$ to be the unique function on $S$ such that, for all $s\in S$, we have:
  \[
   F(s) = \F^p\sH^n(X_s).
  \]
  Then clearly $F$ is a $\C$-distribution in $\rho$. We set:
  \[
   \cP^{p,n}_t(f) := \cP^\C_t(S,\rho,F),
  \]
  \cf Construction \ref{con:pmrep}. Note that $\phi$ is an isomorphism of functors from $\Pi(S)$ to $\Mod(\C)$ from $\rho'$ to $\rho$. Moreover, when $F'$ denotes the unique function on $S$ such that, for all $s\in S$, we have
  \[
   F'(s) = \im((\iota^n_f(p))(s) \colon (\F^p\sH^n(f))(s) \to (\sH^n(f))(s)),
  \]
  then
  \[
   \phi^n_{f,s}[F'(s)] = F(s)
  \]
  for all $s\in S$. Therefore the following diagram commutes in $\Set$:
  \[
   \xymatrix{
    & S \ar[ld]_{\cP'^{p,n}_t(f)} \ar[dr]^{\cP^{p,n}_t(f)} \\
    \Gr((\sH^n(f))(t)) \ar[rr]_-{\Gr(\phi^n_{f,t})} && \Gr(\sH^n(X_t))
   }
  \]
  Since $\cP'^{p,n}_t(f)$ is a holomorphic map from $S$ to $\Gr((\sH^n(f))(t))$ by Proposition \ref{p:pmhol} and $\Gr(\phi^n_{f,t})$ is an isomorphism of complex spaces (as $\phi^n_{f,t}$ is an isomorphisms of $\C$-vector spaces), we may view $\cP^{p,n}_t(f)$ as a morphism of complex spaces from $S$ to $\Gr(\sH^n(X_t))$. We call $\cP^{p,n}_t(f)$ the \emph{(Hodge-de Rham) period mapping} in bidegree $(p,n)$ of $f$ with basepoint $t$.
 \end{enumerate}
\end{notation}

Next, we introduce the classical concept of Kodaira-Spencer maps. Our definition shows how to construct these maps out of the Kodaira-Spencer class given by Notation \ref{not:kscc} and Notation \ref{not:ks0}. As an auxiliary means, we also introduce ``Kodaira-Spencer maps without base change''.

\begin{notation}[Kodaira-Spencer maps]
 \label{not:ksm}
 Let $f\colon X\to S$ be a submersive morphism of complex spaces such that $S$ is a complex manifold. Then, by means of Notation \ref{not:kscc}, we may speak of the Kodaira-Spencer class of $f$, written $\xi_\KS(f)$, which is a morphism
 $$\xi_\KS(f) \colon \O_S \to \Omega^1_S \otimes \R^1f_*(\Theta_f)$$
 of modules on $S$. We write $\KS_f$ for the composition of the following morphisms in $\Mod(S)$:
 \begin{align*}
  \Theta_S & \xrightarrow{\rho(\Theta_S)^{-1}} \Theta_S \otimes \O_S \xrightarrow{\id_{\Theta_S} \otimes \xi_\KS(f)} \Theta_S \otimes (\Omega^1_S \otimes \R^1f_*(\Theta_f)) \\ & \xrightarrow{\alpha(\Theta_S,\Omega^1_S,\R^1f_*(\Theta_f))^{-1}} (\Theta_S \otimes \Omega^1_S) \otimes \R^1f_*(\Theta_f) \\ & \xrightarrow{\gamma^1(\Omega^1_S) \otimes \id_{\R^1f_*(\Theta_f)}} \O_S \otimes \R^1f_*(\Theta_f) \xrightarrow{\lambda(\R^1f_*(\Theta_f))} \R^1f_*(\Theta_f).
 \end{align*}
 Let $t\in S$. Then define
 \[\KS'_{f,t} \colon \T_S(t) \to (\R^1f_*(\Theta_f))(t)\]
 to be the composition of the inverse of the canonical isomorphism $\Theta_S(t)\to\T_S(t)$ with $\KS_f(t) \colon \Theta_S(t) \to (\R^1f_*(\Theta_f))(t)$. We call $\KS'_{f,t}$ the \emph{Kodaira-Spencer map without base change} of $f$ at $t$. Furthermore, define
 \[
  \KS_{f,t} \colon \T_S(t) \to \H^1(X_t,\Theta_{X_t})
 \]
 to be the composition of $\KS'_{f,t}$ with the evident base change morphism:
 \[
  \beta^1_{f,t} \colon (\R^1f_*(\Theta_f))(t) \to \H^1(X_t,\Theta_{X_t}).
 \]
 We call $\KS_{f,t}$ the \emph{Kodaira-Spencer map} of $f$ at $t$.
\end{notation}

\begin{notation}[Cup and contraction, II]
 \label{not:cc'}
 Let $f\colon X\to S$ be an arbitrary morphism of complex spaces. Let $p$ and $q$ be integers. We define
 $$\gamma^{p,q}_f \colon \R^1f_*(\Theta_f) \otimes \sH^{p,q}(f) \to \sH^{p-1,q+1}(f)$$
 to be the composition in $\Mod(S)$ of the cup product morphism
 $$\cupp^{1,q}_f(\Theta_f,\Omega^p_f) \colon  \R^1f_*(\Theta_f) \otimes \R^qf_*(\Omega^p_f) \to \R^{q+1}f_*(\Theta_f\otimes\Omega^p_f)$$
 and the $\R^{q+1}f_*(-)$ of the contraction morphism:
 $$\gamma^p_X(\Omega^1_f) \colon \Theta_f \otimes \Omega^p_f \to \Omega^{p-1}_f,$$
 \cf Notation \ref{not:cont}. $\gamma^{p,q}_f$ is called the \emph{cup and contraction} in bidegree $(p,q)$ for $f$. As a shorthand, we write $\gamma^{p,q}_X$ for $\gamma^{p,q}_{a_X}$.
 
 By means of tensor-hom adjunction on $S$ (with respect to the modules $\R^1f_*(\Theta_f)$, $\sH^{p,q}(f)$, and $\sH^{p-1,q+1}(f)$), the morphism $\gamma^{p,q}_f$ corresponds to a morphism
 $$\R^1f_*(\Theta_f) \to \sHom_S(\sH^{p,q}(f),\sH^{p-1,q+1}(f))$$
 in $\Mod(S)$. Let $t\in S$. Then evaluating the latter morphism at $t$ and composing the result in $\Mod(\C)$ with the canonical morphism
 $$(\sHom_S(\sH^{p,q}(f),\sH^{p-1,q+1}(f)))(t) \to \Hom\left((\sH^{p,q}(f))(t),(\sH^{p-1,q+1}(f))(t)\right),$$
 yields:
 $$\gamma'^{p,q}_{f,t} \colon (\R^1f_*(\Theta_f))(t) \to \Hom\left((\sH^{p,q}(f))(t),(\sH^{p-1,q+1}(f))(t)\right).$$
 We refer to $\gamma'^{p,q}_{f,t}$ as the \emph{cup and contraction without base change} in bidegree $(p,q)$ of $f$ at $t$.
\end{notation}

The following two easy lemmata pave the way for the first essential statement of \S\ref{s:pm}, which is Proposition \ref{p:pm'}.

\begin{lemma}
 \label{l:kscc}
 Let $f\colon X\to S$ be a submersive morphism of complex spaces such that $S$ is a complex manifold. Let $p$ and $q$ be integers and $t\in S$. Then the following identity holds in $\Mod(\C)$:
 \begin{equation}
  \label{e:kscc}
  \A_{S,t}(\sH^{p,q}(f),\sH^{p-1,q+1}(f))(\gamma^{p,q}_\KS(f)) = \gamma'^{p,q}_{f,t} \circ \KS'_{f,t}.
 \end{equation}
\end{lemma}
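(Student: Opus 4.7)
The plan is to chase both sides of \eqref{e:kscc} through their defining chains of canonical isomorphisms, cup products, contractions and adjunctions until they reduce to the same concrete map. Both sides are morphisms in $\Mod(\C)$ from $\T_S(t)$ to $\Hom((\sH^{p,q}(f))(t),(\sH^{p-1,q+1}(f))(t))$ built out of the same raw ingredients --- the Kodaira-Spencer class $\xi_\KS(f)$, the cup product $\cupp^{1,q}_f(\Theta_f,\Omega^p_f)$ and the contraction $\gamma^p(\Omega^1_f)$ --- so the identity ought to be formal. I would verify it after evaluating on an arbitrary tangent vector $v\in\T_S(t)$.

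First I would refactor $\gamma^{p,q}_\KS(f)$. Specialising Notation \ref{not:cc0} to $(G,t)=(\Omega^1_S,\Omega^1(f,a_S))$ and using $\gamma^{p,q}_f = \R^{q+1}f_*(\gamma^p(\Omega^1_f))\circ\cupp^{1,q}_f(\Theta_f,\Omega^p_f)$ from Notation \ref{not:cc'}, the final two arrows in the definition of $\gamma^{p,q}_\KS(f)$ consolidate to $\id_{\Omega^1_S}\otimes\gamma^{p,q}_f$, yielding
\[
\gamma^{p,q}_\KS(f)=(\id_{\Omega^1_S}\otimes\gamma^{p,q}_f)\circ\alpha\circ(\xi_\KS(f)\otimes\id_{\sH^{p,q}(f)})\circ\lambda(\sH^{p,q}(f))^{-1}
\]
in $\Mod(S)$, where $\alpha$ and $\lambda$ are the tensor associator and left unitor.

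Second I would trace the recipe of Construction \ref{con:A} on this factorisation. That recipe sends a morphism $\phi\colon\sh F\to\Omega^1_S\otimes\sh H$ to the map taking $v\in\T_S(t)$ to the composition $\sh F(t)\xrightarrow{\phi(t)}(\Omega^1_S\otimes\sh H)(t)\xrightarrow{v\otimes\id}\sh H(t)$, with the first map interpreted via the canonical comparison $(\Omega^1_S\otimes\sh H)(t)\to\Omega^1_S(t)\otimes_\C\sh H(t)$. When applied to the composition above, the contraction $v\otimes\id$ acts only on the $\Omega^1_S$-factor, which originates entirely from $\xi_\KS(f)$; by naturality of $\alpha$ and because $\id_{\Omega^1_S}\otimes\gamma^{p,q}_f$ does not touch that factor, this contraction commutes past both arrows. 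The outcome is the $\C$-linear map $\sH^{p,q}(f)(t)\to\sH^{p-1,q+1}(f)(t)$, $x\mapsto\gamma^{p,q}_f(t)(\xi_v\otimes x)$, where $\xi_v\in(\R^1f_*(\Theta_f))(t)$ is the image of $\xi_\KS(f)(t)\in\Omega^1_S(t)\otimes(\R^1f_*(\Theta_f))(t)$ under contraction with $v$ on the first factor.

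Third I would read off the right-hand side of \eqref{e:kscc}. By the very definition of $\KS_f$ in Notation \ref{not:ksm}, $\KS_f(t)\colon\Theta_S(t)\to(\R^1f_*(\Theta_f))(t)$ is exactly the contraction of $\xi_\KS(f)(t)$ against $\Theta_S(t)$ via $\gamma^1(\Omega^1_S)$; precomposing with $\T_S(t)\iso\Theta_S(t)^{-1}$ gives $\KS'_{f,t}$, so $\xi_v=\KS'_{f,t}(v)$. By the definition of $\gamma'^{p,q}_{f,t}$ in Notation \ref{not:cc'}, which is the tensor-hom adjoint of $\gamma^{p,q}_f$ evaluated at $t$ followed by the canonical map to $\Hom$ on fibres, for any $u\in(\R^1f_*(\Theta_f))(t)$ one has $\gamma'^{p,q}_{f,t}(u)(x)=\gamma^{p,q}_f(t)(u\otimes x)$. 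Substituting $u=\KS'_{f,t}(v)$ recovers the map produced in step two, giving \eqref{e:kscc}. The main obstacle is purely notational: aligning the many canonical isomorphisms ($\lambda$, $\alpha$, the fibre-evaluation comparison $(\sh F\otimes\sh H)(t)\iso\sh F(t)\otimes_\C\sh H(t)$, and the maps $\sHom(-,-)(t)\to\Hom((-)(t),(-)(t))$) requires patience, but once they are, the identity drops out of the definitions.
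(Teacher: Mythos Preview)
Your proposal is correct and follows essentially the same approach as the paper: both hinge on the factorization $\gamma^{p,q}_\KS(f) = (\id_{\Omega^1_S}\otimes\gamma^{p,q}_f)\circ\alpha\circ(\xi_\KS(f)\otimes\id)\circ\lambda^{-1}$ and then identify the $\Theta_S$-contraction of the $\xi_\KS(f)$-factor with $\KS_f$. The paper organises this slightly differently by first recording the sheaf-level commutative triangle $\gamma^{p,q}_f\circ(\KS_f\otimes\id)=(\text{adjoint of }\gamma^{p,q}_\KS(f))$ in $\Mod(S)$, then applying tensor-hom adjunction and evaluating at $t$, whereas you evaluate at $t$ first and trace a single tangent vector through; the content is the same.
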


\begin{proof}
 We argue in several steps. To begin with, observe that the following diagram commutes in $\Mod(S)$:
 \[
  \xymatrix@C=0pc{
   & \Theta_S \otimes \sH^{p,q}(f) \ar[ld]_{\KS_f \otimes \id_{}} \ar[dr]^{\gamma^{p,q}_\KS(f)} \\ \R^1f_*(\Theta_f) \otimes \sH^{p,q}(f) \ar[rr]_-{\gamma^{p,q}_f} && \sH^{p-1,q+1}(f)
  }
 \]
 Thus, by the naturality of tensor-hom adjunction, the next diagram commutes in $\Mod(S)$, too:
 \[
  \xymatrix@C=0pc{
   & \Theta_S \ar[ld]_{\KS_f} \ar[dr] \\ \R^1f_*(\Theta_f) \ar[rr]_{} && \sHom(\sH^{p,q}(f),\sH^{p-1,q+1}(f))
  }
 \]
 From this we deduce by means of evaluation at $t$ that the diagram
 \[
  \xymatrix@C=0pc{
   & \Theta_S(t) \ar[ld]_{\KS_f(t)} \ar[dr] \\ (\R^1f_*(\Theta_f))(t) \ar[rr]_-{\gamma'^{p,q}_{f,t}} && \Hom((\sH^{p,q}(f))(t),(\sH^{p-1,q+1}(f))(t))
  }
 \]
 commutes in $\Mod(\C)$. Plugging in the inverse of the canonical isomorphism
 $$\Theta_S(t) \to \T_S(t)$$
 and taking into account the definitions of $\A_{S,t}$ and $\KS'_{f,t}$, we deduce the validity of \eqref{e:kscc}.
\end{proof}

\begin{lemma}
 \label{l:grgm}
 Let $n$ and $p$ be integers and $f\colon X\to S$ be a submersive morphism of complex spaces such that $S$ is a complex manifold. Put $\sH:=\sH^n(f)$ and, for any integer $\nu$, $\sh F^\nu := \F^\nu\sH^n(f)$. Denote by
 \[
  \bar\iota \colon \sh F^{p-1}/\sh F^p \to \sH/\sh F^p
 \]
 the morphism in $\Mod(S)$ obtained from $\iota^n_f(p-1)\colon\sh F^{p-1}\to\sH$ by quotienting out $\sh F^p$. Moreover, set:
 \[
  \nabla_\iota := (\id_{\Omega^1_S}\otimes\coker(\iota^n_f(p))) \circ \nabla^n_\GM(f) \circ \iota^n_f(p).
 \]
  Then the following diagram commutes in $\Mod(S)$:
 \begin{equation} \label{e:grgm}
  \xymatrix@C=3pc{
   \sh F^p \ar[r]^-{\nabla_\iota} \ar[d]_{\coker(\iota^n_f(p,p+1))} & \Omega^1_S\otimes\sH/\sh F^p \\
   \sh F^p/\sh F^{p+1} \ar[r]_-{\bar\nabla^{p,n}_\GM(f)} & \Omega^1_S\otimes\sh F^{p-1}/\sh F^p \ar[u]_{\id_{\Omega^1_S}\otimes\bar\iota}
  }
 \end{equation}
\end{lemma}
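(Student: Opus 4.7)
The plan is to reduce the commutativity of \eqref{e:grgm} to an immediate consequence of Proposition \ref{p:gtgrgm} applied with $g = a_S$, combined with the tautological factorisation property defining $\bar\iota$. Recall that by Notation \ref{not:gm}, $\nabla^n_\GM(f) = \nabla^n_\GM(f, a_S)$, so that Proposition \ref{p:gtgrgm} produces a (necessarily unique) pair $(\zeta, \bar\zeta)$ rendering the three-row diagram \eqref{e:gtgrgm} commutative in $\Mod(S / a_S)$; by Notation \ref{not:grgm} the bottom arrow of that diagram is $\bar\zeta = \bar\nabla^{p,n}_\GM(f)$.

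The first key step is to read off from the top square of \eqref{e:gtgrgm} the identity
\[
\nabla^n_\GM(f) \circ \iota^n_f(p) = (\id_{\Omega^1_S} \otimes \iota^n_f(p-1)) \circ \zeta,
\]
and from the bottom square the identity
\[
\bar\nabla^{p,n}_\GM(f) \circ \coker(\iota^n_f(p, p+1)) = (\id_{\Omega^1_S} \otimes \coker(\iota^n_f(p-1, p))) \circ \zeta.
\]

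The second key step is to spell out the defining property of $\bar\iota$: since $\iota^n_f(p) = \iota^n_f(p-1) \circ \iota^n_f(p-1, p)$, the composition $\coker(\iota^n_f(p)) \circ \iota^n_f(p-1)$ annihilates $\sh F^p$ (viewed as a submodule of $\sh F^{p-1}$ via $\iota^n_f(p-1, p)$) and therefore factors uniquely through $\coker(\iota^n_f(p-1, p))\colon \sh F^{p-1} \to \sh F^{p-1}/\sh F^p$, yielding precisely $\bar\iota$. Formally, this is the identity
\[
\bar\iota \circ \coker(\iota^n_f(p-1, p)) = \coker(\iota^n_f(p)) \circ \iota^n_f(p-1).
\]

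Combining the three displayed identities, the composite $(\id_{\Omega^1_S} \otimes \bar\iota) \circ \bar\nabla^{p,n}_\GM(f) \circ \coker(\iota^n_f(p, p+1))$ rewrites, via the bottom square, as $(\id_{\Omega^1_S} \otimes \bar\iota) \circ (\id_{\Omega^1_S} \otimes \coker(\iota^n_f(p-1,p))) \circ \zeta$; pulling the tensor inside and substituting the factorisation of $\bar\iota$ then yields $(\id_{\Omega^1_S} \otimes \coker(\iota^n_f(p))) \circ (\id_{\Omega^1_S} \otimes \iota^n_f(p-1)) \circ \zeta$, which by the top square equals $(\id_{\Omega^1_S} \otimes \coker(\iota^n_f(p))) \circ \nabla^n_\GM(f) \circ \iota^n_f(p) = \nabla_\iota$. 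I do not anticipate any genuine obstacle: the argument is purely formal bookkeeping of inclusions and quotients, with all nontrivial content already packaged into Proposition \ref{p:gtgrgm}. The only minor nuisance is that the squares of \eqref{e:gtgrgm} commute a priori only in $\Mod(S/a_S)$ rather than in $\Mod(S)$, but the equality we need is an equality of sheaf maps, for which this weaker linearity is more than enough; strict $\O_S$-linearity of $\nabla_\iota$ and $\bar\nabla^{p,n}_\GM(f)$ has already been recorded (Proposition \ref{p:nablaiota} and, implicitly, the construction in Proposition \ref{p:grgt0}).
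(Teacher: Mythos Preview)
Your proposal is correct and follows essentially the same approach as the paper: invoke Proposition \ref{p:gtgrgm} (with $g=a_S$) to obtain $(\zeta,\bar\zeta)$ and the two square identities, use the factorisation $\bar\iota\circ\coker(\iota^n_f(p-1,p))=\coker(\iota^n_f(p))\circ\iota^n_f(p-1)$, and chain these together with $\bar\zeta=\bar\nabla^{p,n}_\GM(f)$. The paper runs the chain of equalities starting from $\nabla_\iota$ whereas you start from the other side, but the computation is the same; your remark about $\Mod(S/a_S)$ versus $\Mod(S)$ is a welcome clarification that the paper leaves implicit.
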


\begin{proof}
 By Proposition \ref{p:gtgrgm}, there exists an ordered pair $(\zeta,\bar\zeta)$ of morphisms in $\Mod(S/\C)$ such that the following two identities hold in $\Mod(S/\C)$:
 \begin{gather*}
  \nabla^n_\GM(f) \circ \iota^n_f(p) = (\id_{\Omega^1_S}\otimes\iota^n_f(p-1))\circ\zeta, \\
  \left(\id_{\Omega^1_S}\otimes\coker\left(\iota^n_f(p-1,p)\right)\right) \circ \zeta = \bar\zeta\circ\coker(\iota^n_f(p,p+1)).
 \end{gather*}
 From this we deduce:
 \begin{align*}
  \nabla_\iota & = \left(\id_{\Omega^1_S}\otimes\coker\left(\iota^n_f(p)\right)\right)\circ\nabla^n_\GM(f) \circ\iota^n_f(p) \\
  & = \left(\id_{\Omega^1_S}\otimes\coker\left(\iota^n_f(p)\right)\right)\circ(\id_{\Omega^1_S}\otimes\iota^n_f(p-1))\circ\zeta \\
  & = \left(\id_{\Omega^1_S}\otimes\left(\coker(\iota^n_f(p))\circ\iota^n_f(p-1)\right)\right)\circ\zeta \\
  & = \left(\id_{\Omega^1_S}\otimes\left(\bar\iota\circ\coker(\iota^n_f(p-1,p))\right)\right)\circ\zeta \\
  & = (\id_{\Omega^1_S}\otimes\bar\iota)\circ\left(\id_{\Omega^1_S}\otimes\coker\left(\iota^n_f(p-1,p)\right)\right)\circ\zeta \\
  & = (\id_{\Omega^1_S}\otimes\bar\iota)\circ\bar\zeta\circ\coker(\iota^n_f(p,p+1)).
 \end{align*}
 Taking into account that $\bar\nabla^{p,n}_\GM(f)=\bar\zeta$ by Notation \ref{not:grgm}, we are finished.
\end{proof}

\begin{proposition}
 \label{p:pm'}
 Let $f\colon X\to S$ be a submersive morphism of complex spaces such that $S$ is a simply connected complex manifold. Let $n$ and $p$ be integers and $t\in S$. In addition, let $\psi^p$ and $\psi^{p-1}$ be such that the following diagram commutes in $\Mod(S)$ for $\nu=p,p-1$:
 \begin{equation} \label{e:froedegen}
  \xymatrix{
   \R^n\bar{f}_*(\sigma^{\geq \nu}\bar\Omega^\kdot_{f}) \ar[r]^-{\lambda^n_{f}(\nu)} \ar[d]_{\R^n\bar{f}_*(j^{\leq \nu}(\sigma^{\geq \nu}\bar\Omega^\kdot_f))} & \F^\nu\sH^n(f) \ar[d]^{\coker(\iota^n_f(\nu,\nu+1))} \\
   \R^n\bar{f}_*(\sigma^{= \nu}\bar\Omega^\kdot_{f}) & \F^\nu\sH^n(f)/\F^{\nu+1}\sH^n(f) \ar@{.>}[l]^-{\psi^\nu}
  }
 \end{equation}
 Let $\omega^{p-1}$ be a left inverse of $\psi^{p-1}$ in $\Mod(S)$. Assume that $\sH^n(f)$ is a locally finite free module on $S$ and $\F^p\sH^n(f)$ is a vector subbundle of $\sH^n(f)$ on $S$. Put
 \begin{align*}
  \alpha' & := \kappa^n_{f}(\sigma^{=p}\Omega^\kdot_{f}) \circ \psi^p \circ \coker(\iota^n_{f}(p,p+1)), \\
  \beta' & := (\iota^n_{f}(p-1)/\F^p\sH^n(f)) \circ \omega^{p-1} \circ (\kappa^n_{f}(\sigma^{=p-1}\Omega^\kdot_{f}))^{-1}.
 \end{align*}
 Moreover, set
 $$F'(t) := \im((\iota^n_f(p))(t) \colon (\F^p\sH^n(f))(t) \to (\sH^n(f))(t))$$
 and write
 \begin{align*}
  \sigma & \colon (\F^p\sH^n(f))(t) \to F'(t), \\
  \tau & \colon (\sH^n(f))(t)/F'(t) \to (\sH^n(f)/\F^p\sH^n(f))(t)
 \end{align*}
 for the evident morphisms. Then $\sigma$ and $\tau$ are isomorphisms in $\Mod(\C)$ and the following diagram commutes in $\Mod(\C)$:
 \begin{equation} \label{e:pm'}
  \xymatrix{
   \T_S(t) \ar[r]^-{\KS'_{f,t}} \ar[dd]_{\T_t(\cP'^{p,n}_t(f))} & (\R^1f_*(\Theta_f))(t) \ar[d]^{\gamma'^{p,n-p}_{f,t}} \\
   & \Hom((\sH^{p,n-p}(f))(t),(\sH^{p-1,n-p+1}(f))(t)) \ar[d]^{\Hom(\alpha'(t)\circ\sigma^{-1},\tau^{-1}\circ\beta'(t))} \\
   \T_{\Gr((\sH^n(f))(t))}(F'(t)) \ar[r] \ar@{}@<-1ex>[r]_-{\theta((\sH^n(f))(t),F'(t))} & \Hom(F'(t),(\sH^n(f))(t)/F'(t)) 
  }
 \end{equation}
\end{proposition}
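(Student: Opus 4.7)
The claim that $\sigma$ and $\tau$ are isomorphisms is proved exactly as in Lemma \ref{l:pm0}: injectivity of $\sigma$ uses that $\F^p\sH^n(f)$ is a vector subbundle, its surjectivity is the definition of $F'(t)$, and $\tau$ is an isomorphism because both $(\sH^n(f))(t) \to (\sH^n(f))(t)/F'(t)$ and $(\sH^n(f))(t) \to (\sH^n(f)/\F^p\sH^n(f))(t)$ are cokernels in $\Mod(\C)$ of the same morphism $(\iota^n_f(p))(t)$, by the right exactness of the evaluation functor.

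For the commutativity of \eqref{e:pm'}, I will combine Lemma \ref{l:pm0}, Lemma \ref{l:kscc}, and the naturality of $\A_{S,t}$ (Proposition \ref{p:A} / Remark \ref{r:A}). By Proposition \ref{p:gmconn} \ref{p:gmconn-f}), $\nabla := \nabla^n_\GM(f)$ is a flat $S$-connection on $\sH := \sH^n(f)$, so Lemma \ref{l:pm0} applies to $(\sH,\nabla)$ and to the vector subbundle $\sh F := \F^p\sH^n(f)$. Setting $\nabla_\iota := (\id_{\Omega^1_S}\otimes\coker(\iota^n_f(p)))\circ\nabla\circ\iota^n_f(p)$, it yields
\[
  \theta((\sH^n(f))(t),F'(t))\circ\T_t(\cP'^{p,n}_t(f)) = \Hom(\sigma^{-1},\tau^{-1})\circ\A_{S,t}(\sh F,\sH/\sh F)(\nabla_\iota),
\]
while Lemma \ref{l:kscc} gives $\gamma'^{p,n-p}_{f,t}\circ\KS'_{f,t} = \A_{S,t}(\sH^{p,n-p}(f),\sH^{p-1,n-p+1}(f))(\gamma^{p,n-p}_\KS(f))$. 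Factoring $\Hom(\alpha'(t)\circ\sigma^{-1},\tau^{-1}\circ\beta'(t)) = \Hom(\sigma^{-1},\tau^{-1})\circ\Hom(\alpha'(t),\beta'(t))$, the commutativity of \eqref{e:pm'} thus reduces, via the naturality of $\A_{S,t}$ applied to the morphism $(\alpha',\beta')\colon (\sH^{p,n-p}(f),\sH^{p-1,n-p+1}(f))\to(\sh F,\sH/\sh F)$ in $\Mod(S)^\op\times\Mod(S)$, to the sheaf-level identity
\begin{equation} \label{e:pm'-plan}
  (\id_{\Omega^1_S}\otimes\beta')\circ\gamma^{p,n-p}_\KS(f)\circ\alpha' = \nabla_\iota
\end{equation}
in $\Mod(S)$.

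To establish \eqref{e:pm'-plan}, I substitute the definition of $\alpha'$ and apply Theorem \ref{t:grgmcc} to the composable pair $(f,a_S)$, which is legitimate since $a_S$ is submersive by the smoothness of $S$; this yields
\[
  \gamma^{p,n-p}_\KS(f)\circ\alpha' = \bigl(\id_{\Omega^1_S}\otimes(\kappa^n_f(\sigma^{=p-1}\Omega^\kdot_f)\circ\psi^{p-1})\bigr)\circ\bar\nabla^{p,n}_\GM(f)\circ\coker(\iota^n_f(p,p+1)).
\]
Composing on the left with $\id_{\Omega^1_S}\otimes\beta'$ and unwinding the definition of $\beta'$ collapses the right tensor factor to
\[
  \beta'\circ\kappa^n_f(\sigma^{=p-1}\Omega^\kdot_f)\circ\psi^{p-1} = (\iota^n_f(p-1)/\F^p\sH^n(f))\circ\omega^{p-1}\circ\psi^{p-1} = \iota^n_f(p-1)/\F^p\sH^n(f),
\]
the last equality being precisely the hypothesis that $\omega^{p-1}$ is a left inverse of $\psi^{p-1}$. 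Plugging this back and invoking Lemma \ref{l:grgm}, which identifies $(\id_{\Omega^1_S}\otimes(\iota^n_f(p-1)/\F^p\sH^n(f)))\circ\bar\nabla^{p,n}_\GM(f)\circ\coker(\iota^n_f(p,p+1))$ with $\nabla_\iota$, establishes \eqref{e:pm'-plan}.

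The main obstacle will be the bookkeeping of the several interlocking natural transformations and the precise set-up of the naturality step; conceptually, the key observation is that the left-inverse hypothesis on $\omega^{p-1}$ is exactly what is needed to cancel the auxiliary splitting $\psi^{p-1}$ and let the inclusion $\iota^n_f(p-1)/\F^p\sH^n(f)$ reappear, so that Lemma \ref{l:grgm} can close the argument.
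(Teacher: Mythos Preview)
Your proposal is correct and uses the same ingredients as the paper: Lemma \ref{l:pm0}, Lemma \ref{l:grgm}, Theorem \ref{t:grgmcc}, Lemma \ref{l:kscc}, and the naturality of $\A_{S,t}$ from Remark \ref{r:A}. The only organizational difference is that you first establish the sheaf-level identity \eqref{e:pm'-plan} and then invoke naturality of $\A_{S,t}$ once for the pair $(\alpha',\beta')$, whereas the paper applies naturality twice---first for $(\coker(\iota^n_f(p,p+1)),\,\iota^n_f(p-1)/\F^p\sH^n(f))$ via Lemma \ref{l:grgm}, then for $(\kappa^n_f(\sigma^{=p}\Omega^\kdot_f)\circ\psi^p,\,\omega^{p-1}\circ(\kappa^n_f(\sigma^{=p-1}\Omega^\kdot_f))^{-1})$ via Theorem \ref{t:grgmcc}---passing through the intermediate object $\A_{S,t}(\F^p/\F^{p+1},\F^{p-1}/\F^p)(\bar\nabla^{p,n}_\GM(f))$. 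Both routes amount to the same computation; yours is slightly more streamlined.
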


\begin{proof}
 Introduce the following notational shorthands:
 \begin{align*}
  \sH & := \sH^n(f), & \theta & := \theta(\sH(t),F'(t)), \\ \sh F^* & := \F^*\sH^n(f).
 \end{align*}
 Furthermore, set:
 \[
  \nabla_\iota := (\id_{\Omega^1_S}\otimes\coker(\iota^n_f(p))) \circ \nabla^n_\GM(f) \circ \iota^n_f(p).
 \]
 Then by Lemma \ref{l:pm0}, $\sigma$ and $\tau$ are isomorphisms in $\Mod(\C)$ and the following diagram commutes in $\Mod(\C)$:
 \[
  \xymatrix@C=5pc{
   \T_S(t) \ar[r]^-{\A_{S,t}(\sh F^p,\sH/\sh F^p)(\nabla_\iota)} \ar[d]_{\T_t(\cP'^{p,n}_t(f))} & \Hom(\sh F^p(t),(\sH/\sh F^p)(t)) \ar[d]^{\Hom(\sigma^{-1},\tau^{-1})} \\
   \T_{\Gr(\sH(t))}(F'(t)) \ar[r]_-{\theta} & \Hom(F'(t),\sH(t)/F'(t))
  }
 \]
 By Lemma \ref{l:grgm}, setting
 $$\bar\iota := \iota^n_f(p-1)/\sh F^p \colon \sh F^{p-1}/\sh F^p \to \sH/\sh F^p,$$
 the diagram in \eqref{e:grgm} commutes in $\Mod(S)$. Therefore, with
 $$\bar{\bar\iota} := \coker(\iota^n_f(p,p+1)) \colon \sh F^p \to \sh F^p/\sh F^{p+1}$$
 we have according to Remark \ref{r:A}:
 $$\A_{S,t}(\sh F^p,\sH/\sh F^p)(\nabla_\iota) = \Hom(\bar{\bar\iota}(t),\bar\iota(t)) \circ \A_{S,t}(\sh F^p/\sh F^{p+1},\sh F^{p-1}/\sh F^p)(\bar\nabla^{p,n}_\GM(f)).$$
 Hence the following diagram commutes in $\Mod(\C)$:
 \[
  \xymatrix@C=5pc{
   \T_S(t) \ar[r] \ar@{}@<1ex>[r]^-{\A_{S,t}(\sh F^p/\sh F^{p+1},\sh F^{p-1}/\sh F^p)(\bar\nabla^{p,n}_\GM(f))} \ar[d]_{\T_t(\cP'^{p,n}_t(f))} & \Hom((\sh F^p/\sh F^{p+1})(t),(\sh F^{p-1}/\sh F^p)(t)) \ar[d]^{\Hom(\bar{\bar\iota}(t) \circ \sigma^{-1},\tau^{-1} \circ \bar\iota(t))} \\
   \T_{\Gr(\sH(t))}(F'(t)) \ar[r]_-{\theta} & \Hom(F'(t),(\sH^n(f))(t)/F'(t))
  }
 \]
 By Theorem \ref{t:grgmcc}, the following diagram commutes in $\Mod(S)$:
 \[
  \xymatrix@C=5pc{
   \sh F^p/\sh F^{p+1} \ar[r]^-{\bar\nabla^{p,n}_\GM(f)} \ar[d]_{\kappa^n_f(\sigma^{=p}\Omega^\kdot_f)\circ\psi^p} & \Omega^1_S\otimes\sh F^{p-1}/\sh F^p \ar[d]^{\id_{\Omega^1_S}\otimes(\kappa^n_f(\sigma^{=p-1}\Omega^\kdot_f)\circ\psi^{p-1})} \\
   \sH^{p,n-p} \ar[r]_-{\gamma^{p,n-p}_\KS(f)} & \Omega^1_S\otimes\sH^{p-1,n-p+1}
  }
 \]
 Thus making use of Remark \ref{r:A} again, we obtain:
 \begin{align*}
  & \A_{S,t}(\sh F^p/\sh F^{p+1},\sh F^{p-1}/\sh F^p)(\bar\nabla^{p,n}_\GM(f)) \\ & \begin{aligned} = \Hom\left((\kappa^n_f(\sigma^{=p}\Omega^\kdot_f) \circ \psi^p)(t),(\omega^{p-1} \circ (\kappa^n_f(\sigma^{=p-1}\Omega^\kdot_f))^{-1})(t)\right) \\ \circ \A_{S,t}(\sH^{p,n-p},\sH^{p-1,n-p+1})(\gamma^{p,n-p}_\KS(f)).\end{aligned}
 \end{align*}
 Hence, this next diagram commutes in $\Mod(\C)$:
 \[
  \xymatrix@C=5pc{
   \T_S(t) \ar[r] \ar@{}@<1ex>[r]^-{\A_{S,t}(\sH^{p,n-p},\sH^{p-1,n-p+1})(\gamma^{p,n-p}_\KS(f))} \ar[d]_{\T_t(\cP'^{p,n}_t(f))} & \Hom(\sH^{p,n-p}(t),\sH^{p-1,n-p+1}(t)) \ar[d]^{\Hom(\alpha(t)\circ\sigma^{-1},\tau^{-1}\circ\beta(t))} \\
   \T_{\Gr(\sH(t))}(F'(t)) \ar[r]_-{\theta} & \Hom(F'(t),(\sH^n(f))(t)/F'(t))
  }
 \]
 Employing Lemma \ref{l:kscc}, we infer the commutativity of \eqref{e:pm'}.
\end{proof}

The next result is a variant of the previous Proposition \ref{p:pm'} incorporating base changes.

\begin{theorem}
 \label{t:pm}
 Let $f\colon X\to S$ be a submersive morphism of complex spaces such that $S$ is a simply connected complex manifold. Let $n$ and $p$ be integers and $t\in S$. Let $\psi^p_{X_t}$ and $\psi^{p-1}_{X_t}$ be such that the following diagram commutes in $\Mod(\C)$ for $\nu=p,p-1$:
 \begin{equation} \label{e:froedegenfiber}
  \xymatrix{
   \R^n\bar{a_{X_t}}_*(\sigma^{\geq \nu}\bar\Omega^\kdot_{X_t}) \ar[r]^-{\lambda^n_{X_t}(\nu)} \ar[d]_{\R^n\bar{a_{X_t}}_*(j^{\leq\nu}(\sigma^{\geq\nu}\bar\Omega^\kdot_{X_t}))} & \F^\nu\sH^n(X_t) \ar[d]^{\coker(\iota^n_{X_t}(\nu,\nu+1))} \\
   \R^n\bar{a_{X_t}}_*(\sigma^{= \nu}\bar\Omega^\kdot_{X_t}) & \F^\nu\sH^n(X_t)/\F^{\nu+1}\sH^n(X_t) \ar@{.>}[l]^-{\psi^\nu_{X_t}}
  }
 \end{equation}
 Let $\omega^{p-1}_{X_t}$ be a left inverse of $\psi^{p-1}_{X_t}$ in $\Mod(\C)$. Assume that $\sH^n(f)$ is a locally finite free module on $S$, $\F^p\sH^n(f)$ is a vector subbundle of $\sH^n(f)$ on $S$, and the base change morphisms
 \begin{align*}
  \phi^n_{f,s} & \colon (\sH^n(f))(s) \to \sH^n(X_s), \\
  \phi^{p,n}_{f,s} & \colon (\F^p\sH^n(f))(s) \to \F^p\sH^n(X_s)
 \end{align*}
 are isomorphisms in $\Mod(\C)$ for all $s\in S$. Assume that there exist $\psi^p$, $\psi^{p-1}$, and $\omega^{p-1}$ such that firstly, the diagram in \eqref{e:froedegen} commutes in $\Mod(S)$ for $\nu=p,p-1$ and secondly, $\omega^{p-1}$ is a left inverse of $\psi^{p-1}$ in $\Mod(S)$. Moreover, assume that the Hodge base change map
 \[
  \beta^{p,n-p}_{f,t} \colon (\sH^{p,n-p}(f))(t) \to \sH^{p,n-p}(X_t)
 \]
 is an isomorphism in $\Mod(\C)$. Then, setting
 \begin{align*}
  \alpha & := \kappa^n_{X_t}(\sigma^{=p}\Omega^\kdot_{X_t}) \circ \psi^p_{X_t} \circ \coker(\iota^n_{X_t}(p,p+1)), \\
  \beta & := (\iota^n_{X_t}(p-1)/\F^p\sH^n(X_t)) \circ \omega^{p-1}_{X_t} \circ (\kappa^n_{X_t}(\sigma^{=p-1}\Omega^\kdot_{X_t}))^{-1},
 \end{align*}
 the following diagram commutes in $\Mod(\C)$:
 \begin{equation} \label{e:pm}
  \xymatrix{
   \T_S(t) \ar[r]^{\KS_{f,t}} \ar[dd]_{\T_t(\cP^{p,n}_t(f))} & \H^1(X_t,\Theta_{X_t}) \ar[d]^{\gamma^{p,n-p}_{X_t}} \\
   & \Hom(\sH^{p,n-p}(X_t),\sH^{p-1,n-p+1}(X_t)) \ar[d]^{\Hom(\alpha,\beta)} \\
   \T_{\Gr(\sH^n(X_t))}(\F^p\sH^n(X_t)) \ar[r] \ar@{}@<-1ex>[r]_{\theta(\sH^n(X_t),\F^p\sH^n(X_t))} & \Hom(\F^p\sH^n(X_t),\sH^n(X_t)/\F^p\sH^n(X_t)) 
  }
 \end{equation}
\end{theorem}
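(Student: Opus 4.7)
The plan is to derive Theorem \ref{t:pm} from Proposition \ref{p:pm'} by transporting its commutative diagram along the four base change isomorphisms $\phi^n_{f,t}$, $\phi^{p,n}_{f,t}$, $\beta^1_{f,t}$, and $\beta^{p,n-p}_{f,t}$. By construction of $\cP^{p,n}_t(f)$ in Notation \ref{not:pm} \ref{not:pm-bc}), the period mapping with base change factors as $\cP^{p,n}_t(f) = \Gr(\phi^n_{f,t}) \circ \cP'^{p,n}_t(f)$ as morphisms of complex spaces; consequently, $\T_t(\cP^{p,n}_t(f))$ is the composition of $\T_t(\cP'^{p,n}_t(f))$ with $\T_{F'(t)}(\Gr(\phi^n_{f,t}))$. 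Moreover, by the very definition of $\KS_{f,t}$ in Notation \ref{not:ksm}, we have $\KS_{f,t} = \beta^1_{f,t} \circ \KS'_{f,t}$, which handles the top edge of the diagram.

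Next I would identify, via base change, the fiberwise cup-contraction with the one at the total-space level. The claim needed is the commutativity of
\[
\xymatrix@C=3pc{
 (\R^1f_*(\Theta_f))(t) \ar[r]^-{\gamma'^{p,n-p}_{f,t}} \ar[d]_{\beta^1_{f,t}} & \Hom((\sH^{p,n-p}(f))(t),(\sH^{p-1,n-p+1}(f))(t)) \ar[d]^{\Hom((\beta^{p,n-p}_{f,t})^{-1},\beta^{p-1,n-p+1}_{f,t})} \\
 \H^1(X_t,\Theta_{X_t}) \ar[r]_-{\gamma^{p,n-p}_{X_t}} & \Hom(\sH^{p,n-p}(X_t),\sH^{p-1,n-p+1}(X_t))
}
\]
This amounts to the naturality of the cup product and of the contraction morphism $\gamma^p(\Omega^1)$ with respect to the base change functor; concretely, the cup products $\cupp^{1,q}_f$ and $\cupp^{1,q}_{a_{X_t}}$ are compatible through the Künneth-style base change map in degree one, and the contraction $\gamma^p$ being defined by an evaluation/wedge recipe commutes with pullback to the fiber. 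Analogously, the $\alpha$ and $\beta$ on the fiber identify with $\alpha'(t) \circ \sigma^{-1}$ and $\tau^{-1} \circ \beta'(t)$ composed with $(\beta^{p,n-p}_{f,t})^{-1}$ and $\beta^{p-1,n-p+1}_{f,t}$, respectively, provided one checks that the $\psi^p_{X_t}, \psi^{p-1}_{X_t}$ of \eqref{e:froedegenfiber} correspond to the fiberwise restriction at $t$ of any choice $\psi^p, \psi^{p-1}$ satisfying \eqref{e:froedegen}. Since both are uniquely characterized by the same universal property (being the inverse of an isomorphism induced by $\lambda^n$ and $\R^n\bar{\bullet}_*(j^{\leq \nu}(\sigma^{\geq \nu}\bar\Omega^\kdot))$), and since the stupid filtration and the functors $\lambda^n, \iota^n$ commute with fiber base change, this follows from the naturality of the base change map $\kappa^n$ applied to $\Omega^\kdot_f$ and its subcomplexes $\sigma^{\geq \nu}\Omega^\kdot_f, \sigma^{=\nu}\Omega^\kdot_f$.

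Finally, the isomorphism $\theta(V, F)$ of Notation \ref{not:theta} is natural in the pair $(V, F)$ with respect to linear isomorphisms, so $\Gr(\phi^n_{f,t})$ intertwines $\theta((\sH^n(f))(t), F'(t))$ with $\theta(\sH^n(X_t), \F^p\sH^n(X_t))$, identifying $\sigma$ with $\phi^{p,n}_{f,t}$ and $\tau$ with the induced quotient map. Putting all these identifications together, the commutativity of \eqref{e:pm'} immediately yields the commutativity of \eqref{e:pm}.

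The main obstacle I anticipate is the rigorous verification of the cup-contraction naturality square above, which, although conceptually transparent, requires a careful diagram chase through the construction of the cup product in Construction \ref{con:cupp} (via Godement resolutions) combined with the various base change morphisms. In particular, one must trace how the Godement resolution of $\Omega^p_f \otimes \Theta_f$ restricts to that of $\Omega^p_{X_t} \otimes \Theta_{X_t}$ and how the projection and contraction morphisms entering the definitions of $\gamma^{p,q}_\KS$ and $\gamma^{p,q}_f$ behave under this restriction. All the necessary building blocks are either in this chapter (Proposition \ref{p:cupp}, Proposition \ref{p:projmoriso}, Construction \ref{con:projmor}) or are standard compatibilities of base change maps with cup products, so the exercise is purely one of assembly, but it is the only step not essentially packaged by Proposition \ref{p:pm'}.
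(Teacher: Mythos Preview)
Your proposal follows essentially the same approach as the paper's proof: reduce to Proposition \ref{p:pm'}, then transport the resulting diagram \eqref{e:pm'} to the fiber via the base change isomorphisms $\phi^n_{f,t}$, $\phi^{p,n}_{f,t}$, $\beta^1_{f,t}$, $\beta^{p,n-p}_{f,t}$, using the factorization $\cP^{p,n}_t(f) = \Gr(\phi^n_{f,t}) \circ \cP'^{p,n}_t(f)$, the identity $\KS_{f,t} = \beta^1_{f,t} \circ \KS'_{f,t}$, the naturality of $\theta(-,-)$, and the compatibility of cup-and-contraction with base change. The paper's proof does exactly this, organizing the verification into the sub-diagrams \eqref{e:pm-1}--\eqref{e:pm-6} and then chaining them; your identification of the cup-contraction square as the one nontrivial compatibility to check matches the paper's \eqref{e:pm-5}.
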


\begin{proof}
 We set
 \[
  \phi := \phi^n_{f,t} \colon (\sH^n(f))(t) \to \sH^n(X_t).
 \]
 Then by Notation \ref{not:pm}, the following diagram commutes in the category of complex spaces:
 \[
  \xymatrix{
   & S \ar[ld]_{\cP'^{p,n}_t(f)} \ar[dr]^{\cP^{p,n}_t(f)} \\
   \Gr((\sH^n(f))(t)) \ar[rr]_{\Gr(\phi)} && \Gr(\sH^n(X_t))
  }
 \]
 In consequence, letting
 \[
  F'(t) := \im((\iota^n_f(p))(t) \colon (\F^p\sH^n(f))(t) \to (\sH^n(f))(t)),
 \]
 the following diagram commutes in $\Mod(\C)$:
 \begin{equation} \label{e:pm-1}
  \xymatrix@C=1pc{
   & \T_t(S) \ar[ld]_{\T_t(\cP'^{p,n}_t(f))} \ar[dr]^{\T_t(\cP^{p,n}_t(f))} \\
   \T_{\Gr((\sH^n(f))(t))}(F'(t)) \ar[rr]_-{\T_{F'(t)}(\Gr(\phi))} && \T_{\Gr(\sH^n(X_t))}(\F^p\sH^n(X_t))
  }
 \end{equation}
 Note that
 $$(\Gr(\phi))(F'(t)) = \phi[F'(t)] = \F^p\sH^n(X_t)$$
 due to the commutativity of
 \[
  \xysquare{(\F^p\sH^n(f))(t)}{\F^p\sH^n(X_t)}{(\sH^n(f))(t)}{\sH^n(X_t)}{\phi^{p,n}_{f,t}}{(\iota^n_f(p))(t)}{\iota^n_{X_t}(p)}{\phi}
 \]
 in $\Mod(\C)$ and the fact that $\phi^{p,n}_{f,t}$ is an isomorphism. So, when we denote by
 $$\bar\phi \colon (\sH^n(f))(t)/F'(t) \to \sH^n(X_t)/\F^p\sH^n(X_t)$$
 the morphism which is induced by $\phi$ the obvious way, the following diagram commutes in $\Mod(\C)$ by means of the the naturality of $\theta(-,-)$, \cf Notation \ref{not:theta}:
 \begin{equation} \label{e:pm-2}
  \xymatrix{
   \T_{\Gr((\sH^n(f))(t))}(F'(t)) \ar[r]^{\T_{F'(t)}(\Gr(\phi))} \ar[d]_{\theta((\sH^n(f))(t),F'(t))} & \T_{\Gr(\sH^n(X_t))}(\F^p\sH^n(X_t)) \ar[d]^{\theta(\sH^n(X_t),\F^p\sH^n(X_t))} \\ \Hom(F'(t),(\sH^n(f))(t)/F'(t)) \ar[r] \ar@{}@<-1ex>[r]_{\Hom((\phi|F'(t))^{-1},\bar\phi)} & \Hom(\F^p\sH^n(X_t),\sH^n(X_t)/\F^p\sH^n(X_t))
  }
 \end{equation}
 Define
 \begin{align*}
  \alpha' & := \kappa^n_{f}(\sigma^{=p}\Omega^\kdot_{f}) \circ \psi^p \circ \coker(\iota^n_{f}(p,p+1)), \\
  \beta' & := (\iota^n_{f}(p-1)/\F^p\sH^n(f)) \circ \omega^{p-1} \circ (\kappa^n_{f}(\sigma^{=p-1}\Omega^\kdot_{f}))^{-1}
 \end{align*}
 and introduce the evident morphisms:
 \begin{align*}
  & \sigma \colon (\F^p\sH^n(f))(t) \to F'(t), \\
  & \tau \colon (\sH^n(f))(t)/F'(t) \to (\sH^n(f)/\F^p\sH^n(f))(t).
 \end{align*}
 Then by Proposition \ref{p:pm'}, $\sigma$ and $\tau$ are isomorphisms in $\Mod(\C)$ and the diagram in \eqref{e:pm'} commutes in $\Mod(\C)$. Let
 $$\bar\phi_1 \colon (\sH^n(f)/\F^p\sH^n(f))(t) \to \sH^n(X_t)/\F^p\sH^n(X_t)$$
 be the morphism which is naturally induced by $\phi$---similar to $\bar\phi$ above---using the fact that
 $$\left(\coker(\iota^n_f(p))\right)(t) \colon (\sH^n(f))(t) \to (\sH^n(f)/\F^p\sH^n(f))(t)$$
 is a cokernel for $(\iota^n_f(p))(t)$ in $\Mod(\C)$ as the evaluation functor ``$-(t)$'' is a right exact functor from $\Mod(S)$ to $\Mod(\C)$. Then it is a straightforward matter to verify these identities:
 \begin{equation}
  \label{e:pm-3}
  \begin{split}
   \phi^{p,n}_{f,t} & = (\phi|F'(t)) \circ \sigma, \\ \bar\phi & = \bar\phi_1 \circ \tau.
  \end{split}
 \end{equation} 
 Moreover, comparing $\alpha'$ and $\alpha$ and setting $q := n-p$, we see that this next diagram commutes in $\Mod(\C)$:
 \begin{equation}
  \label{e:pm-4a}
  \xysquare{(\F^p\sH^n(f))(t)}{\F^p\sH^n(X_t)}{(\sH^{p,q}(f))(t)}{\sH^{p,q}(X_t)}{\phi^{p,n}_{f,t}}{\alpha'(t)}{\alpha}{\beta^{p,q}_{f,t}}
 \end{equation}
 Similarly, comparing $\beta'$ and $\beta$, we see that
 \begin{equation}
  \label{e:pm-4b}
  \xysquare{(\sH^{p-1,q+1}(f))(t)}{\sH^{p-1,q+1}(X_t)}{(\sH^n(f)/\F^p\sH^n(f))(t)}{\sH^n(X_t)/\F^p\sH^n(X_t)}{\beta^{p-1,q+1}_{f,t}}{\beta'(t)}{\beta}{\bar\phi_1}
 \end{equation}
 commutes in $\Mod(\C)$. Let us write
 $$\beta^1_{f,t} \colon (\R^1f_*(\Theta_f))(t) \to \H^1(X_t,\Theta_{X_t})$$
 for the evident base change morphism. Then, since the cup product morphisms $\cupp^{1,q}$ as well as the contraction morphisms $\gamma^p$ are compatible with base change, the following diagram commutes in $\Mod(\C)$:
 \[
  \xymatrix{
   (\R^1f_*(\Theta_f) \otimes_S \sH^{p,q}(f))(t) \ar[r]^-{\gamma^{p,q}_f(t)} \ar[d]_{\can} & (\sH^{p-1,q+1}(f))(t) \ar[dd]^{\beta^{p-1,q+1}_{f,t}} \\
   (\R^1f_*(\Theta_f))(t) \otimes_\C (\sH^{p,q}(f))(t) \ar[d]_{\beta^1_{f,t}\otimes \beta^{p,q}_{f,t}} \\
   \H^1(X_t,\Theta_{X_t}) \otimes_\C \sH^{p,q}(X_t) \ar[r]_-{\gamma^{p,q}_{X_t}} & \sH^{p-1,q+1}(X_t)
  }
 \]
 Therefore, given that $\beta^{p,q}_{f,t}$ is an isomorphism by assumption, the following diagram commutes in $\Mod(\C)$ also:
 \begin{equation}
  \label{e:pm-5}
  \xymatrix{
   (\R^1f_*(\Theta_f))(t) \ar[r]^{\beta^1_{f,t}} \ar[d]_{\gamma'^{p,q}_{f,t}} & \H^1(X_t,\Theta_{X_t}) \ar[d]^{\gamma^{p,q}_{X_t}} \\ \Hom((\sH^{p,q}(f))(t),(\sH^{p-1,q+1}(f))(t)) \ar[r] \ar@{}@<-1ex>[r]_{\Hom((\beta^{p,q}_{f,t})^{-1},\beta^{p-1,q+1}_{f,t})} & \Hom(\sH^{p,q}(X_t),\sH^{p-1,q+1}(X_t))
  }
 \end{equation}
 According to the definition of the Kodaira-Spencer map $\KS_{f,t}$ in Notation \ref{not:ksm}, the following diagram commutes in $\Mod(\C)$:
 \begin{equation}
  \label{e:pm-6}
  \xymatrix@C=1pc{
   & \T_t(S) \ar[ld]_{\KS'_{f,t}} \ar[dr]^{\KS_{f,t}} \\
   (\R^1f_*(\Theta_f))(t) \ar[rr]_-{\beta^1_{f,t}} && \H^1(X_t,\Theta_{X_t})
  }
 \end{equation}
 Taking all our previous considerations into account, we obtain:
 \begin{align*}
  & \theta(\sH^n(X_t),\F^p\sH^n(X_t)) \circ \T_t(\cP^{p,n}_t(f)) \\
  & \overset{\eqref{e:pm-1}}{=} \theta(\sH^n(X_t),\F^p\sH^n(X_t)) \circ \T_{F'(t)}(\Gr(\phi)) \circ \T_t(\cP'^{p,n}_t(f)) \\
  & \overset{\eqref{e:pm-2}}{=} \Hom((\phi|F'(t))^{-1},\bar\phi) \circ \theta((\sH^n(f))(t),F'(t)) \circ \T_t(\cP'^{p,n}_t(f)) \\
  & \overset{\eqref{e:pm'}}{=} \Hom((\phi|F'(t))^{-1},\bar\phi) \circ \Hom(\alpha'(t)\circ\sigma^{-1},\tau^{-1}\circ\beta'(t)) \circ \gamma'^{p,n-p}_{f,t} \circ \KS'_{f,t} \\
  & \overset{\eqref{e:pm-3}}{=} \Hom(\alpha'(t) \circ (\phi^{p,n}_{f,t})^{-1},\bar\phi_1 \circ \beta'(t)) \circ \gamma'^{p,n-p}_{f,t} \circ \KS'_{f,t} \\
  & \overset{\substack{\eqref{e:pm-4a}\\ \eqref{e:pm-4b}}}{=} \Hom(\alpha,\beta) \circ \Hom((\beta^{p,q}_{f,t})^{-1},\beta^{p-1,q+1}_{f,t}) \circ \gamma'^{p,q}_{f,t} \circ \KS'_{f,t} \\
  & \overset{\eqref{e:pm-5}}{=} \Hom(\alpha,\beta) \circ \gamma^{p,q}_{X_t} \circ \beta^1_{f,t} \circ \KS'_{f,t} \\
  & \overset{\eqref{e:pm-6}}{=} \Hom(\alpha,\beta) \circ \gamma^{p,n-p}_{X_t} \circ \KS_{f,t},
 \end{align*}
 which implies precisely the commutativity of \eqref{e:pm}.
\end{proof}

When it comes to applying Theorem \ref{t:pm}, one is faced with the problem of deciding whether there exist morphisms $\psi^\nu$ (\resp $\psi^\nu_{X_t}$) rendering commutative in $\Mod(S)$ (\resp $\Mod(\C)$) the diagram in \eqref{e:froedegen} (\resp \eqref{e:froedegenfiber}). Let us formulate two (hopefully) tangible criteria.

\begin{proposition}
 \label{p:froedegen}
 Let $n$ and $\nu$ be integers and $f\colon X\to S$ an arbitrary morphism of complex spaces. Denote by $E$ the Frölicher spectral sequence of $f$.
 \begin{enumerate}
  \item \label{p:froedegen-behind} The following are equivalent:
   \begin{enumerate}
    \item $E$ degenerates from behind in $(\nu,n-\nu)$ at sheet $1$ in $\Mod(S)$;
    \item there exists $\psi^\nu$ rendering commutative in $\Mod(S)$ the diagram in \eqref{e:froedegen}.
   \end{enumerate}
  \item \label{p:froedegen-total} The following are equivalent:
   \begin{enumerate}
    \item $E$ degenerates in $(\nu,n-\nu)$ at sheet $1$ in $\Mod(S)$;
    \item there exists an isomorphism $\psi^\nu$ rendering commutative in $\Mod(S)$ the diagram in \eqref{e:froedegen}.
   \end{enumerate}
 \end{enumerate}
\end{proposition}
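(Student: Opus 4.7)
The plan is to characterize existence and bijectivity of $\psi^\nu$ by a diagram chase in the long exact sequences governing the Frölicher spectral sequence of $f$, and then match the outcome with the notion of degeneration (from behind) at sheet $1$ in $(\nu,n-\nu)$.

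First I would introduce the shorthands $A := \R^n\bar f_*(\sigma^{\geq\nu+1}\bar\Omega^\kdot_f)$, $B := \R^n\bar f_*(\sigma^{\geq\nu}\bar\Omega^\kdot_f)$, $C := \R^n\bar f_*(\sigma^{=\nu}\bar\Omega^\kdot_f)$, and $D := \sH^n(f)$. Applying $\R\bar f_*$ to the short exact triple $\sigma^{\geq\nu+1}\bar\Omega^\kdot_f \to \sigma^{\geq\nu}\bar\Omega^\kdot_f \to \sigma^{=\nu}\bar\Omega^\kdot_f$ of complexes yields a long exact sequence with maps $\phi\colon A \to B$, $\psi\colon B \to C$, and connecting morphism $\delta\colon C \to \R^{n+1}\bar f_*(\sigma^{\geq\nu+1}\bar\Omega^\kdot_f)$; in particular $\ker\psi = \im(\phi)$. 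The analogous application to $\sigma^{\geq\nu}\bar\Omega^\kdot_f \hookrightarrow \bar\Omega^\kdot_f \twoheadrightarrow \bar\Omega^\kdot_f/\sigma^{\geq\nu}\bar\Omega^\kdot_f$ produces a connecting morphism $\delta'\colon \R^{n-1}\bar f_*(\bar\Omega^\kdot_f/\sigma^{\geq\nu}\bar\Omega^\kdot_f) \to B$ with $\ker(B \to D) = \im(\delta')$, where $B \to D$ is precisely $\iota^n_f(\nu) \circ \lambda^n_f(\nu)$ with $\iota^n_f(\nu)$ injective.

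The surjection $\alpha := \coker(\iota^n_f(\nu,\nu+1)) \circ \lambda^n_f(\nu) \colon B \to \F^\nu\sH^n(f)/\F^{\nu+1}\sH^n(f)$ has kernel $\im(\phi) + \ker(B \to D)$, because $\F^{\nu+1}\sH^n(f)$ equals the image of $A$ in $D$ and the map $A \to D$ factors as $\lambda^n_f(\nu) \circ \phi$ (postcomposed with $\iota^n_f(\nu)$). By the universal property of the cokernel, $\psi^\nu$ exists iff $\psi$ vanishes on $\ker\alpha$; since $\psi$ already kills $\im(\phi) = \ker\psi$, this reduces to the single condition $\psi \circ \delta' = 0$. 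Once $\psi^\nu$ exists, it is automatically injective, because $\ker\psi \subseteq \ker\alpha$ forces $\alpha[\ker\psi] = 0$, so $\psi^\nu$ has zero kernel. Hence $\psi^\nu$ is an isomorphism if and only if it is additionally surjective which, as $\alpha$ is surjective, is equivalent to $\psi$ being surjective, i.e., to $\delta = 0$.

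To conclude, I would identify these two vanishing conditions with the two halves of Frölicher degeneration. Iterating the short exact triples $\sigma^{\geq p+1}\bar\Omega^\kdot_f \hookrightarrow \sigma^{\geq p}\bar\Omega^\kdot_f \twoheadrightarrow \sigma^{=p}\bar\Omega^\kdot_f$ for $p < \nu$ decomposes $\psi \circ \delta'$ into a telescope of composite connecting homomorphisms which, by the very construction of the spectral sequence of a filtered complex, represent the higher differentials $d_r^{\nu-r,n-\nu+r-1}$ arriving at $(\nu,n-\nu)$ for $r \geq 1$; their simultaneous vanishing is, by definition, degeneration from behind at $(\nu,n-\nu)$ at sheet $1$, proving \ref{p:froedegen-behind}. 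The dual iteration for $p \geq \nu+1$ expresses $\delta$ as a telescope of the outgoing differentials $d_r^{\nu,n-\nu}$, so its vanishing supplies the complementary half, giving \ref{p:froedegen-total}. The most delicate point is this last identification of iterated connecting morphisms with the abstract differentials of the Frölicher spectral sequence; I would handle the bookkeeping for the tower $(\sigma^{\geq p}\bar\Omega^\kdot_f)_p$ by invoking the general machinery of Chapter~\ref{ch:foun}, to which the precise definition of degeneration (from behind) in a single entry is relegated.
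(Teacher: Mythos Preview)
Your proposal is correct and is precisely an explicit unpacking of what the paper defers to a one-line citation: the paper's own proof simply invokes the standard interpretation of degeneration for the spectral sequence of a filtered complex in \cite[\S1]{De71}, and your diagram chase (existence of $\psi^\nu \Leftrightarrow B_\infty^{\nu,n-\nu}=0$; bijectivity $\Leftrightarrow$ additionally $Z_\infty^{\nu,n-\nu}=E_1^{\nu,n-\nu}$) is exactly that interpretation spelled out. The only imprecisions are cosmetic---the $d_r$ live on subquotients, so the final step is cleaner phrased via $\psi(\im\delta')=B_\infty$ and $\ker\delta=Z_\infty$ than as a literal ``telescope of connecting homomorphisms''---and your pointer to Chapter~\ref{ch:foun} should instead be to Deligne, since that chapter only records the \emph{definition} of degeneration, not the machinery relating it to the filtered-complex description.
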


\begin{proof}
 The above statements are special cases of standard interpretations of the degeneration of a spectral sequence associated to a filtered complex. We refer our readers to Deligne's treatment \cite[\S1]{De71}.
\end{proof}

\begin{theorem}
 \label{t:pmclassic}
 Let $n$ be an integer and $f\colon X\to S$ a submersive morphism of complex spaces such that $S$ is a simply connected complex manifold. Assume that:
 \begin{enumeratei}
  \item \label{t:pmclassic-froe} the Frölicher spectral sequence of $f$ degenerates in entries
  \[
   I := \{(p,q) \in \Z\times\Z : p + q = n\}
  \]
  at sheet $1$ in $\Mod(S)$;
  \item \label{t:pmclassic-flf} for all $(p,q)\in I$, $\sH^{p,q}(f)$ is a locally finite free module on $S$;
  \item \label{t:pmclassic-froefiber} for all $s\in S$, the Frölicher spectral sequence of $X_s$ degenerates in entries $I$ at sheet $1$ in $\Mod(\C)$;
  \item \label{t:pmclassic-bc} for all $s\in S$ and all $(p,q)\in I$, the Hodge base change map
  \[
   \beta^{p,q}_{f,s} \colon (\sH^{p,q}(f))(s) \to \sH^{p,q}(X_s)
  \]
  is an isomorphism in $\Mod(\C)$.
 \end{enumeratei}
 Let $t\in S$. Then there exists a sequence $(\tilde\psi^\nu)_{\nu\in\Z}$ of isomorphisms in $\Mod(\C)$,
 \[
  \tilde\psi^\nu \colon \F^\nu\sH^n(X_t)/\F^{\nu+1}\sH^n(X_t) \to \sH^{\nu,n-\nu}(X_t),
 \]
 such that, for all $p\in\Z$, the diagram in \eqref{e:pm} commutes in $\Mod(\C)$, where we set:
 \begin{equation} \label{e:pmclassic-ab}
  \begin{split}
  \alpha & := \tilde\psi^p \circ \coker(\iota^n_{X_t}(p,p+1)), \\ \beta & := (\iota^n_{X_t}(p-1)/\F^p\sH^n(X_t)) \circ (\tilde\psi^{p-1})^{-1}.
  \end{split}
 \end{equation}
\end{theorem}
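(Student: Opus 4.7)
The plan is to reduce Theorem \ref{t:pmclassic} to Theorem \ref{t:pm} by producing, from the degeneration and local-freeness hypotheses, all the data (isomorphisms $\psi^\nu$, $\psi^\nu_{X_t}$, left inverses $\omega^{p-1}$, $\omega^{p-1}_{X_t}$, base change isomorphisms) that Theorem \ref{t:pm} demands. First, I would fix $p\in\Z$ arbitrary; the argument will produce a single sequence $(\tilde\psi^\nu)_\nu$ that works uniformly, but for the invocation of Theorem \ref{t:pm} only two consecutive indices enter. Applying Proposition \ref{p:froedegen}\,\ref{p:froedegen-total}) to the fiber $X_t$, assumption \ref{t:pmclassic-froefiber} furnishes, for every integer $\nu$, an isomorphism
\[
 \psi^\nu_{X_t}\colon \F^\nu\sH^n(X_t)/\F^{\nu+1}\sH^n(X_t)\to \R^n\bar{a_{X_t}}_*(\sigma^{=\nu}\bar\Omega^\kdot_{X_t})
\]
making \eqref{e:froedegenfiber} commute; set $\tilde\psi^\nu:=\kappa^n_{X_t}(\sigma^{=\nu}\Omega^\kdot_{X_t})\circ\psi^\nu_{X_t}$, which is an isomorphism onto $\sH^{\nu,n-\nu}(X_t)$ because $\kappa^n_{X_t}$ is a natural equivalence (cf.\ Construction \ref{con:augconnhom}). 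Similarly, applying Proposition \ref{p:froedegen}\,\ref{p:froedegen-total}) to $f$ using assumption \ref{t:pmclassic-froe} gives isomorphisms $\psi^\nu$ making \eqref{e:froedegen} commute on $S$. Take $\omega^{p-1}_{X_t}:=(\psi^{p-1}_{X_t})^{-1}$ and $\omega^{p-1}:=(\psi^{p-1})^{-1}$; these are two-sided (hence \emph{a fortiori} left) inverses.

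The next task is to verify the structural hypotheses of Theorem \ref{t:pm}: that $\sH^n(f)$ is a vector bundle, that $\F^p\sH^n(f)$ is a vector subbundle, and that the de Rham and filtered base change maps $\phi^n_{f,s}$, $\phi^{p,n}_{f,s}$ are isomorphisms for every $s\in S$. For the first, I would argue that the filtration $(\F^\nu\sH^n(f))_\nu$ on $\sH^n(f)$ is finite and its successive quotients are isomorphic to $\sH^{\nu,n-\nu}(f)$ via the $\psi^\nu$; by \ref{t:pmclassic-flf} each quotient is locally finite free, hence each inclusion $\F^{\nu+1}\sH^n(f)\hookrightarrow\F^\nu\sH^n(f)$ locally admits a splitting. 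Descending induction on $\nu$ shows that every $\F^\nu\sH^n(f)$ is locally a finite direct sum of locally finite free modules, so locally finite free itself, and in particular each inclusion morphism $\iota^n_f(\nu,\nu+1)$ is fiberwise injective, whence each $\F^p\sH^n(f)$ is a vector subbundle. For the base change claim, by general nonsense (compare Chapter \ref{ch:tool} on base change) the Frölicher spectral sequence and the Hodge filtration are compatible with specialization to a fiber; combining \ref{t:pmclassic-froe}, \ref{t:pmclassic-froefiber}, and the base change isomorphism \ref{t:pmclassic-bc} on the graded pieces with a five-lemma induction on the filtration, both $\phi^n_{f,s}$ and $\phi^{p,n}_{f,s}$ are isomorphisms. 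This spectral-sequence/five-lemma bookkeeping is the step I expect to require the most care.

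With all hypotheses of Theorem \ref{t:pm} satisfied, its conclusion produces the commutativity of \eqref{e:pm} for the concretely built
\begin{align*}
 \alpha&=\kappa^n_{X_t}(\sigma^{=p}\Omega^\kdot_{X_t})\circ\psi^p_{X_t}\circ\coker(\iota^n_{X_t}(p,p+1)),\\
 \beta&=(\iota^n_{X_t}(p-1)/\F^p\sH^n(X_t))\circ\omega^{p-1}_{X_t}\circ(\kappa^n_{X_t}(\sigma^{=p-1}\Omega^\kdot_{X_t}))^{-1}.
\end{align*}
A direct calculation identifies these with the $\alpha$, $\beta$ of \eqref{e:pmclassic-ab}: since $\tilde\psi^p=\kappa^n_{X_t}(\sigma^{=p}\Omega^\kdot_{X_t})\circ\psi^p_{X_t}$ the first formula is immediate, and for the second, $\omega^{p-1}_{X_t}=(\psi^{p-1}_{X_t})^{-1}$ together with invertibility of $\kappa^n_{X_t}$ yields
\[
 \omega^{p-1}_{X_t}\circ(\kappa^n_{X_t}(\sigma^{=p-1}\Omega^\kdot_{X_t}))^{-1}=\bigl(\kappa^n_{X_t}(\sigma^{=p-1}\Omega^\kdot_{X_t})\circ\psi^{p-1}_{X_t}\bigr)^{-1}=(\tilde\psi^{p-1})^{-1}.
\]
Since $p$ was arbitrary, the same sequence $(\tilde\psi^\nu)_\nu$ chosen once and for all in the first paragraph makes \eqref{e:pm} commute for every $p$, completing the argument.
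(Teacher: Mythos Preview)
Your proposal is correct and follows essentially the same route as the paper: obtain the $\psi^\nu$ and $\psi^\nu_{X_t}$ from Proposition \ref{p:froedegen}, set $\tilde\psi^\nu:=\kappa^n_{X_t}(\sigma^{=\nu}\Omega^\kdot_{X_t})\circ\psi^\nu_{X_t}$, establish local freeness of the $\F^\nu\sH^n(f)$ by descending induction using \ref{t:pmclassic-flf}, prove the filtered and de Rham base change isomorphisms by a five-lemma induction along the filtration using \ref{t:pmclassic-bc}, and then invoke Theorem \ref{t:pm}. The identification of $\alpha$ and $\beta$ at the end is exactly as in the paper.
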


\begin{proof}
 By assumption \eqref{t:pmclassic-froe} we know using Proposition \ref{p:froedegen} that, for all integers $\nu$, there exists one, and only one, $\psi^\nu$ such that the diagram in \eqref{e:froedegen} commutes in $\Mod(S)$ (note that the uniqueness of $\psi^\nu$ follows from the fact that both $\lambda^n_f(\nu)$ and $\coker(\iota^n_f(\nu,\nu+1))$, and whence their composition, are epimorphisms in $\Mod(S)$); moreover, $\psi^\nu$ is an isomorphism. Further on, for all integers $\nu$,
 $$\kappa^n_f(\sigma^{=\nu}\Omega^\kdot_f) \colon \R^n\bar{f}_*(\sigma^{=\nu}\bar\Omega^\kdot_f) \to \R^nf_*(\sigma^{=\nu}\Omega^\kdot_f) = \R^{n-p}f_*(\Omega^p_f)$$
 is an isomorphism. Thus for all $\nu\in\Z$, there exists an isomorphism
 $$\F^\nu\sH^n(f)/\F^{\nu+1}\sH^n(f) \to \sH^{\nu,n-\nu}(f)$$
 in $\Mod(S)$. Now since, for all $\nu\in\Z_{\geq n+1}$, $\F^\nu\sH^n(f)$ is a zero module on $S$ and in particular locally finite free, we conclude by descending induction on $\nu$ starting at $\nu=n+1$ that, for all $\nu\in\Z$, $\F^\nu\sH^n(f)$ is a locally finite free module on $S$; along the way we make use of \eqref{t:pmclassic-flf}. Specifically, since $\F^0\sH^n(f) = \sH^n(f)$, we see that $\sH^n(f)$ is a locally finite free module, \iev in the terminology of Definition \ref{d:bundles}, a vector bundle, on $S$. Furthermore, for all integers $\mu$ and $\nu$ such that $\mu \leq \nu$, there exists a short exact sequence
 $$0 \to \F^\mu/\F^\nu \to \F^{\mu-1}/\F^\nu \to \F^{\mu-1}/\F^\mu \to 0,$$
 where we write $\F^*$ as a shorthand for $\F^*\sH^n(f)$. Therefore, we see, using descending induction on $\mu$, that for all integers $\nu$ and all integers $\mu$ such that $\mu \leq \nu$ the quotient $\F^\mu/\F^\nu$ is a locally finite free module on $S$. Specifically, we see that for all integers $\nu$, the quotient $\sH^n(f)/\F^\nu\sH^n(f)$ is a locally finite free module on $S$. Thus we conclude that, for all integers $\nu$, $\F^\nu\sH^n(f)$ is a vector subbundle of $\sH^n(f)$ on $S$.
 
 For the time being, fix an arbitrary element $s$ of $S$. Then by assumption \eqref{t:pmclassic-froefiber} and Proposition \ref{p:froedegen} we deduce that, for all integers $\nu$, there exists a (unique) isomorphism $\psi^\nu_{X_s}$ such that the diagram in \eqref{e:froedegenfiber}, where we replace $t$ by $s$, commutes in $\Mod(\C)$. As base change commutes with taking stupid filtrations, the following diagram has exact rows and commutes in $\Mod(\C)$ for all integers $\nu$:
 \[
  \xymatrix@C=3pc{
   0 \ar[r] & \F^\nu(s) \ar[r]^-{(\iota^n_f(\nu-1,\nu))(s)} \ar[d]_{\phi^{\nu,n}_{f,s}} & \F^{\nu-1}(s) \ar[r] \ar[d]_{\phi^{\nu-1,n}_{f,s}} & \sH^{\nu,n-\nu}(s) \ar[r] \ar[d]^{\beta^{\nu,n-p}_{f,s}} & 0 \\
   0 \ar[r] & \F^\nu\sH^n(X_s) \ar[r]_-{\iota^n_{X_s}(\nu-1,\nu)} & \F^{\nu-1}\sH^n(X_s) \ar[r] & \sH^{\nu,n-\nu}(X_s) \ar[r] & 0
  }
 \]
 Therefore, using a descending induction on $\nu$ starting at $\nu=n+1$ together with assumption \eqref{t:pmclassic-bc} and the ``short five lemma'', we infer that, for all $\nu\in\Z$, the base change map $\phi^{\nu,n}_{f,s}$ is an isomorphism in $\Mod(\C)$. Specifically, since $\phi^{0,n}_{f,s} = \phi^n_{f,s}$, we see that the de Rham base change map $\phi^n_{f,s}$ is an isomorphism in $\Mod(\C)$.
 
 Abandon the fixation of $s$ and define a $\Z$-sequence $\tilde\psi$ by putting, for any $\nu\in\Z$:
 $$\tilde\psi^\nu := \kappa^n_{X_t}(\sigma^{=\nu}\Omega^\kdot_{X_t}) \circ \psi^\nu_{X_t}.$$
 Let $p$ be an integer. Then defining $\alpha$ and $\beta$ according to \eqref{e:pmclassic-ab}, the commutativity of the diagram in \eqref{e:pm} is implied by Theorem \ref{t:pm}.
\end{proof}

\chapter{Degeneration of the Frölicher spectral sequence}
\label{ch:froe}

\setcounter{subsection}{0}

Later, in Chpater \ref{ch:symp}, we would like to make use of Theorem \ref{t:pmclassic} in order to establish a local Torelli theorem for certain compact, symplectic complex spaces of Kähler type, \cf Theorem \ref{t:lt}. The basic idea in the application of Theorem \ref{t:pmclassic} thereby is the following. Consider a family of compact complex spaces over a smooth base, \iev a proper, flat morphism of complex spaces $f\colon X\to S$ such that $S$ is a complex manifold. Wanting to talk about period mappings, we assume additionally that the space $S$ be simply connected, although this assumption is not essential for the problems of Chapter \ref{ch:froe}. Now define $g\colon Y\to S$ to be the ``submersive share'' of $f$, by which we mean that $Y$ is the open complex subspace of $X$ induced on set of points of $X$ in which the morphism $f$ is submersive and $g$ is the composition of the inclusion $Y \to X$ and $f$. Then $g$ is certainly a submersive morphism of complex spaces with smooth and simply connected base, so that we might think of applying Theorem \ref{t:pmclassic} to $g$ (in place of $f$, as in the formulation of the theorem) and an integer $n$. This leads us to the task of determining circumstances, in terms of $f$ and $n$, under which the following assertions hold---observe that these correspond to conditions (i)--(iv) of Theorem \ref{t:pmclassic}:
\begin{enumerate}
 \item \label{i:froe-degen} the Frölicher spectral sequence of $g$ degenerates in entries
 \[
  I := \{(p,q)\in\Z\times\Z:p+q=n\}
 \]
 at sheet $1$ in $\Mod(S)$;
 \item \label{i:froe-lff} for all $(p,q)\in I$, the Hodge module $\sh H^{p,q}(g)$ is a locally finite free module on $S$;
 \item \label{i:froe-degenfiber} for all $s\in S$, the Frölicher spectral sequence of $Y_s$ degenerates in entries $I$ at sheet $1$ in $\Mod(\C)$;
 \item \label{i:froe-bc} for all $s\in S$ and all $(p,q)\in I$, the Hodge base change map
 \[
  \beta^{p,q}_{g,s} \colon (\sh H^{p,q}(g))(s) \to \sh H^{p,q}(Y_s)
 \]
 is an isomorphism in $\Mod(\C)$.
\end{enumerate}

In view of assertion \ref{i:froe-degenfiber}), we remind the reader of a result of T.~Ohsawa (\cf \cite[Theorem 1]{Oh87}):

\begin{theorem}
 \label{t:ohsawaintro}
 Let $X$ be a compact, pure dimensional complex space of Kähler type and $A$ a closed analytic subset of $X$ such that $\Sing(X)\subset A$. Then the Frölicher spectral sequence of $X\setminus A$ degenerates in entries
 \[
  \{(p,q)\in\Z\times\Z : p+q+2\leq\codim(A,X)\}
 \]
 at sheet $1$ in $\Mod(\C)$.
\end{theorem}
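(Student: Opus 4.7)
The plan is to establish the theorem via $L^2$-Hodge theory on the open complex manifold $U:=X\setminus A$, which is the strategy underlying Ohsawa's argument in \cite{Oh87}. The key is to equip $U$ with a complete Kähler metric such that, on the one hand, the associated $L^2$-Dolbeault cohomology admits a Hodge decomposition coming from the Kähler identities and, on the other hand, in the relevant bidegrees this $L^2$-cohomology coincides with the usual sheaf cohomology $\H^q(U,\Omega^p_U)$. Combining these two features will force the Frölicher spectral sequence of $U$ to degenerate at sheet $1$ in the prescribed entries by a dimension count against the abutment.

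To set this up, I would pass to a resolution of singularities $\pi\colon\tilde X\to X$ chosen so that $E:=(\pi^{-1}(A))_{\mathrm{red}}$ is a simple normal crossings divisor and $\pi$ restricts to a biholomorphism $\tilde X\setminus E\to U$. Using that $X$ is of Kähler type together with Blanchard--Varouchas type theorems on modifications of Kähler spaces, one may arrange $\tilde X$ to be of Kähler type as well, which is all that the argument needs. On $\tilde X\setminus E$ I would then construct a complete Kähler metric $\omega$ of Poincaré type along $E$, obtained by gluing, via a partition of unity, the explicit local Poincaré metrics on polydisks adapted to the normal-crossing structure of $E$ to a fixed Kähler metric on $\tilde X$.

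The central technical input, and the main obstacle of the proof, is the following $L^2$-extension assertion: for integers $p,q$ with $p+q+2\leq\codim(A,X)$ the canonical comparison morphism
\[
 \H^q_{(2)}(U,\Omega^p_U)\longrightarrow\H^q(U,\Omega^p_U)
\]
is an isomorphism, and likewise $\H^n_{(2)}(U,\C)\cong\H^n(U,\C)$ whenever $n+1\leq\codim(A,X)$. The underlying point is that in sufficiently low bidegree compared to $\codim(A,X)$ an $L^2$-form for $\omega$ extends across $A$ sheaf-theoretically; this is proved by Hörmander-type $L^2$-estimates tailored to the Poincaré-type metric, combined with Hartogs-style extension theorems that become effective once the codimension is large enough. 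It is precisely this step that forces the codimension bound $p+q+2\leq\codim(A,X)$ in the statement and is the technically most delicate part of the argument.

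Finally, the completeness of $\omega$ together with the Kähler identities yield, via the Andreotti--Vesentini--Gromov procedure, $\bar\partial$- and $d$-harmonic representatives for $L^2$-cohomology and a type decomposition
\[
 \H^n_{(2)}(U,\C)\;\cong\;\bigoplus_{p+q=n}\H^q_{(2)}(U,\Omega^p_U).
\]
Restricting to bidegrees with $p+q+2\leq\codim(A,X)$ and transporting this decomposition through the comparison isomorphisms of the previous step exhibits $\H^n(U,\C)$, for $n+1\leq\codim(A,X)$, as the direct sum of the pieces $\H^q(U,\Omega^p_U)$ with $p+q=n$. A dimension count against the abutment of the Frölicher spectral sequence of $U$ then forces that spectral sequence to degenerate at sheet $1$ in the claimed entries.
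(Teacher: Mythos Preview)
The paper does not give its own proof of this statement: it is quoted in the introduction as a result of Ohsawa (\cite[Theorem~1]{Oh87}), and when it reappears as Theorem~\ref{t:ohsawa} the proof consists of the single line ``the assertions follow from \cite[Theorem~1]{Oh87}'' together with a trivial reduction from the locally pure dimensional to the pure dimensional case by splitting into connected components. In other words, within this paper the theorem functions as a black box imported from \cite{Oh87}.

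Your proposal is not so much an alternative as a sketch of what Ohsawa actually does in \cite{Oh87}: construct a complete K\"ahler metric on $X\setminus A$, use the K\"ahler package to get an $L^2$-Hodge decomposition, and compare $L^2$-cohomology to ordinary cohomology in the range allowed by the codimension hypothesis. That is the right outline. Two remarks if you want to flesh it out into a genuine proof. First, the passage through a resolution $\tilde X\to X$ with SNC exceptional divisor and a Poincar\'e-type metric is one natural route, but you should be careful with ``one may arrange $\tilde X$ to be of K\"ahler type'': K\"ahlerness is not automatically preserved under blow-ups of singular K\"ahler spaces, and Ohsawa in fact works more directly on $X\setminus A$ itself using complete K\"ahler metrics adapted to the singular compactification rather than first smoothing the boundary. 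Second, the dimension-count step at the end is correct but deserves a word of justification: from $\dim\H^n(U,\C)=\sum_{p+q=n}\dim\H^q(U,\Omega^p_U)$ one concludes $E_\infty^{p,q}=E_1^{p,q}$ for all $(p,q)$ on that diagonal, and this forces the vanishing of every differential entering or leaving those entries, including those whose other end lies outside the range $p+q+2\le\codim(A,X)$.
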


So, suppose that the fibers of $f$ are altogether pure dimensional (\egv normal and connected) and of Kähler type. Then Theorem \ref{t:ohsawaintro} guarantees the validity of assertion \ref{i:froe-degenfiber}) when we have
\begin{equation} \label{e:froeintro-codim} \tag{$*$}
 n+2 \leq \codim(\Sing(X_s),X_s) \quad \text{for all } s\in S;
\end{equation}
note that due to the flatness of $f$, we know that, for all $s\in S$, the complex spaces $Y_s$ and $(X_s)_\reg = X_s \setminus \Sing(X_s)$ are isomorphic. Inspired by this observation, we pose the following

\begin{question}
 \label{q:froeintro}
 Let $f$ and $n$ be as above; in particular, we assume that \eqref{e:froeintro-codim} holds. Define $g$ to be the submersive share of $f$. Which of the assertions \ref{i:froe-lff}), \ref{i:froe-bc}), and \ref{i:froe-degen}) are then fulfilled?
\end{question}

We put forward another question, which is wider in scope.

\begin{question}
 \label{q:froeintro2}
 Let $f\colon X\to S$ be a proper (and flat) morphism of complex spaces (such that $S$ is a complex manifold), $A$ a closed analytic subset of $X$ such that the restriction $g \colon Y := X\setminus A \to S$ of $f$ is submersive, $n$ an integer such that
 \[
  n+2 \leq \codim(A\cap X_s,X_s) \quad \text{for all } s\in S.
 \]
 Assume that $f$ is
 \begin{enumeratei}
  \item locally equidimensional, \iev the function $x \mto \dim_x(X_{f(x)})$ is locally constant on $X$, and
  \item weakly Kähler (\cf \cite[(5.1)]{Bi83}).
 \end{enumeratei}
 Do assertions \ref{i:froe-lff}), \ref{i:froe-bc}), and \ref{i:froe-degen}) hold then?
\end{question}

Our goal in this chapter is to give several positive answers in the direction of Question \ref{q:froeintro2} and Question \ref{q:froeintro}---unfortunately we do not manage to answer either of the proposed questions in its entirety.

In \S\ref{s:coh}, we investigate the coherence of the Hodge modules $\sH^{p,q}(g)$ by means of standard techniques of local cohomology as a first step towards the local finite freeness stated in \ref{i:froe-lff}). In \S\ref{s:in}, we study the degeneration behavior of the Frölicher spectral sequence when passing from one infinitesimal neighborhood of a fiber of $g\colon Y\to S$ to the next. In \S\ref{s:formal}, we invoke a comparison theorem between formal and ordinary higher direct image sheaves due to C.\ B\u{a}nic\u{a} and O.\ St\u{a}n\u{a}\c{s}il\u{a} (\cf \cite{BaSt76}) in order to establish \ref{i:froe-lff}) and \ref{i:froe-bc}). Finally, in \S\ref{s:ohsawa}, we draw conclusions for the degeneration of the the Frölicher spectral sequence of $g$.

\section{Coherence of direct image sheaves}
\label{s:coh}

\begin{notation}[Depth]
 \label{d:prof}
 Let $A$ be a commutative ring, $I$ an ideal of $A$, and $M$ an $A$-module. Then we define:
 \begin{equation}
  \label{e:prof}
  \prof_A(I,M):=\sup\{n\in\N:(\forall N\in T)(\forall i\in n)\Ext^i_A(N,M)=0\},
 \end{equation}
 where $T$ denotes the class of all finite type $A$-modules for which there exists a natural number $m$ such that $I^mN=0$. Note that the set in \eqref{e:prof} over which the supremum is taken certainly contains $0$, whence is nonempty. $\prof_A(I,M)$ is called the $I$-\emph{depth} of $M$ over $A$.
 
 When $A$ is a commutative local ring, we define:
 \[
  \prof_A(M) := \prof_A(\fm(A),M).
 \]
 
  Let $X$ be an analytic space (or else a commutative locally ringed space). Let $x$ be an element of the set underlying $X$ and $F$ a module on $X$. Then we set:
  \[
   \prof_{X,x}(F) := \prof_{\O_{X,x}}(F_x).
  \]
\end{notation}

\begin{proposition}
 \label{p:profsum}
 Let $A$ be a commutative ring and $I$ an ideal of $A$.
 \begin{enumerate}
  \item For all $A$-modules $M$ and $M'$, we have:
  \[
   \prof_A(I,M\oplus M')=\min(\prof_A(I,M),\prof_A(I,M')).
  \]
  \item For all $A$-modules $M$ and all $r\in\N$, we have $\prof_A(I,M^{\oplus r})=\prof_A(I,M)$.
 \end{enumerate}
\end{proposition}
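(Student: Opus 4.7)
The plan is to reduce both statements to the elementary behavior of $\Ext$ under finite direct sums. Recall that for any commutative ring $A$, any $A$-module $N$, any family $(M_j)_{j\in J}$ of $A$-modules indexed by a finite set $J$, and any integer $i$, the canonical mapping
\[
 \Ext^i_A(N,\textstyle\bigoplus_{j\in J} M_j) \to \bigoplus_{j\in J} \Ext^i_A(N,M_j)
\]
is an isomorphism of $A$-modules; this is standard and follows, for instance, from the fact that the functor $\Ext^i_A(N,-)$ commutes with finite direct sums as it is additive. In particular, for a finite direct sum, $\Ext^i_A(N,\bigoplus_{j} M_j) = 0$ if and only if $\Ext^i_A(N,M_j) = 0$ for every $j \in J$.

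For part (a), I would fix $M$ and $M'$ and denote by $T$ the class of finite type $A$-modules $N$ admitting an $m\in\N$ with $I^mN = 0$, just as in the definition. Writing
\[
 S(M'') := \{n\in\N:(\forall N\in T)(\forall i\in n)\,\Ext^i_A(N,M'')=0\}
\]
for an arbitrary $A$-module $M''$, the observation above yields $S(M\oplus M') = S(M)\cap S(M')$. Since the sets $S(M)$, $S(M')$, and $S(M\oplus M')$ are all downward closed subsets of $\N$, their suprema satisfy $\sup(S(M)\cap S(M')) = \min(\sup S(M),\sup S(M'))$, which is precisely the claimed equality $\prof_A(I,M\oplus M') = \min(\prof_A(I,M),\prof_A(I,M'))$.

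For part (b), I would proceed by induction on $r$. The case $r=1$ is trivial, since $M^{\oplus 1} = M$. For the inductive step, assume the formula for some $r\geq 1$. Then by part (a) applied to $M^{\oplus r}$ and $M$, together with the induction hypothesis,
\[
 \prof_A(I,M^{\oplus(r+1)}) = \min(\prof_A(I,M^{\oplus r}),\prof_A(I,M)) = \min(\prof_A(I,M),\prof_A(I,M)) = \prof_A(I,M),
\]
which completes the induction.

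I do not foresee a genuine obstacle here, the entire argument being a direct unraveling of the definition of depth combined with the additivity of $\Ext$ in its second argument; the only minor subtlety to keep in mind is the convention on $\N$ (so that $r\geq 1$ in part (b), or else one interprets $\prof_A(I,0) = +\infty$ so that the formula remains valid for $r=0$ under the convention $\min(+\infty,x) = x$).
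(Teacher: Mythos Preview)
Your proof is correct and follows the same approach as the paper, which gives a one-line argument invoking the additivity of $\Ext^i_A(N,-)$; you have simply spelled out the details of how this additivity translates into the claimed equalities via the definition of depth. Your observation about $r=0$ is apt: under the paper's convention $0\in\N$, part (b) as stated fails for $r=0$ (since $\prof_A(I,0)=\omega$ in general), though this case is irrelevant for the later applications.
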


\begin{proof}
 Follows from the fact that, for all $A$-modules $N$ and all $i\in\N$ (\resp $i\in\Z$), $\Ext^i_A(N,-)$ is an additive functor from $\Mod(A)$ to $\Mod(A)$ hence commutes with the formation of finite sums.
\end{proof}

\begin{notation}
 \label{not:singsh}
 Let $X$ be a complex space (or else a commutative locally ringed space), $F$ a module on $X$, and $m$ an integer. Then we define:
 \[
  \rS_m(X,F) := \{x\in X : \prof_{X,x}(F)\leq m\}.
 \]
 $\rS_m(X,F)$ is called the $m$-th \emph{singular set} of $F$ on $X$. 
\end{notation}

\begin{notation}[Sheaves of local cohomology]
 \label{not:lc}
 Let $X$ be a topological space (respectively a ringed space or complex space), $A$ a closed subset of $X$. We denote
 \[
  \sGamma_A(X,-) \colon \Ab(X) \to \Ab(X) \quad (\text{\resp } \Mod(X) \to \Mod(X))
 \]
 the \emph{sheaf of sections on $X$ with supports in $A$} functor. That is, for any abelian sheaf (\resp sheaf of modules) $F$ on $X$, we define $\sGamma_A(X,F)$ to be the abelian subsheaf (\resp subsheaf of modules) of $F$ on $X$ such that, for all open subsets $U$ of $X$, the set $(\sGamma_A(X,F))(U)$ comprises precisely those elements of $F(U)$ which are sent to the zero of $F(U\setminus A)$ by the restriction mapping $F(U) \to F(U\setminus A)$. Note that the functor $\sGamma_A(X,-)$ is additive as well as left-exact. For any integer $n$ we write
 \[
  \uH^n_A(X,-) \colon \Ab(X) \to \Ab(X) \quad (\text{\resp } \Mod(X) \to \Mod(X))
 \]
 for the $n$-th right derived functor of $\sGamma_A(X,-)$, \cf \S\ref{s:der}.
\end{notation}

\begin{proposition}
 \label{p:lcgap}
 Let $X$ be a topological space (\resp a ringed space or complex space), $A$ a closed subset of $X$. Write $i\colon X\setminus A \to X$ for the inclusion morphism. Then, for all abelian sheaves (\resp sheaves of modules) $F$ on $X$, there exists an exact sequence in $\Ab(X)$ (\resp $Mod(X)$),
 \[
  0 \to \uH^0_A(X,F) \to F \to \R^0i_*(i^*(F)) \to \uH^1_A(X,F) \to 0,
 \]
 and, for all integers $q\geq 1$, we have
 \[
  \R^qi_*(i^*(F)) \iso \uH^{q+1}_A(X,F).
 \]
\end{proposition}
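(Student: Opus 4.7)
The plan is to derive both assertions simultaneously from the long exact cohomology sequence associated to a short exact sequence of complexes obtained from an injective resolution of $F$.

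First I would establish the following key input: for any flasque abelian sheaf $G$ (\resp flasque sheaf of modules) on $X$, the canonical sequence
\[
 0 \to \sGamma_A(X,G) \to G \to i_*(i^*G) \to 0
\]
is exact in $\Ab(X)$ (\resp $\Mod(X)$). Exactness at the first two spots follows directly from the very definition of $\sGamma_A(X,-)$ as the kernel of the evident morphism $G \to i_*(i^*G)$: for any open $U\subset X$, a section $s\in G(U)$ restricts to zero in $G(U\setminus A)$ if and only if its support is contained in $U\cap A$. Surjectivity on the right is where flasqueness enters: for any open $U\subset X$ and any section $t\in (i_*(i^*G))(U) = G(U\setminus A)$, the flasqueness of $G$ yields an extension $s\in G(U)$ with $s|(U\setminus A)=t$, so $G(U)\to G(U\setminus A)$ is onto.

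Next, I would choose an injective resolution $F\to I^\kdot$ in $\Mod(X)$ (\resp $\Ab(X)$). Since injective sheaves are flasque, the previous step applies degree-wise and delivers a short exact sequence of complexes
\[
 0 \to \sGamma_A(X,I^\kdot) \to I^\kdot \to i_*(i^*I^\kdot) \to 0.
\]
I then have to identify the cohomology of each of the three resulting complexes. By the very definition of the right derived functor of $\sGamma_A(X,-)$, the left-hand complex computes $\uH^q_A(X,F)$. The middle complex $I^\kdot$ has $F$ as its $0$-th cohomology and zero in positive degrees. For the right-hand complex, recall that $i^*$ is just restriction to the open set $X\setminus A$, which obviously preserves flasqueness; hence $i^*I^\kdot$ is a flasque, in particular $i_*$-acyclic, resolution of $i^*F$, and consequently $\H^q(i_*(i^*I^\kdot)) \iso \R^q i_*(i^*F)$ for all $q\in\Z$.

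Finally, I would take the long exact cohomology sequence of the short exact sequence of complexes above. As $\H^q(I^\kdot)=0$ for all $q\geq 1$, this long exact sequence decomposes into the asserted four-term exact sequence
\[
 0 \to \uH^0_A(X,F) \to F \to \R^0i_*(i^*F) \to \uH^1_A(X,F) \to 0
\]
together with isomorphisms $\R^q i_*(i^*F) \iso \uH^{q+1}_A(X,F)$ for all integers $q\geq 1$. The main obstacle in this program is the initial surjectivity statement $G \to i_*(i^*G)$ for flasque $G$; once this is in hand, the remainder is essentially formal homological algebra, and the naturality statement inherent in the exact sequence follows from the functoriality of all constructions in $F$.
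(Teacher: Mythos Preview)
Your proof is correct and is the standard argument. The paper itself does not give a proof but simply cites \cite[Chapter II, Corollary 1.10]{BaSt76} for the topological case and remarks that the ringed space case is analogous; your write-up is essentially what one finds behind that citation.
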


\begin{proof}
 The topological space case is \cite[Chapter II, Corollary 1.10]{BaSt76}; the ringed space case is proven along the very same lines.
\end{proof}

\begin{lemma}
 \label{l:3coh}
 Let $X$ be a commutative ringed space.
 \begin{enumerate}
  \item For all morphisms $\phi\colon F\to G$ in $\Mod(X)$ such that $F$ and $G$ are coherent on $X$, both $\ker(\phi)$ and $\coker(\phi)$ are coherent on $X$.
  \item For all short exact sequences
  \[
   0\to F\to G\to H\to 0
  \]
  in $\Mod(X)$, when $F$ and $H$ are coherent on $X$, then $G$ is coherent on $X$.
 \end{enumerate}
\end{lemma}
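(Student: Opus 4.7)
The plan is to reduce both assertions to the classical structural properties of coherent modules on an arbitrary commutative ringed space, as laid out for example in \cite[Chapitre 0, \S5.3]{EGA1} (or in Serre's original treatment). Since coherence is a local property, in what follows I may freely restrict to suitable open subsets of $X$.

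For part (a), I would argue $\coker(\phi)$ first. Being a quotient of $G$, it is of finite type. Now any finitely generated quotient of a coherent module is coherent: given a local morphism $\psi\colon \O_X^{\oplus n}|_U \to \coker(\phi)|_U$, one lifts it (after shrinking $U$) through the epimorphism $G|_U \to \coker(\phi)|_U$, using that $G$ is of finite type to produce a suitable local lift; the kernel condition for $\psi$ then follows from the kernel condition for $G$. For $\ker(\phi)$, I would observe that it is a subsheaf of the coherent sheaf $F$, and use the general fact that any subsheaf of a coherent sheaf that is locally of finite type is itself coherent. To show that $\ker(\phi)$ is locally of finite type, I would pick a local epimorphism $\O_X^{\oplus n}|_U \to F|_U$, compose with $\phi$, and apply the kernel condition for the coherent sheaf $G$ to the resulting morphism $\O_X^{\oplus n}|_U \to G|_U$; the image in $F|_U$ of the finite type kernel thus obtained is precisely $\ker(\phi)|_U$.

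For part (b), I would verify coherence of $G$ by checking its two defining properties. Finite type is clear locally: after shrinking, lift finitely many generators of $H$ to sections of $G$ and combine them with the images of finitely many generators of $F$; these jointly generate $G$ by a diagram chase on stalks. For the kernel condition, take any local morphism $\psi\colon \O_X^{\oplus n}|_U \to G|_U$, and compose with the projection $G|_U \to H|_U$ to obtain $\bar\psi$. Since $H$ is coherent, $\ker(\bar\psi)$ is of finite type. The restriction of $\psi$ to $\ker(\bar\psi)$ factors through $F|_U$, and since $F$ is coherent, applying part (a) (or its direct verification) to this restricted morphism yields that its kernel is of finite type. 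By exactness, this kernel coincides with $\ker(\psi)$, which finishes the argument.

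The main obstacle is purely notational: one must be careful to distinguish the two clauses in the definition of coherence (local finite generation on the one hand, and the kernel-of-any-local-map-from-a-finite-free-module condition on the other) and to ensure that the relevant shrinking of open sets is carried out consistently. No substantive analytic input is required; both statements are folklore-level consequences of the definitions, and I would expect the written proof to be quite short, possibly with a reference to \cite[Chapitre 0, Corollaire 5.3.4 and Corollaire 5.3.6]{EGA1} in lieu of reproducing the standard diagram chases.
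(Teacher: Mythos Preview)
Your proposal is correct and essentially reproduces the standard arguments; the paper itself does not spell them out at all but simply cites \cite[I, \S2, Th\'eor\`eme 1 and Th\'eor\`eme 2]{FAC}. You anticipated this possibility yourself at the end of your sketch, and your outlined proofs are exactly the content of those classical results.
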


\begin{proof}
 See \cite[I, \S2, Théorème 1 and Théorème 2]{FAC}.
\end{proof}

\begin{corollary}
 \label{c:lcgap}
 Let $X$ be a ringed space, $A$ a closed subset of $X$, and $F$ a coherent module on $X$. Then, for all $n\in\Z$, the following are equivalent:
 \begin{enumeratei}
  \item For all $q\in\Z$ such that $q\leq n$, the module $\R^qi_*(i^*(F))$ is coherent on $X$.
 \item For all $q\in\Z$ such that $q\leq n+1$, the module $\uH^q_A(X,F)$ is coherent on $X$.
 \end{enumeratei}
\end{corollary}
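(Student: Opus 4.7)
The plan is to combine the four-term exact sequence and the isomorphisms supplied by Proposition \ref{p:lcgap} with the two-out-of-three stability properties for coherence collected in Lemma \ref{l:3coh}; nothing further is required.

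First I would dispatch the easy range $q \geq 1$. For every such $q$, Proposition \ref{p:lcgap} furnishes an isomorphism $\R^q i_*(i^*(F)) \iso \uH^{q+1}_A(X,F)$ in $\Mod(X)$, so coherence of $\R^q i_*(i^*(F))$ on $X$ is equivalent to coherence of $\uH^{q+1}_A(X,F)$ on $X$. This identifies the terms of (i) with index $1 \leq q \leq n$ with the terms of (ii) with index $2 \leq q+1 \leq n+1$.

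It remains to treat the ``boundary'' at $q = 0$, which is the only point where something non-formal happens. Here I would invoke the four-term exact sequence
\[
0 \to \uH^0_A(X,F) \to F \to \R^0 i_*(i^*(F)) \to \uH^1_A(X,F) \to 0
\]
of Proposition \ref{p:lcgap} and split it via $K := \im(F \to \R^0 i_*(i^*(F)))$ into the short exact sequences
\[
0 \to \uH^0_A(X,F) \to F \to K \to 0 \quad \text{and} \quad 0 \to K \to \R^0 i_*(i^*(F)) \to \uH^1_A(X,F) \to 0.
\]
Assuming (i) (in the interesting range $n \geq 0$), the sheaf $\R^0 i_*(i^*(F))$ is coherent; together with the coherence of $F$, Lemma \ref{l:3coh}(a) applied to the map $F \to \R^0 i_*(i^*(F))$ yields coherence of both the kernel $\uH^0_A(X,F)$ and the cokernel $\uH^1_A(X,F)$, giving (ii). Conversely, if (ii) holds, then from the first short exact sequence and Lemma \ref{l:3coh}(a) the quotient $K = F/\uH^0_A(X,F)$ is coherent, and then from the second short exact sequence and Lemma \ref{l:3coh}(b) the middle term $\R^0 i_*(i^*(F))$ is coherent, giving (i).

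The main (and essentially only) ``obstacle'' is bookkeeping: making sure the indexing shift $q \mapsto q+1$ coming from Proposition \ref{p:lcgap} is compatible with the two boundary pieces $\uH^0_A$ and $\uH^1_A$, both of which are controlled by the single sheaf $\R^0 i_*(i^*(F))$. The negative-$n$ range is trivial ($\R^q i_* = 0 = \uH^q_A$ for $q < 0$ make both conditions vacuous), so no further case analysis is needed.
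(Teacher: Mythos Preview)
Your proof is correct and is precisely the expansion of the paper's one-line argument, which reads in full: ``This is clear from Proposition \ref{p:lcgap} and Lemma \ref{l:3coh}.'' You have identified the same two ingredients and used them in the only natural way---the isomorphisms $\R^q i_*(i^*F)\iso \uH^{q+1}_A(X,F)$ for $q\geq 1$, together with the four-term sequence at $q=0$ split against Lemma \ref{l:3coh}.
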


\begin{proof}
 This is clear from Proposition \ref{p:lcgap} and Lemma \ref{l:3coh}.
\end{proof}

\begin{theorem}
 \label{t:finiteness}
 Let $X$ be a complex space, $A$ a closed analytic subset of $X$, and $F$ a coherent module on $X$. Denote $i \colon X\setminus A \to X$ the inclusion morphism. Then, for all natural numbers $n$, the following are equivalent:
 \begin{enumeratei}
  \item \label{t:finiteness-dim} For all integers $k$, we have
  \begin{equation} \label{e:finiteness-dim}
   \dim(A \cap \bar{\rS_{k+n+1}(X\setminus A,i^*(F))})\leq k,
  \end{equation}
  where the bar refers to taking the closure in $X_\top$. Note that imposing \eqref{e:finiteness-dim} hold for all integers $k<0$ is equivalent to requiring that
  \[
   A \cap \bar{\rS_n(X\setminus A,i^*(F))} = \emptyset.
  \]
  \item \label{t:finiteness-lccoh} For all $q\in\N$ (or $q\in\Z$) such that $q\leq n$, the module $\uH^q_A(X,F)$ is coherent on $X$.
 \end{enumeratei}
\end{theorem}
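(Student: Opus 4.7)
The plan is to deduce this from the classical Siu--Trautmann finiteness theorem via the translation between sheaves of local cohomology and higher direct images along the open immersion $i$. First, I would invoke Corollary \ref{c:lcgap} (applied with $n$ replaced by $n-1$) to rewrite condition (ii) as the coherence of $\R^q i_*(i^*F)$ for all $q\leq n-1$; the residual low-degree pieces $\uH^0_A(X,F)$ and $\uH^1_A(X,F)$ are then controlled by the four-term exact sequence of Proposition \ref{p:lcgap} together with the coherence of $F$ and Lemma \ref{l:3coh}. This replaces the slightly opaque local cohomology formalism by the more geometric question of coherence of a direct image of $i^*F$ defined on $X\setminus A$, which is precisely what the dimension hypothesis on the singular sets $\rS_m$ is designed to govern.

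For the implication $(i)\Rightarrow(ii)$, I would proceed by induction on $n$. The key local input is Grothendieck's characterization of depth in terms of local cohomology: for $x\in A$, the vanishing of $(\uH^q_A(X,F))_x$ for $q$ below a threshold is equivalent to a lower bound on $\prof_{X,x}(F)$ measured along $A$. The hypothesis in (i) translates into exactly the required bound on the depth of $F$ at the generic points of the irreducible components of the $k$-dimensional pieces of $A$, and from this bound, coherence of $\uH^q_A(X,F)$ for $q\leq n$ follows via Scheja's coherence criterion for direct images along an open immersion. The induction filters $A$ by the closures $\overline{\rS_m(X\setminus A,i^*F)}$ and treats the strata in order of decreasing depth, using Proposition \ref{p:profsum} to reduce to the case of a single stalk at a stratified generic point.

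For the converse $(ii)\Rightarrow(i)$, I would argue by contrapositive. Assume some integer $k$ violates the dimension inequality; pick an irreducible component $T$ of $A\cap\overline{\rS_{k+n+1}(X\setminus A,i^*F)}$ with $\dim T > k$, and choose a generic point $x\in T$. The failure of depth at $x$ forces $(\uH^q_A(X,F))_x$ to fail to be finitely generated over $\O_{X,x}$ for some $q\leq n$, contradicting coherence. The main obstacle in carrying out this whole program is the bookkeeping required to align three points of view---the analytic dimension of the closures $\overline{\rS_m}$, the algebraic depth of stalks of $F$, and the coherence/vanishing of the derived functors $\uH^q_A$---along a suitable stratification of $A$. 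A streamlined execution of the requisite dévissage has already been carried out by B\u{a}nic\u{a} and St\u{a}n\u{a}\c{s}il\u{a} in \cite[Chapter II]{BaSt76}; in practice, it is probably preferable to extract the statement from there rather than to reproduce the argument in full detail.
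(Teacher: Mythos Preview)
Your proposal is correct and ultimately lands on the same citation as the paper: the paper's entire proof is the single line ``This is \cite[Chapter II, Theorem 4.1]{BaSt76}.'' Your sketch of the dévissage via depth, Scheja's criterion, and stratification by the $\overline{\rS_m}$ is a reasonable outline of how that result is actually established, but the paper does not reproduce any of it and simply invokes the reference directly.
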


\begin{proof}
 This is \cite[Chapter II, Theorem 4.1]{BaSt76}.
\end{proof}

\begin{proposition}
 \label{p:gapcoh}
 Let $X$ be a locally pure dimensional complex space, $A$ a closed analytic subset of $X$, and $F$ a coherent module on $X$. Assume that,
 \begin{equation} \label{e:gapcoh-prof}
  \text{for all } x\in X\setminus A, \quad \prof_{X,x}(F)=\dim_x(X).
 \end{equation}
 Denote by $i$ the inclusion morphism of complex spaces from $X\setminus A$ to $X$. Then, for all $q\in\Z$ such that $q+2\leq\codim(A,X)$, the module $\R^qi_*(i^*(F))$ is coherent on $X$.
\end{proposition}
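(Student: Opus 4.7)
The plan is to reduce the coherence of $\R^qi_*(i^*(F))$ to a statement about local cohomology sheaves via Corollary \ref{c:lcgap}, and then to apply the B\u{a}nic\u{a}--St\u{a}n\u{a}\c{s}il\u{a} finiteness theorem (Theorem \ref{t:finiteness}). First I would dispose of the trivial range $q<0$, for which $\R^qi_*$ vanishes. So fix $q\in\N$ with $q+2\leq\codim(A,X)$. Applying Corollary \ref{c:lcgap} with $n:=q$, it suffices to show that $\uH^{q'}_A(X,F)$ is coherent for all integers $q'\leq q+1$. By Theorem \ref{t:finiteness} applied with the parameter $n := q+1$, this in turn is equivalent to the dimension estimate
\[
 \dim\bigl(A \cap \overline{\rS_{k+q+2}(X\setminus A, i^*(F))}\bigr) \leq k \quad\text{for all } k\in\Z,
\]
where the bar denotes closure in $X_\top$. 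So the whole problem is reduced to verifying this estimate.

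Next I would make the singular sets explicit using the depth hypothesis. For any $x\in X\setminus A$ one has $\O_{X\setminus A,x} = \O_{X,x}$ and $i^*(F)_x = F_x$, so $\prof_{X\setminus A,x}(i^*(F)) = \prof_{X,x}(F)$, which by assumption equals $\dim_x(X)$. Consequently,
\[
 \rS_m(X\setminus A, i^*(F)) = \{x\in X\setminus A : \dim_x(X) \leq m\}.
\]
Since $X$ is locally pure dimensional, the function $x\mapsto\dim_x(X)$ is locally constant, so the set $U_m := \{x\in X : \dim_x(X)\leq m\}$ is both open and closed in $X$. In particular, $U_m$ already contains the closure $\overline{\rS_m(X\setminus A, i^*(F))}$ in $X_\top$, so it suffices to bound $\dim(A\cap U_m)$.

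To close the argument, I would plug in the codimension hypothesis. For any $x\in A$ one has $\dim_x(X) - \dim_x(A) \geq \codim(A,X) \geq q+2$. Setting $m:=k+q+2$, any $x\in A\cap U_m$ satisfies $\dim_x(X)\leq m$, and hence
\[
 \dim_x(A) \leq \dim_x(X) - (q+2) \leq m - (q+2) = k.
\]
Therefore $\dim(A\cap U_m) \leq k$, and a fortiori $\dim(A\cap\overline{\rS_{k+q+2}(X\setminus A,i^*(F))}) \leq k$, which is precisely what was needed.

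I do not expect a genuine obstacle here: once the ingredients from \S\ref{s:coh} are lined up, the proof is essentially a bookkeeping exercise. The only subtle point---and arguably the conceptual heart of the argument---is the use of local pure dimensionality to pass from the \emph{a priori} complicated closure $\overline{\rS_m(X\setminus A, i^*(F))}$ to the genuinely analytic, closed subset $U_m$; without local pure dimensionality (so that $\dim_x(X)$ is merely upper semicontinuous), one would have to argue more carefully, for instance by decomposing $X$ into irreducible components, to control the dimension of the closure.
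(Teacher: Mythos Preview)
Your proof is correct and follows the same overall strategy as the paper: reduce via Corollary \ref{c:lcgap} to coherence of local cohomology, then verify the dimension estimate in Theorem \ref{t:finiteness} using the depth hypothesis to identify the singular sets $\rS_m$ with level sets of $x\mapsto\dim_x(X)$.

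The only noteworthy difference is in how local pure dimensionality is exploited. The paper first restricts to a pure dimensional open neighborhood of an arbitrary point (so that $\dim_x(X)$ is constant and $\rS_m$ is either empty or all of $X\setminus A$), applies Theorem \ref{t:finiteness} there by a case split on whether $k\geq\dim(A)$, and then globalizes. You instead work directly on $X$ by observing that $U_m=\{x:\dim_x(X)\leq m\}$ is clopen, which lets you bound $\dim(A\cap\overline{\rS_m})$ in one stroke without localizing or splitting cases. Your route is marginally more streamlined; the paper's is perhaps more transparent in showing why pure dimensionality is the natural hypothesis. Neither gains any real generality over the other.
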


\begin{proof}
 Assume that $X$ is pure dimensional. When $A=\emptyset$, the morphism $i$ is the identity on $X$, hence we have $\R^0i_*(i^*(F))\iso F$ and $\R^q i_*(i^*(F))\iso 0$ for all integers $q\neq0$ in $\Mod(X)$. Thus, for all integers $q$, the module $\R^qi_*(i^*(F))$ is coherent on $X$.
 
 Now assume that $A\neq\emptyset$ and put $n:=\codim(A,X)-1$. Write $S_m$ as a shorthand for $\rS_m(X\setminus A,i^*(F))$. For all $x\in X\setminus A$, we have:
 \[
  \prof_{X\setminus A,x}(i^*(F))=\prof_{X,x}(F)=\dim_x(X)=\dim(X).
 \]
 Therefore, $S_m=\emptyset$ for all integers $m$ such that $m<\dim(X)$. Let $k$ be an arbitrary integer. When $\dim(A) \leq k$, then
 \[
  \dim(A\cap \bar{S_{k+n+1}})\leq \dim(A)\leq k.
 \]
 For all $x\in A$, we have
 \[
  \dim_x(A) = \dim_x(X) - \codim_x(A,X) \leq \dim(X) - \codim(A,X),
 \]
 which implies that
 \[
  \dim(A) \leq \dim(X) - \codim(A,X).
 \]
 In turn, when $k < \dim(A)$, we have
 \[
  k+n+1 = k+\codim(A,X) < \dim(X),
 \]
 whence $S_{k+n+1} = \emptyset$. Thus, $A \cap \bar{S_{k+n+1}} = \emptyset$ and so again,
 \[
  \dim(A \cap \bar{S_{k+n+1}}) \leq k.
 \]
 We see that assertion \eqref{t:finiteness-dim} of Theorem \ref{t:finiteness} holds. Hence by Theorem \ref{t:finiteness}, assertion \eqref{t:finiteness-lccoh} holds too, so that, for all integers $q$ with $q\leq n$, the module $\uH^q_A(X,F)$ is a coherent module on $X$. Corollary \ref{c:lcgap} implies that, for all integers $q$ with $q+2 \leq \codim(A,X)$, \iev $q \leq n-1$, the module $\R^qi_*(i^*(F))$ is coherent on $X$.
 
 Abandon the assumption that $X$ is pure dimensional. Let $q$ be an integer such that $q+2\leq\codim(A,X)$. Let $x\in X$ be any point. Then, since $X$ is locally pure dimensional, there exists an open neighborhood $U$ of $x$ in $X$ such that the open complex subspace of $X$ induced on $U$ is pure dimensional. Put $Y:=X|U$, $B:=A\cap U$, and $G:=F|U$. By what we have already proven, and the fact that
 \[
  q+2 \leq \codim(A,X) \leq \codim(B,Y),
 \]
 we infer that $\R^qj_*(j^*(G))$ is coherent on $Y$, where $j$ stands for the canonical morphism from $Y\setminus B$ to $Y$. Since $\R^qi_*(i^*(F))|U$ is isomorphic to $\R^qj_*(j^*(G))$ in $\Mod(Y)$, we see that $\R^qi_*(i^*(F))$ is coherent on $X$ in $x$. As $x$ was an arbitrary point of $X$, the module $\R^qi_*(i^*(F))$ is coherent on $X$.
\end{proof}

\begin{corollary}
 \label{c:gapcoh}
 Let $X$ be a locally pure dimensional complex space, $A$ a closed analytic subset of $X$, and $F$ a coherent module on $X$. Assume that $X$ is Cohen-Macaulay in $X\setminus A$ and $F$ is locally finite free on $X$ in $X\setminus A$. Then $\R^qi_*(i^*(F))$ is a coherent module on $X$ for all integers $q$ satisfying $q+2\leq\codim(A,X)$, where $i$ denotes the canonical immersion from $X\setminus A$ to $X$.
\end{corollary}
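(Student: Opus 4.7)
The plan is to reduce the statement to Proposition~\ref{p:gapcoh} by verifying its depth hypothesis~\eqref{e:gapcoh-prof} from the two conditions on $X$ and $F$. To that end, I would fix an arbitrary point $x \in X \setminus A$ and aim to establish $\prof_{X,x}(F) = \dim_x(X)$, at which point Proposition~\ref{p:gapcoh} applies directly and yields the coherence of $\R^q i_*(i^*F)$ in the prescribed range $q + 2 \leq \codim(A, X)$.

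First I would use that $F$ is locally finite free on $X$ in $X \setminus A$ to pick an open neighborhood $U \subset X \setminus A$ of $x$ together with a natural number $r$ and an isomorphism $F|U \iso (\O_X|U)^{\oplus r}$ in $\Mod(X|U)$. Taking stalks at $x$ yields $F_x \iso (\O_{X,x})^{\oplus r}$. Assuming $r \geq 1$, Proposition~\ref{p:profsum}~b) applies to the local ring $\O_{X,x}$ (with $I$ its maximal ideal) and gives the identity $\prof_{X,x}(F) = \prof_{X,x}(\O_X)$. The Cohen-Macaulayness of $X$ at $x$ is by definition the statement that the local ring $\O_{X,x}$ has depth equal to its Krull dimension, which is $\dim_x(X)$; chaining with the previous identity yields the desired $\prof_{X,x}(F) = \dim_x(X)$.

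The only mildly delicate point — and the closest thing here to a genuine obstacle — is the degenerate case $r = 0$ at $x$, which forces $F_x = 0$ and hence infinite depth, so that the literal equality in~\eqref{e:gapcoh-prof} fails. Inspecting the proof of Proposition~\ref{p:gapcoh}, however, one sees that the depth equality is only ever used through the resulting inequality $\prof_{X\setminus A, x}(i^*F) \geq \dim(X)$, which in turn is only invoked to conclude that the singular sets $\rS_m(X \setminus A, i^*F)$ are empty for $m < \dim(X)$. Both of these assertions hold trivially when $F_x = 0$. Thus \eqref{e:gapcoh-prof} is satisfied in the sense that matters for the proof of Proposition~\ref{p:gapcoh}, and I can invoke that proposition verbatim to finish. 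No new ideas beyond the local-freeness-plus-Cohen-Macaulay translation are needed; the corollary really just repackages Proposition~\ref{p:gapcoh} in more familiar geometric language.
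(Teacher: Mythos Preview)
Your proposal is correct and follows essentially the same route as the paper: pass from local freeness plus Cohen--Macaulayness to the depth identity \eqref{e:gapcoh-prof} via Proposition~\ref{p:profsum}, then invoke Proposition~\ref{p:gapcoh}. Your treatment of the degenerate case $r=0$ is in fact more careful than the paper's, which simply writes the chain of equalities without comment; your observation that only the inequality $\prof_{X,x}(F) \geq \dim_x(X)$ is actually needed in the proof of Proposition~\ref{p:gapcoh} is accurate and resolves the issue cleanly.
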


\begin{proof}
  Let $x\in X\setminus A$. As $F$ is locally finite free on $X$ in $x$, there exists a natural number $r$ such that $F_x$ is isomorphic to $(\O_{X,x})^{\oplus r}$ in the category of $\O_{X,x}$-modules, hence using Proposition \ref{p:profsum}, we obtain:
 \begin{align*}
  \prof_{X,x}(F) & =\prof_{\O_{X,x}}(F_x)=\prof_{\O_{X,x}}((\O_{X,x})^{\oplus r})=\prof_{\O_{X,x}}(\O_{X,x}) \\ & =\dim(\O_{X,x})=\dim_x(X).
 \end{align*}
 As $x$ was an arbitrary element of $X\setminus A$, we see that \eqref{e:gapcoh-prof} holds. Thus our claim follows readily from Proposition \ref{p:gapcoh}.
\end{proof}

\begin{theorem}[Grauert's direct image theorem]
 \label{t:propercoh}
 Let $f\colon X\to S$ be a proper morphism of complex spaces. Then, for all coherent modules $F$ on $X$ and all integers $q$, the module $\R^qf_*(F)$ is coherent on $S$.
\end{theorem}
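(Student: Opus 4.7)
The statement is local on $S$, so I may assume $S$ is a Stein open subset of some $\mathbb{C}^N$---in fact, by further shrinking, a relatively compact open polydisc sitting inside a larger polydisc $S'$ over which the morphism extends. Properness then ensures that $f^{-1}(\overline{S})$ is compact inside the extended family, which is the source of all the finiteness I will exploit.

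The plan is to realise $R^qf_*(F)$ via Čech cohomology on a well-chosen cover and then run a Cartan--Serre--Schwartz style compact-operator argument on complexes of Fréchet modules. Concretely, I would first cover $f^{-1}(\overline{S})$ by two finite families $\mathcal{U} = (U_i)$ and $\mathcal{V} = (V_i)$ of Stein open subsets of $X$ with $\overline{V_i} \Subset U_i$, on which $F$ admits finite free presentations after further Stein shrinking. The Čech complexes $\check{C}^\bullet(\mathcal{U}, F)$ and $\check{C}^\bullet(\mathcal{V}, F)$, viewed relative to $S$, become complexes of topological $\mathcal{O}_S$-modules whose sheafified cohomology in degree $q$ recovers $R^qf_*(F)$ (by Leray's theorem, using the Stein-ness of finite intersections of the $U_i$). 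The restriction map $\check{C}^\bullet(\mathcal{U}, F) \to \check{C}^\bullet(\mathcal{V}, F)$ is, in each degree, a map of nuclear Fréchet $\mathcal{O}_S(S)$-modules of the form ``holomorphic sections over $U_{i_0\cdots i_q}$ restrict to holomorphic sections over $V_{i_0\cdots i_q}$'', and such restriction maps are compact by Montel's theorem.

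The core finiteness input is then Schwartz's compactness theorem together with its Grauert--Kiehl--Forster--Knorr refinement: a morphism of complexes of Fréchet $\mathcal{O}_S(S)$-modules that is a quasi-isomorphism up to a compact perturbation yields cohomology modules which are not only finite type but coherent over $\mathcal{O}_S$. Applied to $\check{C}^\bullet(\mathcal{U}, F) \to \check{C}^\bullet(\mathcal{V}, F)$, this produces coherence of $R^qf_*(F)$ on the shrunken base. The passage from pointwise finite-dimensionality (the classical Cartan--Serre output) to honest coherence over the base is achieved by incorporating the $\mathcal{O}_S$-module structure throughout---i.e.\ working in the category of Fréchet sheaves on $S$ (privileged neighbourhoods, in Forster--Knorr's terminology) rather than just Fréchet $\mathbb{C}$-spaces.

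The hard part, unsurprisingly, will be the last step: upgrading topological finite-generation fibre-by-fibre to the fully sheaf-theoretic statement that each $R^qf_*(F)$ is a coherent $\mathcal{O}_S$-module. This is precisely the content of Grauert's original breakthrough and admits no substantially shorter route; in practice one either runs the Forster--Knorr machinery of privileged cuboids and nuclear Fréchet modules, or one invokes Grauert's comparison theorem between formal and analytic higher direct images (as already used in \S\ref{s:formal} of the present chapter) to reduce the coherence claim to a statement about formal completions, which can be handled by Noetherian methods. Either way, the theorem is deep enough that in our exposition it is legitimate to cite it (for example from \cite{BaSt76}) rather than reproduce the full proof here.
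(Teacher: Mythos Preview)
Your proposal is correct in its final conclusion, and that conclusion matches the paper's approach exactly: the paper does not attempt any proof or sketch but simply records the citations (Grauert's original \cite[``Hauptsatz~I'', p.~59]{Gr60}, together with \cite[Chapter~III, Theorem~2.1]{BaSt76} and \cite[Chapter~III, \S4]{GrPeRe94}). The Forster--Knorr/Schwartz sketch you give is a reasonable outline of one standard route to the result, but the paper contains nothing of the sort---so if your aim is to reproduce the paper's treatment, you should drop the sketch entirely and retain only the citation.
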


\begin{proof}
 This is \cite[``Hauptsatz I'', p.~59]{Gr60}. See also \cite[Chapter III, Theorem 2.1]{BaSt76} as well as \cite[Chapter III, \S4]{GrPeRe94} and references there.
\end{proof}

\begin{proposition}
 \label{p:cohcomp}
 Let $f\colon X\to S$ and $g\colon Y\to X$ be morphisms of complex spaces. Let $G$ be a module on $Y$ and $n$ an integer. Put $h:=f\circ g$. Suppose that $f$ is proper and, for all integers $q \leq n$, the module $\R^qg_*(G)$ is coherent on $X$. Then, for all integers $k \leq n$, the module $\R^kh_*(G)$ is coherent on $S$.
\end{proposition}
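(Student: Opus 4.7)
The plan is to invoke the Grothendieck spectral sequence for the composition of derived functors $\R f_* \circ \R g_*$, i.e.\ the Leray spectral sequence
\[
 E_2^{p,q} = \R^pf_*(\R^qg_*(G)) \Rightarrow \R^{p+q}h_*(G).
\]
The spectral sequence is available in $\Mod(S)$ because the direct image along $g$ of any injective module is flasque, hence acyclic for $f_*$; this is the standard hypothesis making Grothendieck's spectral sequence applicable to $f_* \circ g_* = h_*$.

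First I would fix an integer $k \leq n$ and note that in the $E_2$-page the entries $E_2^{p,q}$ with $p+q = k$ only involve $q$ in the range $0 \leq q \leq k \leq n$ (for $p < 0$ or $q < 0$ the term is zero since the higher direct images vanish in negative degrees). By the hypothesis, each $\R^qg_*(G)$ with $q \leq n$ is a coherent module on $X$. Applying Grauert's direct image theorem (Theorem \ref{t:propercoh}) to the proper morphism $f$ and the coherent module $\R^qg_*(G)$, I conclude that each $E_2^{p,q}$ with $p+q \leq n$ is a coherent module on $S$.

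Next I would pass from the $E_2$-page to $E_\infty$. Each subsequent page is obtained by taking kernels modulo images of differentials between entries of the preceding page; by Lemma \ref{l:3coh} (part a)) such kernels and cokernels of morphisms between coherent modules are again coherent, so by induction on the page index $r \geq 2$ each $E_r^{p,q}$ with $p+q \leq n$ is coherent on $S$. The spectral sequence degenerates on any fixed entry after finitely many pages (since $E_r^{p,q}$ involves a differential into or out of a region that eventually becomes trivial for $r$ large), so $E_\infty^{p,q}$ with $p+q \leq n$ is coherent on $S$.

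Finally I would extract $\R^kh_*(G)$ from its finite filtration whose successive quotients are precisely the $E_\infty^{p,k-p}$ for $p = 0,\dots,k$. Applying Lemma \ref{l:3coh} (part b)) repeatedly to the short exact sequences furnished by this filtration, each successive extension of coherent modules remains coherent, so $\R^kh_*(G)$ is coherent on $S$. I do not foresee a genuine obstacle here: the only points that need to be stated with some care are the existence of the Leray spectral sequence in the ringed-space setting (which I would dispatch by recalling that injective modules have flasque direct images) and the fact that a convergent first-quadrant spectral sequence whose $E_2$-entries on a given diagonal are coherent yields coherent abutments, both of which reduce to iterated application of Lemma \ref{l:3coh}.
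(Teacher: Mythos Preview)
Your overall strategy matches the paper's, but your inductive step has a genuine gap. To pass from $E_r^{p,q}$ to $E_{r+1}^{p,q}$ you must take the cohomology of
\[
 E_r^{p-r,q+r-1} \to E_r^{p,q} \to E_r^{p+r,q-r+1},
\]
and Lemma \ref{l:3coh} a) requires \emph{both} source and target of each map to be coherent. When $p+q = n$, the target $E_r^{p+r,q-r+1}$ sits on the diagonal of total degree $n+1$, which your induction hypothesis (``coherent for $p+q \leq n$'') does not cover. So the induction does not close on the boundary diagonal.

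The paper repairs this by proving a strictly stronger claim: $E_r^{p,q}$ is coherent whenever either $p+q \leq n$, \emph{or} $p+q = n+\Delta$ for some $\Delta \geq 1$ and $q$ satisfies an explicit bound depending on $r$ and $\Delta$. This extra clause is exactly what lets one verify, in the step $p+q = n$, that the outgoing target $E_r^{p+r,q-r+1}$ (with $q-r+1 \leq n$ and $\Delta = 1$) is already known to be coherent. The remainder of your argument---extracting $\R^kh_*(G)$ from its finite filtration via Lemma \ref{l:3coh} b)---is fine and is what the paper does as well.
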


\begin{proof}
 We employ the following fact: There exists a spectral sequence $E$ with values in $\Mod(S)$ such that:
 \begin{enumeratei}
  \item for all $p,q\in\Z$, $E_2^{p,q}\iso \R^pf_*(\R^qg_*(G))$;
  \item for all $k\in\Z$, there exists a filtration $F$ on $\R^kh_*(G)$ such that $F^0=\R^kh_*(G)$, $F^{k+1}\iso0$, and for all $p\in\Z$ there exists $r\in\N_{\geq2}$ such that $F^p/F^{p+1}\iso E_r^{p,k-p}$.
 \end{enumeratei}
 Given $E$, we claim: For all $r\in\N_{\geq 2}$ and all $p,q\in\Z$, $E_r^{p,q}$ is coherent on $S$ when any of the following conditions is satisfied:
 \begin{enumerate}
  \item $p+q\leq n$;
  \item there exists $\Delta\in\N_{\geq 1}$ such that $p+q=n+\Delta$, and
  \begin{equation} \label{e:cohcomp-q}
   q\leq n - \left(\sum_{\nu=2}^{\min(\Delta,r-1)} (r-\nu)\right),
  \end{equation}
  where the sum appearing on the right hand side in \eqref{e:cohcomp-q} is defined to equal $0$ in case $\min(\Delta,r-1)<2$.
 \end{enumerate}
 The claim is proven by means of induction. By Theorem \ref{t:propercoh} and property (i) we know that, for all $p,q\in\Z$ such that $q\leq n$ or $p<0$, $E_2^{p,q}$ is coherent on $S$; note that in case $p<0$, (i) implies that $E_2^{p,q}\iso 0$. Let $p,q\in\Z$ be arbitrary. When $p+q\leq n$, then either $p<0$ or $q\leq n$. On the contrary, when there exists $\Delta\in\N_{\geq 1}$ such that \eqref{e:cohcomp-q} holds for $r=2$, then $q\leq n$ as the value of the sum in \eqref{e:cohcomp-q} is always $\geq 0$. Hence our claim holds in case $r=2$.
 
 Now let $r\in\N_{\geq 2}$ and assume that, for all $p,q\in\Z$, when either a) or b) holds, then $E_r^{p,q}$ is coherent on $S$. Since $E$ is a spectral sequence, we know that for all $p,q\in\Z$,
 \[
  E_{r+1}^{p,q} \iso \H(\xymatrix@C3.5pc{E_r^{p-r,q+r-1} \ar[r]^-{d_r^{p-r,q+r-1}} & E_r^{p,q} \ar[r]^-{d_r^{p,q}} & E_r^{p+r,q-r+1}}).
 \]
 In particular, Lemma \ref{l:3coh} implies that, for all $p,q\in\Z$, when $E_r^{p-r,q+r-1}$, $E_r^{p,q}$, and $E_r^{p+r,q-r+1}$ are coherent on $S$, then $E_{r+1}^{p,q}$ is coherent on $S$. Let $p,q\in\Z$ be arbitrary. When $p+q\leq n-1$, then
 \[
  (p-r)+(p+r-1)\leq p+q\leq (p+r)+(q-r+1)=p+q+1\leq n
 \]
 so that $E_r^{p-r,q+r-1}$, $E_r^{p,q}$, and $E_r^{p+r,q-r+1}$ are coherent on $S$ and consequently $E_{r+1}^{p,q}$ is coherent on $S$ by means of the preceding argument. Assume that $p+q=n$. In case $p<0$, $E_r^{p,q}\iso0$, whence $E_{r+1}^{p,q}\iso0$; in particular, $E_{r+1}^{p,q}$ is coherent on $S$. When $p\geq 0$, in addition to $E_r^{p-r,q+r-1}$ and $E_r^{p,q}$, $E_r^{p+r,q-r+1}$ is coherent on $S$ since $(p+r)+(q-r+1)=p+q+1=n+1$ and $q-r+1\leq q\leq n$ which means that \eqref{e:cohcomp-q} holds for $\Delta=1$. So, $E_{r+1}^{p,q}$ is coherent on $S$. Assume that there exists $\Delta\in\N$ such that $p+q=n+\Delta$, and $\Delta\geq 1$, and:
 \[
  q \leq n - \left(\sum_{\nu=2}^{\min(\Delta,(r+1)-1)} ((r+1)-\nu)\right).
 \]
 When $\Delta=1$,
 \[
  (p-r) + (q+r-1) = p+q-1 = n+\Delta-1 = n,
 \]
 whence $E_r^{p-r,q+r-1}$ is coherent on $S$. When $\Delta\geq2$, we have $(p-r)+(q+r-1)=n+\Delta'$ with $\Delta':=\Delta-1$. Moreover, $\Delta'\geq 1$ and
 \[
  q+r-1\leq n-\left(\sum_{\nu=1}^{\min(\Delta,r)-1}(r-\nu)\right)+(r-1)=n-\left(\sum_{\nu=2}^{\min(\Delta',r-1)}(r-\nu)\right),
 \]
 whence again, $E_r^{p-r,q+r-1}$ is coherent on $S$. Similarly, as $p+q=n+\Delta$, and $\Delta\geq 1$, and
 \[
  q \leq n - \left(\sum_{\nu=2}^{\min(\Delta,r)}(r+1-\nu)\right) \leq n - \left(\sum_{\nu=2}^{\min(\Delta,r-1)}(r-\nu)\right),
 \]
 $E_r^{p,q}$ is coherent on $S$. Finally, as firstly $(p+r)+(q-r+1)=p+q+1=n+\Delta''$, where $\Delta'':=\Delta+1$, secondly $\Delta''\geq 1$, and thirdly, setting $m:=\min(\Delta,r)$,
 \begin{equation*}
  \begin{split}
  q-r+1 & \leq n - \left(\sum_{\nu=1}^{m-1}(r-\nu)\right)-(r-1)\leq n-\sum_{\nu=2}^{m+1}(r-\nu) \\ & \leq n - \left(\sum_{\nu=2}^{\min(\Delta'',r-1)}(r-\nu)\right),
  \end{split}
 \end{equation*}
 $E_r^{p+r,q-r+1}$ is coherent on $S$. Therefore, $E_{r+1}^{p,q}$ is coherent on $S$. This finishes the proof of the claim.
 
 Now let $k\in\Z$ such that $k\leq n$. Let $F$ be a filtration of $\R^kh_*(G)$ as in (ii). We claim that, for all $p\in\Z_{\leq k+1}$, $F^p$ is coherent on $S$. Indeed, $F^{k+1}$ is isomorphic to zero in $\Mod(S)$, whence coherent on $S$. Let $p\in\Z_{\geq k}$ such that $F^{p+1}$ is coherent on $S$. Then there exists $r\in\N_{\geq2}$ such that $F^p/F^{p+1}\iso E_r^{p,k-p}$. Thus there exists an exact sequence
 \[
  0 \to F^{p+1} \to F^p \to E_r^{p,k-p} \to 0
 \]
 in $\Mod(S)$. As $p+(k-p)=k\leq n$, $E_r^{p,k-p}$ is coherent on $S$ by our preliminary claim. In turn by Lemma \ref{l:3coh}, $F^p$ is coherent on $S$.
\end{proof}

\begin{notation}
 \label{not:singf}
 Let $f\colon X\to S$ be a morphism of complex spaces.
 \begin{enumerate}
  \item \label{not:singf-singf} We set
  \[
   \Sing(f):=\{x\in X:\text{$f$ is not submersive in $x$}\}
  \]
  and call $\Sing(f)$ the \emph{singular locus} of $f$. Note that one has to regard $f$ as an absolute morphism in $\An$ in order to obtain a clean notation, \cf \S\ref{s:cat}.
  \item \label{not:singf-ss} Evidently, $\Sing(f)$ is a closed subset of $X$. Hence $X\setminus \Sing(f)$ is an open complex subspace of $X$. The composition of the canonical morphism $X\setminus \Sing(f) \to X$ and $f$ will be referred to as the \emph{submersive share} of $f$.
 \end{enumerate}
\end{notation}

\begin{proposition}
 \label{p:cohhdi}
 Let $f\colon X\to S$ be a proper morphism of complex spaces and $A$ a closed analytic subset of $X$ such that $\Sing(f)\subset A$. Suppose that $S$ is Cohen-Macaulay and $X$ is locally pure dimensional. Denote $g$ the restriction of $f$ to $X\setminus A$. Then, for all integers $p$ and $q$ such that $q+2\leq\codim(A,X)$, the Hodge module $\sH^{p,q}(g) := \R^qg_*(\Omega^p_g)$ is a coherent module on $S$.
\end{proposition}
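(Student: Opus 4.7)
The plan is to factor $g$ as $f \circ i$, where $i \colon X \setminus A \to X$ denotes the open immersion of complex spaces, and then to invoke Proposition \ref{p:cohcomp} on this composition. Since $i$ is an open immersion one has a canonical identification $\Omega^p_g = i^*(\Omega^p_f)$ in $\Mod(X\setminus A)$, and hence $\R^qg_*(\Omega^p_g) = \R^qg_*(i^*(\Omega^p_f))$ for every integer $q$. Because $f$ is proper, Proposition \ref{p:cohcomp}, applied to the composable pair $(f,i)$ with $n := \codim(A,X) - 2$, reduces the problem to showing that $\R^qi_*(i^*(\Omega^p_f))$ is coherent on $X$ for all integers $q$ with $q+2 \leq \codim(A,X)$.

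To establish this coherence I would appeal to Corollary \ref{c:gapcoh} applied to the closed analytic subset $A \subset X$ and the coherent module $F := \Omega^p_f$; the coherence of $\Omega^p_f$ itself is standard, following from the coherence of $\Omega^1_f$ together with the fact that wedge products of coherent modules are coherent. Since $X$ is locally pure dimensional by hypothesis, only two further conditions remain to be verified: that $X$ is Cohen-Macaulay on $X \setminus A$, and that $\Omega^p_f$ is locally finite free on $X$ on $X \setminus A$. The second condition is immediate from the inclusion $\Sing(f) \subset A$: on $X\setminus A$ the morphism $f$ coincides with $g$, which is submersive by construction, so $\Omega^1_f$ is locally finite free there, and therefore so is $\Omega^p_f = \wedge^p\Omega^1_f$.

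The point that requires the most attention is the Cohen-Macaulay condition on $X \setminus A$. Here I would use that a submersive morphism of complex spaces is, locally on its source, isomorphic to a projection of the form $S \times U \to S$ with $U$ an open subset of some $\C^n$; equivalently, each local ring of $X \setminus A$ is obtained from a local ring of $S$ by a formally smooth extension followed by a localization, and such extensions preserve the Cohen-Macaulay property. Combined with the hypothesis that $S$ is Cohen-Macaulay, this yields the Cohen-Macaulayness of $X \setminus A$. With all four hypotheses of Corollary \ref{c:gapcoh} in place, I obtain the coherence of $\R^qi_*(i^*(\Omega^p_f))$ on $X$ in the prescribed range of $q$, and feeding this output back into Proposition \ref{p:cohcomp} completes the argument, producing the desired coherence of $\sH^{p,q}(g) = \R^qg_*(\Omega^p_g)$ on $S$ whenever $q+2 \leq \codim(A,X)$.
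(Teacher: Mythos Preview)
Your proof is correct and follows essentially the same route as the paper: factor $g = f \circ i$, use Corollary \ref{c:gapcoh} to get coherence of $\R^q i_*(i^*\Omega^p_f)$ on $X$, and then feed this into Proposition \ref{p:cohcomp}. The only cosmetic difference is that the paper treats the case $A = \emptyset$ separately (since then $\codim(A,X) = \omega$ and your $n := \codim(A,X) - 2$ is not an integer), but in that case $g = f$ and coherence is immediate from Grauert's theorem.
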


\begin{proof}
 In case $A=\emptyset$, $g=f$, hence $\sH^{p,q}(g)$ is coherent on $S$ for all integers $p$ and $q$ by Theorem \ref{t:propercoh} as $f$ is proper and $\Omega^p_f$ is coherent on $X$. So, assume that $A\neq\emptyset$. Let $p\in\Z$ be arbitrary and put $n:=\codim(A,X)-2$, which makes sense now given $\codim(A,X)\in\N$. Write $i$ for the canonical immersion from $X\setminus A$ to $X$. Since, for all $x\in X\setminus A$, $f$ is submersive in $x$ and $S$ is nonsingular (\resp Cohen-Macaulay) in $f(x)$, we see that $X$ is nonsingular (\resp Cohen-Macaulay) in $X\setminus A$. Moreover, $\Omega^p_f$ is coherent on $X$ and locally finite free on $X$ in $X\setminus A$. Thus by Corollary \ref{c:gapcoh}, $\R^qi_*(i^*(\Omega^p_f))$ is coherent on $X$ for all integers $q$ such that $q\leq n$. As $i^*(\Omega^p_f)$ is isomorphic to $\Omega^p_g$ in $\Mod(Y)$, it follows that $\R^qi_*(\Omega^p_g)$ is coherent on $X$ for all integers $q$ such that $q\leq n$. Therefore, our claim is implied by Proposition \ref{p:cohcomp}.
\end{proof}

\section{Infinitesimal neighborhoods}
\label{s:in}

In this section we will consider a morphism of complex spaces $f\colon X\to S$ together with a distinguished ``basepoint'' $t\in S$. Most of the time, we shall assume $f$ to be submersive in the points of $f^{-1}(\{t\})$. We are interested in the degeneration behavior of the Frölicher spectral sequence for the infinitesimal neighborhoods of the morphism $f$ with respect to the closed analytic subsets $f^{-1}(\{t\})$ and $\{t\}$ of $X$ and $S$, respectively. Our main result, namely Theorem \ref{t:froeinf}, of which we have found no account in the literature, asserts that if the Frölicher spectral sequence of the zeroth infinitesimal neighborhood of $f$ degenerates in entries of a certain total degree $n\in\Z$, then the Frölicher spectral sequence of any infinitesimal neighborhood of $f$ degenerates in entries of total degree $n$. The proof of Theorem \ref{t:froeinf} proceeds by induction on the order of the infinitesimal neighborhood, which is why we call this technique the ``infinitesimal lifting of degeneration''.

\begin{setup}
 \label{set:inbc}
 Let $f\colon X\to S$ be a morphism of complex spaces and $t\in S$. We set $S':=\{t\}$ and $X':=f^{-1}(S')$ (set-theoretically, for now) and write $\sI$ and $\sJ$ for the ideals of $X'$ and $S'$ on $X$ and $S$, respectively. For any natural number $m$, we define $X_m$ (\resp $S_m$) to be the $m$-th infinitesimal neighborhood of $X'$ (\resp $S'$) in $X$ (\resp $S$), so that $X_m$ (\resp $S_m$) is the closed complex subspace of $X$ (\resp $S$) defined by the ideal $\sI^{m+1}$ (\resp $\sJ^{m+1}$), \cf \cite[p.~32]{GrPeRe94}. We write $i_m\colon X_m\to X$ (\resp $b_m\colon S_m\to S$) for the so induced canonical morphism of complex spaces. Moreover, we let $f_m\colon X_m\to S_m$ signify the unique morphism of complex spaces satisfying $f \circ i_m = b_m \circ f_m$, that is, making the diagram
 \begin{equation} \label{e:inbc-m}
  \xysquare{X_m}{X}{S_m}{S}{i_m}{f_m}{f}{b_m}
 \end{equation}
 commute in the category of complex spaces.
 
 When $l$ is a natural number such that $l\leq m$, then we denote $i_{l,m} \colon X_l \to X_m$ the unique morphism of complex spaces satisfying $i_m \circ i_{l,m} = i_l$. Similarly, we denote $b_{l,m} \colon S_l \to S_m$ the unique morphism of complex spaces which satisfies $b_m \circ b_{l,m} = b_l$. Given this notation, it follows that the diagram
 \begin{equation} \label{e:inbc-lm}
  \xysquare{X_l}{X_m}{S_l}{S_m}{i_{l,m}}{f_l}{f_m}{b_{l,m}}
 \end{equation}
 commutes in the category of complex spaces.
 
 When $l$ and $m$ are as above and $n$, $p$, and $q$ are integers, we denote by
 \begin{align*}
  \phi^n_m & \colon \sH^n(f_m) \to \sH^n(f), & \phi^n_{l,m} & \colon \sH^n(f_m) \to \sH^n(f_l), \\
  \phi^{p,n}_m & \colon \F^p\sH^n(f_m) \to \F^p\sH^n(f), & \phi^{p,n}_{l,m} & \colon \F^p\sH^n(f_m) \to \F^p\sH^n(f_l), \\
  \beta^{p,q}_m & \colon \sH^{p,q}(f_m) \to \sH^{p,q}(f), & \beta^{p,q}_{l,m} & \colon \sH^{p,q}(f_m) \to \sH^{p,q}(f_l)
 \end{align*}
 the de Rham base change maps in degree $n$, the filtered de Rham base change maps in bidegree $(p,n)$, and the Hodge base change maps in bidegree $(p,q)$ associated respectively to the commutative squares in \eqref{e:inbc-m} and \eqref{e:inbc-lm}, \cf \S\ref{s:bc}.
\end{setup}

In order to prove Theorem \ref{t:froeinf}, we observe in the first place that the algebraic de Rham modules $\sH^n(f_m)$ ($n\in\Z$, $m\in\N$) of our infinitesimal neighborhoods $f_m\colon X_m \to S_m$ (here we speak in terms of Setup \ref{set:inbc}) are altogether free and ``compatible with base change''. Note that even though one actually gains information about the algebraic de Rham modules $\sH^n(f_m)$ as a corollary of Theorem \ref{t:froeinf}, it is crucial to establish their mentioned properties a priori. The key is the following sort of ``universal coefficient theorem''/``topological base change theorem''.

\begin{lemma}
 \label{l:topbc}
 Let $f\colon X\to S$ be a morphism of topological spaces and $\theta \colon B \to A$ a morphism of commutative sheaves of rings on $S$. Let
 \begin{align*}
  f^A \colon (X,f^*A) \to (S,A) \quad (\text{\resp } f^B \colon (X,f^*B) \to (S,B))
 \end{align*}
 be the morphism of ringed spaces given by $f$ and the adjunction morphism $A \to f_*f^*A$ (\resp $B \to f_*f^*B$). Moreover, let
 \[
  u \colon (X,f^*A) \to (X,f^*B) \quad (\text{\resp } w \colon (S,A) \to (S,B))
 \]
 be the morphism of ringed spaces given by $\id_{|X|}$ and $f^*(\theta)$ (\resp $\id_{|S|}$ and $\theta$). Then the following diagram commutes in the category of ringed spaces:
 \begin{equation} \label{e:topbc-sq}
  \xysquare{(X,f^*A)}{(X,f^*B)}{(S,A)}{(S,B)}{u}{f^A}{f^B}{w}
 \end{equation}
 Furthermore, when $\theta$ makes $A$ into a locally finite free $B$-module on $S$, then, for all integers $n$, the morphism
 \[
  \beta^n \colon w^*(\R^nf^B_*(f^*B)) \to \R^nf^A_*(f^*A)
 \]
 which is obtained from $f^*(\theta) \colon f^*B \to f^*A$ by means of Construction \ref{con:derfib} with respect to the square in \eqref{e:topbc-sq} is an isomorphism of $A$-modules on $S$.
\end{lemma}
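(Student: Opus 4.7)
The plan is to handle the two assertions in turn: first the commutativity of \eqref{e:topbc-sq}, then the base change claim, which will be reduced to the case $A \cong B^{\oplus r}$ by localizing on $S$.

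Commutativity of \eqref{e:topbc-sq} decomposes into two verifications. On the level of underlying topological spaces, all four morphisms have either $\id_{|X|}$, $\id_{|S|}$, or $|f|$ as underlying map, so both composite continuous maps from $|X|$ to $|S|$ equal $|f|$. On the level of structure sheaves, the composite $w \circ f^A$ yields $B \xrightarrow{\theta} A \xrightarrow{\eta_A} |f|_*(f^*A)$, while $f^B \circ u$ yields $B \xrightarrow{\eta_B} |f|_*(f^*B) \xrightarrow{|f|_*(f^*\theta)} |f|_*(f^*A)$, where $\eta_A, \eta_B$ denote the adjunction units defining $f^A, f^B$. The two composites agree by the naturality of the adjunction unit $\eta_{(-)} \colon (-) \to |f|_*f^*(-)$ applied to the morphism of sheaves of rings $\theta$.

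For the base change claim, observe that since $u$ and $w$ induce the identity on underlying topological spaces, one has $u^*G = f^*A \otimes_{f^*B} G$ for any $f^*B$-module $G$, and $w^*\mathcal F = A \otimes_B \mathcal F$ for any $B$-module $\mathcal F$; in particular $u^*(f^*B)$ may be identified with $f^*A$ via $f^*(\theta)$, so $\beta^n$ can be viewed as a morphism of $A$-modules on $S$ of the form
\[
 \beta^n \colon A \otimes_B \R^n f^B_*(f^*B) \longrightarrow \R^n f^A_*(f^*A).
\]
The formation of $\beta^n$ is compatible with restriction to open subsets of $S$, so it suffices to prove it is an isomorphism locally on $S$. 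By the local finite freeness hypothesis, we may pass to an open $U \subset S$ over which there exists a $B|_U$-linear isomorphism $\phi \colon (B|_U)^{\oplus r} \to A|_U$; pulling back along $f$ over $V := f^{-1}(U)$ yields an $f^*B|_V$-linear isomorphism $f^*\phi \colon (f^*B|_V)^{\oplus r} \to f^*A|_V$.

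The decisive step is now a naturality argument. Using that $\R^n f^B_*$ commutes with finite direct sums, $f^*\phi$ induces over $U$ an isomorphism $\R^n f^B_*(f^*A)|_U \cong \bigl(\R^n f^B_*(f^*B)|_U\bigr)^{\oplus r}$; similarly, tensor product preserves finite direct sums, so $\phi$ induces $(A \otimes_B \R^n f^B_*(f^*B))|_U \cong \bigl(\R^n f^B_*(f^*B)|_U\bigr)^{\oplus r}$. Under both identifications the morphism $\beta^n|_U$ must correspond to the identity of $\bigl(\R^n f^B_*(f^*B)|_U\bigr)^{\oplus r}$: by the functoriality of Construction \ref{con:derfib} in the datum $\theta$ and by its additivity, it suffices to know that for the trivial morphism $\theta = \id_B \colon B \to B$ the associated base change is the identity of $\R^n f^B_*(f^*B)$. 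Consequently $\beta^n|_U$ is an isomorphism, and since such $U$ cover $S$, so is $\beta^n$ globally. The main obstacle will be the verification of this last compatibility: one must unwind Construction \ref{con:derfib} far enough to confirm both additivity in $\theta$ and the triviality of $\beta^n$ in the case $\theta = \id_B$, rather than merely encoding some canonical but opaque identification.
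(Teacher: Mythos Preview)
Your proposal is correct and the overall strategy is sound, but it differs from the paper's route. The paper does not localize and reduce to $A\cong B^{\oplus r}$. Instead it invokes the projection formula (Proposition~\ref{p:projmoriso}) directly: since $A$ is locally finite free over $B$, the projection morphism
\[
 \pi^n_{f^B}(A,f^*B)\colon A\otimes_B \R^nf^B_*(f^*B)\longrightarrow \R^nf^B_*\bigl((f^B)^*A\otimes_{f^*B} f^*B\bigr)
\]
is an isomorphism; composing with $\R^nf^B_*(\rho)$ for the canonical $\rho\colon (f^B)^*A\otimes_{f^*B} f^*B\to f^*A$ and with the base change $\R^nf^B_*(f^*A)\to w_*(\R^nf^A_*(f^*A))$ (an isomorphism because $u_\top,w_\top$ are identities), one obtains an isomorphism which the paper then identifies with $w_*(\beta^n)$ and concludes via faithfulness of $w_*$. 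The paper, like you, omits the detailed unwinding of Construction~\ref{con:derfib} at this last identification.

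What each approach buys: the paper's argument is shorter because Proposition~\ref{p:projmoriso} is already established in the text, so the ``localize and use additivity'' work is packaged away. Your approach is more self-contained but essentially re-derives that special case of the projection formula by hand. One point to be careful about in your write-up: the phrase ``additivity in $\theta$'' is imprecise, since $\theta$ is a ring homomorphism and the ringed-space structure on $(S,A)$ enters through $\R^nf^A_*$. What you really need is that, after applying the base-change isomorphism $w_*(\R^nf^A_*(f^*A))\cong \R^nf^B_*(f^*A)$ coming from $u_\top=\id$, the map $w_*(\beta^n)$ is a construction that depends only on $f^*A$ as an $f^*B$-module and is additive in that variable; this is exactly the content of the projection morphism, so you may find it cleaner to phrase the argument that way.
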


\begin{proof}
 It is clear that the diagram in \eqref{e:topbc-sq} commutes in the category of commutative ringed spaces. Now fix an integer $n$. Consider the $n$-th projection morphism relative $f^B$, denoted $\pi^n_{f^B}$, which is a natural transformation between certain functors going from $\Mod(S,B) \times \Mod(X,f^*B)$ to $\Mod(S,B)$, \cf Construction \ref{con:projmor}. By Proposition \ref{p:projmoriso}, we know that, for all $f^*B$-modules $F$ on $X$, the projection morphism
 $$\pi^n_{f^B}(A,F) \colon A \otimes_{(S,B)} \R^nf^B_*(F) \to \R^nf^B_*((f^B)^*A \otimes_{(X,f^*B)} F)$$
is an isomorphism of $B$-modules on $S$ given that $A$ is a locally finite free module on $(S,B)$. Therefore, writing
 $$\rho \colon (f^B)^*A \otimes_{(X,f^*B)} f^*B = f^*A \otimes_{(X,f^*B)} f^*B \to f^*A$$
 for the canonical isomorphism of modules on $(X,f^*B)$ (which is nothing but the right tensor unit for $f^*A$ on $(X,f^*B)$), we see that
 $$\R^nf^B_*(\rho) \circ \pi^n_{f^B}(A,f^*B) \colon A \otimes_{(S,B)} \R^nf^B_*(f^*B) \to \R^nf^B_*(f^*A)$$
 is an isomorphism of modules on $(S,B)$, too. Composing the latter morphism further with the base change
 $$\R^nf^B_*(f^*A) \to w_*(\R^nf^A_*(f^*A))$$
 yields yet another isomorphism of modules on $(S,B)$,
 $$A \otimes_{(S,B)} \R^nf^B_*(f^*B) \to w_*(\R^nf^A_*(f^*A)),$$
 which can be seen to equal $w_*(\beta^n)$ (we omit the verification of this very last assertion). Since the functor $w_* \colon \Mod(S,A) \to \Mod(S,B)$ is faithful, we deduce that $\beta^n$ is an isomorphism of modules on $(S,A)$.
\end{proof}

\begin{proposition}
 \label{p:drbc}
 Let $f\colon X\to S$ be a morphism of complex spaces and $t\in S$ such that $f$ is submersive in $f^{-1}(\{t\})$. Adopt the notation of Setup \ref{set:inbc} and let $n$ and $m$ be an integer and a natural number, respectively.
 \begin{enumerate}
  \item \label{p:drbc-free} The module $\sH^n(f_m)$ is free on $S_m$.
  \item \label{p:drbc-bc} For all $l\in\N$ such that $l\leq m$, the de Rham base change map
  $$\phi^n_{l,m} \colon b_{l,m}^*(\sH^n(f_m)) \to \sH^n(f_l)$$
  is an isomorphism in $\Mod(S_l)$.
 \end{enumerate}
\end{proposition}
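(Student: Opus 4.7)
The plan is to reduce the computation of $\sH^n(f_m)$ to sheaf cohomology on the topological space $X':=f^{-1}(\{t\})$ with constant coefficients in the finite-dimensional local Artinian $\C$-algebra $A_m := \O_{S_m,t}$, and then to invoke Lemma \ref{l:topbc}.

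First, because $f$ is submersive at every point of $X'$ (an open condition on $X$), the morphism $f_m\colon X_m\to S_m$ is itself submersive; locally around any point of $|X_m|=X'$ I may therefore factor $f_m$ over $S_m$ as a projection $S_m\times U\to S_m$ with $U$ an open subset of $\C^d$, $d$ being the relative dimension. In such a local model, using that $|S_m|$ consists of a single point and hence $\O_{S_m\times U}=A_m\otimes_\C\O_U$, the relative de Rham complex $\Omega^\kdot_{f_m}$ is precisely the $A_m$-scalar extension of the classical holomorphic de Rham complex $\Omega^\kdot_U$. Since $A_m$ is a $\C$-vector space, hence $\C$-flat, the classical holomorphic Poincaré lemma on $U$ tensored with $A_m$ over $\C$ yields exactness of
\begin{equation*}
 0 \to f_m^{-1}\O_{S_m} \to \O_{X_m} \to \Omega^1_{f_m} \to \Omega^2_{f_m} \to \cdots
\end{equation*}
as a sequence of $f_m^{-1}\O_{S_m}$-modules on $|X_m|$. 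Patching these local trivializations yields the relative holomorphic Poincaré lemma for $f_m$: the complex $\bar\Omega^\kdot_{f_m}$ is a resolution of $f_m^{-1}\O_{S_m}$ on $\bar{X_m}$. Consequently $\sH^n(f_m)\iso\R^n\bar{f_m}_*(f_m^{-1}\O_{S_m})$ in $\Mod(S_m)$.

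Secondly, since $|S_m|=\{t\}$ is a single point, a sheaf of $\O_{S_m}$-modules on $S_m$ is the same datum as an $A_m$-module, and $f_m^{-1}\O_{S_m}$ is literally the constant sheaf $(A_m)_{X'}$ on $X'$; hence $\R^n\bar{f_m}_*(f_m^{-1}\O_{S_m})\iso\H^n(X',(A_m)_{X'})$ as $A_m$-modules. Now I apply Lemma \ref{l:topbc} to the topological morphism $X'\to\{t\}$, with constant sheaves of rings $B:=\C$ and $A:=A_m$ on $\{t\}$ and the unit $\theta\colon\C\to A_m$. Because $A_m$ is finite-dimensional over $\C$, the sheaf $A$ is a locally finite free $B$-module, and the lemma produces a canonical $A_m$-module isomorphism
\begin{equation*}
 A_m\otimes_\C\H^n(X',\C)\iso\H^n(X',(A_m)_{X'}).
\end{equation*}
Combining the three identifications gives $\sH^n(f_m)\iso A_m\otimes_\C\H^n(X',\C)$, which is a free $A_m$-module (any $\C$-basis of $\H^n(X',\C)$ serves as an $A_m$-basis), proving (a). For (b), applying the same recipe to $l$ in place of $m$ and using naturality of the Poincaré lemma and of Lemma \ref{l:topbc} in $\theta$, applied to the factorization $\C\to A_m\to A_l$, identifies $\phi^n_{l,m}$ with the tautological isomorphism $A_l\otimes_{A_m}(A_m\otimes_\C\H^n(X',\C))\iso A_l\otimes_\C\H^n(X',\C)$.

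The main obstacle is setting up the relative holomorphic Poincaré lemma over the Artinian (possibly non-reduced) base $S_m$: one must verify that a submersive morphism of complex spaces admits the appropriate local product structure $S_m\times U\to S_m$ even when the base is a fat point, and that $\Omega^\kdot_{f_m}$ in such local coordinates really is the $\C$-flat extension of the holomorphic de Rham complex on $U$. The compatibility of $\phi^n_{l,m}$ with the tensor-product identification in (b) is formal bookkeeping through the construction of the base change morphism reviewed in \S\ref{s:bc}.
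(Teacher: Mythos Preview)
Your argument is correct and follows essentially the same route as the paper: both invoke the relative holomorphic Poincar\'e lemma for the submersive morphism $f_m$ to identify $\sH^n(f_m)$ with $\R^n\bar{f_m}_*(f_m^{-1}\O_{S_m})$, and then apply Lemma~\ref{l:topbc} with $B=\C$ and $A=A_m=\O_{S_m}$ (a finite free $\C$-module) to obtain freeness and base change compatibility. The paper reverses the order of these two steps and asserts the Poincar\'e step without your explicit local-model justification, but the content is the same; your verification of part~(b) via naturality in the factorization $\C\to A_m\to A_l$ matches the paper's diagram chase through the associativity of base changes.
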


\begin{proof}
 \ref{p:drbc-free})
 By abuse of notation, we write $X'$ (\resp $S'$) for the topological space induced on $X'$ (\resp $S'$) by $X_\top$ (\resp $S_\top$). We write $f'\colon X' \to S'$ for the corresponding morphism of topological spaces. Fix some natural number $k$. We set $B:=\C_{S'}$ and $A_k:=\O_{S_k}$ and write $\theta_k \colon B \to A_k$ for the morphism of sheaves of rings on $S'$ which is induced by the structural morphism $S_k \to \C$ of the complex space $S_k$. Define morphisms of ringed spaces
 \begin{align*}
  f'^{A_k} \colon (X',f'^*A_k) & \to (S',A_k), & f'^B \colon (X',f'^*B) & \to (S',B), \\
  u_k \colon (X',f'^*A_k) & \to (X',f'^*B), & w_k \colon (S',A_k) & \to (S',B)
 \end{align*}
 just as in Lemma \ref{l:topbc} (for $f'\colon X'\to S'$ in place of $f\colon X\to S$ and $\theta_k\colon B\to A_k$ in place of $\theta\colon B\to A$). Observe that $f' = (f_k)_\top$ and thus $f'^* = f_k^{-1}$ and $f'^{A_k} = \bar{f_k}$. Therefore, by Lemma \ref{l:topbc}, as $\theta_k$ makes $A_k$ into a finite free $B$-module on $S'$, there exists an isomorphism
 $$\beta_k \colon w_k^*(\R^nf'^B_*(f'^*B)) \to \R^nf'^{A_k}_*(f'^*A_k) = \R^n\bar{f_k}_*(f_k^{-1}\O_{S_k})$$
 of $A_k$-modules on $S'$. Since the morphism of complex spaces $f\colon X\to S$ is submersive in $f^{-1}(\{t\})$, we see that the morphism of complex spaces $f_k \colon X_k \to S_k$ is submersive. In consequence, the canonical morphism $f_k^{-1}\O_{S_k} \to \bar\Omega^\kdot_{f_k}$ in $\K^+(\bar{X_k})$ is a quasiisomorphism. Hence the induced morphism
 $$\R^n\bar{f_k}_*(f_k^{-1}\O_{S_k}) \to \R^n\bar{f_k}_*(\bar\Omega^\kdot_{f_k}) = \sH^n(f_k)$$
 is an isomorphism in $\Mod(S_k)$. Since $\R^nf'^B_*(f'^*B)$ is clearly free on $(S',B)$, whence $w_k^*(\R^nf'^B_*(f'^*B))$ is free on $(S',A_k)$, this shows that $\sH^n(f_k)$ is free on $S_k$.
 
 \ref{p:drbc-bc})
 Let $l\in\N$ such that $l\leq m$. Define
 \[
  \bar{i_{l,m}} \colon (X',f'^*A_l) \to (X',f'^*A_m)
 \]
 to be the morphism of ringed spaces given by $\id_{X'}$ and the image of the canonical morphism $A_m \to A_l$ of sheaves of rings on $S'$ under the functor $f'^*$. Then evidently, the diagram
 \begin{equation} \label{e:drbc-1}
  \xymatrix{
   (X',f'^*A_l) \ar[r]^{\bar{i_{l,m}}} \ar[d]_{\bar{f_l}} & (X',f'^*A_m) \ar[r]^{u_m} \ar[d]_{\bar{f_m}} & (X',f'^*B) \ar[d]^{f'^B} \\
   (S',A_l) \ar[r]_{b_{l,m}} & (S',A_m) \ar[r]_{w_m} & (S',B)
  }
 \end{equation}
 commutes in the category of ringed spaces, and we have
 \[
  u_m \circ \bar{i_{l,m}} = u_l \quad \text{and} \quad w_m \circ b_{l,m} = w_l.
 \]
 Define
 \[
  \beta_{l,m} \colon b_{l,m}^*(\R^n\bar{f_m}_*(f_m^{-1}\O_{S_m})) \to \R^n\bar{f_l}_*(f_l^{-1}\O_{S_l})
 \]
 to be the base change in degree $n$ with respect to the left-hand square in \eqref{e:drbc-1} induced by the canonical morphism $f'^*A_m \to f'^*A_l$ of sheaves on $X'$, \cf Construction \ref{con:derfib}. Then, by the associativity of base changes, the following diagram commutes in $\Mod(S_l)$:
 \[
  \xymatrix{
   b_{l,m}^*(w_m^*(\R^nf'^B_*(f'^*B))) \ar[r]^-{\sim} \ar[d]_{b_{l,m}^*(\beta_m)} & w_l^*(\R^nf'^B_*(f'^*B)) \ar[d]^{\beta_l} \\ b_{l,m}^*(\R^n\bar{f_m}_*(f_m^{-1}\O_{S_m})) \ar[r]_-{\beta_{l,m}} & \R^n\bar{f_l}_*(f_l^{-1}\O_{S_l})
  }
 \]
 Since $\beta_l$ and $\beta_m$ are isomorphisms (by Lemma \ref{l:topbc}, see above), it follows that $\beta_{l,m}$ is an isomorphism. By the functoriality of the morphisms $f^{-1}\O_{S} \to \bar\Omega^\kdot_{f}$ in terms of $f$, we know that the following diagram of complexes of modules commutes:
 \[
  \xysquare{f_m^{-1}\O_{S_m}}{f_l^{-1}\O_{S_l}}{\bar\Omega^\kdot_{f_m}}{\bar\Omega^\kdot_{f_l}}{}{}{}{}
 \]
 Therefore, due to the functoriality of Construction \ref{con:bc} (with respect to a fixed square), the following diagram commutes in $\Mod(S_l)$:
 \[
  \xymatrix{
   b_{l,m}^*(\R^n\bar{f_m}_*(f_m^{-1}\O_{S_m})) \ar[r]^-{\beta_{l,m}} \ar[d]_{\sim} & \R^n\bar{f_l}_*(f_l^{-1}\O_{S_l}) \ar[d]^{\sim} \\ b_{l,m}^*(\R^n\bar{f_m}_*(\bar\Omega^\kdot_{f_m})) \ar[r] & \R^n\bar{f_l}_*(\bar\Omega^\kdot_{f_l})
  }
 \]
 Here, the lower horizontal arrow is, by definition, nothing but the de Rham base change map
 $$\phi^n_{l,m} \colon b_{l,m}^*(\sH^n(f_m)) \to \sH^n(f_l),$$
 which we hence see to be an isomorphism.
\end{proof}

The upcoming series of results paves the way for the proof of Theorem \ref{t:froeinf}. Proposition \ref{p:grmod} and Lemma \ref{l:grbc} are rather general (we include them here for lack of good references), whereas Proposition \ref{p:infext} and Lemma \ref{l:hdginfext} are more specific and adapted to our Setup \ref{set:inbc} of infinitesimal neighborhoods. Lemma \ref{l:lang} recalls a result which is closely related to Nakayama's Lemma.

\begin{proposition}
 \label{p:grmod}
 Let $X$ be a commutative ringed space, $I$ an ideal on $X$, and $m$ a natural number. Then, for all locally finite free modules $F$ on $X$, the canonical morphism of sheaves
 \begin{equation} \label{e:grmod}
  I^m/I^{m+1} \otimes_X F \to I^mF/I^{m+1}F
 \end{equation}
 is an isomorphism in $\Mod(X)$ (or else in $\Mod(X_\top,\O_X/I)$ adjusting the scalar multiplications of the modules in \eqref{e:grmod} appropriately).
\end{proposition}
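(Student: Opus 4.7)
The plan is to verify the assertion stalk-by-stalk, reducing to a purely algebraic statement about modules over the local ring $\O_{X,x}$ that becomes trivial once $F_x$ is free. The only subtlety is identifying the stalks of $I^m$, $I^mF$, and $I^{m+1}F$ correctly; once that is in place the argument is routine.

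First I would fix $x\in X$ and unwind the stalks of the two sides of \eqref{e:grmod}. Since taking stalks commutes with finite products of ideals, with quotients, and with the tensor product, the stalk at $x$ of the left-hand side is
\[
 (I^m/I^{m+1})_x \otimes_{\O_{X,x}} F_x \;\iso\; (I_x)^m/(I_x)^{m+1} \otimes_{\O_{X,x}} F_x.
\]
For the right-hand side, I would observe that $I^mF$ is by definition the image of the canonical $\O_X$-linear multiplication morphism $I^m\otimes_X F\to F$, so its stalk at $x$ coincides with the image of $(I_x)^m\otimes_{\O_{X,x}} F_x\to F_x$, which is exactly $(I_x)^m F_x$; likewise $(I^{m+1}F)_x=(I_x)^{m+1}F_x$. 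Hence the stalk of the right-hand side at $x$ is $(I_x)^m F_x/(I_x)^{m+1} F_x$, and the canonical morphism \eqref{e:grmod} at the stalk at $x$ is the evident $\O_{X,x}$-linear map
\[
 (I_x)^m/(I_x)^{m+1}\otimes_{\O_{X,x}} F_x \longrightarrow (I_x)^m F_x/(I_x)^{m+1}F_x,\quad \bar a\otimes s\mto \overline{a\cdot s}.
\]

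Next I would invoke the hypothesis that $F$ is locally finite free on $X$: pick an open neighborhood $U$ of $x$ and a natural number $r$ such that $F|U\iso\O_X|U$ to the $r$-th power. Passing to stalks, $F_x$ is a finite free $\O_{X,x}$-module, say $F_x\iso(\O_{X,x})^{\oplus r}$. Under this identification both sides of the displayed stalk map split as $r$-fold direct sums in the obvious way, namely
\[
 (I_x)^m/(I_x)^{m+1}\otimes_{\O_{X,x}}(\O_{X,x})^{\oplus r} \iso \left((I_x)^m/(I_x)^{m+1}\right)^{\oplus r}
\]
on the left, while on the right $(I_x)^m(\O_{X,x})^{\oplus r}=((I_x)^m)^{\oplus r}$ yields
\[
 (I_x)^m(\O_{X,x})^{\oplus r}/(I_x)^{m+1}(\O_{X,x})^{\oplus r} \iso \left((I_x)^m/(I_x)^{m+1}\right)^{\oplus r};
\]
the canonical morphism is exactly the $r$-fold sum of the identity, hence an isomorphism of $\O_{X,x}$-modules. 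As $x\in X$ was arbitrary, \eqref{e:grmod} is an isomorphism in $\Mod(X)$. Since the morphism is by construction annihilated by $I$ on both sides, it is also an isomorphism in $\Mod(X_\top,\O_X/I)$ once one relaxes the scalars.

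The main obstacle, modest as it is, lies in the opening bookkeeping step: making certain that the formation of $I^m$ and of $I^m F$ as sheaves is compatible with stalks---that is, that $(I^m)_x=(I_x)^m$ and $(I^mF)_x=(I_x)^m F_x$---so that the identifications above are genuine equalities and not merely ``canonical isomorphisms''. Once this is settled, the proof reduces, as outlined, to the observation that after choosing a local trivialization of $F$ the displayed map is the identity on a direct sum of $r$ copies of $I^m/I^{m+1}$.
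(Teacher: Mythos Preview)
Your argument is correct and rests on the same core observation as the paper's proof: the canonical map is trivially an isomorphism for $F=\O_X$, and both sides are additive in $F$, so the case of a finite free module follows immediately. The only difference is in how you localize. The paper frames the two sides as object functions of additive functors $\Mod(X)\to\Mod(X)$, observes that the canonical map is a natural transformation, checks it is an isomorphism for $F=\O_X$, deduces the finite free case by additivity, and then handles the locally finite free case by restricting to open subsets over which $F$ trivializes (using that the morphism is compatible with restriction). You instead pass to stalks and compute explicitly with $F_x\iso(\O_{X,x})^{\oplus r}$. Both routes are standard and equally short; your stalk bookkeeping ($(I^m)_x=(I_x)^m$, $(I^mF)_x=(I_x)^mF_x$) is fine since stalks commute with images, quotients, and tensor products.
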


\begin{proof}
 Frist of all, one observes that the associations
 $$F \mto I^m/I^{m+1} \otimes_X F \quad \text{and} \quad F \mto I^mF/I^{m+1}F,$$
 where $F$ runs through the modules on $X$, are the object functions of certain additive functors from $\Mod(X)$ to $\Mod(X)$ (or else $\Mod(X)$ to $\Mod(X_\top,\O_X/I)$). The morphism \eqref{e:grmod} may be defined for all modules $F$ on $X$ and as such makes up a natural transformation between the mentioned functors. When $F = \O_X$, \eqref{e:grmod} clearly is an isomorphism in $\Mod(X)$ (or else $\Mod(X_\top,\O_X/I)$). Thus, by means of ``abstract nonsense'', \eqref{e:grmod} is an isomorphism for all finite free modules $F$ on $X$. When $F$ is only locally finite free on $X$, one concludes by restricting $X$, $I$, and $F$ to open subsets $U$ of $X$ over which $F$ is finite free; this works as \eqref{e:grmod} is in the obvious way compatible with restriction to open subspaces.
\end{proof}

\begin{lemma}
 \label{l:grbc}
 Let
 \begin{equation} \label{e:grbc-sq}
  \xysquare{Y}{X}{T}{S}{u}{g}{f}{w}
 \end{equation}
 be a pullback square in the category of commutative ringed spaces such that $f$ is flat in points coming from $u$ and $w^\sharp \colon w^{-1}\O_S \to \O_T$ is a surjective morphism of sheaves of rings on $T_\top$. Denote $\sI$ (\resp $\sJ$) the ideal sheaf which is the kernel of the morphism of rings $u^\sharp \colon u^{-1}\O_X \to \O_Y$ on $Y_\top$ (\resp $w^\sharp \colon w^{-1}\O_S \to \O_T$ on $T_\top$). Then, for all $m\in\N$, the canonical morphism
 \begin{equation} \label{e:grbc}
  (u^{-1}\O_X/\sI) \otimes_{(Y_\top,g^{-1}(w^{-1}\O_S/\sJ))} g^{-1}(\sJ^m/\sJ^{m+1}) \to \sI^m/\sI^{m+1}
 \end{equation}
 is an isomorphism of $u^{-1}\O_X/\sI$-modules on $Y_\top$.
\end{lemma}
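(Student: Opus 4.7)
The plan is to reduce the claim to a standard flat-base-change identity at the level of sheaves of rings on $Y_\top$. First I would extract from the pullback hypothesis the isomorphism $u^{-1}\O_X \otimes_{g^{-1}w^{-1}\O_S} g^{-1}\O_T \iso \O_Y$ of sheaves of rings; combined with the surjectivity of $w^\sharp$, which gives $g^{-1}\O_T \iso g^{-1}w^{-1}\O_S / g^{-1}\sJ$, this identifies $\sI$ with the image of the multiplication map $u^{-1}\O_X \otimes_{g^{-1}w^{-1}\O_S} g^{-1}\sJ \to u^{-1}\O_X$. I would then translate the flatness hypothesis sheaf-theoretically: for each $y \in Y$ one has $(u^{-1}\O_X)_y = \O_{X,u(y)}$ and $(g^{-1}w^{-1}\O_S)_y = \O_{S,f(u(y))}$, and the induced map agrees with $f^\sharp_{u(y)}$, so the hypothesis precisely amounts to $u^{-1}\O_X$ being a flat $g^{-1}w^{-1}\O_S$-module on $Y_\top$.

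Exploiting this flatness, I would tensor the short exact sequence $0 \to g^{-1}\sJ \to g^{-1}w^{-1}\O_S \to g^{-1}\O_T \to 0$ with $u^{-1}\O_X$; the preserved exactness, compared with the surjection $u^{-1}\O_X \to \O_Y$, would promote the preceding image description of $\sI$ to an isomorphism
\begin{equation*}
 \theta_1 \colon u^{-1}\O_X \otimes_{g^{-1}w^{-1}\O_S} g^{-1}\sJ \iso \sI.
\end{equation*}
A straightforward induction on $m$—applying the same flat tensor product to $0 \to g^{-1}\sJ^{m+1} \to g^{-1}\sJ^m \to g^{-1}(\sJ^m/\sJ^{m+1}) \to 0$ and invoking the identities $g^{-1}\sJ^m = (g^{-1}\sJ)^m$ and $\sI^m = (g^{-1}\sJ)^m \cdot u^{-1}\O_X$—would yield isomorphisms $\theta_m \colon u^{-1}\O_X \otimes_{g^{-1}w^{-1}\O_S} g^{-1}\sJ^m \iso \sI^m$ for every $m \in \N$. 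Tensoring the same graded short exact sequence once more with $u^{-1}\O_X$ and inserting $\theta_{m+1}$ and $\theta_m$ on the two leftmost terms would then produce a canonical isomorphism
\begin{equation*}
 u^{-1}\O_X \otimes_{g^{-1}w^{-1}\O_S} g^{-1}(\sJ^m/\sJ^{m+1}) \iso \sI^m/\sI^{m+1}.
\end{equation*}

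Finally, since $u^{-1}\O_X \otimes_{g^{-1}w^{-1}\O_S} g^{-1}(w^{-1}\O_S/\sJ) \iso u^{-1}\O_X/\sI$ by the case $m=0$ together with flatness, a restriction-of-scalars manipulation would rewrite the left-hand side of this last isomorphism as $(u^{-1}\O_X/\sI) \otimes_{g^{-1}(w^{-1}\O_S/\sJ)} g^{-1}(\sJ^m/\sJ^{m+1})$; a routine check on local sections would then verify that the resulting morphism is the canonical one of \eqref{e:grbc}. The main technical obstacle I anticipate is the sheaf-theoretic bookkeeping behind the identifications $g^{-1}\sJ^m = (g^{-1}\sJ)^m$ and $\sI^m \iso u^{-1}\O_X \otimes_{g^{-1}w^{-1}\O_S} g^{-1}\sJ^m$. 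Both are cleanest to verify stalk-by-stalk at $y \in Y$, where they reduce to the commutative-algebra fact that if $R$ is a flat $A$-algebra and $J \subset A$ is an ideal, then $J^m R \iso R \otimes_A J^m$ for every $m \in \N$; once this input is granted, the remainder of the argument is entirely formal.
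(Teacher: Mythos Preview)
Your proposal is correct and follows essentially the same route as the paper: both arguments reduce to the commutative-algebra fact that for a flat ring map $B \to A$ and an ideal $J \subset B$ with $I := JA$, the natural maps $A \otimes_B J^m \to I^m$ and hence $A/I \otimes_{B/J} J^m/J^{m+1} \to I^m/I^{m+1}$ are isomorphisms, proved by induction on $m$. The only organizational difference is that the paper packages this as a ring-theoretic sublemma and then passes to stalks once at the end, whereas you carry the sheaf-level language throughout and invoke stalks only to justify the identifications $g^{-1}\sJ^m = (g^{-1}\sJ)^m$ and $\sI^m \iso u^{-1}\O_X \otimes g^{-1}\sJ^m$; the content is the same.
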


\begin{proof}
 We formulate a sublemma. Let
 \[
  \xysquare{B}{B'}{A}{A'}{\theta}{\phi}{\phi'}{\eta}
 \]
 be a pushout square in the category of commutative rings such that $\theta \colon B \to B'$ is surjective and $\phi$ makes $A$ into a flat $B$-module. Denote $I$ (\resp $J$) the kernel of $\eta \colon A \to A'$ (\resp $\theta \colon B\to B'$). Then, for all $m\in\N$, the canonical map
 $$A/I \otimes_{B/J} J^m/J^{m+1} \to I^m/I^{m+1}$$
 is a bijection. This assertion can be proven by verifying inductively that, for all $m\in\N$, the canonical mappings $A \otimes_B J^m \to I^m$ and $A \otimes_B (J^m/J^{m+1}) \to I^m/I^{m+1}$ are bijective; we omit the details.
 
 In order to prove the actual lemma, let $y$ be an arbitrary element of $Y$ and $m$ a natural number. Then the image of the morphism \eqref{e:grbc} under the stalk-at-$y$ functor on $Y_\top$ is easily seen to be isomorphic, over the canonical isomorphism of rings $(u^{-1}\O_X/\sI)_y \to \O_{X,x}/I$, to the canonical map
 \begin{equation} \label{e:grbc-1}
  \O_{X,x}/I \otimes_{\O_{S,s}/J} J^m/J^{m+1} \to I^m/I^{m+1},
 \end{equation}
 where $x := u(y)$, $t := g(y)$, $s := f(x) = w(t)$, and $I$ (\resp $J$) denotes the kernel of $u^\sharp_y \colon \O_{X,x} \to \O_{Y,y}$ (\resp $w^\sharp_t \colon \O_{S,s} \to \O_{T,t}$). Now since the diagram in \eqref{e:grbc-sq} is a pullback square in the category of commutative ringed spaces, we know that
 \[
  \xysquare{\O_{S,s}}{\O_{T,t}}{\O_{X,x}}{\O_{Y,y}}{w^\sharp_t}{f^\sharp_x}{g^\sharp_y}{u^\sharp_y}
 \]
 is a pushout square in the category of commutative rings. Therefore, the map \eqref{e:grbc-1} is bijective according to the sublemma. As $y\in Y$ was arbitrary, we conclude that \eqref{e:grbc} is an isomorphism of sheaves on $Y_\top$.
\end{proof}

\begin{proposition}
 \label{p:infext}
 Assume we are in the situation of Setup \ref{set:inbc}. Let $F'$ be a locally finite free module on $(X',i^*\O_X)$. Assume that $f$ is flat along $f^{-1}(\{t\})$ and denote by $H$ the Hilbert function of the local ring $\O_{S,t}$. Define $I$ to be the ideal on $(X',i^*\O_X)$ which is given as the kernel of the morphism
 $$(i_0)^\sharp \colon i^*\O_X = i_0^{-1}\O_X \to \O_{X_0}$$
 of sheaves of rings on $X' = (X_0)_\top$. Then, for all $m\in\N$, we have
 \[
  I^mF'/I^{m+1}F' \iso (F'/IF')^{\oplus H(m)}
 \]
 as $(i^*\O_X)/I$-modules on $X'$.
\end{proposition}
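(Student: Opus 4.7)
The plan is to combine Proposition \ref{p:grmod} with Lemma \ref{l:grbc} and reduce the statement to the observation that the Hilbert function $H$ measures the dimensions of the associated graded pieces of the maximal ideal at $t$.

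First, I would apply Proposition \ref{p:grmod} to the ringed space $(X', i^*\O_X)$, the ideal $I$, and the locally finite free module $F'$. Noting that $I$ acts as zero on both sides, the adjusted version of the proposition yields, for every $m \in \N$, a canonical isomorphism
\[
(I^m/I^{m+1}) \otimes_{\O_{X_0}} (F'/IF') \iso I^mF'/I^{m+1}F'
\]
of $(i^*\O_X)/I \iso \O_{X_0}$-modules on $X'$. Thus the problem reduces to establishing $I^m/I^{m+1} \iso \O_{X_0}^{\oplus H(m)}$ as $\O_{X_0}$-modules.

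Second, I would apply Lemma \ref{l:grbc} to the square
\[
\xysquare{X_0}{X}{S_0}{S}{i_0}{f_0}{f}{b_0}
\]
which, under the standing conventions of Setup \ref{set:inbc} (so that $\sI = f^\sharp(\sJ)\cdot\O_X$), is a pullback square in the category of commutative ringed spaces. Its hypotheses are satisfied: $f$ is flat in points coming from $i_0$ by assumption, and $b_0^\sharp \colon b_0^{-1}\O_S \to \O_{S_0}$ is surjective with kernel $b_0^{-1}\sJ$ since $b_0$ is a closed immersion. Noting that $b_0^{-1}$ is exact and hence commutes with the formation of powers of ideals and of quotients, the lemma provides a canonical isomorphism
\[
\O_{X_0} \otimes_{f_0^{-1}\O_{S_0}} f_0^{-1}\bigl(b_0^{-1}(\sJ^m/\sJ^{m+1})\bigr) \iso I^m/I^{m+1}.
\]

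Third, since $|S_0| = \{t\}$ with structure sheaf $\O_{S,t}/\fm_{S,t} \iso \C$, the sheaf $b_0^{-1}(\sJ^m/\sJ^{m+1})$ is the skyscraper at $t$ with stalk $\fm_{S,t}^m/\fm_{S,t}^{m+1}$, which by the very definition of the Hilbert function is a $\C$-vector space of dimension $H(m)$. A choice of basis furnishes an isomorphism $b_0^{-1}(\sJ^m/\sJ^{m+1}) \iso \O_{S_0}^{\oplus H(m)}$; applying $\O_{X_0} \otimes_{f_0^{-1}\O_{S_0}} f_0^{-1}(-)$, which commutes with finite direct sums, yields $I^m/I^{m+1} \iso \O_{X_0}^{\oplus H(m)}$. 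Feeding this back into the isomorphism of the first step gives
\[
I^mF'/I^{m+1}F' \iso \O_{X_0}^{\oplus H(m)} \otimes_{\O_{X_0}} (F'/IF') \iso (F'/IF')^{\oplus H(m)},
\]
as desired. The main obstacle will be the careful sheaf-theoretic bookkeeping — in particular the verification that the square above is a pullback in ringed spaces and that the various identifications involving $b_0^{-1}$, the associated graded, and the projection formula for $f_0$ are indeed canonical — rather than any substantial analytic input; the flatness hypothesis on $f$ enters solely to license the application of Lemma \ref{l:grbc}.
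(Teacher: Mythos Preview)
Your proposal is correct and follows essentially the same route as the paper: apply Proposition~\ref{p:grmod} to reduce to computing $I^m/I^{m+1}$, invoke Lemma~\ref{l:grbc} on the square $X_0\to X$, $S_0\to S$ to identify $I^m/I^{m+1}$ with the pullback of $J^m/J^{m+1}$, and then use that $J^m/J^{m+1}\iso(\O_{S_0})^{\oplus H(m)}$ since $(b^*\O_S,J)\iso(\O_{S,t},\fm)$. Your explicit flagging of the pullback hypothesis (i.e., that $\sI$ agrees with the ideal generated by $f^\sharp(\sJ)$) is apt; the paper uses this same fact---also in the proof of Lemma~\ref{l:hdginfext}---without further comment, and in the applications it is secured by the submersivity of $f$ along the fiber.
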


\begin{proof}
 Let $m$ be a natural number. As $F'$ is a locally finite free module on the ringed space $(X',i^*\O_X)$, the canonical morphism
 \[
  I^m/I^{m+1} \otimes_{(X',i^*\O_X)} F' \to I^mF'/I^{m+1}F'
 \]
 of $(i^*\O_X)/I$-modules on $X'$ is an isomorphism by Proposition \ref{p:grmod}. Set
 \[
  b := (b_0)_\top \colon S' = (S_0)_\top \to S_\top
 \]
 and define $J$ to be the ideal on $(S',b^*\O_S)$ which is the kernel of the morphism
 $$(b_0)^\sharp \colon b^*\O_S = b_0^{-1}\O_S \to \O_{S_0}.$$
 Then by Proposition \ref{l:grbc}, since $f$ was assumed to be flat in $f^{-1}(\{t\})$, the canonical morphism
 \begin{equation} \label{e:infext-1}
  (i^*\O_X/I) \otimes_{(X',f'^*(b^*\O_S/J))} f'^*(J^m/J^{m+1}) \to I^m/I^{m+1}
 \end{equation}
 is an isomorphism of $i^*\O_X/I$-modules on $X'$. Since the local (or ``idealized'') rings $(b^*\O_S,J)$ and $(\O_{S,t},\fm)$ are isomorphic, $J^m/J^{m+1}$ is isomorphic to $(b^*\O_S/J)^{\oplus H(m)}$ as a module on $(S',b^*\O_S/J)$. Therefore, by means of the isomorphism \eqref{e:infext-1}, $I^m/I^{m+1}$ is isomorphic to $(i^*\O_X/I)^{\oplus H(m)}$ as a module on $(X',i^*\O_X/I)$. This in turn implies that we have
 \[
  I^m/I^{m+1} \otimes_{(X',i^*\O_X)} F' \iso ((i^*\O_X/I) \otimes_{(X',i^*\O_X)} F')^{\oplus H(m)} \iso (F'/IF')^{\oplus H(m)}
 \]
 in the category of $i^*\O_X/I$-modules on $X'$.
\end{proof}

\begin{lemma}
 \label{l:hdginfext}
 Assume we are in the situation of Setup \ref{set:inbc} with $f$ submersive in $f^{-1}(\{t\})$. Denote by $H$ the Hilbert function of the local ring $\O_{S,t}$. Then, for all $p,q\in\Z$ and all $m\in\N$, there exists $\alpha^{p,q}_m$ such that
 \begin{equation} \label{e:hdginfext}
  \xymatrix@C3pc{
   (\sH^{p,q}(f_0))^{\oplus H(m+1)} \ar[r]^-{\alpha^{p,q}_m} & \sH^{p,q}(f_{m+1}) \ar[r]^-{\beta^{p,q}_{m,m+1}} & \sH^{p,q}(f_m)
  }
 \end{equation}
 is an exact sequence in $\Mod(S_{m+1})$, where the first and the last of the modules in \eqref{e:hdginfext} need to be regarded as modules on $S_{m+1}$ via the canonical morphisms of rings $\O_{S_{m+1}} \to \O_{S_0}$ and $\O_{S_{m+1}} \to \O_{S_m}$ on $S'$, respectively.
\end{lemma}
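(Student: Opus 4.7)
The plan is to produce a natural short exact sequence of $\O_{X_{m+1}}$-modules whose long exact sequence under $\R(f_{m+1})_*$ yields the claim. The ideal of the closed immersion $i_{m,m+1}\colon X_m\to X_{m+1}$ in $\O_{X_{m+1}}$ equals $\sI^{m+1}/\sI^{m+2}$, so we have the tautological sequence
\[
 0 \to \sI^{m+1}/\sI^{m+2} \to \O_{X_{m+1}} \to (i_{m,m+1})_*\O_{X_m} \to 0.
\]
Since $f$ is submersive in $f^{-1}(\{t\})$, the module $\Omega^p_{f_{m+1}}$ is locally finite free on $X_{m+1}$, so tensoring with it preserves exactness. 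The projection formula together with the base change isomorphism $i_{\ell,m+1}^*\Omega^p_{f_{m+1}} \cong \Omega^p_{f_\ell}$ (valid because $f_\ell$ is submersive) identifies the right-hand term as $(i_{m,m+1})_*\Omega^p_{f_m}$ and the left-hand term as $(i_{0,m+1})_*\bigl((\sI^{m+1}/\sI^{m+2})\otimes_{\O_{X_0}}\Omega^p_{f_0}\bigr)$.

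The next step is to identify this leftmost tensor product. As $f$ is flat along $f^{-1}(\{t\})$ (submersive implies flat), Lemma \ref{l:grbc} applied to the pullback square with vertices $X_0,X,S_0,S$ gives a canonical isomorphism $\sI^{m+1}/\sI^{m+2} \cong f_0^*(\sJ^{m+1}/\sJ^{m+2})$. Since $\sJ^{m+1}/\sJ^{m+2}$ is a skyscraper at $t$ with stalk $\fm(\O_{S,t})^{m+1}/\fm(\O_{S,t})^{m+2}$, a $\C$-vector space of dimension $H(m+1)$, we obtain $\sI^{m+1}/\sI^{m+2} \cong \O_{X_0}^{\oplus H(m+1)}$ and hence $(\sI^{m+1}/\sI^{m+2})\otimes_{\O_{X_0}}\Omega^p_{f_0} \cong (\Omega^p_{f_0})^{\oplus H(m+1)}$. (Alternatively, Proposition \ref{p:infext} applied with $F' = i^*\Omega^p_f$ delivers the same conclusion.) Combining, we arrive at a short exact sequence of $\O_{X_{m+1}}$-modules
\[
 0 \to (i_{0,m+1})_*(\Omega^p_{f_0})^{\oplus H(m+1)} \to \Omega^p_{f_{m+1}} \to (i_{m,m+1})_*\Omega^p_{f_m} \to 0.
\]

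Applying $\R(f_{m+1})_*$ produces a long exact sequence in $\Mod(S_{m+1})$. Each of the morphisms $i_{0,m+1},i_{m,m+1},b_{0,m+1},b_{m,m+1}$ is a closed immersion whose underlying continuous map is the identity on topological spaces, hence the corresponding pushforward functors are exact. The collapsed Leray spectral sequence therefore yields, for $\ell\in\{0,m\}$, the identity $\R^q(f_{m+1})_*\circ(i_{\ell,m+1})_* = (b_{\ell,m+1})_*\circ\R^q(f_\ell)_*$. The relevant three-term slice of the long exact sequence thus reads
\[
 ((b_{0,m+1})_*\sH^{p,q}(f_0))^{\oplus H(m+1)} \xrightarrow{\alpha^{p,q}_m} \sH^{p,q}(f_{m+1}) \to (b_{m,m+1})_*\sH^{p,q}(f_m),
\]
and viewing a module on $S_\ell$ as one on $S_{m+1}$ via the ring surjection $\O_{S_{m+1}}\to\O_{S_\ell}$ is precisely the application of $(b_{\ell,m+1})_*$. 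A brief comparison with Construction \ref{con:bc} shows that the right-hand arrow, induced by the natural morphism $\Omega^p_{f_{m+1}} \to (i_{m,m+1})_*\Omega^p_{f_m}$, agrees (under adjunction between $b_{m,m+1}^*$ and $(b_{m,m+1})_*$) with the Hodge base change $\beta^{p,q}_{m,m+1}$, yielding the desired exactness.

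The main obstacle lies in the clean identification of the left-hand term after tensoring; this demands the simultaneous use of local finite freeness of $\Omega^p_{f_{m+1}}$ (hence submersivity of $f$ at points over $t$), base change for relative differentials, and the ideal-theoretic computation of Lemma \ref{l:grbc}. Once these are assembled, the long exact sequence of $\R(f_{m+1})_*$ combined with exactness of the closed-immersion pushforwards produces the claim essentially formally.
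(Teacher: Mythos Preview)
Your proof is correct and follows essentially the same strategy as the paper: build a short exact sequence of $\O_{X_{m+1}}$-modules with middle term $\Omega^p_{f_{m+1}}$, right term $(i_{m,m+1})_*\Omega^p_{f_m}$, and left term isomorphic to $(\Omega^p_{f_0})^{\oplus H(m+1)}$, then take $\R^q(f_{m+1})_*$. The only organizational difference is that the paper works on the ambient ringed space $(X',i^*\O_X)$ with $F':=i^*\Omega^p_f$ and the short exact sequence $0\to I^{m+1}F'/I^{m+2}F'\to F'/I^{m+2}F'\to F'/I^{m+1}F'\to 0$, then transports it to $X_{m+1}$ via the isomorphisms $F'/I^{k+1}F'\cong\Omega^p_{f_k}$; you instead tensor the ideal sequence of $X_m\subset X_{m+1}$ directly with the locally free sheaf $\Omega^p_{f_{m+1}}$ and invoke projection formula plus base change for relative differentials. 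Both routes arrive at the same short exact sequence, and both identify the right-hand map with $\beta^{p,q}_{m,m+1}$ by unwinding the Hodge base change construction.
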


\begin{proof}
 Let $p,q\in\Z$ and $m\in\N$. Moreover, let $k\in\N$. Then the diagram
 \begin{equation} \label{e:hdginfext-sq}
  \xysquare{X_k}{X}{S_k}{S}{i_k}{f_k}{f}{b_k}
 \end{equation}
 commutes in the category of complex spaces, \cf Setup \ref{set:inbc}, and thus induces a morphism $i_k^*(\Omega^p_f) \to \Omega^p_{f_k}$ of modules on $X_k$, which is nothing but the $i_k^*$-${i_k}_*$-adjoint of the usual pullback of $p$-differentials $\Omega^p_f \to {i_k}_*(\Omega^p_{f_k})$. Since \eqref{e:hdginfext-sq} is a pullback square, the mentioned morphism $i_k^*(\Omega^p_f) \to \Omega^p_{f_k}$ is, in fact, an isomorphism in $\Mod(X_k)$. Now define $I$ as in Proposition \ref{p:infext}. Then $(i_k)^\sharp \colon i^*\O_X \to \O_{X_k}$ factors uniquely through the quotient morphism $i^*\O_X \to (i^*\O_X)/I^{k+1}$ to yield an isomorphism $(i^*\O_X)/I^{k+1} \to \O_{S_k}$ of rings on $X'$. In turn, we obtain an isomorphism of sheaves on $X'$,
 \[
  ((i^*\O_X)/I^{k+1}) \otimes_{(X',i^*\O_X)} i^*(\Omega^p_f) \to \O_{X_k}\otimes_{(X',i^*\O_X)} i^*(\Omega^p_f) = i_k^*(\Omega^p_f).
 \]
 Precomposing with the canonical morphism
 \[
  F'/I^{k+1}F' \to (i^*\O_X/I^{k+1}) \otimes_{(X',i^*\O_X)} F'
 \]
 of $i^*\O_X/I^{k+1}$-modules on $X'$ for $F' := i^*(\Omega^p_f)$ and composing with the already mentioned $i_k^*(\Omega^p_f) \to \Omega^p_{f_k}$, we arrive at an isomorphism of modules on $X'$,
 \[
  \alpha_k \colon F'/I^{k+1}F' \to \Omega^p_{f_k},
 \]
 over the isomorphism of rings $i^*\O_X/I^{k+1} \to \O_{X_k}$.
 
 Since $f$ is submersive in $f^{-1}(\{t\}) = X'$, the module $\Omega^p_f$ is locally finite free on $X$ in $X'$. Consequently, $F'$ is a locally finite free module on $(X',i^*\O_X)$. Hence by Proposition \ref{p:infext}, there exists an isomorphism
 $$\psi \colon (F'/IF')^{\oplus H(m+1)} \to I^{m+1}F'/I^{m+2}F'$$
 of $i^*\O_X/I$-modules on $X'$. Consider the following sequence of morphisms of sheaves on $X'$:
 \begin{equation} \label{e:hdginfext-1}
  \begin{split}
  (\Omega^p_{f_0})^{\oplus H(m+1)} & \xrightarrow{(\alpha_0^{-1})^{\oplus H(m+1)}} (F'/IF')^{\oplus H(m+1)} \overset{\psi}\to I^{m+1}F'/I^{m+2}F' \\ & \to F'/I^{m+2}F' \xrightarrow{\alpha_{m+1}} \Omega^p_{f_{m+1}},
  \end{split}
 \end{equation}
 where the unlabeled arrow stands for the morphism obtained from the inclusion morphism $I^{m+1}F'\to F'$ by quotienting out $I^{m+2}F'$. Then, when $\Omega^p_{f_0}$ is regarded as an $\O_{X_{m+1}}$-module on $X'$ via the canonical morphism of sheaves of rings $\O_{X_{m+1}} \to \O_{X_0}$, the composition of sheaf maps \eqref{e:hdginfext-1} is a morphism of sheaves of $\O_{X_{m+1}}$-modules on $X'$. This is due to the fact that the following diagram of canonical morphisms of sheaves of rings on $X'$ commutes:
 \[
  \xysquare{i^*\O_X/I^{m+2}}{\O_{X_{m+1}}}{i^*\O_X/I}{\O_{X_0}}{}{}{}{}
 \]
 We define $\alpha^{p,q}_m$ to be the composition of the following sheaf maps on $S'$:
 \begin{align*}
  (\sH^{p,q}(f_0))^{\oplus H(m+1)} & = (\R^q{f_0}_*(\Omega^p_{f_0}))^{\oplus H(m+1)} \to \R^q{f_0}_*((\Omega^p_{f_0})^{\oplus H(m+1)}) \\ & \to \R^q{f_{m+1}}_*((\Omega^p_{f_0})^{\oplus H(m+1)}) \to \R^q{f_{m+1}}_*(\Omega^p_{f_{m+1}}),
 \end{align*}
 where the last arrow signifies the image of the composition \eqref{e:hdginfext-1} under the functor $\R^q{f_{m+1}}_*$ (viewed as a functor from $\Mod(X_{m+1})$ to $\Mod(S_{m+1})$).
 
 We show that, given this definition of $\alpha^{p,q}_m$, the sequence \eqref{e:hdginfext} constitutes an exact sequence in $\Mod(S_{m+1})$. For that matter, consider the pullback of $p$-differentials $\Omega^p_{f_{m+1}} \to \Omega^p_{f_m}$ associated to the square in \eqref{e:inbc-lm} with $l$ and $m$ replaced by $m$ and $m+1$, respectively. Then the following diagram commutes in $\Mod(X_{m+1})$:
 \begin{equation}
  \label{e:hdginfext-2}
  \xysquare{F'/I^{m+2}F'}{F'/I^{m+1}F'}{\Omega^p_{f_{m+1}}}{\Omega^p_{f_m}}{}{\alpha_{m+1}}{\alpha_m}{}
 \end{equation}
 In addition, the following sequence is exact in $\Mod(X_{m+1})$:
 \[
  0\to I^{m+1}F'/I^{m+2}F'\to F'/I^{m+2}F'\to F'/I^{m+1}F'\to 0.
 \]
 Thus, as the diagram in \eqref{e:hdginfext-2} commutes in $\Mod(X_{m+1})$, the vertical arrows being isomorphisms, the sequence
 \[
  0 \to I^{m+1}F'/I^{m+2}F' \to \Omega^p_{f_{m+1}} \to \Omega^p_{f_m}\to 0,
 \]
 where the second arrow denotes the composition of the canonical morphism
 \[
  I^{m+1}F'/I^{m+2}F' \to F'/I^{m+2}F'
 \]
 and $\alpha_{m+1}$, is exact in $\Mod(X_{m+1})$. Therefore, applying the functor $\R^q{f_{m+1}}_*$, we obtain an exact sequence in $\Mod(S_{m+1})$,
 \begin{equation} \label{e:hdginfext-3}
  \R^q{f_{m+1}}_*(I^{m+1}F'/I^{m+2}F')\to \sH^{p,q}(f_{m+1})\to \R^q{f_{m+1}}_*(\Omega^p_{f_m}).
 \end{equation}
 Now by the definition of $\alpha^{p,q}_m$, there exists an isomorphism
 $$(\sH^{p,q}(f_0))^{\oplus H(m+1)} \to \R^q{f_{m+1}}_*(I^{m+1}F'/I^{m+2}F')$$
 of modules on $S_{m+1}$ such that $\alpha^{p,q}_m$ equals the composition of it and the first arrow in \eqref{e:hdginfext-3}. Likewise, by the definition of the Hodge base change map, there exists an isomorphism
 \[
  \R^q{f_{m+1}}_*(\Omega^p_{f_m})\to\sH^{p,q}(f_m)
 \]
 (over $b_{m,m+1}$) such that $\beta^{p,q}_{m,m+1}$ equals the composition of the second arrow in \eqref{e:hdginfext-3} and this. Hence the sequence \eqref{e:hdginfext} is exact.
\end{proof}

\begin{lemma}
 \label{l:lang}
 Let $A$ be a commutative local ring with maximal ideal $\fm$. Let $E$ be a finite, projective $A$-module, $r$ a natural number, and $x$ an $r$-tuple of elements of $E$ such that $\kappa \circ x$ is an ordered $A/\fm$-basis for $E/\fm E$, where $\kappa \colon E \to E/\fm E$ denotes the residue class map. Then $x$ is an ordered $A$-basis for $E$.
\end{lemma}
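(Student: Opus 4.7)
The plan is to consider the unique morphism of $A$-modules $\phi \colon A^r \to E$ which sends the standard ordered basis of $A^r$ to the tuple $x$, and show that $\phi$ is an isomorphism. Surjectivity and injectivity will both be deduced from Nakayama's Lemma, applied respectively to the cokernel and the kernel of $\phi$.

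First I would verify surjectivity. Let $C$ denote the cokernel of $\phi$. Since $E$ is a finite $A$-module, $C$ is a finite $A$-module as well. Tensoring the exact sequence $A^r \xrightarrow{\phi} E \to C \to 0$ with $A/\fm$ over $A$ yields the exact sequence
\[
(A/\fm)^r \xrightarrow{\bar\phi} E/\fm E \to C/\fm C \to 0,
\]
and by hypothesis $\bar\phi$ sends the standard basis of $(A/\fm)^r$ to the ordered basis $\kappa \circ x$ of $E/\fm E$, whence $\bar\phi$ is surjective (in fact bijective). Therefore $C/\fm C = 0$, and Nakayama's Lemma forces $C = 0$, \iev $\phi$ is surjective.

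Next I would establish injectivity. Denote by $K$ the kernel of $\phi$, so that we have a short exact sequence $0 \to K \to A^r \to E \to 0$. Since $E$ is projective over $A$, this sequence splits, giving an isomorphism $A^r \iso K \oplus E$ of $A$-modules. In particular, $K$ is a direct summand of a finite free module, hence itself a finite $A$-module. Tensoring the split sequence with $A/\fm$ over $A$ yields an isomorphism
\[
(A/\fm)^r \iso K/\fm K \oplus E/\fm E
\]
of $A/\fm$-vector spaces. Counting $A/\fm$-dimensions (both $(A/\fm)^r$ and $E/\fm E$ being $r$-dimensional by the hypothesis on $x$), we conclude $K/\fm K = 0$. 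Applying Nakayama's Lemma once more to the finite $A$-module $K$, we obtain $K = 0$, so $\phi$ is injective.

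Combining the two steps, $\phi$ is an isomorphism of $A$-modules, which is exactly the statement that $x$ is an ordered $A$-basis for $E$. There is no genuine obstacle here; the only subtlety to watch is that Nakayama's Lemma requires finite generation of the module to which it is applied, which is why the projectivity of $E$ is invoked (to guarantee that the kernel $K$ is itself finitely generated as a direct summand of $A^r$).
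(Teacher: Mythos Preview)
Your argument is correct; it is the standard two-step Nakayama proof of this fact. The paper itself does not give a proof but simply cites \cite[Chapter X, Theorem 4.4]{La02}, so you have in effect supplied the textbook argument that the cited reference contains.
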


\begin{proof}
 See \cite[Chapter X, Theorem 4.4]{La02}.
\end{proof}

\begin{theorem}
 \label{t:froeinf}
 Let $f\colon X\to S$ be a morphism of complex spaces and $t\in S$ such that $f$ is submersive in $f^{-1}(\{t\})$. Adopt the notation of Setup \ref{set:inbc}. Let $n$ be an integer and write $D_n$ for the $n$-diagonal in $\Z\times\Z$. Assume that the Frölicher spectral sequence of $X_0$ degenerates in entries $D_n$ at sheet $1$ in $\Mod(\C)$ and that, for all $(p,q)\in D_n$, the module $\sH^{p,q}(X_0)$ is of finite type over $\C$. Set
 $$h^{p,q} := \dim_\C(\sH^{p,q}(X_0)) \quad \text{and} \quad b^p := \dim_\C(\F^p\sH^n(X_0)).$$
 Then, for all $p\in\Z$, we have
 \begin{equation} \label{e:froeinf-dim}
  b^p = \sum_{\nu = p}^n h^{\nu,n-\nu}.
 \end{equation}
 Moreover, for all $m\in\N$, the following assertions hold:
 \begin{enumerate}
  \item[a$_m$)] \label{t:froeinf-degen} The Frölicher spectral sequence of $f_m$ degenerates in $D_n$ at sheet $1$ in $\Mod(S_m)$.
  \item[b$_m$)] \label{t:froeinf-free} For all $p\in\Z$, the modules $\sH^{p,n-p}(f_m)$ and $\F^p\sH^n(f_m)$ are free of ranks $h^{p,n-p}$ and $b^p$ on $S_m$, respectively.
  \item[c$_m$)] \label{t:froeinf-bc} For all $p\in\Z$ and all $l\in\N$ such that $l\leq m$, the base change maps
  $$\beta^{p,n-p}_{l,m} \colon b_{l,m}^*(\sH^{p,n-p}(f_m)) \to \sH^{p,n-p}(f_l)$$
  and
  $$\phi^{p,n}_{l,m} \colon b_{l,m}^*(\F^p\sH^n(f_m)) \to \F^p\sH^n(f_l)$$
  are isomorphisms in $\Mod(S_l)$.
 \end{enumerate}
\end{theorem}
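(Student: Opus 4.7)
The proof proceeds by induction on $m$. The formula \eqref{e:froeinf-dim} follows immediately from the degeneration hypothesis on $X_0$: the successive quotients of the Hodge filtration on $\sH^n(X_0)$ are isomorphic to $\sH^{p,n-p}(X_0)$, of dimension $h^{p,n-p}$, and $\F^p\sH^n(X_0)=0$ for $p>n$; so $b^p=\sum_{\nu\geq p}h^{\nu,n-\nu}$. In particular $r_n := \dim_\C \sH^n(X_0) = b^0 = \sum_{p+q=n} h^{p,q}$. The base case $m=0$ is immediate since $\O_{S_0,t}=\C$, so modules on $S_0$ are just finite-dimensional complex vector spaces and a$_0$)--c$_0$) all hold by hypothesis.

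For the inductive step, write $\fm$ for the maximal ideal of $\O_{S_{m+1},t}$ and set $\ell_k := \dim_\C \O_{S_k,t}$, so that $\ell_{m+1}=\ell_m+H(m+1)$. By the freeness assertion of Proposition \ref{p:drbc}, $\sH^n(f_{m+1})$ is free of rank $r_n$ on $S_{m+1}$, hence has $\C$-length $r_n\ell_{m+1}$. For each $(p,q)\in D_n$, Lemma \ref{l:hdginfext} supplies an exact sequence
\[
\sH^{p,q}(f_0)^{\oplus H(m+1)} \xrightarrow{\alpha^{p,q}_m} \sH^{p,q}(f_{m+1}) \xrightarrow{\beta^{p,q}_{m,m+1}} \sH^{p,q}(f_m),
\]
and combining middle exactness with the induction hypothesis b$_m$) yields the bound $\dim_\C\sH^{p,q}(f_{m+1})\leq h^{p,q}H(m+1)+h^{p,q}\ell_m=h^{p,q}\ell_{m+1}$. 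Summing over $D_n$ and invoking the standard Frölicher abutment inequality
\[
r_n\ell_{m+1} = \dim_\C\sH^n(f_{m+1}) \leq \sum_{p+q=n}\dim_\C\sH^{p,q}(f_{m+1}) \leq r_n\ell_{m+1}
\]
forces equality throughout. This simultaneously yields a$_{m+1}$), proves $\dim_\C\sH^{p,q}(f_{m+1})=h^{p,q}\ell_{m+1}$ for each $(p,q)\in D_n$, and shows that $\alpha^{p,q}_m$ is injective while $\beta^{p,q}_{m,m+1}$ is surjective.

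To upgrade the length equality to freeness, observe that by its construction in the proof of Lemma \ref{l:hdginfext} the map $\alpha^{p,q}_m$ factors through the subsheaf $\sI^{m+1}\Omega^p_{f_{m+1}}$ of $\Omega^p_{f_{m+1}}$, so the image of $\alpha^{p,q}_m$ lies in $\fm^{m+1}\sH^{p,q}(f_{m+1})\subset\fm\sH^{p,q}(f_{m+1})$. Hence $\ker(\beta^{p,q}_{m,m+1})\subset\fm\sH^{p,q}(f_{m+1})$, and $\beta^{p,q}_{m,m+1}$ induces an isomorphism $\sH^{p,q}(f_{m+1})/\fm\sH^{p,q}(f_{m+1}) \iso \sH^{p,q}(f_m)/\fm\sH^{p,q}(f_m) \iso \C^{h^{p,q}}$. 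Nakayama's lemma then yields a surjection $\O_{S_{m+1}}^{\oplus h^{p,q}} \to \sH^{p,q}(f_{m+1})$, which must be an isomorphism by the length count, establishing the Hodge part of b$_{m+1}$). The base change map $b_{m,m+1}^*\sH^{p,q}(f_{m+1}) \to \sH^{p,q}(f_m)$ is then a surjection of free modules of equal rank, hence an isomorphism, and c$_{m+1}$) for general $l\leq m+1$ follows by transitivity of base change together with c$_m$). The filtered pieces $\F^p\sH^n(f_{m+1})$ are treated by descending induction on $p$, starting from $\F^p\sH^n(f_{m+1})=0$ for $p>n$ and using the short exact sequences $0\to\F^{p+1}\to\F^p\to\sH^{p,n-p}(f_{m+1})\to 0$ furnished by a$_{m+1}$); since extensions by finite free modules over a local ring split, freeness of rank $b^p$ results, and the corresponding base change assertion follows by a five-lemma argument using the Hodge case already treated.

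The main obstacle is that Lemma \ref{l:hdginfext} provides only middle exactness and so, on its own, yields just a one-sided bound on the length of $\sH^{p,q}(f_{m+1})$; neither the degeneration statement nor the freeness of the individual Hodge pieces can be extracted from it in isolation. The decisive move is to pair this bound with the unconditional freeness of the total de Rham module from Proposition \ref{p:drbc}, so that the Frölicher abutment inequality pins the length down exactly---and it is this rigidity, rather than the exact sequence alone, that drives the entire inductive step.
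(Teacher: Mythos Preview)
Your overall architecture---the dimension formula, the base case, and the derivation of a$_{m+1}$) via the length inequalities from Lemma \ref{l:hdginfext} and Proposition \ref{p:drbc}---matches the paper and is correct. The gap is in the passage from the length equality $\dim_\C\sH^{p,q}(f_{m+1})=h^{p,q}\ell_{m+1}$ to freeness of $\sH^{p,q}(f_{m+1})$.

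You assert that because $\alpha^{p,q}_m$ factors, on the sheaf level, through $\sI^{m+1}\Omega^p_{f_{m+1}}$, its image on cohomology lies in $\fm^{m+1}\sH^{p,q}(f_{m+1})$. This implication fails in general: for an ideal $J\subset\O_S$ and a sheaf $\sF$, the image of $\R^qf_*(J\sF)\to\R^qf_*(\sF)$ need not be contained in $J\cdot\R^qf_*(\sF)$. In fact, the containment $\ker(\beta^{p,q}_{0,m+1})\subset\fm\sH^{p,q}(f_{m+1})$ that you need is \emph{equivalent} to the injectivity of the base-change map $\sH^{p,q}(f_{m+1})/\fm\sH^{p,q}(f_{m+1})\to\sH^{p,q}(f_0)$, which is part of c$_{m+1}$). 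So the argument is circular at this point. And the abstract data you have extracted---an $\O_{S_{m+1}}$-module $M$ of the right length sitting in a short exact sequence $0\to\C^{hH}\to M\to\O_{S_m}^h\to 0$ with the submodule annihilated by $\fm$---does not force $M$ to be free: over $A=\C[\epsilon]/\epsilon^2$ one can build such extensions with $M\cong\C^2\oplus A^{h-1}$.

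The paper avoids this by never trying to prove freeness of $\sH^{p,q}(f_{m+1})$ directly. Instead it exploits the one module that is free \emph{a priori}, namely $\sH^n(f_{m+1})$ (Proposition \ref{p:drbc}). It first shows by descending induction on $p$ that the filtered base-change maps $\phi^{p,n}_{m,m+1}$, and hence $\phi^{p,n}_{0,m+1}$, are surjective; then it lifts a basis of $\sH^n(f_0)$ adapted to the Hodge flag through these surjections, applies Lemma \ref{l:lang} to see the lift is a genuine $\O_{S_{m+1}}$-basis of $\sH^n(f_{m+1})$, and finally uses the length count to conclude that each $\F^p\sH^n(f_{m+1})$ is exactly the span of the appropriate tail of this basis, hence free. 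Freeness of $\sH^{p,q}(f_{m+1})\cong\F^p/\F^{p+1}$ then follows.
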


\begin{proof}
 Let $p$ be an integer. When $p\geq n+1$, we have $\F^p\sH^n(X_0) \iso 0$ in $\Mod(\C)$ and thus $b^p = 0$, so that \eqref{e:froeinf-dim} holds. For arbitrary $p$, \eqref{e:froeinf-dim} now follows by means of descending induction on $p$ (starting at $p=n+1$) exploiting the fact that $\F^p\sH^n(X_0)/\F^{p+1}\sH^n(X_0)$ is isomorphic to $\sH^{p,n-p}(X_0)$ in $\Mod(\C)$ since the Frölicher spectral sequence of $X_0$ degenerates in $(p,n-p)$ at sheet $1$ in $\Mod(\C)$.
 
 In order to prove the second part of the theorem, we use induction on $m$. Denote $\C$ the distinguished terminal complex space and $a_{X_0}$ (\resp $a_{S_0}$) the unique morphism of complex spaces from $X_0$ (\resp $S_0$) to $\C$. Then the following diagram commutes in $\An$, the horizontal arrows being isomorphisms:
 \[
  \xysquare{X_0}{X_0}{S_0}{\C}{\id_{X_0}}{f_0}{a_{X_0}}{a_{S_0}}
 \]
 Hence the Frölicher spectral sequence of $a_{X_0}$, which is by definition the Frölicher spectral sequence of $X_0$, is isomorphic to the Frölicher spectral sequence of $f_0$ (over $a_{S_0}$). So, as the Frölicher spectral sequence of $X_0$ degenerates in entries $D_n$ at sheet $1$ in $\Mod(\C)$ by hypothesis, the Frölicher spectral sequence of $f_0$ degenerates in entries $D_n$ at sheet $1$ in $\Mod(S_0)$. This is a$_0$). By the functoriality of the conceptions `$\sH^{p,q}$' and `$\F^p\sH^n$', we see that, for all integers $p$ and $q$, the module $\sH^{p,q}(X_0)$ (\resp $\F^p\sH^n(X_0)$) is isomorphic to $\sH^{p,q}(f_0)$ (\resp $\F^p\sH^n(f_0)$) over $a_{S_0}$. Thus for all integers $p$, the module $\sH^{p,n-p}(f_0)$ (\resp $\F^p\sH^n(f_0)$) is free of rank $h^{p,n-p}$ (\resp $b^p$) on $S_0$, which proves b$_0$). Assertion c$_0$) is trivially fulfilled since $l\in\N$ together with $l\leq0$ implies that $l=0$; moreover, we have $i_{0,0}=\id_{X_0}$ and $b_{0,0}=\id_{S_0}$, whence, for all $p\in\Z$, by the functoriality of the base changes, $\beta^{p,n-p}_{0,0}$ and $\phi^{p,n}_{0,0}$ are the identities on $\sH^{p,n-p}(f_0)$ and $\F^p\sH^n(f_0)$ in $\Mod(S_0)$, respectively.

 Now, let $m\in\N$ be arbitrary and assume that a$_m$), b$_m$), and c$_m$) hold. For the time being, fix $(p,q)\in D_n$. Denote by $H$ the Hilbert function of the local ring $\O_{S,t}$. Then by Lemma \ref{l:hdginfext}, there exists a morphism of modules on $S_{m+1}$,
 $$\alpha^{p,q}_m \colon (\sH^{p,q}(f_0))^{\oplus H(m+1)} \to \sH^{p,q}(f_{m+1}),$$
 such that the three-term sequence \eqref{e:hdginfext} is exact in $\Mod(S_{m+1})$. By hypothesis b$_m$), $\sH^{p,q}(f_m)$ is isomorphic to $(\O_{S_m})^{\oplus h^{p,q}}$ in $\Mod(S_m)$. Therefore, $\sH^{p,q}(f_m)$ is of finite type over $\C$ and we have:
 $$\dim_\C(\sH^{p,q}(f_m)) = h^{p,q} \cdot \dim_\C(\O_{S_m}).$$
 Moreover, since $a_{S_0} \colon S_0 \to \C$ is an isomorphism, a$_0$) yields:
 $$\dim_\C(\sH^{p,q}(f_0)) = \dim_{S_0}(\sH^{p,q}(f_0)) = h^{p,q}.$$
 The exactness of the sequence \eqref{e:hdginfext} implies that $\sH^{p,q}(f_{m+1})$ is of finite type over $\C$ with
 \begin{equation} \label{e:froeinf-1}
  \begin{split}
  \dim_\C(\sH^{p,q}(f_{m+1})) & \leq H(m+1) \cdot h^{p,q} + h^{p,q} \cdot \dim_\C(\O_{S_m}) \\ & = h^{p,q} \cdot (H(m+1) + \dim_\C(\O_{S_m})) = h^{p,q} \cdot \dim_\C(\O_{S_{m+1}});
  \end{split}
 \end{equation}
 here we made use of the fact that, for all natural numbers $k$, firstly, $\O_{S_k}$ is isomorphic to $\O_{S,t}/\fm^{k+1}$ as a $\C$-algebra and secondly, $\dim_\C(\O_{S,t}/\fm^{k+1}) = \sum_{\nu=0}^kH(\nu)$.
 
 Let $E$ denote the Frölicher spectral sequence of $f_{m+1}$ and $d_r$ the differential of $E_r$. Then $E_1^{p,q}$ is isomorphic to $\sH^{p,q}(f_{m+1})$ in $\Mod(S_{m+1})$. In particular, we see that $E_1^{p,q}$ is of finite type over $\C$. Using induction, we deduce that $E_r^{p,q}$ is of finite type over $\C$ for all $r\in\N_{\geq1}$. Moreover, for all $r\in\N_{\geq1}$, we have
 $$\dim_\C(E_{r+1}^{p,q}) \leq \dim_\C(E_r^{p,q})$$
 with equality holding if and only if both $d_r^{p-r,q+r-1}$ and $d_r^{p,q}$ are zero morphisms. Now write $\F=(\F^\nu)_{\nu\in\Z}$ for the Hodge filtration on the algebraic de Rham module $\sH^n(f_{m+1})$, so that for all $\nu\in\Z$ we have $\F^\nu = \F^\nu\sH^n(f_{m+1})$. Then by the definition of the Frölicher spectral sequence, there exists $r\in\N_{\geq1}$ (take, \egv $r = \max(1,q+2)$) such that $E_r^{p,q}\iso \F^p/\F^{p+1}$ in $\Mod(S_{m+1})$ and $E$ degenerates in $(p,q)$ at sheet $r$ in $\Mod(S_{m+1})$. Hence we conclude that
 \begin{equation} \label{e:froeinf-2}
  \dim_\C(\F^p/\F^{p+1}) \leq \dim_\C(\sH^{p,q}(f_{m+1}))
 \end{equation}
 with equality holding if and only if $E$ degenerates in $(p,q)$ at sheet $1$ in $\Mod(S_{m+1})$.
 
 We abandon our fixation of $(p,q)$. By Proposition \ref{p:drbc} \ref{p:drbc-free}), $\sH^n(f_{m+1})$ is a finite free module on $S_{m+1}$ of rank $\dim_\C(\sH^n(X_0))$. Since $\sH^n(X_0) = \F^0\sH^n(X_0)$, we have $\dim_\C(\sH^n(X_0)) = b^0$. Thus,
 $$\dim_\C(\sH^n(f_{m+1})) = b^0 \cdot \dim_\C(\O_{S_{m+1}}).$$ 
 Besides, since $\F$ is a filtration on $\sH^n(f_{m+1})$ (by modules on $S_{m+1}$) satisfying $\F^{n+1}=0$ as well as $\F^0=\sH^n(f_{m+1})$, we have:
 $$\dim_\C(\sH^n(f_{m+1})) = \sum_{p=0}^n\dim_\C(\F^p/\F^{p+1}).$$
 Taking all of the above into account, we obtain:
 \begin{equation} \label{e:froeinf-3}
  \begin{split}
  & b^0 \cdot \dim_\C(\O_{S_{m+1}}) = \dim_\C(\sH^n(f_{m+1})) = \sum_{p=0}^n\dim_\C(\F^p/\F^{p+1}) \\ & \leq \sum_{p=0}^n\dim_\C(\sH^{p,n-p}(f_{m+1})) \leq \sum_{p=0}^n(h^{p,n-p} \cdot \dim_\C(\O_{S_{m+1}})) \\ & = \left(\sum_{p=0}^n h^{p,n-p}\right) \cdot \dim_\C(\O_{S_{m+1}}) \overset{\eqref{e:froeinf-dim}}= b^0 \cdot \dim_\C(\O_{S_{m+1}}).
  \end{split}
 \end{equation}
 By the antisymmetry of `$\leq$', equality holds everywhere in \eqref{e:froeinf-3}. By the strict monotony of finite sums, it follows that, for all $(p,q) \in D_n$ with $0\leq p\leq n$, we have equality in \eqref{e:froeinf-2}; yet this is possible only if $E$ degenerates in $(p,q)$ at sheet $1$ in $\Mod(S_{m+1})$. Observing that, for all $(p,q) \in D_n$ with $p<0$ or $n<p$, $E$ degenerates in $(p,q)$ at sheet $1$ in $\Mod(S_{m+1})$ as, in that case, $E_1^{p,q} \iso \sH^{p,q}(f_{m+1}) \iso 0$ in $\Mod(S_{m+1})$, we see we have proven a$_{m+1}$).
 
 As we have equality everywhere in \eqref{e:froeinf-3}, we deduce that, for all $(p,q)\in D_n$, we have equality in \eqref{e:froeinf-1}; note that when $p<0$ or $n<p$, this is clear a priori. Equality in \eqref{e:froeinf-1}, however, is possible only if \eqref{e:hdginfext} constitutes a short exact triple in $\Mod(\C)$ (or equivalently, in $\Mod(S_{m+1})$). We claim that, for all $p\in\Z$, the base change map
 $$\phi^{p,n}_{m,m+1} \colon \F^p\sH^n(f_{m+1}) \to \F^p\sH^n(f_m)$$
 is an epimorphism in $\Mod(S_{m+1})$. When $p\geq n+1$, this is obvious since then, $\F^p\sH^n(f_m) \iso 0$. Now let $p\in\Z$ be arbitrary and assume that $\phi^{p+1,n}_{m,m+1}$ is an epimorphism. By a$_m$) we know that the Frölicher spectral sequence of $f_m$ degenerates in $(p,n-p)$ at sheet $1$ in $\Mod(S_m)$, whence by Proposition \ref{p:froedegen} \ref{p:froedegen-behind}) there exists one, and only one, $\psi^p_m$ such that the following diagram commutes in $\Mod(S_m)$ (concerning notation, we refer the reader to Chapter \ref{ch:peri}):
 \begin{equation} \label{e:froeinf-degen}
  \xymatrix{
   \R^n\bar{f_m}_*(\sigma^{\geq p}\bar\Omega^\kdot_{f_m}) \ar[r] \ar@{}@<1ex>[r]^{\R^n\bar{f_m}_*(j^{\leq p}(\sigma^{\geq p}\bar\Omega^\kdot_{f_m}))} \ar[d]_{\lambda^n_{f_m}(p)} & \R^n\bar{f_m}_*(\sigma^{=p}\bar\Omega^\kdot_{f_m}) \ar[d]^{\kappa^n_{f_m}(\sigma^{=p}\Omega^\kdot_{f_m})} \\ \F^p\sH^n(f_m) \ar@{.>}[r]_{\psi^p_m} & \sH^{p,n-p}(f_m)
  }
 \end{equation}
 Furthermore, Proposition \ref{p:froedegen} \ref{p:froedegen-total}) tells that $\psi^p_m$ is isomorphic to the quotient morphism
 \[
  \F^p\sH^n(f_m) \to \F^p\sH^n(f_m)/\F^{p+1}\sH^n(f_m)
 \]
 (note that in the above diagram $\kappa^n_{f_m}(\sigma^{=p}\Omega^\kdot_{f_m})$ is an isomorphism). Similarly, by means of a$_{m+1}$), we dispose of a morphism $\psi^p_{m+1}$. Since any of the solid arrows in \eqref{e:froeinf-degen} is compatible with base change (in the obvious sense), one infers that the following diagram commutes in $\Mod(S_{m+1})$, the rows being exact:
 \[
  \xymatrix{
   0 \ar[r] & \F^{p+1}\sH^n(f_{m+1}) \ar[r]^\subset \ar[d]_{\phi^{p+1,n}_{m,m+1}} & \F^p\sH^n(f_{m+1}) \ar[r]^{\psi^p_{m+1}} \ar[d]_{\phi^{p,n}_{m,m+1}} & \sH^{p,q}(f_{m+1}) \ar[r] \ar[d]^{\beta^{p,q}_{m,m+1}} & 0 \\
   0 \ar[r] & \F^{p+1}\sH^n(f_m) \ar[r]_\subset & \F^p\sH^n(f_m) \ar[r]_{\psi^p_m} & \sH^{p,q}(f_m) \ar[r] & 0   
  }
 \]
 Here $\phi^{p+1,n}_{m,m+1}$ is an epimorphism by assumption, and $\beta^{p,q}_{m,m+1}$ is an epimorphism as \eqref{e:hdginfext} is a short exact triple. Hence the Five Lemma implies that $\phi^{p,n}_{m,m+1}$ is an epimorphism. Thus, our claim follows by descending induction on $p$ starting at, \egv $p=n+1$.
 
 By c$_m$), for all integers $p$, the morphism
 $$\phi^{p,n}_{0,m} \colon \F^p\sH^n(f_m) \to \F^p\sH^n(f_0)$$
 is an epimorphism in $\Mod(S_m)$. By the associativity of the base change construction we have, for all integers $p$:
 $$\phi^{p,n}_{0,m+1} = \phi^{p,n}_{0,m} \circ \phi^{p,n}_{m,m+1}.$$
 So, we see that $\phi^{p,n}_{0,m+1}$ is an epimorphism in $\Mod(S_{m+1})$ for all $p\in\Z$. Since the ring $\O_{S_0}$ is a field---in fact, the structural map $\C \to \O_{S_0}$ is bijective---and the numbers $h^{n,0},h^{n-1,1},\dots,h^{0,n}$ are altogether finite, there exists an ordered $\O_{S_0}$-basis (equivalently, $\C$-basis) $e=(e_\nu)_{\nu\in b^0}$ for $\sH^n(f_0)$ such that, for all $p\in\N$ with $p\leq n$, the restricted tuple $e|b^p = (e_0,\dots,e_{b^p-1})$ makes up an $\O_{S_0}$-basis for $\F^p\sH^n(f_0)$. By the surjectivity of the maps $\phi^{p,n}_{0,m+1}$ (for varying $p$), there exists a $b^0$-tuple $x = (x_\nu)$ of elements of $\sH^n(f_{m+1})$ such that, for all $p\in\N$ with $p\leq n$ and all $\nu\in b^p$, we have $x_\nu \in \F^p$ and $\phi^n_{0,m+1}(x_\nu) = e_\nu$. As pointed out before, $\sH^n(f_{m+1})$ is a finite free $\O_{S_{m+1}}$-module (by Proposition \ref{p:drbc} \ref{p:drbc-free})). Write $\fm$ for the unique maximal ideal of $\O_{S_{m+1}}$ and denote by
 $$\bar\phi^n_{0,m+1} \colon \sH^n(f_{m+1})/\fm\sH^n(f_{m+1}) \to \sH^n(f_0)$$
 the unique map which factors
 $$\phi^n_{0,m+1} \colon \sH^n(f_{m+1}) \to \sH^n(f_0)$$
 through the evident residue map. Then by Proposition \ref{p:drbc} \ref{p:drbc-bc}), $\bar\phi^n_{0,m+1}$ is an isomorphism of modules over the isomorphism of rings $\O_{S_{m+1}}/\fm \to \O_{S_0}$ which is induced by the canonical map $\O_{S_{m+1}} \to \O_{S_0}$, \iev by $b_{0,m+1} \colon S_0 \to S_{m+1}$. In particular, since the tuple $\bar x = (\bar{x_\nu})$ of residue classes obtained from $x$ is sent to $e$ by $\bar\phi^n_{0,m+1}$, we see that $\bar x$ constitutes an $\O_{S_{m+1}}/\fm$-basis for $\sH^n(f_{m+1})/\fm\sH^n(f_{m+1})$. Hence by Lemma \ref{l:lang}, $x$ constitutes an $\O_{S_{m+1}}$-basis for $\sH^n(f_{m+1})$. In consequence, the tuple $x$, and whence any restriction of it, is linearly independent over $\O_{S_{m+1}}$. Thus for all integers $p$, there exists an injective morphism
 $$(\O_{S_{m+1}})^{\oplus b^p} \to \F^p\sH^n(f_{m+1})$$
 of $\O_{S_{m+1}}$-modules. From a$_{m+1}$) we deduce, using a descending induction on $p\in\Z_{\leq n+1}$ as before, that
 $$\dim_\C(\F^p\sH^n(f_{m+1})) = \sum_{\nu=p}^n \dim_\C(\sH^{\nu,n-\nu}(f_{m+1}))$$
 for all $p\in\Z$. We already noted that, for all $(p,q)\in D_n$, equality holds in \eqref{e:froeinf-1}. As a result, we obtain:
 $$\dim_\C(\F^p\sH^n(f_{m+1})) = \sum_{\nu=p}^n (h^{\nu,n-\nu} \cdot \dim_\C(\O_{S_{m+1}})) = \dim_\C((\O_{S_{m+1}})^{\oplus b^p})$$
 for all integers $p$. It follows that, for all $p\in\Z$, any injective morphism of $\O_{S_{m+1}}$-modules (or yet merely $\C$-modules) from $(\O_{S_{m+1}})^{\oplus b^p}$ to $\F^p$ is indeed bijective. We deduce that, for all integers $p$, the module $\F^p$ is free of rank $b^p$ on $S_{m+1}$. Furthermore, for all $p\in\Z$, $\F^p$ equals the $\O_{S_{m+1}}$-span of $x_0,\dots,x_{b^p-1}$ in $\sH^n(f_{m+1})$. Consequently, for all $p\in\Z$, the module $\F^p/\F^{p+1}$ is free of rank $b^p-b^{p+1} = h^{p,n-p}$ on $S_{m+1}$. As $\F^p/\F^{p+1} \iso \sH^{p,n-p}(f_{m+1})$ in $\Mod(S_{m+1})$ according to a$_{m+1}$), this proves b$_{m+1}$).
 
 It remains to prove c$_{m+1}$). Let $p$ be an arbitrary integer. Put $q:=n-p$. Then, as already established above, the Hodge base change map 
 $$\beta^{p,q}_{m,m+1} \colon b_{m,m+1}^*(\sH^{p,q}(f_{m+1})) \to \sH^{p,q}(f_m)$$
 is surjective. Since by b$_{m+1}$), the module $\sH^{p,q}(f_{m+1})$ is free of rank $h^{p,q}$ on $S_{m+1}$, we see that $b_{m,m+1}^*(\sH^{p,q}(f_{m+1}))$ is free of rank $h^{p,q}$ on $S_m$. By b$_m$), the module $\sH^{p,q}(f_m)$ is free of rank $h^{p,q}$ on $S_m$, too. Therefore, the surjection $\beta^{p,q}_{m,m+1}$ is in fact a bijection. Analogously, one shows that the filtered de Rham base change map
  $$\phi^{p,n}_{m,m+1} \colon b_{m,m+1}^*(\F^p\sH^n(f_{m+1})) \to \F^p\sH^n(f_m)$$
 is an isomorphism in $\Mod(S_m)$. So, we have proven c$_{m+1}$) in case $l=m$. In case $l=m+1$ the assertion is trivial. In case $l<m$, the assertion follows from the assertion for $l=m$ combined with c$_m$) and the associativity of base changes.
\end{proof}

\section{Formal completions of complex spaces}
\label{s:formal}

\begin{setup}
 \label{set:comp}
 Let $f\colon X\to S$ be a morphism of complex spaces, $\sI$ and $\sJ$ finite type ideals on $X$ and $S$, respectively, such that $f^\sharp \colon \O_S \to f_*(\O_X)$ maps $\sJ$ into $f_*(\sI)$. Let $q$ be an integer. Let $F$ be a module on $X$. For any natural number $m$ set
 \[
  F^{(m)} := F/\sI^{m+1}F
 \]
 and write
 \[
  \alpha_m \colon F \to F^{(m)}
 \]
 for the evident quotient morphism. For natural numbers $l$ and $m$ such that $l\leq m$, denote
 \[
  \alpha_{lm} \colon F^{(m)} \to F^{(l)}
 \]
 the unique morphism such that $\alpha_l = \alpha_{lm} \circ \alpha_m$. Note that the data of the $F^{(m)}$'s and the $\alpha_{lm}$'s makes up an inverse system of modules on $X$; more formally, this data can be collected into a functor $\N^\op \to \Mod(X)$, which we refer to as $\alpha$.
 
 Set
 \[
  G := \R^qf_*(F)
 \]
 and define $G^{(m)}$, $\beta_m$, $\beta_{lm}$, and $\beta$ in analogy to $F^{(m)}$, $\alpha_m$, $\alpha_{lm}$, and $\alpha$ above. Observe that, for all natural numbers $m$, the morphism
 \[
  \R^qf_*(\alpha_m) \colon \R^qf_*(F) \to \R^qf_*(F^{(m)})
 \]
 factors uniquely through a morphism
 \[
  \tau_m \colon G^{(m)} = \R^qf_*(F)/\sJ^{m+1}\R^qf_*(F) \to \R^qf_*(F^{(m)})
 \]
 (via $\beta_m \colon G \to G^{(m)}$). Moreover, observe that the sequence $\tau = (\tau_m)_{m\in\N}$ constitutes a morphism of functors from $\N^\op$ to $\Mod(S)$,
 \[
  \tau \colon \beta \to \R^qf_* \circ \alpha,
 \]
 which in turn induces a morphism
 \begin{equation} \label{e:comp-1}
  \lim(\tau) \colon \lim(\beta) \to \lim(\R^qf_* \circ \alpha).
 \end{equation}
\end{setup}

\begin{theorem}
 \label{t:banica}
 In the situation of Setup \ref{set:comp} assume that $S$ is a complex manifold, $f$ is submersive, $\sJ$ is the ideal of a point $t\in S$, $\sI$ is the ideal of $f^{-1}(\{t\}) \subset X$, and $F$ is locally finite free on $X$. Furthermore, assume that $\R^qf_*(F)$ and $\R^{q+1}f_*(F)$ are coherent on $S$. Then the morphism \eqref{e:comp-1} is an isomorphism.
\end{theorem}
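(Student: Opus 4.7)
The plan is to identify Theorem \ref{t:banica} as essentially a direct instance of the formal comparison theorem for higher direct images due to B\u{a}nic\u{a} and St\u{a}n\u{a}\c{s}il\u{a} (cf.\ \cite[Chapter VI]{BaSt76}). The proof thus splits into two orthogonal parts: first, translating the statement into the classical formal-completion language; second, verifying that the hypotheses of the cited comparison theorem are satisfied in our setup.

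First I would observe that, since $S$ is a complex manifold, the point $t$ has a coherent ideal $\sJ$, and $\O_{S,t}$ is a regular local ring. For any coherent $\O_S$-module $H$, the evident $\N^\op$-indexed system $m \mto H/\sJ^{m+1}H$ has limit $\lim_m H/\sJ^{m+1}H$ whose stalk at $t$ is the $\fm_t$-adic completion $\widehat{H_t}$ (and which vanishes away from $t$). Applied to $H := G = \R^qf_*(F)$, this identifies $\lim(\beta)$ with the $\sJ$-adic completion $\widehat G$ of the coherent $\O_S$-module $G$. On the other hand, since $\sI$ is the ideal of the fiber $f^{-1}(\{t\})$, each $F^{(m)} = F/\sI^{m+1}F$ is supported on this fiber, so $\R^qf_*(F^{(m)})$ is supported in $\{t\}$, and $\lim(\R^qf_* \circ \alpha)$ identifies with the formal higher direct image of $F$ along $f^{-1}(\{t\})$, viewed as a sheaf on $S$ supported at $t$. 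Under these identifications, the morphism $\lim(\tau)$ becomes precisely the classical comparison map from the completion of $\R^qf_*(F)$ to the formal direct image.

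Next I would invoke the B\u{a}nic\u{a}--St\u{a}n\u{a}\c{s}il\u{a} comparison theorem, which asserts that this comparison map is an isomorphism provided $\R^qf_*(F)$ and $\R^{q+1}f_*(F)$ are coherent on $S$---exactly our hypothesis. The coherence of the auxiliary data is straightforward: $F$ is coherent (locally finite free over the coherent sheaf $\O_X$), $\sI$ and $\sJ$ are coherent ideals, and all the short exact sequences
\[
 0 \to \sI^{m+1}F/\sI^{m+2}F \to F^{(m+1)} \to F^{(m)} \to 0
\]
produce long exact sequences of coherent direct images on $S$ to which an Artin--Rees type argument applies. The coherence of $\R^{q+1}f_*(F)$ is precisely what enforces the Mittag-Leffler property needed to turn the resulting inverse system of short exact sequences into an exact limit, whence the comparison map is bijective.

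The main obstacle I foresee is purely bibliographical/technical: the version of the comparison theorem in \cite{BaSt76} is typically stated either for proper morphisms or under the blanket assumption of coherence of a whole range of direct images, and one must locate (or reprove) the precise version that assumes only coherence of $\R^qf_*(F)$ and $\R^{q+1}f_*(F)$ in two specified degrees. If a ready-made citation is unavailable, one reproves it along the standard lines: apply $\R f_*$ to the two-term exact sequences above, use coherence to deduce Mittag-Leffler for the relevant inverse systems via the Artin--Rees lemma over $\widehat{\O_{S,t}}$, and pass to the inverse limit using that $\lim^1$ vanishes against Mittag-Leffler systems. The submersivity of $f$ and the local-finite-freeness of $F$ play no role beyond ensuring that the setup is well-formed; the entire analytic content lives in the two coherence hypotheses.
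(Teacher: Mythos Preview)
Your proposal is essentially the same as the paper's own proof: both reduce Theorem \ref{t:banica} to a citation of the B\u{a}nic\u{a}--St\u{a}n\u{a}\c{s}il\u{a} comparison theorem, and both acknowledge that a minor adaptation is required. The paper is slightly more precise, pointing to \cite[VI, Theorem 4.1 (i)]{BaSt76} together with ``a small modification of \loccit, VI, Proposition 4.2'' and identifying the morphism \eqref{e:comp-1} with the map denoted $\phi_q$ there; your anticipated ``bibliographical/technical'' obstacle is exactly this modification.
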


\begin{proof}
 This follows from \cite[VI, Theorem 4.1 (i)]{BaSt76} using a small modification of \loccit, VI, Proposition 4.2. Note that the morphism \eqref{e:comp-1} is introduced on p.~218 in \loccit and denoted $\phi_q$.
\end{proof}

\begin{lemma}
 \label{l:comfflat}
 For all Noetherian commutative local rings $A$, the canonical morphism of rings $A\to \hat A$ makes $\hat A$ into a faithfully flat $A$-module.
\end{lemma}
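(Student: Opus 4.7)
The result is classical commutative algebra, and I would deduce it from two well-established facts: first, the flatness of the $\mathfrak{m}$-adic completion in the Noetherian case, and second, a standard criterion upgrading flatness to faithful flatness for local rings.

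First I would establish that $\hat A$ is a flat $A$-module. The plan is to exploit the Artin--Rees lemma: for any finitely generated $A$-module $M$, the canonical morphism $\hat A \otimes_A M \to \hat M$ (where $\hat M$ denotes the $\mathfrak{m}$-adic completion of $M$) is an isomorphism. This follows by choosing a presentation $A^{\oplus p} \to A^{\oplus q} \to M \to 0$, noting that $\hat A \otimes_A -$ is right exact and commutes with finite direct sums, and that on finitely generated $A$-modules $\mathfrak{m}$-adic completion agrees with $\hat A \otimes_A -$ in the free case trivially. To check flatness of $\hat A$, it suffices by a standard reduction to verify that for every injection $N \hookrightarrow M$ of finitely generated $A$-modules, the induced map $\hat A \otimes_A N \to \hat A \otimes_A M$ is injective. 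Via the identification above this becomes the injectivity of $\hat N \to \hat M$, which is precisely the Artin--Rees consequence that $\mathfrak{m}$-adic completion is exact on the category of finitely generated modules over a Noetherian ring.

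Next I would upgrade flatness to faithful flatness. Recall the standard criterion: an $A$-module $M$ is faithfully flat over $A$ if and only if $M$ is flat and $\mathfrak{n} M \neq M$ for every maximal ideal $\mathfrak{n}$ of $A$. Since $A$ is local with unique maximal ideal $\mathfrak{m}$, only the condition $\mathfrak{m} \hat A \neq \hat A$ needs to be checked. But the canonical surjection $\hat A \to A/\mathfrak{m}$ (projection onto the $0$-th component of the inverse limit $\hat A = \varprojlim_m A/\mathfrak{m}^{m+1}$) clearly contains $\mathfrak{m}\hat A$ in its kernel; since the target is nonzero, $\mathfrak{m}\hat A$ is a proper ideal of $\hat A$. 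This finishes the proof.

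I do not expect any real obstacle: every step invokes a standard textbook result (\emph{e.g.}\ Atiyah--Macdonald, Matsumura, or Bourbaki, \emph{Algèbre commutative}). The most delicate point, the exactness of completion on finitely generated modules, is essentially the content of the Artin--Rees lemma, which is available precisely because $A$ is Noetherian.
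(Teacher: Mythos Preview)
Your proof is correct. The paper, however, takes a much shorter route: it simply observes that since $A$ is local, the maximal ideal $\mathfrak m$ equals the Jacobson radical, and then cites Matsumura's \emph{Commutative Ring Theory}, Theorem 8.14, which states that for a Noetherian ring $A$ and an ideal $I$ contained in the Jacobson radical, the $I$-adic completion $\hat A$ is faithfully flat over $A$. Your argument essentially unpacks the proof of that theorem (flatness via Artin--Rees, then the maximal-ideal criterion for faithful flatness), so the underlying mathematics is the same; your version is more self-contained, while the paper's is a one-line citation.
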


\begin{proof}
 Put $I:=\fm(A)$. As $A$ is local, $I$ is the only maximal ideal of $A$ and consequently the Jacobson radical of $A$ equals $I$. In particular, $I$ is a subset of the Jacobson radical of $A$. Hence we are finished taking into account that (1) implies (3) in \cite[Theorem 8.14]{Ma89}. Note that ``ring'' means commutative ring in \loccit
\end{proof}

\begin{proposition}
 \label{p:fflat}
 Let $\phi\colon A\to B$ be a morphism of commutative rings making $B$ into a faithfully flat $A$-module. Then, for all $A$-modules $M$, $M$ is flat over $A$ if and only if $B\otimes_A M$ is flat over $B$.
\end{proposition}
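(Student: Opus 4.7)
The plan is to treat the two implications separately, the forward direction being standard base change of flat modules and the reverse direction being the substantive one that actually uses faithful flatness.

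First I would handle the easy direction: assume $M$ is flat over $A$. Then, since tensor products commute with base change, for any $B$-module $N$ we have a natural isomorphism $(B\otimes_A M)\otimes_B N \cong M\otimes_A N$ (regarding $N$ as an $A$-module via $\phi$), and the functor $M \otimes_A -$ is exact by flatness of $M$. Consequently $(B\otimes_A M)\otimes_B -$ is exact on $\Mod(B)$, so $B\otimes_A M$ is flat over $B$. This step needs no faithful flatness, only flatness of $M$.

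For the reverse direction, assume $B\otimes_A M$ is flat over $B$. The plan is to verify the standard criterion: given any short exact sequence $0 \to N' \to N \to N'' \to 0$ of $A$-modules, show that $M \otimes_A N' \to M \otimes_A N$ is injective. The key trick is to apply $B\otimes_A -$ to this map. On the one hand, because $B$ is faithfully flat over $A$, a morphism of $A$-modules is injective if and only if it remains injective after tensoring with $B$. On the other hand, by associativity of tensor products one has natural isomorphisms
\[
 B \otimes_A (M \otimes_A N) \;\cong\; (B\otimes_A M)\otimes_A N \;\cong\; (B\otimes_A M)\otimes_B (B\otimes_A N),
\]
and similarly for $N'$. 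Under these identifications the map $B \otimes_A (M \otimes_A N') \to B\otimes_A(M \otimes_A N)$ becomes
\[
 (B\otimes_A M) \otimes_B (B\otimes_A N') \;\longrightarrow\; (B\otimes_A M) \otimes_B (B\otimes_A N).
\]
Now $B$ being flat over $A$ implies that $B\otimes_A N' \to B\otimes_A N$ is an injection of $B$-modules, and then flatness of $B\otimes_A M$ over $B$ ensures that tensoring this injection with $B\otimes_A M$ over $B$ keeps it injective. Pulling back through the faithful flatness of $B/A$ gives the desired injectivity of $M\otimes_A N' \to M\otimes_A N$.

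There is no real obstacle here, only the careful bookkeeping of which ring the tensor product is taken over at each step; the two ingredients I would flag as the substantive inputs are (a) the ``injectivity descends along faithfully flat extensions'' property characterising faithful flatness, and (b) the associativity isomorphism that turns a tensor product over $A$ into one over $B$ after base change. Both are elementary and can be cited from any standard commutative algebra reference if one wishes to keep the proof concise.
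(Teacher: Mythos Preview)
Your proof is correct. The paper does not actually give a proof of this proposition; it simply cites Matsumura's \emph{Commutative Ring Theory}, Exercise 7.1, so your write-up is already more detailed than what appears in the paper, and your argument is precisely the standard one that the cited exercise has in mind.
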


\begin{proof}
 See \cite[Exercises to \S7, 7.1]{Ma89}.
\end{proof}

\begin{lemma}
 \label{l:compfree}
 Let $A$ be a Noetherian commutative local ring and $M$ a finitely generated $A$-module. Then $M$ is (finite) free over $A$ if and only if $\hat A\otimes_A M$ is (finite) free over $\hat A$.
\end{lemma}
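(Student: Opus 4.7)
The forward implication is immediate: if $M$ is finite free over $A$ with $M \cong A^{\oplus n}$, then, since tensor product commutes with finite direct sums, $\hat A \otimes_A M \cong \hat A^{\oplus n}$ is finite free over $\hat A$.

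For the converse, assume $\hat A \otimes_A M$ is finite free over $\hat A$, of rank $n$ say. The plan is to reduce to the classical criterion that, over a Noetherian local ring, a finitely generated flat module is free. As any free module is flat, $\hat A \otimes_A M$ is flat over $\hat A$. By Lemma \ref{l:comfflat}, the canonical morphism $A \to \hat A$ makes $\hat A$ into a faithfully flat $A$-module, so Proposition \ref{p:fflat} applied to the pair $(A,\hat A)$ and the $A$-module $M$ yields that $M$ itself is flat over $A$.

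Now I invoke the freeness criterion: a finitely generated flat module $M$ over a Noetherian local ring $(A,\fm)$ is finite free. To carry this out, pick elements $x_1,\dots,x_m \in M$ whose residues form an $A/\fm$-basis of $M/\fm M$; by Nakayama's lemma they generate $M$, so there is a surjection $\pi\colon A^{\oplus m} \to M$ with kernel $K$. Tensoring the short exact sequence
\[
 0 \to K \to A^{\oplus m} \xrightarrow{\pi} M \to 0
\]
with $A/\fm$ remains exact by flatness of $M$, and the induced map $(A/\fm)^{\oplus m} \to M/\fm M$ is an isomorphism by our choice of the $x_i$. Hence $K/\fm K = 0$. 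Since $A$ is Noetherian, $K$ is a finitely generated submodule of $A^{\oplus m}$, and Nakayama forces $K=0$. Thus $M \cong A^{\oplus m}$ is finite free over $A$.

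To match the ranks and confirm the ``finite'' part, note that $\hat A \otimes_A M \cong \hat A^{\oplus m}$; comparing with the hypothesis $\hat A \otimes_A M \cong \hat A^{\oplus n}$ gives $m=n$ (for instance by reducing both sides modulo the maximal ideal $\fm(\hat A)$ and comparing dimensions over $\hat A/\fm(\hat A) \cong A/\fm$). There is no real obstacle here; the only point one has to be careful about is that Noetherianness of $A$ enters crucially to guarantee that the kernel $K$ above is finitely generated, which is what allows Nakayama to finish the argument.
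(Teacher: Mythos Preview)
Your proof is correct and follows essentially the same approach as the paper: both reduce to showing $M$ is flat over $A$ by descending flatness along the faithfully flat map $A\to\hat A$ (via Lemma~\ref{l:comfflat} and Proposition~\ref{p:fflat}), and then conclude freeness. The only minor variation is in the endgame: the paper argues that $M$, being finitely presented and flat, is projective, hence free over the local ring $A$, whereas you spell out the direct Nakayama argument; these are standard equivalent ways to finish.
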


\begin{proof}
 Clearly, when $M$ is free over $A$, then $M$ is finite free over $A$ and thus $\hat A\otimes_A M$ is finite free over $\hat A$ since the tensor product distributes over direct sums and $\hat A\otimes_A A\iso \hat A$ as $\hat A$-modules.
 
 Conversely, assume that $N:=\hat A\otimes_A M$ is free over $\hat A$. Then $N$ is certainly flat over $\hat A$. By Lemma \ref{l:comfflat}, the canonical morphism of rings $A\to\hat A$ makes $\hat A$ into a faithfully flat $A$-module. Thus by Proposition \ref{p:fflat}, the fact that $N$ is flat over $\hat A$ implies that $M$ is flat over $A$. Therefore, since $M$ is a finitely presented $A$-module, $M$ is projective over $A$. Since $A$ is local, $M$ is free over $A$.
\end{proof}

\begin{lemma}
 \label{l:limitfree}
 Let $F$ be a functor from $\N^\op$ to the category of modules. For $k,l,m\in\N$ such that $l\leq m$, write $F_0(k)=(A_k,M_k)$ and $F_1(m,l)=(\theta_{l,m},\phi_{l,m})$. Assume that, for all $k\in\N$, the ring $A_k$ is commutative local and the module $M_k$ is finite free over $A_k$. Moreover, assume that, for all $l,m\in\N$ with $l\leq m$, $\theta_{l,m}$ is a surjection from $A_m$ onto $A_l$ mapping the unique maximal ideal of $A_m$ into the unique maximal ideal of $A_l$ and the morphism of $A_l$-modules $A_l\otimes_{A_m} M_m\to M_l$ induced by $\phi_{l,m}$ is an isomorphism.
 \begin{enumerate}
  \item The limit $(A_\infty,M_\infty)$ of $F$ with respect to $\N^\op$ and the category of modules is finite free.
  \item For all $l\in\N$, the morphism of $A_l$-modules $A_l\otimes_{A_\infty} M_\infty\to M_l$ induced by the canonical projections $A_\infty\to A_l$ and $M_\infty\to M_l$ is an isomorphism.
 \end{enumerate}
\end{lemma}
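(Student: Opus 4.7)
The plan is to build, by induction on $m$, a compatible system of ordered $A_m$-bases $e_m = (e_{m,0},\dots,e_{m,r-1})$ of $M_m$, and then assemble their component-wise limits into an $A_\infty$-basis of $M_\infty$. First I would observe that the rank $r_m := \rank_{A_m} M_m$ is independent of $m$: since $A_l \otimes_{A_m} M_m \to M_l$ is an isomorphism and $M_m$ is finite free, one has $M_l \cong A_l^{r_m}$, so $r_l = r_m$; write $r$ for this common rank. Fix once and for all an ordered basis $e_0$ of $M_0$ over $A_0$.

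For the induction step, assume $e_m$ has been constructed. Since $\theta_{m,m+1}$ is surjective and $A_m \otimes_{A_{m+1}} M_{m+1} \to M_m$ is an isomorphism, the underlying sheaf map $\phi_{m,m+1} \colon M_{m+1}\to M_m$ is surjective, so I can pick arbitrary lifts $e_{m+1,i}\in M_{m+1}$ of the $e_{m,i}$. To see that $e_{m+1}$ is an $A_{m+1}$-basis, I argue as follows. The composite $\theta_{0,m+1} \colon A_{m+1}\to A_0$ is surjective and maps $\fm(A_{m+1})$ into $\fm(A_0)$, so the induced map of residue fields $A_{m+1}/\fm(A_{m+1}) \to A_0/\fm(A_0)$ is an isomorphism; base-changing the iso $A_0 \otimes_{A_{m+1}} M_{m+1}\to M_0$ along the quotient to the residue field yields a canonical isomorphism
\[
 M_{m+1}/\fm(A_{m+1})M_{m+1} \iso M_0/\fm(A_0)M_0
\]
under which the residues of the $e_{m+1,i}$ correspond to those of the $e_{0,i}$. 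The latter form a basis, so the former do too; as $M_{m+1}$ is a finite free, hence finite projective, $A_{m+1}$-module, Lemma \ref{l:lang} implies that $e_{m+1}$ itself is an $A_{m+1}$-basis of $M_{m+1}$. This closes the induction.

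Having the compatible bases, the tuples $(e_{m,i})_{m\in\N}$ define elements $e_{\infty,i}\in M_\infty$, and the $A_m$-linear isomorphisms $\psi_m\colon A_m^{\oplus r}\to M_m$, $(a_j)\mapsto \sum_j a_j e_{m,j}$, form a compatible family over the transition maps $(\theta_{l,m},\phi_{l,m})$. Since the forgetful functor to abelian groups and the product functors commute with limits, passing to $\lim_{\N^\op}$ gives an isomorphism
\[
 \psi \colon A_\infty^{\oplus r} = \lim A_m^{\oplus r} \to \lim M_m = M_\infty
\]
of $A_\infty$-modules, proving (a). For (b), the canonical morphism $A_l \otimes_{A_\infty} M_\infty \to M_l$ is $A_l$-linear and sends $1\otimes e_{\infty,j}$ to $e_{l,j}$. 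Under $\psi$ and $\psi_l$ this becomes the identity of $A_l^{\oplus r}$, hence an isomorphism. I expect no serious obstacle; the only point that needs care is the residue-field comparison in the induction step, which is the main place where all three hypotheses (surjectivity of $\theta_{l,m}$, compatibility with maximal ideals, and base-change isomorphism for the modules) come together.
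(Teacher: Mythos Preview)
Your proposal is correct and follows essentially the same approach as the paper: lift a basis of $M_0$ to a compatible system of bases $e_m$ of $M_m$ using surjectivity of $\phi_{m,m+1}$, verify each $e_m$ is a basis via the residue-field comparison and Lemma~\ref{l:lang}, then assemble the resulting elements of $M_\infty$ into a basis. The only cosmetic difference is that you package the final step as ``the limit of compatible isomorphisms $A_m^{\oplus r}\to M_m$ is an isomorphism $A_\infty^{\oplus r}\to M_\infty$'', whereas the paper verifies by hand that every $x\in M_\infty$ has a unique expansion in the $e_{\infty,i}$ by working levelwise; both arguments are equivalent.
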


\begin{proof}
 By hypothesis there exists $r\in\N$ as well as an $r$-tuple $e^{(0)}$ with values in $M_0$ such that $e^{(0)}$ is an ordered $A_0$-basis of $M_0$. As for all $l\in\N$, $\phi_{l,l+1}$ is a surjection from $M_{l+1}$ onto $M_l$, there exists, for all $i\in r$, a function $e_i$ with domain of definition $\N$ such that $e_i(0)=(e^{(0)})_i$ and, for all $k\in\N$, $e_i(k)\in M_k$, and, for all $l\in\N$, $\phi_{l,l+1}(e_i(l+1))=e_i(l)$. It follows that, for all $i\in r$, $e_i\in M_\infty$. We claim that the $r$-tuple $e$ given by the $e_i$, $i\in r$, is an ordered $A_\infty$-basis for $M_\infty$. As an intermediate step we show that, for all $k\in\N$, the $r$-tuple $e^{(k)}$ given by the $e_i(k)$, $i\in r$, is an ordered $A_k$-basis for $M_k$. In fact, by hypothesis, $\phi_{0,k}$ factors to yield an isomorphism of modules from $M_k/\fm(A_k)M_k$ to $M_0/\fm(A_0)M_0$ over the isomorphism of rings $A_k/\fm(A_k)\to A_0/\fm(A_0)$ induced by $\theta_{0,k}$. As $e^{(0)}$ is a basis for $M_0$ over $A_0$, the residue class tuple $\bar e^{(0)}$ is a basis for $M_0/\fm(A_0)M_0$ over $A_0/\fm(A_0)$. Therefore, the residue class tuple $\bar e^{(k)}$ of $e^{(k)}$ is a basis for $M_k/\fm(A_k)M_k$ over $A_k/\fm(A_k)$. Thus $e^{(k)}$ is a basis for $M_k$ over $A_k$ by Lemma \ref{l:lang}.
 
 Now let $x\in M_\infty$ be arbitrary. Then, for all $k\in\N$, $x(k)\in M_k$, hence there exists a unique vector $\lambda^{(k)}\in (A_k)^r$ such that $\sum_{i\in r}(\lambda^{(k)})_ie_i(k)=x(k)$. Therefore, for all $l,m\in\N$ with $l\leq m$ and all $i\in r$, we have $\theta_{l,m}((\lambda^{(m)})_i)=(\lambda^{(l)})_i$. Thus, for all $i\in r$, the function $\lambda_i$ given by $\lambda_i(k)=(\lambda^{(k)})_i$ is an element of $A_\infty$. Moreover, $\sum_{i\in r}\lambda_ie_i=x$. Hence the tuple $e$ generates $M_\infty$ over $A_\infty$. Apart from that, assume we have given an element $\lambda\in (A_\infty)^r$ such that $\sum_{i\in r}\lambda_ie_i=0$ in $M_\infty$. Then, for all $k\in\N$, $\sum_{i\in r}\lambda_i(k)e_i(k)=0$ in $M_k$, whence $\lambda_i(k)=0$ in $A_k$ for all $k\in\N$ and $i\in r$. That is, for all $i\in r$, $\lambda_i=0$ in $A_\infty$. So we have proven a).
 
 Let $l\in\N$. In order to prove b), it suffices to note that as the $e_i$, $i\in r$, make up an $A_\infty$-basis for $M_\infty$, the elements $1\otimes e_i$ make up an $A_l$-basis for $A_l\otimes_{A_\infty} M_\infty$. In addition, the canonical morphism $A_l\otimes_{A_\infty} M_\infty\to M_l$ maps, for all $i\in r$, $1\otimes e_i$ to $e_i(l)=(e^{(l)})_i$ and the tuple $e^{(l)}$ is an $A_l$-basis for $M_l$, cf.~above.
\end{proof}

\begin{proposition}
 \label{p:hdglffbc0}
 Let $f\colon X\to S$ be a submersive morphism of complex spaces with smooth base, $t\in S$, $p$ and $q$ integers. We adopt the notation of Setup \ref{set:inbc}. Moreover, we set $n := p+q$ and write $D_n$ for the $n$-diagonal in $\Z\times\Z$. Assume that
 \begin{enumeratei}
  \item the Frölicher spectral sequence of $X_0$ degenerates in $D_n$ at sheet $1$ in $\Mod(\C)$;
  \item for all $(\nu,\mu)\in D_n$, the module $\sH^{\nu,\mu}(X_0)$ is of finite type over $\C$;
  \item both $\sH^{p,q}(f)$ and $\sH^{p,q+1}(f)$ are coherent modules on $S$.
 \end{enumeratei}
 Then the following assertions hold:
 \begin{enumerate}
  \item \label{p:hdglffbc0-lff} $\sH^{p,q}(f)$ is locally finite free on $S$ in $t$.
  \item \label{p:hdglffbc0-bc} For all $m\in\N$, the Hodge base change map
  \begin{equation} \label{e:hdglffbc0-bc}
   \beta^{p,q}_{m} \colon b_m^*(\sH^{p,q}(f)) \to \sH^{p,q}(f_m)
  \end{equation}
  is an isomorphism in $\Mod(S_m)$.
 \end{enumerate}
\end{proposition}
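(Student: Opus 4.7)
The plan is to combine the comparison theorem of B\u{a}nic\u{a}--St\u{a}n\u{a}\c{s}il\u{a} (Theorem \ref{t:banica}) with the infinitesimal lifting of Frölicher degeneration (Theorem \ref{t:froeinf}), reducing the assertions to statements about formal completions. First, since $f$ is submersive, $F:=\Omega^p_f$ is locally finite free on $X$, and hypothesis (iii) ensures that $\R^qf_*(F)=\sH^{p,q}(f)$ and $\R^{q+1}f_*(F)=\sH^{p,q+1}(f)$ are coherent on $S$. With $\sJ$ the ideal of $\{t\}\subset S$ and $\sI$ the ideal of $f^{-1}(\{t\})\subset X$, all hypotheses of Theorem \ref{t:banica} are met, yielding an isomorphism $\lim(\tau)\colon\lim(\beta)\to\lim(\R^qf_*\circ\alpha)$ in $\Mod(S)$, in the notation of Setup \ref{set:comp}.

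Next I would identify the target system concretely. Because the square \eqref{e:inbc-m} is a pullback and $\Omega^p_f$ is locally finite free, the canonical morphism $i_m^*\Omega^p_f\to\Omega^p_{f_m}$ is an isomorphism on $X_m$; pushing forward along the closed immersion $i_m$ identifies $F^{(m)}=\Omega^p_f/\sI^{m+1}\Omega^p_f$ with $(i_m)_*\Omega^p_{f_m}$ in $\Mod(X)$. Since $i_m$ and $b_m$ are closed immersions, Leray yields natural isomorphisms $\R^qf_*(F^{(m)})\cong(b_m)_*\sH^{p,q}(f_m)$, compatible with the transition morphisms, and a small diagram chase shows that under these identifications $\tau_m$ corresponds via adjunction to the Hodge base change map $\beta^{p,q}_m$. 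Hypotheses (i) and (ii) coincide with the hypotheses of Theorem \ref{t:froeinf} for $n=p+q$, so that theorem applies and gives that $\sH^{p,q}(f_m)$ is free of rank $h^{p,q}:=\dim_{\C}\sH^{p,q}(X_0)$ on $S_m$ and that the transition maps $\beta^{p,q}_{l,m}$ are isomorphisms for all $l\leq m$.

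Passing to stalks at $t$, the stalk of $\lim(\beta)$ is the $\fm$-adic completion $\widehat{\sH^{p,q}(f)_t}=\widehat{\O_{S,t}}\otimes_{\O_{S,t}}\sH^{p,q}(f)_t$ (by coherence, $\sH^{p,q}(f)_t$ is finitely generated over the Noetherian local ring $\O_{S,t}$), while the stalk of $\lim(\R^qf_*\circ\alpha)$ is $\lim_m\sH^{p,q}(f_m)_t$. The tower $(\O_{S_m,t},\sH^{p,q}(f_m)_t)_{m\in\N}$ meets the hypotheses of Lemma \ref{l:limitfree}, so $\lim_m\sH^{p,q}(f_m)_t$ is free of rank $h^{p,q}$ over $\lim_m\O_{S_m,t}=\widehat{\O_{S,t}}$, and for every $m$ the canonical projection furnishes an isomorphism
\[
 \O_{S_m,t}\otimes_{\widehat{\O_{S,t}}}\lim_k\sH^{p,q}(f_k)_t\to\sH^{p,q}(f_m)_t.
\]
Combining this with $\lim(\tau)$ displays $\widehat{\sH^{p,q}(f)_t}$ as free of rank $h^{p,q}$ over $\widehat{\O_{S,t}}$, and Lemma \ref{l:compfree} then forces $\sH^{p,q}(f)_t$ to be free of rank $h^{p,q}$ over $\O_{S,t}$, proving (a).

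For (b), I observe that $\beta^{p,q}_m$ is a morphism of sheaves on $S_m$ whose underlying topological space is $\{t\}$, so it suffices to check isomorphy at the stalk. Rewriting $(b_m^*\sH^{p,q}(f))_t=\O_{S_m,t}\otimes_{\O_{S,t}}\sH^{p,q}(f)_t=\O_{S_m,t}\otimes_{\widehat{\O_{S,t}}}\widehat{\sH^{p,q}(f)_t}$ and invoking the identification of completions above, the stalk of $\beta^{p,q}_m$ becomes exactly the isomorphism of Lemma \ref{l:limitfree} b). The main technical obstacle I anticipate lies in the identification $\R^qf_*(F^{(m)})\cong(b_m)_*\sH^{p,q}(f_m)$: one must carefully verify that under this isomorphism the morphism $\tau_m$ of Setup \ref{set:comp} corresponds to the sheaf-theoretic Hodge base change $\beta^{p,q}_m$, so that the isomorphism produced by Theorem \ref{t:banica} is genuinely compatible with the freeness-with-prescribed-projections statement supplied by Lemma \ref{l:limitfree}.
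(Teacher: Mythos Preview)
Your proposal is correct and follows essentially the same route as the paper: apply Theorem~\ref{t:banica} with $F=\Omega^p_f$, feed Theorem~\ref{t:froeinf} into Lemma~\ref{l:limitfree} to see that the limit of the infinitesimal Hodge modules is free of rank $h^{p,q}$ over $\widehat{\O_{S,t}}$, and conclude part~(a) via Lemma~\ref{l:compfree}; for part~(b) both you and the paper reduce to the compatibility of the comparison isomorphism with the projections to level $m$, which is exactly Lemma~\ref{l:limitfree}~b). The technical point you flag---that $\tau_m$ really corresponds to $\beta^{p,q}_m$ under the identification $\R^qf_*(F^{(m)})\cong(b_m)_*\sH^{p,q}(f_m)$---is also handled only implicitly in the paper (it is hidden in the assertion that the square involving $\lim_k(b_k^*\R^qf_*(F))$, $\lim_k(\R^q{f_k}_*(i_k^*F))$, and the base change map at level $m$ commutes), so your caution there is well placed but not a gap.
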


\begin{proof}
 a). Set $F := \Omega^p_f$. Consider the inverse system of modules given by
 \[
  \R^q{f_m}_*(i_m^*(F)) \to \R^q{f_l}_*(i_l^*(F))
 \]
 for natural numbers $l$ and $m$ with $l\leq m$. By Theorem \ref{t:froeinf} and Lemma \ref{l:limitfree}, we know that the limit of this inverse system is a finite free $\hat{\O_{S,t}}$-module (of rank equal to the dimension of $\sH^{p,q}(X_0)$ over $\C$). By Theorem \ref{t:banica}, the latter limit is, in the category of $\hat{\O_{S,t}}$-modules, isomorphic to the completion of the stalk
 \[
  (\R^qf_*(F))_t = (\sH^{p,q}(f))_t
 \]
 with respect to the maximal ideal of $\O_{S,t}$, and as the stalk $(\sH^{p,q}(f))_t$ is of finite type over $\O_{S,t}$, its completion is isomorphic to
 \[
  \hat{\O_{S,t}} \otimes_{\O_{S,t}} (\sH^{p,q}(f))_t.
 \]
 Thus by Lemma \ref{l:compfree}, we see that $(\sH^{p,q}(f))_t$ is a finite free module over $\O_{S,t}$. Since $\sH^{p,q}(f)$ is coherent on $S$ by assumption, we deduce that $\sH^{p,q}(f)$ is locally finite free on $S$ in $t$.
 
 b). Let $m$ be a natural number. Then the following diagram commutes:
 \[
  \xymatrix{
   \lim_k \left(b_k^*(\R^qf_*(F))\right) \ar[r]^\sim \ar[d]_\pr & \lim_k \left(\R^q{f_k}_*(i_k^*(F))\right) \ar[d]^\pr \\
   b_m^*(\R^qf_*(F)) \ar[r]_{\BC} & \R^q{f_m}_*(i_m^*(F))
  }
 \]
 Tensoring the morphism in the upper row with $\O_{S_m}$ over $\hat{\O_{S,t}}$, the vertical maps become isomorphisms. Thus the arrow in the lower row is an isomorphism. Hence the Hodge base change map \eqref{e:hdglffbc0-bc} is an isomorphism too.
\end{proof}

\section{Compactifiable submersive morphisms}
\label{s:ohsawa}

\begin{theorem}[Ohsawa's criterion]
 \label{t:ohsawa}
 Let $X$ be a compact, locally pure dimensional complex space of Kähler type and $A$ a closed analytic subset of $X$ such that $\Sing(X)\subset A$. Then the Frölicher spectral sequence of $X\setminus A$ degenerates in entries
 \[
  I := \{(p,q) \in \Z\times\Z : p+q+2 \leq \codim(A,X)\}
 \]
 at sheet $1$ in $\Mod(\C)$. Moreover, for all $(p,q)\in I$, we have
 \[
  \sH^{p,q}(X\setminus A) \iso \sH^{q,p}(X\setminus A)
 \]
 in $\Mod(\C)$.
\end{theorem}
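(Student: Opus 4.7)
The plan is to reduce the statement to Ohsawa's original theorem \cite[Theorem 1]{Oh87} (which is the pure dimensional version stated as Theorem \ref{t:ohsawaintro} above) and then extract the Hodge symmetry from the $L^2$-theoretic proof.

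First I would reduce to the pure dimensional case. Since $X$ is compact and locally pure dimensional, the function $x \mapsto \dim_x(X)$ is locally constant, so $X$ decomposes as a finite disjoint union $X = \coprod_{i\in I} X_i$ of its connected components; each $X_i$ is then compact, pure dimensional, of Kähler type, and satisfies $\Sing(X_i) = \Sing(X) \cap X_i \subset A \cap X_i =: A_i$ with $\codim(A_i, X_i) \geq \codim(A, X)$. Both the Frölicher spectral sequence in $\Mod(\C)$ and the Hodge modules $\sH^{p,q}$ commute with the formation of finite disjoint unions, so degeneration in entries $I$ as well as the Hodge symmetry on each $X_i \setminus A_i$ will add up to the corresponding assertions on $X \setminus A$. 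This reduces both claims to the case where $X$ is itself (compact, connected, and) pure dimensional.

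Next, for pure dimensional $X$, the degeneration statement is the content of \cite[Theorem 1]{Oh87}, which I would invoke directly. For the Hodge symmetry, the plan is to revisit Ohsawa's proof rather than to cite a statement verbatim. The idea is: choose a resolution of singularities $\pi \colon \tilde X \to X$ such that $E := \pi^{-1}(A)$ is a normal crossings divisor and $\pi$ restricts to a biholomorphism $\tilde X \setminus E \to X \setminus A$; equip $\tilde X \setminus E$ with a complete Kähler metric $\omega$ built from a Kähler metric on $X_\reg$ together with barrier functions along $E$ (as in \loccit). In the range $p + q + 2 \leq \codim(A,X)$, Ohsawa's estimates show that the natural comparison maps
\[
\cH^{p,q}_{(2)}(\tilde X \setminus E, \omega) \to \H^q(X \setminus A, \Omega^p_{X\setminus A}) = \sH^{p,q}(X\setminus A)
\]
from the space of harmonic $L^2$-forms of type $(p,q)$ to Dolbeault cohomology are isomorphisms, and moreover that the total sum $\bigoplus_{p+q = n} \cH^{p,q}_{(2)}$ computes the (algebraic) de Rham cohomology $\sH^n(X \setminus A)$ in the same range; this is exactly what produces the degeneration in sheet $1$. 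Complex conjugation $\omega \mapsto \bar\omega$ interchanges harmonic $(p,q)$- and $(q,p)$-forms and thus induces (antilinear, hence in particular abstract) isomorphisms $\cH^{p,q}_{(2)} \iso \cH^{q,p}_{(2)}$, which transport to the claimed isomorphism $\sH^{p,q}(X\setminus A) \iso \sH^{q,p}(X\setminus A)$.

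The main obstacle I anticipate lies not in the core analytic input (which is Ohsawa's) but in confirming that the codimension convention and the $L^2$-comparison estimates in \cite{Oh87} really apply on the singular base $X$ once transported via $\pi$, \iev that the Kähler metric on $\tilde X \setminus E$ can indeed be arranged to be complete while still being quasi-isometric to a metric pulled back from $X_\reg$ near general points. A secondary, but purely bookkeeping, obstacle is checking that the Hodge symmetry obtained at the level of harmonic $L^2$-forms really descends to an isomorphism of the Hodge modules $\sH^{p,q}$ as defined in Notation \ref{not:hodge}; here one uses that the comparison maps above are morphisms in $\Mod(\C)$ and, in our abstract codimension range, bijective.
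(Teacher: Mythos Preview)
Your proposal is correct and follows essentially the same approach as the paper: reduce from locally pure dimensional to pure dimensional via the finite decomposition into connected components, then invoke \cite[Theorem~1]{Oh87}. The only difference is that the paper cites Ohsawa's theorem directly for \emph{both} the degeneration and the Hodge symmetry (Ohsawa's Theorem~1 already states the symmetry $\sH^{p,q} \iso \sH^{q,p}$ in the relevant range), whereas you re-sketch the $L^2$-harmonic argument for the latter; your sketch is accurate but unnecessary, and the obstacles you anticipate are precisely those handled in Ohsawa's original paper rather than here.
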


\begin{proof}
 When $X$ is pure dimensional, the assertions follow from \cite[Theorem 1]{Oh87}. When $X$ is only locally pure dimensional, $X$ has finitely many connected components $X_0,\dots,X_{b-1}$. For all $\nu<b$, we know that $X_\nu$ is pure dimensional, so that the Frölicher spectral sequence of $Y_\nu := X_\nu \setminus (A\cap X_\nu)$ degenerates in entries $I$ at sheet $1$. Therefore, since the Frölicher spectral sequence of $X \setminus A$ is isomorphic to the direct sum of the Frölicher spectral sequences of the $Y_\nu$, wheres $\nu$ ranges through $b$, it degenerates in entries $I$ at sheet $1$ also. Similarly, the Hodge symmetry is traced back to the pure dimensional case.
\end{proof}

\begin{question}
 Let $X$ and $A$ be as in Theorem \ref{t:ohsawa}, $n$ an integer such that
 \[
  n+2 \leq \codim(A,X).
 \]
 Does the filtration $(\F^p\sH^n(X\setminus A))_{p\in\Z}$ on $\sH^n(X \setminus A)$ coincide with the Hodge filtration of the mixed Hodge structure of $n$-th cohomology associated to the compactification $X\setminus A \to X$ in the spirit of Deligne and Fujiki (\cf \cite{Fu80,De71,De74})?
\end{question}

\begin{proposition}
 \label{p:equidim}
 Let $f\colon X\to S$ be a flat morphism of complex spaces with locally pure dimensional base $S$. Then the following are equivalent:
 \begin{enumeratei}
  \item \label{i:equidim-f} $f$ is locally equidimensional.
  \item \label{i:equidim-x} $X$ is locally pure dimensional.
 \end{enumeratei}
\end{proposition}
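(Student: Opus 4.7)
The plan is to reduce the equivalence to the flat dimension formula
\[
 \dim_x(X) \;=\; \dim_x(X_{f(x)}) + \dim_{f(x)}(S),
\]
valid for every $x \in X$ because $f$ is flat (this is the standard analytic analogue of the algebraic equidimensionality of the fibres of a flat morphism between Cohen–Macaulay-in-dimension or reduced bases; in the complex-analytic setting it appears, for instance, as a consequence of the openness of flat morphisms together with the fibre-dimension formula in \cite{Fi76}, and will be invoked as a known fact).

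First I would record the three functions on $X$ in play, namely
\[
 \varphi(x) := \dim_x(X), \qquad \psi(x) := \dim_x(X_{f(x)}), \qquad \sigma(x) := \dim_{f(x)}(S),
\]
and observe that by the flat dimension formula $\varphi = \psi + \sigma$ pointwise on $X$. The hypothesis that $S$ is locally pure dimensional says that the function $s \mapsto \dim_s(S)$ is locally constant on $S$; pulling back along the continuous map $|f| \colon X_\top \to S_\top$ shows that $\sigma = (s \mapsto \dim_s(S)) \circ |f|$ is locally constant on $X$.

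With $\sigma$ locally constant, the identity $\varphi = \psi + \sigma$ forces $\varphi$ to be locally constant if and only if $\psi$ is locally constant. Unwinding the definitions, this is exactly the equivalence \eqref{i:equidim-f} $\Leftrightarrow$ \eqref{i:equidim-x}.

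The only non-routine step is the invocation of the flat dimension formula; everything else is bookkeeping with locally constant functions. If one prefers to avoid quoting the global formula, one can argue pointwise at a chosen $x_0 \in X$, choose a small neighbourhood $U$ of $f(x_0)$ on which $\dim_s(S)$ is constant, and restrict $f$ to $f^{-1}(U)$ to reduce to the case of a flat morphism to a pure dimensional base, where the equivalence of \eqref{i:equidim-f} and \eqref{i:equidim-x} in that local setting is standard.
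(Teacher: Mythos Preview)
Your proof is correct and follows essentially the same route as the paper: invoke the flat dimension formula $\dim_x(X) = \dim_x(X_{f(x)}) + \dim_{f(x)}(S)$ (the paper's Theorem~\ref{t:dimformula}, also from \cite{Fi76}), observe that $x \mapsto \dim_{f(x)}(S)$ is locally constant as the pullback of a locally constant function along the continuous map $|f|$, and conclude that local constancy of $x \mapsto \dim_x(X)$ is equivalent to local constancy of $x \mapsto \dim_x(X_{f(x)})$. The paper's argument is identical in structure and content, so there is nothing to add.
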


\begin{proof}
 Since the morphism $f$ is flat, Theorem \ref{t:dimformula} tells that
 \begin{equation} \label{e:equidim-1}
  \dim_x(X) = \dim_x(X_{f(x)}) + \dim_{f(x)}(S)
 \end{equation}
 for all $x\in X$. By assumption, the complex space $S$ is locally pure dimensional so that the function given by the assignment
 \[
  s \mto \dim_s(S)
 \]
 is locally constant on $S$. Thus by the continuity of $f$, the function given by the assignment
 \[
  x \mto \dim_{f(x)}(S)
 \]
 is a locally constant function on $X$. Therefore, by \eqref{e:equidim-1} we see that $\dim_x(X_{f(x)})$ is a locally constant function of $x$ on $X$ if and only if $\dim_x(X)$ is a locally constant function of $x$ on $X$. This implies that \eqref{i:equidim-f} holds if and only if \eqref{i:equidim-x} holds.
\end{proof}

\begin{proposition}
 \label{p:dimrk}
 Let $f\colon X\to Y$ be a morphism of complex spaces and $p\in X$. Then
 \[
  \dim_p(X) - \dim_p(X_{f(p)}) \leq \dim_{f(p)}(S).
 \]
\end{proposition}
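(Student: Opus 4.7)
The plan is to reduce the assertion to a purely algebraic statement about local Noetherian rings and then invoke Krull's Hauptidealsatz. Note that the "$S$" on the right-hand side of the inequality is a typo: the morphism is $f\colon X\to Y$, so one should read $\dim_{f(p)}(Y)$, which is what the proof will actually bound.

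First, translate everything into commutative algebra. Set $A := \O_{Y,f(p)}$ and $B := \O_{X,p}$, and let $\phi := f^\sharp_p \colon A \to B$ be the induced local homomorphism. Both $A$ and $B$ are Noetherian local rings, with
\[
\dim_p(X) = \dim(B), \qquad \dim_{f(p)}(Y) = \dim(A).
\]
Since the analytic fiber $X_{f(p)}$ is the closed complex subspace of $X$ defined by the ideal generated by $\phi(\fm_A)$ on a neighborhood of $p$, one has $\O_{X_{f(p)},p} \iso B/\fm_A B$, whence $\dim_p(X_{f(p)}) = \dim(B/\fm_A B)$. The task thus reduces to proving the purely algebraic inequality
\[
\dim(B) \leq \dim(B/\fm_A B) + \dim(A)
\]
for any local homomorphism $\phi \colon A \to B$ of Noetherian local rings.

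Set $d := \dim(A)$ and $e := \dim(B/\fm_A B)$. By the existence of systems of parameters in Noetherian local rings, I can choose $x_1,\dots,x_d \in \fm_A$ with $\sqrt{(x_1,\dots,x_d)} = \fm_A$ in $A$, and $\bar z_1,\dots,\bar z_e \in \fm_{B/\fm_A B}$ with $\sqrt{(\bar z_1,\dots,\bar z_e)} = \fm_{B/\fm_A B}$ in $B/\fm_A B$; lift the latter to elements $z_1,\dots,z_e \in \fm_B$. Putting $y_i := \phi(x_i)$, the choice of the $x_i$'s implies that every element of $\fm_A B$ lies in $\sqrt{(y_1,\dots,y_d)}$ inside $B$. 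Combined with $\sqrt{(z_1,\dots,z_e) + \fm_A B} = \fm_B$, this yields
\[
\sqrt{(z_1,\dots,z_e,y_1,\dots,y_d)} = \fm_B.
\]
Thus $\fm_B$ is the radical of an ideal generated by $d+e$ elements, and Krull's Hauptidealsatz in the form "the dimension of a Noetherian local ring is at most the minimal number of generators of any $\fm_B$-primary ideal" gives $\dim(B) \leq d+e = \dim(A) + \dim(B/\fm_A B)$, as desired.

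The entire argument is short and essentially standard; I do not anticipate any serious obstacle. The only mildly delicate point is the inclusion $\fm_A B \subseteq \sqrt{(y_1,\dots,y_d)}$, which must be verified element by element using $x_i^{n_i} \in (x_1,\dots,x_d)$ mapping into $(y_1,\dots,y_d)$ under $\phi$. Everything else is a direct invocation of standard facts from analytic local algebra (identification of fiber rings, Noetherianness of $\O_{X,p}$ and $\O_{Y,f(p)}$) and commutative algebra (Krull's principal ideal theorem).
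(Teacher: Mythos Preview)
Your proof is correct. The paper itself does not argue this proposition at all: it simply cites \cite[3.9, Proposition ($*$)]{Fi76}. What you have written is exactly the standard commutative-algebra proof of the dimension inequality $\dim B \le \dim A + \dim(B/\fm_A B)$ for a local homomorphism of Noetherian local rings (as in Matsumura or in Fischer's book itself), together with the routine identification $\O_{X_{f(p)},p}\iso B/\fm_A B$. Your observation about the typo ($S$ should read $Y$) is correct.

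One small remark on the point you flagged as ``mildly delicate'': the inclusion $\fm_A B \subseteq \sqrt{(y_1,\dots,y_d)}$ is in fact immediate once you know that each generator $\phi(a)$, $a\in\fm_A$, lies in $\sqrt{(y_1,\dots,y_d)}$, since a radical is an ideal; no further element-by-element check is required.
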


\begin{proof}
 See \cite[3.9, Proposition ($*$)]{Fi76}.
\end{proof}

\begin{definition}
 \label{d:relcodim}
 Let $f\colon X\to S$ be a morphism of complex spaces and $A$ a closed analytic subset of $X$. Then we define:
 \begin{equation} \label{e:relcodim}
  \codim(A,f) := \inf\{\codim(A\cap |X_s|,X_s):s\in S\},
 \end{equation}
 where $X_s$ denotes the fiber of $f$ over $s$. We call $\codim(A,f)$ the \emph{relative codimension} of $A$ with respect to $f$. Note that the set in \eqref{e:relcodim} over which the infimum is taken is a subset of $\N\cup\{\omega\}$. We agree on taking the infimum with respect to the canonical (strict) partial order on $\N\cup\{\omega\}$ given by the $\in$-relation.
\end{definition}

\begin{proposition}
 \label{p:codim}
 Let $f\colon X\to S$ be a morphism of complex spaces, $A$ a closed analytic subset of $X$.
 \begin{enumerate}
  \item \label{p:codim-point} For all $p\in A$, when $f$ is flat in $p$, we have
  \[
   \codim_p(A \cap |X_{f(p)}|,X_{f(p)}) \leq \codim_p(A,X).
  \]
  \item \label{p:codim-global} When $f$ is flat, we have
  \[
   \codim(A,f) \leq \codim(A,X).
  \]
 \end{enumerate}
\end{proposition}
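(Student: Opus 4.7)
The plan is to reduce assertion \ref{p:codim-global}) to assertion \ref{p:codim-point}) by taking infima and then to prove \ref{p:codim-point}) via a short computation combining the dimension formula for flat morphisms (Theorem \ref{t:dimformula}, used already in the proof of Proposition \ref{p:equidim}) with the Remmert-type inequality of Proposition \ref{p:dimrk} applied to the restriction of $f$ to $A$.

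For \ref{p:codim-point}), fix $p\in A$ and put $s:=f(p)$. Since $f$ is flat in $p$, Theorem \ref{t:dimformula} yields
\[
 \dim_p(X) = \dim_p(X_s) + \dim_s(S),
\]
so that
\[
 \codim_p(A,X) = \dim_p(X) - \dim_p(A) = \dim_p(X_s) + \dim_s(S) - \dim_p(A).
\]
Now view $A$ as a closed complex subspace of $X$ and consider the composition $f_A\colon A \to S$ of the closed immersion $A\to X$ with $f$. The fiber of $f_A$ over $s$ is, set-theoretically, $A \cap |X_s|$, so Proposition \ref{p:dimrk} applied to $f_A$ in the point $p$ gives
\[
 \dim_p(A) - \dim_p(A\cap |X_s|) \leq \dim_s(S),
\]
whence $\dim_p(A\cap |X_s|) \geq \dim_p(A) - \dim_s(S)$. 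Combining this with the previous identity, we obtain
\begin{align*}
 \codim_p(A\cap |X_s|,X_s) &= \dim_p(X_s) - \dim_p(A\cap |X_s|) \\
  &\leq \dim_p(X_s) - \dim_p(A) + \dim_s(S) \\
  &= \codim_p(A,X),
\end{align*}
which is the desired inequality. (The case $A\cap |X_s| = \emptyset$ is harmless since then the left-hand side is $\omega$ only if also $p\notin A$, which we excluded; otherwise the inequality is vacuous.)

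For \ref{p:codim-global}), observe that unwinding definitions,
\[
 \codim(A,f) = \inf_{s\in S}\inf_{p\in A\cap |X_s|}\codim_p(A\cap |X_s|,X_s) = \inf_{p\in A}\codim_p(A\cap |X_{f(p)}|,X_{f(p)}),
\]
and similarly $\codim(A,X) = \inf_{p\in A}\codim_p(A,X)$. Since $f$ is flat, part \ref{p:codim-point}) applies at every point $p\in A$, so the inequality follows by taking the infimum over $p\in A$ on both sides.

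I do not expect any genuine obstacle here; the only subtle point is ensuring the bookkeeping with the conventions for $\codim$ of an empty analytic subset (taking values in $\N\cup\{\omega\}$) and the restriction of $f$ to $A$ as an honest morphism of complex spaces so that Proposition \ref{p:dimrk} may be invoked.
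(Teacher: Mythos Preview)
Your proof is correct and follows essentially the same route as the paper: both arguments combine the dimension formula for flat morphisms (Theorem \ref{t:dimformula}) with Proposition \ref{p:dimrk} applied to the restriction $f|_A$, and then pass to infima for part \ref{p:codim-global}). The only superfluous remark is your parenthetical about $A\cap |X_s|=\emptyset$: since $p\in A$ and $s=f(p)$, the point $p$ itself lies in $A\cap |X_s|$, so this case never arises.
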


\begin{proof}
 \ref{p:codim-point}). Let $p\in A$ and assume that $f$ is flat in $p$. Put $t := f(p)$. Then we have
 \[
  \dim_p(X_t) + \dim_t(S) = \dim_p(X)
 \]
 by Theorem \ref{t:dimformula}. By abuse of notation we denote $A$ also the closed analytic subspace of $X$ induced on $A$. Write $i\colon A\to X$ for the corresponding inclusion morphism and set $g := f \circ i$. Then Proposition \ref{p:dimrk}, applied to $g$, implies that
 \[
  \dim_p(A) - \dim_p(A_t) \leq \dim_t(S).
 \]
 On the whole, we obtain:
 \begin{align*}
  \codim_p(A \cap |X_t|,X_t) & = \dim_p(X_t) - \dim_p(A_t) \\
  & \leq (\dim_p(X) - \dim_t(S)) + (\dim_t(S) - \dim_p(A)) \\ & = \codim_p(A,X).
 \end{align*}
 Note that implicitly we have used that the closed complex subspace of $X_t$ induced on $A \cap |X_t|$ is canonically isomorphic to $A_t$, which is by definition the closed complex subspace of $A$ induced on $g^{-1}(\{t\})$.
 
 \ref{p:codim-global}). Noticing that
 \[
  \codim(A,f) = \inf\{\codim_p(A \cap X_{f(p)},X_{f(p)}) : p\in A\},
 \]
 the desired inequality follows immediately from part \ref{p:codim-point}).
\end{proof}

\begin{proposition}
 \label{p:hdglffbc}
 Let $f\colon X\to S$ be a proper, flat morphism of complex spaces, $A$ a closed analytic subset of $X$ such that $\Sing(f)\subset A$, and $t\in S$. Assume that $S$ is smooth, $f$ is locally equidimensional, and $X_t$ is of Kähler type. Define $g$ to be the composition of the canonical morphism $Y := X\setminus A\to X$ and $f$. Then, for all ordered pairs of integers $(p,q)$ such that
 \begin{equation} \label{e:hdglffbc-codim}
  p+q+2 \leq c := \codim(A,f),
 \end{equation}
 yet not $(p,q+2)=(0,c)$, the following assertions hold:
 \begin{enumerate}
  \item \label{p:hdglffbc-lff} The module $\sH^{p,q}(g)$ is locally finite free on $S$ in $t$.
  \item \label{p:hdglffbc-bc} The Hodge base change map
  \[
   (\sH^{p,q}(g))(t) \to \sH^{p,q}(Y_t)
  \]
  is an isomorphism in $\Mod(\C)$.
 \end{enumerate}
\end{proposition}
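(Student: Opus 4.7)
The plan is to apply Proposition \ref{p:hdglffbc0} to the submersive morphism $g \colon Y \to S$ at the point $t$ with $n := p+q$; note that $g$ is indeed submersive since $\Sing(f) \subset A$, and $S$ is smooth by hypothesis, so the setting of that proposition is in force. It then suffices to verify its three hypotheses: the Frölicher degeneration and finite-dimensionality over the special fiber, plus coherence on $S$ of the two Hodge modules $\sH^{p,q}(g)$ and $\sH^{p,q+1}(g)$. The trivial cases $p<0$ or $q<0$ require no argument, so I restrict to $p,q \geq 0$ throughout.

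For the coherence of the two Hodge modules I would invoke Proposition \ref{p:cohhdi}. Since $f$ is flat and locally equidimensional and $S$ is smooth (hence locally pure dimensional), Proposition \ref{p:equidim} guarantees that $X$ is locally pure dimensional; moreover, $S$ is Cohen-Macaulay, and $\Sing(f)\subset A$. By Proposition \ref{p:codim} \ref{p:codim-global}), the flatness of $f$ yields $c \leq \codim(A,X)$. For $\sH^{p,q}(g)$ one needs $q+2\leq \codim(A,X)$, which follows from $q+2 \leq p+q+2 \leq c \leq \codim(A,X)$. For $\sH^{p,q+1}(g)$ one needs $q+3 \leq \codim(A,X)$; when $p\geq 1$ this follows from $q+3 \leq p+q+2$, and when $p=0$ the hypothesis $p+q+2 \leq c$ together with the exclusion $(p,q+2)\neq (0,c)$ forces $q+2 \leq c-1$, giving $q+3 \leq c \leq \codim(A,X)$. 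This is precisely where the edge case is excluded.

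For the degeneration and finite-dimensionality hypotheses over the fiber $Y_t$, I would first observe that $X_t$ is compact (properness of $f$), of Kähler type (assumption), and locally pure dimensional (since $f$ is locally equidimensional, whence $x\mapsto \dim_x(X_{f(x)})$ is locally constant on $X_t$). Because $f$ is flat over the smooth base $S$, the fiber $X_t$ is smooth at a point $x$ if and only if $f$ is submersive there, so $\Sing(X_t) = \Sing(f)\cap X_t \subset A\cap X_t$. By the definition of relative codimension, $c \leq \codim(A\cap X_t, X_t)$, so Ohsawa's criterion (Theorem \ref{t:ohsawa}) applies to the pair $(X_t, A\cap X_t)$ and yields degeneration of the Frölicher spectral sequence of $Y_t = X_t\setminus(A\cap X_t)$ at sheet $1$ in all entries $(\nu,\mu)$ with $\nu+\mu+2\leq \codim(A\cap X_t,X_t)$; this covers the entire diagonal $D_n$ since $n+2 \leq c \leq \codim(A\cap X_t,X_t)$. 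For the finite-dimensionality of the $\sH^{\nu,\mu}(Y_t)$ with $(\nu,\mu)\in D_n$, I would apply Proposition \ref{p:cohhdi} a second time to the proper morphism $a_{X_t} \colon X_t \to \C$ with closed analytic subset $A\cap X_t$: the target $\C$ is trivially Cohen-Macaulay, $X_t$ is locally pure dimensional, and $\Sing(a_{X_t}) = \Sing(X_t) \subset A\cap X_t$; the codimension condition $\mu+2 \leq \codim(A\cap X_t,X_t)$ holds since $\mu+2 \leq \nu+\mu+2 = n+2 \leq c \leq \codim(A\cap X_t,X_t)$. Coherence over $\C$ being the same as finite dimensionality, hypothesis (ii) is verified.

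With all three hypotheses in place, Proposition \ref{p:hdglffbc0} \ref{p:hdglffbc0-lff}) gives assertion (a), and Proposition \ref{p:hdglffbc0} \ref{p:hdglffbc0-bc}) applied with $m=0$ gives an isomorphism $b_0^*(\sH^{p,q}(g)) \to \sH^{p,q}(g_0)$; since $g_0$ is naturally identified with the structural morphism of $Y_t$, this is exactly the Hodge base change map of assertion (b). I expect the main obstacle to be the clean handling of the excluded case $(p,q+2)=(0,c)$, because it is precisely at this boundary that the switch between the relative codimension $c$ (needed for Ohsawa on the fiber) and the absolute codimension $\codim(A,X)$ (needed for coherence of the higher Hodge module $\sH^{0,q+1}(g)$) fails to yield the extra unit one would want.
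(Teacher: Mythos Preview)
Your proposal is correct and follows essentially the same approach as the paper's proof: reduce to Proposition \ref{p:hdglffbc0} by verifying its three hypotheses, using Proposition \ref{p:cohhdi} (together with Proposition \ref{p:equidim} and Proposition \ref{p:codim}) for the coherence of $\sH^{p,q}(g)$ and $\sH^{p,q+1}(g)$, and using Theorem \ref{t:ohsawa} plus Proposition \ref{p:cohhdi} applied to $a_{X_t}$ for the degeneration and finite-dimensionality over the fiber. The only cosmetic difference is that the paper packages the case split as a single inequality $(q+1)+2\leq c$ (derived from $p>0$ or from $p=0$ together with the exclusion), whereas you keep the two cases separate; the content is identical.
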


\begin{proof}
 Let $(p,q)$ be an ordered pair of integers as above. When $p<0$, assertions \ref{p:hdglffbc-lff}) and \ref{p:hdglffbc-bc}) are trivially fulfilled. So, assume that both $p\geq 0$. We claim that
 \begin{equation} \label{e:hdglffbc-1}
  (q+1)+2 \leq c = \codim(A,f).
 \end{equation}
 Indeed, when $p=0$, we have $q+2 \leq c$ by \eqref{e:hdglffbc-codim}, but also $q+2 \neq c$, so that $q+2 < c$. When $p>0$, \eqref{e:hdglffbc-1} follows directly from \eqref{e:hdglffbc-codim}.
 
 Using \eqref{e:hdglffbc-1} in conjunction with Proposition \ref{p:codim} \ref{p:codim-global}), we obtain:
 \[
  q+2 \leq (q+1)+2 \leq \codim(A,X).
 \] 
 Moreover, by Proposition \ref{p:equidim}, the complex space $X$ is locally pure dimensional. Thus by Proposition \ref{p:cohhdi}, we see that $\sH^{p,q+1}(g)$ and $\sH^{p,q}(g)$ are coherent modules on $S$.
 
 Next, we claim that, for all integers $\nu$ and $\mu$ such that $\nu+\mu = p+q$, the Hodge module $\sH^{\nu,\mu}(Y_t)$ is of finite type over $\C$. When $\nu<0$, this is clear. When $\nu \geq 0$, we have
 \[
  \mu+2 \leq \nu+\mu+2 = p+q+2 \leq c \leq \codim(A \cap |X_t|,X_t).
 \]
 In addition, the complex space $X_t$ is compact and locally pure dimensional. Thus $\sH^{\nu,\mu}(X_t \setminus (A \cap |X_t|))$ is of finite type over $\C$ by Proposition \ref{p:cohhdi}, whence $\sH^{\nu,\mu}(Y_t)$ is of finite type over $\C$ given that
 \[
  Y_t \iso X_t \setminus (A \cap |X_t|)
 \]
 as complex spaces.
 
 Theorem \ref{t:ohsawa} implies that for all integers $\nu$ and $\mu$ such that $\nu+\mu = p+q$ the Frölicher spectral sequence of $Y_t$ degenerates in $(\nu,\mu)$ at sheet $1$. Therefore, we deduce \ref{p:hdglffbc-lff}) and \ref{p:hdglffbc-bc}) from \ref{p:hdglffbc0-lff}) and \ref{p:hdglffbc0-bc}) of Proposition \ref{p:hdglffbc0}, respectively.
\end{proof}

\begin{question}
 Let $f$, $A$, and $t$ be as in Proposition \ref{p:hdglffbc}. Define $g$ and $c$ accordingly, and assume that $A \cap |X_t| \neq \emptyset$ (which implies that $c\in\N$). Do \ref{p:hdglffbc-lff}) and \ref{p:hdglffbc-bc}) of Proposition \ref{p:hdglffbc} hold for $(p,q)=(0,c-2)$?
\end{question}

\begin{proposition}
 \label{p:vanishing}
 Let $X$ be a Cohen-Macaulay complex space, $A$ a closed analytic subset of $X$, $F$ a locally finite free module on $X$.
 \begin{enumerate}
  \item \label{p:vanishing-lc} For all integers $q$ such that $q+1 \leq \codim(A,X)$, we have $\uH^q_A(X,F) \iso 0$.
  \item \label{p:vanishing-gap} Denote $j\colon X\setminus A\to X$ the inclusion morphism. When $2 \leq \codim(A,X)$, the canonical morphism
  \begin{equation} \label{e:vanishing-0}
   F \to \R^0j_*(j^*(F))
  \end{equation}
  of modules on $X$ is an isomorphism. Moreover, for all integers $q \neq 0$ such that $q+2 \leq \codim(A,X)$, we have $\R^qj_*(j^*(F)) \iso 0$.
 \end{enumerate}
\end{proposition}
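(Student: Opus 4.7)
The plan is to establish part (a) by a purely local, stalkwise vanishing argument, and then to deduce part (b) as a formal consequence of (a) together with Proposition \ref{p:lcgap}.

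For (a), the task reduces to showing $\uH^q_A(X,F)_x = 0$ for every $x \in X$ and every integer $q$ with $q+1 \leq \codim(A,X)$. For $x \notin A$ the closedness of $A$ in $X_\top$ gives an open neighborhood of $x$ disjoint from $A$, on which $\sGamma_A(X,-)$ and all of its right derived functors vanish, so the stalk in question is zero. For $x \in A$, I would invoke the standard compatibility of sheaf-theoretic local cohomology with algebraic local cohomology in the coherent setting (see \cite{BaSt76}), which yields a canonical isomorphism
\[
 \uH^q_A(X,F)_x \iso \H^q_{\sI_x}(F_x),
\]
where $\sI$ denotes the full coherent ideal sheaf of $A$ on $X$. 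The algebraic local cohomology $\H^q_I(M)$ of a finitely generated module $M$ vanishes for $q$ strictly below the $I$-depth of $M$, so it suffices to show
\[
 \prof_{\O_{X,x}}(\sI_x, F_x) \geq \codim(A,X).
\]
Because $F$ is locally finite free on $X$, the stalk $F_x$ is isomorphic to $(\O_{X,x})^{\oplus r}$ for some $r \in \N$; Proposition \ref{p:profsum} then identifies $\prof_{\O_{X,x}}(\sI_x, F_x)$ with $\prof_{\O_{X,x}}(\sI_x, \O_{X,x})$. Since $X$ is Cohen-Macaulay, $\O_{X,x}$ is a Cohen-Macaulay local ring, and for such rings the $\sI_x$-depth of $\O_{X,x}$ equals the height of $\sI_x$, which in turn coincides with $\codim_x(A,X)$ (using that CM local rings are catenary and that $\sI_x$ defines $A$ locally around $x$). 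As $\codim_x(A,X) \geq \codim(A,X)$ by definition, the required inequality follows.

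Part (b) would then be immediate from (a) combined with Proposition \ref{p:lcgap}. If $2 \leq \codim(A,X)$, part (a) gives $\uH^0_A(X,F) \iso 0$ and $\uH^1_A(X,F) \iso 0$, so the four-term exact sequence of Proposition \ref{p:lcgap} collapses and forces the canonical morphism $F \to \R^0j_*(j^*(F))$ to be an isomorphism. For a positive integer $q$ with $q+2 \leq \codim(A,X)$, the second clause of Proposition \ref{p:lcgap} yields $\R^qj_*(j^*(F)) \iso \uH^{q+1}_A(X,F)$, and the right hand side vanishes by (a) since $(q+1)+1 \leq \codim(A,X)$. For negative $q$, the sheaf $\R^qj_*(j^*(F))$ is zero by convention.

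The main obstacle I anticipate is the careful verification of the stalkwise comparison $\uH^q_A(X,F)_x \iso \H^q_{\sI_x}(F_x)$: while this identification is standard, one has to match the analytic flasque resolution underlying Notation \ref{not:lc} with an algebraic injective resolution of $F_x$ supported on powers of $\sI_x$, and confirm that the vanishing $\H^q_{\sI_x}(F_x) = 0$ for $q < \prof_{\O_{X,x}}(\sI_x, F_x)$ applies in precisely the form compatible with Notation \ref{d:prof}. The depth-equals-height identity in a CM local ring is entirely classical (see \cite{Ma89}), so the crux of the argument is the bookkeeping of this sheaf-theoretic comparison rather than the commutative-algebraic input.
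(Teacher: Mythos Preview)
Your proof is correct and follows essentially the same strategy as the paper: reduce part (a) to a depth estimate and deduce part (b) from (a) via Proposition \ref{p:lcgap}. The only difference is one of packaging. For (a), the paper verifies the inequality $q+1+\dim_x(A) \leq \dim_x(X) = \prof_{X,x}(F)$ for all $x\in A$ (using that $X$ is Cohen--Macaulay and $F$ locally finite free) and then invokes \cite[II, Theorem 3.6, (b)$\Rightarrow$(c)]{BaSt76} as a black box to conclude $\uH^q_A(X,F)\iso 0$. You instead unpack what is behind that citation: the stalk comparison $\uH^q_A(X,F)_x \iso \H^q_{\sI_x}(F_x)$, vanishing of algebraic local cohomology below the $\sI_x$-depth, and the identity $\prof_{\O_{X,x}}(\sI_x,\O_{X,x})=\operatorname{ht}(\sI_x)=\codim_x(A,X)$ in a Cohen--Macaulay local ring. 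Your route is more self-contained; the paper's is shorter because the cited theorem absorbs exactly the bookkeeping you flag as the ``main obstacle''. Part (b) is handled identically in both.
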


\begin{proof}
 a). When $q$ is an integer such that $q+1 \leq \codim(A,X)$, then we have
 \[
  q+1+\dim_x(A) \leq \dim_x(X) = \dim(\O_{X,x}) = \prof_{X,x}(F)
 \]
 for all $x\in A$. Thus $\uH^q_A(X,F) \iso 0$ by \cite[II, Theorem 3.6, (b) $\Rightarrow$ (c)]{BaSt76}; see also II, Corollary 3.9 in \loccit
 
 b). When $2 \leq \codim(A,X)$, we deduce that $\uH^0_A(X,F) \iso 0$ and $\uH^1_A(X,F) \iso 0$ from part \ref{p:vanishing-lc}). Besides, by Proposition \ref{p:lcgap}, there exists an exact sequence
 \[
  0 \to \uH^0_A(X,F) \to F \xrightarrow{\can} \R^0j_*(j^*(F)) \to \uH^1_A(X,F) \to 0
 \]
 of sheaves of modules on $X$. Thus the canonical morphism \eqref{e:vanishing-0} is an isomorphism.
 
 Let $q$ be an integer $\neq 0$ such that $q+2 \leq \codim(A,X)$. When $q<0$, we have $\R^qj_*(j^*(F)) \iso 0$ in $\Mod(X)$ trivially. When $q>0$, we have
 \[
  R^qj_*(j^*(F)) \iso \uH^{q+1}_A(X,F)
 \]
 according to Proposition \ref{p:lcgap}, yet $\uH^{q+1}_A(X,F) \iso 0$ by means of \ref{p:vanishing-lc}).
\end{proof}

\begin{proposition}
 \label{p:gaprel}
 Let $f\colon X\to S$ be a morphism of complex spaces, $A$ a closed analytic subset of $X$ and $F$ a locally finite free module on $X$. Assume that $X$ is Cohen-Macaulay. Denote $j\colon X\setminus A\to X$ the inclusion morphism and set $g := f\circ j$. Then, for all integers $q$ such that $q+2 \leq \codim(A,X)$, the canonical morphism
 \begin{equation} \label{e:gaprel}
  \R^qf_*(F) \to \R^qg_*(j^*(F))
 \end{equation}
 of modules on $S$ is an isomorphism.
\end{proposition}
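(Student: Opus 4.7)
The natural tool is the Leray (Grothendieck) spectral sequence for the composition $g = f\circ j$, which takes the form
\[
 E_2^{p,r} = \R^pf_*\bigl(\R^rj_*(j^*F)\bigr) \;\Longrightarrow\; \R^{p+r}g_*(j^*F).
\]
The canonical morphism $\R^qf_*(F) \to \R^qg_*(j^*F)$ of the statement is, up to the identification furnished by the unit $F \to j_*j^*F$, precisely the edge morphism $E_2^{q,0} \to \R^qg_*(j^*F)$ of this spectral sequence. So the plan is simply to show that under the hypothesis $q+2 \leq \codim(A,X)$ this edge morphism is an isomorphism.

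First I would invoke Proposition \ref{p:vanishing} b) (which applies since $X$ is Cohen-Macaulay and $F$ is locally finite free on $X$). It gives two pieces of information: on the one hand, the unit $F \to \R^0j_*(j^*F)$ is an isomorphism in $\Mod(X)$ as soon as $2 \leq \codim(A,X)$, which is automatic from $q+2 \leq \codim(A,X)$ (the case $q<0$ being trivial and handled separately); on the other hand, $\R^rj_*(j^*F) \iso 0$ for every integer $r \neq 0$ with $r+2 \leq \codim(A,X)$. Applying $\R^pf_*$, the first fact rewrites $E_2^{p,0}$ as $\R^pf_*(F)$; the second, applied to $r = 1,\dots,q$ (all of which satisfy $r+2 \leq q+2 \leq \codim(A,X)$), shows that $E_2^{p,r} \iso 0$ for every $p$ and every $r$ with $1 \leq r \leq q$.

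Second, I would read off the consequences for the spectral sequence in total degree $q$. Every term $E_2^{p,r}$ with $p+r = q$ and $p<q$ has $r \geq 1$ with $r \leq q$, so it vanishes. Hence the abutment in degree $q$ is concentrated in $E_\infty^{q,0}$. It remains to check that no nonzero differential enters or leaves $E_r^{q,0}$ for $r \geq 2$: outgoing differentials land in $E_r^{q+r,1-r}$ which is zero for negative second index; incoming differentials come from $E_r^{q-r,r-1}$ with $r-1 \geq 1$ and $r-1 \leq q-1$, and these terms already vanish at the $E_2$-page by the preceding discussion. Consequently $E_\infty^{q,0} = E_2^{q,0} \iso \R^qf_*(F)$ and the filtration on $\R^qg_*(j^*F)$ collapses to this single step, so the edge morphism is an isomorphism.

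I do not anticipate a serious obstacle beyond the bookkeeping of indices and the verification that the edge morphism of the Leray spectral sequence really does agree with the canonical morphism \eqref{e:gaprel}; the latter is formal, boiling down to naturality of the spectral sequence together with the identification $\R^0j_*(j^*F) \iso F$ given by Proposition \ref{p:vanishing} b). The cases $q<0$ (both sides zero) and $q=0$ (both sides equal to $F$ and $f_*(F) \to g_*(j^*F)$ induced by the isomorphism $F \to j_*j^*F$) can be checked directly and subsumed into the general argument.
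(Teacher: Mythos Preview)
Your proposal is correct and follows essentially the same approach as the paper: both use the Grothendieck/Leray spectral sequence for the composition $g = f \circ j$, invoke Proposition~\ref{p:vanishing}~b) to kill the columns $E_2^{\bullet,r}$ with $1 \leq r \leq q$ and to identify $F \xrightarrow{\sim} \R^0j_*(j^*F)$, and conclude that the edge morphism $E_2^{q,0} \to \R^qg_*(j^*F)$ is an isomorphism. Your index bookkeeping is more explicit than the paper's, but the argument is the same.
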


\begin{proof}
 Let $q$ be an integer as above. When $q<0$, our assertion is clear. So assume that $q\geq 0$. Denote $E$ the Grothendieck spectral sequence associated to the triple
 \[
  \xymatrix{
   \Mod(X\setminus A) \ar[r]^-{j_*} & \Mod(X) \ar[r]^-{f_*} & \Mod(S)
  }
 \]
 of categories and functors and the object $j^*(F)$ of $\Mod(X\setminus A)$. Then, for all integers $\nu$ and $\mu$, we have
 \[
  E_2^{\nu,\mu} \iso \R^{\nu}f_*(\R^{\mu}j_*(j^*(F))).
 \]
 In particular, Proposition \ref{p:vanishing} \ref{p:vanishing-gap}) implies that $E_2^{\nu,\mu} \iso 0$ whenever $\mu\neq 0$ and $\mu+2 \leq \codim(A,X)$. Of course, we also have $E_2^{\nu,\mu} \iso 0$ whenever $\nu<0$. Therefore, $E$ degenerates in entries of total degree $q$ at sheet $1$, and the edge morphism
 \[
  \R^qf_*(\R^0j_*(j^*(F))) \to \R^q(f_*\circ j_*)(j^*(F))
 \]
 is an isomorphism in $\Mod(S)$. Now $f_*\circ j_* = g_*$, and the edge morphism is the canonical one, so that the canonical morphism \eqref{e:gaprel} factors through the edge morphism via the morphism
 \[
  \R^qf_*(F) \to \R^qf_*(\R^0j_*(j^*(F)))
 \]
 which is obtained from \eqref{e:vanishing-0} by applying the functor $\R^qf_*$. By Proposition \ref{p:vanishing} \ref{p:vanishing-gap}), the morphism \eqref{e:vanishing-0} is an isomorphism in $\Mod(X)$ since
 \[
  2 \leq q+2 \leq \codim(A,X).
 \]
 Hence we conclude that \eqref{e:gaprel} is an isomorphism in $\Mod(S)$.
\end{proof}

\begin{proposition}
 \label{p:hdg00fllbc}
 Let $f\colon X\to S$ be a proper, flat morphism of complex spaces and $t\in S$ such that $X_t$ is a reduced complex space. Then the module $\R^0f_*(\O_X)$ is locally finite free on $S$ in $t$ and the base change map
 \[
  \C \otimes_{\O_{S,t}} (\R^0f_*(\O_X))_t \to \H^0(X_t,\O_{X_t})
 \]
 is an isomorphism of complex vector spaces.
\end{proposition}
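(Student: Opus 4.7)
The plan is to reduce the proposition to an application of Grauert's ``cohomology and base change'' theorem (see \egv \cite[Chapter III]{BaSt76}), which asserts that for a proper flat morphism $f\colon X\to S$ of complex spaces and a coherent, $S$-flat module $F$ on $X$, if the base change map
\[
 \phi^q(s_0) \colon (\R^qf_*F)(s_0) \to \H^q(X_{s_0},F|X_{s_0})
\]
is surjective, then $\phi^q$ is an isomorphism on a whole open neighborhood of $s_0$; moreover, $\R^qf_*F$ is locally finite free in a neighborhood of $s_0$ if and only if $\phi^{q-1}(s_0)$ is surjective. Specializing to $q=0$ and $F=\O_X$, the map $\phi^{-1}(t)$ is vacuously surjective, so the entire statement of Proposition \ref{p:hdg00fllbc} will follow once we establish that
\[
 \phi^0(t) \colon \C\otimes_{\O_{S,t}}(f_*\O_X)_t \to \H^0(X_t,\O_{X_t})
\]
is surjective.

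To verify the surjectivity, I would exploit the reducedness of $X_t$ as follows. Since $X_t$ is a compact reduced complex space, every global section of $\O_{X_t}$ is locally constant on $X_t$, so $\H^0(X_t,\O_{X_t})$ is finite-dimensional over $\C$ with basis given by the characteristic functions $\mathbf 1_{Z_1},\dots,\mathbf 1_{Z_c}$ of the connected components $Z_1,\dots,Z_c$ of $X_t$. It suffices to lift each $\mathbf 1_{Z_i}$ to a section of $f_*\O_X$ over some open neighborhood $V$ of $t$ in $S$. For this I would choose pairwise disjoint open neighborhoods $U_1,\dots,U_c$ of $Z_1,\dots,Z_c$ in $X$; since $f$ is proper, the image $f(X\setminus (U_1\cup\dots\cup U_c))$ is closed in $S$ and misses $t$, so its complement $V$ is an open neighborhood of $t$ with $f^{-1}(V)\subset U_1\cup\dots\cup U_c$. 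The sets $U_i':=U_i\cap f^{-1}(V)$ then form a disjoint clopen partition of $f^{-1}(V)$, and their characteristic functions are locally constant, hence holomorphic, sections of $\O_X$ on $f^{-1}(V)$; their residue classes in $(f_*\O_X)_t$ map to $\mathbf 1_{Z_i}$ under $\phi^0(t)$.

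With surjectivity of $\phi^0(t)$ in hand, Grauert's theorem yields both the local finite freeness of $f_*\O_X = \R^0f_*(\O_X)$ in $t$ and the bijectivity of the base change map, completing the proof. The main technical point is the precise invocation of Grauert's base change machinery in the analytic category, but this is standard material and is handled, in a compatible formulation, in Chapter \ref{ch:tool} of the present text (or directly in \cite[Chapter III]{BaSt76}); the geometric ingredient supplied by our hypotheses is merely the separation argument above, which turns reducedness of $X_t$ into a concrete lifting statement for idempotents.
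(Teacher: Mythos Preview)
Your argument is correct. The paper itself does not give a proof at all: it simply cites \cite[III, Proposition 3.12]{BaSt76}. What you have written is essentially the standard proof of that cited result---reducing to Grauert's cohomology-and-base-change criterion and then verifying surjectivity of $\phi^0(t)$ by lifting the idempotents corresponding to the connected components of the compact reduced fiber $X_t$. The separation argument (using properness of $f$ to find a neighborhood $V$ of $t$ with $f^{-1}(V)$ decomposing into disjoint clopen pieces $U_i'$) is exactly how one produces the required lifts, and the vacuous surjectivity of $\phi^{-1}(t)$ then gives local freeness of $\R^0f_*(\O_X)$ for free. So your proposal is not just compatible with the paper's approach---it \emph{is} the content behind the citation, spelled out in full.
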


\begin{proof}
 See \cite[III, Proposition 3.12]{BaSt76}.
\end{proof}

\begin{proposition}
 \label{p:hdglffbccm}
 Let $f\colon X\to S$ be a proper, flat morphism of complex spaces and $A$ a closed analytic subset of $X$ such that $\Sing(f)\subset A$. Assume that $S$ is smooth and the fibers of $f$ are Cohen-Macaulay and of Kähler type. Define $g$ to be the composition of the canonical morphism $Y := X\setminus A \to X$ and $f$. Then for all integers $p$ and $q$ such that
 \begin{equation} \label{e:hdglffbc-codim'}
  p+q+2 \leq c := \codim(A,f)
 \end{equation}
 the following assertions hold:
 \begin{enumerate}
  \item \label{p:hdglffbccm-lff} The module $\sH^{p,q}(g)$ is locally finite free on $S$.
  \item \label{p:hdglffbccm-bc} For all $s\in S$, the Hodge base change map
  \[
   (\sH^{p,q}(g))(s) \to \sH^{p,q}(Y_s)
  \]
  is an isomorphism in $\Mod(\C)$.
 \end{enumerate}
\end{proposition}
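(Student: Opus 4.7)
The plan is to reduce Proposition \ref{p:hdglffbccm} to the already established Proposition \ref{p:hdglffbc} and then to treat a single remaining bidegree using the Cohen-Macaulay hypothesis together with Proposition \ref{p:gaprel} and Ohsawa's Hodge symmetry. Since $f$ is flat with Cohen-Macaulay fibers and $S$ is smooth, the total space $X$ is Cohen-Macaulay; in particular, $X$ is locally pure dimensional, so Proposition \ref{p:equidim} shows that $f$ is locally equidimensional. Hence Proposition \ref{p:hdglffbc} applies and yields both assertions \ref{p:hdglffbccm-lff}) and \ref{p:hdglffbccm-bc}) for every bidegree $(p,q)$ satisfying $p+q+2 \leq c$ except possibly $(p,q) = (0,c-2)$, which must be treated separately.

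For this exceptional bidegree, Proposition \ref{p:codim} \ref{p:codim-global}) yields $c \leq \codim(A,X)$, and Proposition \ref{p:gaprel} applied to the locally finite free sheaf $\O_X$ in degree $c-2$ produces a canonical isomorphism
\[
 \sH^{0,c-2}(g) = \R^{c-2}g_*(\O_Y) \iso \R^{c-2}f_*(\O_X),
\]
and, by the same argument run on the Cohen-Macaulay fiber $X_s$ (where $c \leq \codim(A \cap |X_s|, X_s)$), a fiberwise isomorphism $\H^{c-2}(Y_s,\O_{Y_s}) \iso \H^{c-2}(X_s,\O_{X_s})$ for each $s \in S$. Since $f$ is proper and flat and $\O_X$ is trivially $f$-flat, I would next invoke the classical Grauert cohomology and base change theorem for $(f,\O_X)$ in degree $c-2$: it suffices to verify that $s \mapsto \dim_\C \H^{c-2}(X_s,\O_{X_s})$ is locally constant on $S$ in order to conclude that $\R^{c-2}f_*(\O_X)$ is locally finite free on $S$ and that the associated base change maps in degree $c-2$ are isomorphisms.

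For $c > 2$, this local constancy follows from Ohsawa's Hodge symmetry (Theorem \ref{t:ohsawa}), which, since $\Sing(X_s) \subset A \cap |X_s|$ and $(0,c-2)$ lies in the permitted range on each fiber, supplies $\dim_\C \sH^{0,c-2}(Y_s) = \dim_\C \sH^{c-2,0}(Y_s)$. The bidegree $(c-2,0)$ is not the exceptional one (as $c-2 > 0$), so the first part of the proof already forces $\sH^{c-2,0}(g)$ to be locally finite free on $S$ with compatible base change, whence $\dim_\C \sH^{c-2,0}(Y_s)$ is locally constant. For $c = 2$ the exceptional bidegree is $(0,0)$, and one argues instead via Proposition \ref{p:hdg00fllbc} applied to $f$, relying on the reducedness of the Kähler type fibers. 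In either case, chasing the conclusions back through the two Proposition \ref{p:gaprel} isomorphisms yields the desired local finite freeness of $\sH^{0,c-2}(g)$ and the base change isomorphism.

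The delicate point will be verifying that the Hodge base change map $\sH^{0,c-2}(g)(s) \to \sH^{0,c-2}(Y_s)$ agrees, modulo the two Proposition \ref{p:gaprel} isomorphisms, with the Grauert base change map $\R^{c-2}f_*(\O_X)(s) \to \H^{c-2}(X_s,\O_{X_s})$; this should follow from the naturality of the Leray spectral sequence underlying Proposition \ref{p:gaprel} with respect to the fiber square, but it requires a careful diagram chase.
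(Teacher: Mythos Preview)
Your proposal is correct and follows essentially the same approach as the paper: reduce to Proposition \ref{p:hdglffbc} for all bidegrees except $(0,c-2)$, then for that exceptional case pass to $\R^{c-2}f_*(\O_X)$ via Proposition \ref{p:gaprel}, establish local constancy of $s\mapsto\dim_\C\H^{c-2}(X_s,\O_{X_s})$ through Ohsawa's Hodge symmetry against the already-handled bidegree $(c-2,0)$ (respectively via Proposition \ref{p:hdg00fllbc} when $c=2$), and conclude with Grauert's continuity theorem. The paper handles the final compatibility you flag by writing down the evident commutative square relating the two base change maps through the functoriality of the canonical morphisms in Proposition \ref{p:gaprel}; this is exactly the diagram chase you anticipate, and it goes through without difficulty.
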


\begin{proof}
 Let $p$ and $q$ be integers satisfying \eqref{e:hdglffbc-codim'}. When $p<0$ or $q<0$, assertions \ref{p:hdglffbccm-lff}) and \ref{p:hdglffbccm-bc}) are trivially fulfilled. So, assume that both $p$ and $q$ are $\geq 0$. Then,
 \[
  2 \leq p+q+2 \leq \codim(A,f) \leq \codim(A,X) \leq \codim(\Sing(X),X),
 \]
 where we have used that
 \[
  \Sing(X) \subset \Sing(f) \subset A.
 \]
 Moreover, given that $f\colon X\to S$ is flat and fiberwise Cohen-Macaulay with a smooth base $S$, the complex space $X$ is Cohen-Macaulay. Thus $X$ is normal, whence locally pure dimensional by Proposition \ref{p:normalpuredim}. By Proposition \ref{p:equidim}, the morphism $f$ is locally equidimensional. Therefore, as long as $(p,q+2) \neq (0,c)$, assertions \ref{p:hdglffbccm-lff}) and \ref{p:hdglffbccm-bc}) are implied by the corresponding assertions of Proposition \ref{p:hdglffbc}.
 
 Now assume that $(p,q+2) = (0,c)$ (and still $q\geq 0$). We claim that $\R^qf_*(\O_X)$ is locally finite free on $S$ and compatible with base change. In case $q=0$, this follows from Proposition \ref{p:hdg00fllbc}. So, be $q>0$. Let $s\in S$ be arbitrary. Then $X_s$ is a Cohen-Macaulay complex space and $A \cap |X_s|$ is a closed analytic subset of $X_s$ such that
 \[
  q+2 \leq \codim(A,f) \leq \codim(A \cap |X_s|,X_s).
 \]
 Hence by Proposition \ref{p:gaprel} (applied to $a_{X_s} \colon X_s \to \C$), the canonical morphism
 \[
  \H^q(X_s,\O_{X_s}) \to \H^q(Y_s,\O_{Y_s}) = \sH^{0,q}(Y_s)
 \]
 is an isomorphism in $\Mod(\C)$. By Theorem \ref{t:ohsawa}, we have:
 \[
  \sH^{0,q}(Y_s) \iso \sH^{q,0}(Y_s).
 \]
 By what we have already proven, the module $\sH^{q,0}(g)$ is locally finite free on $S$ and compatible with base change, so that, in particular, (abandoning our fixation of $s$) the function
 \[
  s \mto \dim_\C(\sH^{q,0}(Y_s))
 \]
 is locally constant on $S$. As a consequence, the function
 \[
  s \mto \dim_\C(\H^q(X_s,\O_{X_s}))
 \]
 is a locally constant function on $S$ too, so that Grauert's continuity theorem yields our claim.
 
 We allow $q=0$ again. By Proposition \ref{p:gaprel}, the canonical morphism
 \[
  \R^qf_*(\O_X) \to \R^qg_*(\O_Y)
 \]
 is an isomorphism in $\Mod(S)$. Therefore $\R^qg_*(\O_Y) = \sH^{p,q}(g)$ is locally finite free on $S$. Furthermore, by the functoriality of base changes, the diagram
 \[
  \xymatrix{
   (\R^qf_*(\O_X))(s) \ar[r]^\sim \ar[d]_\BC & (\R^qg_*(\O_Y))(s) \ar[d]^\BC \\
   \H^q(X_s,\O_{X_s}) \ar[r]_\sim & \H^q(Y_s,\O_{Y_s})
  }
 \]
 commutes in the category of complex vector spaces for all $s\in S$. As the left base change in the diagram is an isomorphism, the right one is as well.
\end{proof}

\begin{theorem}
 \label{t:ohsawarel}
 Let $f\colon X\to S$ be a proper, flat morphism of complex spaces and $A$ a closed analytic subset of $X$ such that $\Sing(f)\subset A$. Assume that $S$ is smooth and the fibers of $f$ are Cohen-Macaulay as well as of Kähler type. Define $g$ to be the composition of the inclusion $Y:=X\setminus A\to X$ and $f$ and denote $E$ the Frölicher spectral sequence of $g$.
 \begin{enumerate}
  \item \label{t:ohsawarel-behind} $E$ degenerates from behind in entries
  \[
   I := \{(p,q) \in \Z\times\Z : p+q+2 \leq \codim(A,f)\}
  \]
  at sheet $1$ in $\Mod(S)$.
  \item \label{t:ohsawarel-total} $E$ degenerates in entries $I$ at sheet $1$ in $\Mod(S)$ if and only if either $A=\emptyset$ or $A\neq\emptyset$ and the canonical morphism
  \begin{equation} \label{e:ohsawarel-0}
   \sH^n(g) \to \sH^{0,n}(g)
  \end{equation}
  of modules on $S$, where we set $n := \codim(A,f) - 2$, is an epimorphism.
 \end{enumerate}
\end{theorem}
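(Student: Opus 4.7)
The proof combines three ingredients assembled in this chapter: Theorem~\ref{t:ohsawa} (Ohsawa's fiberwise degeneration criterion), Theorem~\ref{t:froeinf} (the infinitesimal lifting of Frölicher degeneration), and Theorem~\ref{t:banica} (the B\u{a}nic\u{a}--St\u{a}n\u{a}\c{s}il\u{a} formal-functions theorem), all underpinned by the coherence and base-change statements of Proposition~\ref{p:hdglffbccm}. Ohsawa supplies Frölicher degeneration on each reduced fiber $Y_t = X_t \setminus (A \cap |X_t|)$ in entries $I$, using that $\codim(A \cap |X_t|, X_t) \geq c$ and that $\Sing(X_t) \subset A \cap |X_t|$ (the latter because $f$ is flat with smooth base and Cohen--Macaulay fibers, so submersivity of $f$ at a point is equivalent to smoothness of the fiber there). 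Theorem~\ref{t:froeinf}, applied to the submersive morphism $g$ at each $t$, then lifts this to simultaneous degeneration over every infinitesimal neighborhood $g_m \colon Y_m \to S_m$, supplying freeness and base-change isomorphisms for the Hodge and filtered de~Rham modules of $g_m$. Finally, Theorem~\ref{t:banica} in concert with the faithful flatness of $\hat{\O_{S,t}}$ over $\O_{S,t}$ (Lemma~\ref{l:comfflat}) and Lemma~\ref{l:compfree} transfers these infinitesimal statements to the germ at $t$.

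For part~(a), fix $(p,q) \in I$ and set $n = p+q$. By Proposition~\ref{p:froedegen}~(a) it suffices to construct an $\O_S$-linear morphism $\psi^p \colon \F^p\sH^n(g)/\F^{p+1}\sH^n(g) \to \sH^{p,n-p}(g)$ making the diagram \eqref{e:froedegen} commute. Proposition~\ref{p:hdglffbccm} makes the target locally finite free near each $t$, while a Hodge-to-de~Rham spectral-sequence argument based on Proposition~\ref{p:cohhdi} shows that the source is coherent. Applying Proposition~\ref{p:froedegen}~(a) over each $S_m$, using the degeneration and base-change output of Theorem~\ref{t:froeinf}, yields a unique compatible inverse system $(\psi^p_m)_{m \in \N}$. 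Identifying the completions of source and target at $t$ with the inverse limits of their restrictions to the $S_m$ (Theorem~\ref{t:banica}), the system $(\psi^p_m)$ determines a morphism on completions which, by faithful flatness, descends to a germ $\psi^p_t$; the germs glue to the required global $\psi^p$.

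For part~(b): if $A = \emptyset$ then $g = f$ is proper submersive with Kähler fibers, so $E$ degenerates at $E_1$ in $\Mod(S)$ by the classical theorem of Deligne. Assume $A \neq \emptyset$ and set $n := c - 2$. For $(\Rightarrow)$, full degeneration at $(0,n) \in I$ makes $\psi^0$ an isomorphism by Proposition~\ref{p:froedegen}~(b); postcomposing with the quotient $\sH^n(g) = \F^0\sH^n(g) \to \F^0\sH^n(g)/\F^1\sH^n(g)$ exhibits $\sH^n(g) \to \sH^{0,n}(g)$ as an epimorphism. For $(\Leftarrow)$, on each interior diagonal $D_{n'}$ with $n' < n$, every outgoing differential $d_r^{p',q'}$ (for $r \geq 1$) at $(p',q') \in D_{n'}$ has target in $D_{n'+1} \subset I$; its vanishing is precisely the incoming-vanishing at that target, which is already provided by part~(a). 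Full degeneration therefore holds off the boundary diagonal $D_n$. On $D_n$, the edge-map hypothesis forces $\psi^0$ to be surjective; since $\psi^0$ is fiberwise an isomorphism by Ohsawa and its kernel is coherent, Nakayama makes $\psi^0$ an isomorphism. A descending induction on $p'$, based on the short exact sequences $0 \to \F^{p'+1}\sH^n(g) \to \F^{p'}\sH^n(g) \to \F^{p'}\sH^n(g)/\F^{p'+1}\sH^n(g) \to 0$, then propagates local freeness of $\F^{p'}\sH^n(g)$ upward and iso-ness of each $\psi^{p'}$ to all of $D_n$, again invoking fiberwise iso from Ohsawa and base-change compatibility to verify the inductive step.

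The main obstacle lies in the descent step of part~(a): one must carefully match the formal-functions limit produced by Theorem~\ref{t:banica} with the infinitesimal system $(\psi^p_m)$ of Theorem~\ref{t:froeinf}, and verify that the resulting morphism on completions really does descend via faithful flatness to an $\O_S$-linear morphism of the coherent sheaves in question. In part~(b) the delicate point is propagating local-freeness and base-change compatibility of $\F^{p'}\sH^n(g)$ from the anchor at $p' = 0$ (given by the edge-map hypothesis) across the entire boundary diagonal $D_{c-2}$, which is what makes the descending Hodge-filtration induction go through.
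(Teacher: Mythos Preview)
Your strategy for part~\ref{t:ohsawarel-behind}) is on the right track---Ohsawa on the fibers (Theorem~\ref{t:ohsawa}), lifted to the infinitesimal neighbourhoods via Theorem~\ref{t:froeinf}, is indeed the engine---but the descent step has a genuine gap. Faithful flatness of $\O_{S,t} \to \hat{\O_{S,t}}$ gives \emph{injectivity} of $\Hom_{\O_{S,t}}(M,N) \to \Hom_{\hat{\O_{S,t}}}(\hat M, \hat N)$, not surjectivity; a morphism on completions assembled from the compatible system $(\psi^p_m)$ need not descend to a morphism of stalks. In addition, Theorem~\ref{t:banica} as stated applies to $\R^qf_*(F)$ for a single locally free sheaf $F$, not to filtered pieces of a hyper-direct-image, so your identification of the completion of $\F^p\sH^n(g)/\F^{p+1}\sH^n(g)$ with the inverse limit of the $\F^p\sH^n(g_m)/\F^{p+1}\sH^n(g_m)$ is not covered by the tools at hand.

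The paper sidesteps both issues by recasting the existence of $\psi^p$ as a \emph{vanishing} statement rather than a construction. Setting $K := \ker\bigl(\R^n\bar g_*(\sigma^{\geq p}\bar\Omega^\kdot_g) \to \sH^n(g)\bigr)$, one checks that $\psi^p$ exists if and only if the composition $K \hookrightarrow \R^n\bar g_*(\sigma^{\geq p}\bar\Omega^\kdot_g) \to \R^n\bar g_*(\sigma^{=p}\bar\Omega^\kdot_g)$ is zero. This is a property of an already-defined morphism into the locally finite free module $\sH^{p,n-p}(g)$ (Proposition~\ref{p:hdglffbccm}), and such vanishing \emph{can} be tested on all infinitesimal neighbourhoods: if the image of every section lies in $\fm^{m+1}(\sH^{p,n-p}(g))_t$ for every $m$, Krull's intersection theorem forces it to zero. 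The only base-change input required is the Hodge base change for the target, Proposition~\ref{p:hdglffbc0}~\ref{p:hdglffbc0-bc}); no formal-functions comparison for the de~Rham side is invoked. As for part~\ref{t:ohsawarel-total}): note that once $\psi^0$ exists it is automatically injective (it embeds $E_\infty^{0,n}$ into $E_1^{0,n}$), so the edge-map hypothesis already makes it an isomorphism---your Nakayama step at $(0,n)$ is superfluous---while your inductive sketch for the remaining entries of $D_n$ leans on base-change compatibility of the $\F^{p'}\sH^n(g)$ that you have not established.
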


\begin{proof}
 \ref{t:ohsawarel-behind}). Let $(p,q) \in I$ and put $n := p+q$. When $n<0$, we know that $E_1^{p,q} \iso 0$ in $\Mod(S)$ so that $E$ certainly degenerates in $(p,q)$ at sheet $1$. So, we assume $n\geq 0$. Define $K$ to be the kernel of the morphism of modules on $S$,
 \[
  \R^ng_*(i^{\geq p}\Omega^\kdot_g) \colon \R^ng_*(\sigma^{\geq p}\Omega^\kdot_g) \to \R^ng_*(\Omega^\kdot_g).
 \]
 Let $t\in S$ be arbitrary. Adopt the notation of Setup \ref{set:inbc} for infinitesimal neighborhoods (for $g\colon Y\to S$ in place of $f\colon X\to S$). Fix a natural number $m$ and write $K_m$ for the kernel of the morphism
 \[
  \R^n{g_m}_*(i^{\geq p}\Omega^\kdot_{g_m}) \colon \R^n{g_m}_*(\sigma^{\geq p}\Omega^\kdot_{g_m}) \to \R^n{g_m}_*(\Omega^\kdot_{g_m})
 \]
 of modules on $S_m$. Then by the functoriality of base change maps the diagram
 \[
  \xymatrix{
   b_m^*(\R^ng_*(\sigma^{\geq p}\Omega^\kdot_g)) \ar[r] \ar[d]_\BC & b_m^*(\R^ng_*(\Omega^\kdot_g)) \ar[d]^\BC \\
   \R^n{g_m}_*(\sigma^{\geq p}\Omega^\kdot_{g_m}) \ar[r] & \R^n{g_m}_*(\Omega^\kdot_{g_m})
  }
 \]
 commutes in $\Mod(S_m)$. Thus, there exists one, and only one, $\alpha_m$ rendering commutative in $\Mod(S_m)$ the following diagram:
 \[
  \xymatrix@C3pc{
   b_m^*(K) \ar[r]^-{b_m^*(\subset)} \ar@{.>}[d]_{\alpha_m} & b_m^*(\R^ng_*(\sigma^{\geq p}\Omega^\kdot_g)) \ar[d]^\BC \\
   K_m \ar[r]_-\subset & \R^n{g_m}_*(\sigma^{\geq p}\Omega^\kdot_{g_m})
  }
 \]
 By Theorem \ref{t:ohsawa}, the Frölicher spectral sequence of $Y_0 = Y_t$ degenerates in entries of total degree $n$ at sheet $1$, so that the Frölicher spectral sequence of $Y_m$ degenerates in entries of total degree $n$ at sheet $1$ by means of Theorem \ref{t:froeinf} a$_m$). In particular, the Frölicher spectral sequence of $Y_m$ degenerates from behind in $(p,q)$ at sheet $1$. Therefore, by Proposition \ref{p:froedegen} \ref{p:froedegen-behind}), there exists $\psi_m$ such that the diagram
 \[
  \xymatrix{
   \R^n{g_m}_*(\sigma^{\geq p}\Omega^\kdot_{g_m}) \ar[rr] \ar[dr] && \R^n{g_m}_*(\sigma^{=p}\Omega^\kdot_{g_m}) \\ & \F^p\sH^n(g_m) \ar@{.>}[ru]_{\psi_m}
  }
 \]
 commutes in $\Mod(S_m)$. Thus the composition of the two arrows in the bottom row of the following diagram is zero:
 \begin{equation} \label{e:ohsawarel-1}
  \xymatrix{
   b_m^*(K) \ar[r] \ar[d]_{\alpha_m} & b_m^*(\R^ng_*(\sigma^{\geq p}\Omega^\kdot_g)) \ar[r] \ar[d]_\BC & b_m^*(\R^ng_*(\sigma^{=p}\Omega^\kdot_g)) \ar[d]^\BC \\
   K_m \ar[r] & \R^n{g_m}_*(\sigma^{\geq p}\Omega^\kdot_{g_m}) \ar[r] & \R^n{g_m}_*(\sigma^{=p}\Omega^\kdot_{g_m})
  }
 \end{equation}
 By Proposition \ref{p:hdglffbc0} \ref{p:hdglffbc0-bc}), the Hodge base change map which is the rightmost vertical arrow in the diagram in \eqref{e:ohsawarel-1} is an isomorphism. Moreover, by the functoriality of base change maps and the definition of $\alpha_m$, the diagram in \eqref{e:ohsawarel-1} commutes. In consequence, we see that the composition of the two arrows in the top row of \eqref{e:ohsawarel-1} equals zero. Since $m$ was an arbitrary natural naumber, we deduce that the composition of stalk maps
 \[
  K_t \to (\R^ng_*(\sigma^{\geq p}\Omega^\kdot_g))_t \to (\R^ng_*(\sigma^{=p}\Omega^\kdot_g))_t
 \]
 equals zero. As $t$ was an arbitrary element of $S$, we deduce further that the composition
 \[
  K \to \R^ng_*(\sigma^{\geq p}\Omega^\kdot_g) \to \R^ng_*(\sigma^{=p}\Omega^\kdot_g)
 \]
 of morphisms of sheaves of modules on $S$ equals zero. Thus there exists $\psi$ such that the diagram
 \[
  \xymatrix{
   \R^ng_*(\sigma^{\geq p}\Omega^\kdot_g) \ar[rr] \ar[dr] && \R^ng_*(\sigma^{=p}\Omega^\kdot_g) \\ & \F^p\sH^n(g) \ar@{.>}[ru]_{\psi}
  }
 \]
 commutes in $\Mod(S)$ and, in turn, Proposition \ref{p:froedegen} \ref{p:froedegen-behind}) tells that the Frölicher spectral sequence of $g$ degenerates from behind in $(p,q)$ at sheet $1$.
 
 \ref{t:ohsawarel-total}). Not that by \ref{t:ohsawarel-behind}), the spectral sequence $E$ certainly degenerates in entries
 \[
  \{(p,q) \in \Z\times\Z : p+q+3 \leq \codim(A,f)\}
 \]
 at sheet $1$. Therefore, our assertions holds in case $A = \emptyset$. In case $A \neq \emptyset$, our assertion follows readily from Proposition \ref{p:froedegen} \ref{p:froedegen-total}).
\end{proof}

\begin{proposition}
 \label{p:ohsawarel+}
 Let $f\colon X\to S$ and $A$ be as in Theorem \ref{t:ohsawarel}. Define $g\colon Y\to S$ accordingly. Assume that $A\neq\emptyset$ and put $n:=\codim(A,f)-2$. Moreover, assume that, for all $s\in S$, the canonical mapping
 \begin{equation} \label{e:ohsawarel+}
  \H^n(X_s,\C) \to \sH^{0,n}(X_s)
 \end{equation}
 is a surjection. Then the canonical morphism \eqref{e:ohsawarel-0} is an epimorphism in $\Mod(S)$.
\end{proposition}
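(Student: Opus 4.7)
The plan is to reduce the global statement to a stalkwise question at each $s\in S$ via Nakayama's lemma, and then to exploit the hypothesis by feeding $\H^n(X_s,\C)$ into the stalks of $\sH^n(g)$ and $\sH^{0,n}(g)$ through an explicit commutative diagram. First I observe that, since $(0,n)$ satisfies $0+n+2\leq c$, Proposition \ref{p:hdglffbccm} gives that $\sH^{0,n}(g)$ is locally finite free on $S$ and that the Hodge base change yields an isomorphism $(\sH^{0,n}(g))(s)\iso\sH^{0,n}(Y_s)$ for every $s$. Moreover, since $X_s$ is Cohen-Macaulay and $n+2\leq \codim(A,f)\leq\codim(A\cap|X_s|,X_s)$, Proposition \ref{p:gaprel} applied to $a_{X_s}\colon X_s\to\C$ provides a canonical isomorphism $\sH^{0,n}(Y_s)\iso \sH^{0,n}(X_s)$. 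Together these identify the target fiber $(\sH^{0,n}(g))(s)$ with $\sH^{0,n}(X_s)$.

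Next I would construct, for each $s\in S$, a commutative diagram of $\C$-vector spaces
\[
\xymatrix@C=1.4pc{
\H^n(X_s,\C) \ar[r]^-{\alpha_s} \ar@/_1.1pc/[rrrd]_-{\textup{hyp.}} & (\sH^n(g))_s \ar[r]^-{\kappa_s} & (\sH^{0,n}(g))_s \ar[r] & (\sH^{0,n}(g))(s) \ar[d]^-{\iso} \\
& & & \sH^{0,n}(X_s)
}
\]
in which $\kappa_s$ is the stalk of the edge morphism $\sH^n(g)\to\sH^{0,n}(g)$ and the diagonal is the hypothesis map. To produce $\alpha_s$, combine the adjunction morphism $\C_X\to\R j_*\C_Y$ (for $j\colon Y\hookrightarrow X$) with the composed morphism of complexes $\C_Y=g^{-1}\C_S\to g^{-1}\O_S\to\bar\Omega^\kdot_g$ (the first arrow via $\C\hookrightarrow\O_S$, the second via inclusion in degree zero). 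Applying $\R^nf_*$ yields a morphism $\R^nf_*(\C_X)\to \R^n\bar g_*(\bar\Omega^\kdot_g)=\sH^n(g)$ of sheaves of $\C$-vector spaces on $S_\top$; topological proper base change for the proper morphism $f$ identifies the stalk on the left with $\H^n(X_s,\C)$ and so gives $\alpha_s$. The commutativity of the outer triangle amounts to the observation that the composition of complexes $\C_Y\to\bar\Omega^\kdot_g\to\O_Y[0]$ (the last being the stupid truncation) agrees with the canonical $\C_Y\to\O_Y$, combined with the compatibility of topological proper base change for $\C_X$ and of ringed-space base change for $\O_Y$ with the canonical isomorphism $\R^nf_*(\O_X)\iso \R^ng_*(\O_Y)=\sH^{0,n}(g)$ furnished by Proposition \ref{p:gaprel} applied to $f$ itself (noting that $X$ is Cohen-Macaulay and $n+2\leq\codim(A,f)\leq\codim(A,X)$).

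Granted the diagram, the diagonal is surjective by hypothesis, so the image $M:=\kappa_s[(\sH^n(g))_s]$ in $(\sH^{0,n}(g))_s$ surjects onto $(\sH^{0,n}(g))(s)=(\sH^{0,n}(g))_s/\fm_s(\sH^{0,n}(g))_s$; equivalently, $M+\fm_s(\sH^{0,n}(g))_s=(\sH^{0,n}(g))_s$. Since $M$ is an $\O_{S,s}$-submodule and $(\sH^{0,n}(g))_s$ is a finite free $\O_{S,s}$-module, Nakayama's lemma applied to the finitely generated quotient $(\sH^{0,n}(g))_s/M$ forces $M=(\sH^{0,n}(g))_s$. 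Hence $\kappa_s$ is surjective at every $s\in S$, and thus $\sH^n(g)\to\sH^{0,n}(g)$ is an epimorphism in $\Mod(S)$.

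The principal obstacle I expect will be the verification of the commutativity of the outer triangle in the diagram above: this is a naturality chase threading together the derived-category construction of $\alpha_s$, the topological proper base change $(\R^nf_*(\C_X))_s\iso\H^n(X_s,\C)$, the Hodge base change of Proposition \ref{p:hdglffbccm}, and the Cohen-Macaulay excision isomorphism of Proposition \ref{p:gaprel}, all of which must be made compatible on the nose rather than only up to isomorphism.
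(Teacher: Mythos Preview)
Your proposal is correct and follows essentially the same approach as the paper: feed $\H^n(X_s,\C)$ into $(\sH^n(g))$ via proper base change for $\R^nf_*(\C_X)$, chase a commutative diagram to the fiber $(\sH^{0,n}(g))(s)\iso\sH^{0,n}(Y_s)\iso\sH^{0,n}(X_s)$, and conclude by Nakayama using the local finite freeness of $\sH^{0,n}(g)$ from Proposition \ref{p:hdglffbccm}. The only cosmetic difference is that the paper routes through the fiber $(\sH^n(g))(s)$ rather than the stalk $(\sH^n(g))_s$, and it handles your ``principal obstacle'' by simply drawing out the full diagram \eqref{e:defrtlsing-1} and asserting its commutativity ``by the functoriality of the various base changes''---so you have correctly identified where the work lies, and no more than that is needed.
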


\begin{proof}
 Let $s\in S$ be arbitrary. As $f$ is proper, the base change map
 \[
  (\R^nf_*(\C_X))_s \to \H^n(X_s,\C)
 \]
 is a bijection. By assumption, the canonical map \eqref{e:ohsawarel+} is a surjection. As $X_s$ is Cohen-Macaulay and $A_s := A \cap |X_s|$ is a closed analytic subset of $X_s$ such that
 \[
  n+2 = \codim(A,f) \leq \codim(A_s,X_s),
 \]
 the morphism
 \[
  \sH^{0,n}(X_s) \to \sH^{0,n}(X_s\setminus A_s)
 \]
 induced by the inclusion $X_s\setminus A_s \to X_s$ of complex spaces is a bijection. Now the morphism $Y_s\to X_s$ of complex spaces which is induced on fibers over $s$ by the inclusion $Y \to X$ is isomorphic to $X_s\setminus A_s \to X_s$ in the overcategory $\An_{/X_s}$, hence by the functoriality of $\sH^{0,n}$, the morphism
 \[
  \sH^{0,n}(X_s)\to \sH^{0,n}(Y_s)
 \]
 induced by $Y_s \to X_s$ is a bijection. Combining these results, one infers that the composition of functions
 \begin{equation} \label{e:defrtlsing-0}
  (\R^nf_*(\C_X))_s \to \H^n(X_s,\C) \to \sH^{0,n}(X_s) \to \sH^{0,n}(Y_s)
 \end{equation}
 is a surjection.

 The diagram
 \begin{equation} \label{e:defrtlsing-1}
  \xymatrix{
   (\R^nf_*(\C_X))_s \ar[r] \ar[d] & (\R^ng_*(\C_Y))_s \ar[r] \ar[d] & (\sH^n(g))(s) \ar[d] \ar@/^4pc/[dd] \\
   \H^n(X_s,\C) \ar[r] \ar[d] & \H^n(Y_s,\C) \ar[r] \ar[d] & \sH^n(Y_s) \ar[ld] \\
   \sH^{0,n}(X_s) \ar[r] & \sH^{0,n}(Y_s) & (\sH^{0,n}(g))(s) \ar[l]
  }
 \end{equation}
 commutes in $\Mod(\C)$ by the functoriality of the various base changes appearing in it. By the commutativity of the diagram in \eqref{e:defrtlsing-1} and the surjectivity of the composition \eqref{e:defrtlsing-0}, we deduce that the composition
 \[
  (\sH^n(g))(s) \to (\sH^{0,n}(g))(s) \to \sH^{0,n}(Y_s)
 \]
 is a surjection. As the base change map
 \[
  (\sH^{0,n}(g))(s) \to \sH^{0,n}(Y_s)
 \]
 is a bijection by Proposition \ref{p:hdglffbccm} \ref{p:hdglffbccm-bc}), we see that
 \[
  (\sH^n(g))(s) \to (\sH^{0,n}(g))(s)
 \]
 is a surjection. By Proposition \ref{p:hdglffbccm} \ref{p:hdglffbccm-lff}), the module $(\sH^{0,n}(g))_s$ is finite free over $\O_{S,s}$, whence Nakayama's lemma tells that
 \[
  (\sH^n(g))_s \to (\sH^{0,n}(g))_s
 \]
 is a surjection. Taking into account that while conducting this argument, $s$ was an arbitrary point of $S$, we conclude that the canonical morphism \eqref{e:ohsawarel-0} of sheaves of modules on $S$ is an epimorphism.
\end{proof}

\begin{corollary}
 \label{c:froertlsing}
 Let $f\colon X\to S$ be a proper, flat morphism of complex spaces such that $S$ is smooth and the fibers of $f$ have rational singularities, are of Kähler type, and have singular loci of codimension $\geq 4$. Define $g\colon Y\to S$ to be the submersive share of $f$ (\cf Notation \ref{not:singf}). Set
 \[
  I := \{(\nu,\mu) \in \Z\times\Z : \nu+\mu \leq 2\}.
 \]
 \begin{enumerate}
  \item \label{c:froertlsing-lff} For all $(p,q)\in I$, the module $\sH^{p,q}(g)$ is locally finite free on $S$.
  \item \label{c:froertlsing-bc} For all $(p,q)\in I$ and all $s\in S$, the Hodge base change map
  \begin{equation} \label{e:froertlsing-bc}
   (\sH^{p,q}(g))(s) \to \sH^{p,q}(Y_s)
  \end{equation}
  is an isomorphism in $\Mod(\C)$.
  \item \label{c:froertlsing-degen} The Frölicher spectral sequence of $g$ degenerates in entries $I$ at sheet $1$ in $\Mod(S)$.
 \end{enumerate}
\end{corollary}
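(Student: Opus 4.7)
First I would set $A := \Sing(f)$. Because $f$ is flat over the smooth base $S$, the equality $A \cap |X_s| = \Sing(X_s)$ holds for every $s \in S$, so the fiberwise hypothesis yields $c := \codim(A,f) \geq 4$. Rational singularities are Cohen-Macaulay, hence the fibers of $f$ are Cohen-Macaulay and of Kähler type. Any $(p,q) \in I$ satisfies $p+q+2 \leq 4 \leq c$, and Proposition \ref{p:hdglffbccm} immediately yields assertions (a) and (b).

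For (c), the plan is to combine Theorem \ref{t:ohsawarel} with the elementary observation that the outgoing differential $d_r\colon E_r^{p,q} \to E_r^{p+r,q-r+1}$ of the Frölicher spectral sequence of $g$ coincides with an incoming differential at $(p+r,q-r+1)$, so that degeneration-from-behind at the target forces it to vanish. If $A = \emptyset$, Theorem \ref{t:ohsawarel}(b) closes the case at once. Otherwise $c \in \N_{\geq 4}$, and Theorem \ref{t:ohsawarel}(a) provides degeneration-from-behind at sheet $1$ in all entries of total degree $\leq c-2$. When $c \geq 5$, every outgoing $d_r$ from an entry $(p,q) \in I$ lands in an entry of total degree $\leq 3 \leq c-2$ and thus vanishes, so total degeneration throughout $I$ follows.

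The borderline case $c = 4$ is the crux: here $I = \{(p,q) : p+q \leq 2\}$ and the hypothesis of Theorem \ref{t:ohsawarel}(b) becomes genuinely nontrivial, demanding that $\sH^2(g) \to \sH^{0,2}(g)$ be an epimorphism. By Proposition \ref{p:ohsawarel+} this reduces to surjectivity of the canonical map $\H^2(X_s,\C) \to \H^2(X_s,\O_{X_s})$ for every $s \in S$. Choosing a Kähler resolution $\pi\colon \tilde X_s \to X_s$, which exists by Kähler type, the rational singularities hypothesis identifies $\H^2(X_s,\O_{X_s})$ with $\H^2(\tilde X_s,\O_{\tilde X_s})$, and classical Hodge theory on the compact Kähler manifold $\tilde X_s$ provides surjectivity of $\H^2(\tilde X_s,\C) \to \H^2(\tilde X_s,\O_{\tilde X_s})$.

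The main obstacle is to promote this last surjectivity to surjectivity of the top map $\H^2(X_s,\C) \to \H^2(X_s,\O_{X_s})$; the pullback $\H^2(X_s,\C) \to \H^2(\tilde X_s,\C)$ is in general only injective, so the commutative square alone does not close the argument. The cleanest route proceeds through mixed Hodge theory: for a compact Kähler-type complex space with rational singularities, the mixed Hodge structure on $\H^n(X_s,\C)$ is pure of weight $n$, its Hodge filtration piece $F^0/F^1$ is canonically identified with $\H^n(X_s,\O_{X_s})$ via rational singularities, and the projection $\H^n(X_s,\C) \twoheadrightarrow F^0/F^1$ is tautologically surjective. I would invoke the mixed-Hodge-theoretic apparatus developed in chapter \ref{ch:tool} to verify this surjectivity, which constitutes the principal analytic point of the proof.
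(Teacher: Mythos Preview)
Your proposal is correct and follows the paper's proof essentially step for step: the same reduction to Proposition~\ref{p:hdglffbccm} for parts (a) and (b), the same case split on $\codim(A,f)$ for part (c), and the same appeal to Theorem~\ref{t:ohsawarel} together with Proposition~\ref{p:ohsawarel+}. The only difference is presentational: where you treat the surjectivity of $\H^2(X_s,\C) \to \H^2(X_s,\O_{X_s})$ as the ``principal analytic point'' and sketch its proof, the paper simply asserts it as a known consequence of rational singularities---this is precisely Proposition~\ref{p:bingenercrit} (surjectivity of $\H^2(X_s,\RR) \to \H^2(X_s,\O_{X_s})$, which a fortiori gives surjectivity from $\H^2(X_s,\C)$), so you could have cited that directly rather than re-deriving it.
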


\begin{proof}
 Set $A:=\Sing(f)$. Then $A$ is a closed analytic subset of $X$ which contains $\Sing(f)$. Moreover, according to Notation \ref{not:singf}, we have $Y = X\setminus A$ and $g$ equals the composition of the canonical morphism $Y\to X$ and $f$. Since any complex space which has rational singularities is Cohen-Macaulay, the fibers of $f$ are Cohen-Macaulay. Let $(p,q)\in I$. Then
 \[
  p+q+2 \leq 4 \leq \codim(A,f).
 \]
 Thus by Proposition \ref{p:hdglffbccm}, the Hodge module $\sH^{p,q}(g)$ is locally finite free on $S$ and, for all $s\in S$, the Hodge base change map \eqref{e:froertlsing-bc} is an isomorphism in $\Mod(\C)$. As $(p,q)$ was an arbitrary element of $I$, this proves \ref{c:froertlsing-lff}) and \ref{c:froertlsing-bc}).
 
 For proving \ref{c:froertlsing-degen}), we distinguish two cases: Firstly, suppose that $\codim(A,f) \geq 5$. Then clearly
 \[
  I \subset \{(p,q) \in \Z\times\Z : p+q+3 \leq \codim(A,f)\}.
 \]
 So, \ref{c:froertlsing-degen}) is implied by \ref{t:ohsawarel-behind}) of Theorem \ref{t:ohsawarel}. Secondly, suppose that $\codim(A,f)<5$. Then as $\codim(A,f)\geq 4$, we have $\codim(A,f)=4$. In particular, $A\neq \emptyset$. For all $s\in S$ the complex space $X_s$ has rational singularities, whence the canonical mapping
 \[
  \H^2(X_s,\C) \to \sH^{0,2}(X_s)
 \]
 is a surjection. Therefore, \ref{c:froertlsing-degen}) is implied by Proposition \ref{p:ohsawarel+} in conjunction with \ref{t:ohsawarel-total}) of Theorem \ref{t:ohsawarel}.
\end{proof}

\chapter{Symplectic complex spaces}
\label{ch:symp}

In this chapter we study symplectic complex spaces. The main results are Theorem \ref{t:lt} (Local Torelli) and Theorem \ref{t:fr} (Fujiki Relation).

In \S\ref{s:symp}, we define what we mean by a symplectic complex space; moreover, we establish certain fundamental properties of such spaces. In \S\ref{s:bbform}, we generalize the notion of the Beauville-Bogomolov form, which has proven a pivotal tool in the study of irreducible symplectic manifolds, to the context of compact, connected, symplectic complex spaces $X$ whose $\Omega^2_X(X_\reg)$ is $1$\hyphen dimensional over the field of complex numbers. \S\ref{s:defsymp} is consecrated to the (local) deformation theory of compact, Kähler, symplectic complex spaces whose singular loci have codimension $\geq 4$. In \S\ref{s:lt}, we derive our version of a local Torelli theorem for symplectic complex spaces as well as several corollaries thereof. Eventually, in \S\ref{s:fr}, we prove that compact, Kähler, symplectic complex spaces $X$ with $1$\hyphen dimensional $\Omega^2_X(X_\reg)$ and singular locus of codimension $\geq 4$ satisfy the Fujiki relation.

\section{Symplectic structures on complex spaces}
\label{s:symp}

We intend to define a notion of symplecticity for complex spaces. Our definition (\cf Definition \ref{d:symp} below) is inspired by Y.\ Namikawa's notions of a ``projective symplectic variety'' and a ``symplectic variety'' in \cite{Na01} and \cite{Na01a}, respectively (note that the two definitions differ slightly), as well as by A.\ Beauville's notion of ``symplectic singularities'', \cf \cite[Definition 1.1]{Be00}. These concepts rely themselves on the concept of symplectic structures on complex manifolds. For the origins of symplectic structures on complex manifolds, we refer our readers to the works \cite{Bo74} and \cite{Bo78} of F.\ Bogomolov, where the term ``Hamiltonian'' is used instead of ``symplectic'', as well as to Beauville's \cite[``Définition'' in \S4]{Be83}.

In Definition \ref{d:symp0}, we coin the new term of a ``generically symplectic structure'' on a complex manifold---not with the intention of specifically studying the geometry of spaces possessing these structures, but merely as a tool to define and study the Beauville-Bogomolov form for (possibly nonsmooth) complex spaces in \S\ref{s:bbform}. Furthermore, we introduce notions of ``symplectic classes'', which seem new in the literature too. Apart from giving definitions, we state several easy or well-known consequences of the fact that a complex space is symplectic. Proposition \ref{p:ratgor} (mildness of singularities) and Proposition \ref{p:symph2} (purity of the mixed Hodge structure $\H^2(X)$) are of particular importance and fundamental for the theory developed in subsequent sections.

Let us point out that our view on symplectic structures is purely algebraic in the sense that, to begin with, our candidates for symplectic structures are elements of the sheaf of Kähler $2$\hyphen differentials on a complex space. That way, our ideas and terminology may be translated effortlessly into the framework of, say, (relative) schemes, even though we refrain from realizing this translation here. We start by defining nondegeneracy for a global $2$\hyphen differential on a complex manifold. The definition might appear a little unusual, yet we like it for its algebraic nature.

\begin{definition}[Nondegeneracy]
 \label{d:nondeg}
 Let $X$ be a complex manifold and $\sigma\in\Omega^2_X(X)$. Define $\phi$ to be the composition of the following morphisms in $\Mod(X)$:
 \begin{equation} \label{e:nondeg}
  \Theta_X \to \Theta_X\otimes\O_X \xrightarrow{\id\otimes\sigma} \Theta_X\otimes\Omega^2_X \to \Omega^1_X.
 \end{equation}
 Here, the first and last arrows stand for the inverse of the right tensor unit for $\Theta_X$ on $X$ and the contraction morphism $\gamma^2_X(\Omega^1_X)$, \cf Notation \ref{not:cont}, respectively, and
 \[
  \sigma \colon \O_X \to \Omega^2_X
 \]
 denotes, by abuse of notation, the unique morphism of modules on $X$ sending the $1$ of $\O_X(X)$ to the actual $\sigma \in \Omega^2_X(X)$.
 \begin{enumerate}
  \item \label{d:nondeg-pt} Let $p\in X$. Then $\sigma$ is called \emph{nondegenerate} on $X$ at $p$ when $\phi$ is an isomorphism of modules on $X$ at $p$, \iev when there exists an open neighborhood $U$ of $p$ in $X$ such that $i^*(\phi)$ is an isomorphism in $\Mod(X|U)$, where $i \colon X|U \to X$ denotes the evident inclusion morphism.
  \item \label{d:nondeg-global} $\sigma$ is called \emph{nondegenerate} on $X$ when $\sigma$ is nondegenerate on $X$ at $p$ for all $p\in X$.
  \item \label{d:nondeg-gen} $\sigma$ is called \emph{generically nondegenerate} on $X$ when there exists a thin subset $A$ of $X$ such that $\sigma$ is nondegenerate on $X$ at $p$ for all $p \in X \setminus A$.
 \end{enumerate}
\end{definition}

Observe that allowing $X$ to be an arbitrary complex space in Definition \ref{d:nondeg} would not cause any problems. However, as we do not see whether the thereby obtained more general concept possesses intriguing meaning---especially at the singular points of $X$---, we desisted from admitting the further generality.

\begin{remarks}
 \label{r:nondeg}
 Let $X$ and $\sigma$ be as in Definition \ref{d:nondeg}. Define $\phi$ accordingly.
 \begin{enumerate}
  \item \label{r:nondeg-global} By general sheaf theory, we see that $\sigma$ is nondegenerate on $X$ if and only if
  \[
   \phi \colon \Theta_X \to \Omega^1_X
  \]
  is an isomorphism in $\Mod(X)$.
  \item \label{r:nondeg-pt} Let $p \in X$. Then $\sigma$ is nondegenerate on $X$ at $p$ if and only if there exists an open neighbohood $U$ of $p$ in $X$ such that the image of $\sigma$ under the canonical mapping $\Omega^2_X(X) \to \Omega^2_{X|U}(U)$ is nondegenerate on $X|U$.
  \item \label{r:nondeg-deg} Define
  \[
   D := \{p \in X: \text{$\sigma$ is not nondegenerate on $X$ at $p$}\}
  \]
  to be the \emph{degeneracy locus} of $\sigma$ on $X$. Then $D$ is a closed analytic subset of $X$. Moreover, $\sigma$ is generically nondegenerate on $X$ if and only if $D$ is thin in $X$; $\sigma$ is nondegenerate on $X$ if and only if $D = \emptyset$.
 \end{enumerate}
\end{remarks}

We briefly digress in order to establish, for later use, a typical characterization of the nondegeneracy of a Kähler $2$\hyphen differential $\sigma$ on a complex manifold $X$ employing the wedge powers of $\sigma$, \cf Proposition \ref{p:nondeg}. The quick reader may well skip this discussion and head on to Definition \ref{d:symp0} immediately.

\begin{proposition}
 \label{p:nondegchart}
 Let $X$ be a complex manifold, $\sigma \in \Omega^2_X(X)$, and $p\in X$. Let $n$ be a natural number and $z\colon U \to \C^n$ an $n$\hyphen dimensional (holomorphic) chart on $X$ at $p$. Then the following are equivalent:
 \begin{enumeratei}
  \item $\sigma$ is nondegenerate on $X$ at $p$;
  \item $\phi_p\colon \Theta_{X,p} \to \Omega^1_{X,p}$ is an isomorphism in $\Mod(\O_{X,p})$, where $\phi$ denotes the composition \eqref{e:nondeg} in $\Mod(X)$ (just as in Definition \ref{d:nondeg});
  \item when $A$ is the unique alternating $n\times n$-matrix with values in the ring $\O_X(U)$ such that
  \[
   \sigma|U = \sum_{i<j}A_{ij}\cdot\dd z^i\wedge\dd z^j,
  \]
  then $A(p) \in \GL_n(\C)$, where $A(p)$ denotes the composition of $A$ with the evaluation of sections in $\O_X$ over $U$ at $p$.
 \end{enumeratei}
\end{proposition}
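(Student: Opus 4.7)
The plan is to prove the chain of equivalences (i) $\Leftrightarrow$ (ii) $\Leftrightarrow$ (iii) in two independent steps.

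First I would establish (i) $\Leftrightarrow$ (ii) by purely formal sheaf-theoretic reasoning. By Definition \ref{d:nondeg} \ref{d:nondeg-pt}), (i) asserts the existence of an open neighborhood $V$ of $p$ in $X$ such that $i^*(\phi)$ is an isomorphism in $\Mod(X|V)$, where $i\colon X|V\to X$ denotes the inclusion. Since $X$ is a complex manifold, both $\Theta_X$ and $\Omega^1_X$ are locally finite free, hence coherent, modules on $X$. Therefore $\phi\colon \Theta_X\to\Omega^1_X$ is a morphism of coherent modules, and the standard fact that a morphism of coherent sheaves is an isomorphism in an open neighborhood of a point if and only if its stalk at that point is an isomorphism yields the equivalence with (ii).

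Next I would establish (ii) $\Leftrightarrow$ (iii) by a direct local computation. Over $U$, the modules $\Omega^1_X|U$ and $\Theta_X|U$ are finite free with bases $(\dd z^1,\dots,\dd z^n)$ and $(\partial/\partial z^1,\dots,\partial/\partial z^n)$, respectively, the latter being dual to the former. Unwinding the construction of $\phi$ in Definition \ref{d:nondeg}, specifically the contraction $\gamma^2_X(\Omega^1_X)$ of Notation \ref{not:cont}, which is expressed via the adjoint interior product of Notation \ref{not:adjint} and the interior product of Construction \ref{con:intprod}, I would check that for each $i\in\{1,\dots,n\}$,
\[
 \phi_U\Bigl(\frac{\partial}{\partial z^i}\Bigr) \;=\; \sum_{j<i} A_{ji}\cdot(-\dd z^j) \;+\; \sum_{i<j} A_{ij}\cdot \dd z^j \;=\; \sum_{j=1}^n A_{ij}\cdot \dd z^j,
\]
where the second equality uses the alternation $A_{ji}=-A_{ij}$ and $A_{ii}=0$. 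Thus the matrix of $\phi|U\colon \Theta_X|U\to\Omega^1_X|U$ with respect to the chosen bases is precisely $A$. Passing to stalks at $p$, the morphism $\phi_p$ between free $\O_{X,p}$-modules of rank $n$ is represented by $A$ as well. By Lemma \ref{l:lang} (a variant of Nakayama's lemma applied to the local ring $\O_{X,p}$ with residue field $\C$), $\phi_p$ is an isomorphism if and only if the reduction $A(p)\in\C^{n\times n}$ modulo the maximal ideal is invertible, which is exactly the condition $A(p)\in\GL_n(\C)$.

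The only step requiring any genuine care is the matrix computation in the second paragraph: one must correctly unravel the definitions of $\tilde\iota^2$ and the subsequent composition with the left tensor unit, keeping track of the sign that appears in the formula for the interior product in Construction \ref{con:intprod}. Once this identification is made, the remainder of the argument is entirely routine local algebra, and no further analytic input beyond the coherence of $\Theta_X$ and $\Omega^1_X$ is needed.
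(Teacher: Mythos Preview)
Your proposal is correct and follows essentially the same route as the paper: the equivalence (i)~$\Leftrightarrow$~(ii) via coherence of $\Theta_X$ and $\Omega^1_X$, and (ii)~$\Leftrightarrow$~(iii) by computing the matrix of $\phi$ in the chart and reducing modulo the maximal ideal. Two cosmetic remarks: the paper records the matrix of $\phi$ as $A^\transp$ rather than $A$ (a matter of sign and row/column conventions, irrelevant since $A(p)\in\GL_n(\C)$ iff $A(p)^\transp\in\GL_n(\C)$), and instead of invoking Lemma~\ref{l:lang} it argues directly that a matrix over the local ring $\O_{X,p}$ is invertible iff its evaluation at $p$ is.
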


\begin{proof}
 (i) implies (ii) since for any open neighborhood $V$ of $p$ in $X$ the stalk-at-$p$ functor $\Mod(X) \to \Mod(\O_{X,p})$ on $X$ factors over $i^*\colon \Mod(X) \to \Mod(X|V)$, where $i\colon X|V\to X$ denotes the obvious inclusion morphism. That (ii) implies (i) is due to the fact that $\Theta_X$ and $\Omega^1_X$ both are coherent modules on $X$. Now let $A$ be a matrix as in (iii). Then, essentially by the definition of the contraction morphism, the matrix associated with the morphism of $\O_X(U)$-modules $\phi(U) \colon \Theta_X(U) \to \Omega^1_X(U)$ relative to the bases $(\partial_{z^0},\dots,\partial_{z^{n-1}})$ and $(\dd z^0,\dots,\dd z^{n-1})$ is the transpose $A^\transp$ of $A$. Thus the matrix associated with the morphism of $\O_{X,p}$-modules $\phi_p\colon \Theta_{X,p}\to \Omega^1_{X,p}$ relative to the bases $((\partial_{z^0})_p,\dots,(\partial_{z^{n-1}})_p)$ and $((\dd z^0)_p,\dots,(\dd z^{n-1})_p)$ is $(A^\transp)_p$, by which we mean the matrix of germs of $A^\transp$. Hence we have (ii) if and only if $(A^\transp)_p \in \GL_n(\O_{X,p})$. Yet $(A^\transp)_p \in \GL_n(\O_{X,p})$ if and only if $(A^\transp)(p) \in \GL_n(\C)$ if and only if $A(p) \in \GL_n(\C)$, the latter equivalence being true for $(A^\transp)(p)=(A(p))^\transp$.
\end{proof}

\begin{corollary}
 \label{c:nondegdim}
 Let $X$ be a complex manifold, $\sigma \in \Omega^2_X(X)$, and $p\in X$ such that $\sigma$ is nondegenerate on $X$ at $p$. Then $\dim_p(X)$ is even.
\end{corollary}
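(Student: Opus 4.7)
The plan is to reduce the statement to a standard linear algebra fact about alternating matrices via the chart characterization of nondegeneracy already at our disposal.

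First I would set $n := \dim_p(X)$ and pick an $n$-dimensional holomorphic chart $z \colon U \to \C^n$ on $X$ at $p$, which exists by the very definition of the local dimension of a complex manifold. Since $\Omega^1_{X|U}$ is free over $\O_X|U$ on the basis $(\dd z^0, \dots, \dd z^{n-1})$, the module $\Omega^2_{X|U}$ is free on the basis $(\dd z^i \wedge \dd z^j)_{i<j}$, so there exists a unique alternating $n \times n$-matrix $A$ with values in $\O_X(U)$ satisfying
\[
 \sigma|U = \sum_{i<j} A_{ij} \cdot \dd z^i \wedge \dd z^j.
\]

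Next I would invoke Proposition \ref{p:nondegchart}: since $\sigma$ is nondegenerate on $X$ at $p$, assertion (iii) holds, \iev $A(p) \in \GL_n(\C)$. Now $A(p)$ is an alternating matrix in $\C^{n \times n}$, \iev $A(p)^\transp = -A(p)$. Taking determinants yields
\[
 \det(A(p)) = \det(A(p)^\transp) = \det(-A(p)) = (-1)^n \cdot \det(A(p)).
\]
As $A(p) \in \GL_n(\C)$ we have $\det(A(p)) \neq 0$, whence $(-1)^n = 1$, \iev $n$ is even.

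There is essentially no obstacle here: the heavy lifting has been done in Proposition \ref{p:nondegchart}, which translates nondegeneracy into the invertibility of the coefficient matrix in a chart. The only point requiring minimal care is recalling that the alternation of the Kähler $2$-differential expansion really does force $A$ itself (and hence $A(p)$) to be skew-symmetric, but this is built into the chosen basis of $\Omega^2_{X|U}$.
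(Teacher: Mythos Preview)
Your proof is correct and follows essentially the same approach as the paper: pick a chart, invoke Proposition \ref{p:nondegchart} to obtain $A(p)\in\GL_n(\C)$, and conclude that an invertible alternating matrix has even size. The only difference is that the paper cites a reference for this last linear algebra fact, whereas you supply the standard determinant argument directly.
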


\begin{proof}
 Set $n:=\dim_p(X)$. Then there exists an $n$\hyphen dimensional holomorphic chart $z\colon U\to \C^n$ on $X$ at $p$. There exists an alternating $n\times n$-matrix $A$ with values in the ring $\O_X(U)$ such that $\sigma|U = \sum_{i<j}A_{ij}\cdot\dd z^i\wedge\dd z^j$. By Proposition \ref{p:nondegchart}, as $\sigma$ is nondegenerate on $X$ at $p$, we have $A(p) \in \GL_n(\C)$. Yet the existence of an invertible, alternating complex $n\times n$-matrix implies that $n$ is even (see, \egv \cite[XV, Theorem 8.1]{La02}).
\end{proof}

\begin{remark}[Pfaffians]
 \label{r:pfaffian}
 Let $R$ be a commutative ring. We define the $R$-\emph{Pfaffian} $\Pf_R$ as a function on the set of alternating (quadratic) matrices of arbitrary size over $R$; the function $\Pf_R$ is to take values in $R$. Concretely, when $A$ is an alternating $n\times n$-matrix over $R$, where $n$ is some natural number, we set $\Pf_R(A):=0$ in case $n$ is odd; in case $n$ is even, we set
 \[
  \Pf_R(A) := \sum_{\pi \in \Pi} \sgn(\pi) A_{\pi(0),\pi(1)} A_{\pi(2),\pi(3)} \cdot \ldots \cdot A_{\pi(n-2),\pi(n-1)},
 \]
 where $\Pi$ denotes the set of all permutations $\pi$ of $n$ such that we have
 \[
  \pi(0) < \pi(1), \quad \pi(2) < \pi(3), \quad \dots, \quad \pi(n-2) < \pi(n-1)
 \]
 and
 \[
  \pi(0) < \pi(2) < \dots < \pi(n-2).
 \]
 The $R$-Pfaffian enjoys property that, for any alternating $n\times n$-matrix over $R$, we have (\cf \egv \cite{Mui60}):
 \begin{equation} \label{e:pfaffiandet}
  \sideset{}{_R}\det(A) = (\Pf_R(A))^2.
 \end{equation}
\end{remark}

\begin{proposition}
 \label{p:pfaffian}
 Let $R$ be a commutative ring, $r\in\N$, $M$ an $R$-module, $v$ an ordered $R$-basis of length $2r$ for $M$, and $A$ an alternating $2r\times 2r$-matrix with values in $R$. Set $\sigma := \sum_{i<j} A_{ij}\cdot v_i\wedge v_j$.
 \begin{enumerate}
  \item We have $\sigma^{\wedge r}= r!\Pf_R(A)\cdot v_0\wedge\dots\wedge v_{2r-1}$.
  \item Assume that $R$ is a field of characteristic zero. Then the following are equivalent:
  \begin{enumeratei}
   \item $A \in \GL_{2r}(R)$;
   \item $\sigma^{\wedge r} \neq 0$ in $\wedge^{2r}_R(M)$.
  \end{enumeratei}
 \end{enumerate}
\end{proposition}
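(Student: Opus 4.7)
\medskip

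My plan is to treat the two parts in sequence, with part (a) doing all the combinatorial work and part (b) following by a short argument via the identity \eqref{e:pfaffiandet}.

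For part (a), I would expand $\sigma^{\wedge r}$ directly using the definition $\sigma = \sum_{i<j} A_{ij}\, v_i\wedge v_j$. Multilinearity yields
\[
 \sigma^{\wedge r} = \sum_{(i_1,j_1),\dots,(i_r,j_r)} A_{i_1j_1}\cdots A_{i_rj_r}\,(v_{i_1}\wedge v_{j_1})\wedge\dots\wedge (v_{i_r}\wedge v_{j_r}),
\]
where each pair satisfies $i_k<j_k$. The nonzero summands are exactly those where $(i_1,j_1,\dots,i_r,j_r)$ is a permutation of $(0,1,\dots,2r-1)$. Next I would group summands according to the underlying unordered partition of $\{0,1,\dots,2r-1\}$ into $r$ pairs: since each $v_i\wedge v_j$ has even degree, reordering the $r$ pairs does not affect the wedge product, and of course it does not affect the product of entries of $A$ either; hence each partition contributes $r!$ identical summands. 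Restricting the sum to the canonical orderings $\pi\in\Pi$ from Remark \ref{r:pfaffian} (one per partition), and using $v_{\pi(0)}\wedge\dots\wedge v_{\pi(2r-1)}=\sgn(\pi)\,v_0\wedge\dots\wedge v_{2r-1}$, I arrive at
\[
 \sigma^{\wedge r} = r!\Bigl(\sum_{\pi\in\Pi}\sgn(\pi)\,A_{\pi(0),\pi(1)}\cdots A_{\pi(2r-2),\pi(2r-1)}\Bigr)\,v_0\wedge\dots\wedge v_{2r-1},
\]
and the parenthesized expression is precisely $\Pf_R(A)$.

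For part (b), since $v$ is an $R$-basis of $M$, the element $v_0\wedge\dots\wedge v_{2r-1}$ is a basis element of the one-dimensional $R$-vector space $\wedge^{2r}_R(M)$ and hence nonzero. In characteristic zero, $r!$ is a unit in $R$. Therefore part (a) implies that $\sigma^{\wedge r}\neq 0$ in $\wedge^{2r}_R(M)$ if and only if $\Pf_R(A)\neq 0$ in $R$. The identity \eqref{e:pfaffiandet}, namely $\det_R(A)=(\Pf_R(A))^2$, then shows $\Pf_R(A)\neq 0$ if and only if $\det_R(A)\neq 0$, and the latter condition is equivalent to $A\in\GL_{2r}(R)$ since $R$ is a field. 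This chain of equivalences gives the statement.

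The only step requiring genuine care is the combinatorial bookkeeping in part (a): keeping track of the factor $r!$ from reordering the pairs, the sign coming from rearranging the $v_i$'s into standard order, and verifying that the restriction to the indexing set $\Pi$ is precisely one representative per partition. Everything else is formal.
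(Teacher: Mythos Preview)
Your proof is correct and matches the paper's approach: the paper omits the combinatorial calculation for part (a) entirely, and your argument for part (b) via \eqref{e:pfaffiandet} is essentially identical to the paper's. In fact, your write-up of part (a) supplies exactly the omitted details, with the $r!$ factor, the sign bookkeeping, and the restriction to $\Pi$ all handled correctly.
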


\begin{proof}
 We omit the calculation leading to assertion a). As to b): When $A \in \GL_{2r}(R)$, we have $\det_R(A)\neq 0$, hence $\Pf_R(A)\neq 0$ by \eqref{e:pfaffiandet}. Thus as $\ch(R)=0$, we have $r!\Pf_R(A)\neq 0$, so that (ii) follows from a). Conversely, when (ii) holds, a) implies that $\Pf_R(A)\neq 0$. Thus $\det_R(A)\neq 0$ by \eqref{e:pfaffiandet}, which implies (i).
\end{proof}

\begin{proposition}
 \label{p:nondeg}
 Let $X$ be a complex manifold, $\sigma \in \Omega^2_X(X)$, $p \in X$, and $r \in \N$ such that $\dim_p(X) = 2r$. Then the following are equivalent:
 \begin{enumeratei}
  \item $\sigma$ is nondegenerate on $X$ at $p$;
  \item $(\sigma^{\wedge r})(p) \neq 0$ in the complex vector space $(\Omega^{2r}_X)(p)$;
  \item $(\sigma'^{\wedge r})(p) \neq 0$ in $\wedge^{2r}_\C(\T^*_{\C,p}(X))$, where $\sigma'$ denotes the image of $\sigma$ under the canonical mapping $\Omega^2_X(X) \to \A^{2,0}(X)$ and the wedge power is calculated in $\A^*(X,\C)$.
 \end{enumeratei}
\end{proposition}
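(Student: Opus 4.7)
The plan is to reduce everything to a local calculation in a chart and then invoke the two preparatory results about Pfaffians. Choose an $n$-dimensional holomorphic chart $z\colon U\to\C^n$ on $X$ at $p$ with $n=\dim_p(X)=2r$. Let $A$ be the unique alternating $n\times n$-matrix with entries in $\O_X(U)$ such that $\sigma|U=\sum_{i<j}A_{ij}\cdot\dd z^i\wedge\dd z^j$. Then Proposition \ref{p:nondegchart} tells us that (i) is equivalent to $A(p)\in\GL_{2r}(\C)$.

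To handle (ii), I would first establish the key local identity
\[
 \sigma^{\wedge r}|U \;=\; r!\,\Pf_{\O_X(U)}(A)\cdot \dd z^0\wedge\dots\wedge \dd z^{2r-1},
\]
which is exactly Proposition \ref{p:pfaffian}(a) applied to the free $\O_X(U)$-module $\Omega^1_X(U)$ with basis $(\dd z^0,\dots,\dd z^{2r-1})$ and to the alternating matrix $A$. Evaluating at $p$ (and noting that $(\dd z^0\wedge\dots\wedge \dd z^{2r-1})(p)$ is a basis of the one-dimensional complex vector space $(\Omega^{2r}_X)(p)$), this identity becomes
\[
 (\sigma^{\wedge r})(p) \;=\; r!\,\Pf_\C(A(p))\cdot(\dd z^0\wedge\dots\wedge \dd z^{2r-1})(p).
\]
Since $\C$ has characteristic zero, $r!$ is a unit; hence (ii) holds iff $\Pf_\C(A(p))\neq 0$, which by Proposition \ref{p:pfaffian}(b) applied over $R=\C$ and $M=\T^*_{\C,p}(X)^{1,0}$ is equivalent to $A(p)\in\GL_{2r}(\C)$, and therefore to (i).

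Finally, for (iii), I would argue that the canonical $\C$-linear map
\[
 \iota\colon (\Omega^{2r}_X)(p)\longrightarrow \wedge^{2r}_\C(\T^*_{\C,p}(X))
\]
sending $(\dd z^{i_1}\wedge\dots\wedge \dd z^{i_{2r}})(p)$ to the corresponding wedge in the complexified cotangent space is injective---both source and the image subspace $\wedge^{2r}\bigl(\T^*_{\C,p}(X)^{1,0}\bigr)$ are one-dimensional and $\iota$ is nonzero on the basis element produced above. Since $\sigma'^{\wedge r}(p)=\iota((\sigma^{\wedge r})(p))$ by the naturality of wedge products and the compatibility of the map $\Omega^2_X(X)\to\A^{2,0}(X)$ with evaluation at $p$, we conclude (ii) iff (iii).

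The only mild obstacle I foresee is bookkeeping with the evaluation functor $-(p)$ and verifying that evaluation commutes with the wedge product (which is immediate from its definition as a right exact tensorial functor) and that the identification with the $(2r,0)$-part of $\wedge^{2r}_\C(\T^*_{\C,p}(X))$ is indeed induced by the canonical map $\Omega^2_X(X)\to\A^{2,0}(X)$; both points are routine once set up carefully.
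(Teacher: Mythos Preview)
Your proof is correct and follows essentially the same approach as the paper: both pick a $2r$-dimensional chart, introduce the alternating coefficient matrix $A$, and reduce all three conditions to $A(p)\in\GL_{2r}(\C)$ via Propositions~\ref{p:nondegchart} and~\ref{p:pfaffian}. The only cosmetic difference is the order of the equivalences---the paper proves (i)$\Leftrightarrow$(iii) first and then compares (ii) and (iii) by introducing the scalar $\lambda$ with $\sigma^{\wedge r}|U=\lambda\cdot\dd z^0\wedge\dots\wedge\dd z^{2r-1}$, whereas you prove (i)$\Leftrightarrow$(ii) first and link (ii) to (iii) via the injectivity of the canonical map $(\Omega^{2r}_X)(p)\to\wedge^{2r}_\C(\T^*_{\C,p}(X))$; these are the same argument in slightly different packaging.
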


\begin{proof}
 There exists a $2r$\hyphen dimensional holomorphic chart $z$ on $X$ at $p$. Set $U:=\dom(z)$ and let $A$ be the unique alternating $2r\times 2r$-matrix over $\O_X(U)$ such that $\sigma|U = \sum_{i<j} A_{ij}\cdot \dd z^i\wedge\dd z^j$. Then $\sigma'(p) = \sum_{i<j} A_{ij}(p)\cdot \dd z^i(p)\wedge \dd z^j(p)$. Thus by Proposition \ref{p:pfaffian}, we have $(\sigma'(p))^{\wedge r} \neq 0$ in $\wedge^{2r}_\C(\T^*_{\C,p}(X))$ if and only if $A(p) \in \GL_{2r}(\C)$. As $(\sigma'^{\wedge r})(p) = (\sigma'(p))^{\wedge r}$, Proposition \ref{p:nondegchart} implies that (i) and (iii) are equivalent. Now let $\lambda \in \O_X(U)$ such that $\sigma^{\wedge r}|U=\lambda\cdot\dd z^0\wedge\dots\wedge \dd z^{2r-1}$. Then (ii) holds if and only if $\lambda(p)\neq 0$. Yet $\sigma'^{\wedge r}|U = [\lambda]\cdot\dd z^0\wedge\dots\wedge\dd z^{2r-1}$. Therefore, (iii) holds if and only if $[\lambda](p)\neq 0$. Since $[\lambda](p)=\lambda(p)$ per definitionem, (ii) and (iii) are equivalent and we are finished.
\end{proof}

\begin{definition}[Symplecticity I]
 \label{d:symp0}
 Let $X$ be a complex manifold.
 \begin{enumerate}
  \item $\sigma$ is called a \emph{(generically) symplectic structure} on $X$ when $\sigma\in\Omega^2_X(X)$ such that $\sigma$ is (generically) nondegenerate on $X$ and $\dd^2_X \colon \Omega^2_X \to \Omega^3_X$ sends $\sigma$ to the zero of $\Omega^3_X(X)$.
  \item $X$ is called \emph{(generically) symplectic} when there exists $\sigma$ such that $\sigma$ is a (generically) symplectic structure on $X$.
  \item $w$ is called \emph{(generically) symplectic class} on $X$ when there exists a (generically) symplectic structure $\sigma$ on $X$ such that $w$ is the class of $\sigma$ in $\H^2(X,\C)$; note that it makes sense to speak of ``the class of $\sigma$ in $\H^2(X,\C)$'' given that $\sigma$ is a closed $2$\hyphen differential on $X$ by a).
 \end{enumerate}
\end{definition}

\begin{remark}
 \label{r:symp0}
 For us, an interesting feature of the class of generically symplectic complex manifolds---as opposed to the (strictly smaller) class of symplectic complex manifolds---is presented by the fact that the former is stable under modifications, precisely: When $W$ and $X$ are complex manifolds, $f\colon W\to X$ is a modification, and $\sigma$ is a generically symplectic structure on $X$, then the image of $\sigma$ under the pullback of Kähler differentials $\Omega^2_X(X) \to \Omega^2_W(W)$ which is induced by $f$ is a generically symplectic structure on $W$. The proof is straightforward. In consequence, when $f\colon W\to X$ is a modification such that $W$ is smooth and $X$ is a generically symplectic complex manifold, then $W$ is a generically symplectic complex manifold too.
\end{remark}

In order to define what a symplectic structure on a complex space is (\cf Definition \ref{d:symp} below) we need to talk about extensions of Kähler $2$\hyphen differentials defined over the regular locus of some complex space with respect to resolutions of singularities. As we will encounter the phenomenon of extension of differentials more often than once, let us introduce the following convention of speech right away.

\begin{definition}
 \label{d:extdiff}
 Let $f\colon W\to X$ be a resolution of singularities, $p\in\N$, and $\beta \in \Omega^p_X(X_\reg)$. Then $\alpha$ is called an \emph{extension as $p$\hyphen differential} of $\beta$ with respect to $f$ when $\alpha \in \Omega^p_W(W)$ such that the restriction of $\alpha$ to $f^{-1}(X_\reg)$ within the presheaf $\Omega^p_W$ equals the image of $\beta$ under the pullback of $p$\hyphen differentials mapping $\Omega^p_X(X_\reg) \to \Omega^p_W(f^{-1}(X_\reg))$ induced by $f$.

 As the case where $p=2$ is of primary---maybe even exclusive---interest for us, we agree on using the word ``extension'' as a synonym for ``extension as $2$\hyphen differential'' in the above construction.
\end{definition}

\begin{proposition}
 \label{p:extdiff}
 Let $X$ be a complex space, $p\in\N$, and $\beta \in \Omega^p_X(X_\reg)$.
 \begin{enumerate}
  \item Let $f\colon W\to X$ be a resolution of singularities and $\alpha$ and $\alpha'$ extensions as $p$\hyphen differentials of $\beta$ with respect to $f$. Then $\alpha = \alpha'$.
  \item When $X$ is a reduced complex space, the following are equivalent:
  \begin{enumerate}
   \item there exists a resolution of singularities $f_0\colon W_0\to X$ and $\alpha_0$ such that $\alpha_0$ is an extension as $p$\hyphen differential of $\beta$ with respect to $f_0$;
   \item for all resolutions of singularities $f\colon W\to X$ there exists $\alpha$ such that $\alpha$ is an extension as $p$\hyphen differential of $\beta$ with respect to $f$.
  \end{enumerate}
 \end{enumerate}
\end{proposition}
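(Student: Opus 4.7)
Since $f\colon W\to X$ is a resolution of singularities, there is a dense open subset of $X$ (which may be chosen inside $X_\reg$) over which $f$ is biholomorphic; hence $U:=f^{-1}(X_\reg)$ contains a dense open of $W$ and is itself dense in $W$. The sheaf $\Omega^p_W$ is locally free on the complex manifold $W$, and in particular torsion-free on the reduced complex space $W$, so the restriction map $\Omega^p_W(W)\to \Omega^p_W(U)$ is injective. By definition of an extension, $\alpha$ and $\alpha'$ both map to $f^*\beta\in\Omega^p_W(U)$, so $\alpha=\alpha'$.

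\textbf{Plan for part (b).} The implication (ii)$\Rightarrow$(i) is immediate (take $f_0$ to be any resolution, which exists by Hironaka). For (i)$\Rightarrow$(ii), fix $\alpha_0$ extending $\beta$ with respect to $f_0\colon W_0\to X$, and let $f\colon W\to X$ be an arbitrary resolution of singularities. The strategy is to pass to a common dominating resolution and then descend.

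Concretely, one produces a complex manifold $W'$ together with proper bimeromorphic morphisms $g_0\colon W'\to W_0$ and $g\colon W'\to W$ such that $f_0\circ g_0=f\circ g$; such a $W'$ is obtained by forming the analytic fiber product $W_0\times_X W$, passing to the component dominating $X$, and desingularizing via Hironaka. Then $\alpha':=g_0^*(\alpha_0)\in\Omega^p_{W'}(W')$ is an extension of $\beta$ with respect to $f\circ g=f_0\circ g_0$, as is seen by restricting to $(f\circ g)^{-1}(X_\reg)$ and using functoriality of pullback.

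The key descent step is to show $\alpha'$ arises as $g^*\alpha$ for a (unique) $\alpha\in\Omega^p_W(W)$. Let $E\subseteq W$ be the complement of the maximal open subset over which $g$ is biholomorphic; since $W$ is smooth (hence normal) and $g$ is proper and bimeromorphic, Zariski's main theorem yields $\codim(E,W)\geq 2$. On $W\setminus E$ the morphism $g$ restricts to an isomorphism, so $\alpha'$ induces a unique section $\tilde\alpha\in\Omega^p_W(W\setminus E)$. Because $\Omega^p_W$ is locally free on the smooth space $W$ and $E$ has codimension $\geq 2$, the Hartogs-type extension principle for locally free sheaves on complex manifolds (equivalently, the identification of $\Omega^p_W$ with its reflexive hull) shows $\tilde\alpha$ extends uniquely to a section $\alpha\in\Omega^p_W(W)$. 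Finally, to verify that $\alpha|_{f^{-1}(X_\reg)}=f^*\beta$, note that both sides agree on the dense open subset $f^{-1}(X_\reg)\cap(W\setminus E)$ of $f^{-1}(X_\reg)$ by construction, after which torsion-freeness of $\Omega^p_W$ — the same argument as in part (a) — forces equality on the whole of $f^{-1}(X_\reg)$.

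\textbf{Principal obstacle.} The routine ingredients are the density/torsion-freeness argument of part (a) and the existence of a common smooth dominating resolution. The substantive step is the descent of $\alpha'$ along $g$: it relies on combining Zariski's main theorem (to secure $\codim(E,W)\geq 2$) with Hartogs-type extension for $\Omega^p_W$ across the codimension-two set $E\subset W$. Once this codimension-two extension is available, checking that the resulting $\alpha$ genuinely extends $\beta$ is purely formal.
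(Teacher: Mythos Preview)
Your proof is correct and follows essentially the same route as the paper. The only difference is in the descent step of part (b): the paper simply invokes the fact that for a proper modification $g\colon V\to W$ between complex manifolds the pullback $g^*\colon\Omega^p_W(W)\to\Omega^p_V(V)$ is a bijection and takes $\alpha:=(g^*)^{-1}(g_0^*\alpha_0)$, whereas you unpack this bijection by hand via Zariski's main theorem (to get $\codim(E,W)\geq2$) and Hartogs extension for the locally free sheaf $\Omega^p_W$---which is precisely how that bijection is usually proven.
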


\begin{proof}
 a). Since $f$ is a resolution of singularities, $W\setminus f^{-1}(X_\reg)$ is a closed thin subset of $W$, and $W$ is a complex manifold. Hence the restriction mapping $\Omega^p_W(W) \to \Omega^p_W(f^{-1}(X_\reg))$ is one-to-one. As both $\alpha$ and $\alpha'$ restrict to the pullback of $\beta$ along $f$, \cf Definition \ref{d:extdiff}, we obtain $\alpha = \alpha'$.

 b). Assume (i). Let $f\colon W\to X$ be a resolution of singularities. Then there exists a complex manifold $V$ as well as proper modifications $g_0\colon V\to W_0$ and $g\colon V\to W$ such that $f_0\circ g_0 = f\circ g =: h$. Since $g$ is a proper modification between complex manifolds, the pullback function $g^* \colon \Omega^p_W(W) \to \Omega^p_V(V)$ is a bijection. Define $\alpha$ to be the inverse image under $g^*$ of the image of $\alpha_0$ under the function $\Omega^p_{W_0}(W_0) \to \Omega^p_V(V)$. Then it is easily verified that $\alpha$ restricts to the pullback of $\beta$ under $f$ within the presheaf $\Omega^p_W$. In fact, this is true on $V$ so that it suffices to note that the function $\Omega^p_W(f^{-1}(X_\reg)) \to \Omega^p_V(h^{-1}(X_\reg))$ is injective. Therefore $\alpha$ is an extension as $p$\hyphen differential of $\beta$ with respect to $f$, and we have proven (ii). Conversely, when one assumes (ii), (i) follows instantly; one simply has note that there exists a resolution of singularities $f_0\colon W_0\to X$ since $X$ is a reduced complex space.
\end{proof}

\begin{definition}[Symplecticity II]
 \label{d:symp}
 Let $X$ be a complex space.
 \begin{enumerate}
  \item \label{d:symp-str} $\sigma$ is called a \emph{symplectic structure} on $X$ when $\sigma \in \Omega^2_X(X_\reg)$ such that:
  \begin{enumerate}
   \item \label{d:symp-str-mfd} The image of $\sigma$ under the pullback function $\Omega^2_X(X_\reg) \to \Omega^2_{X_\reg}(X_\reg)$ induced by the canonical morphism of complex spaces $X_\reg \to X$ is a symplectic structure on $X_\reg$ in the sense of Definition \ref{d:symp0}.
   \item \label{d:symp-str-ext} For all resolutions of singularities $f\colon W\to X$, there exists $\rho$ such that $\rho$ is an extension as $2$\hyphen differential of $\sigma$ with respect to $f$.
  \end{enumerate}
  \item \label{d:symp-symp} $X$ is called \emph{symplectic} when $X$ is normal and there exists $\sigma$ such that $\sigma$ is a symplectic structure on $X$.
 \end{enumerate}
\end{definition}

\begin{proposition}
 \label{p:sympres}
 Let $X$ be a symplectic complex space and $f\colon W\to X$ a resolution of singularities. Then:
 \begin{enumerate}
  \item When $\sigma$ is a symplectic structure on $X$ and $\rho$ an extension as $2$\hyphen differential of $\sigma$ with respect to $f$, then $\rho$ is a generically symplectic structure on $W$. In particular, $\dd^2_W \colon \Omega^2_W \to \Omega^3_W$ sends $\rho$ to the zero of $\Omega^3_W(W)$.
  \item $W$ is a generically symplectic complex manifold.
 \end{enumerate}
\end{proposition}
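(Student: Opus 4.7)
The plan is to reduce everything to the manifold case on the open locus $U := f^{-1}(X_\reg) \subset W$. First I would observe that, since $f$ is a resolution of singularities, $U$ is open in $W$ with closed thin complement, and $f|U \colon U \to X_\reg$ is a proper modification of complex manifolds. Let $\sigma' \in \Omega^2_{X_\reg}(X_\reg)$ denote the image of $\sigma$ under the canonical restriction $\Omega^2_X(X_\reg) \to \Omega^2_{X_\reg}(X_\reg)$. By Definition \ref{d:extdiff} combined with the functoriality of pullback of Kähler differentials, the restriction of $\rho$ to $U$ coincides, as an element of $\Omega^2_W(U) = \Omega^2_U(U)$, with $(f|U)^*(\sigma')$. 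By condition (i) of Definition \ref{d:symp}, $\sigma'$ is a symplectic structure on the complex manifold $X_\reg$ in the sense of Definition \ref{d:symp0}; hence by Remark \ref{r:symp0} (or directly, under the standard convention that $f|U$ is a biholomorphism onto $X_\reg$) the restriction $\rho|U$ is a generically symplectic structure on the complex manifold $U$.

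Part (a) then follows by propagating closedness and generic nondegeneracy from $U$ to all of $W$. For closedness, the section $\dd^2_W(\rho) \in \Omega^3_W(W)$ restricts on $U$ to $\dd^2_U(\rho|U) = 0$; since $\Omega^3_W$ is locally free on the smooth space $W$ and $W \setminus U$ is closed thin in $W$, the restriction map $\Omega^3_W(W) \to \Omega^3_W(U)$ is injective, and we conclude $\dd^2_W(\rho) = 0$. For generic nondegeneracy, the degeneracy locus $D$ of $\rho$ on $W$ (introduced in Remark \ref{r:nondeg}) is a closed analytic subset of $W$; its trace $D \cap U$ is the degeneracy locus of $\rho|U$ on $U$, which is thin because $\rho|U$ is generically nondegenerate, and $W \setminus U$ is itself thin, so $D$ is thin in $W$. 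Thus $\rho$ is a generically symplectic structure on $W$, proving (a). Part (b) is immediate: since $X$ is symplectic, Definition \ref{d:symp} furnishes a symplectic structure $\sigma$ on $X$ together with an extension $\rho$ of $\sigma$ with respect to $f$, and (a) shows $\rho$ to be a generically symplectic structure on $W$, whence $W$ is a generically symplectic complex manifold.

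There is no serious obstacle beyond careful bookkeeping with pullbacks and restrictions; the whole argument rests on the injectivity of restriction of sections of a locally free sheaf on a smooth complex space to the complement of a closed thin set. The one minor choice to make is how strong a convention of ``resolution of singularities'' one adopts: under the standard convention that $f|U \colon U \to X_\reg$ is a biholomorphism one bypasses Remark \ref{r:symp0} entirely and obtains that $\rho|U$ is a genuine symplectic structure on $U$, while under the weaker convention of a proper modification with smooth source the same conclusion at the level of generic symplecticity follows via Remark \ref{r:symp0}; in either case the subsequent extension to all of $W$ is identical.
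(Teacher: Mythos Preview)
Your proof is correct and follows essentially the same strategy as the paper's: verify closedness and nondegeneracy on an open dense subset of $W$ and extend using injectivity of restriction for locally free sheaves across thin sets. The only real difference is the choice of open set. The paper works on $W\setminus A$, where $A$ is the exceptional locus of the modification so that $f$ restricts to an isomorphism $W\setminus A \to X\setminus B \subset X_\reg$; there nondegeneracy of $\rho$ is immediate from nondegeneracy of $\sigma'$, and no appeal to Remark~\ref{r:symp0} is needed. You instead work on the larger set $U=f^{-1}(X_\reg)$, where $f|U$ need only be a proper modification of manifolds under the paper's convention (\S\ref{ss:res}), and therefore correctly invoke Remark~\ref{r:symp0} to obtain generic symplecticity of $\rho|U$; you then run a short degeneracy-locus argument to pass from $U$ to $W$. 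Both routes yield the same conclusion with the same underlying mechanism; the paper's choice just avoids the extra layer of Remark~\ref{r:symp0} and the thin-union argument for $D$.
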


\begin{proof}
 a). Let $\sigma$ and $\rho$ be as proposed. Then $\rho \in \Omega^2_W(W)$ by Definition \ref{d:extdiff}. As $f$ is a resolution of singularities, there exist closed thin subsets $A$ and $B$ of $W$ and $X$, respectively, such that $f$ induces an isomorphism $W\setminus A \to X\setminus B$. As $W\setminus A$ is a complex manifold, we have $X\setminus B \subset X_\reg$. Hence the pullback of $\sigma$ along the inclusion morphism $X\setminus B \to X$ is nondegenerate on $X\setminus B$, and therefore, for all $p\in W\setminus A$, $\rho$ is nondegenerate on $W$ at $p$. By Definition \ref{d:nondeg} c), $\rho$ generically nondegenerate on $W$. As $\rho$ is an extension as $2$\hyphen differential of $\sigma$ with respect to $f$, the restriction of $\rho$ to $f^{-1}(X_\reg)$ within the presheaf $\Omega^2_W$ equals the pullback of $\sigma$ along $f$. As $\sigma$ is a symplectic structure on $X$, we have $(\dd^2_X)_{X_\reg}(\sigma) = 0$ in $\Omega^3_X(X_\reg)$. Thus $(\dd^2_W)_{f^{-1}(X_\reg)}(\rho|f^{-1}(X_\reg)) = 0$ in $\Omega^3_W(f^{-1}(X_\reg))$. As $W\setminus f^{-1}(X_\reg) \subset A$, we see that $W\setminus f^{-1}(X_\reg)$ is thin in $W$. In consequence, the restriction mapping $\Omega^3_W(W) \to \Omega^3_W(f^{-1}(X_\reg))$ is certainly one-to-one. So, $(\dd^2_W)_W(\rho) = 0$ in $\Omega^3_W(W)$. By Definition \ref{d:symp0} a), $\rho$ is a generically symplectic structure on $W$.

 b). As $X$ is a symplectic complex space, there exists a symplectic structure $\sigma$ on $X$. By (ii) of Definition \ref{d:symp} a), there exists an extension as $2$\hyphen differential $\rho$ of $\sigma$ with respect to $f$. Now by a), $\rho$ is a generically symplectic structure on $W$. Hence $W$ is a generically symplectic complex manifold by Definition \ref{d:symp0} b).
\end{proof}

\begin{proposition}
 \label{p:sympdim}
 Let $X$ be a symplectic complex space.
 \begin{enumerate}
  \item Let $p\in X$. Then $\dim_p(X)$ is an even natural number.
  \item When $X$ is nonempty and finite dimensional, then $\dim(X)$ is an even natural number.
 \end{enumerate}
\end{proposition}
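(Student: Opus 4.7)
The plan is as follows. Let $X$ be a symplectic complex space with symplectic structure $\sigma \in \Omega^2_X(X_\reg)$, and fix $p \in X$. Since $X$ is normal by Definition \ref{d:symp} \ref{d:symp-symp}), Proposition \ref{p:normalpuredim} tells us that $X$ is locally pure dimensional at $p$; hence there exists an open neighborhood $U$ of $p$ in $X$ such that $\dim_q(X) = \dim_p(X)$ for every $q \in U$. Because $X$ is normal and $X_\reg \subset X$ is dense, the open set $U \cap X_\reg$ is nonempty, so we may choose some $q \in U \cap X_\reg$ satisfying $\dim_q(X) = \dim_p(X)$.

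Next, denote by $\sigma' \in \Omega^2_{X_\reg}(X_\reg)$ the image of $\sigma$ under the canonical pullback $\Omega^2_X(X_\reg) \to \Omega^2_{X_\reg}(X_\reg)$ induced by the inclusion $X_\reg \to X$. By condition (i) of Definition \ref{d:symp} \ref{d:symp-str}), $\sigma'$ is a symplectic structure on the complex manifold $X_\reg$ in the sense of Definition \ref{d:symp0}; in particular, $\sigma'$ is nondegenerate on $X_\reg$ at $q$. Corollary \ref{c:nondegdim} then yields that $\dim_q(X_\reg)$ is an even natural number. Since $X_\reg$ is an open complex subspace of $X$, we have $\dim_q(X_\reg) = \dim_q(X)$, and combined with $\dim_q(X) = \dim_p(X)$ this settles part a).

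For part b), suppose $X$ is nonempty and finite dimensional. Then the supremum $\dim(X) = \sup_{x \in X} \dim_x(X)$ is a natural number, and since the function $x \mto \dim_x(X)$ is upper semicontinuous and $X$ is nonempty, this supremum is attained at some point $p_0 \in X$. Thus $\dim(X) = \dim_{p_0}(X)$, which is even by part a). The only point requiring a bit of care is the invocation of local pure dimensionality for normal complex spaces; everything else is an immediate bookkeeping chase through the definitions and the already-proven Corollary \ref{c:nondegdim}. No substantial obstacle is expected.
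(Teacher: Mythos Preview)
Your proof is correct and follows essentially the same route as the paper: normality gives local pure dimensionality via Proposition~\ref{p:normalpuredim}, one picks a regular point $q$ in a neighborhood of $p$ with the same local dimension, and Corollary~\ref{c:nondegdim} applied to the symplectic structure on $X_\reg$ finishes part a); part b) then follows by choosing a point where the supremum is attained. The only cosmetic difference is that the paper justifies $U\cap X_\reg\neq\emptyset$ by reducedness rather than density, and for part b) simply asserts the existence of a point realizing $\dim(X)$ without invoking semicontinuity.
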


\begin{proof}
 a). As $X$ is symplectic, $X$ is normal, whence locally pure dimensional by Proposition \ref{p:normalpuredim}. So, there exists a neighbohood $U$ of $p$ in $X$ such that, for all $x\in U$, we have $\dim_x(X) = \dim_p(X)$. As $X$ is reduced, there exists $q \in U\cap X_\reg$. Since $X$ is symplectic, there exists a symplectic structure $\sigma$ on $X$. By Definition \ref{d:symp} a), the pullback of $\sigma$ to $X_\reg$ is a symplectic structure on $X_\reg$ in the sense of Definition \ref{d:symp0} a). Specifically, by Corollary \ref{c:nondegdim}, $\dim_q(X_\reg)$ is an even natural number. Since $\dim_q(X_\reg) = \dim_q(X) = \dim_p(X)$, we obtain our claim.

 b). When $X$ is nonempty and of finite dimension, there exists $p\in X$ such that $\dim_p(X) = \dim(X)$. Hence $\dim(X)$ is an even natural number by a).
\end{proof}

We would like to get a somewhat better understanding of conditions \eqref{d:symp-str-mfd} and \eqref{d:symp-str-ext} of Definition \ref{d:symp} \ref{d:symp-str}) for an element $\sigma$ of $\Omega^2_X(X_\reg)$ to be a symplectic structure on a complex space $X$. Concerning condition \eqref{d:symp-str-ext}, we recall below a result of H.\ Flenner on the existence of extensions of $p$\hyphen differentials with respect to resolutions of singularities which in turn yields a criterion for condition \eqref{d:symp-str-ext} of Definition \ref{d:symp} \ref{d:symp-str}) to come for free.

\begin{theorem}
 \label{t:flenner}
 Let $X$ be a normal complex space, $f\colon W\to X$ a resolution of singularities, $p\in\N$ such that $p+1<\codim(\Sing(X),X)$, and $\beta \in \Omega^p_X(X_\reg)$. Then there exists $\alpha$ such that $\alpha$ is an extension as $p$\hyphen differential of $\beta$ with respect to $f$.
\end{theorem}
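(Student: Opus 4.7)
The plan is to localize the problem on $X$ and reduce it to a pure extension-of-sections question on the resolution $W$. Since the statement concerns the existence of a single global extension $\alpha \in \Omega^p_W(W)$, and since the uniqueness in Proposition \ref{p:extdiff} a) means that local extensions glue, it suffices to work in a neighborhood of an arbitrary singular point $x_0 \in \Sing(X)$. Put $E := f^{-1}(\Sing(X))$, which is a closed analytic subset of $W$; the pullback of Kähler $p$-differentials along $f$ produces, by restriction to $f^{-1}(X_\reg) = W \setminus E$, an element $\tilde\beta \in \Omega^p_W(W \setminus E)$. The task is precisely to show that $\tilde\beta$ lies in the image of the restriction map $\Omega^p_W(W) \to \Omega^p_W(W \setminus E)$.

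Next, I would use that $W$ is a complex manifold, so $\Omega^p_W$ is a locally finite free, hence reflexive, module on $W$. By the second Riemann extension theorem (Scheja's theorem) for coherent reflexive sheaves on complex manifolds, sections extend uniquely across any closed analytic subset of $W$ of codimension $\geq 2$. Decompose $E$ into its irreducible components, and let $E_1, \dots, E_r$ be those of codimension exactly $1$ in $W$ (the prime exceptional divisors); the union $A$ of all higher-codimension components can be ignored outright by Scheja. So the problem shrinks to verifying that $\tilde\beta$, viewed as a section of $\Omega^p_W$ over the complement of $E_1 \cup \dots \cup E_r$ in a neighborhood of a generic smooth point of each $E_i$, has no pole along $E_i$.

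The crux is the following estimate on generic fiber dimensions of $f$ restricted to each $E_i$. Set $B_i := f(E_i) \subset \Sing(X)$; by the codimension hypothesis,
\[
\codim(B_i, X) \;\geq\; \codim(\Sing(X), X) \;\geq\; p+2,
\]
so $\dim B_i \leq \dim X - (p+2)$. Since $\dim E_i = \dim W - 1 = \dim X - 1$, the generic fiber of $f|_{E_i} \colon E_i \to B_i$ has dimension at least $p+1$. Around a generic smooth point $y \in E_i$ choose local holomorphic coordinates $(z_1, \dots, z_n)$ on $W$ with $E_i = \{z_1 = 0\}$ and coordinates around $f(y)$ on an ambient smooth space containing $X$ locally; write $f$ componentwise. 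The pullback $\tilde\beta$ is a sum $\sum_{|I|=p} g_I \, dz^I$ with $g_I$ possibly meromorphic along $\{z_1=0\}$. The key local computation shows: because the differential $df(y) \colon T_y E_i \to T_{f(y)} B_i$ has kernel of dimension $\geq p+1$, the pullback of \emph{any} $p$-covector from $X_\reg$ (if extended to nearby points) forces linear relations among the $dz^I$ that obstruct the existence of a genuine pole of order $\geq 1$ in $z_1$. Concretely, pairing a hypothetical leading polar term with $p+1$ independent vertical tangent vectors on the fiber yields a nonzero multiple of the pole coefficient on one side and zero on the other (since $\beta$ was defined without poles on $X_\reg$), giving the contradiction.

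The main obstacle is making this last step fully rigorous without wading into coordinate slog: cleanest is to argue via the reflexive hull, showing that under the codimension assumption the pushforward $f_*\Omega^p_W$ satisfies the $S_2$ property on $X$ and therefore coincides with the reflexive sheaf $(j_*\Omega^p_{X_\reg})^{\vee\vee}$ obtained from the inclusion $j\colon X_\reg \hookrightarrow X$; then $\beta$, viewed as a section of $j_*\Omega^p_{X_\reg}$ over a neighborhood of $x_0$, lifts to a section of $f_*\Omega^p_W$, which is by definition a section $\alpha \in \Omega^p_W(W)$ whose restriction to $W \setminus E$ agrees with $\tilde\beta$. The verification of the $S_2$ property for $f_*\Omega^p_W$ and the agreement of the two reflexive hulls is where the codimension bound $\codim(\Sing(X), X) \geq p+2$ enters decisively, via a local-cohomology/depth computation on $W$ combined with Grauert's coherence theorem applied to the proper modification $f$.
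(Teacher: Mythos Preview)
The paper does not prove this theorem; it cites Flenner's 1988 paper \emph{Extendability of differential forms on non-isolated singularities} and remarks only that one works locally on $X$. Your proposal attempts a direct proof, which is a different undertaking altogether.

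Your reductions are sound: localizing on $X$, extending across the codimension $\geq 2$ part of the exceptional set by reflexivity of $\Omega^p_W$, and observing that each divisorial component $E_i$ maps to some $B_i \subset \Sing(X)$ with generic fiber of dimension $\geq p+1$. The gap is in the step where you turn this fiber-dimension bound into a pole obstruction. The sentence about ``pairing a hypothetical leading polar term with $p+1$ independent vertical tangent vectors'' does not parse: a $p$-form pairs with $p$ tangent vectors, not $p+1$. More substantively, the vertical directions live on $E_i$, whereas $f^*\beta$ is defined only on $W \setminus E$; on that complement $f$ is locally biholomorphic onto its image, so there are no vertical vectors to contract against. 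Passing to the limit as one approaches $E_i$ does not help without further control, since $\beta$ is defined only on $X_\reg$ and $f(\gamma(t)) \to B_i \subset \Sing(X)$, so there is no a priori boundedness of $\beta$ along the approach.

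Your fallback via the $S_2$ property of $f_*\Omega^p_W$ and identification with the reflexive hull is exactly the statement to be proved, not a proof of it. Flenner's actual argument reduces (via Bertini-type slicing, as you implicitly do) to the isolated-singularity case and then invokes genuine vanishing theorems for higher direct images of differential forms on the resolution; this is where the bound $p+2 \leq \codim(\Sing X, X)$ enters, and it is not a routine depth computation. You have correctly located the hard step but not supplied it, which is why the paper opts to cite rather than reprove.
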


\begin{proof}
 This follows from \cite[Theorem]{Fl88} by working locally on $X$.
\end{proof}

\begin{corollary}
 \label{c:flennersymp}
 Let $X$ be a normal complex space and $\sigma \in \Omega^2_X(X_\reg)$. Assume that $\codim(\Sing(X),X) \geq 4$. Then condition \eqref{d:symp-str-ext} of Definition \ref{d:symp} \ref{d:symp-str}) holds.
\end{corollary}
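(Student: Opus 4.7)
The plan is to deduce this corollary directly from Flenner's extension theorem (Theorem \ref{t:flenner}), which has already been stated in the excerpt. The role of the hypothesis $\codim(\Sing(X),X) \geq 4$ is precisely to put us into the range where Flenner's theorem applies for $p = 2$.

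Concretely, I would fix an arbitrary resolution of singularities $f \colon W \to X$ and then check that the hypotheses of Theorem \ref{t:flenner} are satisfied with $p := 2$ and $\beta := \sigma$. Namely, $X$ is normal by assumption; $\sigma$ is, by hypothesis, an element of $\Omega^2_X(X_\reg) = \Omega^p_X(X_\reg)$; and finally the codimension inequality reads
\[
 p + 1 = 3 < 4 \leq \codim(\Sing(X),X),
\]
so it holds. Theorem \ref{t:flenner} therefore yields some $\rho \in \Omega^2_W(W)$ which is an extension as $2$-differential of $\sigma$ with respect to $f$, in the sense of Definition \ref{d:extdiff}. Since $f$ was arbitrary, this is precisely what condition \eqref{d:symp-str-ext} of Definition \ref{d:symp} \ref{d:symp-str}) demands, and we are done.

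There is essentially no obstacle here beyond unwinding the definitions; the entire work is contained in Theorem \ref{t:flenner}, and the corollary only records the numerical observation that the codimension $4$ bound on $\Sing(X)$ is what one needs to make Flenner's inequality $p + 1 < \codim(\Sing(X),X)$ hold in the symplectic case $p = 2$.
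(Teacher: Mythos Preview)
Your proof is correct and matches the paper's own argument essentially verbatim: the paper simply notes that the claim is an immediate consequence of Theorem \ref{t:flenner} since $2+1 < 4 \leq \codim(\Sing(X),X)$. You have just spelled out the details a bit more explicitly.
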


\begin{proof}
 The assertion is an immediate consequence of Theorem \ref{t:flenner} taking into account that $2+1 < 4 \leq \codim(\Sing(X),X)$.
\end{proof}

The upshot of Corollary \ref{c:flennersymp} is that for normal complex spaces $X$ with singular loci of codimension $\geq 4$, symplectic structures on $X$ are nothing but symplectic structures (in the sense of Definition \ref{d:symp0}) on the regular locus $X_\reg$ of $X$ via the canonical mapping $\Omega^2_X(X_\reg) \to \Omega^2_{X_\reg}(X_\reg)$, which is of course a bijection.

Quite generally, when the singularities of a complex space are ``mild'', one might expect $p$\hyphen differentials to extend with respect to resolutions of singularities. In Theorem \ref{t:flenner} the mildness of the singularities of the complex space $X$ comes (next to the normality of $X$) from the codimension of the singular locus. We would like to hint at another form of mildness of singularities which plays a role in the theory of symplectic spaces due to works of A.\ Beauville and Y.\ Namikawa.

\begin{proposition}
 \label{p:ratgor}
 Let $X$ be a symplectic complex space. Then $X$ is Gorenstein and has rational singularities.
\end{proposition}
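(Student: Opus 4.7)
The plan is to separate the two claims: I first trivialize the dualizing sheaf $\omega_X$ using the top wedge power of the symplectic form, and then deduce rationality of the singularities from the extension property, either via Grauert--Riemenschneider combined with a trace-map argument or by appealing directly to Elkik's theorem.

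Let $\sigma$ be a symplectic structure on $X$. By Proposition \ref{p:sympdim}, the local dimension of $X$ is an even integer, say $2r$. The restriction $\sigma_\reg$ of $\sigma$ to $X_\reg$ is a symplectic structure on the complex manifold $X_\reg$, so Proposition \ref{p:nondeg} forces its $r$-th wedge power $\sigma_\reg^{\wedge r}$ to be a nowhere vanishing section of $\Omega^{2r}_{X_\reg}=\omega_{X_\reg}$. Writing $j\colon X_\reg\hookrightarrow X$ for the inclusion, the normality of $X$ together with $\codim(\Sing(X),X)\geq 2$ identifies $\omega_X$ with $j_*\omega_{X_\reg}$ as reflexive rank-one sheaves, so the trivialization of $\omega_{X_\reg}$ by $\sigma_\reg^{\wedge r}$ extends uniquely to an isomorphism $\omega_X\iso\O_X$. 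In particular $\omega_X$ is invertible.

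For the rationality of singularities, I would pick a resolution $f\colon\tilde X\to X$ and invoke the extension condition of Definition \ref{d:symp} to obtain $\tilde\sigma\in\Omega^2_{\tilde X}(\tilde X)$ extending $\sigma$. The wedge power $\tilde\sigma^{\wedge r}$ is a global section of $\omega_{\tilde X}$ which agrees with $f^*(\sigma_\reg^{\wedge r})$ on $f^{-1}(X_\reg)$, hence is nowhere zero there. Its divisor of zeros is therefore an effective exceptional divisor $D$ on $\tilde X$, and, using the triviality of $\omega_X$ from the first step, this translates into $K_{\tilde X/X}=D\geq 0$. So $X$ has canonical singularities, and moreover the trace map $f_*\omega_{\tilde X}\to\omega_X$ is an isomorphism: injectivity is automatic from torsion-freeness together with birationality, and surjectivity follows because $\tilde\sigma^{\wedge r}\in(f_*\omega_{\tilde X})(X)$ maps to the global trivialization of $\omega_X$ produced in the first step.

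Combining $f_*\omega_{\tilde X}\iso\omega_X$ with Grauert--Riemenschneider vanishing, $\R^if_*\omega_{\tilde X}=0$ for $i>0$, and Kempf's characterization of rational singularities then yields both that $X$ is Cohen--Macaulay and that $\R^if_*\O_{\tilde X}=0$ for $i>0$, so $X$ has rational singularities; together with the invertibility of $\omega_X$ this says precisely that $X$ is Gorenstein with rational singularities. The main obstacle I foresee is the clean execution of the trace-map identification and the invocation of Kempf's theorem in the complex-analytic rather than algebraic setting; an economical alternative is to apply directly Elkik's theorem (canonical singularities in characteristic zero are rational), which bypasses the explicit trace-map argument altogether.
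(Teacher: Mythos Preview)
The paper does not give its own proof here; it simply cites \cite[Proposition 1.3]{Be00}. Your argument is essentially a reconstruction of Beauville's proof from that reference: trivialize $\omega_X$ locally via $\sigma^{\wedge r}$, use the extension condition to produce an effective exceptional relative canonical divisor on a resolution (hence canonical singularities), and then invoke Elkik to get rationality; Gorenstein follows from Cohen--Macaulay plus invertibility of $\omega_X$. This is correct, and your instinct to prefer Elkik's theorem over the Kempf/trace-map route is sound---the latter needs Cohen--Macaulayness as an input rather than producing it, so the Grauert--Riemenschneider path as you first sketched it is circular unless one already has rationality or invokes a duality argument that is awkward in the analytic category. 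One small imprecision: the local dimension $2r$ may vary across $X$, so the trivialization of $\omega_X$ should be carried out on each pure-dimensional open piece separately (which is fine since normality gives local pure-dimensionality).
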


\begin{proof}
 This follows from \cite[Proposition 1.3]{Be00}.
\end{proof}

Inspired by Proposition \ref{p:ratgor} one might ask the following:

\begin{question}
 \label{q:ratgorext}
 Let $X$ be a Gorenstein complex space which has rational singularities. Is it true then that, for all $\sigma \in \Omega^2_X(X_\reg)$ (\resp all $\sigma \in \Omega^2_X(X_\reg)$ such that \eqref{d:symp-str-mfd} of Definition \ref{d:symp} \ref{d:symp-str}) holds) and all resolutions of singularities $f\colon W\to X$, there exists an extension of $\sigma$ with respect to $f$?
\end{question}

If the answer to (any of the two versions of) Question \ref{q:ratgorext} were positive, a complex space $X$ would be symplectic if and only if it was Gorenstein, had rational singularities, and $X_\reg$ was a symplectic complex manifold. As it turns out, Y.\ Namikawa was able to give a partial (positive) answer to Question \ref{q:ratgorext}, namely:

\begin{theorem}
 \label{t:namikawaext}
 Let $X$ be a projective, Gorenstein complex space having rational singularities, $f\colon W\to X$ a resolution of singularities, and $\sigma \in \Omega^2_X(X_\reg)$. Then there exists an extension as $2$\hyphen differential of $\sigma$ with respect to $f$.
\end{theorem}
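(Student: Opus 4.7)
My plan is to translate the extension problem into a vanishing statement for residues of logarithmic $2$-forms, and then to extract this vanishing from mixed Hodge theory for the singular projective space $X$.

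By Hironaka's theorem, I may replace $W$ by a further resolution and so assume that $W$ is projective and that the exceptional locus $E := W \setminus f^{-1}(X_\reg)$ is a simple normal crossings divisor; an extension with respect to a dominating resolution pushes forward to one with respect to the original $f$. Set $U := W \setminus E$, so that $f|_U \colon U \to X_\reg$ is an isomorphism. By a standard local argument with the log-de Rham complex, the pullback $f^*\sigma \in \Omega^2_W(U)$ extends to a global section $\tilde\sigma_{\log} \in \Omega^2_W(\log E)(W)$. The natural residue maps fit into an exact sequence
\[
 0 \to \Omega^2_W \to \Omega^2_W(\log E) \xrightarrow{\bigoplus_i \mathrm{Res}_i} \bigoplus_i {j_i}_*\Omega^1_{E_i},
\]
in which $j_i \colon E_i \hookrightarrow W$ denotes the inclusion of the $i$-th irreducible component of $E$, so that $\tilde\sigma_{\log}$ lifts to an element $\tilde\sigma \in \Omega^2_W(W)$---the desired extension of $\sigma$---if and only if $\mathrm{Res}_i(\tilde\sigma_{\log}) = 0 \in H^0(E_i, \Omega^1_{E_i})$ for every $i$.

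The heart of the argument is the vanishing of these residues, which I would read off Deligne's mixed Hodge structure on $H^2(U,\C)$ coming from the good compactification $U \hookrightarrow W$: the Hodge piece $F^2 H^2(U,\C)$ is represented by global log $2$-forms, while the weight-$3$ graded piece $\mathrm{Gr}^W_3 H^2(U,\C)$ is detected injectively by the combined residue map into $\bigoplus_i H^0(E_i, \Omega^1_{E_i})$. Since $f|_U \colon U \to X_\reg$ is an isomorphism, the class $[\tilde\sigma_{\log}] \in H^2(U,\C)$ comes from $H^2(X_\reg,\C)$ and, via the restriction $H^2(X,\C) \to H^2(X_\reg,\C)$, ultimately from $H^2(X,\C)$. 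For $X$ projective, Gorenstein, and with rational singularities, Du~Bois-type results for singular complex projective spaces---whose input is the rational-singularity quasi-isomorphism $\O_X \overset{\sim}\to Rf_*\O_W$ together with the analogous statement for the Deligne--Du~Bois complex $\underline{\Omega}^\kdot_X$---ensure that the mixed Hodge structure on $H^2(X,\Q)$ is pure of weight $2$. Hence the image of $H^2(X,\C)$ in $H^2(U,\C)$ lies in $W_2 H^2(U,\C)$ and has zero image in $\mathrm{Gr}^W_3$, forcing each $\mathrm{Res}_i(\tilde\sigma_{\log})$ to vanish.

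The main obstacle will be this third step, with two subtleties deserving care. First, $\sigma$ is not assumed to be $d$-closed, so the passage to cohomology classes requires the observation that the residues of a log form depend only on its class modulo $d$-exact log forms---an ingredient already built into Deligne's log-de Rham formalism---or else a preliminary reduction to the closed case using the complex $\Omega^1_X \xrightarrow{\dd} \Omega^2_X \xrightarrow{\dd} \Omega^3_X$ on $X_\reg$. Second, the purity statement for $H^2(X,\Q)$ must be invoked in the complex-analytic (rather than algebraic) setting; I would cite this from the Deligne--Du~Bois theory of singular projective complex spaces rather than reprove it, recognizing that the projectivity hypothesis on $X$ enters precisely to legitimize these Hodge-theoretic tools.
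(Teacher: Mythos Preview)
The paper does not supply an argument of its own here; it simply cites Namikawa's \cite[Theorem~4]{Na01a}. Your outline is in the spirit of Namikawa's approach---both routes use mixed Hodge theory and the purity of $\H^2(X)$ (which the paper establishes independently as Corollary~\ref{c:h2pure})---but two of the steps you treat as routine are in fact where the substantive work lies.

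First, the assertion that $f^*\sigma\in\Omega^2_W(U)$ extends to a global section of $\Omega^2_W(\log E)$ is not ``a standard local argument.'' A holomorphic form on the complement of a simple normal crossing divisor need not have logarithmic poles: already on $\C^*\subset\P^1$ the form $z\,dz$ acquires a pole of order~$3$ at infinity. What would be needed is a bound on the pole order of $f^*\sigma$ along each $E_i$, and this is precisely where the rational--Gorenstein (equivalently canonical) hypothesis must be exploited. The identity $\F^2\H^2(U,\C)=H^0(W,\Omega^2_W(\log E))$ runs the wrong way for your purposes: it says that global log $2$-forms represent the $\F^2$-classes, not that an arbitrary holomorphic $2$-form on $U$ arises this way.

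Second, even granting a log extension and closedness, the claim that the class $[\sigma]\in\H^2(X_\reg,\C)$ lies in the image of $\H^2(X,\C)\to\H^2(X_\reg,\C)$ is unjustified. That restriction is not surjective in general, and the paper's own result in this direction (Lemma~\ref{l:f2h2compare}) requires $\codim(\Sing(X),X)\geq4$ and invokes Flenner's extension theorem---exactly the hypothesis Namikawa's theorem is designed to dispense with. Without this lift you cannot conclude that $[\sigma]$ has weight $\leq2$ in $\H^2(U,\C)$, and the residue-vanishing step does not follow. Namikawa's actual argument confronts both issues through a more careful analysis of the Du~Bois complex and the local structure of canonical singularities; your sketch identifies the right endgame but skips the opening.
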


\begin{proof}
 This is a consequence of \cite[Theorem 4]{Na01a}.
\end{proof}

We move on to the investigation of condition (i) of Definition \ref{d:symp} a). Looking at Definition \ref{d:symp0} a), we see that symplecticity is made up of two components, namely nondegeneracy and closedness. We observe that for spaces of Fujiki class $\sC$ for which extension of $2$-differentials holds the closedness part is automatic.

\begin{proposition}
 \label{p:sympclexassc}
 Let $X$ be a complex space of Fujiki class $\sC$ and $\sigma \in \Omega^2_X(X_\reg)$. Assume that condition \eqref{d:symp-str-ext} of Definition \ref{d:symp} \ref{d:symp-str}) holds (for $X$ and $\sigma$). Then $\sigma$ is sent to the zero of $\Omega^3_X(X_\reg)$ by the differential $\dd^2_X \colon \Omega^2_X \to \Omega^3_X$.
\end{proposition}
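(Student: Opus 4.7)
The plan is to reduce the closedness of $\sigma$ on $X_\reg$ to the well-known fact that every global holomorphic $p$-form on a compact Kähler manifold is $d$-closed (a consequence of Hodge theory). Since $X$ is of Fujiki class $\sC$, $X$ is compact and there exists a resolution of singularities $f\colon W \to X$ with $W$ a compact Kähler manifold. By hypothesis \eqref{d:symp-str-ext} of Definition \ref{d:symp} \ref{d:symp-str}), applied to this particular $f$, there exists an extension as $2$-differential $\tilde\sigma \in \Omega^2_W(W)$ of $\sigma$ with respect to $f$, that is, the canonical restriction of $\tilde\sigma$ to $f^{-1}(X_\reg)$ within the presheaf $\Omega^2_W$ equals the pullback $f^*\sigma \in \Omega^2_W(f^{-1}(X_\reg))$ of $\sigma$ along $f$.

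Next, I would invoke Hodge theory on the compact Kähler manifold $W$: every global holomorphic $p$-form on such a $W$ is $d$-closed. Thus $(\dd^2_W)_W(\tilde\sigma) = 0$ in $\Omega^3_W(W)$. By naturality of the exterior derivative of Kähler differentials with respect to morphisms of complex spaces, the pullback of $2$-differentials along $f$ commutes with $\dd^2$. Combining this with the extension property yields, in $\Omega^3_W(f^{-1}(X_\reg))$:
\[
 f^*\bigl((\dd^2_X)_{X_\reg}(\sigma)\bigr) = (\dd^2_W)_{f^{-1}(X_\reg)}(f^*\sigma) = (\dd^2_W)_{f^{-1}(X_\reg)}\bigl(\tilde\sigma|f^{-1}(X_\reg)\bigr) = (\dd^2_W)_W(\tilde\sigma)|f^{-1}(X_\reg) = 0.
\]

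Finally, I would conclude by an injectivity argument analogous to the one used in the proof of Proposition \ref{p:extdiff} a). Let $U \subset X_\reg$ be the (dense) open subset over which $f$ restricts to an isomorphism $f^{-1}(U) \to U$ of complex manifolds; such a $U$ exists because $f$ is a resolution of singularities. Then the pullback function $\Omega^3_X(U) \to \Omega^3_W(f^{-1}(U))$ associated to $f|f^{-1}(U)$ is bijective, and the displayed vanishing above restricts to zero on $f^{-1}(U)$, which forces $(\dd^2_X)_{X_\reg}(\sigma)|U = 0$ in $\Omega^3_X(U)$. Since $X_\reg$ is a complex manifold, $\Omega^3_X|X_\reg$ is locally finite free, so any section of $\Omega^3_X$ over $X_\reg$ vanishing on the dense open $U \subset X_\reg$ vanishes on all of $X_\reg$. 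Hence $(\dd^2_X)_{X_\reg}(\sigma) = 0$ in $\Omega^3_X(X_\reg)$, as claimed.

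No real obstacle is anticipated; the only point requiring a bit of care is the formal manipulation of restriction and pullback mappings between the various presheaves involved, which is essentially bookkeeping. The content of the argument is entirely contained in the compact Kähler Hodge-theoretic fact that $\tilde\sigma$ is closed and in the invocation of a resolution by a Kähler manifold, which is afforded by the Fujiki class $\sC$ hypothesis.
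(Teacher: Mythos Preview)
Your proposal is correct and follows essentially the same approach as the paper's proof: pick a Kähler resolution afforded by the class $\sC$ hypothesis, use the extension condition to get a global holomorphic $2$-form on $W$, invoke Hodge theory to see it is closed, and then descend via injectivity of the pullback of $3$-differentials. The only difference is cosmetic: where the paper simply asserts that the pullback $\Omega^3_X(X_\reg) \to \Omega^3_W(f^{-1}(X_\reg))$ is one-to-one, you unpack this by restricting to a dense open $U \subset X_\reg$ over which $f$ is an isomorphism and then invoking the identity principle on the manifold $X_\reg$.
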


\begin{proof}
 As $X$ is of Fujiki class $\sC$, there exists a proper modification $f\colon W\to X$ such that $W$ is a compact complex manifold of Kähler type. As condition \eqref{d:symp-str-ext} of Definition \ref{d:symp} \ref{d:symp-str}) holds, there exists $\rho \in \Omega^2_W(W)$ restricting to the pullback $\sigma'$ of $\sigma$ within the presheaf $\Omega^2_W$. As $W$ is a compact complex manifold of Kähler type, $(\dd^2_W)_W(\rho) = 0$ in $\Omega^3_W(W)$. Hence, $(\dd^2_W)_{f^{-1}(X_\reg)}(\sigma') = 0$ in $\Omega^3_W(f^{-1}(X_\reg))$. As the pulling back of differential forms commutes with the respective algebraic de Rham differentials, we see that $(\dd^2_X)_{X_\reg}(\sigma)$ is mapped to $0$ by the pullback function $\Omega^3_X(X_\reg) \to \Omega^3_W(f^{-1}(X_\reg))$. As the latter function is one-to-one, we infer that $(\dd^2_X)_{X_\reg}(\sigma) = 0$ in $\Omega^3_{X}(X_\reg)$.
\end{proof}

\begin{proposition}
 \label{p:symph2}
 Let $X$ be a symplectic complex space of Fujiki class $\sC$. Then the mixed Hodge structure $\H^2(X)$ is pure of weight $2$. In particular, we have
 \begin{equation}
  \label{e:symph2}
  \H^2(X,\C) = \H^{0,2}(X)\oplus \H^{1,1}(X)\oplus \H^{2,0}(X),
 \end{equation}
 where $\H^{p,q}(X) := \F^p\H^2(X)\cap \oF^q\H^2(X)$.
\end{proposition}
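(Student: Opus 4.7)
The plan is to reduce the purity of $\H^2(X)$ to the smooth compact Kähler case via a suitable resolution of singularities, leveraging the rationality of the singularities of $X$.

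First, by Proposition~\ref{p:ratgor}, $X$ is Gorenstein with rational singularities. Since $X$ is of Fujiki class $\sC$, one may choose a proper modification $f\colon W\to X$ with $W$ a compact complex manifold of Kähler type. Fujiki's extension of Deligne's theory endows $\H^2(X,\C)$ with a mixed Hodge structure whose weights lie in $\{0,1,2\}$, and renders the pullback
\[
 f^* \colon \H^2(X,\C) \to \H^2(W,\C)
\]
a morphism of mixed Hodge structures; since $W$ is compact Kähler, $\H^2(W,\C)$ is pure of weight $2$.

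The decisive step is the injectivity of $f^*$ in degree $2$. The rationality of the singularities of $X$ provides the quasi-isomorphism $\O_X\to Rf_*\O_W$, and a spectral-sequence comparison between $\C_X$ and $Rf_*\C_W$, combined with the codimension estimate $\codim(\Sing(X),X)\geq 2$ (which holds because $X$ is normal), forces $f^*$ to be one-to-one. This is precisely the sort of mixed-Hodge-theoretic consequence of rational singularities that I expect to establish systematically in the appendix Chapter~\ref{ch:tool}, and I shall invoke it here. Granted injectivity, strictness of morphisms of mixed Hodge structures with respect to the weight filtration forces $f^*(W_k\H^2(X)) = 0$ for $k<2$, hence $W_k\H^2(X)=0$ for $k<2$; together with $W_2\H^2(X)=\H^2(X)$, this yields that the mixed Hodge structure on $\H^2(X)$ is pure of weight $2$.

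The asserted decomposition \eqref{e:symph2} is then the standard Hodge decomposition of a pure weight-$2$ structure: one has $\H^2(X,\C) = \bigoplus_{p+q=2} (\F^p\H^2(X) \cap \oF^q\H^2(X))$. The principal obstacle in this strategy is the injectivity of $f^*$ on $\H^2(X,\C)$; morally, this is a ``cohomological invariance under resolution'' statement that rests squarely on the rational singularities hypothesis and the normality of $X$, with the rigorous derivation being one of the motivating applications of the mixed-Hodge-theoretic toolkit assembled in the appendix.
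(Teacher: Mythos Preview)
Your proposal is correct and follows essentially the same route as the paper: invoke Proposition~\ref{p:ratgor} to obtain rational singularities, then apply the appendix result (Corollary~\ref{c:h2pure}, resting on Proposition~\ref{p:resh2inj}) that for a Fujiki-class-$\sC$ space with rational singularities the pullback $f^*$ along a resolution is injective on $\H^2$, and conclude purity via strictness of morphisms of mixed Hodge structures. One small remark: your sketch of why $f^*$ is injective mentions the codimension-$2$ estimate, but in the paper's actual argument (Proposition~\ref{p:rtlsingr1}) the key point is the vanishing $\R^1f_*(\C_W)\iso 0$, which is extracted from rational singularities via a local mixed-Hodge argument on the exceptional fibre (Lemma~\ref{l:rtlsingres}); the Leray spectral sequence then gives injectivity directly.
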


\begin{proof}
 By Proposition \ref{p:ratgor}, $X$ has rational singularities. Therefore, the mixed Hodge structure $\H^2(X)$ is pure of weight $2$ by Corollary \ref{c:h2pure}. \eqref{e:symph2} is a formal consequence of the purity of the mixed Hodge structure $\H^2(X)$ given that $\H^2(X)_\C = \H^2(X,\C)$ (by definition of $\H^2(X)$).
\end{proof}

When $X$ is a (generically) symplectic complex manifold, every (generically) symplectic structure on $X$ gives naturally rise to an element of $\H^2(X,\C)$ (via de Rham cohomology). Such an element is what we have decided on calling a ``(generically) symplectic class'', \cf Definition \ref{d:symp0} c). Hence, when $X$ is a symplectic complex space, every symplectic structure $\sigma$ on $X$ gives naturally rise to an element of $\H^2(X_\reg,\C)$ since $\sigma$ is mapped to a symplectic structure on the complex manifold $X_\reg$ by the evident function $\Omega^2_X(X_\reg) \to \Omega^2_{X_\reg}(X_\reg)$. However, this is somewhat unsatisfactory as, for reasons that will become clear in \S\ref{s:bbform}, we would like $\sigma$ to already correspond to an element in $\H^2(X,\C)$ rather than only to an element in $\H^2(X_\reg,\C)$---in the sense that any element of $\H^2(X,\C)$ automatically procures an element of $\H^2(X_\reg,\C)$ via the function
\[
 i^*\colon \H^2(X,\C) \to \H^2(X_\reg,\C)
\]
which is induced by the inclusion $i\colon X_\reg \to X$. A priori it is not clear whether there exists (a unique) $w$ such that $w$ is sent to the class of $\sigma$ in $\H^2(X_\reg,\C)$ by $i^*$. This observation motivates:

\begin{definition}[Symplectic classes]
 \label{d:sympcl}
 Let $X$ be a symplectic complex space.
 \begin{enumerate}
  \item Let $\sigma$ be a symplectic structure on $X$. $w$ is called \emph{symplectic class} of $\sigma$ on $X$ when, for all resolutions of singularities $f\colon W\to X$, the function
  \[
   f^*\colon \H^2(X,\C) \to \H^2(W,\C)
  \]
  induced by $f$ maps $w$ to the class of the extension of $\sigma$ with respect to $f$. Observe that it makes sense to speak about ``the class of the extension of $\sigma$'' here since by Proposition \ref{p:sympres} we have $(\dd^2_W)_W(\rho)=0$ in $\Omega^3_W(W)$ when $\rho$ denotes the extension of $\sigma$ with respect to $f$.
  \item $w$ is called \emph{symplectic class} on $X$ when there exists a symplectic structure $\sigma$ on $X$ such that $w$ is the symplectic class of $\sigma$ on $X$.
 \end{enumerate} 
\end{definition}

\begin{proposition}
 \label{p:sympclbis}
 Let $X$ be a symplectic complex space and $\sigma$ a symplectic structure on $X$. Then, for all $w$, the following are equivalent:
 \begin{enumeratei}
  \item $w$ is a symplectic class of $\sigma$ on $X$;
  \item there exists a resolution of singularities $f\colon W\to X$ such that $f^*(w)$ is the class of an extension of $\sigma$ with respect to $f$.
 \end{enumeratei}
\end{proposition}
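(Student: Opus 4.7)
The implication (i) $\Rightarrow$ (ii) will be essentially a tautology: since $X$, being symplectic, is in particular a normal---hence reduced---complex space, it admits a resolution of singularities $f\colon W\to X$. By condition (ii) of Definition \ref{d:symp} a) the symplectic structure $\sigma$ admits an extension $\rho$ with respect to $f$, and by Definition \ref{d:sympcl} a) the hypothesis that $w$ be a symplectic class of $\sigma$ yields $f^*(w) = [\rho]$, as required.

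For the nontrivial converse, I plan to fix the resolution $f_0\colon W_0\to X$ and extension $\rho_0$ supplied by (ii), and, given an arbitrary resolution $f\colon W\to X$, let $\rho$ denote the extension of $\sigma$ with respect to $f$---its existence comes from the symplecticity of $X$, its uniqueness from Proposition \ref{p:extdiff} a). The task is then to show $f^*(w) = [\rho]$ in $\H^2(W,\C)$. I would proceed by comparing $f$ with $f_0$ through a simultaneously dominating resolution, exactly as in the proof of Proposition \ref{p:extdiff} b): pick a complex manifold $V$ together with proper modifications $g_0\colon V\to W_0$ and $g\colon V\to W$ with $h := f_0\circ g_0 = f\circ g$, so that $h$ itself is a resolution of singularities of $X$.

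Pullback of $2$-differentials being natural, both $g_0^*(\rho_0)$ and $g^*(\rho)$ are elements of $\Omega^2_V(V)$ whose restrictions over $h^{-1}(X_\reg)$ agree with the pullback of $\sigma$ along $h$; each is therefore an extension of $\sigma$ with respect to $h$, and the uniqueness part of Proposition \ref{p:extdiff} a) will force $g_0^*(\rho_0) = g^*(\rho)$ in $\Omega^2_V(V)$. Passing to de Rham cohomology classes gives $g_0^*[\rho_0] = g^*[\rho]$ in $\H^2(V,\C)$. Combining this with functoriality of pullback on cohomology and with the hypothesis $f_0^*(w) = [\rho_0]$, one obtains
\[
 g^*(f^*(w)) = h^*(w) = g_0^*(f_0^*(w)) = g_0^*[\rho_0] = g^*[\rho].
\]

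The remaining---and genuinely substantive---step will be to cancel $g^*$ on the left, \iev to deduce $f^*(w) = [\rho]$ from $g^*(f^*(w) - [\rho]) = 0$. For this I would invoke the standard fact that, for any proper bimeromorphic morphism $g\colon V\to W$ between complex manifolds, the adjunction morphism $\C_W \to \R g_*(\C_V)$ is a split monomorphism in the derived category of sheaves of $\C$-vector spaces on $W_\top$---a consequence of the analytic decomposition theorem for proper bimeromorphic maps of smooth complex spaces, and in degree two also accessible more directly through the projection formula together with the identity $g_*(1_V) = 1_W$. Taking hypercohomology yields injectivity of $g^*\colon \H^n(W,\C) \to \H^n(V,\C)$ in every degree, in particular for $n = 2$, whereupon the proof concludes. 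The main obstacle in the write-up will thus be pinpointing and properly citing this injectivity statement in the analytic (as opposed to algebraic) category; everything else reduces to the uniqueness of extensions already established in Proposition \ref{p:extdiff}.
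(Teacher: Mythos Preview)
Your proposal is correct and follows essentially the same approach as the paper: dominate the given resolution and an arbitrary one by a common manifold $V$, use uniqueness of extensions (Proposition \ref{p:extdiff} a)) to identify the pulled-back forms on $V$, and then cancel via injectivity of pullback on $\H^2$ for a proper modification of complex manifolds. The paper simply asserts this injectivity as known rather than discussing the decomposition theorem or the projection formula, but the logical skeleton is identical.
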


\begin{proof}
 Since the complex space $X$ is reduced, there exists a resolution of singularities $f\colon W\to X$. Hence (i) implies (ii). Now suppose that (ii) holds. Let $f'\colon W'\to X$ be a resolution of singularities. Then there exist a complex space $V$ as well as two morphisms of complex spaces $g\colon V\to W$ and $g'\colon V\to W'$ such that $g$ and $g'$ both are resolutions of singularities and $f\circ g = f'\circ g'=:h$. By assumption there exists a closed global $2$\hyphen differential $\rho$ on $W$ such that $f^*(w)$ is the class of $\rho$ and $\rho$ is an extension of $\sigma$ with respect to $f$. Hence the image $\pi$ of $\rho$ under the canonical function $\Omega^2_W(W) \to \Omega^2_V(V)$ is an extension of $\sigma$ with respect to $h$. Similarly, as $\sigma$ is a symplectic structure on $X$, there exists $\rho'$ such that $\rho'$ is an extension of $\sigma$ with respect to $f'$. By Proposition \ref{p:sympres}, $\rho'$ is closed. Moreover, the image $\pi'$ of $\rho'$ under the canonical mapping $\Omega^2_{W'}(W') \to \Omega^2_V(V)$ is an extension of $\sigma$ with respect to $h$. As $h$ is a resolution of singularities, we see that $\pi = \pi'$. Denoting by $v$, $v'$, and $u$ the class of $\rho$, $\rho'$, and $\pi$, respectively, we obtain:
 \[
  g'^*(f'^*(w)) = (f'\circ g')^*(w) = (f\circ g)^*(w) = g^*(f^*(w)) = g^*(v) = u = g'^*(v').
 \]
 Given that the function $g'^*\colon \H^2(W',\C) \to \H^2(V,\C)$ is one-to-one, we infer that $f'^*(w) = v'$. As $f'$ was an arbitrary resolution of singularities of $X$, this shows that $w$ is a symplectic class of $\sigma$ on $X$, \iev (i).
\end{proof}

\begin{remark}
 \label{r:sympcl}
 Let $X$ be a symplectic complex space, $w$ a symplectic class on $X$, and $f\colon W\to X$ a resolution of singularities. Then $f^*(w)$ is a generically symplectic class on $W$. This is because by Definition \ref{d:sympcl} there exists a symplectic structure $\sigma$ on $X$ such that $f^*(w)$ is the class of $\rho$ on $W$, $\rho$ denoting the extension of $\sigma$ with respect to $f$, and by Proposition \ref{p:sympres}, $\rho$ is a generically symplectic structure on $W$.
\end{remark}

\begin{proposition}
 \label{p:sympclex}
 Let $X$ be a symplectic complex space of Fujiki class $\sC$.
 \begin{enumerate}
  \item For all symplectic structures $\sigma$ on $X$ there exists one, and only one, $w$ such that $w$ is a symplectic class of $\sigma$ on $X$.
  \item There exists $w$ which is a symplectic class on $X$.
 \end{enumerate}
\end{proposition}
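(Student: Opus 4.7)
The strategy for (a) will be to reduce the problem, via Proposition \ref{p:sympclbis}, to finding for one specific resolution $f\colon W \to X$ a unique class $w \in \H^2(X,\C)$ that pulls back to the class of the extension $\rho$ of $\sigma$ along $f$. Assertion (b) will then be immediate from (a): since $X$ is symplectic, Definition \ref{d:symp} \ref{d:symp-symp}) provides a symplectic structure $\sigma$ on $X$, and applying (a) to this $\sigma$ yields a symplectic class on $X$ in the sense of Definition \ref{d:sympcl} b).

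For (a), I fix a symplectic structure $\sigma$ on $X$. Since $X$ is of Fujiki class $\sC$, I choose a resolution of singularities $f\colon W\to X$ such that $W$ is a compact Kähler manifold (combining the definition of Fujiki class with Hironaka's theorem). Let $\rho \in \Omega^2_W(W)$ be the extension of $\sigma$ with respect to $f$, whose existence is guaranteed by condition \eqref{d:symp-str-ext} of Definition \ref{d:symp} \ref{d:symp-str}). By Proposition \ref{p:sympres} a), the form $\rho$ is closed, so its class $[\rho] \in \H^2(W,\C)$ is well-defined and, being represented by a holomorphic $2$-form, lies in $\F^2\H^2(W,\C) = \H^{2,0}(W)$. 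By Proposition \ref{p:sympclbis}, it then suffices to exhibit a unique $w \in \H^2(X,\C)$ with $f^*(w) = [\rho]$.

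Uniqueness amounts to the injectivity of $f^*\colon \H^2(X,\C) \to \H^2(W,\C)$. This follows from the fact that for a proper modification $f\colon W\to X$ between compact complex spaces with $W$ smooth and $X$ of Fujiki class $\sC$, the pullback $f^*$ on rational cohomology admits a left inverse (a Gysin/trace map, available since $X$ has rational singularities by Proposition \ref{p:ratgor}); equivalently, one invokes the functoriality of the Deligne-Fujiki mixed Hodge structure combined with the purity of $\H^2(X)$ established in Proposition \ref{p:symph2}.

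For existence, I must show $[\rho] \in \im(f^*)$. Since $f^*\colon \H^2(X) \to \H^2(W)$ is a morphism of mixed Hodge structures and $\H^2(X)$ is pure of weight $2$ by Proposition \ref{p:symph2}, $f^*$ is strict with respect to the Hodge filtrations, so it is enough to produce a preimage of $[\rho]$ in $\F^2\H^2(X,\C)$. The latter is furnished using that $X$ has rational singularities (Proposition \ref{p:ratgor}): the Deligne-Fujiki description of $\F^2\H^2(X,\C)$ via the resolution $f$ identifies it, through $f^*$, with the corresponding piece $\F^2\H^2(W,\C) = \Omega^2_W(W)$ cut out by the condition of descending to $X_\reg$, and $\sigma$ (equivalently, $\rho$) belongs to this subspace by the very construction of $\rho$ as an extension. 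The main obstacle is precisely this last identification---namely, the verification that $f^*$ restricts to a surjection $\F^2\H^2(X,\C) \to \F^2\H^2(W,\C)$ in the present analytic setting; this relies on Hodge-theoretic input about rational singularities (of the type encapsulated in Theorem \ref{t:namikawaext}) and is the delicate step of the argument.
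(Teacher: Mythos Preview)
Your overall strategy matches the paper's: pick a resolution $f\colon W\to X$, note that the extension $\rho$ of $\sigma$ is closed (Proposition~\ref{p:sympres}) with class in $\F^2\H^2(W)$, and reduce via Proposition~\ref{p:sympclbis} to finding a unique $w\in\H^2(X,\C)$ with $f^*(w)=[\rho]$; part (b) then follows trivially from (a).

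Where your write-up drifts is in the justification of existence. The paper does not characterize the image of $f^*$ on $\F^2$ by any ``descending to $X_\reg$'' condition, and Theorem~\ref{t:namikawaext} (which is about extending differential forms, and is already built into the hypothesis that $\sigma$ is symplectic) is not the relevant input. The paper instead invokes Proposition~\ref{p:f2h2} directly: for $X$ of Fujiki class $\sC$ with rational singularities (Proposition~\ref{p:ratgor}), the pullback $f^*$ restricts to a \emph{bijection} $\F^2\H^2(X)\to\F^2\H^2(W)$, so $[\rho]$ automatically has a preimage. The proof of that proposition is where the real Hodge-theoretic work sits---it uses the Leray spectral sequence together with the vanishing $\F^2\H^2(E)\cong 0$ for exceptional fibers (Lemma~\ref{l:rtlsingres}) and strictness of morphisms of mixed Hodge structures---not the extension of forms. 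Similarly, uniqueness is handled by the clean statement Proposition~\ref{p:resh2inj} (injectivity of $f^*$ on $\H^2$, again via $\R^1f_*(\C_W)\cong 0$ from rational singularities), rather than via a Gysin map. Your sketch is not wrong in spirit, but the detour through a ``descent'' condition is unnecessary and the cited input is misplaced.
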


\begin{proof}
 a). Let $\sigma$ be a symplectic structure on $X$. There exists a resolution of singularities $f\colon W\to X$. As $\sigma$ is symplectic structure on $X$, there exists $\rho$ such that $\rho$ is an extension of $\sigma$ with respect to $f$, \cf Definition \ref{d:symp} a), condition (ii). By Proposition \ref{p:sympres}, $\rho$ is a closed Kähler $2$\hyphen differential on $W$. Define $v$ to be the class of $\rho$. Then $v \in \F^2\H^2(W)$; note that $W$ is of Fujiki class $\sC$ so that it makes sense to speak of $\F^2\H^2(W)$ in the first place. By Proposition \ref{p:ratgor}, the complex space $X$ has rational singularities. Thus by Proposition \ref{p:f2h2}, the function
 \[
  f^*\colon \H^2(X,\C) \to \H^2(W,\C)
 \]
 induces a bijection
 \[
  f^*|\F^2\H^2(X) \colon \F^2\H^2(X) \to \F^2\H^2(W).
 \]
 In particular, there exists $w$ such that $f^*(w) = v$. Therefore, employing Proposition \ref{p:sympclbis}, we see that $w$ is a symplectic class of $\sigma$ on $X$. To see that $w$ is unique, let $w'$ be another symplectic class of $\sigma$ on $X$. Then $f^*(w') = v$ since the extension of $\sigma$ with respect to $f$ is unique. From this it follows that $w'=w$ as, by Proposition \ref{p:resh2inj}, the function $f^*$ is one-to-one.

 b). As $X$ is symplectic, there exists a symplectic structure on $X$, whence the assertion is a consequence of a).
\end{proof}

\section{The Beauville-Bogomolov form}
\label{s:bbform}

In \cite[p.~772]{Be83}, A.\ Beauville introduced a certain complex quadratic form on the complex vector space $\H^2(X,\C)$, where $X$ is an irreducible symplectic complex manifold. This quadratic form is nowadays customarily called the Beauville-Bogomolov form of $X$ (\cf \cite[Abschnitt 1.9]{Hu99} for instance). In what follows, we generalize the concept of the Beauville-Bogomolov form in two directions, namely that of compact, connected
\begin{enumeratei}
 \item generically symplectic complex manifolds and
 \item symplectic complex spaces,
\end{enumeratei}
where the symplectic structures are, in both cases, unique up to scaling. We would like to point out that throughout Chapter \ref{ch:symp} we aim to study potentially singular symplectic complex spaces. In that respect, we view the concept of generically symplectic complex manifolds, and thus generalization (i) above, as an auxiliary tool. We will not revisit the notion of ``generic symplecticity'' in later sections.

\begin{notation}
 \label{not:bbform}
 Let $(X,w)$ be an ordered pair such that $X$ is a compact, irreducible reduced complex space of strictly positive, even dimension and $w$ is an element of $\H^2(X,\C)$. Then we write $q_{(X,w)}$ for the unique function from $\H^2(X,\C)$ to $\C$ such that, for all $a\in\H^2(X,\C)$, we have\footnote{We slightly deviate from Beauville's original formula by writing $w^{r-1}\bar{w}^{r-1}$ instead of $(w\bar w)^{r-1}$, \cf \cite[p.~772]{Be83}, as we feel this is more natural to work with in calculations.}:
 \begin{equation} \label{e:bbform}
  q_{(X,w)}(a) := \frac{r}{2}\int_X\left(w^{r-1}\bar{w}^{r-1}a^2\right)+(r-1)\int_X\left(w^{r-1}\bar w^ra\right)\int_X\left(w^r\bar w^{r-1}a\right),
 \end{equation}
 where $r$ denotes the unique natural number such that $2r=\dim(X)$.
\end{notation}

\begin{remark}
 \label{r:bbformscale}
 Let $(X,w)$ be as in Notation \ref{not:bbform} and $\mu$ a complex number of absolute value $1$, \iev $|\mu|^2=\mu\bar\mu=1$. Then $q_{(X,\mu w)} = q_{(X,w)}$ as our readers will readily deduce from formula \eqref{e:bbform}.
\end{remark}

\begin{lemma}
 \label{l:integral}
 Let $n$ be an even natural number, $X$ a pure $n$\hyphen dimensional complex manifold, and $\alpha \in \A^{n,0}_\cp(X)$. Then the complex number $I:=\int_X(\alpha\wedge\bar\alpha)$ is real and $\geq 0$, and we have $I=0$ if and only if $\alpha$ is the trivial differential $n$-form on $X$.
\end{lemma}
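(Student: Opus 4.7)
The plan is to reduce the assertion to a pointwise local computation. Since $X$ is a pure $n$-dimensional complex manifold, it carries a canonical orientation, namely the one given locally in any holomorphic chart $z = (z^1, \dots, z^n)$ with $z^j = x^j + iy^j$ by the positive volume form $dV := dx^1 \wedge dy^1 \wedge \dots \wedge dx^n \wedge dy^n$. Since $\alpha$ has compact support and $X$ is paracompact, I would choose a locally finite atlas of holomorphic charts on $X$ together with a subordinate smooth partition of unity $(\chi_\lambda)$; by $\Z$-linearity of the integral it suffices to prove that $\int_X (\chi_\lambda \alpha) \wedge \bar\alpha$ is real and $\geq 0$ for each $\lambda$, and that vanishing of the total integral forces $\alpha$ to vanish identically.

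In a single chart with coordinates $z$, write $\alpha = f\, dz^1 \wedge \dots \wedge dz^n$ for a unique smooth complex-valued function $f$ on the chart domain. Then
\[
 \alpha \wedge \bar\alpha \;=\; |f|^2 \, dz^1 \wedge \dots \wedge dz^n \wedge d\bar z^1 \wedge \dots \wedge d\bar z^n.
\]
Rearranging the factors so as to interleave each $d\bar z^j$ next to the corresponding $dz^j$ costs a sign $(-1)^{n(n-1)/2}$, and then the identity $dz^j \wedge d\bar z^j = -2i\, dx^j \wedge dy^j$ yields a factor $(-2i)^n$. Thus
\[
 \alpha \wedge \bar\alpha \;=\; (-1)^{n(n-1)/2} (-2i)^n \, |f|^2 \, dV.
\]
The crucial step is to verify that, for even $n$, the constant $(-1)^{n(n-1)/2} (-2i)^n$ collapses to $2^n$: writing $n = 2m$, we have $(-i)^n = (-1)^m$ and $n(n-1)/2 = m(2m-1) \equiv m \pmod 2$, so the two signs cancel and leave the positive real factor $2^n$.

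Consequently $\alpha \wedge \bar\alpha = 2^n |f|^2\, dV$ in every chart, which shows locally that $\alpha \wedge \bar\alpha$ is a smooth, nonnegative multiple of the canonical orientation form. Reassembling via the partition of unity yields $I = 2^n \int_X |f|^2\, dV_X \in \RR_{\geq 0}$, where the absolute value $|f|^2$ makes invariant sense as the pointwise square norm of $\alpha$ with respect to the metric induced by any Hermitian structure, or more elementarily as a chart-independent quantity because the volume form transforms by $|{\det \partial z'/\partial z}|^2$ exactly compensating the transformation law of $|f|^2$. The vanishing case follows at once: $I = 0$ forces $|f|^2 \equiv 0$ on each chart by continuity of $f$ and positivity of $dV$, and hence $\alpha = 0$ globally. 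The only mildly delicate point throughout is the sign bookkeeping of the preceding paragraph, which genuinely uses the hypothesis that $n$ is even (for odd $n$ one would rather obtain that $i^{n^2} \alpha \wedge \bar\alpha$ is nonnegative, but $i^{n^2} = 1$ precisely when $n$ is even).
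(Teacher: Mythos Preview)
Your proof is correct and follows essentially the same approach as the paper: a local chart computation showing that $\alpha \wedge \bar\alpha = 2^n |f|^2\, dV$ pointwise (using the sign identity $(-1)^{n(n-1)/2}(-2i)^n = 2^n$ for even $n$), from which reality, nonnegativity, and the vanishing criterion follow immediately. The paper's version is marginally more streamlined in that it dispenses with the partition of unity altogether---once $\alpha \wedge \bar\alpha$ is seen to be a nonnegative multiple of the orientation form at every point, the conclusion is immediate---so your setup splitting $\alpha$ via $\chi_\lambda$ is harmless but not needed.
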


\begin{proof}
 As $n$ is even, there is a natural number $r$ such that $2r=n$. Let $z\colon U\to \C^n$ be a (holomorphic) chart on $X$. Then there exists a $\sC^\infty$ function $f\colon U\to \C$ such that
 \[
  \alpha|U = f\cdot \dd z^1 \wedge \dots \wedge \dd z^n.
 \]
 Therefore:
 \begin{align*}
  (\alpha \wedge \bar\alpha)|U & = f\bar f \cdot \dd z^1 \wedge \dots \wedge \dd z^n \wedge \dd\bar z^1 \wedge \dots \wedge \dd\bar z^n \\
  & = |f|^2(-1)^{\frac{n(n-1)}{2}} \cdot (\dd z^1 \wedge \dd\bar z^1) \wedge \dots \wedge (\dd z^n \wedge \dd\bar z^n) \\
  & = |f|^2(-1)^{\frac{n(n-1)}{2}}\frac{1}{(-2i)^n} \cdot (\dd x^1 \wedge \dd y^1) \wedge \dots \wedge (\dd x^n \wedge \dd y^n) \\
  & = |f|^2\frac{1}{4^r} \cdot (\dd x^1 \wedge \dd y^1) \wedge \dots \wedge (\dd x^n \wedge \dd y^n),
 \end{align*}
 where $z^i = (x^i,y^i)$ for $i=1,\dots,n$. This calculation shows that the $2n$-form $\alpha\wedge\bar\alpha$ is real and nonnegative (with respect to the canonical orientation of $X$) at every point of $X$. Hence $I$ is real and $\geq 0$.

 Assume that $I=0$. Then $\alpha \wedge \bar\alpha$ has to be trivial at each point of $X$ (otherwise $\alpha \wedge \bar\alpha$ would be strictly positive on a nonempty open subset of $X$, which would imply $I>0$). By the above calculation, $\alpha \wedge \bar\alpha$ is trivial at a point of $X$ (if and) only if $\alpha$ is trivial at that point. So, $I=0$ implies that $\alpha$ is trivial. On the other hand, clearly, when $\alpha$ is trivial, then $I=0$.
\end{proof}

\begin{proposition}
 \label{p:sympvol+}
 Let $X$ be a nonempty, compact, connected complex manifold and $w$ a generically symplectic class on $X$. Then the complex number $\int_X\left(w^r\bar w^r\right)$, where $r$ denotes half the dimension of $X$, is real and strictly positive.
\end{proposition}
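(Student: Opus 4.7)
Since $w$ is a generically symplectic class on $X$, there exists $\sigma \in \Omega^2_X(X)$ which is a generically symplectic structure on $X$ and whose de Rham class equals $w$. By Definition \ref{d:symp0}, $\sigma$ is closed and generically nondegenerate, so there is at least one $p \in X$ at which $\sigma$ is nondegenerate. By Corollary \ref{c:nondegdim}, $\dim_p(X)$ is even, and since $X$ is connected, this is the dimension of $X$; so the given $r$ with $2r = \dim(X)$ is indeed a natural number, and moreover $r \geq 1$ because $\sigma$ being nondegenerate at $p$ forces $\dim_p(X) > 0$.

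The plan is to pass $\sigma$ into the smooth de Rham complex and compute $\int_X w^r \bar w^r$ directly. Let $\sigma' \in \A^{2,0}(X)$ denote the image of $\sigma$ under the canonical map $\Omega^2_X(X) \to \A^{2,0}(X)$, and set $\alpha := \sigma'^{\wedge r} \in \A^{2r,0}(X)$. Since the map from holomorphic to smooth forms is a morphism of differential graded $\C$-algebras compatible with complex conjugation, $\alpha$ is a closed representative of $w^r$ and $\bar\alpha$ is a closed representative of $\bar w^r$ in the smooth de Rham cohomology of $X$. Consequently $\alpha \wedge \bar\alpha \in \A^{2r,2r}(X) = \A^{4r}(X,\C)$ represents $w^r \bar w^r$, and
\[
 \int_X\bigl(w^r \bar w^r\bigr) = \int_X (\alpha \wedge \bar\alpha).
\]

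Since $X$ is compact, $\alpha \in \A^{2r,0}_\cp(X)$, so Lemma \ref{l:integral}, applied with $n = 2r$, tells us that the right-hand side is a real number $\geq 0$, and that it vanishes if and only if $\alpha$ is the trivial $2r$-form on $X$. It therefore suffices to exhibit one point of $X$ at which $\alpha$ is nonzero. This is where the generic nondegeneracy of $\sigma$ enters: by assumption there exists $p \in X$ at which $\sigma$ is nondegenerate, and Proposition \ref{p:nondeg} (the equivalence of conditions (i) and (iii)) then yields $(\sigma'^{\wedge r})(p) \neq 0$ in $\wedge^{2r}_\C(\T^*_{\C,p}(X))$, i.e.\ $\alpha(p) \neq 0$. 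Hence $\alpha$ is not the trivial form, and Lemma \ref{l:integral} gives $\int_X(\alpha \wedge \bar\alpha) > 0$, as required.

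No real obstacle is expected here; the only points requiring care are the identification of wedge products of holomorphic forms with the corresponding wedge products of smooth forms (so that $w^r \bar w^r$ is genuinely represented by $\alpha \wedge \bar\alpha$) and the bookkeeping between $\Omega^{2r}_X$, $\A^{2r,0}(X)$, and $\wedge^{2r}_\C(\T^*_{\C,p}(X))$ so that Proposition \ref{p:nondeg} applies verbatim.
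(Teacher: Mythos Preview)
Your proof is correct and follows essentially the same approach as the paper: represent $w^r\bar w^r$ by $\alpha\wedge\bar\alpha$ with $\alpha=\sigma'^{\wedge r}$, invoke Lemma~\ref{l:integral} for positivity, and use Proposition~\ref{p:nondeg} together with generic nondegeneracy to rule out the vanishing case. The paper's version is slightly terser but the argument is the same.
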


\begin{proof}
 As $w$ is a generically symplectic class on $X$, there exists a generically symplectic structure $\sigma$ on $X$ such that $w$ is the class of $\sigma$ (see Definition \ref{d:symp0}). Abusing notation, we symbolize the image of $\sigma$ under the canonical mapping $\Omega^2_X(X) \to \A^{2,0}(X)$ again by $\sigma$. Set $n:=\dim(X)$ and $\alpha := \sigma^{\wedge r}$. Then $\alpha \in \A^{2r,0}(X) = \A^{n,0}_\cp(X)$, and $\alpha$ is not the trivial $n$-form on $X$ as $\sigma$ is generically nondegenerate on $X$, \cf Proposition \ref{p:nondeg}. Moreover, $n$ is even and $X$ is pure $n$\hyphen dimensional (specifically as $X$ is connected). Thus applying Lemma \ref{l:integral}, we see that $\int_X(\alpha \wedge \bar\alpha) > 0$. By the definition of the integral on cohomology,
 \[
  \int_X\left(w^r\bar w^r\right) = \int_X(\alpha\wedge\bar\alpha)
 \]
 since $w^r\bar w^r$ is clearly the class of $\alpha\wedge\bar\alpha$ in $\H^{2n}(X,\C)$.
\end{proof}

\begin{definition}[Normed classes, I]
 \label{d:normedcl+}
 Let $X$ be a nonempty, compact, connected complex manifold. $w$ is called \emph{normed generically symplectic class} on $X$ when $w$ is a generically symplectic class on $X$ such that
 \[
  \int_X w^r\bar w^r = 1,
 \]
 where $r$ denotes half the dimension of $X$.
\end{definition}

\begin{remark}
 \label{r:normedcl+}
 Let $X$ be a nonempty, compact, connected, generically symplectic complex manifold. Then, according to Definition \ref{d:symp0}, there exists a generically symplectic structure $\sigma_0$ on $X$. Let $w_0$ be the class of $\sigma_0$. Then $w_0$ is a generically symplectic class on $X$, and, by Proposition \ref{p:sympvol+}, the complex number $I:=\int_X\left(w_0^r\bar{w_0}^r\right)$, where $r:=\nicefrac{1}{2}\dim(X)$, is real an strictly positive. Denote by $\lambda$ the ordinary (positive) $2r$-th real root of $I$. Set $w:=\lambda^{-1}w_0$. Then $w$ is the class of a generically symplectic structure on $X$, namely of $\lambda^{-1}\sigma_0$, and we have: 
 \begin{align*}
  \int_X w^r\bar w^r & = \int_X (\lambda^{-1}w_0)^r(\bar{\lambda^{-1}w_0})^r = \int_X(\lambda^{-1}w_0)^r(\lambda^{-1}\bar{w_0})^r = (\lambda^{2r})^{-1}\int_X w_0^r\bar{w_0}^r \\
  & = I^{-1}I = 1.
 \end{align*}
 The upshot is that, for all $X$ as above, there exists a normed generically symplectic class on $X$. In fact, we can always rescale a given generically symplectic class (by a strictly positive real number) to procure a normed generically symplectic class. 
\end{remark}

\begin{definition}[Beauville-Bogomolov form, I]
 \label{d:bbformgen}
 Let $X$ be a compact, connected, generically symplectic complex manifold such that $\dim_\C(\Omega^2_X(X))=1$. We claim there exists a unique function
 \[
  q \colon \H^2(X,\C) \to \C
 \]
 such that, for all normed generically symplectic classes $w$ on $X$, we have
 \[
  q_{(X,w)} = q.
 \]
 Note that the expression ``$q_{(X,w)}$'' makes sense here since $X$ is a compact, connected complex manifold whose dimension is a strictly positive, even natural number and $w\in\H^2(X,\C)$, \cf Notation \ref{not:bbform}.
 
 In fact, by Remark \ref{r:normedcl+}, there exists a normed generically symplectic class $w_1$ on $X$. Obviously, $q_{(X,w_1)}$ is a function from $\H^2(X,\C)$ to $\C$. Let $w$ be any normed generically symplectic class on $X$. Then there is a generically symplectic structure $\sigma$ on $X$ such that $w$ is the class of $\sigma$. Besides, there is a generically symplectic structure $\sigma_1$ on $X$ such that $w_1$ is the class of $\sigma_1$. Now as $\dim_\C(\Omega^2_X(X))=1$, the dimension of $X$ is strictly positive so that $\sigma_1 \neq 0$ in $\Omega^2_X(X)$. Thus $\sigma_1$ generates $\Omega^2_X(X)$ as a complex vector space. In particular, there exists a complex number $\mu$ such that $\sigma = \mu\sigma_1$. It follows that $w = \mu w_1$ and in turn, setting $r:=\nicefrac{1}{2}\dim(X)$,
 \[
  |\mu|^{2r} = |\mu|^{2r}\int_X w_1^r\bar{w_1}^r = \int_X (\mu w_1)^r(\bar{\mu w_1})^r = \int_X w^r\bar w^r = 1.
 \]
 As $2r$ is a natural number $\neq 0$, we infer that $|\mu|=1$. Thus
 \[
  q_{(X,w)} = q_{(X,\mu w_1)} = q_{(X,w_1)}.
 \]
 This proves, on the one hand, the existence of $q$. On the other hand, the uniqueness of $q$ is evident by the fact that there exists a normed generically symplectic class $w_1$ on $X$ (any $q$ has to agree with $q_{(X,w_1)}$).
 
 In what follows, we refer to the unique $q$ satisfying the condition stated above as the \emph{Beauville-Bogomolov form} of $X$. The Beauville-Bogomolov form of $X$ will be denoted $q_X$.
\end{definition}

\begin{lemma}
 \label{l:integralmod}
 Let $n$ be a natural number and $f\colon W\to X$ a proper modification such that $W$ and $X$ are reduced complex spaces of pure dimension $n$. Then we have
 \begin{equation} \label{e:integralmod}
  \int_W f^*(c) = \int_X c
 \end{equation}
 for all $c\in \H^{2n}_\cp(X,\C)$, where
 \[
  f^* \colon \H^{2n}_\cp(X,\C) \to \H^{2n}_\cp(W,\C)
 \]
 signifies the morphism induced by $f$ on complex cohomology with support.
\end{lemma}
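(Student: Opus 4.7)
The strategy is to restrict $f$ to the biholomorphic piece of the modification, where $W$ and $X$ may be assumed smooth, and to exploit the fact that closed thin analytic subsets are negligible both for $\H^{2n}_\cp$ and for the integration pairing against the fundamental class.

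First I would invoke the defining property of a proper modification to choose closed, nowhere dense analytic subsets $B \subset X$ and $A \subset W$ with $A \supset f^{-1}(B) \cup \Sing(W)$, $B \supset \Sing(X)$, $f^{-1}(B) = A$, and such that $f$ restricts to a biholomorphism $f_0 \colon W_0 := W \setminus A \to X \setminus B =: X_0$ of pure $n$\hyphen dimensional complex manifolds. Writing $j^W \colon W_0 \hookrightarrow W$ and $j^X \colon X_0 \hookrightarrow X$ for the open inclusions, proper base change for extension by zero along the pullback square formed by $f$, $f_0$, $j^X$, $j^W$ yields the naturality identity $f^* \circ j^X_! = j^W_! \circ f_0^*$ on $\H^{2n}_\cp(X_0,\C)$.

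Next I would record the following negligibility statement for a closed, nowhere dense analytic subset $C$ of a reduced, pure $n$\hyphen dimensional complex space $Y$: since $\dim_\R C \le 2n-2$, one has $\H^{2n}_\cp(C,\C) = 0$, so the long exact sequence attached to the open--closed decomposition $(Y\setminus C, C)$ forces the extension by zero
\begin{equation*}
j_! \colon \H^{2n}_\cp(Y\setminus C,\C) \to \H^{2n}_\cp(Y,\C)
\end{equation*}
to be surjective; moreover, the fundamental class $[Y] \in \H^{BM}_{2n}(Y,\C)$ restricts along $Y\setminus C \hookrightarrow Y$ to the fundamental class $[Y\setminus C]$, and hence by the cap--product adjunction between $j_!$ and open restriction, $\int_Y (j_! \alpha) = \int_{Y\setminus C} \alpha$.

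Finally, given an arbitrary $c \in \H^{2n}_\cp(X,\C)$, surjectivity of $j^X_!$ lets me write $c = j^X_! c_0$ for some $c_0 \in \H^{2n}_\cp(X_0,\C)$; the naturality identity from the first step then gives $f^* c = j^W_! (f_0^* c_0)$. Applying the negligibility statement once to $(X,B)$ and once to $(W,A)$, and invoking the classical change-of-variables formula for the biholomorphism $f_0$ between pure $n$\hyphen dimensional complex manifolds, yields
\begin{equation*}
 \int_W f^*(c) = \int_{W_0} f_0^*(c_0) = \int_{X_0} c_0 = \int_X c,
\end{equation*}
which is precisely \eqref{e:integralmod}. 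The only delicate input is the compatibility between integration and extension by zero across a thin analytic subset; this rests on the construction of the fundamental class of a reduced pure-dimensional analytic space and its naturality under open restriction (Borel--Haefliger), and once this is granted the argument is formal. I expect the main obstacle to be writing out this compatibility cleanly without developing an independent treatment of Borel--Moore homology of complex spaces.
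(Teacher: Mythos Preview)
Your proposal is correct and follows essentially the same route as the paper: both restrict to the open locus where $f$ is biholomorphic between smooth spaces, use the long exact sequence and the dimension bound $\dim_\C(B\cup\Sing X)<n$ to get surjectivity of the pushforward $j_!\colon\H^{2n}_\cp(X_0,\C)\to\H^{2n}_\cp(X,\C)$, and then conclude from the naturality square together with the change-of-variables formula for the biholomorphism. The only difference is cosmetic: where you invoke the Borel--Haefliger fundamental class in Borel--Moore homology and cap-product adjunction to justify $\int_Y j_!\alpha=\int_{Y\setminus C}\alpha$, the paper instead factors the open inclusion through $Y_\reg$ and appeals directly to de Rham's theorem realised via $\sC^\infty$ $2n$-forms, which amounts to the same compatibility stated in more elementary language.
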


\begin{proof}
 Since $f\colon W\to X$ is a proper modification, there exist thin, closed analytic subsets $A$ and $B$ of $W$ and $X$, respectively, such that $f$ induces an isomorphism of complex spaces
 \[
  W\setminus A \to X\setminus B
 \]
 by restriction. Define
 \[
  X' := X \setminus (B \cup \Sing(X)) \quad \text{and} \quad W' := W|(f^{-1}(X') \setminus A),
 \]
 write
 \[
  i \colon W' \to W \quad \text{and} \quad j \colon X' \to X
 \]
 for the canonical morphisms and
 \[
  f' \colon W' \to X'
 \]
 for the restriction of $f$. Then the diagram
 \[
  \xysquare{W'}{W}{X'}{X}{i}{f'}{f}{j}
 \]
 commutes in the category of complex spaces, whence the diagram
 \[
  \xysquare{\H^{2n}_\cp(X',\C)}{\H^{2n}_\cp(X,\C)}{\H^{2n}_\cp(W',\C)}{\H^{2n}_\cp(W,\C)}{j_*}{f'^*}{f^*}{i_*}
 \]
 commutes in the category of complex vector spaces. Observe that $X' \subset X_\reg$. So, $j_*$ factors as follows:
 \[
  \H^{2n}_\cp(X',\C) \xrightarrow{(j_0)_*} \H^{2n}_\cp(X_\reg,\C) \xrightarrow{(j_1)_*} \H^{2n}_\cp(X,\C).
 \]
 Realizing the pushforward morphism
 \[
  (j_0)_* \colon \H^{2n}_\cp(X',\C) \to \H^{2n}_\cp(X_\reg,\C)
 \]
 as well as the integrals on $X'$ and $X_\reg$ by means of $\sC^\infty$ differential $2n$-forms (via de Rham's theorem), we obtain that
 \[
  \int_{X'} c' = \int_{X_\reg} (j_0)_*(c') = \int_X (j_1)_*((j_0)_*(c')) = \int_X j_*(c')
 \]
 for all $c' \in \H^{2n}_\cp(X',\C)$. Similarly, we have
 \[
  \int_{W'} b' = \int_W i_*(b')
 \]
 for all $b' \in \H^{2n}_\cp(W',\C)$. Furthermore, we have
 \[
  \int_{W'} f'^*(c') = \int_{X'} c'
 \]
 for all $c' \in \H^{2n}_\cp(X',\C)$ since $f'$ is an isomorphism. By the long exact sequence in complex cohomology with compact support, we see that there exists an exact sequence of complex vector spaces
 \[
  \H^{2n}_\cp(X',\C) \overset{j_*}\to \H^{2n}_\cp(X,\C) \to \H^{2n}_\cp(B \cup \Sing(X),\C).
 \]
 Since $X$ is reduced and pure dimensional and $B$ is thin in $X$, we have
 \[
  \dim(B \cup \Sing(X)) < \dim(X) = n
 \]
 and thus
 \[
  \H^{2n}_\cp(B \cup \Sing(X),\C) \iso 0.
 \]
 Now let $c \in \H^{2n}_\cp(X,\C)$. Then there exists an element $c'$ of $\H^{2n}_\cp(X',\C)$ such that $j_*(c') = c$, and we deduce \eqref{e:integralmod} from the already established identities.
\end{proof}

\begin{proposition}
 \label{p:bbformmod}
 Let $X$ be a nonempty, compact, connected, generically symplectic complex manifold and $f\colon X'\to X$ a proper modification such that $X'$ is a complex manifold.
 \begin{enumerate}
  \item \label{p:bbformmod-mfd} $X'$ is a nonempty, compact, connected, generically symplectic complex manifold.
  \item \label{p:bbformmod-class} When $w$ is a normed generically symplectic class on $X$, then $f^*(w)$ is a normed generically symplectic class on $W$.
  \item \label{p:bbformmod-bbform} When $\dim_\C(\Omega^2_X(X))=1$, then $\dim_\C(\Omega^2_{X'}(X'))=1$, and we have
  \[
   q_X = q_{X'} \circ f^*,
  \]
  where
  \[
   f^* \colon \H^2(X,\C) \to \H^2(X',\C)
  \]
  signifies the morphism induced by $f$ on second complex cohomology.
 \end{enumerate}
\end{proposition}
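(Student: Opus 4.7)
The plan is to handle parts (a) and (b) quickly using Remark \ref{r:symp0} and Lemma \ref{l:integralmod}, and then in (c) first to establish $\dim_\C(\Omega^2_{X'}(X'))=1$ before computing $q_X$ via the same integration identity.

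For (a), I would note that $X'$ is nonempty and compact since $f$ is surjective and proper with nonempty compact target. For connectedness, observe that $f$ being a proper modification provides a closed thin analytic subset $B\subset X'$ such that $X'\setminus B$ is biholomorphic to $X$ minus a thin analytic set; the latter is connected since $X$ is connected and has thin complement removed, so $X'\setminus B$ is connected, and density in $X'$ then forces $X'$ itself to be connected. Generic symplecticity of $X'$ is immediate from Remark \ref{r:symp0}: if $\sigma$ is a generically symplectic structure on $X$, then $f^*\sigma$ is one on $X'$.

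For (b), writing $w=[\sigma]$ for a generically symplectic structure $\sigma$ on $X$, the class $f^*w=[f^*\sigma]$ is generically symplectic on $X'$ by Remark \ref{r:symp0}. Setting $n:=2r=\dim X=\dim X'$ and using that $X$ and $X'$ are compact so that $\H^{2n}_\cp=\H^{2n}$, Lemma \ref{l:integralmod} combined with the facts that $f^*$ is a ring homomorphism on cohomology and commutes with complex conjugation yields
\[
 \int_{X'}(f^*w)^r\overline{(f^*w)}^r \;=\; \int_{X'}f^*(w^r\bar w^r) \;=\; \int_X w^r\bar w^r \;=\; 1,
\]
which is the normedness condition.

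Part (c) splits into two steps. I would first show that the pullback
\[
 f^*\colon \Omega^2_X(X) \to \Omega^2_{X'}(X')
\]
is a bijection. Injectivity is clear because $f$ restricts to an isomorphism on a dense open subset. Surjectivity, which I expect to be the main obstacle of the whole proof, proceeds as follows: since $X$ is a smooth compact manifold of even dimension $2r\geq 2$ and $f$ is a proper bimeromorphism between smooth manifolds, the image in $X$ of the exceptional locus of $f$ is a closed analytic subset $A$ of codimension $\geq 2$ (a Zariski-main-theorem-type statement for proper modifications between smooth spaces). Under the biholomorphism $X'\setminus f^{-1}(A)\iso X\setminus A$, any $\rho\in\Omega^2_{X'}(X')$ transfers to a holomorphic $2$-form on $X\setminus A$, which then extends across $A$ to a $\sigma\in\Omega^2_X(X)$ by the Hartogs-type removable singularities theorem for holomorphic tensors across analytic subsets of codimension $\geq 2$ in a smooth ambient space; injectivity then forces $f^*\sigma = \rho$. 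This establishes $\dim_\C\Omega^2_{X'}(X') = \dim_\C\Omega^2_X(X) = 1$. For the identity $q_X = q_{X'}\circ f^*$, I would pick a normed generically symplectic class $w$ on $X$, so that by (b) the class $f^*w$ is a normed generically symplectic class on $X'$. For any $a\in\H^2(X,\C)$ the three integrals from Notation \ref{not:bbform} applied to $(X',f^*w)$ and class $f^*a$ match, term by term, those from $(X,w)$ and $a$: using that $f^*$ is a graded ring homomorphism commuting with complex conjugation, we have
\[
 (f^*w)^{r-1}\overline{(f^*w)}^{r-1}(f^*a)^2 \;=\; f^*\bigl(w^{r-1}\bar w^{r-1}a^2\bigr),
\]
and analogously for the two remaining terms, whereupon Lemma \ref{l:integralmod} collapses each integral on $X'$ to the corresponding one on $X$. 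Hence $q_X(a)=q_{(X,w)}(a)=q_{(X',f^*w)}(f^*a)=q_{X'}(f^*a)$, as required.
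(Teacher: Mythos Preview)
Your proposal is correct and follows essentially the same approach as the paper: Remark \ref{r:symp0} for generic symplecticity, Lemma \ref{l:integralmod} for the integral identities, and the fact that pullback of global holomorphic $2$-forms along a proper modification between complex manifolds is an isomorphism. The paper simply asserts this last fact in one line, whereas you supply the standard Hartogs-type argument for surjectivity; similarly, the paper dismisses nonemptiness, compactness, and connectedness of $X'$ as ``clear'' while you justify them explicitly.
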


\begin{proof}
 \ref{p:bbformmod-mfd}). Clearly, $X'$ is nonempty, compact, and connected. $X'$ is generically symplectic by means of Remark \ref{r:symp0}.

 \ref{p:bbformmod-class}). Let $w$ be a normed generically symplectic class on $X$. Then $f^*(w)$ is a generically symplectic class on $X'$ by Remark \ref{r:symp0}. $f^*(w)$ is a normed generically symplectic class on $X'$ since by Lemma \ref{l:integralmod} we have:
 \[
  \int_{X'}f^*(w)^r(\bar{f^*(w)})^r = \int_{X'} f^*(w^r\bar w^r) = \int_X w^r\bar w^r = 1.
 \]

 \ref{p:bbformmod-bbform}). As $f$ is a proper modification between complex manifolds, the pullback of Kähler differentials
 \[
  \Omega^2_X(X) \to \Omega^2_{X'}(X')
 \]
 induced by $f$ furnishes an isomorphism of complex vector spaces. Specifically, when $\dim_\C(\Omega^2_X(X))=1$, then $\dim_\C(\Omega^2_{X'}(X'))=1$.

 Looking at formula \eqref{e:bbform}, Lemma \ref{l:integralmod} implies that, for any $w\in\H^2(X,\C)$, we have
 \[
  q_{(X,w)} = q_{(X',f^*(w))} \circ f^*;
 \]
 observe that $f^*$ is compatible with the respective multiplications and conjugations on $\H^*(X,\C)$ and $\H^*(X',\C)$ and that $\dim(X') = \dim(X)$. By Remark \ref{r:normedcl+}, there exists a normed generically symplectic class $w$ on $X$. Thus we deduce
 \[
  q_X = q_{(X,w)} = q_{(X',f^*(w))} \circ f^* = q_{X'} \circ f^*
 \]
 from \ref{p:bbformmod-class}) recalling Definition \ref{d:bbformgen}.
\end{proof}

\begin{proposition}
 \label{p:unisympres}
 Let $X$ be a (compact, connected) symplectic complex space such that $\dim_\C(\Omega^2_X(X_\reg))=1$. Then, for all resolutions of singularities $f\colon W\to X$, the space $W$ is a  (compact, connected) generically symplectic complex manifold with $\Omega^2_W(W)$ of dimension $1$ over the field of complex numbers.
\end{proposition}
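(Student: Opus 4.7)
The plan is to decompose the statement into three independent assertions: generic symplecticity of $W$, compactness and connectedness of $W$ in the parenthetical case, and the dimension count $\dim_\C \Omega^2_W(W) = 1$. The first is immediate from Proposition~\ref{p:sympres}~b), which says precisely that $W$ is a generically symplectic complex manifold whenever $X$ is symplectic and $f\colon W\to X$ is a resolution of singularities. For the second, compactness of $W$ follows at once from the properness of $f$; for connectedness, a connected normal complex space is automatically irreducible (in a normal space the irreducible components are pairwise disjoint open subsets), so the symplectic---in particular normal---complex space $X$ is irreducible, whence $X_\reg$ is a connected dense open subset of $X$, and since $f$ restricts to a biholomorphism $f^{-1}(X_\reg) \to X_\reg$ the open dense subset $f^{-1}(X_\reg)$ of $W$ is connected, forcing $W$ itself to be connected.

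For the dimension count, I would sandwich $\dim_\C \Omega^2_W(W)$ between $1$ and $1$. The lower bound exploits the existence, built into the symplecticity of $X$ via condition (ii) of Definition~\ref{d:symp}~\ref{d:symp-str}), of an extension $\rho \in \Omega^2_W(W)$ of a fixed symplectic structure $\sigma \in \Omega^2_X(X_\reg)$: since $\sigma$ is generically nondegenerate it is in particular nonzero, and its pullback along $f|f^{-1}(X_\reg)$ is therefore nonzero, forcing $\rho$ to be nonzero in $\Omega^2_W(W)$. For the upper bound I would exploit the chain of canonical morphisms
\[
 \Omega^2_W(W) \hookrightarrow \Omega^2_W(f^{-1}(X_\reg)) \overset{\sim}{\longleftarrow} \Omega^2_{X_\reg}(X_\reg) \overset{\sim}{\longleftarrow} \Omega^2_X(X_\reg),
\]
where the first arrow is restriction, injective because $W \setminus f^{-1}(X_\reg)$ is thin in the smooth space $W$; the middle isomorphism is pullback along the biholomorphism $f^{-1}(X_\reg) \to X_\reg$; and the last identification uses normality of $X$ to match $\Omega^2_X$ with $\Omega^2_{X_\reg}$ on the regular locus. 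The composite exhibits $\Omega^2_W(W)$ as a subspace of the $1$-dimensional complex vector space $\Omega^2_X(X_\reg)$.

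The only real subtlety concerns the middle arrow in the upper-bound chain, namely the claim that $f$ restricts to a biholomorphism over \emph{all} of $X_\reg$, rather than merely to an isomorphism off some pair of thin subsets (which is the minimal content of being a resolution in the usage of, for example, the proof of Proposition~\ref{p:sympres}). For resolutions in the standard sense this is built in; should one wish to admit the more permissive notion it suffices to replace $X_\reg$ by a smaller open dense subset $U \subseteq X_\reg$ over which $f$ is a biholomorphism and to argue with all the additional restriction maps injective (each complement being thin in a reduced space), so no new ideas are required.
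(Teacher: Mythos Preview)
Your proposal is correct and follows essentially the same route as the paper: Proposition~\ref{p:sympres} for generic symplecticity, and the same chain of maps (restriction to $f^{-1}(X_\reg)$ injective by thinness of the complement, pullback from $\Omega^2_X(X_\reg)$ bijective) for the dimension count, together with the extension $\rho$ of a symplectic structure $\sigma$. The paper packages the dimension argument as showing the restriction $\Omega^2_W(W)\to\Omega^2_W(f^{-1}(X_\reg))$ is bijective (surjectivity coming from $\rho$ hitting the pullback of the generator $\sigma$) rather than as a sandwich, and it asserts directly that the pullback $\Omega^2_X(X_\reg)\to\Omega^2_W(f^{-1}(X_\reg))$ is a bijection---this holds for any proper modification between complex manifolds, so your worry about whether $f$ is biholomorphic over all of $X_\reg$ is moot, though your workaround via a smaller dense open is of course also valid.
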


\begin{proof}
 Let $f\colon W\to X$ be a resolution of singularities. Then $W$ is a generically symplectic complex manifold according to Proposition \ref{p:sympres}. The restriction mapping $\Omega^2_W(W) \to \Omega^2_W(f^{-1}(X_\reg))$ is surely one-to-one. The pullback function $\Omega^2_X(X_\reg) \to \Omega^2_W(f^{-1}(X_\reg))$ is a bijection. As the complex space $X$ is symplectic, there exists a symplectic structure $\sigma$ on $X$. Since $\Omega^2_X(X_\reg)$ is $1$-dimensional, $\sigma$ generates $\Omega^2_X(X_\reg)$ as a complex vector space. Thus as $\sigma$ has an extension $\rho$ with respect to $f$, we see that the restriction mapping $\Omega^2_W(W) \to \Omega^2_W(f^{-1}(X_\reg))$ is, in addition to being one-to-one, onto. Therefore we have $\dim_\C(\Omega^2_W(W))=1$. Of course, when $X$ is compact and connected, $W$ is compact and conected.
\end{proof}

\begin{definition}[Beauville-Bogomolov form, II]
 \label{d:bbform}
 Let $X$ be a compact, and connected, and symplectic complex space such that $\dim_\C(\Omega^2_X(X_\reg))=1$. We claim there exists a unique function
 \[
  q \colon \H^2(X,\C) \to \C
 \]
 such that, for all resolutions of singularities $f\colon W\to X$, we have
 \[
  q = q_W \circ f^*,
 \]
 where
 \[
  f^* \colon \H^2(X,\C) \to \H^2(W,\C)
 \]
 signifies the pullback function induced by $f$ (or better, $f_\top$) on second complex cohomology. Note that it makes sense to write ``$q_W$'' above as by Proposition \ref{p:unisympres}, for all resolutions of singularities $f\colon W\to X$, the space $W$ is a compact, connected, generically symplectic complex manifold with $1$-dimensional $\Omega^2_W(W)$, whence Definition \ref{d:bbformgen} tells what is to be understood by the Beauville-Bogomolov form of $W$.
 
 As there exists a resolution of singularities $f\colon W\to X$, we see that $q$ is uniquely determined. Now for the existence of $q$ it suffices to show that for any two resolutions of singularities $f\colon W\to X$ and $f'\colon W'\to X$, we have
 \[
  q_W \circ f^* = q_{W'} \circ (f')^*.
 \]
 Given such $f$ and $f'$, there exist a complex manifold $V$ as well as proper modifications $g\colon V\to W$ and $g'\colon V\to W'$ such that the following diagram commutes in the category of complex spaces:
 \[
  \xysquare{V}{W}{W'}{X}{g}{g'}{f}{f'}
 \]
 Therefore, by Proposition \ref{p:bbformmod} \ref{p:bbformmod-bbform}), we have:
 \[
  q_W\circ f^* = q_V\circ g^*\circ f^* = q_V\circ (g')^*\circ (f')^* = q_{W'}\circ (f')^*.
 \]
 
 The unique $q$ satisfying the condition stated above will be called the \emph{Beauville-Bogomolov form} of $X$. We denote the Beauville-Bogomolov form of $X$ by $q_X$.
\end{definition}

\begin{remark}[Ambiguity]
 \label{r:bbform}
 In case $X$ is a compact, connected, symplectic complex manifold with $1$-dimensional $\Omega^2_X(X)$ both Definition \ref{d:bbformgen} and Definition \ref{d:bbform} are applicable in order to tell what the Beauville-Bogomolov form of $X$ is. Gladly, employing Proposition \ref{p:bbformmod}, one infers that the Beauville-Bogomolov form of $X$ in the sense of Definition \ref{d:bbformgen} satisfies the condition given for $q$ in Definition \ref{d:bbform}, hence is the Beauville-Bogomolov form of $X$ in the sense of Definition \ref{d:bbform}.
\end{remark}

Our philosophy in defining the Beauville-Bogomolov form on possibly singular complex spaces $X$ (\cf Definition \ref{d:bbform}) is to make use of the Beauville-Bogomolov form for generically symplectic complex manifolds (\cf Definition \ref{d:bbformgen}) together with a resolution of singularities.

An alternative approach might be to employ the formula \eqref{e:bbform} directly on $X$ rather than first passing to a resolution. Then, of course, for $w$ we should plug a (suitably normed) symplectic class on $X$ into \eqref{e:bbform}. Unfortunately, as we have already noticed in \S\ref{s:symp}, it is not clear whether on a given arbitrary compact, symplectic complex space $X$, there exists one (and only one) symplectic class for every symplectic structure $\sigma$ on $X$ (\cf Proposition \ref{p:sympclex}). Therefore we cannot pursue this alternative in general. However, if we are lucky and there do exist symplectic classes on $X$, calculating the Beauville-Bogomolov form on $X$ is as good as calculating it on a resolution. We briefly explain the details.

\begin{definition}[Normed classes, II]
 \label{d:normedcl}
 Let $X$ be a nonempty, compact, and connected, and symplectic complex space. Then $w$ is called \emph{normed symplectic class} on $X$ when $w$ is a symplectic class on $X$, \cf Definition \ref{d:sympcl}, and we have
 \[
  \int_X w^r\bar w^r = 1,
 \]
 where $r$ is short for half the dimension of $X$.
\end{definition}

\begin{proposition}
 \label{p:normedclres}
 Let $X$ be a nonempty, compact, connected, and symplectic complex space, $w$ a normed symplectic class on $X$, and $f\colon W\to X$ a resolution of singularities. Then $f^*(w)$ is a normed generically symplectic class on $W$.
\end{proposition}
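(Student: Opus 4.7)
The proof will split into two verifications: showing that $f^{*}(w)$ is a generically symplectic class on $W$, and checking the normalization $\int_W f^{*}(w)^r\overline{f^{*}(w)}^r = 1$, where $2r = \dim(X)$.

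For the first point, I would unpack Definition \ref{d:sympcl}: since $w$ is a (normed) symplectic class on $X$, there exists a symplectic structure $\sigma$ on $X$ of which $w$ is the symplectic class. By the defining property of symplectic classes, $f^{*}(w)$ is then the class of the extension $\rho$ of $\sigma$ with respect to $f$. Proposition \ref{p:sympres} a) tells us that $\rho$ is a generically symplectic structure on $W$, hence by Definition \ref{d:symp0} c) its class $f^{*}(w)$ is a generically symplectic class on $W$. (This is precisely the content of Remark \ref{r:sympcl}.)

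For the normalization, I would verify the hypotheses of Lemma \ref{l:integralmod}. The complex space $X$ is normal (Proposition \ref{p:ratgor}) and nonempty, compact, connected with even dimension $2r$ by Proposition \ref{p:sympdim}; being normal it is locally pure dimensional by Proposition \ref{p:normalpuredim}, and since $X$ is connected this forces pure dimensionality $\dim(X) = 2r$. The resolution $W$ is a complex manifold, and since $f$ is proper and $X$ is compact, $W$ is compact; moreover $f$ being a resolution (in particular a proper modification) implies $\dim(W) = 2r$ and $W$ is pure $2r$-dimensional. Lemma \ref{l:integralmod} then gives $\int_W f^{*}(c) = \int_X c$ for every $c \in \H^{4r}(X,\C)$. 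Applying this to $c = w^r\bar{w}^r$ and using that $f^{*}$ is a ring homomorphism compatible with complex conjugation (so $f^{*}(w^r\bar{w}^r) = f^{*}(w)^r\overline{f^{*}(w)}^r$), we conclude
\[
\int_W f^{*}(w)^r\overline{f^{*}(w)}^r \;=\; \int_W f^{*}(w^r\bar{w}^r) \;=\; \int_X w^r\bar{w}^r \;=\; 1,
\]
so $f^{*}(w)$ is normed in the sense of Definition \ref{d:normedcl+}.

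Honestly, this proposition is essentially a bookkeeping matter: the substance is already hidden in Remark \ref{r:sympcl} (for generic symplecticity) and Lemma \ref{l:integralmod} (for the birational invariance of top-degree integrals). The only subtlety worth double-checking is that $X$ really is globally pure dimensional, which is where connectedness of $X$ enters in tandem with the normality coming from Proposition \ref{p:ratgor}.
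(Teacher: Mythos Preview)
Your proof is correct and follows essentially the same route as the paper: invoke Remark \ref{r:sympcl} for generic symplecticity of $f^*(w)$, then Lemma \ref{l:integralmod} for the normalization identity. One small point: you cite Proposition \ref{p:ratgor} for the normality of $X$, but normality is already part of Definition \ref{d:symp} \ref{d:symp-symp}), so no appeal to rational singularities is needed there.
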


\begin{proof}
 By Remark \ref{r:sympcl}, $f^*(w)$ is a generically symplectic class on $W$. Moreover, $W$ is a nonempty, compact, connected complex manifold. Set $r:=\nicefrac{1}{2}\dim(W)$. Then by means of Lemma \ref{l:integralmod}, we obtain:
 \[
  \int_W (f^*(w))^r(\bar{f^*(w)})^r = \int_W f^*(w^r\bar w^r) = \int_X w^r\bar w^r = 1.
 \]
 The very last equality holds since $w$ is a normed symplectic class on $X$ and we have $\nicefrac{1}{2}\dim(X)=\nicefrac{1}{2}\dim(W)=r$.
\end{proof}

\begin{proposition}
 \label{p:bbformres}
 Let $X$ be a compact, connected, symplectic complex space such that $\dim_\C(\Omega^2_X(X_\reg))=1$. Let $w$ be a normed symplectic class on $X$. Then $q_X = q_{(X,w)}$.
\end{proposition}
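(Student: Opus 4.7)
The plan is to reduce the claim to the already-established compatibility of the generic Beauville--Bogomolov form with proper modifications (Proposition~\ref{p:bbformmod}~\ref{p:bbformmod-bbform}), together with the behaviour of the integral under a proper modification given by Lemma~\ref{l:integralmod}. Concretely, I would begin by choosing an arbitrary resolution of singularities $f\colon W\to X$, which exists because $X$ is (in particular) reduced. By Proposition~\ref{p:unisympres}, $W$ is then a compact, connected, generically symplectic complex manifold with $\dim_\C(\Omega^2_W(W))=1$, so that both $q_W$ (in the sense of Definition~\ref{d:bbformgen}) and $q_X = q_W\circ f^*$ (Definition~\ref{d:bbform}) are defined.

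Next I would appeal to Proposition~\ref{p:normedclres} to conclude that $f^*(w)$ is a normed generically symplectic class on $W$; Definition~\ref{d:bbformgen} then gives $q_W = q_{(W,f^*(w))}$, so that
\[
 q_X = q_{(W,f^*(w))} \circ f^*.
\]
It remains to verify the purely formal identity $q_{(W,f^*(w))} \circ f^* = q_{(X,w)}$ as functions on $\H^2(X,\C)$. Writing out the defining formula \eqref{e:bbform} for both sides, this amounts to checking term by term that, for every $a\in\H^2(X,\C)$ and with $r=\tfrac12\dim(X)=\tfrac12\dim(W)$,
\[
 \int_W f^*(w)^{r-1}\overline{f^*(w)}^{\,r-1}(f^*a)^2 = \int_X w^{r-1}\bar w^{r-1}a^2,
\]
and analogously for the two ``mixed'' integrals $\int w^{r-1}\bar w^r a$ and $\int w^r\bar w^{r-1}a$. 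Since $f^*$ is a ring homomorphism on complex cohomology that commutes with complex conjugation, each integrand on the left is the $f^*$-image of the corresponding integrand on the right; Lemma~\ref{l:integralmod}, applied to $f\colon W\to X$ (which is a proper modification of reduced complex spaces of common pure dimension $2r$), then yields the desired equalities of integrals.

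There is no serious obstacle here: the only subtlety worth double-checking is that $\dim(W)=\dim(X)$ (so that ``$r$'' is unambiguous in both formulas) and that $f$ is a proper modification between pure-dimensional reduced spaces, which is automatic because $X$ is symplectic, hence normal, hence locally pure-dimensional, and $W$ is a connected manifold of the same dimension. Combining the two displayed equalities above with the definition of $q_{(X,w)}$ gives $q_{(W,f^*(w))}(f^*a) = q_{(X,w)}(a)$ for every $a$, and therefore $q_X(a) = q_{(X,w)}(a)$, completing the proof.
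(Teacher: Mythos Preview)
Your proof is correct and follows essentially the same approach as the paper: pick a resolution $f\colon W\to X$, use Proposition~\ref{p:normedclres} to get a normed generically symplectic class $f^*(w)$ on $W$, invoke Definition~\ref{d:bbformgen} to write $q_W = q_{(W,f^*(w))}$ and Definition~\ref{d:bbform} to write $q_X = q_W\circ f^*$, and then check $q_{(W,f^*(w))}\circ f^* = q_{(X,w)}$ term by term via Lemma~\ref{l:integralmod}. The paper's proof is more terse but structurally identical.
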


\begin{proof}
 There exists a resolution of singularities $f\colon \tilde X\to X$. Set $\tilde w:=f^*(w)$. By Definition \ref{d:bbform}, we have $q_X = q_{\tilde X} \circ f^*$. By Definition \ref{d:bbformgen}, we have $q_{\tilde X} = q_{(\tilde X,\tilde w)}$ since by Proposition \ref{p:normedclres}, $\tilde w$ is a normed generically symplectic class on $\tilde X$. Employing Lemma \ref{l:integralmod} (three times) as well as the fact that $\dim(X)=\dim(\tilde X)$, one easily deduces $q_{(X,w)} = q_{(\tilde X,\tilde w)} \circ f^*$ from \eqref{e:bbform}. So, $q_X = q_{(X,w)}$.
\end{proof}

Having introduced the notion of a Beauville-Bogomolov form for two (overlapping) classes of complex spaces, we are now going to establish, essentially in Proposition \ref{p:bbformhdg} and Proposition \ref{p:bbformtopint} below, two formulae for $q_X$. These formulae are classically due to Beauville, \cf \cite[Th\'eor\`eme 5, D\'emostration be (b)]{Be83}; the proofs are pretty much straightforward.

An essential point is that we require the complex spaces in question to be of Fujiki class $\sC$ so that their cohomologies carry mixed Hodge structures.

\begin{proposition}
 \label{p:bbformhdg}
 Let $X$ be a compact, connected, and generically symplectic complex manifold of Fujiki class $\sC$ such that $\Omega^2_X(X)$ is $1$-dimensional over $\C$. Let $w$ be a normed generically symplectic class on $X$, $c \in \H^{1,1}(X)$, and $\lambda,\lambda' \in \C$. Then, setting
 \[
  a := \lambda w + c + \lambda'\bar w,
 \]
 where we calculate in $\H^2(X,\C)$, and setting $r:=\nicefrac{1}{2}\dim(X)$, we have:
 \begin{equation}
  \label{e:bbformhdg}
  q_X(a) = \frac{r}{2}\int_X\left(w^{r-1}\bar w^{r-1}c^2\right) + \lambda\lambda'.
 \end{equation}
\end{proposition}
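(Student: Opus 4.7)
The plan is to reduce everything to a direct substitution into formula \eqref{e:bbform}, with the Hodge decomposition of $\H^{4r}(X,\C)$ doing almost all of the work. Since $w$ is a normed generically symplectic class on $X$ and $\dim_\C\Omega^2_X(X)=1$, Definition \ref{d:bbformgen} identifies $q_X$ with $q_{(X,w)}$, so it suffices to evaluate
\[
 q_{(X,w)}(a) = \frac{r}{2}\int_X\bigl(w^{r-1}\bar w^{r-1}a^2\bigr) + (r-1)\int_X\bigl(w^{r-1}\bar w^r a\bigr)\int_X\bigl(w^r\bar w^{r-1}a\bigr)
\]
on $a=\lambda w+c+\lambda'\bar w$, and to compare the outcome with the right hand side of \eqref{e:bbformhdg}.

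The key observation will be the following Hodge-type vanishing. Since $X$ is a compact complex manifold of Fujiki class $\sC$, the complex cohomology of $X$ carries a pure Hodge structure, and in particular
\[
 \H^{4r}(X,\C) = \bigoplus_{p+q=4r}\H^{p,q}(X) = \H^{2r,2r}(X),
\]
because $\H^{p,q}(X)=0$ as soon as $\max(p,q)>2r$. Consequently the integration map $\int_X\colon\H^{4r}(X,\C)\to\C$ annihilates any class whose pure Hodge type differs from $(2r,2r)$. Now $w\in\H^{2,0}(X)$, $\bar w\in\H^{0,2}(X)$, and $c\in\H^{1,1}(X)$, so every monomial $w^i\bar w^j c^k$ carries the unambiguous Hodge bidegree $(2i+k,\,2j+k)$; this will make it a purely combinatorial matter to single out, in each of the three integrals above, the terms whose bidegree equals $(2r,2r)$.

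I will then carry out the expansion $a^2=\lambda^2 w^2+2\lambda wc+2\lambda\lambda'\,w\bar w+c^2+2\lambda'\,c\bar w+(\lambda')^2\bar w^2$, distribute $w^{r-1}\bar w^{r-1}$, and keep only the terms of bidegree $(2r,2r)$; by the type count this leaves only $w^{r-1}\bar w^{r-1}c^2$ and $2\lambda\lambda'\,w^r\bar w^r$. Similarly the expansion of $w^{r-1}\bar w^r a$ retains only $\lambda\, w^r\bar w^r$, and that of $w^r\bar w^{r-1}a$ only $\lambda'\, w^r\bar w^r$. Using the normalization $\int_X w^r\bar w^r=1$ from Definition \ref{d:normedcl+}, these evaluations read
\[
 \int_X w^{r-1}\bar w^{r-1}a^2 = \int_X w^{r-1}\bar w^{r-1}c^2 + 2\lambda\lambda',\qquad \int_X w^{r-1}\bar w^r a = \lambda,\qquad \int_X w^r\bar w^{r-1}a = \lambda'.
\]

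Substituting these three identities into the defining formula for $q_{(X,w)}(a)$ yields \eqref{e:bbformhdg}. The only obstacle of any substance is the Hodge-theoretic input---purity of the top-degree cohomology of a Fujiki-class manifold and the resulting type-based vanishing of integrals; everything else is bookkeeping in $\H^*(X,\C)$, together with the definitions of a normed class and of $q_X$.
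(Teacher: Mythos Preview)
Your proof is correct and takes essentially the same route as the paper's: expand $a=\lambda w+c+\lambda'\bar w$, kill the unwanted cross terms by Hodge-type considerations, and substitute the three resulting integral identities into \eqref{e:bbform}. The only cosmetic difference is that the paper records the vanishings $w^{r+1}=\bar w^{r+1}=w^rc=\bar w^rc=0$ already in $\H^{2r+2}(X,\C)$ (via $\F^{2r+1}\H^{2r+2}(X)=0$), whereas you argue at the top degree via $\H^{4r}(X,\C)=\H^{2r,2r}(X)$; the underlying Hodge input is the same.
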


\begin{proof}
 As $w$ is the class of a (closed) holomorphic $2$-form on $X$, we have $w \in \F^2\H^2(X)$ by the definition of the Hodge structure $\H^2(X)$. Thus, for all $d \in \F^1\H^2(X)$, we have $w^rd \in \F^{2r+1}\H^{2r+2}(X)$ by the compatibility of the Hodge filtrations with the cup product on $\H^*(X,\C)$. Since $2r+1 > 2r = \dim(X)$, we know that $\F^{2r+1}\H^{2r+2}(X)$ is the trivial vector subspace of $\H^{2r+2}(X,\C)$. Hence, $w^rd=0$ in $\H^*(X,\C)$ for all $d \in \F^1\H^2(X)$. It follows that $\bar w^rd'=0$ in $\H^*(X,\C)$ for all $d' \in \oF^1\H^2(X)$. In particular, we have
 \[
  w^{r+1} = \bar w^{r+1} = w^rc = \bar w^rc = 0
 \]
 in $\H^*(X,\C)$ as $w \in \F^1\H^2(X)$, and $c \in \F^1\H^2(X) \cap \oF^1\H^2(X)$.

 Note that the subring $\H^{2*}(X,\C)$ of $\H^*(X,\C)$ is commutative. Exploiting the above vanishings, we obtain:
 \begin{align*}
  w^{r-1}\bar w^ra & = w^{r-1}\bar w^r(\lambda w+c+\lambda'\bar w) \\
  & = (w^{r-1}\bar w^r)(\lambda w) + (w^{r-1}\bar w^r)c + (w^{r-1}\bar w^r)(\lambda'\bar w) \\
  & = \lambda w^r\bar w^r + w^{r-1}(\bar w^rc) + \lambda'w^{r-1}\bar w^{r+1} \\
  & = \lambda w^r\bar w^r.
 \end{align*}
 That is,
 \begin{equation} \label{e:bbformhdg-1}
  \int_X\left(w^{r-1}\bar w^ra\right) = \int_X\left(\lambda w^r\bar w^r\right) = \lambda \int_X\left(w^r\bar w^r\right) = \lambda1 = \lambda,
 \end{equation}
 specifically since $\int_X w^r\bar w^r = 1$ given that $w$ is a normed generically symplectic class on $X$ (\cf Definition \ref{d:normedcl+}). Likewise, one shows that
 \begin{equation} \label{e:bbformhdg-2}
  \int_X\left(w^r\bar w^{r-1}a\right) = \lambda'.
 \end{equation}
 Further on, we calculate:
 \begin{align*}
  w^{r-1}\bar w^{r-1}a^2 & = w^{r-1}\bar w^{r-1}(\lambda w + c + \lambda'\bar w)^2 \\
  & = w^{r-1}\bar w^{r-1}\left((\lambda w)^2 + c^2 + (\lambda'\bar w)^2 + 2(\lambda w)b + 2(\lambda w)(\lambda'\bar w) + 2c(\lambda'\bar w)\right) \\
  & = \lambda^2(w^{r+1}\bar w^{r-1}) + w^{r-1}\bar w^{r-1}c^2 + \lambda'^2(w^{r-1}\bar w^{r+1}) + 2\lambda((w^rc)\bar w^{r-1}) \\
  & \qquad + 2\lambda\lambda'(w^r\bar w^r) + 2\lambda'(w^{r-1}\bar w^rc) \\
  & = w^{r-1}\bar w^{r-1}c^2 + 2\lambda\lambda'(w^r\bar w^r).
 \end{align*}
 That is:
 \begin{equation} \label{e:bbformhdg-3}
  \int_X w^{r-1}\bar w^{r-1}a^2 = \int_X\left(w^{r-1}\bar w^{r-1}c^2 + 2\lambda\lambda'(w^r\bar w^r)\right) = \int_X\left(w^{r-1}\bar w^{r-1}c^2\right) + 2\lambda\lambda'.
 \end{equation}
 By the definition of the Beauville-Bogomolov form of $X$ (\cf Definition \ref{d:bbformgen}), we have $q_X(a) = q_{(X,w)}(a)$. So, plugging identities \eqref{e:bbformhdg-1}, \eqref{e:bbformhdg-2}, and \eqref{e:bbformhdg-3} into formula \eqref{e:bbform}, we infer:
 \[
  q_X(a) = \frac{r}{2}\left(\int_X\left(w^{r-1}\bar w^{r-1}c^2\right)+2\lambda\lambda'\right) + (1-r)\lambda\lambda' = \frac{r}{2}\int_X\left(w^{r-1}\bar w^{r-1}c^2\right) + \lambda\lambda',
 \]
 which is nothing but \eqref{e:bbformhdg}.
\end{proof}

\begin{corollary}
 \label{c:bbformhdg}
 Let $X$ be a compact, connected, symplectic complex space of Fujiki class $\sC$ such that $\Omega^2_X(X_\reg)$ is of dimension $1$ over the field of complex numbers. Let $w$ a normed symplectic class on $X$, $c \in \H^{1,1}(X)$, and $\lambda,\lambda' \in \C$. Define $a$ and $r$ as before. Then \eqref{e:bbformhdg} holds.
\end{corollary}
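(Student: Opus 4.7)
The strategy is to lift the situation to a suitable resolution of singularities and invoke the already proven Proposition \ref{p:bbformhdg}. First, I would choose a resolution of singularities $f\colon W\to X$ with $W$ of Kähler type. Such $f$ exists: since $X$ is of Fujiki class $\sC$, there is a proper modification $g\colon V\to X$ with $V$ a compact Kähler manifold, and because $V$ is already smooth, $g$ itself is a resolution of singularities; take $f := g$, so that $W := V$ is automatically of Fujiki class $\sC$. By Proposition \ref{p:unisympres}, $W$ is a compact, connected, generically symplectic complex manifold with $\dim_\C(\Omega^2_W(W)) = 1$, and by Proposition \ref{p:normedclres}, the class $\tilde w := f^*(w)$ is a normed generically symplectic class on $W$.

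Next, I would propagate the decomposition $a = \lambda w + c + \lambda'\bar w$ to $W$. Set $\tilde c := f^*(c)$. The pullback $f^* \colon \H^2(X,\C) \to \H^2(W,\C)$ commutes with complex conjugation, so $f^*(\bar w) = \overline{\tilde w}$ and $f^*(a) = \lambda \tilde w + \tilde c + \lambda' \overline{\tilde w}$. Moreover, $f^*$ is a morphism of mixed Hodge structures (by functoriality of Deligne-Fujiki MHS on spaces of Fujiki class $\sC$), and since by Proposition \ref{p:symph2} the MHS on $\H^2(X)$ is pure of weight $2$, while $\H^2(W)$ is pure of weight $2$ as $W$ is compact Kähler, we get $\tilde c \in \F^1\H^2(W) \cap \overline{\F^1}\H^2(W) = \H^{1,1}(W)$. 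Also $\dim(W) = \dim(X) = 2r$.

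Applying Proposition \ref{p:bbformhdg} to the data $(W, \tilde w, \tilde c, \lambda, \lambda')$ yields
\[
 q_W(f^*(a)) = \frac{r}{2}\int_W\left(\tilde w^{r-1}\overline{\tilde w}^{r-1}\tilde c^2\right) + \lambda\lambda'.
\]
By Definition \ref{d:bbform} we have $q_X(a) = (q_W \circ f^*)(a) = q_W(f^*(a))$. Since $f^*$ is a ring homomorphism on cohomology, $\tilde w^{r-1}\overline{\tilde w}^{r-1}\tilde c^2 = f^*\bigl(w^{r-1}\bar w^{r-1}c^2\bigr)$; and since both $W$ and $X$ are reduced and pure of dimension $2r$, Lemma \ref{l:integralmod} gives $\int_W f^*(\cdot) = \int_X(\cdot)$. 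Substituting yields precisely \eqref{e:bbformhdg}. The only delicate point in this plan is ensuring that the chosen resolution $W$ lies in Fujiki class $\sC$ (so that its MHS, and with it Proposition \ref{p:bbformhdg}, are available), but this is handled at the outset by selecting $W$ from the Kähler modification witnessing $X \in \sC$.
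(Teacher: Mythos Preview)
Your proof is correct and follows essentially the same route as the paper: pull back to a resolution, apply Proposition \ref{p:bbformhdg} there, and transfer back via Definition \ref{d:bbform} and Lemma \ref{l:integralmod}. The only cosmetic difference is that the paper takes an arbitrary resolution $f\colon \tilde X\to X$ and observes that $\tilde X$ inherits Fujiki class $\sC$ from $X$ (any proper modification of a class $\sC$ space is again class $\sC$), whereas you explicitly pick the K\"ahler modification witnessing $X\in\sC$; either way the ``delicate point'' you flag is immediate.
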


\begin{proof}
 There exists a resolution of singularities $f\colon \tilde X\to X$. By Proposition \ref{p:unisympres}, $\tilde X$ is a compact, connected, generically symplectic complex manifold with $\Omega^2_{\tilde X}(\tilde X)$ of dimension $1$ over the field of complex numbers. As $X$ is of Fujiki class $\sC$, $\tilde X$ is of Fujiki class $\sC$. Set $\tilde w:=f^*(w)$ and $\tilde c:=f^*(c)$. Then $\tilde w$ is a normed generically symplectic class on $\tilde X$, and $\tilde c \in \H^{1,1}(\tilde X)$ since $f^*$ respects the Hodge filtrations. By Definition \ref{d:bbform}, we have $q_X = q_{\tilde X} \circ f^*$. Thus as $\nicefrac{1}{2}\dim(\tilde X) = \nicefrac{1}{2}\dim(X) = r$, we obtain, invoking Proposition \ref{p:bbformhdg} in particular:
 \begin{align*}
  q_X(a) & = q_{\tilde X}(f^*(a)) = q_{\tilde X}(f^*(\lambda w + c + \lambda'\bar w)) = q_{\tilde X}(\lambda\tilde w + \tilde c + \lambda'\bar{\tilde w}) \\ & = \frac{r}{2}\int_{\tilde X}\left({\tilde w}^{r-1}\bar{\tilde w}^{r-1}{\tilde c}^2\right) + \lambda\lambda' = \frac{r}{2}\int_X\left(w^{r-1}\bar w^{r-1}c^2\right) + \lambda\lambda';
 \end{align*}
 for the very last equality we use Lemma \ref{l:integralmod} and the fact that
 \[
  f^*(w^{r-1}\bar w^{r-1}c^2) = {\tilde w}^{r-1}\bar{\tilde w}^{r-1}{\tilde c}^2.
 \]
 Evidently, we have established \eqref{e:bbformhdg}.
\end{proof}

\begin{proposition}
 \label{p:bbformtopint}
 Let $X$ and $w$ be as in Proposition \ref{p:bbformhdg}. Furthermore, let $a \in \H^2(X,\C)$ and $\lambda \in \C$ such that $a^{(2,0)}=\lambda w$. Then, setting $r:=\nicefrac{1}{2}\dim(X)$, the following identity holds:
 \begin{equation}
  \label{e:bbformtopint}
  \int_X\left(a^{r+1}\bar w^{r-1}\right)=(r+1)\lambda^{r-1}q_X(a).
 \end{equation}
\end{proposition}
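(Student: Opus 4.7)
The plan is to reduce to a direct multinomial expansion of $a^{r+1}$ along the Hodge decomposition and to identify which terms survive integration. Since $X$ is a compact complex manifold of Fujiki class $\sC$, the mixed Hodge structure $\H^2(X)$ is pure of weight $2$, so we have $\H^2(X,\C) = \H^{2,0}(X) \oplus \H^{1,1}(X) \oplus \H^{0,2}(X)$, with $\H^{2,0}(X) \cong \Omega^2_X(X)$ one-dimensional and generated by $w$, and with $\H^{0,2}(X) = \overline{\H^{2,0}(X)} = \C \bar w$. Thus I can write $a = \lambda w + c + \lambda'\bar w$ for a uniquely determined pair $(c,\lambda') \in \H^{1,1}(X) \times \C$ (the hypothesis fixes the $(2,0)$-component to be $\lambda w$).

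Next I would expand $a^{r+1} \bar w^{r-1}$ by the multinomial theorem, obtaining a sum of terms of the form $\binom{r+1}{i,j,k}\lambda^i\lambda'^k\, w^i c^j \bar w^{k+r-1}$ with $i+j+k=r+1$. The key filtering step uses two facts already established in (the proof of) Proposition \ref{p:bbformhdg}: firstly, $w^{r+1}=\bar w^{r+1}=0$ and $w^r c = \bar w^r c = 0$ (hence the same for all higher powers of $c$ multiplied by $w^r$ or $\bar w^r$), coming from $\F^{2r+1}\H^{2r+2}(X)=0$; and secondly, the Hodge-type consideration that $\int_X$ kills every class not of type $(2r,2r)$ in $\H^{4r}(X,\C)$. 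A given summand has Hodge type $(2i+j,\,j+2k+2r-2)$, so survival under $\int_X$ forces $2i+j=2r$ and $j+2k=2$; together with $i+j+k=r+1$ this leaves exactly the two solutions $(i,j,k)=(r-1,2,0)$ and $(i,j,k)=(r,0,1)$.

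Computing the two surviving multinomial coefficients gives $\binom{r+1}{r-1,2,0}=\frac{(r+1)r}{2}$ and $\binom{r+1}{r,0,1}=r+1$. Therefore
\[
\int_X a^{r+1}\bar w^{r-1} = \frac{(r+1)r}{2}\lambda^{r-1}\int_X w^{r-1}c^2\bar w^{r-1} + (r+1)\lambda^r\lambda'\int_X w^r\bar w^r.
\]
Using $\int_X w^r\bar w^r=1$ (normedness of $w$) and factoring out $(r+1)\lambda^{r-1}$, the right-hand side becomes
\[
(r+1)\lambda^{r-1}\left(\frac{r}{2}\int_X w^{r-1}\bar w^{r-1}c^2 + \lambda\lambda'\right),
\]
which by formula \eqref{e:bbformhdg} of Proposition \ref{p:bbformhdg} is exactly $(r+1)\lambda^{r-1}q_X(a)$.

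There is no real obstacle: the argument is purely formal once the Hodge decomposition is in hand, and the vanishings on which it relies have already been worked out in Proposition \ref{p:bbformhdg}. The only point that deserves care is the bookkeeping of which multinomial summands survive both the algebraic vanishings and the Hodge-type filter on $\int_X$, but as the counting above shows this cuts the triple sum down to just two explicit terms.
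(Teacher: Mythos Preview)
Your proof is correct and follows essentially the same route as the paper: decompose $a=\lambda w+c+\lambda'\bar w$, expand $a^{r+1}\bar w^{r-1}$ multinomially, reduce to the two surviving terms $(i,j,k)=(r-1,2,0)$ and $(r,0,1)$, and invoke \eqref{e:bbformhdg}. The only cosmetic difference is in the bookkeeping of which summands vanish: the paper records the specific identities $\bar w^{r+1}=b\bar w^r=b^3\bar w^{r-1}=w^{r+1}=w^rb=0$ in $\H^*(X,\C)$ and uses them to kill all other terms as cohomology classes, whereas you phrase the same elimination as a Hodge-type count (a summand of type $(2i+j,\,j+2k+2r-2)$ must be zero in $\H^{4r}(X,\C)=\H^{2r,2r}(X)$ unless $2i+j=2r$ and $j+2k=2$). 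Both justifications amount to the same Hodge-filtration argument, and your type count in fact already suffices without separately invoking the ``algebraic vanishings''.
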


\begin{proof}
 We know that $w$ generates $\F^2\H^2(X)$ as a $\C$-vector space. Hence $\bar w$ generates $\oF^2\H^2(X)$ as a $\C$-vector space. Thus there exists a complex number $\lambda'$ such that $a^{(0,2)}=\lambda'\bar w$. Setting $b:=a^{(1,1)}$, we have $a = \lambda w + b + \lambda'\bar w$. As the subring $\H^{2*}(X,\C)$ of $\H^*(X,\C)$ is commutative, we may calculate as follows employing the ``trinomial formula'':
 \begin{align} \label{e:bbformtopint-1}
  a^{r+1}\bar w^{r-1} & = (\lambda w + b + \lambda'\bar w)^{r+1}\bar w^{r-1} \\
  & = \sum_{\substack{(i,j,k)\in\N^3 \\ i+j+k=r+1}}\binom{r+1}{i,j,k}(\lambda w)^ib^j(\lambda'\bar w)^k\bar w^{r-1}. \nonumber
 \end{align}
 Since the product on $\H^*(X,\C)$ is ``filtered'' with respect to the Hodge filtrations on the graded pieces, we have
 \[
  \bar w^{r+1} = b\bar w^r = b^3\bar w^{r-1} = w^{r+1} = w^rb = 0
 \]
 in $\H^*(X,\C)$. Therefore, $w^ib^j\bar w^{k+r-1}=0$ in $\H^*(X,\C)$ for all $(i,j,k)\in \N^3$ such that either $k>1$, or $k=1$ and $j>0$, or $k=0$ and $j>2$. Moreover, when $(i,j)\in\N^2$ such that $i+j=r+1$ and $j<2$, we have $w^ib^j=0$ in $\H^*(X,\C)$. Thus from \eqref{e:bbformtopint-1} we deduce:
 \begin{align*}
  a^{r+1}\bar w^{r-1} & =  \binom{r+1}{r-1,2,0}(\lambda w)^{r-1}b^2\bar w^{r-1} + \binom{r+1}{r,0,1}(\lambda w)^r(\lambda'\bar w)\bar w^{r-1} \\
  & = \frac{r(r+1)}{2}\lambda^{r-1}w^{r-1}\bar w^{r-1}b^2 + (r+1)\lambda^r\lambda' w^r\bar w^r  \\
  & = (r+1)\lambda^{r-1}\left(\frac{r}{2}(w^{r-1}\bar w^{r-1}b^2) + \lambda\lambda'(w^r\bar w^r)\right).
 \end{align*}
 So,
 \[
  \int_X\left(a^{r+1}\bar w^{r-1}\right) = (r+1)\lambda^{r-1}\left(\frac{r}{2}\int_X\left(w^{r-1}\bar w^{r-1}b^2\right) + \lambda\lambda'\right) = (r+1)\lambda^{r-1}q_X(a),
 \]
 where we eventually plug in $\int_X\left(w^r\bar w^r\right)=1$ as well as \eqref{e:bbformhdg}.
\end{proof}

\begin{corollary}
 \label{c:bbformtopint}
 Let $X$ be a compact, connected, symplectic complex space of Fujiki class $\sC$ such that $\dim_\C(\Omega^2_X(X_\reg))=1$. Furthermore, let $w$ be a normed symplectic class on $X$, $a\in\H^2(X,\C)$, and $\lambda\in\C$ such that $a^{(2,0)}=\lambda w$. Then, setting $r:=\nicefrac{1}{2}\dim(X)$, \eqref{e:bbformtopint} holds.
\end{corollary}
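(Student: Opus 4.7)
The plan is to reduce the assertion to the manifold case handled by Proposition \ref{p:bbformtopint}, in close analogy with the proof of Corollary \ref{c:bbformhdg}. First I would pick a resolution of singularities $f\colon \tilde X \to X$. By Proposition \ref{p:unisympres}, $\tilde X$ is a compact, connected, generically symplectic complex manifold with $\dim_\C(\Omega^2_{\tilde X}(\tilde X)) = 1$, and since $X$ is of Fujiki class $\sC$ the resolution $\tilde X$ is of Fujiki class $\sC$ as well (dominating $\tilde X$ further by a Kähler manifold if necessary). By Proposition \ref{p:normedclres}, the class $\tilde w := f^*(w)$ is a normed generically symplectic class on $\tilde X$. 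Set $\tilde a := f^*(a)$ and note that $\nicefrac{1}{2}\dim(\tilde X) = r$.

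The only nontrivial point is to verify that ${\tilde a}^{(2,0)} = \lambda \tilde w$, so that Proposition \ref{p:bbformtopint} applies to the triple $(\tilde X, \tilde w, \tilde a)$ with the same constant $\lambda$. For this I would argue that $f^*\colon \H^2(X,\C) \to \H^2(\tilde X,\C)$ is a morphism of pure Hodge structures of weight $2$: on the target side this is classical, while on $\H^2(X)$ the purity is supplied by Proposition \ref{p:symph2}, which is applicable because $X$ is symplectic (hence has rational singularities by Proposition \ref{p:ratgor}) and of Fujiki class $\sC$. In particular $f^*$ commutes with the projections onto the $(p,q)$-components, so
\[
 {\tilde a}^{(2,0)} = (f^*(a))^{(2,0)} = f^*\bigl(a^{(2,0)}\bigr) = f^*(\lambda w) = \lambda \tilde w.
\]

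Invoking Proposition \ref{p:bbformtopint} for $\tilde X$ then yields
\[
 \int_{\tilde X}\bigl({\tilde a}^{r+1}\bar{\tilde w}^{r-1}\bigr) = (r+1)\lambda^{r-1}\,q_{\tilde X}(\tilde a).
\]
On the right, Definition \ref{d:bbform} gives $q_{\tilde X}(\tilde a) = q_{\tilde X}(f^*(a)) = q_X(a)$. On the left, since $f^*$ is a ring homomorphism we have $f^*(a^{r+1}\bar w^{r-1}) = {\tilde a}^{r+1}\bar{\tilde w}^{r-1}$, and Lemma \ref{l:integralmod} (applied in top degree $2\dim(X)=2\dim(\tilde X)$) delivers
\[
 \int_{\tilde X}\bigl({\tilde a}^{r+1}\bar{\tilde w}^{r-1}\bigr) = \int_X\bigl(a^{r+1}\bar w^{r-1}\bigr).
\]
Combining these identities produces \eqref{e:bbformtopint}. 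The main obstacle, insofar as there is one, is really only the bookkeeping in the previous paragraph — ensuring that the $(2,0)$-component is preserved by pullback — and this is handled cleanly by the purity result of Proposition \ref{p:symph2}; all remaining steps are formal consequences of results already established.
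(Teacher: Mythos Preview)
Your proof is correct and follows essentially the same route as the paper: pass to a resolution $\tilde X$, transfer $w$ and $a$ via $f^*$, apply Proposition \ref{p:bbformtopint} on $\tilde X$, and descend using Lemma \ref{l:integralmod} and Definition \ref{d:bbform}. Your justification that $f^*$ preserves the $(2,0)$-component via the purity of $\H^2(X)$ (Proposition \ref{p:symph2}) is slightly more explicit than the paper's terse ``$f^*$ preserves Hodge types'', and your citation of Proposition \ref{p:normedclres} for the \emph{normed} generically symplectic class is in fact more precise than the paper's reference to Remark \ref{r:sympcl}.
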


\begin{proof}
 There exists a resolution of singularities $f\colon \tilde X \to X$. Put $\tilde w:=f^*(w)$ and $\tilde a:=f^*(a)$, where $f^*$ denotes the pullback on second complex cohomology induced by $f$. As $f^*$ preserves Hodge types, we have
 \[
  \tilde a^{(2,0)}=f^*(a^{(2,0)})=f^*(\lambda w)=\lambda\tilde w.
 \]
 Moreover, $\tilde X$ is a compact, connected complex manifold of Fujiki class $\sC$ with $\Omega^2_{\tilde X}(\tilde X)$ of dimension $1$ over the field of complex number and, according to Remark \ref{r:sympcl}, $\tilde w$ is a normed generically symplectic class on $\tilde X$. Thus as $r=\nicefrac{1}{2}\dim(\tilde X)$ we obtain, using Lemma \ref{l:integralmod} and Proposition \ref{p:bbformtopint}:
 \[
  \int_X\left(a^{r+1}\bar w^{r-1}\right) = \int_{\tilde X}\left({\tilde a}^{r+1}\bar{\tilde w}^{r-1}\right) = (r+1)\lambda^{r-1}q_{\tilde X}(\tilde a) = (r+1)\lambda^{r-1}q_X(a).
 \]
 Observe that the very last equality holds by definition of the Beauville-Bogomolov form on $X$ (\cf Definition \ref{d:bbform}).
\end{proof}

\begin{remark}[Quadratic forms]
 \label{r:quadform}
 We review the definition of a quadratic form on a module, \cf \egv \cite[(2.1) Definition a)]{Kn02}: Let $R$ be a ring and $M$ an $R$-module. Then $q$ is called an \emph{$R$-quadratic form} on $M$ when $q$ is a function from $M$ to $R$ such that:
 \begin{enumeratei}
  \item \label{i:quadform-quad} for all $\lambda \in R$ and all $x\in M$, we have $q(\lambda\cdot x)=\lambda^2\cdot q(x)$;
  \item \label{i:quadform-bil} there exists an $R$-bilinear form $b$ on $M$ such that, for all $x,y\in M$, we have
  \begin{equation} \label{e:quadform}
   q(x+y) = q(x) + q(y) + b(x,y).
  \end{equation}
 \end{enumeratei}
 In case $R$ equals the ring of complex numbers (\resp real numbers, \resp rational numbers, \resp integers), we use the term \emph{complex} (\resp \emph{real}, \resp \emph{rational}, \resp \emph{integral}) \emph{quadratic form} as a synonym for the term ``$R$-quadratic form''.
 
 Observe that when $2\neq0$ in $R$ and $2$ is not a zero divisor in $R$, then, for all functions $q\colon M\to R$, \eqref{i:quadform-bil} implies \eqref{i:quadform-quad} above, \iev $q$ is an $R$-quadratic form on $M$ if and only if \eqref{i:quadform-bil} is satisfied.
 
 Given an $R$-quadratic form $q$ on $M$, there is one, and only one, $R$-bilinear form $b$ on $M$ such that \eqref{e:quadform} holds for all $x,y\in M$. We call this so uniquely determined $b$ the \emph{$R$-bilinear form on $M$ associated to $q$}. Note that the bilinear form $b$ is always symmetric. When $R$ is a field and $M$ is a finite dimensional $R$-vector space, we have, for any $R$-bilinear form $b$ on $M$, a well-defined concept of an $R$-rank of $b$ on $M$ given, for instance, as the $R$-rank of any matrix associated with $b$ relative to an ordered $R$-basis of $M$. In this context, we define the \emph{$R$-rank of $q$ on $M$} as the $R$-rank of $b$ on $M$, where $b$ is the $R$-bilinear form on $M$ associated to $q$.
\end{remark}

\begin{proposition}
 \label{p:bbformquad} \strut
 \begin{enumerate}
  \item \label{p:bbformquad-0} Let $(X,w)$ be as in Notation \ref{not:bbform}. Then $q_{(X,w)}$ is a complex quadratic form on $\H^2(X,\C)$.
  \item \label{p:bbformquad-gen} Let $X$ be a compact, connected, generically symplectic complex manifold such that $\Omega^2_X(X)$ is $1$-dimensional. Then $q_X$ is a complex quadratic form on $\H^2(X,\C)$.
  \item \label{p:bbformquad-sp} Let $X$ be a compact, connected, symplectic complex space such that $\Omega^2_X(X_\reg)$ is $1$-dimensional. Then $q_X$ is a complex quadratic form on $\H^2(X,\C)$.
 \end{enumerate}
\end{proposition}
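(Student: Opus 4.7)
The plan is to prove the three parts in order, with (a) doing the real work and (b), (c) following formally.

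For part \ref{p:bbformquad-0}, I will verify the two axioms of Remark \ref{r:quadform} for the function $q_{(X,w)}$ defined by \eqref{e:bbform}. Since $2$ is invertible in $\C$, it suffices to produce a $\C$-bilinear form $b$ on $\H^2(X,\C)$ satisfying $q_{(X,w)}(a+a') = q_{(X,w)}(a) + q_{(X,w)}(a') + b(a,a')$ for all $a,a' \in \H^2(X,\C)$. Writing $L_1,L_2\colon \H^2(X,\C)\to\C$ for the $\C$-linear functionals
\[
L_1(a) := \int_X\left(w^{r-1}\bar w^r a\right),\qquad L_2(a) := \int_X\left(w^r\bar w^{r-1} a\right),
\]
and $Q(a) := \int_X\left(w^{r-1}\bar w^{r-1} a^2\right)$, the formula \eqref{e:bbform} reads $q_{(X,w)} = \tfrac{r}{2}Q + (r-1)L_1 L_2$. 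The product $L_1L_2$ of two linear forms is a quadratic form with associated bilinear form $(x,y)\mto L_1(x)L_2(y)+L_1(y)L_2(x)$. As for $Q$, the cup product on $\H^*(X,\C)$ is $\C$-bilinear and graded-commutative, and $a\cup a \in \H^4(X,\C)$ lies in even total degree, so $(a+a')^2 = a^2 + a'^2 + 2(a\cup a')$ in $\H^4(X,\C)$; hence $Q$ is quadratic with associated symmetric bilinear form $(x,y)\mto 2\int_X(w^{r-1}\bar w^{r-1} x y)$. Setting
\[
b(x,y) := r\int_X\left(w^{r-1}\bar w^{r-1} xy\right) + (r-1)\bigl(L_1(x)L_2(y)+L_1(y)L_2(x)\bigr)
\]
gives the required bilinear form. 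The homogeneity condition $q_{(X,w)}(\lambda a) = \lambda^2 q_{(X,w)}(a)$ then follows automatically from Remark \ref{r:quadform} since $2$ is not a zero divisor in $\C$.

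For part \ref{p:bbformquad-gen}, by Remark \ref{r:normedcl+} there exists a normed generically symplectic class $w$ on $X$, and then by Definition \ref{d:bbformgen} we have $q_X = q_{(X,w)}$; since $X$ is a compact, connected complex manifold with $\dim(X) = 2r > 0$ (because $\Omega^2_X(X)\neq 0$ forces the existence of a nondegenerate $2$-form on some point, forcing even positive dimension via Corollary \ref{c:nondegdim}), the ordered pair $(X,w)$ satisfies the hypotheses of Notation \ref{not:bbform}, so part \ref{p:bbformquad-0} applies and $q_X$ is a complex quadratic form on $\H^2(X,\C)$.

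For part \ref{p:bbformquad-sp}, choose any resolution of singularities $f\colon W\to X$; such an $f$ exists since $X$ is reduced. By Proposition \ref{p:unisympres}, $W$ is a compact, connected, generically symplectic complex manifold with $\dim_\C(\Omega^2_W(W))=1$, so part \ref{p:bbformquad-gen} shows that $q_W$ is a complex quadratic form on $\H^2(W,\C)$. By Definition \ref{d:bbform} we have $q_X = q_W\circ f^*$, where $f^*\colon \H^2(X,\C)\to\H^2(W,\C)$ is $\C$-linear. The composition of a quadratic form with a linear map is again a quadratic form: if $b_W$ is the bilinear form associated with $q_W$, then $b_X(x,y) := b_W(f^*(x),f^*(y))$ is a $\C$-bilinear form on $\H^2(X,\C)$ witnessing the quadratic property of $q_X$. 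No step here is a real obstacle; the only content is the bookkeeping in part \ref{p:bbformquad-0}, where care is needed that the cup product identity $(a+a')^2 = a^2 + a'^2 + 2a\cup a'$ really does hold in $\H^4(X,\C)$ because $a\in\H^2$ lies in even degree.
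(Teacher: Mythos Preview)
Your proof is correct and follows essentially the same approach as the paper: for part~\ref{p:bbformquad-0} you write down the same associated bilinear form (the paper calls it $s$) and invoke Remark~\ref{r:quadform}, and for parts~\ref{p:bbformquad-gen} and~\ref{p:bbformquad-sp} you reduce to the previous parts via Definition~\ref{d:bbformgen}, Remark~\ref{r:normedcl+}, Proposition~\ref{p:unisympres}, and Definition~\ref{d:bbform} exactly as the paper does. Your extra remarks about graded-commutativity in $\H^4$ and about why $\dim X>0$ are correct and simply make explicit what the paper leaves implicit.
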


\begin{proof}
 \ref{p:bbformquad-0}). Denote by $r$ the unique natural number such that $\dim(X)=2r$. Then $r\neq 0$ since $\dim(X) > 0$. Define
 \[
  s \colon \H^2(X,\C) \times \H^2(X,\C) \to \C
 \]
 to be the function given by:
 \begin{align*}
   & s(a,b) := r\int_X\left(w^{r-1}\bar{w}^{r-1}ab\right) \\ & + (r-1)\left(\int_X\left(w^{r-1}\bar w^ra\right)\int_X\left(w^r\bar w^{r-1}b\right)
  + \int_X\left(w^{r-1}\bar w^rb\right)\int_X\left(w^r\bar w^{r-1}a\right)\right).
 \end{align*}
 Then $s$ surely is a $\C$-bilinear form on $\H^2(X,\C)$, and, for all $a,b\in \H^2(X,\C)$, we have
 \[
  q_{(X,w)}(a+b) = q_{(X,w)}(a) + q_{(X,w)}(b) + s(a,b)
 \]
 as one easily verifies looking at \eqref{e:bbform}. Hence $q_{(X,w)}$ is a complex quadratic form on $\H^2(X,\C)$ according to Remark \ref{r:quadform}.

 \ref{p:bbformquad-gen}). There exists a normed generically symplectic class $w$ on $X$ by Remark \ref{r:normedcl+}. Now by Definition \ref{d:bbform}, we have $q_X = q_{(X,w)}$. Thus $q_X$ is a complex quadratic form on $\H^2(X,\C)$ by a).

 \ref{p:bbformquad-sp}). There exists a resolution of singularities $f\colon W\to X$. By Proposition \ref{p:unisympres}, $W$ is a compact, connected, generically symplectic complex manifold with $\Omega^2_W(W)$ of dimension $1$ over the field of complex numbers. By Definition \ref{d:bbform}, we have $q_X = q_W \circ f^*$. So, $q_X$ is a complex quadratic form on $\H^2(X,\C)$ since $q_W$ is a complex quadratic form on $\H^2(W,\C)$ by b) and $f^*$ is a homomorphism of complex vector spaces from $\H^2(X,\C)$ to $\H^2(W,\C)$ (it is a general fact that quadratic forms pull back to quadratic form under module homomorphisms).
\end{proof}

\begin{proposition}
 \label{p:bbformpol}
 Let $X$ be a compact, connected, symplectic complex space such that $\Omega^2_X(X_\reg)$ is of dimension $1$ over $\C$. Let $w$ be a normed symplectic class on $X$ and denote by $b$ the $\C$-bilinear form on $\H^2(X,\C)$ associated to $q_X$ (\cf Remark \ref{r:quadform}).
 \begin{enumerate}
  \item Setting $r:=\nicefrac{1}{2}\dim(X)$, we have, for all $c,d\in\H^{1,1}(X)$ and all $\lambda,\lambda',\mu,\mu'\in\C$:
  \begin{equation}
   \label{e:bbformpol}
   b(\lambda w+c+\lambda'\bar w,\mu w+d+\mu'\bar w) = r \int_X\left(w^{r-1}\bar w^{r-1}cd\right) + (\lambda\mu'+\mu\lambda').
  \end{equation}
  \item $w$ and $\bar w$ are perpendicular to $\H^{1,1}(X)$ in $(\H^2(X,\C),q_X)$.
  \item $b(w,w)=b(\bar w,\bar w)=0$ and $b(w,\bar w)=b(\bar w,w)=1$.
 \end{enumerate}
\end{proposition}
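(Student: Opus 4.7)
My plan is to derive (a) by polarizing the formula of Corollary \ref{c:bbformhdg} and then to read off (b) and (c) as special cases of (a). First I note that, since $\H^{1,1}(X)$ appears in the statement, $X$ must implicitly be of Fujiki class $\sC$ (so that, by Proposition \ref{p:symph2}, $\H^2(X)$ is pure of weight $2$ and $\H^{1,1}(X)$ is well-defined). In particular, Corollary \ref{c:bbformhdg} is applicable.

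To prove (a), set $a := \lambda w + c + \lambda'\bar w$ and $a' := \mu w + d + \mu'\bar w$. Since $\H^{1,1}(X)$ is a $\C$-vector subspace of $\H^2(X,\C)$, we have $c+d \in \H^{1,1}(X)$ and
\[
 a + a' = (\lambda+\mu)w + (c+d) + (\lambda'+\mu')\bar w.
\]
I would apply Corollary \ref{c:bbformhdg} to $q_X(a)$, $q_X(a')$, and $q_X(a+a')$; expanding $(c+d)^2 = c^2 + 2cd + d^2$ under the integral and $(\lambda+\mu)(\lambda'+\mu') = \lambda\lambda' + \mu\mu' + \lambda\mu' + \mu\lambda'$, one gets
\[
 q_X(a+a') = q_X(a) + q_X(a') + r\int_X\left(w^{r-1}\bar w^{r-1}cd\right) + (\lambda\mu' + \mu\lambda').
\]
Since, by Proposition \ref{p:bbformquad} \ref{p:bbformquad-sp}), $q_X$ is a complex quadratic form with associated bilinear form $b$ characterized by $q_X(a+a') = q_X(a) + q_X(a') + b(a,a')$ (see Remark \ref{r:quadform}), the identity \eqref{e:bbformpol} follows at once.

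Assertions (b) and (c) are then purely formal consequences of (a). For (b), given $c \in \H^{1,1}(X)$, I would write $w = 1\cdot w + 0 + 0\cdot \bar w$ and $c = 0\cdot w + c + 0\cdot\bar w$, so that (a) yields $b(w,c) = r\int_X(w^{r-1}\bar w^{r-1}\cdot 0 \cdot c) + (1\cdot 0 + 0\cdot 0) = 0$; analogously, with $\bar w = 0\cdot w + 0 + 1\cdot \bar w$, one obtains $b(\bar w, c) = 0$, and symmetry of $b$ handles the other order. For (c), the three identities come from plugging $c = d = 0$ into (a) with $(\lambda,\lambda',\mu,\mu')$ equal to $(1,0,1,0)$, $(0,1,0,1)$, and $(1,0,0,1)$ respectively, giving $b(w,w) = 0$, $b(\bar w,\bar w) = 0$, and $b(w,\bar w) = 1$; symmetry of $b$ then delivers $b(\bar w, w) = 1$.

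There is no real obstacle here: the content is a straightforward polarization argument on top of Corollary \ref{c:bbformhdg}, the main work consisting of careful bookkeeping of the Hodge-type decomposition of $a + a'$ and the expansion of the cross terms. The only conceptual point worth flagging is the implicit Fujiki-class-$\sC$ hypothesis needed to make sense of $\H^{1,1}(X)$ and to invoke Corollary \ref{c:bbformhdg}.
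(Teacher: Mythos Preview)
Your proof is correct and follows essentially the same approach as the paper: polarize via $b(x,y)=q_X(x+y)-q_X(x)-q_X(y)$ and apply Corollary \ref{c:bbformhdg} three times (to $a$, $a'$, and $a+a'$), then read off (b) and (c) as special cases of (a). Your remark about the implicit Fujiki-class-$\sC$ hypothesis is a valid observation.
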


\begin{proof}
 Assertion a) is an immediate consequence of Corollary \ref{c:bbformhdg} (applied three times) and the fact that, for all $x,y \in \H^2(X,\C)$, we have
 \[
  b(x,y) = q_X(x+y) - q_X(x) - q_X(y).
 \]
 Both b) and c) are immediate corollaries of a).
\end{proof}

\begin{definition}[Beauville-Bogomolov quadric]
 \label{d:bbquad}
 Let $X$ be a compact, and connected, and symplectic complex space satisfying $\dim_\C(\Omega^2_X(X_\reg))=1$. We set
 \[
  Q_X := \{p\in\P(\H^2(X,\C)):(\forall c\in p)q_X(c)=0\}
 \]
 and call $Q_X$ the \emph{Beauville-Bogomolov quadric} of $X$.
 
 Since $X$ is compact, $\H^2(X,\C)$ is a finite dimensional complex vector space, so that we may view $\P(\H^2(X,\C))$ as a complex space. Obviously, as $q_X$ is a complex quadratic form on $\H^2(X,\C)$ by Proposition \ref{p:bbformquad} \ref{p:bbformquad-sp}), $Q_X$ is a closed analytic subset of $\P(\H^2(X,\C))$. We abuse notation and signify the closed complex subspace of $\P(\H^2(X,\C))$ induced on $Q_X$ again by $Q_X$. Besides, the latter $Q_X$ (complex space) will go by the name of \emph{Beauville-Bogomolov quadric} of $X$ too. We hope this ambivalent terminology will not irritate our readers.
\end{definition}

In order to prove in \S\ref{s:fr} that certain---consult Theorem \ref{t:fr} for the precise statement---compact, connected, symplectic complex spaces of Kähler type satisfy the so-called ``Fujiki relation'' (\cf Definition \ref{d:fr}), we need to know a priori that, for these $X$, the Beauville-Bogomolov quadric $Q_X$ is an \emph{irreducible} closed analytic subset of $\P(\H^2(X,\C))$ (strictly speaking, the irreducibility of $Q_X$ is exploited in the proof of Lemma \ref{l:frfam0}). Hence, we set out to investigate the rank of the quadratic form $q_X$.

\begin{proposition}
 \label{p:hrbr}
 Let $(V,g,I) = ((V,g),I)$ be a finite dimensional real inner product space endowed with a compatible (\iev orthogonal) almost complex structure $I$. Let $p$ and $q$ be natural numbers such that
 \[
  k := p+q \leq n := \nicefrac{1}{2}\dim_\RR(V).
 \]
 Denote $\omega$ the complexified fundamental form of $(V,g,I)$. Then, for all primitive forms $\alpha$ of type $(p,q)$ on $V$, we have
 \begin{equation} \label{e:hrbr}
  i^{p-q}(-1)^{\frac{k(k-1)}{2}}\cdot \alpha\wedge \bar\alpha \wedge \omega^{n-k} \geq 0
 \end{equation}
 in $\wedge^{2n}_\C(V_\C)^\vee$ with equality holding if and only if $\alpha$ is the trivial $k$-form on $V_\C$.
\end{proposition}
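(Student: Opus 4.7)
The plan is to reduce this to a pointwise linear-algebraic statement on a Hermitian vector space and then invoke the classical Hodge--Riemann bilinear relations, proved by a direct computation in a unitary basis.

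First I would fix a $g$-orthonormal basis $(v_1,Iv_1,\dots,v_n,Iv_n)$ of $V$ compatible with $I$ and define the associated $(1,0)$-covectors $\epsilon^j := \tfrac{1}{\sqrt{2}}(v_j^\vee - i(Iv_j)^\vee) \in V_\C^\vee$, so that $(\epsilon^1,\dots,\epsilon^n,\bar\epsilon^1,\dots,\bar\epsilon^n)$ is a basis of $V_\C^\vee$ diagonalizing $I$. With the standard normalisations, one has
\[
 \omega = i\sum_{j=1}^n \epsilon^j\wedge\bar\epsilon^j
 \quad\text{and}\quad
 \mathrm{vol} := \frac{\omega^n}{n!} = i^n(-1)^{\frac{n(n-1)}{2}}\,\epsilon^1\wedge\bar\epsilon^1\wedge\cdots\wedge\epsilon^n\wedge\bar\epsilon^n.
\]
Let $g_\C$ denote the Hermitian extension of $g$ to $V_\C^\vee$ (so that the $\epsilon^j,\bar\epsilon^j$ are orthonormal) and let $*\colon \wedge^\bullet V_\C^\vee \to \wedge^{2n-\bullet}V_\C^\vee$ be the complex-linear Hodge star induced by $g_\C$ and $\mathrm{vol}$, characterized by $\beta\wedge *\bar\gamma = g_\C(\beta,\gamma)\,\mathrm{vol}$.

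Next I would introduce the Lefschetz operator $L := \omega\wedge\cdot$ and its $g_\C$-adjoint $\Lambda$, which together with the grading operator generate an $\mathfrak{sl}_2$-action on $\wedge^\bullet V_\C^\vee$. Primitivity of $\alpha$ of type $(p,q)$ with $k=p+q\leq n$ is equivalent to $\Lambda\alpha = 0$, equivalently $L^{n-k+1}\alpha = 0$. The heart of the proof is Weil's identity: for every such primitive $\alpha$,
\[
 * \alpha \;=\; \frac{i^{p-q}(-1)^{\frac{k(k-1)}{2}}}{(n-k)!}\, L^{n-k}\bar\alpha .
\]
I would establish this first on monomials $\alpha = \epsilon^{i_1}\wedge\cdots\wedge\epsilon^{i_p}\wedge\bar\epsilon^{j_1}\wedge\cdots\wedge\bar\epsilon^{j_q}$ with $\{i_\mu\}\cap\{j_\nu\}=\emptyset$ (which are automatically primitive and span, together with $L$-images, all of $\wedge^k V_\C^\vee$ after Lefschetz decomposition) by computing both sides directly in the $\epsilon$-basis; the $\mathfrak{sl}_2$-equivariance then propagates the identity to all primitive $\alpha$.

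Combining Weil's identity with the defining property of $*$ yields
\[
 \alpha\wedge\bar\alpha\wedge\omega^{n-k}
 \;=\; \alpha \wedge (n-k)!\, i^{q-p}(-1)^{\frac{k(k-1)}{2}}\, *\bar\alpha
 \;=\; (n-k)!\, i^{q-p}(-1)^{\frac{k(k-1)}{2}}\, g_\C(\alpha,\alpha)\,\mathrm{vol},
\]
so that
\[
 i^{p-q}(-1)^{\frac{k(k-1)}{2}}\,\alpha\wedge\bar\alpha\wedge\omega^{n-k}
 \;=\; (n-k)!\,g_\C(\alpha,\alpha)\,\mathrm{vol} \;\geq\; 0,
\]
with equality iff $g_\C(\alpha,\alpha)=0$, iff $\alpha=0$, since $g_\C$ is positive definite on $\wedge^k V_\C^\vee$. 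The main obstacle is establishing Weil's identity with the correct numerical constants and signs: the powers of $i$ and of $-1$ are notoriously sensitive to the ordering conventions for $*$, to the normalisation of $\omega$ as $i\sum\epsilon^j\wedge\bar\epsilon^j$ rather than $\tfrac{i}{2}\sum$, and to the orientation of $\mathrm{vol}$. I would handle this by first verifying the formula in the extremal pure-type case $(p,q)=(k,0)$ (where $\alpha = \epsilon^{i_1}\wedge\cdots\wedge\epsilon^{i_k}$ is trivially primitive and $*\alpha$ can be read off by inspection), and then invoking $\mathfrak{sl}_2$-equivariance and the Weil operator $C|_{(p,q)}=i^{p-q}$ to reduce arbitrary $(p,q)$ to this case. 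With these signs pinned down, the inequality \eqref{e:hrbr} and the equality case follow at once.
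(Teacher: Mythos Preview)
Your approach is correct and is precisely the standard proof of the Hodge--Riemann bilinear relations on a Hermitian vector space via Weil's formula for the Hodge star on primitive forms. The paper itself does not give a proof at all: it simply cites \cite[Corollary 1.2.36]{Hu05}, and the argument leading to that corollary in Huybrechts' book (via his Proposition 1.2.31) is exactly the one you outline --- choose a unitary coframe, establish the $\mathfrak{sl}_2$-structure, prove Weil's identity $*\alpha = \frac{i^{p-q}(-1)^{k(k-1)/2}}{(n-k)!}L^{n-k}\bar\alpha$ on primitive $(p,q)$-forms, and then read off the positivity from $\alpha\wedge *\bar\alpha = g_\C(\alpha,\alpha)\,\mathrm{vol}$. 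So your proposal is not a different route but an unpacking of the cited reference; your caution about sign and normalisation conventions is well placed, and anchoring the constants in the extremal case $(p,q)=(k,0)$ is a sound way to verify them.
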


\begin{proof}
 See \cite[Corollary 1.2.36]{Hu05}.
\end{proof}

\begin{proposition}
 \label{p:bbformkc}
 Let $X$ be a compact, connected, symplectic complex space of Kähler type such that $\dim_\C(\Omega^2_X(X_\reg))=1$, and let $c$ be the image of a Kähler class on $X$ under the canonical mapping $\H^2(X,\RR) \to \H^2(X,\C)$. Then we have $q_X(c)>0$ (in the sense that $q_X(c)$ is in particular real).
\end{proposition}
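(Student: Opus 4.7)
The plan is to reduce the statement to a concrete integration on a Kähler resolution of $X$ and to deduce strict positivity via Beauville's signature analysis of $q_{\tilde X}$ on a distinguished three-dimensional subspace of $\H^2(\tilde X,\RR)$.

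First I would fix a resolution of singularities $f\colon\tilde X\to X$; since $X$ is of Kähler type, one may arrange, by a further Kähler modification if necessary, that $\tilde X$ is itself a compact Kähler manifold. By Proposition \ref{p:unisympres}, $\tilde X$ is then a compact, connected, generically symplectic complex manifold with $\dim_\C(\Omega^2_{\tilde X}(\tilde X))=1$. Choose a normed generically symplectic class $\tilde w$ on $\tilde X$ (Remark \ref{r:normedcl+}), represented by a closed $2$-form $\tilde\sigma\in\Omega^2_{\tilde X}(\tilde X)$, and set $\tilde c:=f^*(c)$. Since both Hodge structures are pure of weight $2$ (Proposition \ref{p:symph2}) and $f^*$ is a morphism of pure Hodge structures, $\tilde c\in\H^{1,1}(\tilde X,\RR)$. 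Definition \ref{d:bbform} then gives $q_X(c)=q_{\tilde X}(\tilde c)$, and Proposition \ref{p:bbformhdg} applied with $\lambda=\lambda'=0$ yields
\[
q_X(c)=\tfrac{r}{2}\int_{\tilde X}\tilde\sigma^{r-1}\wedge\bar{\tilde\sigma}^{r-1}\wedge\tilde\gamma^2,
\]
where $\tilde\gamma=f^*\alpha$ is a real closed $(1,1)$-form representing $\tilde c$ ($\alpha$ denoting a Kähler form on $X$). In particular, the integrand is real, so $q_X(c)\in\RR$, which already handles the parenthetical assertion.

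To extract strict positivity, I would fix an auxiliary Kähler class $[\tilde\omega]$ on $\tilde X$ and observe that, by Proposition \ref{p:bbformpol}, on the real subspace $V:=\RR\operatorname{Re}\tilde w\oplus\RR\operatorname{Im}\tilde w\oplus\RR[\tilde\omega]$ of $\H^2(\tilde X,\RR)$ the form $q_{\tilde X}$ is $q$-orthogonal with $q_{\tilde X}(\operatorname{Re}\tilde w)=q_{\tilde X}(\operatorname{Im}\tilde w)=\tfrac{1}{4}$. Establishing $q_{\tilde X}([\tilde\omega])>0$ would therefore render $q_{\tilde X}$ positive-definite on $V$, and, by varying $[\tilde\omega]$ over the Kähler cone of $\tilde X$, strictly positive on the whole Kähler cone. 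The pulled-back class $\tilde c=f^*[\alpha]$ is nef and big on $\tilde X$ (big since $\int_{\tilde X}\tilde c^{2r}=\int_X\alpha^{2r}>0$ by Lemma \ref{l:integralmod}), hence lies in the closure of the Kähler cone, so continuity gives $q_X(c)\geq 0$; an approximation argument using the Kähler classes $\tilde c+t[\tilde\omega]$ for $t>0$, together with the explicit expansion $(\tilde\gamma+t\tilde\omega)^2=\tilde\gamma^2+2t\tilde\gamma\wedge\tilde\omega+t^2\tilde\omega^2$ and the pointwise positivity of $\tilde\gamma\wedge\tilde\omega$ and $\tilde\omega^2$ on the dense open $f^{-1}(X_\reg)$, upgrades this to the strict inequality $q_X(c)>0$.

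The main obstacle is the core positivity $q_{\tilde X}([\tilde\omega])>0$ for $\tilde\omega$ a Kähler form, equivalently $\int_{\tilde X}\tilde\sigma^{r-1}\wedge\bar{\tilde\sigma}^{r-1}\wedge\tilde\omega^2>0$. The $(2r-2,2r-2)$-form $\tilde\sigma^{r-1}\wedge\bar{\tilde\sigma}^{r-1}$ is \emph{not} in general pointwise weakly positive for $r\geq 2$: a Darboux expansion of $\tilde\sigma$ exhibits sign-indefinite cross terms between distinct symplectic pairs, so the positivity is a genuinely global phenomenon and cannot be reduced to a pointwise inequality. I would address it by adapting Beauville's original proof for irreducible symplectic manifolds, combining the Hodge-Lefschetz decomposition of $\H^2(\tilde X,\C)$ relative to $\tilde\omega$ with the Hodge-Riemann bilinear relations to compute the signature of the symmetric pairing $(u,v)\mapsto\int_{\tilde X}u\wedge v\wedge\tilde\sigma^{r-1}\wedge\bar{\tilde\sigma}^{r-1}$ on $\H^2(\tilde X,\RR)$, and verifying that its positive cone contains a Kähler class. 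The delicate point relative to Beauville's original setting is that $\tilde\sigma$ is only generically non-degenerate, so $\tilde\sigma^r$ may vanish along a divisor of $\tilde X$; handling this will likely require either a careful perturbation/limiting argument within the class $\tilde w$ or a direct refinement of Beauville's computation to the generically symplectic setting.
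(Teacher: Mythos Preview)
Your reduction to the integral $\frac{r}{2}\int_{\tilde X}\tilde\sigma^{r-1}\wedge\bar{\tilde\sigma}^{r-1}\wedge\tilde\gamma^2$ is correct and matches the paper. The gap is in what follows: your assertion that the integrand is not pointwise nonnegative, and that the positivity is ``a genuinely global phenomenon'', is mistaken, and this misconception sends you down an unnecessarily complicated path.

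The point you are missing is that $\tilde\sigma^{r-1}$ is a form of pure type $(2r-2,0)$, and any such form is automatically primitive with respect to \emph{any} K\"ahler metric (the dual Lefschetz operator $\Lambda$ would land in type $(2r-3,-1)=0$). The pointwise Hodge--Riemann inequality for primitive $(p,q)$-forms (Proposition~\ref{p:hrbr} in the paper) therefore applies directly: for $p=2r-2$, $q=0$, $k=2r-2$, $n=2r$ the sign factor $i^{p-q}(-1)^{k(k-1)/2}$ works out to $+1$, so
\[
\tilde\sigma^{r-1}\wedge\bar{\tilde\sigma}^{r-1}\wedge\omega^{2}\;\geq\;0
\]
at every point where $\omega$ is positive definite, with equality precisely where $\tilde\sigma^{r-1}$ vanishes. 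Your Darboux cross terms are genuinely present in $\tilde\sigma^{r-1}\wedge\bar{\tilde\sigma}^{r-1}$, but they are killed upon wedging with $\omega^{2}$; equivalently, since $i^{(2r-2)^2}=1$, the form $\tilde\sigma^{r-1}\wedge\bar{\tilde\sigma}^{r-1}$ is in fact weakly positive as a $(2r-2,2r-2)$-form.

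The paper then finishes in one step: take any resolution $f\colon W\to X$ (it need not be K\"ahler), choose a representative $\omega$ of $f^*(c)$ that is a genuine K\"ahler form on the dense open set $W\setminus A\cong X_\reg$, and note that $\tilde\sigma^{r-1}$ is nonzero there as well, since $\tilde\sigma$ restricts to the symplectic form on $X_\reg$. The integrand is then $\geq 0$ everywhere and $>0$ on a nonempty open set, so the integral is strictly positive. No auxiliary K\"ahler class on $\tilde X$, no nef-and-big approximation, and no global signature analysis are needed. (Incidentally, your approximation step in the last paragraph is internally inconsistent: you invoke pointwise positivity of $\tilde\sigma^{r-1}\wedge\bar{\tilde\sigma}^{r-1}\wedge\tilde\gamma\wedge\tilde\omega$ after having just asserted that $\tilde\sigma^{r-1}\wedge\bar{\tilde\sigma}^{r-1}$ is not weakly positive.)
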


\begin{proof}
 There exists a resolution of singularities $f\colon W\to X$. In particular, there are thin closed subsets $A$ and $B$ of $W$ and $X$ respectively such that $f$ induces by restriction an isomorphism of complex spaces $W\setminus A \to X\setminus B$. Since $c$ is (the image in $\H^2(X,\C)$ of) a Kähler class on $X$, there exists $\omega \in \A^{1,1}(W)$ such that $\omega$ is a de Rham representative of $f^*(c)$ and the restriction of $\omega$ (as a differential $2$-form) to $W \setminus A$ is (the complexification of) a Kähler form on $W \setminus A$. We know there exists a normed generically symplectic class $v$ on $W$. Thus there exists a generically symplectic structure $\rho$ on $W$ such that $v$ is the class of $\rho$. Denote the image of $\rho$ under the canonical mapping $\Omega^2_W(W) \to \A^{2,0}(W)$ again by $\rho$. Set $r:=\nicefrac{1}{2}\dim(W)$ and $\alpha := \rho^{\wedge(r-1)}$ (calculated in $\A^*(W,\C)$). Then $\alpha$ is a differential form of type $(2r-2,0)$ on $W$, whence in particular a primitive form. Thus by Proposition \ref{p:hrbr}, we see that the differential $2n$-form $\alpha \wedge \bar\alpha \wedge \omega^{\wedge 2}$ on $W$ is, for all $p\in W\setminus A$, strictly positive in $p$. As $W\setminus A$ is a nonempty, open, and dense subset of $W$, it follows that
 \[
  \int_W \alpha \wedge \bar\alpha \wedge \omega^{\wedge 2} > 0.
 \]
 Now obviously, $\alpha \wedge \bar\alpha \wedge \omega^{\wedge 2}$ is a de Rham representative of $v^{r-1}\bar v^{r-1}(f^*(c))^2$. Since $f^*(c) \in \H^{1,1}(W)$, we obtain by means of Definition \ref{d:bbform} and Proposition \ref{p:bbformhdg}:
 \[
  q_X(c) = q_W(f^*(c)) = \frac{r}{2}\int_W v^{r-1}\bar v^{r-1}(f^*(c))^2 = \frac{r}{2} \int_W \alpha \wedge \bar\alpha \wedge \omega^{\wedge 2} > 0.
 \]
 Yet this was just our claim.
\end{proof}

\begin{corollary}
 \label{c:bbquadirred}
 Let $X$ be as in Proposition \ref{p:bbformkc}. Then:
 \begin{enumerate}
  \item The $\C$-rank of the quadratic form $q_X$ on $\H^2(X,\C)$ is at least $3$.
  \item $Q_X$ is an irreducible closed analytic subset of $\P(\H^2(X,\C))$.
 \end{enumerate}
\end{corollary}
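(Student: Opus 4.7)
My plan is to deduce both assertions from the rank identity that, writing $b$ for the symmetric $\C$-bilinear form on $\H^2(X,\C)$ associated to $q_X$, the $\C$-rank of $b$ equals $2$ plus the $\C$-rank of the restriction $b|_{\H^{1,1}(X)}$. To obtain this, I would first invoke Proposition \ref{p:symph2} to write down the purity decomposition
\[
 \H^2(X,\C) = \H^{2,0}(X) \oplus \H^{1,1}(X) \oplus \H^{0,2}(X)
\]
and choose a normed symplectic class $w$ on $X$; such a class exists by rescaling any symplectic class (supplied by Proposition \ref{p:sympclex}) by a suitable positive real number, analogously to Remark \ref{r:normedcl+}, noting that the integral $\int_X w^r\bar w^r$ can be computed on any resolution via Lemma \ref{l:integralmod} and is strictly positive by Proposition \ref{p:sympvol+}. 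Since the extension of a symplectic structure representing $w$ to any resolution is a global holomorphic $2$-form, $w$ lies in $\F^2\H^2(X)$, which by purity coincides with $\H^{2,0}(X)$; correspondingly $\bar w \in \H^{0,2}(X)$. Proposition \ref{p:bbformpol} now tells us that $w$ and $\bar w$ are $b$-orthogonal to $\H^{1,1}(X)$ and that the Gram matrix of $b$ on $\langle w,\bar w\rangle$ in the basis $(w,\bar w)$ is $\left(\begin{smallmatrix}0 & 1 \\ 1 & 0\end{smallmatrix}\right)$; in particular $(w,\bar w)$ is linearly independent, and concatenating it with any $\C$-basis of $\H^{1,1}(X)$ yields a $\C$-basis of $\H^2(X,\C)$ in which the Gram matrix of $b$ is block-diagonal, the first block being a hyperbolic plane of rank $2$. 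The advertised rank identity follows.

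For part (a), it then suffices to exhibit an element of $\H^{1,1}(X)$ on which $q_X$ does not vanish. Since $X$ is of Kähler type there exists a Kähler class, whose image $c$ in $\H^2(X,\C)$ lies in $\H^{1,1}(X)$, being real and of type $(1,1)$. Proposition \ref{p:bbformkc} then furnishes the crucial positivity $q_X(c) > 0$, so $b(c,c) = 2q_X(c) \neq 0$ and the restriction $b|_{\H^{1,1}(X)}$ has $\C$-rank at least $1$. Combined with the rank identity, this yields assertion (a).

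For part (b), I would invoke the standard fact that a homogeneous polynomial of degree $2$ on a finite-dimensional complex vector space is irreducible if and only if its associated symmetric bilinear form has $\C$-rank at least $3$: any factorization into two linear forms would, after diagonalization, yield a bilinear form of rank at most $2$. Thanks to (a), $q_X$ is therefore an irreducible homogeneous polynomial of degree $2$ on $\H^2(X,\C)$, whence its zero locus $Q_X$ is an irreducible projective hypersurface in $\P(\H^2(X,\C))$, in particular an irreducible closed analytic subset.

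The key---and essentially the only---nontrivial input is the positivity statement of Proposition \ref{p:bbformkc}, on which the whole argument hinges; with that result in hand, all remaining steps are routine linear algebra and the standard irreducibility criterion for projective quadrics, so I do not foresee any substantive obstacle.
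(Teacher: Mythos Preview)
Your proposal is correct and follows essentially the same approach as the paper. Both arguments pick a normed symplectic class $w$ and the Kähler class $c$, use Proposition \ref{p:bbformpol} to obtain a block-diagonal Gram matrix (the paper refines this by further splitting $\H^{1,1}(X) = \C c \oplus (c^\perp \cap \H^{1,1}(X))$ to display an explicit $3\times 3$ nondegenerate block, whereas you phrase it as a rank identity), invoke Proposition \ref{p:bbformkc} for $q_X(c)\neq 0$, and then appeal to the standard irreducibility criterion for quadrics of rank $\geq 3$; the only small point you leave implicit is that $\H^{2,0}(X)$ is one-dimensional (so that $(w,\bar w)$ together with a basis of $\H^{1,1}(X)$ really spans), which follows from the hypothesis $\dim_\C \Omega^2_X(X_\reg)=1$ via Propositions \ref{p:unisympres} and \ref{p:f2h2}.
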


\begin{proof}
 There exists a normed symplectic class $w$ on $X$. Moreover, as $X$ is of Kähler type, there exists a Kähler class on $X$; denote by $c$ the image of this Kähler class under the canonical mapping $\H^2(X,\RR) \to \H^2(X,\C)$. Set $V := c^\perp \cap \H^{1,1}(X)$, where $c^\perp$ signifies the orthogonal complement of $c$ in $(\H^2(X,\C),q_X)$, and let $\basis v$ be an ordered $\C$-basis of $V$. We claim that the tuple $\basis b$ obtained by concatenating $(w,\bar w,c)$ and $\basis v$ is an ordered $\C$-basis of $\H^2(X,\C)$. This is because $w$ is a basis for $\H^{2,0}(X)$, $\bar w$ is a basis for $\H^{0,2}(X) = \bar{\H^{2,0}(X)}$, $c$ is a basis for $\C c$ (\iev $c\neq 0$), and we have $\H^{1,1}(X) = \C c \oplus V$ (since $q_X(c) \neq 0$) as well as
 \[
  \H^2(X,\C)=\H^{2,0}(X) \oplus \H^{0,2}(X) \oplus \H^{1,1}(X).
 \]
 Let $b$ be the $\C$-bilinear form on $\H^2(X,\C)$ associated to $q_X$. As
 \[
  (\H^{2,0}(X)+\H^{0,2}(X)) \perp \H^{1,1}(X),
 \]
 the matrix $M$ associated with $b$ relative to the basis $\basis b$ looks as follows:
 \[
  \begin{pmatrix}
   0 & 1 & 0 & 0 & \cdots & 0 \\
   1 & 0 & 0 & 0 & \cdots & 0 \\
   0 & 0 & 2q_X(c) & 0 & \cdots & 0 \\
   0 & 0 & 0 & * & \cdots & * \\
   \vdots & \vdots & \vdots & \vdots & \ddots & \vdots \\
   0 & 0 & 0 & * & \cdots & *
  \end{pmatrix}
 \]
 Clearly, taking into account that $2q_X(c)\neq 0$, the $\C$-rank of $M$, which equals (by definition) the rank of the $\C$-bilinear form $b$ on $\H^2(X,\C)$, is $\geq 3$. This proves a).
 
 Assertion b) follows from a) by means of the general fact that given a finite dimensional complex vector space $V$ and a $\C$-quadratic form $q$ of rank $\geq 3$ on $V$, the zero set defined by $q$ in $\P(V)$ is an irreducible closed analytic subset of $\P(V)$.
\end{proof}

\section{Deformation theory of symplectic complex spaces}
\label{s:defsymp}

In what follows, we prove that the quality of a complex space to be connected, symplectic, of Kähler type, and with a singular locus of codimension not deceeding $4$ is stable under small proper and flat deformation, \cf Theorem \ref{t:stabsymp} as well as Corollary \ref{c:stabsymp}. This result is originally due to Y.\ Namikawa, \cf \cite[Theorem 7']{Na01a}. We include its proof here for two reasons: Firstly, we felt that several points of Namikawa's exposition \loccit, in particular the essence of what we are going to say in Lemma \ref{l:stabsymp}, were not quite so clear. Secondly, the proof blends in nicely with the remainder of our presentation. Observe that Theorem \ref{t:stabsymp} used in conjunction with the likewise crucial Theorem \ref{t:defsm}, which we recall below, makes up a key ingredient for proving, in \S\ref{s:fr}, that the Fujiki relation holds for compact, connected, symplectic complex spaces $X$ with $1$-dimensional $\Omega^2_X(X_\reg)$ and a singular locus of codimension $\geq 4$ (\cf Theorem \ref{t:fr}).

Before delving into the stability of symplecticity for complex spaces, we need to review some preliminary stability results. We start by making a general

\begin{definition}[Stability under deformation]
 \label{d:stab}
 To speak about ``stability'' of certain properties of complex spaces (or similar geometrical objects) under deformation is pretty much folklore in the field. To the day we have, however, not seen a rigorous definition of the concept of stability in the literature. Therefore we move forward and suggest a definition---mainly for conceptual purposes.

 To that end, let $\cat C$ be a class (preferrably, yet not necessarily, a subclass of the class of complex spaces or the class of compact complex spaces). Then we say that $\cat C$ is \emph{stable under small proper and flat deformation} when, for all proper and flat morphisms of complex spaces $f\colon X\to S$ and all $t\in S$ such that $X_t \in \cat C$, there exists a neighborhood $V$ of $t$ in $S$ such that $X_s\in\cat C$ for all $s\in V$. Similarly, when $\phi=\phi(v)$ is a property (\iev $\phi$ is a formula in the language of set theory with one free variable $v$), we say that $\phi$ is \emph{stable under small proper and flat deformation} if the class $\{v:\phi(v)\}$ is so.

 In the same spirit, one may define a local variant of the notion of stability: Given a class $\cat C$ (\resp a property $\phi = \phi(v_0,v_1)$) we say that $\cat C$ (\resp $\phi$) is \emph{stable under small flat deformation} when, for all flat morphisms of complex spaces $f\colon X\to S$ and all $p\in X$ such that $(X_{f(p)},p) \in \cat C$ (\resp such that $\phi(X_{f(p)},p)$ holds), there exists a neighborhood $U$ of $p$ in $X$ such that $(X_{f(x)},x) \in \cat C$ (\resp such that $\phi(X_{f(x)},x)$ holds) for all $x\in U$.
\end{definition}

\begin{remark}
 \label{r:stab}
 Let $\cat C$ be any class (imagine $\cat C$ to be a subclass of the class of pointed complex spaces), and define $\cat C'$ to be the class containing precisely the complex spaces $X$ such that, for all $p\in |X|$, we have $(X,p) \in \cat C$. Speaking in terms of properties, this means that $\cat C$ reflects a local or pointwise property $\phi$ of a complex space whereas $\cat C'$ stands for property that a complex space satisfies $\phi$ at each of its points. Assume that $\cat C$ is stable under small flat deformation. Then it is easy to see that $\cat C'$ is stable under small proper and flat deformation.
\end{remark}

Here goes an overview of classical stable properties.

\begin{theorem}
 \label{t:stab}
 For a complex space $X$ and $p\in X$, let $\phi = \phi(X,p)$ signify one of the following properties:
 \begin{enumeratei}
  \item \label{i:stab-red} $X$ is reduced in $p$.
  \item \label{i:stab-nor} $X$ is normal in $p$.
  \item \label{i:stab-cm} $X$ is Cohen-Macaulay in $p$.
  \item \label{i:stab-gor} $X$ is Gorenstein in $p$.
  \item \label{i:stab-rtl} $X$ has a rational singularity in $p$.
 \end{enumeratei}
 Then $\phi$ is stable under small flat deformation.
\end{theorem}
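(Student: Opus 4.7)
The plan is to treat each of the five properties $\phi$ separately, translating the geometric statement ``$X_{f(p)}$ has $\phi$ at $p$ and a neighborhood of $p$ admits the same'' into a statement about the flat local homomorphism $\O_{S,f(p)}\to\O_{X,p}$ of Noetherian local $\C$-algebras and its closed fiber $\O_{X_{f(p)},p}$. Once this translation is made, I would invoke the relevant openness statement from commutative algebra/analytic geometry. In every case we need the openness (in the analytic Zariski topology on $X$) of the set $\{x\in X : X_{f(x)} \text{ has }\phi\text{ at }x\}$, and this set is automatically stable under passing to neighborhoods in $X$, hence gives the required $U$.

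For \eqref{i:stab-red} and \eqref{i:stab-nor}, I would apply Serre's criteria: reducedness is $R_0\wedge S_1$, normality is $R_1\wedge S_2$. The conditions $R_k$ and $S_k$ are both open in a flat family with geometrically $R_k$ (resp.\ $S_k$) fibres, because (a) under flatness, depth is additive along fibres via the formula $\prof_{\O_{X,p}}(\O_{X,p})=\prof_{\O_{S,f(p)}}(\O_{S,f(p)})+\prof_{\O_{X_{f(p)},p}}(\O_{X_{f(p)},p})$ used together with the dimension formula (Theorem \ref{t:dimformula}), and (b) the regular locus of $X_{f(p)}$ is open in $X_{f(p)}$ and extends to an open subset of $X$ by upper semicontinuity of fibre dimensions and the flatness of $f$. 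The precise statements may be taken from Fischer, \emph{Complex Analytic Geometry}, or Banica--Stanasila, Ch.~II.

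For \eqref{i:stab-cm} and \eqref{i:stab-gor}, the Cohen--Macaulay case follows from the same depth formula: if $\O_{X_{f(p)},p}$ is CM, i.e.\ $\prof=\dim$ at $p$ in the fibre, then $\O_{X,p}$ is CM, and the CM locus is open in any Noetherian local analytic situation; combining with flatness gives the stated stability. For Gorenstein, once we are CM, Gorenstein is equivalent to the fibrewise dualizing module being invertible of rank $1$, i.e.\ to the dualizing complex $\omega_{f}^\bullet$ being concentrated in one degree and locally free of rank $1$; both are open conditions on $X$ by coherence of $\omega_f$ and semicontinuity, so stability follows.

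The main obstacle, and genuinely the deepest ingredient, is \eqref{i:stab-rtl}: openness of the locus of rational singularities in a flat family. For this I would cite Elkik's theorem (Elkik, \emph{Singularit\'es rationnelles et d\'eformations}, Invent.\ Math.\ \textbf{47} (1978)), which asserts precisely that the property of having rational singularities is open in any flat family of (reduced) complex spaces/excellent schemes; the analytic formulation is the one we need. Granting Elkik's result, the argument for \eqref{i:stab-rtl} is immediate: if $X_{f(p)}$ has a rational singularity at $p$, Elkik furnishes an open neighborhood $U$ of $p$ in $X$ on which every fibre of $f$ has rational singularities at each of its points, which is exactly what the definition of stability under small flat deformation demands.
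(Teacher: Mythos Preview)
Your proposal is essentially correct and, for cases \eqref{i:stab-cm}, \eqref{i:stab-gor}, and \eqref{i:stab-rtl}, matches the paper's approach closely: the paper likewise cites B\u{a}nic\u{a}--St\u{a}n\u{a}\c{s}il\u{a} for Cohen--Macaulay, reduces Gorenstein to Cohen--Macaulay via the relative dualizing sheaf, and invokes Elkik for rational singularities. One point to sharpen: Elkik's theorem in \emph{Invent.\ Math.}~\textbf{47} is stated and proved for finite type schemes over a field of characteristic zero, so you should say explicitly that the argument needs to be transferred to the analytic category rather than simply that ``the analytic formulation is the one we need''; the paper acknowledges this transfer as a genuine step.

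For cases \eqref{i:stab-red} and \eqref{i:stab-nor} you take a different route. You argue via Serre's criteria $R_k\wedge S_{k+1}$ and the openness of each condition separately; the paper instead cites a result (essentially Theorem~\ref{t:normalred}) to the effect that flatness plus reduced/normal fiber plus reduced/normal base gives reduced/normal total space, and then first passes to the reduction (resp.\ normalization) of the base so that openness on the total space can be invoked. Your approach is more self-contained but the sketch you give for openness of $R_k$ along fibers (``extends to an open subset of $X$ by upper semicontinuity of fibre dimensions'') is a bit loose and would need a more careful argument relating the singular locus of the fibers to $\Sing(f)$; the paper's reduction-of-base trick avoids this by turning the question into openness of a property on a single complex space.
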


\begin{proof}
 Cases \eqref{i:stab-red} and \eqref{i:stab-nor} can be deduced from \cite[Thm.~1.101 (2)]{GrLoSh07} by first passing respectively to the reduction or normalization of the base of the deformation in question. Case \eqref{i:stab-cm} follows from \cite[V, Theorem 2.8]{BaSt76}. Case \eqref{i:stab-gor} follows from case \eqref{i:stab-cm} by considering the relative dualizing sheaf. Case \eqref{i:stab-rtl} is treated by transferring R.~Elkik's proof of \cite[Théorème 4]{El78} to the analytic category (Elkik proves the same assertion for finite type $k$-schemes, where $k$ is a field of characteristic zero).
\end{proof}

\begin{corollary}
 \label{c:stab}
 For a complex space $X$, let $\phi = \phi(X)$ denote one of the following properties:
 \begin{enumeratei}
  \item \label{c:stab-red} $X$ is reduced.
  \item \label{c:stab-nor} $X$ is normal.
  \item \label{c:stab-cm} $X$ is Cohen-Macaulay.
  \item \label{c:stab-gor} $X$ is Gorenstein.
  \item \label{c:stab-rtl} $X$ has rational singularities.
 \end{enumeratei}
 Then $\phi$ is stable under small proper and flat deformation.
\end{corollary}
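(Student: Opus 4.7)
The plan is to reduce each of the five assertions to its pointwise analogue in Theorem \ref{t:stab} via the principle already articulated in Remark \ref{r:stab}. For each property $\phi(X)$ in the list (reduced, normal, Cohen-Macaulay, Gorenstein, rational singularities), observe that by definition $\phi(X)$ holds if and only if the corresponding local property $\psi(X,p)$ from Theorem \ref{t:stab} holds for every $p \in |X|$. Thus, in the notation of Remark \ref{r:stab}, the class $\cat C'$ of complex spaces satisfying $\phi$ is precisely the ``pointwise closure'' of the class $\cat C$ of pointed complex spaces satisfying $\psi$, and Theorem \ref{t:stab} tells us that $\cat C$ is stable under small flat deformation. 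So everything boils down to spelling out the simple compactness argument implicit in Remark \ref{r:stab}.

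Concretely, I would argue as follows. Let $f\colon X \to S$ be proper and flat with $X_t \in \cat C'$, \iev $(X_t,p) \in \cat C$ for every $p \in |X_t|$. For each $p \in |X_t|$, the ordered pair $(X_{f(p)},p)=(X_t,p)$ lies in $\cat C$, so by Theorem \ref{t:stab} (applied to $f$, which is flat) there exists an open neighborhood $U_p$ of $p$ in $X$ such that $(X_{f(x)},x) \in \cat C$ for all $x \in U_p$. The family $\{U_p : p \in |X_t|\}$ covers $|X_t|$.

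Since $f$ is proper, $|X_t|$ is compact, so finitely many $U_{p_1},\dots,U_{p_n}$ already cover $|X_t|$; set $U := U_{p_1} \cup \dots \cup U_{p_n}$. Because $f$ is a closed map (properness again), the set $S \setminus f(X \setminus U)$ is an open neighborhood $V$ of $t$ in $S$ with $f^{-1}(V) \subset U$. For any $s \in V$ and any $x \in |X_s|$, we have $x \in f^{-1}(V) \subset U$, so $(X_s,x) = (X_{f(x)},x) \in \cat C$, whence $X_s \in \cat C'$. This establishes $\phi(X_s)$ for all $s$ in a neighborhood of $t$, which is what the stability under small proper and flat deformation demands.

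There is no real obstacle here; the only point requiring care is the passage from a pointwise statement on the central fiber to a uniform statement on a neighborhood of the base, and this is handled by combining the compactness of the fiber (from properness) with the closedness of $f$ to produce the neighborhood $V$ of $t$ with $f^{-1}(V) \subset U$.
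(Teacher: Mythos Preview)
Your proof is correct and follows exactly the route the paper takes: it invokes Theorem \ref{t:stab} together with the local-to-global principle of Remark \ref{r:stab}. The only difference is that the paper's proof is the single line ``This is immediate from Theorem \ref{t:stab} and the local to global principle outlined in Remark \ref{r:stab}'', whereas you have spelled out the compactness/closedness argument that Remark \ref{r:stab} leaves as ``easy to see''.
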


\begin{proof}
 This is immediate from Theorem \ref{t:stab} and the local to global principle outlined in Remark \ref{r:stab}.
\end{proof}

We move on to a property which is more delicate as concerns stability, namely that of ``Kählerity'' of a complex space. In general, small proper and flat deformations of complex spaces of Kähler type need not be of Kähler type, even if one assumes them to be, for instance, normal in addition (\cf \cite[Section 2]{Moi75}). Yet, we have the following result due to B.~Moishezon and J.~Bingener.

\begin{theorem}
 \label{t:bingener}
 Let $f\colon X\to S$ be a proper, flat morphism of complex spaces and $t\in S$. Assume that $X_t$ is of Kähler type and that the function
 \begin{equation} \label{e:bingener}
  \H^2(X_t,\RR)\to \H^2(X_t,\O_{X_t})
 \end{equation}
 induced by the canonical sheaf map $\RR_{X_t} \to \O_{X_t}$ on $X_t$ is a surjection. Then $f$ is weakly Kähler at $t$, \iev there exists an open neighborhood $V$ of $t$ in $S$ such that $f_V$ is weakly Kähler.
\end{theorem}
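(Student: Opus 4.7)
The plan is to adapt the classical Kodaira--Spencer stability argument for Kähler manifolds to the complex-space setting, replacing the use of harmonic theory (which would require smoothness) by a Čech-cohomological perturbation of Kähler potentials, in the spirit of Bingener. Recall that a Kähler structure on a complex space $X_t$ can be encoded (in the Grauert--Moishezon sense) by an open cover $\{U_i\}$ of $X_t$ together with strictly plurisubharmonic $\sC^\infty$ functions $\rho_i$ on $U_i$ whose differences $\rho_i-\rho_j$ are pluriharmonic on $U_i\cap U_j$. The task is to extend such a datum from $X_t$ to fibers $X_s$ for $s$ near $t$.

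First, I would replace $S$ by a small Stein open neighborhood of $t$ and, using the properness of $f$, extend the potentials $\rho_i$ on $X_t$ to strictly plurisubharmonic $\sC^\infty$ functions $\tilde\rho_i$ defined on open subsets $W_i\subset X$ chosen so that $\{W_i\cap X_t\}$ still covers $X_t$. Strict plurisubharmonicity along fibers is an open condition, so after shrinking $S$ further, each $\tilde\rho_i|W_i\cap X_s$ remains strictly plurisubharmonic. The obstruction is then that the differences $\tilde\rho_i-\tilde\rho_j$, pluriharmonic on $U_i\cap U_j\subset X_t$, generally fail to be pluriharmonic on $W_i\cap W_j\cap X_s$ for $s\neq t$.

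The heart of the argument is to measure this obstruction cohomologically and to kill it using the surjectivity hypothesis. Working with the short exact sequence
\[
 0\to \RR_{X_t}\to \O_{X_t}\xrightarrow{\Im}\sh{PH}_{X_t}\to 0
\]
of sheaves on $X_t$, the associated long exact sequence realizes the obstruction to the cocycle condition on nearby fibers as a class in $\H^2(X_t,\O_{X_t})$, modulo the image of $\H^2(X_t,\RR)$. By hypothesis, the canonical map $\H^2(X_t,\RR)\to\H^2(X_t,\O_{X_t})$ is surjective, so after a refinement of the cover, the obstruction Čech cocycle with values in $\O_{X_t}$ is cohomologous to one coming from a real cocycle. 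Subtracting this real correction from the $\tilde\rho_i$ (locally, by real constants and real parts of appropriately chosen holomorphic functions) restores the cocycle condition along each nearby fiber, yielding a family of Kähler structures on $X_s$ for $s$ in a neighborhood $V$ of $t$.

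The principal obstacle is rigorizing this perturbation in the relative and possibly singular setting, where Hodge-theoretic tools available in the smooth Kähler case are unavailable. The delicate point is that the sheaf of real $\sC^\infty$ functions is not coherent, so one cannot directly invoke Grauert's theorem for the obstruction module; one must instead interleave coherent base change for $\R^2f_*\O_X$ with the locally constant direct image $\R^2f_*\RR_X$. Ensuring that the surjectivity on the central fiber propagates to a neighborhood --- via upper semicontinuity and base change for coherent cohomology combined with the local constancy of $\R^2f_*\RR_X$ on a small neighborhood of $t$ --- is precisely where Bingener's framework of Banach-analytic deformation spaces does the work. Granting this, the construction above produces the desired family of Kähler forms, and shrinking $S$ once more gives the open neighborhood $V$ of $t$ on which $f_V$ is weakly Kähler.
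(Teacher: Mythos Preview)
The paper does not prove this statement at all: its entire proof is a citation to Bingener's paper \cite{Bi83}, specifically Theorem (6.3) there, noting that the precise statement appears in the introduction on p.~506. The result is treated as a black box due to Moishezon and Bingener.

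Your sketch is a reasonable outline of the strategy that Bingener's proof actually follows, and you correctly identify where the serious work lies (the perturbation in the relative singular setting, handled via Bingener's Banach-analytic machinery). But as a self-contained proof it is not complete: you explicitly defer the ``principal obstacle'' to Bingener's framework without carrying it out, so what you have written is closer to a summary of the literature proof than an independent argument. For the purposes of this paper, the appropriate move is exactly what the author does --- cite \cite{Bi83} --- rather than attempt to reproduce the argument.
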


\begin{proof}
 This is a consequence of \cite[Theorem (6.3)]{Bi83}. As a matter of fact, the precise statement is given in the introduction of \loccit on p.~506.
\end{proof}

\begin{corollary}
 \label{c:stabkähler} \strut
 \begin{enumerate}
  \item \label{c:stabkähler-k} Let $f\colon X\to S$ be a proper and flat morphism of complex spaces and $t\in S$ such that $X_t$ is of Kähler type and has rational singularities. Then there exists an open neighborhood $V$ of $t$ in $S$ such that $X_s$ is of Kähler type for all $s\in V$.
  \item \label{c:stabkähler-stab} The class of Kähler complex spaces with rational singularities is stable under small proper and flat deformation.
 \end{enumerate} 
\end{corollary}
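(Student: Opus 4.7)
For part \ref{c:stabkähler-k}, the plan is to invoke Theorem \ref{t:bingener} directly; the only hypothesis left to verify is that the canonical map
\[
 \H^2(X_t,\RR) \to \H^2(X_t,\O_{X_t})
\]
is a surjection. The key input will be that $X_t$, being compact of Kähler type (hence of Fujiki class $\sC$) and admitting only rational singularities, carries a pure Hodge structure of weight $2$ on $\H^2(X_t)$; this is supplied by Corollary \ref{c:h2pure}, already used in the proof of Proposition \ref{p:symph2}. Purity gives the Hodge decomposition
\[
 \H^2(X_t,\C) = \H^{2,0}(X_t) \oplus \H^{1,1}(X_t) \oplus \H^{0,2}(X_t).
\]
Next I would observe that the map $\H^2(X_t,\C) \to \H^2(X_t,\O_{X_t})$ factors through the quotient by $\F^1\H^2(X_t)$, thereby identifying with the projection onto the $(0,2)$-piece; any $\beta \in \H^{0,2}(X_t)$ is then the image of the real class $\beta + \bar\beta \in \H^2(X_t,\RR)$ under this projection, since complex conjugation interchanges $\H^{0,2}(X_t)$ and $\H^{2,0}(X_t)$. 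This establishes the required surjectivity, and Theorem \ref{t:bingener} then produces an open neighborhood $V$ of $t$ in $S$ over which $f$ is weakly Kähler, so that $X_s$ is of Kähler type for each $s \in V$.

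For part \ref{c:stabkähler-stab}, the plan is to intersect two open neighborhoods: one coming from part \ref{c:stabkähler-k} and one from the already-established stability of rational singularities. Concretely, given a proper, flat morphism $f\colon X\to S$ and $t \in S$ such that $X_t$ is Kähler with only rational singularities, Corollary \ref{c:stab} \ref{c:stab-rtl} provides a neighborhood $V_1$ of $t$ over which every fiber has rational singularities, whereas part \ref{c:stabkähler-k} provides a neighborhood $V_2$ of $t$ over which every fiber is of Kähler type. Setting $V := V_1 \cap V_2$ yields a neighborhood on which every fiber is simultaneously Kähler and of rational singularities, as required.

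The only genuinely substantial step is the Hodge-theoretic verification in part \ref{c:stabkähler-k}, but this is routine once one notices that purity of $\H^2(X_t)$ and the interchange $\overline{\H^{0,2}} = \H^{2,0}$ together force the projection $\H^2(X_t,\RR) \to \H^{0,2}(X_t)$ to hit every vector. The real content of the corollary therefore lies in recognizing that Theorem \ref{t:bingener}'s surjectivity hypothesis is supplied free of charge by the presence of rational singularities on a Kähler fiber; after that observation both assertions follow immediately by combining Theorem \ref{t:bingener} with Corollary \ref{c:stab}.
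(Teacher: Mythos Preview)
Your overall strategy matches the paper's: reduce to Theorem \ref{t:bingener} by verifying the surjectivity hypothesis, then for \ref{c:stabkähler-stab}) intersect with the neighborhood given by Corollary \ref{c:stab} \eqref{c:stab-rtl}. The paper simply cites Proposition \ref{p:bingenercrit} for the surjectivity, whereas you attempt to prove it inline. Your inline argument, however, has a gap.

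The problematic step is the assertion that the canonical map $\H^2(X_t,\C) \to \H^2(X_t,\O_{X_t})$ ``factors through the quotient by $\F^1\H^2(X_t)$, thereby identifying with the projection onto the $(0,2)$-piece.'' Purity of the mixed Hodge structure $\H^2(X_t)$ gives you the decomposition of $\H^2(X_t,\C)$, but it does \emph{not} by itself tell you how that decomposition relates to the sheaf-cohomology group $\H^2(X_t,\O_{X_t})$. For singular $X_t$ the Hodge filtration on the mixed Hodge structure is defined via a filtered resolution (Deligne--Du Bois, etc.), not via the stupid filtration on $\Omega^\kdot_{X_t}$; so the identification $\H^2(X_t,\C)/\F^1 \cong \H^2(X_t,\O_{X_t})$ is not automatic. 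This is precisely where rational singularities enter a second time (beyond yielding purity): one needs $\H^2(X_t,\O_{X_t}) \cong \H^2(W,\O_W)$ for a resolution $W$, together with the comparison of $\F^2$ (equivalently $\oF^2$) under $f^*$ provided by Proposition \ref{p:f2h2}, to transport the classical identification on the smooth $W$ down to $X_t$. That is exactly what the proof of Proposition \ref{p:bingenercrit} does. So either invoke Proposition \ref{p:bingenercrit} directly, or insert the resolution argument explicitly; as written, the step is unjustified.
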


\begin{proof}
 \ref{c:stabkähler-k}). As $X_t$ is a compact complex space of Kähler type having rational singularities, the canonical mapping \eqref{e:bingener} is a surjection by Proposition \ref{p:bingenercrit}. Hence by Theorem \ref{t:bingener}, there exists an open neighborhood $V$ of $t$ in $S$ such that the morphism $f_V\colon X_V \to S|V$ is weakly Kähler. In consequence, for all $s\in V$, the complex space $(X_V)_s$ is Kähler; thus $X_s$ is Kähler as we have $(X_V)_s \iso X_s$ in $\An$.

 Assertion \ref{c:stabkähler-stab}) follows from \ref{c:stabkähler-k}) coupled with case \eqref{c:stab-rtl} of Corollary \ref{c:stab}.
\end{proof}

Next, we discuss the property that codimension of the singular locus of a complex space does not drop below a given fixed number.

\begin{notation}
 \label{not:codimcl}
 Let $c \in \N \cup \{\omega\}$. We introduce the follwing classes:
 \begin{align*}
  \cat C_c & := \left\{(X,p):\parbox{7cm}{$(X,p)$ is a pointed complex space such that $c \leq \codim_p(\Sing(X),X)$} \right\}. \\
  \cat C_c' & := \{X:X\text{ is a complex space such that } c \leq \codim(\Sing(X),X)\}. 
 \end{align*}
 Note that $\cat C_c'$ is the ``globalization'' of $\cat C_c$ in the sense of Remark \ref{r:stab}, \iev for any complex space $X$, we have $X \in \cat C_c'$ if and only if, for all $p\in X$, we have $(X,p) \in \cat C_c$.
\end{notation}

We ask whether the class $\cat C_c$ (\resp $\cat C_c'$) is stable under small flat deformation (\resp small proper and flat deformation). In fact, we will briefly sketch how to deduce that the intersection of $\cat C_c$ with the class of normal pointed complex spaces is stable under small flat deformation.

\begin{proposition}
 \label{p:fiberdimusc}
 For all morphisms of complex spaces $f\colon X\to Y$ and all $p\in X$ there exists a neighborhood $U$ of $p$ in $X$ such that, for all $x\in U$, we have
 \[
  \dim_x(X_{f(x)}) \leq \dim_p(X_{f(p)}).
 \]
\end{proposition}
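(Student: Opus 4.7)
The plan is to prove the classical upper semi-continuity of fiber dimension by the standard ``parametrized Noether normalization'' argument. The claim is local at $p$, so I may replace $X$ by an arbitrarily small open neighborhood of $p$ and $Y$ by an arbitrarily small open neighborhood of $y := f(p)$. Set $n := \dim_p(X_y)$.

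First, I would produce ``cutting functions'' for the fiber. By the very definition of the local dimension of a complex space at a point, I can find holomorphic functions $g_1,\dots,g_n$ on some neighborhood of $p$ in the fiber $X_y$ whose common zero locus in $X_y$ has $p$ as an isolated point. Since $X_y$ is a closed complex subspace of $X$ (cut out from $X$ by the pullback of the maximal ideal of $\O_{Y,y}$), the restriction morphism $\O_X \to \iota_*\O_{X_y}$ is surjective locally at $p$, where $\iota\colon X_y\hookrightarrow X$ denotes the inclusion; so after shrinking, I may lift the $g_i$ to holomorphic functions $\tilde g_1,\dots,\tilde g_n$ on a neighborhood $U$ of $p$ in $X$.

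Second, I would assemble the auxiliary morphism
\[
 \Phi := (\tilde g_1,\dots,\tilde g_n,f|U) \colon U \longrightarrow \C^n \times Y.
\]
By construction, $p$ is an isolated point of the set-theoretic fiber $\Phi^{-1}(\Phi(p))$, since that fiber, intersected with $X_y$, is the common vanishing locus of $g_1,\dots,g_n$ near $p$. At this point I would invoke the standard finiteness criterion for holomorphic maps of complex spaces (see, e.g., the ``Endlichkeitssatz'' in Grauert-Remmert or the relevant statement in Fischer's \emph{Complex Analytic Geometry}, \S 3.2): if a holomorphic map has an isolated point in one of its fibers, then it is finite on some open neighborhood of that point. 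Applied to $\Phi$ at $p$, this lets me shrink $U$ further so that $\Phi|U$ is a finite map; in particular, for every $x\in U$ the point $x$ is an isolated point of $\Phi^{-1}(\Phi(x))$.

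Finally, I would read off the conclusion. Fix $x\in U$ and write $y' := f(x)$. Intersecting with the fiber gives
\[
 \Phi^{-1}(\Phi(x))\cap X_{y'} \;=\; \bigl\{z\in U\cap X_{y'} : \tilde g_i(z)=\tilde g_i(x)\text{ for } i=1,\dots,n\bigr\},
\]
and $x$ remains isolated there. Hence the $n$ functions $\tilde g_i|_{U\cap X_{y'}} - \tilde g_i(x)$ cut $x$ out as an isolated point of $U\cap X_{y'}$, which by the definition of local dimension means $\dim_x(X_{y'})\leq n = \dim_p(X_{f(p)})$, as required. The only nontrivial ingredient is the finiteness criterion used to pass from isolatedness at the single point $p$ to isolatedness throughout a neighborhood; everything else is dimension bookkeeping and elementary lifting of functions from a closed subspace.
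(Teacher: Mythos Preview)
Your argument is correct and is the standard proof of upper semi-continuity of fiber dimension via local Noether normalization and the finiteness criterion. The paper itself gives no argument at all: it simply cites \cite[Proposition in 3.4]{Fi76} (Fischer, \emph{Complex Analytic Geometry}), so there is nothing to compare against beyond noting that your proof is essentially the one found in that reference. One minor quibble: the existence of $g_1,\dots,g_n$ cutting out $p$ as an isolated point of the fiber is not literally ``by definition'' of local dimension but rather a consequence of local Noether normalization (equivalently, the existence of a system of parameters in the Noetherian local ring $\O_{X_y,p}$); this is of course standard and does not affect the validity of your argument.
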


\begin{proof}
 See \cite[Proposition in 3.4]{Fi76}.
\end{proof}

\begin{definition}[Equidimensionality]
 \label{d:equidim}
 Let $f\colon X\to Y$ be a morphism of complex spaces.
 \begin{enumerate}
  \item \label{d:equidim-pt} Let $p\in X$. We say that $f$ is \emph{locally equidimensional} in $p$ when there exists a neighbohood $U$ of $p$ in $X$ such that, for all $x\in U$, we have
  \[
   \dim_x(X_{f(x)}) = \dim_p(X_{f(p)}).
  \]
  \item \label{d:equidim-total} We say that $f$ is \emph{locally equidimensional} when $f$ is locally equidimensional in $p$ for all $p\in X$.
 \end{enumerate}
\end{definition}

\begin{proposition}
 \label{p:codimlsc}
 Let $f\colon X\to S$ be a morphism of complex spaces, $A$ a closed analytic subset of $X$, and $p\in X$. Suppose that $f$ is locally equidimensional in $p$. Then there exists a neighborhood $U$ of $p$ in $X$ such that, for all $x\in U$, we have:
 \begin{equation} \label{e:codimlsc}
  \codim_p(A \cap |X_{f(p)}|,X_{f(p)}) \leq \codim_x(A \cap |X_{f(x)}|,X_{f(x)}).
 \end{equation}
\end{proposition}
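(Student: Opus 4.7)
The plan is to reduce the statement to the upper semicontinuity of fiber dimension (Proposition \ref{p:fiberdimusc}), applied not to $f$ itself but to the restriction of $f$ to $A$. Concretely, equip $A$ with its canonical reduced closed complex subspace structure, write $i \colon A \to X$ for the inclusion, and set $g := f \circ i \colon A \to S$. For every $y \in A$ the fiber $A_{g(y)}$ has underlying set $A \cap |X_{f(y)}|$; since local analytic dimension depends only on the germ of the underlying set, $\dim_y(A_{g(y)}) = \dim_y(A \cap |X_{f(y)}|)$.

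First I would dispense with the trivial case $p \notin A$. Since $A$ is closed in $X$, there is then a neighborhood $U$ of $p$ disjoint from $A$; for every $x \in U$ both sides of \eqref{e:codimlsc} equal $+\infty$ by the standard convention $\dim_x(\emptyset) = -\infty$, and the inequality is vacuous. So assume $p \in A$.

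Now apply Proposition \ref{p:fiberdimusc} to $g$ at $p$: there is an open $U_1 \subset X$ with $p \in U_1$ such that, setting $V := A \cap U_1$,
\[
 \dim_y(A \cap |X_{f(y)}|) \;\leq\; \dim_p(A \cap |X_{f(p)}|)
 \qquad\text{for all } y \in V.
\]
By the hypothesis of local equidimensionality of $f$ at $p$ (Definition \ref{d:equidim} \ref{d:equidim-pt})), there is a further neighborhood $U_2$ of $p$ in $X$ on which $\dim_x(X_{f(x)}) = \dim_p(X_{f(p)})$. Take $U := U_1 \cap U_2$.

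For $x \in U$, either $x \notin A$, in which case $\codim_x(A \cap |X_{f(x)}|, X_{f(x)}) = +\infty$ and \eqref{e:codimlsc} holds trivially, or $x \in A = A \cap U_1 \supset V$, in which case the codimension identity $\codim_{\bullet} = \dim_{\bullet}(X_{f(\bullet)}) - \dim_{\bullet}(A \cap |X_{f(\bullet)}|)$ combined with the preceding two displays yields
\[
 \codim_p(A \cap |X_{f(p)}|, X_{f(p)}) \;\leq\; \codim_x(A \cap |X_{f(x)}|, X_{f(x)}),
\]
which is \eqref{e:codimlsc}. There is no real obstacle beyond bookkeeping; the only point requiring a small sanity check is the identification $\dim_y(A_{g(y)}) = \dim_y(A \cap |X_{f(y)}|)$, which reduces to the fact that dimension is an invariant of the reduced germ and so is insensitive to whether we view $A_{g(y)}$ as the scheme-theoretic fiber of $g$ or as the closed analytic subset $A \cap |X_{f(y)}|$ of $X_{f(y)}$.
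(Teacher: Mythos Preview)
Your argument is correct and follows essentially the same route as the paper: split off the trivial case $p\notin A$, then for $p\in A$ apply Proposition \ref{p:fiberdimusc} to $g=f\circ i$ and combine with local equidimensionality of $f$ to turn the upper semicontinuity of fiber dimension into the desired lower semicontinuity of fiber codimension. One small slip: in the line ``$x\in A = A\cap U_1 \supset V$'' you presumably mean $x\in A\cap U\subset A\cap U_1 = V$, which is what is needed.
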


\begin{proof}
 When $p\notin A$, we put $U := |X|\setminus A$. Then $U$ is open in $X$ and $p\in U$. Moreover, for all $x\in U$, we have $A\cap |X_{f(x)}| = \emptyset$ and thus $\codim_x(A \cap |X_{f(x)}|,X_{f(x)}) = \omega$. Hence, for all $x\in U$, \eqref{e:codimlsc} holds.
 
 Now, assume that $p\in A$. Denote by $Y$ the closed complex subspace of $X$ induced on $A$; denote by $i\colon Y\to X$ the corresponding inclusion morphism. Set $g:=f\circ i$. By Proposition \ref{p:fiberdimusc} (applied to $g$), there is a neighborhood $V$ of $p$ in $Y$ such that, for all $y\in V$, we have
 \[
  \dim_y(Y_{g(y)}) \leq \dim_p(Y_{g(p)}).
 \]
 By the definition of the subspace topology, there exists a neighborhood $\tilde V$ of $p$ in $X$ such that $\tilde V\cap A \subset V$.  As $f$ is locally equidimensional in $p$, there exists a neighborhood $U'$ of $p$ in $X$ such that, for all $x\in U'$, we have
 \[
  \dim_x(X_{f(x)}) = \dim_p(X_{f(p)}),
 \]
 \cf Definition \ref{d:equidim}. Set $U:=U' \cap \tilde V$. Then $U$ is a neighborhood of $p$ in $X$ and, for all $x\in U\cap A$, we have:
 \begin{align*}
  & \codim_p(A \cap |X_{f(p)}|,X_{f(p)}) = \dim_p(X_{f(p)}) - \dim_p(Y_{g(p)}) \\
  & \leq \dim_x(X_{f(x)}) - \dim_x(Y_{g(x)}) = \codim_x(A \cap |X_{f(x)}|,X_{f(x)}),
 \end{align*}
 where we use that, for all $s\in S$, the complex subspace of $X_s$ induced on $A\cap |X_s|$ is isomorphic in $\An$ to the complex subspace of $Y$ induced on $|Y_s|$. For all $x\in U\setminus A$, \eqref{e:codimlsc} holds since $A\cap |X_{f(x)}| = \emptyset$ and thus $\codim_x(A \cap |X_{f(x)}|,X_{f(x)}) = \omega$.
\end{proof}

Looking at Proposition \ref{p:codimlsc}, we wish to find criteria for a (possibly flat) morphism of complex spaces to be locally equidimensional in a certain point of its source space. We content ourselves with treating the case where fiber passing through the given point is normal.

\begin{theorem}
 \label{t:dimformula}
 Let $f\colon X\to Y$ be a morphism of complex spaces and $p\in X$. Assume that $f$ is flat in $p$. Then:
 \begin{equation} \label{e:dimformula}
  \dim_p(X) = \dim_p(X_{f(p)}) + \dim_{f(p)}(Y).
 \end{equation}
\end{theorem}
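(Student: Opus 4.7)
The plan is to reduce the assertion to a statement about local rings and then invoke the classical dimension formula for flat local homomorphisms of Noetherian local rings.

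First, I would translate the three dimensions appearing in \eqref{e:dimformula} into Krull dimensions of local rings. Since $X$ and $Y$ are complex spaces, we have $\dim_p(X) = \dim(\O_{X,p})$ and $\dim_{f(p)}(Y) = \dim(\O_{Y,f(p)})$ (this is a standard fact from the local theory of complex spaces, \cf \cite[\S3]{Fi76}). For the fiber, set $t := f(p)$ and observe that the fiber $X_t$ is by definition the closed complex subspace of $X$ induced by the ideal sheaf $f^{-1}(\fm_{Y,t})\cdot\O_X$ (or equivalently, by passing through the fibered product in $\An$, $X_t \iso X \times_Y \{t\}$ where $\{t\}$ carries its reduced structure and $t$ is regarded as a one-point complex space). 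In particular, there is a canonical isomorphism of $\C$-algebras
\[
 \O_{X_t,p} \iso \O_{X,p}/\fm_{Y,t}\O_{X,p},
\]
whence $\dim_p(X_t) = \dim\bigl(\O_{X,p}/\fm_{Y,t}\O_{X,p}\bigr)$.

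Next, I would exploit the flatness hypothesis. The assumption that $f$ is flat in $p$ means precisely that the ring morphism
\[
 f^\sharp_p \colon \O_{Y,t} \to \O_{X,p}
\]
induced by $f$ turns $\O_{X,p}$ into a flat $\O_{Y,t}$-module. The rings $\O_{Y,t}$ and $\O_{X,p}$ are Noetherian local rings (they are analytic local $\C$-algebras) and $f^\sharp_p$ is a local homomorphism, \iev it maps $\fm_{Y,t}$ into $\fm_{X,p}$.

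Now I would invoke the classical dimension formula for flat local homomorphisms of Noetherian local rings (see, \egv \cite[Theorem 15.1]{Ma89}): if $\phi \colon A \to B$ is a flat local homomorphism of Noetherian local rings, then
\[
 \dim(B) = \dim(A) + \dim(B/\fm_A B).
\]
Applied to $\phi = f^\sharp_p$, $A = \O_{Y,t}$, $B = \O_{X,p}$, this yields
\[
 \dim(\O_{X,p}) = \dim(\O_{Y,t}) + \dim(\O_{X,p}/\fm_{Y,t}\O_{X,p}),
\]
which, via the three identifications above, is precisely \eqref{e:dimformula}. The only non-routine step is the correct identification of the fiber's local ring as the quotient $\O_{X,p}/\fm_{Y,t}\O_{X,p}$, but this is just the compatibility of $\O_X$-modules with base change along the closed immersion $\{t\} \hookrightarrow Y$, and it is a well-established property of the fiber construction in $\An$. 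The remainder is pure commutative algebra.
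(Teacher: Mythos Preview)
Your argument is correct and is the standard reduction to the dimension formula for flat local homomorphisms of Noetherian local rings. The paper itself does not give a proof but simply cites \cite[Lemma in 3.19]{Fi76}; your write-up is essentially what that reference contains.
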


\begin{proof}
 See \cite[Lemma in 3.19]{Fi76}.
\end{proof}

\begin{theorem}
 \label{t:normalred}
 Let $f\colon X\to Y$ be a flat morphism of complex spaces and $p\in X$. When $X_{f(p)}$ and $Y$ are normal (\resp reduced) in $p$ and $f(p)$, respectively, then $X$ is normal (\resp reduced) in $p$.
\end{theorem}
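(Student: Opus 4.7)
The plan is to reduce the statement to a purely algebraic fact about flat local homomorphisms of Noetherian local rings and then to invoke classical commutative algebra.

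Set $t := f(p)$, $A := \O_{Y,t}$, $B := \O_{X,p}$, and let $\phi \colon A \to B$ be the local homomorphism induced by $f^\sharp$. The hypothesis that $f$ is flat at $p$ is precisely the statement that $\phi$ makes $B$ into a flat $A$-module. Moreover, by the very definition of the analytic fiber $X_t$ as the closed complex subspace of $X$ cut out by the ideal sheaf $f^{-1}(\fm(A))\cdot\O_X$, the analytic stalk $\O_{X_t,p}$ coincides canonically with the quotient $B/\fm(A)B$. Thus the hypothesis that $X_t$ is normal (resp.\ reduced) in $p$ amounts to saying that the Noetherian local ring $B/\fm(A)B$ is normal (resp.\ reduced), and similarly the hypothesis on $Y$ at $t$ translates to $A$ being normal (resp.\ reduced).

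With this set-up, both assertions follow from the classical principle that properties such as ``reduced'' and ``normal'' ascend along a flat local homomorphism of Noetherian local rings provided they hold for both the base ring and the closed fiber ring. Concretely, one needs: (a) if $A$ is reduced and $B/\fm(A)B$ is reduced, then $B$ is reduced; and (b) if $A$ is normal and $B/\fm(A)B$ is normal, then $B$ is normal. Both facts are classical; compare, e.g., Matsumura \cite[Theorem 23.9]{Ma89} and the surrounding discussion of Serre's conditions $R_i$ and $S_i$. For (b) one invokes Serre's criterion ``normal iff $R_1$ and $S_2$'' and checks the two conditions separately: the condition $R_1$ transfers by means of the algebraic counterpart of Theorem \ref{t:dimformula} together with the analogous formula for regular loci under flat base change, whereas $S_2$ transfers via the additivity of depth along flat local extensions. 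The reduced case (a) can either be read off from the same source, or alternatively deduced by combining (b) with the observation that reducedness of $B$ is checked at the localizations of $B$ at its associated primes, which under flatness lie over associated primes of $A$.

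The only---rather mild---technical obstacle is the identification of the analytic stalk $\O_{X_t,p}$ with the algebraic quotient $B/\fm(A)B$. In contrast to the scheme-theoretic situation, where this compatibility is built into the very definition of the scheme-theoretic fiber, in the analytic category it rests on the standard fact that the stalk of the structure sheaf of a closed complex subspace defined by a coherent ideal sheaf $\sI$ at a point $x$ equals the quotient of the ambient stalk by $\sI_x$; see, e.g., Fischer \cite{Fi76}. Once this identification is granted, the proof is essentially a direct citation of the commutative algebra result.
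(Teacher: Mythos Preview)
Your proposal is correct and follows the standard route: reduce to stalks, identify the fiber stalk with $B/\fm(A)B$, and invoke the classical ascent of reducedness and normality along flat local homomorphisms (Matsumura \cite[Theorem~23.9]{Ma89}). The paper itself does not spell any of this out and simply cites \cite[Thm.~1.101~(2)]{GrLoSh07}, so your argument is in fact more detailed than what appears in the text; the underlying approach is the same.
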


\begin{proof}
 See \cite[Thm.~1.101 (2)]{GrLoSh07}.
\end{proof}

\begin{proposition}
 \label{p:normalpuredim}
 Let $X$ be a complex space and $p\in X$. When $X$ is normal in $p$, then $X$ is pure dimensional at $p$.
\end{proposition}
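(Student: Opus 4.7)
The plan is to reduce the assertion to the purely algebraic fact that a Noetherian normal local ring is an integral domain, and then to pass back to a local geometric statement about $X$ near $p$. Concretely, since $X$ is normal at $p$, the stalk $A := \O_{X,p}$ is a Noetherian normal local ring, and in particular $A$ is reduced.

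The first step is to show that such an $A$ is a domain. The standard argument (see, e.g., \cite[\S9]{Ma89}) proceeds as follows: in any Noetherian normal ring the irreducible components of $\mathrm{Spec}$ are pairwise disjoint---equivalently, the ring decomposes as a finite direct product of normal domains indexed by the connected components of its spectrum. As $\mathrm{Spec}(A)$ is connected for any local ring $A$, this forces $A$ to have a unique minimal prime; being reduced, that minimal prime must be $(0)$, so $A$ is a domain.

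The second step translates this back into complex analytic geometry. Since $\O_{X,p}$ is a domain, the analytic germ $(X,p)$ is irreducible. By the standard correspondence between germs and their representatives for complex spaces, cf.~\cite[3.3]{Fi76}, there exists an open neighborhood $U$ of $p$ in $X$ such that the open complex subspace $X|U$ is irreducible. Because any irreducible reduced complex space is pure dimensional---its regular locus is connected (otherwise $X|U$ would decompose nontrivially as a union of two proper closed analytic subsets) and a complex manifold with connected underlying space has constant dimension, so by upper semicontinuity of $x \mto \dim_x(X)$ the dimension is constant on $X|U$ as well, cf.~\cite[3.4]{Fi76}---we conclude that $X|U$ is pure dimensional. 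This is precisely the statement that $X$ is pure dimensional at $p$.

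There is no serious obstacle in this proof. The only ingredient that goes beyond routine bookkeeping is the product decomposition of Noetherian normal rings along the connected components of $\mathrm{Spec}$, which is however a well-known textbook result; the subsequent passage from a domain stalk to an irreducible neighborhood is guaranteed by the general germ/representative correspondence available for all complex spaces.
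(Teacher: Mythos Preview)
Your argument is correct. The paper itself gives no proof at all---it simply cites \cite[Kapitel 6, \S4, Abschnitt 2]{CAS} and stops---so there is no ``approach'' in the paper to compare against. What you have written is the standard proof that one finds in that reference and elsewhere: normality of the stalk forces $\O_{X,p}$ to be a domain (via the product decomposition of Noetherian normal rings and connectedness of $\mathrm{Spec}$ of a local ring), an integral stalk gives an irreducible germ, an irreducible germ admits an irreducible representative, and an irreducible reduced complex space is pure dimensional.

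One minor remark: your justification of the last step (irreducible $\Rightarrow$ pure dimensional) via connectedness of the regular locus plus upper semicontinuity of $x\mapsto\dim_x(X)$ handles the inequality $\dim_x(X)\geq d$ for $d$ the dimension at a regular point, but the reverse inequality $\dim_x(X)\leq d$ is not automatic from semicontinuity alone. It does follow, e.g., from the active lemma or from the fact that an irreducible germ has a well-defined dimension equal to the Krull dimension of its local ring; since you cite \cite[3.4]{Fi76} at that point, which covers this, the step is fine as written.
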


\begin{proof}
 See \cite[Kapitel 6, \S4, Abschnitt 2]{CAS}. 
\end{proof}

\begin{proposition}
 \label{p:flatequidim}
 Let $f\colon X\to S$ be a flat morphism of complex spaces and $p\in X$. Assume that $X_{f(p)}$ and $S$ are normal in $p$ and $f(p)$, respectively. Then $f$ is locally equidimensional in $p$.
\end{proposition}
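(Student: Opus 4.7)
The plan is to combine the dimension formula of Theorem \ref{t:dimformula} with the observation that, under our normality hypotheses, both the total space $X$ and the base $S$ are locally of constant dimension around $p$ and $f(p)$, respectively. Since flatness persists (it is given globally), we can then apply Theorem \ref{t:dimformula} at every point of a suitable neighborhood of $p$ and read off the equidimensionality.

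First I would use Theorem \ref{t:normalred}: as $f$ is flat (hence flat at $p$) and $X_{f(p)}$ and $S$ are normal at $p$ and $f(p)$ respectively, the complex space $X$ is normal at $p$. Invoking Proposition \ref{p:normalpuredim} twice, I obtain that $X$ is pure dimensional at $p$ and that $S$ is pure dimensional at $f(p)$. Unfolding the definition of ``pure dimensional at a point'', there exist an open neighborhood $U_1$ of $p$ in $X$ and an open neighborhood $V$ of $f(p)$ in $S$ such that
\[
 \dim_x(X) = \dim_p(X) \quad \text{for all } x\in U_1,
\]
and
\[
 \dim_s(S) = \dim_{f(p)}(S) \quad \text{for all } s\in V.
\]
Set $U := U_1 \cap f^{-1}(V)$; by continuity of $f$, $U$ is an open neighborhood of $p$ in $X$.

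Now let $x\in U$ be arbitrary. Since $f$ is flat, $f$ is in particular flat at $x$, so Theorem \ref{t:dimformula} applies at $x$ and at $p$:
\[
 \dim_x(X) = \dim_x(X_{f(x)}) + \dim_{f(x)}(S), \qquad \dim_p(X) = \dim_p(X_{f(p)}) + \dim_{f(p)}(S).
\]
By construction of $U$, the left hand sides agree, and by construction of $V$ we have $\dim_{f(x)}(S) = \dim_{f(p)}(S)$. Solving for the fiber dimension yields $\dim_x(X_{f(x)}) = \dim_p(X_{f(p)})$, which, since $x \in U$ was arbitrary, is precisely the condition of Definition \ref{d:equidim} \ref{d:equidim-pt}) for $f$ to be locally equidimensional at $p$.

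There is no real obstacle here; the argument is a straightforward assembly of the cited results. The only subtle point worth double-checking is that ``pure dimensional at $p$'' really does supply an open neighborhood on which the dimension function is constant equal to $\dim_p(X)$ (as opposed to merely $\leq \dim_p(X)$), so that one may combine it with the upper semicontinuity of fiber dimension (Proposition \ref{p:fiberdimusc}) painlessly; under our conventions this is built into the definition, so no extra work is needed.
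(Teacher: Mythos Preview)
Your proof is correct and follows essentially the same route as the paper: use Theorem \ref{t:normalred} to get normality of $X$ at $p$, invoke Proposition \ref{p:normalpuredim} on both $X$ and $S$ to obtain neighborhoods where the total and base dimensions are constant, intersect with $f^{-1}(V)$, and then subtract using the dimension formula of Theorem \ref{t:dimformula}. The paper's argument is identical up to the order in which the cited results are invoked.
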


\begin{proof}
 By Proposition \ref{p:normalpuredim}, $S$ is pure dimensional at $f(p)$, \iev there exists a neighborhood $V$ of $f(p)$ in $S$ such that, for all $s\in V$, we have $\dim_s(S) = \dim_{f(p)}(S)$. By Theorem \ref{t:normalred}, $X$ is normal in $p$, whence again by means of Proposition \ref{p:normalpuredim}, $X$ is pure dimensional at $p$. Accordingly, there exists a neighborhood $U'$ of $p$ in $X$ such that, for all $x\in U'$, $\dim_x(X) = \dim_p(X)$. Set $U := f^{-1}(V) \cap U'$. Then clearly, $U$ is a neighborhood of $p$ in $X$. Moreover, by Theorem \ref{t:dimformula}, we have for all $x\in U$:
 \begin{equation*}
  \dim_x(X_{f(x)}) = \dim_x(X) - \dim_{f(x)}(S) = \dim_p(X) - \dim_{f(p)}(S) = \dim_p(X_{f(p)}).
 \end{equation*}
 In consequence, $f$ is locally equidimensional in $p$, \cf Definition \ref{d:equidim}.
\end{proof}

\begin{corollary} \label{c:stabcodim} \strut
 \begin{enumerate}
  \item \label{c:stabcodim-pt} Let $f\colon X\to S$ be a flat morphism of complex spaces and $p\in X$. Suppose that $X_{f(p)}$ and $S$ are normal in $p$ and $f(p)$, respectively. Then there exists a neighborhood $U$ of $p$ in $X$ such that, for all $x\in U$, we have:
  \begin{equation} \label{e:stabcodim}
   \codim_p(\Sing(X_{f(p)}),X_{f(p)}) \leq \codim_x(\Sing(X_{f(x)}),X_{f(x)}). 
  \end{equation}
  \item \label{c:stabcodim-cl} For all $c\in \N\cup\{\omega\}$, the class $\{(X,p)\in\cat C_c:(X,p)\text{ is normal}\}$ is stable under small flat deformation.
  \item \label{c:stabcodim-cl'} For all $c\in \N\cup\{\omega\}$, the class $\{X\in\cat C_c':X\text{ is normal}\}$ is stable under small proper and flat deformation.
 \end{enumerate}
\end{corollary}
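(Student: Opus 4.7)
For part (a), the plan is to apply Proposition \ref{p:codimlsc} to the closed analytic subset $A := \Sing(f) \subset X$ of points where $f$ fails to be submersive (for flat $f$ this is indeed an analytic subset). Whenever a fiber $X_s$ is reduced at $x\in X_s$, flatness of $f$ forces $A\cap|X_s|=\Sing(X_s)$ in a neighborhood of $x$ in $X_s$; since $X_{f(p)}$ is normal (hence reduced) at $p$, and reducedness of fibers is stable under small flat deformation by Theorem \ref{t:stab}\,\eqref{i:stab-red}, this identification is valid along all nearby fibers. The hypothesis of Proposition \ref{p:codimlsc} is local equidimensionality of $f$ at $p$, which is granted by Proposition \ref{p:flatequidim} from the joint normality of $X_{f(p)}$ at $p$ and of $S$ at $f(p)$. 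The codimension inequality \eqref{e:codimlsc} of Proposition \ref{p:codimlsc}, evaluated on $A$, becomes exactly \eqref{e:stabcodim}.

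For part (b), the novelty is that $S$ is no longer assumed normal, so Proposition \ref{p:flatequidim} is unavailable. The plan is to pass to the normalization $\nu\colon\tilde S\to S$, form the base change $\tilde f\colon \tilde X:=X\times_S\tilde S\to\tilde S$ with projection $\tilde\nu\colon\tilde X\to X$, which is finite and surjective (because $\nu$ is) and preserves flatness of $\tilde f$. A preliminary step uses Theorem \ref{t:stab}\,\eqref{i:stab-nor} to shrink $X$ to an open neighborhood $U_0$ of $p$ on which every fiber is normal at its base-point. For each of the finitely many points $\tilde p_1,\dots,\tilde p_k$ in the fiber $\tilde\nu^{-1}(\{p\})$, the canonical fiber isomorphism $\tilde X_{\tilde f(\tilde p_i)}\iso X_{f(p)}$ identifies $\tilde p_i$ with $p$, so $(\tilde X_{\tilde f(\tilde p_i)},\tilde p_i)$ is normal and lies in $\cat C_c$; because $\tilde S$ is normal at $\tilde f(\tilde p_i)$, part (a) applies to $\tilde f$ at $\tilde p_i$ and yields an open neighborhood $\tilde U_i$ of $\tilde p_i$ in $\tilde X$ along which the fiberwise codimension of the singular locus remains $\geq c$.

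Setting $\tilde U := \bigcup_i \tilde U_i$, properness of $\tilde\nu$ makes $\tilde\nu(\tilde X\setminus\tilde U)\subset X$ closed and disjoint from $p$, so $U := U_0\cap(X\setminus\tilde\nu(\tilde X\setminus\tilde U))$ is an open neighborhood of $p$ in $X$. Any $x\in U$ admits a preimage $\tilde x\in\tilde\nu^{-1}(\{x\})\subset\tilde U$ by surjectivity of $\tilde\nu$, and transporting the codimension bound from $\tilde x$ to $x$ via the fiberwise isomorphism $\tilde X_{\tilde f(\tilde x)}\iso X_{f(x)}$ (which identifies $\tilde x$ with $x$) yields $c\leq\codim_x(\Sing(X_{f(x)}),X_{f(x)})$; together with normality of $(X_{f(x)},x)$ inherited from $U_0$, this proves (b). Finally, (c) is immediate from (b) by the globalization principle recorded in Remark \ref{r:stab}.

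The main obstacle is part (b): local equidimensionality of $f$ at $p$, on which Proposition \ref{p:codimlsc} ultimately relies, is derived in Proposition \ref{p:flatequidim} from normality of the base, which is not granted here. The technical device that circumvents this is the base change along the normalization of $S$; the slightly delicate bookkeeping consists in converting an open neighborhood $\tilde U$ upstairs into a genuine open neighborhood of $p$ downstairs, and here properness and surjectivity of $\tilde\nu$ together with the finiteness of $\tilde\nu^{-1}(\{p\})$ are exactly what is needed.
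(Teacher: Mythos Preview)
Your proof is correct and follows essentially the same route as the paper. For part~(a), the paper also sets $A:=\Sing(f)$, invokes Proposition~\ref{p:flatequidim} for local equidimensionality, and applies Proposition~\ref{p:codimlsc}; it simply asserts that flatness of $f$ alone gives $A\cap|X_s|=\Sing(X_s)$ for all $s\in S$ (this is the standard smoothness criterion: a flat morphism is submersive at $x$ if and only if the fiber is smooth at $x$), so your detour through fiberwise reducedness via Theorem~\ref{t:stab}\,\eqref{i:stab-red} is harmless but superfluous. For part~(b), the paper only sketches the argument (``pulling the deformation back along the normalization of the base space'') and explicitly refrains from giving details, since the result is applied only over normal bases; you have supplied precisely those details, correctly handling the conversion of the neighborhood upstairs to one downstairs via properness and surjectivity of $\tilde\nu$. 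Part~(c) matches the paper verbatim.
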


\begin{proof}
 \ref{c:stabcodim-pt}). Set $A:=\Sing(f)$. Then $A$ is a closed analytic subset of $X$. By Proposition \ref{p:flatequidim}, $f$ is locally equidimensional in $p$. Therefore, by Proposition \ref{p:codimlsc}, there exists a neighborhood $U$ of $p$ in $X$ such that, for all $x\in U$, \eqref{e:codimlsc} holds. Due to the flatness of $f$, we have $A\cap |X_s| = \Sing(X_s)$ for all $s\in S$. Thus, for all $x\in U$, we have \eqref{e:stabcodim}.
 
 \ref{c:stabcodim-cl}). This is an immediate consequence of \ref{c:stabcodim-pt}), at least in case the base space of the deformation is normal in its basepoint. For arbitrary (\iev not necessarily normal) base spaces the result traced back to the result for normal base spaces by pulling the deformation back along the normalization of the base space. We refrain from explaining the details of this argument as we will apply the result only in case the base of the deformation is normal.
 
 \ref{c:stabcodim-cl'}). This follows from \ref{c:stabcodim-cl}) by means of Remark \ref{r:stab}.
\end{proof}

The remainder of \S\ref{s:defsymp} deals with stability of symplecticity.

\begin{lemma}
 \label{l:stabsymp}
 Let $f\colon X\to S$ be a proper, flat morphism of complex spaces with normal (or else Cohen-Macaulay) base and normal, Gorenstein fibers, let $t\in S$ and $\sigma \in \Omega^2_f(U)$, where
 \[
  U := |X| \setminus \Sing(f).
 \]
 For any $s\in S$, denote $i_s \colon X_s \to X$ the inclusion of the $f$-fiber over $s$, denote
 \[
  \phi_s \colon \Omega^2_f \to {i_s}_*(\Omega^2_{X_s})
 \]
 the pullback of $2$-differentials induced by $i_s$, and set
 \[
  \sigma_s := (\phi_s)_U(\sigma) \in \Omega^2_{X_s}((X_s)_\reg).
 \]
 Assume that $\sigma_t$ is nondegenerate on $(X_t)_\reg$. Then there exists a neighborhood $V$ of $t$ in $S$ such that $\sigma_s$ is nondegenerate on $(X_s)_\reg$ for all $s\in V$.
\end{lemma}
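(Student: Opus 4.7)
The plan is to reduce nondegeneracy of $\sigma_s$ on $(X_s)_\reg$ to nowhere-vanishing of its top exterior power $\sigma_s^{\wedge r}$, to extend that top power across $\Sing(f)$ into a global section of the relative dualizing sheaf via the Gorenstein hypothesis, and then to propagate the nonvanishing at $t$ to a neighborhood by properness. First, by Corollary \ref{c:nondegdim} and the pure-dimensionality of $X_t$ (Proposition \ref{p:normalpuredim}), $X_t$ has pure dimension $2r$ for some $r \in \N$. Using upper semicontinuity of the fiber dimension (Proposition \ref{p:fiberdimusc}) together with properness of $f$ and the flat dimension formula (Theorem \ref{t:dimformula}), after shrinking $S$ to a neighborhood of $t$ I may assume that every fiber $X_s$ has pure dimension $2r$. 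By Proposition \ref{p:nondeg}, nondegeneracy of $\sigma_s$ at a point $p \in (X_s)_\reg$ is then equivalent to $\sigma_s^{\wedge r}(p) \neq 0$; setting $\tau := \sigma^{\wedge r} \in \Omega^{2r}_f(U)$, the task becomes to show that $\sigma_s^{\wedge r}$ is nowhere zero on $(X_s)_\reg$ for $s$ in a suitable neighborhood of $t$.

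Next I would extend $\tau$ across $\Sing(f)$ to a global section of the relative dualizing sheaf $\omega_f$. Since $f$ is flat with Gorenstein fibers, $\omega_f$ is a line bundle on $X$ satisfying $\omega_f|_{X_s} \cong \omega_{X_s}$ for all $s \in S$ and agreeing on $U$ with $\Omega^{2r}_f|U$. Under either hypothesis on $S$, the total space $X$ is $S_2$: if $S$ is normal, then $X$ is normal by Theorem \ref{t:normalred}, while if $S$ is Cohen-Macaulay, then $X$ is Cohen-Macaulay by flat transfer of Cohen-Macaulayness with Cohen-Macaulay fibers. Since $\Sing(f) \cap X_s = \Sing(X_s)$ by flatness and $\codim(\Sing(X_s),X_s) \geq 2$ by normality of the fibers, Corollary \ref{c:stabcodim} \ref{c:stabcodim-pt}) yields $\codim(\Sing(f),X) \geq 2$. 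A standard Hartogs-type extension of sections of the line bundle $\omega_f$ across the codimension-$\geq 2$ analytic subset $\Sing(f)$ of the $S_2$ space $X$ then furnishes a unique $\tilde\tau \in \omega_f(X)$ with $\tilde\tau|U = \tau$; by the base change property, $\tilde\tau_s := \tilde\tau|_{X_s} \in \omega_{X_s}(X_s)$ restricts to $\sigma_s^{\wedge r}$ on $(X_s)_\reg$.

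The crucial step is to verify that $\tilde\tau_t$ is nowhere vanishing on the entire fiber $X_t$. Its zero locus $Z_t \subset X_t$ agrees on $(X_t)_\reg$ with the zero locus of $\sigma_t^{\wedge r}$, which is empty by the nondegeneracy hypothesis on $\sigma_t$; hence $Z_t \subset \Sing(X_t)$, a set of codimension $\geq 2$ in $X_t$. But $\tilde\tau_t$ is not identically zero (it is nonzero on $(X_t)_\reg$), and the zero locus of a nonzero section of a line bundle on the pure-dimensional, reduced space $X_t$ is of pure codimension $1$; forced, $Z_t = \emptyset$. Now let $Z := \{x \in X : \tilde\tau(x) = 0\}$, a closed analytic subset of $X$. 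Since $f$ is proper, $f(Z) \subset S$ is closed, and $t \notin f(Z)$ by what we have just shown. Hence $V := S \setminus f(Z)$ is an open neighborhood of $t$, and for every $s \in V$ the restriction $\tilde\tau_s$ is nowhere zero on $X_s$, so $\sigma_s^{\wedge r}$ is nowhere zero on $(X_s)_\reg$, and $\sigma_s$ is nondegenerate on $(X_s)_\reg$ by Proposition \ref{p:nondeg}. The main technical obstacle is the justification of the extension $\tilde\tau$ together with its compatibility with restriction to fibers; this hinges on the identifications $\omega_f|U \cong \Omega^{2r}_f|U$ and $\omega_f|_{X_s} \cong \omega_{X_s}$ coming from relative duality for the flat proper morphism $f$ with Gorenstein fibers.
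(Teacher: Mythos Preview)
Your proof is correct and follows essentially the same route as the paper: reduce nondegeneracy to nonvanishing of the top wedge power via Proposition~\ref{p:nondeg}, extend $\sigma^{\wedge r}$ to a global section of the invertible relative dualizing sheaf $\omega_f$ across the codimension-$\geq 2$ set $\Sing(f)$ using that $X$ is $S_2$, then use the codimension-one-versus-two contradiction to show the zero locus misses $X_t$ entirely and conclude by properness. One small correction: your citation of Corollary~\ref{c:stabcodim}~\ref{c:stabcodim-pt}) for the bound $\codim(\Sing(f),X)\geq 2$ is misplaced, since that result concerns the stability of \emph{fiberwise} codimension; the inequality you need (passing from $\codim(\Sing(f),f)\geq 2$ to $\codim(\Sing(f),X)\geq 2$) is Proposition~\ref{p:codim}~\ref{p:codim-global}), exactly as the paper uses it.
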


\begin{proof}
 Let us assume that the space $X$ is connected and $X_t \neq \emptyset$---the general case can be traced back easily to this special case by restricting $f$ to the connected components of $X$ having nonempty intersection with $f^{-1}(\{t\})$. Since $X_t \neq \emptyset$ and $X_t$ is normal, there exists $p_0 \in (X_t)_\reg$. Furthermore, since $\sigma_t$ is nondegenerate on $(X_t)_\reg$ at $p_0$, there exists a natural number $r$ such that $\dim_{p_0}(X_t) = 2r$ by Corollary \ref{c:nondegdim}. Obviously, the spaces $X$ and $S$ are locally pure dimensional so that the morphism $f$ is locally equidimensional by Proposition \ref{p:equidim}. For $X$ is connected, the morphism $f$ is yet equidimensional, and we have
 \[
  \dim_x(X_{f(x)}) = \dim_{p_0}(X_{f(p_0)}) = 2r
 \]
 for all $x\in X$. Denote $\omega_f$ the relative dualizing sheaf for $f$. Then since the morphism $f$ is submersive in $U$ with fibers pure of dimension $2r$, we have a canonical isomorphism
 \[
  \psi \colon \omega_f|U \to \Omega^{2r}_f|U
 \]
 of modules on $X|U$. Set $A := \Sing(f)$. Then $A$ is a closed analytic subset of $X$, and due to the flatness of $f$ we have
 \[
  \Sing(X_s) = A \cap |X_s|
 \]
 for all $s \in S$. As the fibers of $f$ are normal, we obtain
 \[
  2 \leq \codim(\Sing(X_s),X_s) = \codim(A \cap |X_s|,X_s)
 \]
 for all $s\in S$ and thus
 \[
  2 \leq \codim(A,f) \leq \codim(A,X)
 \]
 by Proposition \ref{p:codim} \ref{p:codim-global}). Since the fibers of $f$ are Gorenstein, we know that $\omega_f$ is locally free of rank $1$ on $X$ (\cf \egv \cite[Theorem 3.5.1]{Con00}). So, by Riemann's extension theorem, the restriction map
 \[
  \omega_f(X) \to \omega_f(X \setminus A) = \omega_f(U)
 \]
 is bijective. In particular, there exists one (and only one) $\alpha \in \omega_f(X)$ which restricts to $\sigma^{\wedge r} \in \Omega^{2r}_f(U)$ via $\psi_U \colon \omega_f(U) \to \Omega^{2r}_f(U)$.
 
 Let $s\in S$ and $p\in (X_s)_\reg$. Then, by Proposition \ref{p:nondeg}, $\sigma_s$ is nondegenerate on $(X_s)_\reg$ at $p$ if and only if we have $(\sigma_s^{\wedge r})(p) \neq 0$ in 
 \[
  \Omega^{2r}_{X_s}(p) := \C \otimes_{\O_{X_s,p}} \Omega^{2r}_{X_s,p}.
 \]
 The pullback of differentials
 \[
  \Omega^{2r}_f \to {i_s}_*(\Omega^{2r}_{X_s})
 \]
 induces an isomorphism
 \[
  \O_{X_s,p} \otimes_{\O_{X,p}} \Omega^{2r}_{f,p} \to \Omega^{2r}_{X_s,p},
 \]
 whence an isomorphism
 \[
  \C \otimes_{\O_{X,p}} \Omega^{2r}_{f,p} \to \C \otimes_{\O_{X_s,p}} \Omega^{2r}_{X_s,p},
 \]
 under which $(\sigma^{\wedge r})(p)$ is mapped to $(\sigma_s^{\wedge r})(p)$. Moreover, $\psi$ gives rise to an isomorphism
 \[
  \psi(p) \colon \omega_f(p) \to \Omega^{2r}_f(p)
 \]
 which sends $\alpha(p)$ to $(\sigma^{\wedge r})(p)$. In conclusion, we see that $\sigma_s$ is nondegenerate on $(X_s)_\reg$ at $p$ if and only if $\alpha(p) \neq 0$ in $\omega_f(p)$.
 
 Set
 \[
  Z := \{x \in X: \alpha(x) = 0 \text{ in } \omega_f(x)\}.
 \]
 Then as $\sigma_t$ is nondegenerate on $(X_t)_\reg$ by assumption, we have $Z \cap (X_t)_\reg = \emptyset$. In other words, setting $Z_t := Z \cap |X_t|$, we have $Z_t \subset \Sing(X_t)$. So, for all $p\in Z_t$,
 \[
  2 \leq \codim(\Sing(X_t),X_t) \leq \codim_p(\Sing(X_t),X_t) \leq \codim_p(Z_t,X_t).
 \]
 Clearly, $Z_t$ is the zero locus of the image of $\alpha$ under the canonical mapping
 \[
  \omega_f(X) \to (i_t^*(\omega_f))(X_t)
 \]
 in the module $i_t^*(\omega_f)$ on $X_t$. As $\omega_f$ is locally free of rank $1$ on $X$, we see that $i_t^*(\omega_f)$ is locally free of rank $1$ on $X_t$. Hence, for all $p\in Z_t$, we have
 \[
  \codim_p(Z_t,X_t) \leq 1.
 \]
 Therefore, $Z_t = \emptyset$. This implies that $|X_t| \subset |X| \setminus Z$. As $Z$ is a closed subset of $X$, we infer, exploiting the properness of $f$, that there exists a neighborhood $V$ of $t$ in $S$ such that $f^{-1}(V) \subset |X| \setminus Z$. From what we have noticed earlier, it follows that $\sigma_s$ is nondegenerate on $X_s$ for all $s\in V$.
\end{proof}

\begin{theorem}
 \label{t:stabsymp}
 Let $f\colon X\to S$ be a proper, flat morphism of complex spaces with smooth base and $t\in S$. Assume that $X_t$ is symplectic, Kähler, and in $\cat C_4'$. Then there exists a neighborhood $V$ of $t$ in $S$ such that $X_s$ is symplectic, Kähler, and in $\cat C_4'$ for all $s\in V$.
\end{theorem}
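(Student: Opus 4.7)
The plan is to propagate the three defining properties---Kähler type, the codimension bound $\codim(\Sing,\cdot)\geq 4$, and symplecticity---to nearby fibers by successively shrinking $S$ about $t$. The first two are handled by the classical stability results already recorded in this section, while the third requires lifting the symplectic form $\sigma_t$ on $X_t$ to a relative $2$-form and then invoking Lemma \ref{l:stabsymp}. I will therefore shrink $S$ a fixed finite number of times and intersect the resulting neighborhoods at the end.

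First I would use Proposition \ref{p:ratgor} to note that $X_t$ is Gorenstein with rational singularities, hence in particular normal. Corollary \ref{c:stab} (cases \eqref{c:stab-nor}, \eqref{c:stab-gor}, \eqref{c:stab-rtl}) lets me shrink $S$ so that \emph{every} fiber of $f$ is normal, Gorenstein, and has rational singularities. Since $X_t$ is of Kähler type and has rational singularities, Corollary \ref{c:stabkähler} \ref{c:stabkähler-k} permits a further shrinking so that every fiber is of Kähler type. Finally, because the fibers are now normal, Corollary \ref{c:stabcodim} \ref{c:stabcodim-cl'} delivers a neighborhood on which $\codim(\Sing(X_s),X_s)\geq 4$ for all $s$.

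For the symplectic structure itself, let $g\colon Y\to S$ denote the submersive share of $f$, with $Y := X\setminus\Sing(f)$. On the shrunken base the hypotheses of Corollary \ref{c:froertlsing} are met, so $\sH^{2,0}(g) = g_*(\Omega^2_g)$ is locally finite free on $S$ and the Hodge base change map
\[
(\sH^{2,0}(g))(t) \longrightarrow \sH^{2,0}(Y_t)
\]
is an isomorphism. Since $Y_t \iso (X_t)_\reg$, the symplectic structure $\sigma_t$ defines an element of $\sH^{2,0}(Y_t)$; Nakayama's lemma combined with this base change isomorphism then produces, after possibly shrinking $S$ once more, a section $\sigma \in \Omega^2_g(Y)$ (viewed as a global section of $\sH^{2,0}(g)$) whose restriction to $Y_t$ equals $\sigma_t$. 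With $\sigma$ in hand, Lemma \ref{l:stabsymp} applies: its hypotheses (smooth, hence normal, base; normal and Gorenstein fibers; nondegeneracy of $\sigma_t$ on $(X_t)_\reg$) are in force. It therefore yields a neighborhood $V$ of $t$ such that $\sigma_s$ is nondegenerate on $(X_s)_\reg$ for every $s\in V$.

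It remains to upgrade each $\sigma_s$ to a symplectic structure on $X_s$ in the sense of Definition \ref{d:symp}. The extension condition \eqref{d:symp-str-ext} comes for free from Corollary \ref{c:flennersymp}, since $X_s$ is normal and $\codim(\Sing(X_s),X_s)\geq 4$. Closedness $(\dd^2_{X_s})_{(X_s)_\reg}(\sigma_s) = 0$ then follows from Proposition \ref{p:sympclexassc}, because $X_s$ is of Fujiki class $\sC$ (being Kähler). Together with normality this shows $X_s$ is symplectic, completing the proof on $V$. The main obstacle is the production of the relative lift $\sigma$: it hinges on the base change isomorphism for $\sH^{2,0}(g)$, which in turn is available only thanks to the codimension hypothesis $\codim(\Sing(X_t),X_t)\geq 4$ entering Corollary \ref{c:froertlsing}. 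In particular, the three shrinkings cannot be decoupled, as the last one is needed even to set up the construction of $\sigma$.
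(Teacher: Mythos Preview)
Your argument is correct and follows essentially the same route as the paper's proof: shrink to make the fibers normal, Gorenstein, K\"ahler, and in $\cat C_4'$; use local freeness and base change for $\sH^{2,0}(g)$ to lift $\sigma_t$ to a relative section $\sigma$; invoke Lemma \ref{l:stabsymp} for nondegeneracy; and finish with Corollary \ref{c:flennersymp} and Proposition \ref{p:sympclexassc}. The only minor deviation is that you appeal to Corollary \ref{c:froertlsing} (which forces you to also arrange rational singularities on all nearby fibers via Corollary \ref{c:stab} \eqref{c:stab-rtl}), whereas the paper cites the more primitive Proposition \ref{p:hdglffbc} directly; this is a packaging difference, not a substantive one.
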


\begin{proof}
 As the space $X_t$ is symplectic, we know that $X_t$ is Gorenstein and has rational singularities by Proposition \ref{p:ratgor}. By Theorem \ref{t:stab} cases \eqref{i:stab-nor} and \eqref{i:stab-gor}, Corollary \ref{c:stabkähler} \ref{c:stabkähler-k}), and Corollary \ref{c:stabcodim} \ref{c:stabcodim-cl'}) there exists an open neighborhood $V'$ of $t$ in $S$ such that $X_s$ is normal, Gorenstein, Kähler, and in $\cat C_4'$ for all $s \in V'$. Therefore, without loss of generality, we may assume that the fibers of $f$ are altogether normal, Gorenstein, Kähler, and in $\cat C_4'$ to begin with.
 
 Set $A := \Sing(f)$ and define $g \colon Y \to S$ to be the composition of the inclusion $Y := X \setminus A \to X$ and $f$. Then due to the flatness of $f$, we have
 \[
  A \cap |X_s| = \Sing(X_s)
 \]
 for all $s\in S$ and thus
 \[
  (2+0)+2 = 4 \leq \codim(A,f).
 \]
 Therefore, by Proposition \ref{p:hdglffbc}, the module $\sH^{2,0}(g)$ is a locally finite free on $S$ in $t$ and the base change map
 \begin{equation} \label{e:stabsymp-bc}
  \C \otimes_{\O_{S,t}} (\sH^{2,0}(g))_t \to \sH^{2,0}(Y_t)
 \end{equation}
 is an isomorphism of complex vector spaces.
 
 Since $X_t$ is symplectic, there exists a symplectic structure $\sigma_t$ on $X_t$. By the surjectivity of the base change map \eqref{e:stabsymp-bc}, there exists an open neighborhood $V''$ of $t$ in $S$ and an element $\sigma \in \Omega^2_f(f^{-1}(V'') \setminus A)$ such that $\sigma$ is mapped to $\sigma_t$ by the pullback of Kähler $2$-differentials
 \[
  \Omega^2_f \to {i_t}_*(\Omega^2_{X_t})
 \]
 induced by the inclusion $i_t \colon X_t \to X$. By passing from $f\colon X\to S$ to
 \[
  f_{V''} \colon X|f^{-1}(V'') \to S|V''
 \]
 we may assume, again without loss of generality, that $\sigma \in \Omega^2_f(|X| \setminus A)$.
 
 By Lemma \ref{l:stabsymp}, there exists a neighborhood $V$ of $t$ in $S$ such that $\sigma_s$, which is to be defined as in the formulation of the lemma, is nondegenerate on $(X_s)_\reg$ for all $s\in V$. Let $s\in V$. Then by Proposition \ref{c:flennersymp}, we know that, for all resolutions of singularities $h \colon W \to X_s$, there exists $\rho$ such that $\rho$ is an extension as $2$-differential of $\sigma_s$ with respect to $h$, \iev condition \eqref{d:symp-str-ext} of Definition \ref{d:symp} \ref{d:symp-str}) holds (for $X_s$ and $\sigma_s$ in place of $X$ and $\sigma$, respectively). Since $X_s$ is a reduced, compact, and Kähler complex space, $X_s$ is of Fujiki class $\sC$ so that Proposition \ref{p:sympclexassc} implies that $\sigma_s$ induces a closed $2$\hyphen differential on $(X_s)_\reg$. Thus $\sigma_s$ is a symplectic structure on $X_s$ and $X_s$ is symplectic.
\end{proof}

\begin{theorem}
 \label{t:defsm}
 Let $f\colon X\to S$ be a proper, flat morphism of complex spaces and $t\in S$ such that $f$ is semi-universal in $t$ and $X_t$ is a symplectic complex space of Kähler type such that $\codim(\Sing(X_t),X_t)\geq4$. Then the complex space $S$ is smooth at $t$.
\end{theorem}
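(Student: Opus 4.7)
The plan is to establish smoothness of $(S,t)$ by invoking the $T^1$-lifting criterion of Z.~Ran and Y.~Kawamata. Writing $f_n \colon X_n \to S_n$ for the infinitesimal neighborhoods of $f$ at $t$ in the sense of Setup \ref{set:inbc} and $A_n := \O_{S_n,t}$, semi-universality of $f$ at $t$ reduces the smoothness of the germ $(S,t)$ to showing that for every $n \geq 1$ the canonical restriction
\[
 T^1(X_n/A_n) \otimes_{A_n} A_{n-1} \to T^1(X_{n-1}/A_{n-1})
\]
is surjective, where $T^1(X_n/A_n)$ stands for the space of first-order analytic deformations of $X_n$ over $A_n$.

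First I would propagate symplecticity infinitesimally. By Theorem \ref{t:stabsymp}, after shrinking $S$ about $t$ one may assume that every fiber of $f$ is compact symplectic Kähler with singular locus of codimension $\geq 4$. I would then build a compatible system of relative symplectic $2$-forms $\sigma_n \in \Omega^2_{f_n}(U_n)$ on the submersive loci $U_n \subset X_n$ by exploiting, on the one hand, the local freeness and base-change compatibility of $\sH^{2,0}(g)$ granted by Corollary \ref{c:froertlsing} (where $g$ denotes the submersive share of $f$), and, on the other hand, the Flenner-type reflexive extension supplied by Corollary \ref{c:flennersymp}.

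The pivotal identification is then that of $T^1(X_n/A_n)$ with a Hodge module. Because $X_t$ (hence each $X_n$ fiberwise) is Gorenstein with rational singularities by Proposition \ref{p:ratgor}, the relative tangent sheaf $\Theta_{f_n}$ is reflexive and the local-to-global Ext spectral sequence yields an isomorphism $T^1(X_n/A_n) \iso \R^1{f_n}_*(\Theta_{f_n})$. The codimension hypothesis combined with the depth-based vanishing of Proposition \ref{p:vanishing} shows that restriction to $U_n$ induces an isomorphism $\R^1{f_n}_*(\Theta_{f_n}) \iso \R^1{g_n}_*(\Theta_{g_n})$, where $g_n$ denotes the submersive share of $f_n$. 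Contraction with $\sigma_n$ furnishes an isomorphism $\Theta_{g_n} \iso \Omega^1_{g_n}$, so that
\[
 T^1(X_n/A_n) \iso \sH^{1,1}(g_n).
\]

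The surjectivity in the $T^1$-lifting thus becomes the assertion that the canonical map $\sH^{1,1}(g_n) \otimes_{A_n} A_{n-1} \to \sH^{1,1}(g_{n-1})$ is surjective. Since the Frölicher spectral sequence of $X_t$ degenerates at sheet $1$ in entries of total degree $2$ by Corollary \ref{c:froertlsing}, Theorem \ref{t:froeinf} applied in bidegree $(1,1)$ asserts that each $\sH^{1,1}(g_n)$ is free of rank $h^{1,1}(X_t)$ over $A_n$ and compatible with base change to $A_{n-1}$; this produces the required surjectivity. Consequently the analytic deformation functor of $X_t$ is unobstructed, and semi-universality of $f$ at $t$ forces $(S,t)$ to be smooth. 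The main obstacles are the compatible construction of the $\sigma_n$'s across infinitesimal levels together with the identification of $T^1$ on the singular fibers with $\R^1 f_* \Theta$ in the infinitesimal setting; once these are in place, the chain Theorem \ref{t:stabsymp} $\Rightarrow$ Corollary \ref{c:froertlsing} $\Rightarrow$ Theorem \ref{t:froeinf} delivers the remaining Hodge-theoretic input.
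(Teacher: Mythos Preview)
Your overall strategy—Ran--Kawamata $T^1$-lifting, identification $T^1(X_n/A_n)\cong\sH^{1,1}(g_n)$ via the relative symplectic form, and freeness plus base-change compatibility of $\sH^{1,1}(g_n)$—is exactly Namikawa's argument in \cite[Theorem (2.5)]{Na01}, which is what the paper cites, noting only that Theorem~\ref{t:ohsawa} replaces the classical Hodge theory of the projective case. So the approach coincides with the paper's.

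However, two of your citations are circular. Both Theorem~\ref{t:stabsymp} and Corollary~\ref{c:froertlsing} carry the hypothesis that the base $S$ be \emph{smooth}, which is precisely what you are trying to prove. Invoking Theorem~\ref{t:stabsymp} to spread symplecticity to nearby fibers of $f$ is in any case unnecessary: the $T^1$-lifting criterion concerns the abstract deformation functor of $X_t$ over Artinian local $\C$-algebras $A_n=\C[\epsilon]/(\epsilon^{n+1})$, not the given family over $S$. Likewise, you cannot use Corollary~\ref{c:froertlsing} to obtain local freeness and base-change compatibility of $\sH^{2,0}(g)$ over $S$, nor to deduce degeneration of the Fr\"olicher spectral sequence ``of $X_t$''.

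The repair is straightforward and already available in the paper. For any deformation $X_n\to\Spec A_n$ with submersive share $g_n\colon Y_n\to\Spec A_n$, Theorem~\ref{t:froeinf} applies directly (its hypotheses require only submersiveness along the central fiber, not smoothness of the base): the needed inputs are the degeneration of the Fr\"olicher spectral sequence of $Y_0\cong(X_t)_{\reg}$ in total degree~$2$, furnished by Theorem~\ref{t:ohsawa}, and the finite-dimensionality of $\sH^{\nu,\mu}(Y_0)$ for $\nu+\mu=2$, furnished by Proposition~\ref{p:cohhdi}. Theorem~\ref{t:froeinf} then gives freeness and base-change compatibility of $\sH^{2,0}(g_n)$ (allowing you to construct the $\sigma_n$) and of $\sH^{1,1}(g_n)$ (yielding the $T^1$-lifting surjectivity), all over the Artinian base $A_n$. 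Note also that your concluding ``chain'' has the logical dependencies reversed: in the paper Theorem~\ref{t:froeinf} is an ingredient in the proofs of Corollary~\ref{c:froertlsing} and Theorem~\ref{t:stabsymp}, not a consequence of them.
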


\begin{proof}
 In case the complex space $X_t$ is projective, the statement follows from \cite[Theorem (2.5)]{Na01}. However, the proof given \loccit~remains valid without requiring $X_t$ to be projective by means of Theorem \ref{t:ohsawa}.
\end{proof}

\begin{corollary}
 \label{c:stabsymp}
 The class
 \[
  \cat C := \left\{X:\parbox{7cm}{$X$ is a symplectic, Kähler complex space such that $\codim(\Sing(X),X)\geq4$}\right\}
 \]
 is stable under small proper and flat deformation.
\end{corollary}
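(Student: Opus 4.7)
The plan is to reduce the corollary to Theorem \ref{t:stabsymp} by passing to a semi-universal deformation of the fiber $X_t$. Concretely, let $f\colon X\to S$ be a proper, flat morphism of complex spaces and $t\in S$ such that $X_t\in\cat C$. Since $X_t$ is a compact complex space, I would first invoke the existence theorem for semi-universal deformations of compact complex spaces (Douady--Grauert--Palamodov) to obtain a proper, flat morphism of complex spaces $h\colon \cY\to T$ together with a point $t_0\in T$ and an isomorphism $\cY_{t_0}\iso X_t$ such that $h$ is semi-universal at $t_0$.

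Because $X_t$ is symplectic, Kähler, and satisfies $\codim(\Sing(X_t),X_t)\geq 4$, Theorem \ref{t:defsm} applied to $h$ at $t_0$ forces the complex space $T$ to be smooth at $t_0$. After shrinking $T$ around $t_0$, I may assume without loss of generality that $T$ itself is a complex manifold. Then the hypothesis of Theorem \ref{t:stabsymp} is met for the family $h\colon \cY\to T$ at $t_0$, so that theorem furnishes an open neighborhood $V'$ of $t_0$ in $T$ with the property that $\cY_{t'}$ is symplectic, Kähler, and in $\cat C_4'$ for every $t'\in V'$; in other words, $\cY_{t'}\in\cat C$ for all $t'\in V'$.

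The third step is to transport this information back to $f$ via the semi-universal property of $h$ at $t_0$: I would use the fact that, shrinking $S$ around $t$, there exist an open neighborhood $U$ of $t$ in $S$, a morphism of complex spaces $\phi\colon S|U\to T$ with $\phi(t)=t_0$, and an isomorphism of families $X|f^{-1}(U)\iso \cY\times_T(S|U)$ over $U$ extending the identification $\cY_{t_0}\iso X_t$. In particular $X_s\iso\cY_{\phi(s)}$ for every $s\in U$. Setting $V:=U\cap\phi^{-1}(V')$ then produces an open neighborhood of $t$ in $S$ over which every fiber of $f$ belongs to $\cat C$, as required.

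The only delicate point in this plan is the appeal to the semi-universal deformation machinery and the pullback property relating $f$ to $h$; everything else is a direct combination of Theorems \ref{t:defsm} and \ref{t:stabsymp}. Fortunately, the existence of a semi-universal (Kuranishi) family for a compact complex space, as well as the fact that any other proper flat deformation is, locally near the distinguished fiber, induced by pullback from the semi-universal family, are standard results in analytic deformation theory, and they apply verbatim in our setting.
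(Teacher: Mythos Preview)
Your proposal is correct and follows essentially the same route as the paper's proof: invoke the Kuranishi family (the paper's Theorem \ref{t:kuranishi}), use Theorem \ref{t:defsm} to get smoothness of the semi-universal base at the distinguished point, apply Theorem \ref{t:stabsymp} to the semi-universal family, and then pull back along the classifying map guaranteed by semi-universality. The only differences are cosmetic—you make the shrinking of $T$ to a smooth neighborhood explicit, whereas the paper leaves it implicit.
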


\begin{proof}
 Let $f\colon \cX\to S$ be a proper, flat morphism of complex spaces and $t\in S$ such that $\cX_t \in \cat C$. Then by Theorem \ref{t:kuranishi}, there exists a proper, flat morphism $f' \colon \cX'\to S'$ and an element $t'\in S'$ such that $\cX_t \iso \cX'_{t'}$ and $f'$ is semi-universal in $t'$. By Theorem \ref{t:defsm}, the complex space $S'$ is smooth at $t'$. In consequence, by Theorem \ref{t:stabsymp}, there exists an open neighborhood $V'$ of $t'$ in $S'$ such that, for all $s\in V'$, we have $\cX'_s \in \cat C$. Without loss of generality, we may assume that $\cX'_s \in \cat C$ for all $s\in S'$. By the semi-universality of $f'$ in $t'$, there exists an open neighborhood $V$ of $t$ in $S$ and morphisms of complex spaces
 \[
  b \colon S|V \to S' \quad \text{and} \quad i \colon \cX_V \to \cX'
 \]
 such that $b(t) = t'$ and
 \[
  \xysquare{\cX_V}{\cX'}{S|V}{S'}{i}{f_V}{f'}{b}
 \]
 is a pullback square in the category of complex spaces. In particular, for all $s\in V$, the morphism $i$ induces an isomorphism $\cX_s \iso \cX'_s$, whence $\cX_s \in \cat C$.
\end{proof}

\section{The local Torelli theorem}
\label{s:lt}

First and foremost, we state and prove here our version of a local Torelli theorem for compact, connected, symplectic complex spaces $X$ of Kähler type such that $\Omega^2_X(X_\reg)$ is $1$-dimensional and the codimension of the singular locus of $X$ is $\geq 4$ (\cf Theorem \ref{t:lt}). Note that Y.\ Namikawa has proposed a local Torelli theorem for a slightly smaller class of spaces in \cite[Theorem 8]{Na01a}. Our statement is more general in the following respects: Firstly, our spaces need neither be projective nor $\Q$-factorial (or similar, in the nonprojective case). Secondly, we do not require $\H^1(X,\O_X)$ to be trivial. Moreover, we feel that we are after all the first to prove a local Torelli type statement in the context of singular symplectic complex spaces since, in \loccit, Namikawa contents himself with referring to Beauville's work \cite{Be83} as a proof for the decisive point (3) of his theorem.

Beauville's proof of the local Torelli theorem for irreducible symplectic manifolds certainly provides the basis for our line of reasoning below. Nonetheless, we would like to point out that in guise of Theorem \ref{t:pmclassic} and Corollary \ref{c:froertlsing}, the upshots of our entire Chapters \ref{ch:peri} and \ref{ch:froe} enter the proof of Theorem \ref{t:lt}.

\begin{proposition}
 \label{p:cmext}
 Let $X$ be a Cohen-Macaulay complex space and $A$ a closed analytic subset of $X$ such that $\Sing(X)\subset A$ and $\codim(A,X)\geq 3$. Put $Y:=X\setminus A$. Then the evident restriction mapping
 \[
  \Ext^1(\Omega^1_X,\O_X) \to \Ext^1(\Omega^1_Y,\O_Y)
 \]
 is bijective.
\end{proposition}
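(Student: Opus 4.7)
The plan is to realize the restriction map as the degree-one term of the long exact sequence of ``Ext with support in $A$'' attached to the open immersion $j\colon Y\to X$, and then to deduce bijectivity from the vanishing of the two adjacent support-Ext groups. Concretely, I would start from the localization triangle
\[
 R\sGamma_A(X,\O_X) \to \O_X \to Rj_*(\O_Y) \to R\sGamma_A(X,\O_X)[1]
\]
in $D^+(X)$, apply $R\Hom_X(\Omega^1_X,-)$, and use the $(j^*,Rj_*)$-adjunction together with $j^*\Omega^1_X = \Omega^1_Y$ (compatibility of Kähler differentials with restriction to open subspaces) to obtain the long exact sequence
\[
 \cdots \to \Ext^n_X(\Omega^1_X, R\sGamma_A(X,\O_X)) \to \Ext^n_X(\Omega^1_X,\O_X) \to \Ext^n_Y(\Omega^1_Y,\O_Y) \to \Ext^{n+1}_X(\Omega^1_X, R\sGamma_A(X,\O_X)) \to \cdots
\]
So bijectivity of the restriction in degree $n=1$ will follow once I show that $\Ext^n_X(\Omega^1_X, R\sGamma_A(X,\O_X)) = 0$ for $n = 1,2$.

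The key input for that vanishing is a depth estimate on $\O_X$: since $X$ is Cohen-Macaulay and $\O_X$ is locally free of rank one, Proposition \ref{p:vanishing} \ref{p:vanishing-lc}) together with $\codim(A,X) \geq 3$ yields $\uH^q_A(X,\O_X) = 0$ for every integer $q$ with $q \leq 2$, equivalently, $R\sGamma_A(X,\O_X)$ lies in $D^{\geq 3}(X)$. Then I would feed this into the hypercohomology spectral sequence
\[
 E_2^{p,q} = \Ext^p_X(\Omega^1_X, \uH^q_A(X,\O_X)) \Rightarrow \Ext^{p+q}_X(\Omega^1_X, R\sGamma_A(X,\O_X))
\]
and note that every $E_2$-entry with $q \leq 2$ vanishes; in particular, every entry on a diagonal $p+q \leq 2$ vanishes, so the abutment vanishes in total degree $\leq 2$, which is exactly what is needed.

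I do not foresee a genuine obstacle in this plan: once the depth input for $\O_X$ is granted by Proposition \ref{p:vanishing}, the rest is derived-category bookkeeping plus a single spectral-sequence glance. It is worth emphasizing that the coherent sheaf $\Omega^1_X$ enters only as a coefficient module in the first slot of Ext, so its possible failure to be locally free on $X$ is harmless; all the work is done by the Cohen-Macaulay property of $X$ together with the codimension hypothesis on $A$, and the fact that $\Omega^1_X$ restricts to the locally free sheaf $\Omega^1_Y$ on $Y \subset X_{\mathrm{reg}}$ is used only to identify the right-hand term of the long exact sequence.
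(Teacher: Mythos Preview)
Your argument is correct. The paper's own proof consists of a single sentence citing the analytic counterpart of \cite[Lemma (12.5.6)]{KoMo92}; you have essentially unpacked that citation, and your local-cohomology/adjunction/spectral-sequence route is exactly the standard mechanism behind such a result.
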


\begin{proof}
 This follows from the analytic counterpart of \cite[Lemma (12.5.6)]{KoMo92}.
\end{proof}

\begin{corollary}
 \label{c:ksiso}
 Let $f\colon X\to S$ be a proper, flat morphism of complex spaces, $A$ a closed analytic subset of $X$ such that $\Sing(f)\subset A$, and $t\in S$. Assume that $S$ is smooth, $f$ is semi-universal in $t$, $X_t$ is Cohen-Macaulay, and $\codim(A\cap|X_t|,X_t)\geq 3$. Set $Y:=X\setminus A$, and define $g$ to be the composition of the inclusion $Y\to X$ and $f$. Then the Kodaira-Spencer map of $g$ at $t$,
 \begin{equation} \label{e:ksiso-0}
  \KS_{g,t} \colon \T_S(t) \to \H^1(Y_t,\Theta_{Y_t})
 \end{equation}
 (\cf Notation \ref{not:ksm}), is an isomorphism of complex vector spaces.
\end{corollary}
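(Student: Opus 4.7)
The strategy is to factor $\KS_{g,t}$ through a classical Kodaira-Spencer map attached to the flat (possibly non-submersive) morphism $f$ itself, and then identify both factors as isomorphisms. First I would note that, since $g$ is submersive, the fiber $Y_t$ is a complex manifold; hence $\Theta_{Y_t}$ is locally free and $\H^1(Y_t,\Theta_{Y_t}) = \Ext^1_{Y_t}(\Omega^1_{Y_t},\O_{Y_t})$. The flatness of $f$ combined with the smoothness of $S$ at $t$ yields $\Sing(X_t)=\Sing(f)\cap|X_t|\subset A\cap|X_t|$; together with the hypotheses that $X_t$ is Cohen-Macaulay and that $\codim(A\cap|X_t|,X_t)\geq 3$, Proposition~\ref{p:cmext} applied to the pair $(X_t,A\cap|X_t|)$ delivers an isomorphism
\[
 \rho\colon \Ext^1_{X_t}(\Omega^1_{X_t},\O_{X_t}) \xrightarrow{\;\sim\;} \Ext^1_{Y_t}(\Omega^1_{Y_t},\O_{Y_t}) = \H^1(Y_t,\Theta_{Y_t}).
\]

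Next I would bring in the classical Kodaira-Spencer map of the deformation $f$ at $t$,
\[
 \KS_{f,t}\colon \T_S(t) \longrightarrow \Ext^1_{X_t}(\Omega^1_{X_t},\O_{X_t}),
\]
obtained as the connecting morphism associated to the short exact sequence
\[
 0 \to f^*\Omega^1_S\otimes_X\O_{X_t} \to \Omega^1_X\otimes_X\O_{X_t} \to \Omega^1_{X_t} \to 0
\]
of modules on $X_t$ (exact because $f$ is flat at the points of $X_t$ and $\Omega^1_S$ is locally free near $t$), evaluated against $v\in\T_S(t)$. By the theory of semi-universal deformations of compact complex spaces (Grauert-Douady-Kuranishi), the semi-universality of $f$ in $t$ together with the smoothness of $S$ at $t$ force $\KS_{f,t}$ to be bijective.

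The remaining task---and the main obstacle---is the commutativity in $\Mod(\C)$ of the triangle
\[
 \xymatrix@C=3pc{
  \T_S(t) \ar[r]^-{\KS_{f,t}} \ar[dr]_-{\KS_{g,t}} & \Ext^1_{X_t}(\Omega^1_{X_t},\O_{X_t}) \ar[d]^-{\rho} \\
  & \H^1(Y_t,\Theta_{Y_t}).
 }
\]
The map $\KS_{g,t}$ is packaged via the abstract Kodaira-Spencer class $\xi_\KS(g)$ built from the triple $\Omega^1(g,a_S)$ of $1$-differentials of the submersive pair $(g,a_S)$; the classical $\KS_{f,t}$ arises from the analogous triple for $(f,a_S)$. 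Proposition~\ref{p:lambdaprest} together with the functoriality of the $\Lambda^p$ construction under restriction to the open subspace $Y\hookrightarrow X$ shows that $\xi_\KS(g)$ is nothing but the restriction of the corresponding class of $f$. The naturality of the connecting-homomorphism machinery of \S\ref{s:connhom} under this restriction, combined with base change to the fiber $X_t$ (resp.\ $Y_t$), then produces the triangle above, with the right vertical arrow implemented precisely by the $\Ext$-restriction $\rho$ of Proposition~\ref{p:cmext}. Given the commutativity of the triangle, the bijectivity of both $\KS_{f,t}$ and $\rho$ immediately yields that $\KS_{g,t}$ is an isomorphism, which is the claim.
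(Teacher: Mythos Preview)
Your overall architecture is exactly that of the paper: factor through $\Ext^1_{X_t}(\Omega^1_{X_t},\O_{X_t})$, use Proposition~\ref{p:cmext} for the restriction isomorphism $\rho$, and invoke semi-universality for the bijectivity of the Kodaira--Spencer map of $f$. The difficulty is in how you implement that Kodaira--Spencer map for the \emph{non-submersive} morphism $f$.

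Concretely, the sequence
\[
 0 \to f^*\Omega^1_S\otimes_X\O_{X_t} \to \Omega^1_X\otimes_X\O_{X_t} \to \Omega^1_{X_t} \to 0
\]
need not be exact on the left at points of $\Sing(f)\cap|X_t|$; flatness of $f$ and local freeness of $\Omega^1_S$ do not rescue this. For the same reason, the machinery you cite from \S\ref{s:connhom} and Notation~\ref{not:kscc} (the class $\xi_\KS(f)$, the triple $\Omega^1(f,a_S)$, the $\Lambda^p$ construction) is set up only for submersive morphisms, so your commutativity argument via ``restriction of $\xi_\KS(f)$ to $Y$'' does not go through as stated. The paper sidesteps this by passing to the generalized Kodaira--Spencer map
\[
 D_{f,t}\colon \T_S(t)\to \T^1_{X_t}
\]
with values in the first (co)tangent cohomology of $X_t$ (Palamodov/Illusie), which is defined for arbitrary flat proper $f$ and is bijective exactly when $f$ is semi-universal in $t$. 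One then uses the reducedness of $X_t$ and $Y_t$ to identify $\T^1_{X_t}\cong\Ext^1(\Omega^1_{X_t},\O_{X_t})$ and $\T^1_{Y_t}\cong\Ext^1(\Omega^1_{Y_t},\O_{Y_t})$, and the functoriality of $D_{\,\cdot\,,t}$ under the open inclusion $Y\hookrightarrow X$ gives the commuting triangle for free. Finally, $D_{g,t}$ is identified with the ordinary $\KS_{g,t}$ through the canonical isomorphisms $\Ext^1(\Omega^1_{Y_t},\O_{Y_t})\cong\H^1(Y_t,\Theta_{Y_t})$. Replacing your ad hoc connecting-homomorphism construction with $D_{f,t}$ fixes the gap and makes the triangle commutativity a formal consequence of functoriality rather than a separate computation.
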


\begin{proof}
 Denote $\T^1_{X_t}$ the first (co-)tangent cohomology of $X_t$ and
 \begin{equation} \label{e:ksiso-1}
  D_{f,t} \colon \T_S(t) \to \T^1_{X_t}
 \end{equation}
 the ``Kodaira-Spencer map'' for $f$ in $t$ (\cf \cite{Pal76}, \cite{Pal90}, \cite{Ill71}). The construction of this generalized Kodaira-Spencer map is functorial so that the inclusion $Y \to X$ gives rise to a commutative diagram of complex vector spaces
 \[
  \xymatrix{
   & \T_S(t) \ar[ld]_{D_{f,t}} \ar[dr]^{D_{g,t}} \\
   \T^1_{X_t} \ar[rr] && \T^1_{Y_t}
  }
 \]
 where the lower horizontal arrow is the morphism induced on (co-)tangent cohomology by $Y_t \to X_t$.

 Since the complex spaces $X_t$ and $Y_t$ are reduced, the canonical maps
 \begin{align*}
  & \Ext^1(\Omega^1_{X_t},\O_{X_t}) \to \T^1_{X_t}, \\
  & \Ext^1(\Omega^1_{Y_t},\O_{Y_t}) \to \T^1_{Y_t}
 \end{align*}
 are isomorphisms (\cf \egv \cite[(III.3.1), (iv) and (v)]{Ser06}). Moreover, the following diagram commutes:
 \[
  \xysquare{\Ext^1(\Omega^1_{X_t},\O_{X_t})}{\T^1_{X_t}}{\Ext^1(\Omega^1_{Y_t},\O_{Y_t})}{\T^1_{Y_t}}{}{}{}{}
 \]
 
 Since the morphism $f\colon X\to S$ is semi-universal in $t$, the Kodaira-Spencer map \eqref{e:ksiso-1} is a bijection. Therefore, by Proposition \ref{p:cmext} and the commutativity of the above diagrams we see that $D_{g,t}$ is an isomorphism. However, through the composition of canonical maps
 \[
  \Ext^1(\Omega^1_{Y_t},\O_{Y_t}) \to \Ext^1(\O_{Y_t},\Theta_{Y_t}) \to \H^1(Y_t,\Theta_{Y_t})
 \]
 the morphism $D_{g,t}$ is isomorphic to the ordinary Kodaira-Spencer map \eqref{e:ksiso-0}, whence \eqref{e:ksiso-0} is an isomorphism as claimed.
\end{proof}

\begin{lemma}
 \label{l:sympcc}
 Let $X$ be a symplectic complex manifold. Then the (adjoint) cup and contraction 
 \[
  \gamma := \gamma^{2,0}_X \colon \H^1(X,\Theta_X) \to \Hom(\sH^{2,0}(X),\sH^{1,1}(X))
 \]
 (\cf Notation \ref{not:cc'}) is injective. Moreover, when $\sH^{2,0}(X)$ is $1$-dimensional over the field of complex numbers, then $\gamma$ is an isomorphism.
\end{lemma}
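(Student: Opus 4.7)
The plan is to leverage the nondegeneracy of a symplectic structure on $X$ in order to identify $\gamma$, once evaluated at the class of such a structure, with an isomorphism of complex vector spaces coming from contraction with that very structure. Concretely, fix a symplectic structure $\sigma \in \Omega^2_X(X) = \sH^{2,0}(X)$ on $X$ and let $\phi_\sigma \colon \Theta_X \to \Omega^1_X$ denote the morphism of modules on $X$ obtained as the composition \eqref{e:nondeg}. By Remark \ref{r:nondeg} \ref{r:nondeg-global}), $\phi_\sigma$ is an isomorphism in $\Mod(X)$, so the induced morphism $\H^1(\phi_\sigma) \colon \H^1(X,\Theta_X) \to \H^1(X,\Omega^1_X) = \sH^{1,1}(X)$ is an isomorphism of complex vector spaces.

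The key identity to establish is that, for every $\xi \in \H^1(X,\Theta_X)$, one has $\gamma(\xi)(\sigma) = \H^1(\phi_\sigma)(\xi)$ in $\sH^{1,1}(X)$. Unwinding Notation \ref{not:cc'} together with tensor-hom adjunction on $\Mod(\C)$, the left-hand side equals the image of $\xi \otimes \sigma$ under $\H^1(\gamma^2_X(\Omega^1_X)) \circ \cupp^{1,0}(\Theta_X,\Omega^2_X)$, where the cup product is taken relative to $a_X$. Viewing $\sigma$ as a morphism $\tilde\sigma \colon \O_X \to \Omega^2_X$ of modules on $X$, the naturality of the cup product (Proposition \ref{p:cupp} \ref{p:cupp-nat})) combined with the right-handed analogue of its compatibility with tensor units (Proposition \ref{p:cupp} \ref{p:cupp-unit})) yields
\[
\cupp^{1,0}(\Theta_X,\Omega^2_X)(\xi \otimes \sigma) = \H^1(\id_{\Theta_X} \otimes \tilde\sigma)(\H^1(\rho^{-1})(\xi)),
\]
where $\rho \colon \Theta_X \otimes \O_X \to \Theta_X$ is the right tensor unit. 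Applying $\H^1(\gamma^2_X(\Omega^1_X))$ to both sides and reading off the definition of $\phi_\sigma$ from \eqref{e:nondeg} then produces the identity.

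Granting the identity, injectivity of $\gamma$ is immediate: $\gamma(\xi) = 0$ forces $\gamma(\xi)(\sigma) = \H^1(\phi_\sigma)(\xi) = 0$, whence $\xi = 0$ by the isomorphy of $\H^1(\phi_\sigma)$. When in addition $\dim_\C \sH^{2,0}(X) = 1$, the element $\sigma$ spans $\sH^{2,0}(X)$ (it is nonzero, as $\phi_\sigma$ is an isomorphism), so evaluation at $\sigma$ provides an isomorphism $\mathrm{ev}_\sigma \colon \Hom(\sH^{2,0}(X), \sH^{1,1}(X)) \to \sH^{1,1}(X)$. The key identity then reads $\mathrm{ev}_\sigma \circ \gamma = \H^1(\phi_\sigma)$, exhibiting $\gamma$ as the composition $\mathrm{ev}_\sigma^{-1} \circ \H^1(\phi_\sigma)$, which is an isomorphism. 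The principal obstacle I expect is the careful bookkeeping in the cup-product formalism of \S\ref{s:connhom} required to verify the key identity; once that diagram chase is complete, the remainder of the argument is formal.
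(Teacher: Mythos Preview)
Your proposal is correct and follows essentially the same approach as the paper: both pick a symplectic structure $\sigma$, use nondegeneracy to obtain the isomorphism $\phi_\sigma\colon\Theta_X\to\Omega^1_X$, establish the key identity $\gamma(\xi)(\sigma)=\H^1(\phi_\sigma)(\xi)$, and then deduce injectivity (and, in the one-dimensional case, surjectivity via evaluation at $\sigma$). The paper compresses your cup-product verification into the single line ``by the definition of the cup product, $(\H^1(X,\phi))(c)=\gamma'(c\otimes s)$'', whereas you spell out the appeal to naturality and the unit axiom; note that Proposition~\ref{p:cupp}~\ref{p:cupp-unit}) as stated is the left-handed version, so you are tacitly invoking the symmetric analogue, which is equally valid.
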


\begin{proof}
 Let $c$ be an element of $\H^1(X,\Theta_X)$ such that $\gamma(c) = 0$. Observe that, by definition, $\gamma$ arises from the ``ordinary'' cup and contraction
 \[
  \gamma' \colon \H^1(X,\Theta_X) \otimes_\C \H^0(X,\Omega^2_X) \to \H^1(X,\Omega^1_X)
 \]
 through tensor-hom adjunction over $\C$ (\cf Notation \ref{not:cc'}). In particular, for all $d \in \H^0(X,\Omega^2_X)$, we have
 \[
  \gamma'(c \otimes d) = (\gamma(c))(d) = 0.
 \]

 As $X$ is a symplectic complex manifold, there exists a symplectic structure $\sigma$ on $X$ (\cf Definition \ref{d:symp0}). Denote $s$ the image of $\sigma$ under the canonical map
 \[
  \Omega^2_X(X) \to \H^0(X,\Omega^2_X).
 \]
 Moreover, denote $\phi$ the composition
 \[
  \Theta_X \to \Theta_X\otimes_X\O_X \xrightarrow{\id\otimes\sigma} \Theta_X\otimes_X\Omega^2_X \to \Omega^1_X
 \]
 of morphisms of modules on $X$, where the first arrow stands for the inverse of the right tensor unit for $\Theta_X$ on $X$, the $\sigma$ stands, by abuse of notation, for the unique morphism $\O_X\to\Omega^2_X$ of modules on $X$ which sends the $1$ of $\O_X(X)$ to the actual $\sigma$, and the last arrows stands for the sheaf-theoretic contraction morphism (\cf Notation \ref{not:cont})
 \[
  \gamma^2_X(\Omega^1_X) \colon \Theta_X \otimes_X \Omega^2_X \to \Omega^1_X.
 \]
 Then, since $\sigma$ is nondegenerate on $X$,
 \[
  \phi \colon \Theta_X \to \Omega^1_X
 \]
 is an isomorphism of modules on $X$ (\cf Remarks \ref{r:nondeg} \ref{r:nondeg-global})), whence
 \[
  \H^1(X,\phi) \colon \H^1(X,\Theta_X) \to \H^1(X,\Omega^1_X)
 \]
 is an isomorphism of complex vector spaces by functoriality. By the definition of the cup product, we have
 \[
  (\H^1(X,\phi))(c) = \gamma'(c \otimes s).
 \]
 Thus $(\H^1(X,\phi))(c) = 0$, and, in consequence, $c = 0$. This proves the injectivity of $\gamma$.

 Suppose that $\sH^{2,0}(X)$ is $1$\hyphen dimensional and let
 \[
  f \in \Hom(\sH^{2,0}(X),\sH^{1,1}(X))
 \]
 be an arbitrary element. By the surjectivity of $\H^1(X,\phi)$, there exists $c \in \H^1(X,\Theta_X)$ such that
 \[
  (\gamma(c))(s) = (\H^1(X,\phi))(c) = f(s).
 \]
 As $\sH^{2,0}(X)$ is nontrivial, we have $\dim(X)>0$, whence $s\neq 0$ in $\sH^{2,0}(X)$. Accordingly, as $\sH^{2,0}(X)$ is $1$-dimensional, $s$ generates $\sH^{2,0}(X)$ over $\C$. Thus $\gamma(c) = f$, which proves that $\gamma$ is surjective.
\end{proof}

\begin{theorem}[Local Torelli]
 \label{t:lt}
 Let $f\colon X\to S$ be a proper, flat morphism of complex spaces such that $S$ is simply connected and smooth and the fibers of $f$ have rational singularities, are of Kähler type, and have singular loci of codimension $\geq 4$. Furthermore, let $t\in S$ such that $f$ is semi-universal in $t$ and $X_t$ is symplectic and connected with $\Omega^2_{X_t}((X_t)_\reg)$ of dimension $1$ over the field of complex numbers. Define $g\colon Y\to S$ to be the submersive share of $f$. Then the period mapping
 \[
  \cP^{2,2}_t(g) \colon S \to \Gr(\sH^2(Y_t))
 \]
 (\cf Notation \ref{not:pm} \ref{not:pm-bc}) is well-defined and the tangent map
 \begin{equation} \label{e:lt-0}
  \T_t(\cP^{2,2}_t(g)) \colon \T_S(t) \to \T_{\Gr(\sH^2(Y_t))}(\F^2\sH^2(Y_t))
 \end{equation}
 is an injection with $1$-dimensional cokernel.
\end{theorem}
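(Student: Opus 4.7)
The plan is to verify that the hypotheses of Theorem \ref{t:pmclassic} are satisfied for the submersive morphism $g$ with $n=2$, so that the period mapping $\cP^{2,2}_t(g)$ is well-defined and its tangent map at $t$ fits into the commutative diagram \eqref{e:pm}. A direct analysis of the factors appearing in that diagram then yields the claimed injectivity and $1$-dimensional cokernel.

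First I would verify the hypotheses of Theorem \ref{t:pmclassic} for $g$ and $n=2$. Conditions (i), (ii), and (iv) are supplied directly by Corollary \ref{c:froertlsing} applied to $f$: that corollary gives the local finite freeness of $\sH^{p,q}(g)$ on $S$ for $(p,q)$ of total degree $\leq 2$, the Hodge base change isomorphisms in those bidegrees, and the degeneration of the Frölicher spectral sequence of $g$ at sheet $1$ in the same range. For condition (iii), note that for every $s\in S$ the fiber $Y_s$ is canonically isomorphic to $(X_s)_\reg$; since $X_s$ is compact of Kähler type with $\codim(\Sing(X_s),X_s)\geq 4$, Ohsawa's criterion (Theorem \ref{t:ohsawa}) yields the degeneration of the Frölicher spectral sequence of $Y_s$ at sheet $1$ in all entries of total degree $\leq 2$. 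Hence Theorem \ref{t:pmclassic} applies, and in particular $\cP^{2,2}_t(g)$ is a well-defined morphism of complex spaces from $S$ to $\Gr(\sH^2(Y_t))$.

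Next I would invoke Theorem \ref{t:pmclassic} at bidegree $(p,n)=(2,2)$ to obtain the identity
\[
\theta\circ\T_t(\cP^{2,2}_t(g))=\Hom(\alpha,\beta)\circ\gamma^{2,0}_{Y_t}\circ\KS_{g,t},
\]
where $\theta$ is the canonical isomorphism of Notation \ref{not:theta} and where
\[
\alpha\colon\F^2\sH^2(Y_t)\to\sH^{2,0}(Y_t),\qquad \beta\colon\sH^{1,1}(Y_t)\to\sH^2(Y_t)/\F^2\sH^2(Y_t)
\]
arise from the Frölicher degeneration on $Y_t$. Using $\F^3\sH^2(Y_t)=0$, one sees that $\alpha$ is an isomorphism, and that $\beta$ is a monomorphism whose cokernel is canonically identified with $\sH^{0,2}(Y_t)$. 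The Kodaira-Spencer map $\KS_{g,t}$ is an isomorphism by Corollary \ref{c:ksiso}: the space $X_t$ is Cohen-Macaulay (being symplectic, hence Gorenstein by Proposition \ref{p:ratgor}), we have $\codim(\Sing(f)\cap|X_t|,X_t)=\codim(\Sing(X_t),X_t)\geq 4\geq 3$ by flatness of $f$, and $f$ is semi-universal in $t$.

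Now $Y_t=(X_t)_\reg$ is a symplectic complex manifold with $\sH^{2,0}(Y_t)=\Omega^2_{X_t}((X_t)_\reg)$ one-dimensional over $\C$, so Lemma \ref{l:sympcc} gives that $\gamma^{2,0}_{Y_t}$ is an isomorphism onto $\Hom(\sH^{2,0}(Y_t),\sH^{1,1}(Y_t))$. It therefore remains to analyze $\Hom(\alpha,\beta)$. Since $\alpha$ is an isomorphism, this map is identified with $\Hom(\sH^{2,0}(Y_t),\beta)$; applying the exact functor $\Hom(\sH^{2,0}(Y_t),-)$ (the argument is a $1$-dimensional $\C$-vector space) to the short exact sequence
\[
0\to\sH^{1,1}(Y_t)\xrightarrow{\beta}\sH^2(Y_t)/\F^2\sH^2(Y_t)\to\sH^{0,2}(Y_t)\to 0
\]
yields injectivity and cokernel $\Hom(\sH^{2,0}(Y_t),\sH^{0,2}(Y_t))$. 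By the Hodge symmetry clause of Theorem \ref{t:ohsawa} we have $\sH^{0,2}(Y_t)\iso\sH^{2,0}(Y_t)$, both of dimension $1$, so this cokernel is $1$-dimensional. Translating back through the isomorphism $\theta$ gives the assertion. The main obstacle is essentially bookkeeping: identifying $\alpha$ and $\beta$ correctly from the Frölicher degeneration, and invoking Ohsawa's theorem not only for degeneration (condition (iii)) but also for the Hodge symmetry that pins down the dimension of the cokernel; everything else is an assembly of the results cited.
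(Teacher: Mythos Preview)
Your proposal is correct and follows essentially the same route as the paper: verify the hypotheses of Theorem \ref{t:pmclassic} via Corollary \ref{c:froertlsing} and Theorem \ref{t:ohsawa}, then read off the tangent map from the resulting commutative diagram using Corollary \ref{c:ksiso} for $\KS_{g,t}$, Lemma \ref{l:sympcc} for $\gamma^{2,0}_{Y_t}$, and the Hodge symmetry in Theorem \ref{t:ohsawa} for the cokernel dimension. The only omission is the remark that $X_s$ is locally pure dimensional (needed to invoke Theorem \ref{t:ohsawa}), which follows from normality via Proposition \ref{p:normalpuredim}.
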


\begin{proof}
 By Corollary \ref{c:froertlsing}, we know that the Frölicher spectral sequence of $g$ degenerates in entries
 \[
  I := \{(\nu,\mu) \in \Z\times\Z : \nu+\mu = 2\}
 \]
 at sheet $1$ in $\Mod(S)$; moreover, for all $(p,q) \in I$, the Hodge module $\sH^{p,q}(g)$ is locally finite free on $S$ and compatible with base change in the sense that, for all $s \in S$, the Hodge base change map
 \[
  \beta^{p,q}_{g,s} \colon (\sH^{p,q}(g))(s) \to \sH^{p,q}(Y_s)
 \]
 is an isomorphism of complex vector spaces. Let $s\in S$. Then as $X_s$ has rational singularities, $X_s$ is normal, whence locally pure dimensional according to Proposition \ref{p:normalpuredim}. Thus, by Theorem \ref{t:ohsawa}, the Frölicher spectral sequence of $X_s \setminus \Sing(X_s)$ degenerates in entries $I$ at sheet $1$ in $\Mod(\C)$. Since the morphism $f$ is flat, the inclusion $Y \to X$ induces an isormophism of complex spaces
 \[
  Y_s \to X_s \setminus \Sing(X_s).
 \]
 Therefore, the Frölicher spectral sequence of $Y_s$ degenerates in entries $I$ at sheet $1$ in $\Mod(\C)$.
 
 By Theorem \ref{t:pmclassic} (applied to $g$ in place of $f$ and $n=2$), the period mapping $\cP^{2,2}_t(g)$ is well-defined (this is implicit in the theorem) and there exists a sequence $\tilde\psi = (\tilde\psi^\nu)_{\nu\in\Z}$ such that firstly, for all $\nu\in\Z$,
 \[
  \tilde\psi^\nu \colon \F^\nu\sH^2(Y_t)/\F^{\nu+1}\sH^2(Y_t) \to \sH^{\nu,2-\nu}(Y_t)
 \]
 is an isomorphism in $\Mod(\C)$ and secondly, setting
 \begin{align*}
  \alpha & := \tilde\psi^2 \circ \coker(\iota^2_{Y_t}(2,3)) && \colon \F^2\sH^2(Y_t) \to \sH^{2,0}(Y_t), \\
  \beta & := (\iota^2_{Y_t}(1)/\F^2\sH^2(Y_t)) \circ (\tilde\psi^1)^{-1} && \colon \sH^{1,1}(Y_t) \to \sH^2(Y_t)/\F^2\sH^2(Y_t),
 \end{align*}
 the following diagram commutes in $\Mod(\C)$:
 \begin{equation} \label{e:lt-1}
  \xymatrix{
   \T_S(t) \ar[r]^{\KS_{g,t}} \ar[dd]_{\T_t(\cP^{2,2}_t(g))} & \H^1(Y_t,\Theta_{Y_t}) \ar[d]^{\gamma^{2,0}_{Y_t}} \\
   & \Hom(\sH^{2,0}(Y_t),\sH^{1,1}(Y_t)) \ar[d]^{\Hom(\alpha,\beta)} \\
   \T_{\Gr(\sH^2(Y_t))}(\F^2\sH^2(Y_t)) \ar[r] \ar@{}@<-1ex>[r]_{\theta(\sH^2(Y_t),\F^2\sH^2(Y_t))} & \Hom(\F^2\sH^2(Y_t),\sH^2(Y_t)/\F^2\sH^2(Y_t))
  }
 \end{equation}
 commutes in $\Mod(\C)$ (for the definition of $\theta$, see Notation \ref{not:theta}).
  
 Since $X_t$ has rational singularities, $X_t$ is Cohen-Macaulay. Put $A := \Sing(f)$. Then clearly $A$ is a closed analytic subset of $X$ and, due to the flatness of $f$, we have $A\cap |X_t| = \Sing(X_t)$, whence
 \[
  3 < 4 \leq \codim(\Sing(X_t),X_t) = \codim(A \cap |X_t|,X_t).
 \]
 Applying Corollary \ref{c:ksiso}, we see that the Kodaira-Spencer map
 \[
  \KS_{g,t} \colon \T_S(t) \to \H^1(Y_t,\Theta_{Y_t})
 \]
 is an isomorphism in $\Mod(\C)$. As $X_t$ is a symplectic complex space, $(X_t)_\reg$ is a symplectic complex manifold. As $\Omega^2_{X_t}((X_t)_\reg)$ is a $1$-dimensional complex vector space, $\Omega^2_{(X_t)_\reg}((X_t)_\reg)$ is a $1$-dimensional complex vector space. Since $Y_t \iso (X_t)_\reg$ (\cf above), the same assertions hold for $Y_t$ instead of $(X_t)_\reg$. Therefore, by Lemma \ref{l:sympcc}, the cup and contraction
 \[
  \gamma^{2,0}_{Y_t} \colon \H^1(Y_t,\Theta_{Y_t}) \to \Hom(\sH^{2,0}(Y_t),\sH^{1,1}(Y_t))
 \]
 is an isomorphism in $\Mod(\C)$. Since $\F^3\sH^2(Y_t) \iso 0$, we know that the cokernel of the inclusion
 \[
  \iota^2_{Y_t}(2,3) \colon \F^3\sH^2(Y_t) \to \F^2\sH^2(Y_t)
 \]
 is an isomorphism. Thus, $\alpha$ is an isomorphism. The morphism $\theta$ in \eqref{e:lt-1} is an isomorphism anyway. By the definition of $\beta$, $\beta$ is certainly injective. So, $\Hom(\alpha,\beta)$ is injective, and exploiting the commutativity of \eqref{e:lt-1}, we conclude that the tangent map \eqref{e:lt-0} is injective.
 
 Moreover, the dimension of the cokernel of the injection \eqref{e:lt-0} equals the dimension of the cokernel of the injection $\Hom(\alpha,\beta)$, which in turn equals the dimension of the cokernel of $\beta$ since $\sH^{2,0}(Y_t)$, and likewise $\F^2\sH^2(Y_t)$, is $1$-dimensional. Now the cokernel of $\beta$ is obviously isomorphic to $\sH^2(Y_t)/\F^1\sH^2(Y_t)$, and via $\tilde\psi^0$, \iev by the degeneration of the Frölicher spectral sequence of $Y_t$ in entry $(0,2)$ at sheet $1$, we have
 \[
  \sH^2(Y_t)/\F^1\sH^2(Y_t) \iso \sH^{0,2}(Y_t).
 \]
 According to Theorem \ref{t:ohsawa}, we have
 \[
  \sH^{0,2}(Y_t) \iso \sH^{2,0}(Y_t).
 \]
 Thus, we deduce that the cokernel of the tangent map \eqref{e:lt-0} is $1$-dimensional.
\end{proof}

Reviewing the statement of Theorem \ref{t:lt}, we would like to draw our reader's attention to the fact that the period mapping $\cP := \cP^{2,2}_t(g)$ depends exclusively on the submersive share $g$ of the original family $f$. Note that the submersive share $g$ of $f$ is nothing but the family of regular loci of the fibers of $f$. Further on, note that the local system with respect to which $\cP$ is defined assigns to a point $s$ in $S$ in principle the second complex cohomology $\H^2((X_s)_\reg,\C)$ of $(X_s)_\reg$. Thus $\cP$ encompasses Hodge theoretic information of the $(X_s)_\reg$, not however a priori of the $X_s$ themselves.

The upcoming series of results is aimed at sheding a little light on the relationship between $X$ and $X_\reg$, for $X$ a compact symplectic complex space of Kähler type, in terms of Hodge structures on second cohomologies. We also discuss ramifications of this relationship for the deformations theory of $X$. To begin with, we introduce terminology capturing the variation of mixed Hodge structure in a family of spaces of Fujiki class $\sC$.

\begin{construction}
 \label{con:repcoh}
 Let $f\colon X\to S$ be a morphism of topological spaces, $A$ a ring, $n$ an integer. Our aim is to construct, given that $f$ satisfies certain conditions (\cf below), an $A$-representation of the fundamental groupoid of $S$ which parametrizes the $A$-valued cohomology modules in degree $n$ of the fibers of $f$.

 Recall that we denote
 \[
  f^A \colon (X,A_X) \to (S,A_S)
 \]
 the canonical morphism of ringed spaces derived from $f$. Moreover, set
 \[
  \H^n(f,A) := \R^n(f^A)_*(A_X).
 \]
 Thus $\H^n(f,A)$ is a sheaf of $A_S$-modules on $S$. Assume that $\H^n(f,A)$ is a locally constant sheaf on $S$. Then by means of Remark \ref{r:locsysrep}, the sheaf $\H^n(f,A)$ induces an $A$-representation
 \[
  \rho' \colon \Pi(S) \to \Mod(A).
 \]
 Assume that, for all $s\in S$, the evident base change map
 \[
  \beta_s \colon \left(\H^n(f,A)\right)_s \to \H^n(X_s,A)
 \]
 is a bijection. Then define
 \[
  \rho^n(f,A) \colon \Pi(S) \to \Mod(A)
 \]
 to be the unique functor such that, for all $s\in S$, we have
 \[
  (\rho^n(f,A))_0(s) = \H^n(X_s,A)
 \]
 and, for all $(s,t)\in S\times S$ and all morphisms $a \colon s \to t$ in $\Pi(S)$, we have
 \[
  \left((\rho^n(f,A))_1(s,t)\right)(a) = \beta_t \circ ((\rho')_1(s,t))(a) \circ (\beta_s)^{-1}.
 \]
 As a matter of fact, it would be more accurate to simply define $\rho_0$ and $\rho_1$ as indicated above, then set $\rho^n(f,A) := (\rho_0,\rho_1)$ and assert that the so defined $\rho$ is a functor from $\Pi(S)$ to $\Mod(S)$.

 When $f\colon X\to S$ is not a morphism of topological spaces but a morphism of complex spaces (\resp a morphism of ringed spaces), we agree on writing $\rho^n(f,A)$ for $\rho^n(f_\top,A)$ in case this makes sense.
\end{construction}

\begin{definition}[Period mappings for MHS]
 \label{d:pmmhs}
 Let $f\colon X\to S$ be a proper morphism of complex spaces such that $X_s$ is of Fujiki class $\sC$ for all $s\in S$. Let $n$ and $p$ be integers. We define $\F^p\H^n(f)$ to be the unique function on $|S|$ such that, for all $s\in |S|$, we have
 \[
  (\F^p\H^n(f))(s) = \F^p\H^n(X_s).
 \]
 We call $\F^p\H^n(f)$ the \emph{system of Hodge filtered pieces in degree $p$ on $n$-th cohomology} associated to $f$.

 Assume further that $S_\top$ is simply connected and $\H^n(f,\C)$ is a locally constant sheaf on $S_\top$. Then, for any $t\in S$, we set (\cf Construction \ref{con:pmrep}):
 \[
  \cP^{p,n}_t(f)_\MHS := \cP^\C_t(S_\top,\rho^n(f,\C),\F^p\H^n(f)).
 \]
 Note that this definition makes sense. In fact, $\rho^n(f,\C)$ is a well-defined complex representation of $\Pi(S_\top)$ and $\F^p\H^n(f)$ is a complex distribution in $\rho:=\rho^n(f,\C)$ as clearly, $(\F^p\H^n(f))(s) = \F^p\H^n(X_s)$ is a complex vector subspace of $\rho_0(s)=\H^n(X_s,\C)$ for all $s\in S$.
\end{definition}

\begin{lemma}
 \label{l:f2h2compare}
 Let $X$ be a complex space of Fujiki class $\sC$ having rational singularities and satisfying $4\leq\codim(\Sing(X),X)$.
 \begin{enumerate}
  \item \label{l:f2h2compare-h2} The mapping
  \[
   j^* \colon \H^2(X,\C) \to \H^2(X_\reg,\C)
  \]
  induced by the inclusion morphism $j\colon X_\reg \to X$ is one-to-one.
  \item \label{l:f2h2compare-f2h2} The composition
  \[
   \H^2(X,\C) \to \H^2(X_\reg,\C) \to \sH^2(X_\reg)
  \]
  restricts to a bijection
  \[
   \F^2\H^2(X) \to \F^2\sH^2(X_\reg).
  \]
 \end{enumerate}
\end{lemma}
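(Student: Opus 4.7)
The plan is to reduce both statements to facts about a compact Kähler manifold by passing to a resolution of singularities. Since $X$ is of Fujiki class $\sC$, one may choose a resolution $f\colon \tilde X\to X$ such that $\tilde X$ is a compact Kähler manifold and such that $E := f^{-1}(\Sing(X))$ is a simple normal crossings divisor in $\tilde X$ (possibly after further blow-ups). Writing $U := \tilde X\setminus E$, the restriction $f' := f|U\colon U\to X_\reg$ is an isomorphism of complex manifolds. Letting $j\colon X_\reg\to X$ and $\tilde j\colon U\to \tilde X$ denote the corresponding open immersions, functoriality of cohomology yields the identity
\[
 (f')^* \circ j^* = \tilde j^* \circ f^*
\]
of maps $\H^2(X,\C)\to \H^2(U,\C)$.

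For part (a), since $(f')^*$ is a bijection and $f^*\colon \H^2(X,\C)\to \H^2(\tilde X,\C)$ is injective by Proposition \ref{p:resh2inj}, the injectivity of $j^*$ is equivalent to the assertion $\im(f^*)\cap \ker(\tilde j^*)=0$ inside $\H^2(\tilde X,\C)$. The Gysin exact sequence for the open immersion $\tilde j$ identifies $\ker(\tilde j^*)$ with the subspace spanned by the fundamental classes $[E_i]$ of the irreducible divisorial components of $E$, each of Hodge type $(1,1)$. The hypothesis $\codim(\Sing(X),X)\geq 4$ forces every such $E_i$ to satisfy $\dim f(E_i)\leq \dim(X)-4<\dim E_i$, so $f$ contracts each $E_i$ with positive-dimensional generic fibers. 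For $c\in \H^2(X,\C)$ with $f^*(c)=\sum_i a_i[E_i]$, the projection formula yields $f^*(c)\cdot \gamma=c\cdot f_*(\gamma)=0$ for every curve $\gamma$ contracted by $f$; choosing such $\gamma$'s within the fibers of $f|E$ and invoking the classical negativity of the intersection matrix of exceptional divisorial components over a contracted fiber (Grauert-Mumford), one obtains $a_i=0$ for all $i$, whence $f^*(c)=0$ and $c=0$ by injectivity of $f^*$.

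For part (b), the injectivity of the restricted map follows from part (a) together with the canonical de Rham identification $\H^2(X_\reg,\C)\iso \sH^2(X_\reg)$ available for the complex manifold $X_\reg$. Under this identification, $\F^2\sH^2(X_\reg)$ is exactly the image of the space of closed global holomorphic $2$-forms on $X_\reg$, as one sees by inspecting the \v Cech double complex of $\sigma^{\geq 2}\Omega^\kdot_{X_\reg}$: in total degree $2$ the only contribution comes from closed sections in $\Omega^2_{X_\reg}(X_\reg)$, with no coboundaries. Any class in $\F^2\H^2(X)$ is realized on $\tilde X$, via Proposition \ref{p:f2h2}, by a closed global holomorphic $2$-form $\rho$, and the restriction of $\rho$ to $U\iso X_\reg$ is a closed holomorphic $2$-form representing the image class; hence the image lies in $\F^2\sH^2(X_\reg)$. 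For surjectivity, let $a=[\sigma]\in \F^2\sH^2(X_\reg)$ with $\sigma\in \Omega^2_X(X_\reg)$ closed. Theorem \ref{t:flenner} applies since $2+1<4\leq \codim(\Sing(X),X)$, producing an extension $\rho\in \Omega^2_{\tilde X}(\tilde X)$ of $\sigma$ with respect to $f$; since $\dd\rho$ is a holomorphic $3$-form on $\tilde X$ vanishing on the dense open set $U$, we have $\dd\rho=0$ throughout, whence $[\rho]\in \F^2\H^2(\tilde X)$. Proposition \ref{p:f2h2} then delivers a unique $c\in \F^2\H^2(X)$ with $f^*(c)=[\rho]$, and the commutativity established at the outset forces the image of $c$ in $\F^2\sH^2(X_\reg)$ to be precisely $a$.

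The principal difficulty lies in the intersection-theoretic step of part (a): ruling out nontrivial combinations of exceptional divisor classes in the image of $f^*$. Our approach depends on Grauert-Mumford negativity of the intersection form restricted to the span of exceptional $(1,1)$-classes, and the hypothesis $\codim(\Sing(X),X)\geq 4$ is indispensable here, as it guarantees that every divisorial component of $E$ is genuinely contracted by $f$ and hence cannot arise as the pullback of an honest divisor class on $X$.
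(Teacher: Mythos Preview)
Your proof follows essentially the same route as the paper's: pass to a resolution with simple normal crossing exceptional divisor, use the relative cohomology sequence to identify $\ker(\tilde j^*)$ with the span of the classes $[E_i]$, and for part~(b) combine Flenner's extension theorem with Proposition~\ref{p:f2h2}. The one place you do more than the paper is in justifying $\im(f^*)\cap\C\langle[E_i]\rangle=\{0\}$: the paper simply asserts this, whereas you argue via intersection with contracted curves. Your argument is correct in substance, but what you call ``Grauert--Mumford negativity'' is, in the higher-dimensional setting, the negativity lemma of birational geometry (an $f$-numerically trivial exceptional $\RR$-divisor is zero), and to invoke it cleanly you should take $f$ projective and split into real and imaginary parts. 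Note also that this step only needs $\codim(\Sing X)\geq 2$, so that each $E_i$ is genuinely contracted; the hypothesis $\geq 4$ enters solely through Flenner in part~(b), and calling it ``indispensable'' for part~(a) overstates its role there.
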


\begin{proof}
 \ref{l:f2h2compare-h2}). Let $f \colon W\to X$ be a resolution of singularities such that the exceptional locus $E$ of $f$ is a simple normal crossing divisor in $W$ and $f$ induces by restriction an isomorphism
 \[
  f' \colon W\setminus E \to X_\reg
 \]
 of complex spaces. Then, by Proposition \ref{p:resh2inj}, since $X$ has rational singularities, we know that $f$ induces a one-to-one map
 \[
  f^* \colon \H^2(X,\C) \to \H^2(W,\C)
 \]
 on complex cohomology. Denote
 \begin{align*}
  & i \colon W\setminus E \to W, \\
  & i' \colon (W,\emptyset) \to (W,W\setminus E)
 \end{align*}
 the respective inclusion morphisms. Then the sequence
 \[
  \xymatrix{
   \H^2(W,W\setminus E;\C) \ar[r]^-{i'^*} & \H^2(W,\C) \ar[r]^-{i^*} & \H^2(W\setminus E,\C)
  }
 \]
 is exact in $\Mod(\C)$. Thus the kernel of $i^*$ is precisely the $\C$-linear span of the fundamental cohomology classes $[E_\nu]$ of the irreducible components $E_\nu$ of $E$. So, since
 \[
  f^*[\H^2(X,\C)] \cap \C\langle [E_\nu] \rangle = \{0\},
 \]
 we see that
 \[
  (f \circ i)^* = i^* \circ f^* \colon \H^2(X,\C) \to \H^2(W\setminus E,\C)
 \]
 is one-to-one. Therefore, $j^*$ is one-to-one taking into account that $f'$ furnishes an isomorphism $f\circ i \to j$ in the overcategory $\An_{/X}$.

 \ref{l:f2h2compare-f2h2}). By Proposition \ref{p:f2h2}, we know that
 \[
  f^*|\F^2\H^2(X) \colon \F^2\H^2(X) \to \F^2\H^2(W)
 \]
 is a bijection. By the functoriality of base change maps, we know that the following diagram, where the vertical arrows denote the respective inclusions, commutes in $\Mod(\C)$:
 \begin{equation} \label{e:f2h2compare-1}
  \xysquare{\F^2\sH^2(W)}{\F^2\sH^2(W\setminus E)}{\sH^2(W)}{\sH^2(W\setminus E)}{}{}{}{}
 \end{equation}
 
 Let $p \in \{1,2\}$ and $c \in \sH^{p,0}(W\setminus E)$. Then by Theorem \ref{t:flenner}, there exists one, and only one, element $b \in \sH^{p,0}(W)$ such that $b$ is sent to $c$ by the restriction mapping
 \[
  \sH^{p,0}(W) \to \sH^{p,0}(W\setminus E).
 \]
 Since $W$ is a complex manifold of Fujiki class $\sC$, the Frölicher spectral sequene of $W$ degenerates at sheet $1$, whence, specifically, $b$ corresponds to a closed Kähler $p$-differential on $W$. In consequence, $c$ corresponds to a closed Kähler $p$-differential on $W\setminus E$. Varying $c$ and $p$, we deduce that the Frölicher spectral sequence of $W\setminus E$ degenerates in entries $(2,0)$ at sheet $1$. Thus, there exist isomorphisms such that the diagram
 \[
  \xymatrix{
   \F^2\sH^2(W) \ar[r] \ar@{.>}[d]_\sim & \F^2\sH^2(W\setminus E) \ar@{.>}[d]^\sim \\
   \sH^{2,0}(W) \ar[r] & \sH^{2,0}(W\setminus E)
  }
 \]
 commutes in $\Mod(\C)$.
 
 As we have already noticed, the Hodge base change
 \[
  \sH^{2,0}(W) \to \sH^{2,0}(W\setminus E)
 \]
 is an isomorphism. Thus from the commutativity of the diagram in \eqref{e:f2h2compare-1} we deduce that the composition of morphisms
 \[
  \H^2(W,\C) \to \sH^2(W) \to \sH^2(W\setminus E)
 \]
 restricts to an isomorphism
 \[
  \F^2\H^2(W) \to \F^2\sH^2(W\setminus E);
 \]
 note here that by the definition of the Hodge structure $\H^2(W)$, the Hodge filtered piece $\F^2\H^2(W)$ is the inverse image of $\F^2\sH^2(W)$ under the canonical map
 \[
  \H^2(W,\C) \to \sH^2(W).
 \]
 
 Finally, we observe that since $f' \colon W\setminus W \to X_\reg$ is an isomorphism of complex spaces, $f'$ induces an isomorphism
 \[
  f'^* \colon \F^2\sH^2(X_\reg) \to \F^2\sH^2(W\setminus E)
 \]
 of complex vector spaces. Thus the commutativity in $\Mod(\C)$ of the diagram
 \[
  \xymatrix{
   \H^2(X,\C) \ar[r] \ar[d]_{f^*} & \H^2(X_\reg,\C) \ar[r] \ar[d]_{f'^*} & \sH^2(X_\reg) \ar[d]^{f'^*} \\
   \H^2(W,\C) \ar[r] & \H^2(W\setminus E,\C) \ar[r] & \sH^2(W\setminus W)
  }
 \]
 yields our claim.
\end{proof}

\begin{proposition}
 \label{p:pmcompare}
 Let $f\colon X\to S$ be a proper, flat morphism of complex spaces such that $S$ is smooth and simply connected, $\H^2(f,\C)$ is a locally constant sheaf on $S_\top$, and the fibers of $f$ have rational singularities, are of Kähler type, and have singular loci of codimension $\geq 4$. Furthermore, let $t\in S$. Define $f'\colon X'\to S$ to be the submersive share of $f$, set $\cP:=\cP^{2,2}_t(f)_\MHS$ and $\cP':=\cP^{2,2}_t(f')$, and denote
 \[
  \phi_t \colon \H^2(X_t,\C) \to \H^2(X'_t,\C) \to \sH^2(X'_t)
 \]
 the composition of canonical mappings. Then, for all $s\in S$, we have
 \[
  \cP'(s) = \phi_t[\cP(s)].
 \]
\end{proposition}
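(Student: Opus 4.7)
The strategy is to construct a morphism $\theta\colon \H^2(f,\C) \to H$ of locally constant $\C$-sheaves on $S_\top$, where $H := \Hor_S(\sH^2(f'),\nabla^2_\GM(f'))$ is the local system of horizontal sections underlying $\cP'$ (well-defined by Proposition \ref{p:gmconn}, Corollary \ref{c:froertlsing}, and Proposition \ref{p:rhc}), and to verify that at every point $s\in S$, $\theta_s$ is compatible with the canonical fiber-level comparison $\phi_s\colon \H^2(X_s,\C)\to\sH^2(X'_s)$ via the appropriate base-change isomorphisms. Once this is done, the proposition will fall out of Lemma \ref{l:f2h2compare}\ref{l:f2h2compare-f2h2}) together with the fact that, on a simply connected base, horizontal transport of sections of constant sheaves reduces to passing through global sections.

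To construct $\theta$, I compose two natural morphisms of sheaves of $\C$-modules on $S_\top$: the morphism $\R^2f^\C_*(\C_X) \to \R^2(f')^\C_*(\C_{X'})$ induced by the open immersion $j\colon X' = X\setminus\Sing(f) \to X$, and the de Rham comparison morphism $\R^2(f')^\C_*(\C_{X'}) \to \R^2\overline{f'}_*(\overline\Omega^\kdot_{f'}) = \sH^2(f')$, the latter coming from the canonical morphism $(f')^{-1}\O_S \to \overline\Omega^\kdot_{f'}$, which is a quasi-isomorphism by the relative holomorphic Poincaré lemma (valid since $f'$ is submersive). That the composition factors through the subsheaf $H\subset\sH^2(f')$ is the standard fact that classes arising from $\C_{X'}$ lift through $(f')^{-1}\O_S$ and are therefore killed by the Gauß-Manin connecting homomorphism built on the Koszul filtration of the absolute de Rham complex (\cf Notation \ref{not:gm} and Setup \ref{set:cdlemma}). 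The central bookkeeping step is then to verify that, for every $s\in S$, the square
\[
\xymatrix@C=3.5pc{
(\H^2(f,\C))_s \ar[r]^-{\theta_s} \ar[d]^{\wr} & H_s \ar[d]^{\wr} \\
\H^2(X_s,\C) \ar[r]_-{\phi_s} & \sH^2(X'_s)
}
\]
commutes in $\Mod(\C)$, where the left vertical arrow is proper base change (an isomorphism since $f$ is proper), and the right vertical arrow is the composition of $H_s \to (\sH^2(f'))(s)$ (an isomorphism by Proposition \ref{p:rhc}\ref{p:rhc-iso})) with the Hodge–de Rham base change $(\sH^2(f'))(s) \to \sH^2(X'_s)$ (an isomorphism by Corollary \ref{c:froertlsing}). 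This commutativity follows from the functoriality of the three base-change natural transformations involved, together with the naturality of $\theta$ with respect to $j$ and its fiberwise analogue $X'_s \hookrightarrow X_s$.

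With this square established, simple connectedness of $S_\top$ makes both $\H^2(f,\C)$ and $H$ into constant sheaves, so for any $s_0,s_1\in S$ one obtains transport isomorphisms $\sigma_{s_0,s_1}\colon\H^2(X_{s_0},\C)\to\H^2(X_{s_1},\C)$ and $\tau_{s_0,s_1}\colon\sH^2(X'_{s_0})\to\sH^2(X'_{s_1})$ by passing through global sections and applying the base-change identifications above, and the stalk-level square upgrades to the identity $\tau_{s_0,s_1}\circ\phi_{s_0} = \phi_{s_1}\circ\sigma_{s_0,s_1}$. By Definition \ref{d:pmmhs} and Construction \ref{con:pmhol}, we have $\cP(s)=\sigma_{s,t}[\F^2\H^2(X_s)]$ and $\cP'(s)=\tau_{s,t}[\F^2\sH^2(X'_s)]$, so
\[
\phi_t[\cP(s)] = \phi_t[\sigma_{s,t}[\F^2\H^2(X_s)]] = \tau_{s,t}[\phi_s[\F^2\H^2(X_s)]].
\]
Since $X_s$ is compact of Kähler type, has rational singularities, and satisfies $\codim(\Sing(X_s),X_s)\geq 4$, and since $X'_s \iso (X_s)_\reg$ by the flatness of $f$, Lemma \ref{l:f2h2compare}\ref{l:f2h2compare-f2h2}) applies to show that $\phi_s$ restricts to a bijection $\F^2\H^2(X_s)\to \F^2\sH^2(X'_s)$. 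Therefore $\phi_t[\cP(s)] = \tau_{s,t}[\F^2\sH^2(X'_s)] = \cP'(s)$, as desired. The main obstacle is the stalk-level compatibility square: it is conceptually routine but requires careful tracking of how the three base-change natural transformations and the open-immersion-induced morphism of direct image sheaves interact.
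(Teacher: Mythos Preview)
Your proposal is correct and follows essentially the same approach as the paper: both construct a morphism of locally constant sheaves $\H^2(f,\C)\to H'$ (the paper's $\psi$, your $\theta$) by composing the open-immersion pullback with the de Rham comparison and factoring through horizontal sections, then verify the stalkwise compatibility square with the $\phi_s$ via functoriality of base change, and finally invoke Lemma~\ref{l:f2h2compare}\ref{l:f2h2compare-f2h2}) to identify $\phi_s[\F^2\H^2(X_s)]$ with $\F^2\sH^2(X'_s)$. The only cosmetic difference is that the paper phrases the transport via representations $\rho$, $\rho'$ of the fundamental groupoid rather than directly via transport isomorphisms $\sigma_{s,t}$, $\tau_{s,t}$, but this is the same content.
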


\begin{proof}
 Set $\rho := \rho^2(f,\C)$ (\cf Construction \ref{con:repcoh}); note that this makes sense as $\H^2(f,\C)$ is a locally constant sheaf on $S_\top$ and $f$ is proper. By Proposition \ref{c:froertlsing} \ref{c:froertlsing-lff}) and \ref{c:froertlsing-degen}), the algebraic de Rahm module $\sH^2(f')$ is locally finite free on $S$. Let $H'$ be the module of horizontal sections of
 \[
  \nabla^2_\GM(f') \colon \sH^2(f') \to \Omega^1_S \otimes_S \sH^2(f')
 \]
 on $S$ (\cf Notations \ref{not:gm} and \ref{not:hor}). Then by Proposition \ref{p:gmconn} \ref{p:gmconn-f}) and Proposition \ref{p:rhc} \ref{p:rhc-locsys}), $H'$ is a locally constant sheaf on $S$. Define $\rho''$ to be $\C$-representation of $\Pi(S)$ associated $H'$ (\cf Construction \ref{con:locsysrep} and Remark \ref{r:locsysrep}). For any $s\in S$, define $\beta'_s$ to be the composition of the following morphisms
 \[
  (H')_s \to (\sH^2(f'))_s \to (\sH^2(f'))(s) \to \sH^2(X'_s)
 \]
 in $\Mod(\C)$, where the first arrow stands for the stalk-at-$s$ morphism on $S_\top$ associated to the inclusion morphism $H' \to \sH^2(f')$, the second arrow is the evident ``quotient morphism'', and the third arrow stands for the de Rham base change in degree $2$ for $f'$ at $s$. Define $(\rho')_0$ to be the function on $|S|$ given by the assignment
 \[
  s \mto \sH^2(X'_s).
 \]
 Define $(\rho')_1$ to be the unique function on $|S|\times |S|$ such that, for all $(r,s)\in |S|\times |S|$, $(\rho')_1(r,s)$ is the unique function on $(\Pi(S))_1(r,s)$ satisfying, for all $a \in (\Pi(S))_1(r,s)$:
 \[
  ((\rho')_1(r,s))(a) = \beta'_s \circ ((\rho'')_1(r,s))(a) \circ (\beta'_r)^{-1}.
 \]
 Set $\rho' := ((\rho')_0,(\rho')_1)$. Then $\rho'$ is functor from $\Pi(S)$ to $\Mod(\C)$.
 
 Define $\psi$ to be the composition
 \[
  \H^2(f,\C) \to \H^2(f',\C) \to H'
 \]
 of morphisms of sheaves of $\C_S$-modules on $S$, where the first arrow signifies the evident base change map and the second arrow denotes the unique morphism from $\H^2(f',\C)$ to $H'$ which factors the canonical morphism $\H^2(f',\C) \to \sH^2(f')$ through the inclusion $H' \to \sH^2(f')$. Let $\phi = (\phi_s)_{s\in S}$ be the family of morphisms
 \[
  \phi_s \colon \H^2(X_s,\C) \to \H^2(X'_s,\C) \to \sH^2(X'_s);
 \]
 note that this fits with the notation ``$\phi_t$'' introduced in the statement of the proposition. Then by the functoriality of base change maps, the following diagram commutes in $\Mod(\C)$ for all $s\in S$:
 \[
  \xymatrix{
   (\H^2(f,\C))_s \ar[r]^{\psi_s} \ar[d]_{\beta_s} & (H')_s \ar[d]^{\beta'_s} \\
   \H^2(X_s,\C) \ar[r]_{\phi_s} & \sH^2((X')_s)
  }
 \]
 
 In consequence,
 \[
  \phi \colon \rho \to \rho'
 \]
 is a natural transformation of functors from $\Pi(S)$ to $\Mod(\C)$. Thus employing Lemma \ref{l:f2h2compare}, we see that, for all $s\in S$, letting $a$ stand for the unique element of $(\Pi(S))_1(s,t)$, we have
 \begin{align*}
  \phi_t[\cP(s)] & = \phi_t[\rho_1(s,t)(a)[\F^2\H^2(X_s)]] = (\phi_t\circ \rho_1(s,t)(a))[\F^2\H^2(X_s)] \\
  & = (\rho'_1(s,t)(a)\circ \phi_s)[\F^2\H^2(X_s)] = \rho'_1(s,t)(a)[\phi_s[\F^2\H^2(X_s)]] \\
  & = \rho'_1(s,t)(a)[\F^2\sH^2(X'_s)] = \cP'(s),
 \end{align*}
 which is what had to be proven.
\end{proof}

\begin{corollary}
 \label{c:pmmhsholo}
 Let $f$ and $t$ be as in Proposition \ref{p:pmcompare}. Then there exists a unique morphism of complex spaces
 \[
  \cP^+ \colon S \to G := \Gr(\H^2(X_t,\C))
 \]
 such that the underlying function of $\cP^+$ is precisely $\cP := \cP^{2,2}_t(f)_\MHS$; in particular, $\cP$ is a holomorphic mapping from $S$ to $G$. Moreover, the diagram
 \[
  \xymatrix{
   & S \ar[ld]_{\cP^+} \ar[dr]^{\cP'} \\
   \Gr(\H^2(X_t,\C)) \ar[rr]_{(\phi_t)_*} && \Gr(\sH^2(X'_t))
  }
 \]
 where $f'$, $\cP'$ and $\phi_t$ have the same meaning as in Proposition \ref{p:pmcompare} and $(\phi_t)_* := \Gr(\phi_t)$, commutes in the category of complex spaces.
\end{corollary}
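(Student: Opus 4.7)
The plan is to exploit the pointwise identity $\cP'(s) = \phi_t[\cP(s)]$ supplied by Proposition \ref{p:pmcompare}, and to interpret $\Gr(\phi_t)$ as a closed immersion of complex spaces through which the (already holomorphic) period mapping $\cP'$ factors. The first step will be to verify that $\phi_t$ is injective as a $\C$-linear map. The hypotheses on $f$ transported to the fiber show that $X_t$ is a Kähler, hence Fujiki class $\sC$, complex space with rational singularities and $\codim(\Sing(X_t),X_t) \geq 4$, so Lemma \ref{l:f2h2compare} \ref{l:f2h2compare-h2}) delivers injectivity of $j^* \colon \H^2(X_t,\C) \to \H^2((X_t)_\reg,\C)$. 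Composing with the canonical isomorphism $\H^2(X'_t,\C) \to \sH^2(X'_t)$ between Betti and algebraic de Rham cohomology for the complex manifold $X'_t$ (holomorphic Poincaré lemma), and noting that the flatness of $f$ provides $X'_t \iso (X_t)_\reg$, I conclude that $\phi_t$ is injective.

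Second, I will invoke the functoriality of the Grassmannian construction of Notation \ref{not:gr} with respect to linear injections to see that
\[
 (\phi_t)_* = \Gr(\phi_t) \colon \Gr(\H^2(X_t,\C)) \to \Gr(\sH^2(X'_t))
\]
is a closed immersion of complex spaces, whose image $Z$ consists of those subspaces of $\sH^2(X'_t)$ contained in $\phi_t[\H^2(X_t,\C)]$. Concretely, working dimension by dimension and using standard Plücker-type charts, one sees that $\Gr(\phi_t)$ restricts to a closed immersion $\Gr_d(\H^2(X_t,\C)) \to \Gr_d(\sH^2(X'_t))$ on each component, and taking the disjoint union yields the claim. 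This is the only genuinely technical ingredient, and although routine it is the mild obstacle worth flagging.

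For the last step I will factor $\cP'$ through $Z$. By Theorem \ref{t:lt} (via Notation \ref{not:pm} \ref{not:pm-bc})), the map $\cP' \colon S \to \Gr(\sH^2(X'_t))$ is a morphism of complex spaces; by Proposition \ref{p:pmcompare}, $\cP'(s) = \phi_t[\cP(s)] \in Z$ for every $s \in S$. Since $S$ is reduced (it is smooth) and $Z$ is a closed complex subspace, this set-theoretic factorization lifts uniquely to a morphism of complex spaces $S \to Z$, and postcomposing with the inverse of the isomorphism $\Gr(\phi_t) \colon \Gr(\H^2(X_t,\C)) \to Z$ produces the sought morphism $\cP^+ \colon S \to \Gr(\H^2(X_t,\C))$. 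By injectivity of $\phi_t$ its underlying set-theoretic function is $\cP$, and by construction $\Gr(\phi_t) \circ \cP^+ = \cP'$, which is exactly the asserted commutativity. Uniqueness of $\cP^+$ is automatic: $\Gr(\H^2(X_t,\C))$ is a complex manifold, hence reduced, and $S$ is reduced, so any two morphisms of complex spaces with the same underlying function agree.
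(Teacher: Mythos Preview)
Your argument is correct and follows essentially the same route as the paper's own proof: establish that $\phi_t$ is injective, observe that $(\phi_t)_*$ is a closed embedding of Grassmannians, and factor the holomorphic map $\cP'$ through it using that $S$ is reduced. One minor point: your citation of Theorem~\ref{t:lt} for the holomorphicity of $\cP'$ is off, since that theorem carries the extra hypotheses that $f$ be semi-universal in $t$ and $X_t$ be symplectic; the well-definedness of $\cP' = \cP^{2,2}_t(f')$ as a morphism of complex spaces follows already from Notation~\ref{not:pm}~\ref{not:pm-bc}) once the freeness and base-change conditions are verified via Corollary~\ref{c:froertlsing}, which applies under the standing hypotheses of Proposition~\ref{p:pmcompare}. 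Also, for uniqueness only the reducedness of $S$ is needed; the target being a manifold is irrelevant.
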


\begin{proof}
 By Proposition \ref{p:pmcompare}, we know that $|\cP'| = |(\phi_t)_*| \circ \cP$ (in the plain set-theoretic sense) since, for all $l \in \Gr(\H^2(X_t,\C))$, we have $|(\phi_t)_*|(l) = \phi_t[l]$ by the definition of $(\phi_t)_*$. Therefore, $\cP'(S) \subset (\phi_t)_*[G]$. As $\phi_t$ is a monomorphism of complex vector spaces, $(\phi_t)_*$ is a closed embedding of complex spaces. Hence, given that the complex space $S$ is reduced (for it is smooth by assumption), there exists a morphism of complex spaces $\cP^+\colon S\to G$ such that $(\phi_t)_* \circ \cP^+ = \cP'$. From this we obtain
 \[
  |(\phi_t)_*| \circ |\cP^+| = |\cP'| = |(\phi_t)_*| \circ \cP,
 \]
 which implies $|\cP^+| = \cP$ as $|(\phi_t)_*|$ is a one-to-one function. This proves the existence of $\cP^+$.
 
 When $\cP^+_1$ is another morphism of complex spaces from $S$ to $G$ such that $|\cP^+_1| = \cP$, we have $|\cP^+_1| = |\cP^+|$ and thus $\cP^+_1 = \cP^+$ by the reducedness of $S$. This shows the uniqueness of $\cP^+$.
\end{proof}

\begin{proposition}
 \label{p:ltquad}
 Let $f\colon X\to S$ be a proper, flat morphism of complex spaces and $t\in S$ such that $X_t$ is connected and symplectic, $\Omega^2_{X_t}((X_t)_\reg)$ is $1$-dimensional, $S$ is simply connected, $\H^2(f,\C)$ is a locally constant sheaf on $S$, and $f$ is fiberwise of Fujiki class $\sC$.
 \begin{enumerate}
  \item \label{p:ltquad-cont} When $\cP:=\cP^{2,2}_t(f)_\MHS$ is a continuous mapping from $S$ to
  \[
   G_1 := \Gr(1,\H^2(X_t,\C)),
  \]
  there exists a neighborhood $V$ of $t$ in $S$ such that $\cP(V)\subset Q_{X_t}$.
  \item \label{p:ltquad-hol} When $S$ is smooth and $\cP$ is a holomorphic mapping from $S$ to $G_1$, we have $\cP(S)\subset Q_{X_t}$.
 \end{enumerate}
\end{proposition}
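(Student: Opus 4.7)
The plan is to prove (a) by combining Corollary \ref{c:bbformtopint} with a ``local constancy of integration over fibres'' argument, and then to deduce (b) from (a) by analytic continuation. Since $S$ is simply connected and $\H^2(f,\C)$ is locally constant, it is in fact the constant sheaf with stalk $L:=\H^2(X_t,\C)$, giving parallel-transport isomorphisms $\phi_{\sigma,t}\colon\H^2(X_\sigma,\C)\to L$. The space $X_t$ admits a normed symplectic class $w_t$ (apply Proposition \ref{p:sympclex}, then rescale using that $\int_{X_t}w_0^r\bar w_0^r>0$, which follows from Lemma \ref{l:integralmod} and Proposition \ref{p:sympvol+} applied to a resolution); this $w_t$ spans the one-dimensional space $\F^2\H^2(X_t)=\H^{2,0}(X_t)$. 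Since $\cP$ has codomain $G_1$, the space $\F^2\H^2(X_\sigma)$ is one-dimensional for every $\sigma$; pulling back a local trivialisation of the tautological line bundle of $G_1$ around $\cP(t)=\C w_t$ along the continuous map $\cP$, I may choose on a small connected neighbourhood $V$ of $t$ a continuous family of generators $v_\sigma\in\F^2\H^2(X_\sigma)$ with $v_t=w_t$; set $u_\sigma:=\phi_{\sigma,t}(v_\sigma)\in L$, continuous in $\sigma$ with $u_t=w_t$.

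The key ingredient is the following claim: for any two sections $\alpha,\beta\in\Gamma(V,\H^2(f,\C))$ the function $\sigma\mapsto\int_{X_\sigma}(\alpha_\sigma^{r+1}\beta_\sigma^{r-1})$ is locally constant on $V$. This is because the cup product lifts to a sheaf map $\H^2(f,\C)^{\otimes 2r}\to\H^{4r}(f,\C)$ whose target, restricted to a sufficiently small neighbourhood of $t$, is locally constant and is canonically identified with $\C_S$ by fibre-wise integration (for $\sigma$ near $t$ the fibre $X_\sigma$ is compact, connected and of pure complex dimension $2r$ by flatness of $f$, upper semi-continuity of $\dim\H^0(X_\sigma,\O_{X_\sigma})$, and normality hence pure-dimensionality of $X_t$). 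Granting this, fix $s\in V$ and let $\alpha^{(s)}$, respectively $\beta$, be the unique section of $\H^2(f,\C)|_V$ which in the trivialisation $\H^2(f,\C)|_V\iso L_V$ is the constant $u_s$, respectively $\bar w_t$. Then $\alpha^{(s)}_s=v_s$, and the Hodge-filtration estimate $v_s^{r+1}\in\F^{2r+2}\H^{2r+2}(X_s)=0$ (because $\dim X_s=2r$ forces $\F^p\H^n(X_s)=0$ for $p>2r$) causes the integrand at $\sigma=s$ to vanish; hence by local constancy $\int_{X_t}(u_s^{r+1}\bar w_t^{r-1})=0$. Applying Corollary \ref{c:bbformtopint} to $X_t$ with $a=u_s$, and writing $u_s^{(2,0)}=\lambda_s w_t$ (possible because $\H^{2,0}(X_t)=\C w_t$), yields $(r+1)\lambda_s^{r-1}q_{X_t}(u_s)=0$. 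Since $\lambda_t=1$ and $\lambda_s$ depends continuously on $s$, $\lambda_s\neq 0$ on a smaller neighbourhood $V'\subset V$ of $t$, so $q_{X_t}(u_s)=0$ and thus $\cP(V')\subset Q_{X_t}$, proving (a).

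Part (b) is then a short analytic-continuation argument: $A:=\cP^{-1}(Q_{X_t})$ is a closed analytic subset of $S$ (as the holomorphic preimage of the closed analytic subset $Q_{X_t}\subset G_1$), and by (a) it contains a non-empty open subset $V'$ of $S$. Because $S$ is smooth and simply connected it is a connected complex manifold, hence irreducible as a complex space, and the only closed analytic subset of an irreducible complex space which contains a non-empty open subset is the whole space. Therefore $A=S$, and $\cP(S)\subset Q_{X_t}$.

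The main obstacle will be a clean justification of the local-constancy claim, that is, proving that $\H^{4r}(f,\C)$ is locally constant near $t$ and that fibre-wise integration realises it as $\C_S$ there. This is where the possibly singular fibres need to be handled carefully, relying on properness and flatness of $f$ together with the fact—itself to be verified—that the fibres $X_\sigma$ close to $X_t$ are compact, connected and pure of complex dimension $2r$.
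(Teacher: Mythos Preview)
Your proof is correct and takes essentially the same approach as the paper: parallel transport via the constant sheaf $\H^2(f,\C)$, the vanishing $v_s^{r+1}=0$ for $v_s\in\F^2\H^2(X_s)$, and Corollary~\ref{c:bbformtopint} on $X_t$, followed by analytic continuation for part (b). The paper differs only cosmetically---it sets $V:=\cP^{-1}(G_1\setminus E)$ directly (with $E$ the hyperplane of classes having vanishing $(2,0)$-component) rather than choosing a continuous section $v_\sigma$, and it phrases your local-constancy claim as the injectivity of $(\H^{4r}(f,\C))(S)\to\H^{4r}(X_s,\C)$, which it asserts without further comment; so your caution about this point is, if anything, more scrupulous than the paper's own argument.
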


\begin{proof}
 \ref{p:ltquad-cont}). Let $r$ be half the dimension of $X_t$. As $X_t$ is nonempty, connected, symplectic, and of Fujiki class $\sC$, there exists a normed symplectic class $w$ on $X_t$ (\cf Proposition \ref{p:sympclex} and Definition \ref{d:normedcl}). We know that $w$ is a nonzero element of $\F^2\H^2(X_t)$. Since $\cP$ has image lying in the Grassmannian of $1$-dimensional subspaces of $\H^2(X_t,\C)$ and $\cP(t) = \F^2\H^2(X_t)$, we see that $\F^2\H^2(X_t)$ is $1$-dimensional, whence generated by $w$. By Proposition \ref{p:symph2}, we have:
 \[
  \H^2(X_t,\C) = \H^{0,2}(X_t) \oplus \H^{1,1}(X_t) \oplus \H^{2,0}(X_t).
 \]
 Define $E$ to be the hyperplane in $G_1$ which is spanned by $\H^{0,2}(X_t) \oplus \H^{1,1}(X_t)$. Moreover, put $V:=\cP^{-1}(G_1\setminus E)$.
 
 Let $s\in V$ be arbitrary. Since $\F^2\H^2(X_s)$ is $1$-dimensional, there exists a nonzero element $v \in \F^2\H^2(X_s)$. As $H := \H^2(f,\C)$ is a locally constant sheaf on $S$ and $S$ is simply connected, the stalk map $H(S)\to H_s$ is one-to-one and onto. As $f$ is proper, the base change map $H_s\to \H^2(X_s,\C)$ is one-to-one and onto, too. Thus there exists a unique $\gamma\in H(S)$ which is sent to $v$ by the composition of the latter two functions. Write $a$ for the image of $\gamma$ in $\H^2(X_t,\C)$. Then, by the definition of $\cP$, we have
 \[
  \cP(s) = \C a \subset \H^2(X_t,\C).
 \]
 Thus, by the definition of $Q_{X_t}$ (\cf Definition \ref{d:bbquad}), we have $\cP(s)\in Q_{X_t}$ if and only if $q_{X_t}(a)=0$.
 
 According to the Hodge decomposition on $\H^2(X_t,\C)$, there exist complex numbers $\lambda$ and $\lambda'$ as well as an element $b\in\H^{1,1}(X_t)$ such that
 \[
  a = \lambda w+b+\lambda'\bar w.
 \]
 By Proposition \ref{p:bbformtopint}, the following identity holds:
 \begin{equation} \label{e:ltquad-0}
  \int_{X_t}(a^{r+1}{\bar w}^{r-1})=(r+1)\lambda^{r-1}q_{X_t}(a).  
 \end{equation}
 Denote $\delta$ the unique lift of $\bar w$ with respect to the function $H(S)\to \H^2(X_t,\C)$. Denote $c$ the image of $\delta$ under $H(S)\to \H^2(X_s,\C)$. As $\dim(X_s)=2r$ (due to the flatness of $f$) and $v \in \F^2\H^2(X_s)$, we see that $v^{r+1}=0$ in $\H^*(X_s,\C)$. In consequence, we have $v^{r+1}c^{r-1}=0$ in $\H^*(X_s,\C)$. As the mapping
 \[
  (\H^*(f,\C))(S) \to \H^*(X_s,\C)
 \]
 is a morphism of rings, it sends $\gamma^{r+1}\delta^{r-1}$ to $v^{r+1}c^{r-1}$. Therefore, as
 \[
  (\H^{4r}(f,\C))(S) \to \H^{4r}(X_s,\C)
 \]
 is one-to-one, we see that $\gamma^{r+1}\delta^{r-1}=0$ in $(\H^*(f,\C))(S)$. But then, $a^{r+1}{\bar w}^{r-1}=0$ in $\H^*(X_t,\C)$ as this is the image of $\gamma^{r+1}\delta^{r-1}$ under
 \[
  (\H^*(f,\C))(S) \to \H^*(X_t,\C).
 \]
 Thus the left hand side of equation \eqref{e:ltquad-0} equals zero. As $\cP(s)=\C a\notin E$, we have $\lambda\neq0$. So, we obtain $q_{X_t}(a)=0$, whence $\cP(s) \in Q_{X_t}$. As $s$ was an arbitrary element of $V$, we deduce $\cP(V) \subset Q_{X_t}$.
 
 \ref{p:ltquad-hol}). We apply \ref{p:ltquad-cont}) and conclude by means of the Identitätssatz for holomorphic functions.
\end{proof}

\begin{theorem}[Local Torelli, II]
 \label{t:ltfull}
 Let $f\colon X\to S$ be a proper, flat morphism of complex spaces such that $S$ is smooth and simply connected and the fibers of $f$ have rational singularities, are of Kähler type, and have singular loci of codimension $\geq 4$. Furthermore, let $t\in S$ and suppose that $X_t$ is connected and symplectic with $\Omega^2_{X_t}((X_t)_\reg)$ of dimension $1$ over $\C$. Define $g\colon Y\to S$ to be the submersive share of $f$, set $\cP':=\cP^{2,2}_t(g)$, and assume that the tangent map
 \[
  \T_t(\cP') \colon \T_S(t) \to \T_{\Gr(\sH^2(Y_t))}(\F^2\sH^2(Y_t))
 \]
 is an injection with $1$-dimensional cokernel. Moreover, assume that $\H^2(f,\C)$ is a locally constant sheaf on $S_\top$, and set $\cP:=\cP^{2,2}_t(f)_\MHS$.
 \begin{enumerate}
  \item \label{t:ltfull-hol} There exists one, and only one, morphism of complex spaces
  \[
   \cP^+\colon S \to G:=\Gr(\H^2(X_t,\C))
  \]
  such that $|\cP^+| = \cP$.
  \item \label{t:ltfull-quad} There exists one, and only one, morphism of complex spaces
  \[
   \bar\cP \colon S \to Q_{X_t}
  \]
  such that $j \circ \bar\cP = \cP^+$, where $j \colon Q_{X_t} \to G$ denotes the inclusion morphism.
  \item \label{t:ltfull-iso} $\bar\cP$ is locally biholomorphic at $t$.
  \item \label{t:ltfull-tgt} The tangent map
  \[
   \T_t(\cP^+) \colon \T_S(t) \to \T_G(\F^2\H^2(X_t))
  \]
  is an injection with $1$-dimensional cokernel.
  \item \label{t:ltfull-h2} The mapping
  \[
   \H^2(X_t,\C) \to \H^2((X_t)_\reg,\C)
  \]
  induced by the inclusion $(X_t)_\reg \to X_t$ is a bijection.
 \end{enumerate}
\end{theorem}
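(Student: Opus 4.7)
The plan is to handle parts (a) and (b) as holomorphicity and factorization statements, then to prove (c), (d), and (e) together via a tangent space analysis at $t$. For (a), I would invoke Corollary \ref{c:pmmhsholo}: using Proposition \ref{p:pmcompare} we have $\cP' = (\phi_t)_* \circ \cP$, where $(\phi_t)_* : G \to \Gr(\sH^2(Y_t))$ is the morphism of complex spaces induced by the canonical map $\phi_t : \H^2(X_t,\C) \to \sH^2((X_t)_\reg)$, which is injective by Lemma \ref{l:f2h2compare} \ref{l:f2h2compare-h2}). Since $(\phi_t)_*$ is a closed embedding and $S$ is reduced, $\cP$ lifts uniquely to a morphism of complex spaces $\cP^+ : S \to G$. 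For (b), I would first show that $\cP^+(S) \subset G_1 := \Gr(1,\H^2(X_t,\C))$: at every $s \in S$ the dimension of $\cP(s)$ equals that of $\cP'(s)$, which by Corollary \ref{c:froertlsing} is constant on $S$ (the rank of the locally finite free module $\F^2\sH^2(g)$) and equals $\dim \F^2\sH^2(Y_t) = \dim \F^2\H^2(X_t) = \dim \Omega^2_{X_t}((X_t)_\reg) = 1$ via Lemma \ref{l:f2h2compare} \ref{l:f2h2compare-f2h2}). Proposition \ref{p:ltquad} \ref{p:ltquad-hol}) then yields $\cP^+(S) \subset Q_{X_t}$, and since $j : Q_{X_t} \to G$ is a closed embedding, $\cP^+$ factors uniquely through a morphism $\bar\cP : S \to Q_{X_t}$.

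For the tangent analysis at $F := \F^2\H^2(X_t) = \C w$, where $w$ is a normed symplectic class on $X_t$, I first note that $Q_{X_t}$ is smooth at $F$: Proposition \ref{p:bbformpol} gives $b(w,\bar w) = 1$, so $w$ lies outside the radical of the bilinear form associated to $q_{X_t}$, whence the differential of $q_{X_t}$ is nonzero at $w$. Since $G$ coincides near $F$ with the connected component $\P(\H^2(X_t,\C))$, we have $\dim \T_G(F) = \dim \H^2(X_t,\C) - 1$ and $\dim \T_{Q_{X_t}}(F) = \dim \H^2(X_t,\C) - 2$, and the inclusion $\T_F(j): \T_{Q_{X_t}}(F) \hookrightarrow \T_G(F)$ has one-dimensional cokernel. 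A direct unwinding of the Grassmannian functoriality shows that
\[
 \T_F((\phi_t)_*) \colon \Hom(F,\H^2(X_t,\C)/F) \to \Hom(\phi_t(F),\sH^2(Y_t)/\phi_t(F))
\]
sends $\alpha$ to $\bar\phi_t \circ \alpha \circ (\phi_t|_F)^{-1}$, and is injective since $\phi_t$ is. Factoring $\cP' = (\phi_t)_* \circ j \circ \bar\cP$ and applying the hypothesis that $\T_t(\cP')$ is injective, one concludes that $\T_t(\bar\cP)$ is injective.

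The key dimension count then ties (c), (d), and (e) together. From the hypothesis on $\cP'$, $\dim \T_S(t) = \dim \sH^2(Y_t) - 2$; the injection $\T_t(\bar\cP) : \T_S(t) \hookrightarrow \T_{Q_{X_t}}(F)$ therefore forces $\dim \sH^2(Y_t) \leq \dim \H^2(X_t,\C)$, while the injectivity of $\phi_t$ provides the reverse inequality. Hence $\phi_t$ is a bijection, and since $\H^2((X_t)_\reg,\C) \to \sH^2((X_t)_\reg)$ is an isomorphism (de Rham theorem on the complex manifold $(X_t)_\reg$), this establishes (e). The equality of dimensions then upgrades $\T_t(\bar\cP)$ to an isomorphism, and since $S$ is smooth at $t$ and $Q_{X_t}$ is smooth at $F$, the inverse function theorem yields (c). Finally (d) follows from $\T_t(\cP^+) = \T_F(j) \circ \T_t(\bar\cP)$ being the composition of an isomorphism and the injection $\T_F(j)$ of one-dimensional cokernel. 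The principal obstacle is that (c), (d), and (e) cannot be argued in isolation: the bijectivity in (e) only becomes accessible once the dimension of $\T_S(t)$, supplied by the hypothesis on $\T_t(\cP')$, is matched against the upper bound $\dim \T_{Q_{X_t}}(F) = \dim \H^2(X_t,\C) - 2$ coming from the smoothness of $Q_{X_t}$ at $F$.
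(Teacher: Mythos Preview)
Your proposal is correct and follows essentially the same route as the paper: parts (a) and (b) via Corollary~\ref{c:pmmhsholo} and Proposition~\ref{p:ltquad}, then the factorization $\cP' = (\phi_t)_* \circ j \circ \bar\cP$ at the level of tangent maps, using that $\T_t(\cP')$ has $1$-dimensional cokernel and that $\T_{\bar\cP(t)}(j)$ also has $1$-dimensional cokernel (by smoothness of $Q_{X_t}$ at $\cP(t)$) to force both $\T_t(\bar\cP)$ and $\T_{\cP(t)}((\phi_t)_*)$ to be isomorphisms, from which (c), (d), and (e) all follow. Your explicit justification of the smoothness of $Q_{X_t}$ at $\C w$ via $b(w,\bar w)=1$ and your phrasing of the key step as a dimension count are slightly more detailed than the paper's presentation, but the argument is the same.
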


\begin{proof}
 Assertion \ref{t:ltfull-hol}) is an immediate consequence of Corollary \ref{c:pmmhsholo}.
 
 Let us write $\phi_t$ for the composition of the following canonical morphisms in $\Mod(\C)$:
 \[
  \H^2(X_t,\C) \to \H^2(Y_t,\C) \overset\sim\to \sH^2(Y_t).
 \]
 Then by Lemma \ref{l:f2h2compare} \ref{l:f2h2compare-h2}), noting that, due to the flatness of $f$, the morphism $i_t \colon Y_t \to X_t$ induces an isomorphism $Y_t \to (X_t)_\reg$, we infer that $\phi_t$ is a monomorphism of complex vector spaces. Denote
 \[
  (\phi_t)_* \colon G = \Gr(\H^2(X_t,\C)) \to \Gr(\sH^2(Y_t)) =: G'.
 \]
 the induced morphism of Grassmannians. By Corollary \ref{c:pmmhsholo}, we have $\cP' = (\phi_t)_* \circ \cP^+$. Since by assumption $\Omega^2_{X_t}((X_t)_\reg)$ is of dimension $1$ over $\C$, we see that $\F^2\sH^2(Y_t)$ is of dimension $1$ over $\C$. Thus the range of $\cP'$ is a subset of the Grassmannian of lines in $\sH^2(Y_t)$. In consequence, the range of $\cP^+$ is a subset of the Grassmannian $G_1$ of lines in $\H^2(X_t,\C)$, whence $\cP = |\cP^+|$ is a holomorphic map from $S$ to $G_1$. As the fibers of $f$ are compact, reduced, and of Kähler type, the fibers of $f$ are of Fujiki class $\sC$. Therefore, by Proposition \ref{p:ltquad}, we have assertion \ref{t:ltfull-quad}).

 From \ref{t:ltfull-quad}), we deduce that $\cP' = (\phi_t)_* \circ j \circ \bar\cP$. Thus by the functoriality of tangent maps, we obtain:
 \[
  \T_t(\cP') = \T_{\cP(t)}((\phi_t)_*) \circ \T_{\bar\cP(t)}(j) \circ \T_t(\bar\cP).
 \]
 By assumption, $\T_t(\cP')$ is a monomorphism with cokernel of dimension $1$. Therefore, $\T_t(\bar\cP)$ is certainly a monomorphism. Besides, $\T_{\bar\cP(t)}(j)$ and $\T_{\cP(t)}((\phi_t)_*)$ are monomorphisms since $j$ and $(\phi_t)_*$ are closed immersions. Since the quadric $Q_{X_t}$ is smooth at $\cP(t)=\bar\cP(t)$, the cokernel of $\T_{\bar\cP(t)}(j)$ has dimension $1$. Thus both $\T_t(\bar\cP)$ and $\T_{\cP(t)}((\phi_t)_*)$ are isomorphisms. Since $S$ is smooth at $t$, we deduce \ref{t:ltfull-iso}).

 As
 \[
  \T_t(\cP^+) = \T_{\bar\cP(t)}(j) \circ \T_t(\bar\cP),
 \]
 we see that $\T_t(\cP)$ is an injection with $1$-dimensional cokernel, which proves \ref{t:ltfull-tgt}).

 Furthermore, $\T_{\cP(t)}((\phi_t)_*)$ is an isomorphism (if and) only if $\phi_t$ is an isomorphism. Therefore, as $\H^2(Y_t,\C) \to \sH^2(Y_t)$ is an isomorphism anyway, we see that
 \[
  i_t^* \colon \H^2(X_t,\C) \to \H^2(Y_t,\C)
 \]
 is an isomorphism. As we have already pointed out, the morphism $i_t$ is isomorphic to the inclusion morphism $(X_t)_\reg \to X_t$ in the overcategory $\An_{/X_t}$, whence we deduce \ref{t:ltfull-h2}) by means of the functoriality of $\H^2(-,\C)$.
\end{proof}

\section{The Fujiki relation}
\label{s:fr}

Let $X$ be a nonempty, compact, irreducible reduced complex space. Then $n:=\dim(X)$ is a natural number, and we may define a function $t_X$ on $\H^2(X,\C)$ by means of the assignment
\[
 a \mapsto \int_X a^n.
\]
The Fujiki relation reveals how, in case $X$ is symplectic and with $\Omega^2_X(X_\reg)$ of complex dimension $1$, the Beauville-Bogomolov form of $X$ (\cf Definition \ref{d:bbform}) relates to the function $t_X$. We will introduce some appropriate rigorous terminology in Definition \ref{d:fr} below. Note that the fact that any irreducible symplectic complex manifold satisfies the Fujiki relation is due to A.~Fujiki (\cf \cite{Fu87}), hence the name. The main result of this section is Theorem \ref{t:fr} which generalizes Fujiki's result to a wider class of certain, possibly nonsmooth, symplectic complex spaces.

Let us mention that D.\ Matsushita has advertised a result similar to ours in form of \cite[Theorem 1.2]{Ma01}. However, we feel that several points of Matsushita's line of reasoning in \loccit~are much harder to establish than his exposition makes the reader believe. In addition to, the proof of \cite[Theorem 1.2]{Ma01} does unfortunately rely on Y.\ Namikawa's \cite[Theorem 8]{Na01a}, which we reprehend as explained at the very beginning of \S\ref{s:lt}. We emphasize that we do not think that Matsushita's intermediate conclusions are wrong; on the contrary, we reckon that his argument can be amended by occasionally invoking techniques of \cite{Na06}. Anyhow, our Theorem \ref{t:fr} is stronger than Matsushita's in the respect that we do not require our space in question to be projective, nor do we require it to be $\Q$-factorial (or similar).
 
\begin{definition}
 \label{d:fr}
 Let $X$ be a compact, connected, symplectic complex space such that $\Omega^2_X(X_\reg)$ is of dimension $1$ over the field of complex numbers. We say that $X$ \emph{satisfies the Fujiki relation} when, for all $a\in\H^2(X,\C)$, we have:
 \begin{equation}
  \label{e:fr}
  \int_X a^{2r} = \binom{2r}{r}(q_X(a))^r,
 \end{equation}
 where $r:=\nicefrac{1}{2}\dim(X)$.
\end{definition}

\begin{proposition}
 \label{p:frequiv}
 Let $X$ be a compact, connected, symplectic complex space of Kähler type such that $\Omega^2_X(X_\reg)$ is of dimension $1$ over $\C$. Then the following are equivalent:
 \begin{enumeratei}
  \item $X$ satisfies the Fujiki relation.
  \item There exists $\lambda\in\C^*$ such that, for all $a\in\H^2(X,\C)$, $\int_X a^{2r} = \lambda(q_X(a))^r$, where $r:=\nicefrac{1}{2}\dim(X)$.
  \item $Q_X=\{l \in \Gr(1,\H^2(X,\C)):(\forall v\in l)v^{\dim(X)}=0 \textup{ in } \H^*(X,\C)\}$.
 \end{enumeratei}
\end{proposition}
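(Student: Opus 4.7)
First I would observe that since $X$ is compact, connected, and symplectic with $\Omega^2_X(X_\reg)$ of dimension one over $\C$, we have $\dim(X) = 2r$ for some natural number $r \geq 1$ (by Proposition \ref{p:sympdim} and the fact that $\Omega^2_X(X_\reg)$ would vanish if $\dim(X) = 0$). The implication (i) $\Rightarrow$ (ii) is then immediate with $\lambda = \binom{2r}{r} \neq 0$. My plan is to establish the remaining implications (ii) $\Rightarrow$ (i), (ii) $\Rightarrow$ (iii), and (iii) $\Rightarrow$ (ii) as three separate steps.

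For (ii) $\Rightarrow$ (i), since $X$ is compact, connected, Kähler, and symplectic with $\dim_\C \Omega^2_X(X_\reg) = 1$, I would invoke Proposition \ref{p:sympclex} together with a rescaling argument (using Proposition \ref{p:sympvol+} on a resolution) to produce a normed symplectic class $w$ on $X$. Evaluating the identity $\int_X a^{2r} = \lambda(q_X(a))^r$ at $a = w + \bar w$, Corollary \ref{c:bbformhdg} gives $q_X(w + \bar w) = 1$; on the other hand, expanding $(w+\bar w)^{2r}$ via the binomial formula and using $w^{r+1} = \bar w^{r+1} = 0$ in $\H^*(X,\C)$ (exactly as in the proof of Proposition \ref{p:bbformhdg}), only the middle term contributes, so $\int_X (w+\bar w)^{2r} = \binom{2r}{r} \int_X w^r\bar w^r = \binom{2r}{r}$. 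Comparing gives $\lambda = \binom{2r}{r}$, which proves (i).

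For (ii) $\Rightarrow$ (iii), I would use that $X$, being symplectic and connected, is normal and connected, whence irreducible of pure complex dimension $2r$; consequently the integration morphism $\int_X \colon \H^{4r}(X,\C) \to \C$ is an isomorphism. Given a line $l \in \Gr(1,\H^2(X,\C))$ with generator $v$, the homogeneity of $v \mapsto v^{2r}$ of degree $2r$ shows that the vanishing of $v^{2r}$ in $\H^*(X,\C)$ is independent of the chosen generator of $l$, and the chain of equivalences $v^{2r} = 0 \Leftrightarrow \int_X v^{2r} = 0 \Leftrightarrow \lambda(q_X(v))^r = 0 \Leftrightarrow q_X(v) = 0 \Leftrightarrow l \in Q_X$ yields (iii).

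The main step is (iii) $\Rightarrow$ (ii). Here I would regard $t_X(a) := \int_X a^{2r}$ as a homogeneous polynomial of degree $2r$ on $\H^2(X,\C)$ (well-defined because cup product is multilinear and $\H^{4r}(X,\C) \iso \C$). Condition (iii), combined with the same isomorphism $\int_X \colon \H^{4r}(X,\C) \iso \C$ used in the previous paragraph, identifies the projective zero loci of $t_X$ and of $q_X$. By Corollary \ref{c:bbquadirred}, the quadratic form $q_X$ has $\C$-rank at least three and is thus an irreducible element of the polynomial ring $\Sym^*(\H^2(X,\C)^\vee)$. Applying Hilbert's Nullstellensatz I would factor $t_X = q_X^m \cdot h$ with $m \in \N$ and $h$ coprime to $q_X$; the equality of projective zero loci forces $V(h) \subseteq V(q_X)$, and irreducibility of $q_X$ together with a second application of the Nullstellensatz forces $h$ to be a scalar, so that $t_X = \lambda q_X^r$ follows by comparing degrees ($2m = 2r$). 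To rule out $\lambda = 0$, I would exhibit a Kähler class $c$ on $X$ and verify both $q_X(c) > 0$ (Proposition \ref{p:bbformkc}) and $\int_X c^{2r} > 0$; the latter follows by pulling back along a resolution $f \colon W \to X$ via Lemma \ref{l:integralmod}, noting that $f^*c$ is representable by a closed semipositive $(1,1)$-form restricting to a Kähler form on a dense open subset of $W$, so its $2r$-fold wedge is a strictly positive volume form there. The chief obstacle is precisely this Nullstellensatz step, since without the irreducibility of $q_X$ (which itself depends on the rank bound of Corollary \ref{c:bbquadirred}) one could only conclude the divisibility $q_X \mid t_X$ rather than the full identity $t_X = \lambda q_X^r$; the other steps are essentially bookkeeping by comparison.
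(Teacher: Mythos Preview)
Your proof is correct and follows essentially the same route as the paper: the implication (iii)$\Rightarrow$(ii) via the Nullstellensatz together with the irreducibility of $q_X$ from Corollary~\ref{c:bbquadirred} is the heart of both arguments, and your computation of $\lambda$ by evaluating at $w+\bar w$ is exactly how the paper handles (ii)$\Rightarrow$(i). One small point: in that step you assert $w^{r+1}=\bar w^{r+1}=0$ in $\H^*(X,\C)$ ``exactly as in the proof of Proposition~\ref{p:bbformhdg}'', but that proposition is stated for a complex \emph{manifold} and uses that $\F^{2r+1}\H^{2r+2}$ vanishes there; on the singular space $X$ this requires an extra word. The paper sidesteps the issue by pulling back to a resolution $\tilde X$ via Lemma~\ref{l:integralmod} and computing $(\tilde w+\bar{\tilde w})^{2r}$ on $\tilde X$, where $\tilde w$ is the class of an honest holomorphic $2$-form and the vanishing $\tilde w^{r+1}=0$ is immediate. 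Your argument for $\lambda\neq 0$ via a K\"ahler class is a pleasant alternative to the paper's implicit reasoning (that $Z(g)=Z(f)\subsetneq\C^d$ forces $g\neq 0$).
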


\begin{proof}
 (i) implies (iii). On the one hand, let $l\in Q_X$. Then $l \in \Gr(1,\H^2(X,\C))$ and $q_X(v) = 0$ for all $v\in l$. As $X$ satisfies the Fujiki relation, it follows that $\int_X v^{2r} = 0$ for all $v \in l$ (note that $r\neq 0$), \cf Definition \ref{d:fr}. Thus $v^{\dim(X)}=v^{2r}=0$ in $\H^*(X,\C)$ since as $X$ is a compact, irreducible reduced complex space of dimension $2r$, the function
 \[
  \int_X \colon \H^{2\dim(X)}(X,\C) \to \C
 \]
 is one-to-one. On the other hand, when $l\in \Gr(1,\H^2(X,\C))$ is an element such that, for all $v\in l$, $v^{\dim(X)} = 0$ in $\H^*(X,\C)$, we have $\int_X v^{2r} = 0$ and hence $q_X(v) = 0$ for all $v\in l$ by means of \eqref{e:fr}.

 (iii) implies (ii). As $X$ is compact, $\H^2(X,\C)$ is finite dimensional. Thus there exists $d\in\N$ as well as a $\C$-linear isomorphism $\phi \colon \C^d \to \H^2(X,\C)$. Furthermore, there exist complex polynomials $f$ and $g$ in $d$ variables such that $f_\C(x) = q_X(\phi(x))$ and $g_\C(x) = \int_X(\phi(x))^{\dim(X)}$ for all $x\in \C^d$, where $f_\C$ and $g_\C$ denote the polynomial functions on $\C^d$ associated to $f$ and $g$, respectively. Clearly, $f$ and $g$ are homogeneous of degrees $2$ and $\dim(X)$, respectively. By (iii), we have $\rZ(f) = \rZ(g)$ as, for all $v\in\H^2(X,\C)$, $v^{\dim(X)}=0$ in $\H^*(X,\C)$ if and only if $\int_X v^{\dim(X)}=0$. Thus $\sqrt{(f)} = \sqrt{(g)}$ by elemetary algebraic geometry, whence $f^j = hg$. By Corollary \ref{c:bbquadirred}, $f$ is irreducible in the polynomial ring. Therefore, there exists $\lambda \in \C^*$ and $i$ such that $g = \lambda f^i$. By comparison of degrees, $i = r$. Thus, for all $x\in \C^d$, we have
 \[
  g_\C(x) = (\lambda f^r)_\C(x) = \lambda (f_\C(x))^r.
 \]
 Plugging in $\phi^{-1}$, we obtain (ii).

 (ii) implies (i). As $X$ is a compact, reduced complex space of Kähler type, $X$ is of Fujiki class $\sC$. Hence, $X$ being in addition symplectic, there exists a normed symplectic class $w$ on $X$ by Proposition \ref{p:sympclex}, Lemma \ref{l:integralmod}, Proposition \ref{p:normedclres}, and Proposition \ref{p:sympvol+}. By Corollary \ref{c:bbformhdg}, we have $q_X(w+\bar w) = 1$. There exists a resolution of singularities $f\colon \tilde X \to X$. Set $\tilde w := f^*(w)$. Then
 \[
  f^*((w+\bar w)^{2r}) = (f^*(w+\bar w))^{2r} = (\tilde w+\bar{\tilde w})^{2r}.
 \]
 By Proposition \ref{p:normedclres}, $\tilde w$ is a normed generically symplectic class on $\tilde X$. In particular, $\tilde w$ is the class of a global $2$\hyphen differential on $\tilde X$. Thus we have ${\tilde w}^i = 0$ in $\H^*(\tilde X,\C)$ for all $i\in\N$ such that $i>r$ and likewise $\bar{\tilde w}^j = 0$ for all $j\in\N$ such that $j>r$. In consequence, since the subring $\H^{2*}(\tilde X,\C)$ of $\H^*(\tilde X,\C)$ is commutative, calculating with the aid of the binomial formula yields:
 \[
  (\tilde w+\bar{\tilde w})^{2r} = \sum_{j=0}^{2r}\binom{2r}{j}{\tilde w}^{2r-j}\bar{\tilde w}^j = \binom{2r}{r}{\tilde w}^r\bar{\tilde w}^r
 \]
 in $\H^*(\tilde X,\C)$. Applying Lemma \ref{l:integralmod}, we obtain (recall that $\tilde w$ is normed on $\tilde X$):
 \[
  \int_X (w+\bar w)^{2r} = \int_{\tilde X} (\tilde w+\bar{\tilde w})^{2r} = \binom{2r}{r} \int_{\tilde X}{\tilde w}^r\bar{\tilde w}^r = \binom{2r}{r}.
 \]
 Now,
 \[
  \lambda = \lambda (q_X(w+\bar w))^r = \int_X (w+\bar w)^{2r} = \binom{2r}{r}.
 \]
 In turn, for all $a\in\H^2(X,\C)$, \eqref{e:fr} holds, which then implies (i).
\end{proof}

\begin{lemma}
 \label{l:critlines}
 Let $V$ be a finite dimensional complex vector space, $Q$ a quadric in $\P(V)$, and $p\in\P(V)\setminus Q$. Denote $G_p$ the set of all lines in $\P(V)$ passing through $p$. Then there exist a hyperplane $H$ in $\P(V)$ and a quadric $Q'$ in $H$ such that $p\notin H$, $\rk(Q')=\rk(Q)-1$, and
 \begin{equation} \label{e:critlines}
  \bigcup\{L\in G_p:|L\cap Q|\neq 2\} = K,
 \end{equation}
 where $K$ denotes the cone in $\P(V)$ with base $Q'$ and vertex $p$.
\end{lemma}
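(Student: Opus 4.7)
\textbf{Proof plan for Lemma \ref{l:critlines}.}

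The plan is to choose coordinates centred at $p$, write down the discriminant condition that determines when a line through $p$ is tangent to $Q$, and recognise that condition as the vanishing of a new quadratic form on a linear complement of the line $\C v_0$. Concretely, fix $v_0 \in V$ with $[v_0] = p$ and let $q\colon V \to \C$ be a quadratic form defining $Q$, with associated symmetric bilinear form $b$. Since $p \notin Q$ we have $q(v_0) \neq 0$. Choose any linear hyperplane $W \subset V$ with $V = \C v_0 \oplus W$, and set $H := \P(W)$; then $p \notin H$, and the map $[w] \mapsto \text{(line through $p$ and $[w]$)}$ is a bijection between $H$ and $G_p$.

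For $w \in W \setminus \{0\}$, the intersection of $L = \{[sv_0 + tw] : [s:t] \in \P^1\}$ with $Q$ is cut out by the homogeneous degree-$2$ equation $s^2 q(v_0) + 2st\,b(v_0,w) + t^2 q(w) = 0$. Its leading coefficient $q(v_0)$ is nonzero, so $L \not\subset Q$, and $|L \cap Q| \in \{1,2\}$. One has $|L\cap Q| \neq 2$ precisely when the discriminant vanishes, i.e., when
\[
 q'(w) := q(v_0)\,q(w) - b(v_0,w)^2 = 0.
\]
The function $q'\colon W \to \C$ is manifestly a quadratic form on $W$, so the zero locus $Q' := \{[w] \in \P(W) : q'(w) = 0\}$ is a quadric in $H$, and the union in \eqref{e:critlines} is the set of points on lines joining $p$ to $Q'$, which is exactly the cone $K$ with base $Q'$ and vertex $p$.

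It remains to check that $\rk(Q') = \rk(Q) - 1$. The first step is to pick any basis $w_1,\dots,w_{n-1}$ of $W$ and write the Gram matrix of $b$ with respect to $(v_0,w_1,\dots,w_{n-1})$ in block form
\[
 M = \begin{pmatrix} q(v_0) & \ell^{\transp} \\ \ell & N \end{pmatrix},
\]
where $N$ is the Gram matrix of $b|_W$ and $\ell$ is the column with entries $b(v_0,w_j)$; the Gram matrix of the bilinear form $b'$ associated to $q'$ is then $M' = q(v_0)\,N - \ell\ell^{\transp}$. Since $q(v_0) \neq 0$, one performs the standard symmetric block elimination
\[
 \begin{pmatrix} 1 & 0 \\ -\ell/q(v_0) & I \end{pmatrix} M \begin{pmatrix} 1 & -\ell^{\transp}/q(v_0) \\ 0 & I \end{pmatrix} = \begin{pmatrix} q(v_0) & 0 \\ 0 & \tfrac{1}{q(v_0)} M' \end{pmatrix},
\]
which shows $\rk(M) = 1 + \rk(M')$, i.e.\ $\rk(Q') = \rk(Q) - 1$, completing the proof.

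I do not foresee a serious obstacle: the identification of $|L\cap Q| \neq 2$ with the discriminant condition is immediate from $p\notin Q$, and the rank drop is a two-line block-matrix computation that uses nothing beyond $q(v_0)\neq 0$. The only small care needed is to ensure the hyperplane $H$ is chosen linear (avoiding $p$) so that $Q'$ is genuinely a quadric in $H$ and the lines through $p$ are parametrised by $H$ bijectively.
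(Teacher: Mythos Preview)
Your proposal is correct and follows essentially the same approach as the paper's proof: both choose a hyperplane $H$ not containing $p$ to parametrise $G_p$, identify the condition $|L\cap Q|\neq 2$ with the vanishing of the discriminant of the binary quadratic cutting out $L\cap Q$, recognise this discriminant as a quadratic form $q'$ on $H$, and deduce the rank drop via a block-elimination on the Gram matrix (the paper does this in explicit coordinates with $p=[1:0:\dots:0]$, you do it coordinate-free with the bilinear form $b$, but the computations are line-for-line equivalent). Your presentation is slightly cleaner, but there is no substantive difference in method.
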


\begin{proof}
 As $p \in \P(V) \setminus Q$, we have $Q \neq \P(V)$. Thus $\dim_\C(V) \geq 2$. Put $N := \dim_\C(V) - 1$. We fix a coordinate system in $\P(V)$, \iev an ordered $\C$-basis of $V$, such that $p = [1:0:\dots:0]$. Let $f \in \C[X_0,\dots,X_N]$ be homogeneous of degree $2$ such that:
 \[
  Q = \{x \in \P(V): f(x) = 0\}.
 \]
 There are $a_0,\dots,a_N \in \C$ and $b_{ij} \in \C$, $(i,j) \in \{0,\dots,N\}^2$, $i<j$, such that:
 \[
  f = a_0X_0^2 + \dots + a_NX_N^2 + 2\sum_{0\leq i<j\leq N}b_{ij}X_iX_j
 \]
 in $\C[X_0,\dots,X_N]$. We define $H := \{x\in \P(V):x_0=0\}$ and
 \begin{equation} \label{e:critlines-1}
  f' := (b_{01}^2-a_0a_1)X_1^2+\dots+(b_{0N}^2-a_0a_N)X_N^2 + 2\sum_{1\leq i<j\leq N}(b_{0i}b_{0j}-a_0b_{ij})X_iX_j
 \end{equation}
 in $\C[X_1,\dots,X_N]$. Further, we set $Q' := \{x\in H:f'(x)=0\}$. Clearly, $H$ is a hyperplane in $\P(V)$, $Q'$ is a quadric in $H$, and $p\notin H$.  We claim that, for all $q\in H$, the cardinality of $\overline{pq} \cap Q$ is different from $2$ if and only if $q \in Q'$.

 For that matter, let $q\in H$ be an arbitrary element. Set $L:=\overline{pq}$. Let $(0,q_1,\dots,q_N)$ be a representative of $q$ (written in our fixed basis of $V$). Then $(1,0,\dots,0)$ and $(0,q_1,\dots,q_N)$ make up a coordinate system in $l$. The coordinates corresponding to the points $p$ and $q$ will be denoted by $Y_0$ and $Y_1$, respectively. Thus the point $[y_0:y_1]$ of $L$ equals the point $[y_0:y_1q_1:\dots:y_1q_N]$ of $\P(V)$. That is, in coordinates of $L$, the set $L\cap Q$ is the zero set of the polynomial
 \begin{align*}
  g & = f(Y_0,q_1Y_1,\dots,q_NY_1) \\
  & = a_0Y_0^2 + \sum_{i=1}^N a_i(q_iY_1)^2 + 2\sum_{j=1}^N b_{0j}Y_0(q_jY_1) + 2\sum_{1\leq i<j\leq N} b_{ij}(q_iY_1)(q_jY_1) \\
  & = a_0Y_0^2 + \left(2\sum_{j=1}^N b_{0j}q_j\right)Y_0Y_1 + \left(\sum_{i=1}^N a_iq_i^2 + 2\sum_{1\leq i<j\leq N}b_{ij}q_iq_j\right)Y_1^2
 \end{align*}
 in $\C[Y_0,Y_1]_{(2)}$. Hence, we have $|L\cap Q|\neq 2$ if and only if the discriminant of $g$ vanishes, in symbols:
 \begin{equation} \label{e:critlines-2}
  \left(\sum_{j=1}^N b_{0j}q_j\right)^2 - a_0\left(\sum_{i=1}^N a_iq_i^2 + 2\sum_{1\leq i<j\leq N} b_{ij}q_iq_j\right) = 0.
 \end{equation}
 Recalling \eqref{e:critlines-1}, we see that the left hand side of \eqref{e:critlines-2} equals $f'(q_1,\dots,q_N)$. Therefore, we have established our claim.

 Let us deduce \eqref{e:critlines} from the claim. $K$ is to denote the cone in $\P(V)$ with base $Q'$ and vertex $p$, as in the formulation of the lemma. Let $L\in G_p$ such that $|L\cap Q|\neq 2$. There exists $q\in H$ such that $L=\overline{pq}$. The claim implies $q\in Q'$. Thus $L\subset K$. For the other inclusion let $x\in K$ be arbitrary. By definition of the cone $K$, there exists $q \in Q'$ such that $x \in \overline{pq}$, whence $|\overline{pq}\cap Q|\neq 2$ again by the claim. Therefore, $x$ is contained in the left hand side of \eqref{e:critlines}.

 It remains to show that $\rk(f') = \rk(f) - 1$. Denote by $A$ and $A'$ the symmetric coefficient matrices associated to $f$ and $f'$, respectively. The rows of $A$ are numbered starting with $0$. When first multiplying each of the rows $1$ to $N$ of $A$ by $-a_0$ and then adding, for $i=1,\dots,N$, $b_{0i}$-times the $0$-th row to the $i$-th row, one obtains the matrix
 \[
  B := \begin{pmatrix} a_0 & b_{0*} \\ \underline{0} & A' \end{pmatrix}.
 \]
 As $p=[1:0:\dots:0] \neq Q$, we have $a_0\neq0$. In consequence, $\rk(A) = \rk(B)$, and $\rk(B) = \rk(A') + 1$. Noting that $\rk(f)=\rk(A)$ and $\rk(f')=\rk(A)'$, we are finished.
\end{proof}

\begin{corollary}
 \label{c:goodlines}
 Let $V$ be a finite dimensional complex vector space, $Q$ a quadric of rank $\geq 2$ in $\P(V)$, and $p\in\P(V)\setminus Q$. Then there exists a nonempty, Zariski-open subset $U$ of $\P(V)$ such that $p\notin U$ and, for all $q\in U$, we have $|L\cap Q|=2$, where $L$ denotes the line in $\P(V)$ joining $p$ and $q$.
\end{corollary}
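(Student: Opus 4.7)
The plan is to derive Corollary \ref{c:goodlines} as a direct consequence of Lemma \ref{l:critlines}. I apply the lemma to obtain a hyperplane $H \subset \P(V)$ with $p \notin H$, together with a quadric $Q'$ in $H$ satisfying $\rk(Q') = \rk(Q) - 1$, such that the union of all lines $L$ through $p$ with $|L \cap Q| \neq 2$ coincides with the cone $K \subset \P(V)$ with vertex $p$ and base $Q'$. The idea is then to take $U := \P(V) \setminus K$.

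First I would verify that $U$ has the required properties. Since $K$ is a cone over the zero locus in $H$ of the homogeneous quadratic polynomial $f'$ appearing in the proof of Lemma \ref{l:critlines} (the polynomial $f'$ may also be regarded as a homogeneous quadratic polynomial on $\P(V)$ which does not involve the coordinate $X_0$), $K$ is a Zariski-closed subset of $\P(V)$, so $U$ is Zariski-open. The hypothesis $\rk(Q) \geq 2$ forces $\rk(Q') \geq 1$, which means $f' \neq 0$ as a polynomial, so $Q' \subsetneq H$ and consequently $K \subsetneq \P(V)$; hence $U$ is nonempty. Moreover $p$ is the vertex of the cone $K$, so $p \in K$ and thus $p \notin U$.

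Finally I would check the defining property of $U$. Let $q \in U$ be arbitrary. Since $p \in K$ and $q \notin K$, we have $q \neq p$, so the line $L := \overline{pq}$ is well-defined and lies in $G_p$. If $|L \cap Q| \neq 2$ held, then by equation \eqref{e:critlines} of Lemma \ref{l:critlines} we would have $L \subset K$, which together with $q \in L$ would contradict $q \notin K$. Therefore $|L \cap Q| = 2$, as required.

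No step here presents a real obstacle; the entire content is packed into Lemma \ref{l:critlines}, and the corollary is merely a geometric repackaging of the fact that the bad locus of points $q$ (for which the line $\overline{pq}$ meets $Q$ in fewer than two points) is a proper Zariski-closed cone in $\P(V)$ containing $p$.
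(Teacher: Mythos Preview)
Your proposal is correct and follows essentially the same route as the paper's proof: apply Lemma \ref{l:critlines}, set $U := \P(V)\setminus K$, and verify Zariski-openness, nonemptiness, $p\notin U$, and the line condition in the same way. The only cosmetic differences are in how you justify $U\neq\emptyset$ (you argue $K\subsetneq\P(V)$ directly, the paper observes $H\setminus Q'\subset U$) and $p\in K$ (you cite that $p$ is the vertex, the paper invokes $Q'\neq\emptyset$); both arguments are fine.
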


\begin{proof}
 By Lemma \ref{l:critlines} there exist a hyperplane $H$ in $\P(V)$ and a quadric $Q'$ in $H$ such that $p\notin H$, $\rk(Q') = \rk(Q) - 1$, and \eqref{e:critlines} holds, where $G_p$ and $K$ stand for the set of all lines in $\P(V)$ passing through $p$ and the cone with base $Q'$ and apex $p$, respectively. Set $U := \P(V) \setminus K$. Then as $K$ is an algebraic set in $\P(V)$, $U$ is Zariski-open in $\P(V)$. Since $\rk(Q) \geq 2$ by assumption, we have $\rk(Q') = \rk(Q) - 1 \geq 1$, whence $H \setminus Q' \neq \emptyset$. Thus $U \neq \emptyset$ as $H\setminus Q' \subset U$. Since $Q' \neq \emptyset$, $p\in K$; thus $p\notin U$. Now let $q$ be an arbitrary element of $U$. Let $L$ be the line in $\P(V)$ joining $p$ and $q$. Then if $|L\cap Q| \neq 2$, we would have $L\subset K$ by \eqref{e:critlines} and therefore $q\in K$, which is clearly not the case looking at the definition of $U$. In conclusion, $|L\cap Q| = 2$.
\end{proof}

\begin{lemma}
 \label{l:fr}
 Let $A$ be a commutative $\C$-algebra, $V$ a finite dimensional $\C$-vector subspace of $A$, $r\in\N\setminus\{0\}$, and $Q$ a quadric of rank $\geq2$ in $\P(V)$. Assume that there exists $c\in V$ such that $c^{2r}\neq0$ in $A$. Moreover, assume that
 \[
  Q\subset R:=\{p\in\P(V):(\forall x\in p)x^{r+1}=0 \textup{ in }A\}.
 \]
 Then we have:
 \[
  Q=R=S:=\{p\in\P(V):(\forall x\in p)x^{2r}=0 \textup{ in }A\}.
 \]
\end{lemma}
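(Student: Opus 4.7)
My proof proposal for Lemma \ref{l:fr}:

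The plan is to first dispose of the trivial inclusions $Q \subset R \subset S$ and then prove the reverse chain $S \subset Q$ by a line-intersection argument modeled on the classical Fujiki proof. The inclusion $Q \subset R$ holds by hypothesis. The inclusion $R \subset S$ is trivial: if $[x] \in R$, then $x^{r+1} = 0$, and since $r \geq 1$ we have $2r \geq r+1$, so $x^{2r} = x^{r+1} \cdot x^{r-1} = 0$, whence $[x] \in S$.

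The core of the proof is to show $S \subset Q$. First, define $T := \{[x] \in \P(V) : x^{2r} \neq 0 \text{ in } A\}$. This is the complement in $\P(V)$ of the zero locus of a homogeneous polynomial of degree $2r$ on $V$ (the coefficients being the components of the map $V \to A$, $x \mapsto x^{2r}$, in some basis extension), hence $T$ is Zariski-open in $\P(V)$; it is nonempty by the existence of $c$. Now suppose for contradiction that there exists $p \in S \setminus Q$. Since $Q$ is a quadric of rank $\geq 2$ in $\P(V)$ and $p \notin Q$, Corollary \ref{c:goodlines} yields a nonempty Zariski-open subset $U \subset \P(V)$ with $p \notin U$ such that, for every $q \in U$, the projective line $L := \overline{pq}$ satisfies $|L \cap Q| = 2$. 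Since $\P(V)$ is irreducible and $U$, $T$ are both nonempty Zariski-open, $U \cap T \neq \emptyset$.

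Pick $q \in U \cap T$ and let $[a_1], [a_2]$ be the two distinct points of $L \cap Q$. Choose representatives $u, v \in V$ of $p, q$ respectively; since $a_1, a_2$ span the two-dimensional subspace of $V$ corresponding to $L$, there exist $\alpha_1, \alpha_2, \beta_1, \beta_2 \in \C$ with $u = \alpha_1 a_1 + \alpha_2 a_2$ and $v = \beta_1 a_1 + \beta_2 a_2$. Because $[a_1], [a_2] \in Q \subset R$, we have $a_1^{r+1} = a_2^{r+1} = 0$ in $A$. Using the commutativity of $A$ and the binomial theorem,
\[
 u^{2r} = \sum_{k=0}^{2r}\binom{2r}{k}\alpha_1^k\alpha_2^{2r-k}\,a_1^k a_2^{2r-k},
\]
and every summand vanishes except possibly $k = r$ (since $a_1^k = 0$ for $k \geq r+1$ and $a_2^{2r-k} = 0$ for $k \leq r-1$). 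Thus
\[
 u^{2r} = \binom{2r}{r}(\alpha_1\alpha_2)^r\,a_1^r a_2^r,
\]
and analogously $v^{2r} = \binom{2r}{r}(\beta_1\beta_2)^r\,a_1^r a_2^r$.

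From $q \in T$ we get $v^{2r} \neq 0$, hence $a_1^r a_2^r \neq 0$ in $A$. From $p \in S$ we get $u^{2r} = 0$, hence $(\alpha_1 \alpha_2)^r = 0$, so $\alpha_1 = 0$ or $\alpha_2 = 0$. In either case $u$ is a nonzero scalar multiple of $a_1$ or $a_2$, giving $p = [u] \in \{[a_1],[a_2]\} \subset Q$, contradicting $p \notin Q$. This establishes $S \subset Q$, and combined with $Q \subset R \subset S$ we obtain $Q = R = S$.

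The only mild obstacle is verifying that $T$ is genuinely Zariski-open and nonempty; both are straightforward from the polynomial nature of $x \mapsto x^{2r}$ composed with an arbitrary linear functional on the finite-dimensional span $\C \cdot V^{2r} \subset A$, together with the hypothesis on $c$. Everything else is direct bookkeeping with the binomial expansion.
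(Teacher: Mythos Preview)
Your proof is correct and follows essentially the same approach as the paper: establish the trivial inclusions $Q\subset R\subset S$, then prove $S\subset Q$ by contradiction using Corollary~\ref{c:goodlines} to find a line through $p\in S\setminus Q$ meeting $Q$ in exactly two points, and applying the binomial expansion to force $p$ to coincide with one of those intersection points. The only cosmetic difference is that you name the complement $T=\P(V)\setminus S$ explicitly and argue its Zariski-openness, whereas the paper simply observes that $S$ is algebraic.
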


\begin{proof}
 Obviously we have $Q\subset R\subset S$ since $r+1 \leq 2r$ (thus for all $x\in A$, $x^{r+1} = 0$ in $A$ implies $x^{2r} = 0$ in $A$). Hence, it suffices to show that $S\subset Q$. For that matter, let $p$ be an arbitrary element of $S$. Assume that $p$ is not an element of $Q$. Then by Corollary \ref{c:goodlines}, there exists a nonempty, Zariski-open subset $U$ of $\P(V)$ such that $p\notin U$ and, for all $q\in U$, one has $|L\cap Q| = 2$, where $L$ stands for the line in $\P(V)$ joining $p$ and $q$. Observe that $S$ is an algebraic set in $\P(V)$. Since there exists $c\in V$ such that $c^{2r} \neq 0$ in $A$, $\P(V)\setminus S\neq \emptyset$. Thus $U\setminus S \neq \emptyset$ and there exists $q\in U\setminus S$. Denote the line in $\P(V)$ joining $p$ and $q$ by $L$. Then $|L\cap Q| = 2$, \iev there exist $p'$ and $q'$ such that $p'\neq q'$ and $L\cap Q = \{p',q'\}$. Since $Q\subset R$, we have $p',q'\in R$, whence there exist $x\in p'$ and $y\in q'$ such that $x^{r+1} = y^{r+1} = 0$ in $A$. As the multiplication of $A$ is commutative, we infer using the binomial theorem that, for all $\lambda,\mu\in\C$:
 \begin{equation} \label{e:frbinom}
  (\lambda x+\mu y)^{2r} = \sum_{j=0}^{2r}\binom{2r}{j}(\lambda x)^{2r-j}(\mu y)^j = \binom{2r}{r}(\lambda^r\mu^r)x^ry^r.
 \end{equation}
 Let $v\in p$ and $w\in q$. Since $x$ and $y$ span the line $L$, there are $\lambda,\mu,\lambda_1,\mu_1\in \C$ such that $v = \lambda x + \mu y$ and $w = \lambda_1 x + \mu_1 y$. Since $q\notin S$, we have $w^{2r}\neq 0$ in $A$. Therefore, substituting $\lambda_1$ and $\mu_1$ for $\lambda$ and $\mu$, respectively, in \eqref{e:frbinom}, we see that $x^ry^r\neq 0$ in $A$. As $p\in S$, we have $v^{2r} = 0$. Thus now, \eqref{e:frbinom} yields that $\lambda^r\mu^r=0$. In turn, $\lambda=0$ and consequently $p=q'$ or else $\mu=0$ and consequently $p=p'$. Either way, $p\in Q$. This argument shows that indeed, for all $p\in S$, we have $p\in Q$. In other words, $S\subset Q$, quod erat demonstrandum.
\end{proof}

\begin{proposition}
 \label{p:frcrit}
 Let $X$ be a compact, connected, symplectic complex space of Kähler type such that $\Omega^2_X(X_\reg)$ is of dimension $1$ over $\C$. Assume that
 \begin{equation} \label{e:frcrit}
  Q_X\subset \{l\in\Gr(1,\H^2(X,\C)):(\forall v\in l)v^{r+1}=0 \textup{ in } \H^*(X,\C)\},
 \end{equation}
 where $r:=\nicefrac{1}{2}\dim(X)$. Then $X$ satisfies the Fujiki relation.
\end{proposition}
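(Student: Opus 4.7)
The plan is to reduce the statement to condition (iii) of Proposition \ref{p:frequiv} via a direct application of Lemma \ref{l:fr}, taking $A = \H^*(X,\C)$ (whose subring $\H^{2*}(X,\C)$ generated by elements of even degree is commutative), $V = \H^2(X,\C)$ (finite dimensional since $X$ is compact), and $Q = Q_X$. The natural number $r$ is nonzero because $\Omega^2_X(X_\reg)$ being $1$-dimensional forces $\dim(X) \geq 2$, which among other things ensures that the codomain $\H^{4r}(X,\C)$ of integration is nontrivial enough to detect vanishing of $2r$-th powers.

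There are three hypotheses of Lemma \ref{l:fr} to verify. First, that $Q_X$ has rank $\geq 2$: this is an immediate consequence of Corollary \ref{c:bbquadirred}, which yields the stronger bound $\rk(q_X) \geq 3$. Second, the inclusion $Q_X \subset R := \{l \in \Gr(1,\H^2(X,\C)) : (\forall v\in l)\, v^{r+1}=0 \text{ in } \H^*(X,\C)\}$ is precisely the hypothesis \eqref{e:frcrit} of the proposition. Third, we need an element $c \in \H^2(X,\C)$ with $c^{2r} \neq 0$ in $\H^*(X,\C)$: since $X$ is of Kähler type, there exists a Kähler class, whose image $c$ in $\H^2(X,\C)$ under the canonical map $\H^2(X,\RR) \to \H^2(X,\C)$ may be represented on $X_\reg$ by a Kähler form $\omega$. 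Then $\omega^{2r}$ is a strictly positive volume form on $X_\reg$, so $\int_X c^{2r} = \int_{X_\reg} \omega^{2r} > 0$, which a fortiori gives $c^{2r} \neq 0$ in $\H^*(X,\C)$.

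With all hypotheses in place, Lemma \ref{l:fr} yields
\[
Q_X \;=\; \{l\in\Gr(1,\H^2(X,\C)) : (\forall v\in l)\, v^{2r}=0 \text{ in } \H^*(X,\C)\}.
\]
Since $2r = \dim(X)$, this is exactly condition (iii) of Proposition \ref{p:frequiv}. Invoking the equivalence (iii) $\Leftrightarrow$ (i) in that proposition then yields that $X$ satisfies the Fujiki relation, completing the proof.

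The argument is almost purely formal once the preparatory lemmas are in hand; the only step requiring genuine geometric input is the construction of the witness $c$ with $c^{2r}\neq 0$, and even here the hardest work has already been done in Corollary \ref{c:bbquadirred} (which needed Proposition \ref{p:bbformkc} on the positivity of $q_X$ on Kähler classes). Thus I do not anticipate any substantial obstacle; the proof is essentially a bookkeeping step assembling Corollary \ref{c:bbquadirred}, Lemma \ref{l:fr}, and Proposition \ref{p:frequiv}.
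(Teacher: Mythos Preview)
Your proof is correct and follows essentially the same route as the paper: invoke Corollary \ref{c:bbquadirred} for the rank bound, apply Lemma \ref{l:fr}, and conclude via Proposition \ref{p:frequiv}. The only substantive difference is the choice of witness for $c^{2r}\neq 0$: the paper takes $c=w+\bar w$ for a symplectic class $w$ and computes $(w+\bar w)^{2r}=\binom{2r}{r}w^r\bar w^r$, then appeals to Proposition \ref{p:sympvol+}, whereas you use a K\"ahler class directly. Both work. One small point: Lemma \ref{l:fr} requires $A$ to be commutative, so you should set $A=\H^{2*}(X,\C)$ rather than $\H^*(X,\C)$; your parenthetical remark shows you are aware of this, but the formal choice of $A$ should reflect it.
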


\begin{proof}
 Define $A$ to be the complex subalgebra $\H^{2*}(X,\C)$ of $\H^*(X,\C)$ and $V$ to be the $\C$-submodule of $A$ which is given by the canonical image of $\H^2(X,\C)$ in $A$. Then $A$ is a commutative $\C$-algebra and $V$ is a finite dimensional $\C$-submodule of $A$ (as $X$ is compact). Evidently, $r$ is a natural number different from $0$. Set
 \[
  Q := \{p\in\P(V):(\forall x\in p)q_X(\psi(x))=0\},
 \]
 where $\psi$ denotes the inverse function of the canonical isomorphism $\H^2(X,\C) \to V$. Then $Q$ is a quadric of rank $\geq2$ in $\P(V)$ by Corollary \ref{c:bbquadirred}. By Proposition \ref{p:sympclex}, there exists a symplectic class $w$ on $X$. We identify $w$ with its image in $V$. As in the proof of Proposition \ref{p:frequiv}, we calculate in $A$ (or $\H^*(X,\C)$):
 \[
  (w + \bar w)^{2r} = \binom{2r}{r} w^r\bar w^r.
 \]
 By Proposition \ref{p:sympvol+}, $\int_X w^r\bar w^r > 0$; in particular, $(w + \bar w)^{2r} \neq 0$ in $A$. By \eqref{e:frcrit}, we see that
 \[
  Q \subset \{p\in\P(V):(\forall x\in p)x^{r+1}=0 \text{ in }A\}.
 \]
 Hence by Lemma \ref{l:fr},
 \[
  Q=\{p\in\P(V):(\forall x\in p)x^{2r}=0 \text{ in }A\}.
 \]
 This implies that (iii) of Proposition \ref{p:frequiv} holds. Applying Proposition \ref{p:frequiv}, one infers that $X$ satisfies the Fujiki relation.
\end{proof}

\begin{lemma}
 \label{l:frfam0}
 Let $f\colon X\to S$ be a proper, equidimensional morphism of complex spaces and $t\in S$ such that $X_t$ is connected and symplectic, $\Omega^2_{X_t}((X_t)_\reg)$ is of dimension $1$ over the field of complex numbers, $S$ is simply connected, and the fibers of $f$ are of Fujiki class $\sC$. Moreover, assume that, for all $i\in\N$, the sheaf $\H^i(f,\C)$ (\cf \eqref{ss:Avalcoh}) is locally constant on $S_\top$. Set $\cP:=\cP^{2,2}_t(f)_\MHS$. Suppose that the mapping
 \[
  \cP \colon S \to G := \Gr(1,\H^2(X_t,\C))
 \]
 is holomorphic and there exists a neighborhood $W$ of $\cP(t)$ in $G$ such that
 \[
  Q_{X_t}\cap W \subset \cP(S).
 \]
 Then $X_t$ satisfies the Fujiki relation.
\end{lemma}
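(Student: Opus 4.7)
The plan is to verify the hypothesis of Proposition \ref{p:frcrit}. With $r:=\nicefrac{1}{2}\dim(X_t)$, set
$$Z:=\{l\in\Gr(1,\H^2(X_t,\C)):(\forall v\in l)\,v^{r+1}=0\text{ in }\H^*(X_t,\C)\}.$$
In projective coordinates on the Grassmannian of lines in $\H^2(X_t,\C)$, the vanishing $v^{r+1}=0$ amounts to a system of homogeneous polynomials of degree $r+1$ in the coordinates of $v$, so $Z$ is a closed analytic subset. By Corollary \ref{c:bbquadirred}, $Q_{X_t}$ is irreducible, so the inclusion $Q_{X_t}\subset Z$ --- which is precisely the hypothesis of Proposition \ref{p:frcrit} --- will follow once one shows that the open subset $Q_{X_t}\cap W$ lies in $Z$. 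The set $Q_{X_t}\cap W$ is nonempty: $\cP(t)=\F^2\H^2(X_t)$ is spanned by a normed symplectic class $w$ on $X_t$, and Proposition \ref{p:bbformpol} gives $q_{X_t}(w)=0$, so $\cP(t)\in Q_{X_t}\cap W$.

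Next, fix $l\in Q_{X_t}\cap W$ and choose $s\in S$ with $\cP(s)=l$. Because every sheaf $\H^i(f,\C)=\R^i(f^{\C})_*\C_X$ is locally constant on the simply connected space $S_\top$, and because the cup products $\H^i(f,\C)\otimes\H^j(f,\C)\to\H^{i+j}(f,\C)$ are honest morphisms of sheaves, parallel transport along the unique homotopy class $a\colon s\to t$ in $\Pi(S_\top)$ yields a graded ring isomorphism
$$\psi\colon\H^*(X_s,\C)\longrightarrow\H^*(X_t,\C).$$
The degree-$2$ component of $\psi$ carries $\F^2\H^2(X_s)$ onto $l$ by the definition of $\cP^{2,2}_t(f)_\MHS$ (Definition \ref{d:pmmhs}). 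Consequently any $v\in l$ has the form $v=\psi(v_s)$ with $v_s\in\F^2\H^2(X_s)$, and $v^{r+1}=\psi(v_s^{r+1})$.

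The heart of the argument --- already exploited in the proof of Proposition \ref{p:ltquad} --- is the vanishing $v_s^{r+1}=0$ in $\H^{2r+2}(X_s,\C)$. Equidimensionality of $f$ together with $\dim(X_t)=2r$ gives $\dim(X_s)=2r$; the Deligne--Fujiki mixed Hodge structure on the compact Fujiki-class-$\sC$ space $X_s$ satisfies $\F^p\H^n(X_s)=0$ for every $p>\dim(X_s)=2r$, so compatibility of the cup product with the Hodge filtration places $v_s^{r+1}\in\F^{2r+2}\H^{2r+2}(X_s)=0$. Applying $\psi$ gives $v^{r+1}=0$, so $l\in Z$, and Proposition \ref{p:frcrit} finishes the proof.

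The main obstacle is the compatibility of parallel transport with the cup product: one must unpack Construction \ref{con:repcoh} to check that the base-change isomorphisms turning stalks of $\H^i(f,\C)$ into fiber cohomologies intertwine the cup product on the derived pushforward with the cup products on the fibers, so that the locally constant sheaf $\bigoplus_i\H^i(f,\C)$ is one of graded rings and parallel transport is ring-theoretic. A secondary technicality is the vanishing $\F^p\H^n(X_s)=0$ for $p>\dim(X_s)$ on a general compact Fujiki-class-$\sC$ space, which reduces through a cubical hyperresolution to the analogous statement for compact Kähler manifolds.
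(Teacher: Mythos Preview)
Your proof is correct and follows essentially the same approach as the paper: reduce to the inclusion $Q_{X_t}\subset Z$ via irreducibility of $Q_{X_t}$, then for $l\in Q_{X_t}\cap W$ pick $s$ with $\cP(s)=l$ and use that parallel transport is a ring isomorphism together with $v_s^{r+1}\in\F^{2r+2}\H^{2r+2}(X_s)=0$ to conclude $l\in Z$; finally invoke Proposition \ref{p:frcrit}.

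There is one organizational difference worth noting. The paper first shows $\cP(V)\subset R_t$ on a small connected neighbourhood $V$ of $t$ where the sheaf $\H^*(f,\C)$ is constant, and then invokes the identity theorem for holomorphic maps to propagate this to $\cP(S)\subset R_t$. You bypass this step entirely by observing that, since $S$ is simply connected and every $\H^i(f,\C)$ is locally constant, parallel transport between any two points of $S$ is already a graded ring isomorphism; hence you can directly argue that any $l\in Q_{X_t}\cap W\subset\cP(S)$ lies in $Z$. This is a genuine (if modest) simplification: you avoid the identity theorem and the attendant use of smoothness and connectedness of $S$. The two technical points you flag at the end---multiplicativity of parallel transport and the vanishing $\F^p\H^n(X_s)=0$ for $p>\dim(X_s)$---are exactly the ones the paper also leans on without further elaboration.
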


\begin{proof}
 Define $r$ to be the unique natural number such that $2r=\dim(X_t)$. Further on, introduce the following notation, where we exponentiate in the ring $\H^*(X_t,\C)$, into which $\H^2(X_t,\C)$ embeds canonically, and the ``$0$'' shall be considered the zero element of the ring $\H^*(X_t,\C)$:
 \begin{align*}
  R_t & := \{l\in |G|:(\forall v\in l)v^{r+1}=0\}, \\
  S_t & := \{l\in |G|:(\forall v\in l)v^{2r}=0\}.
 \end{align*}
 As $f$ is locally topologically trivial at $t$ and the topological space $S_\top$ is locally connected, there exists a connected open neighborhood $V$ of $t$ in $S_\top$ such that the sheaf $\H^*(f,\C)$ is constant on $S_\top$ over $V$. In particular, for all $s\in V$, the canonical maps
 \[
  (\H^*(f,\C))(V) \to (\H^*(f,\C))_s
 \]
 are bijective.

 Let now $s\in V$ be arbitrary and $v\in \cP(s)$. Put $\rho := \rho^2(f,\C)$. Since $S_\top$ is simply connected, there exists a unique element $a$ in $(\Pi(S_\top))_1(s,t)$. Put $\phi_{s,t} := (\rho_1(s,t))(a)$ and recall that thus
 \[
  \phi_{s,t} \colon \H^2(X_s,\C) \to \H^2(X_t,\C)
 \]
 is an isomorphism of complex vector spaces. Therefore, there exists a unique element $u \in \H^2(X_s,\C)$ such that $\phi_{s,t}(u) = v$; note that $\cP(s)\subset \H^2(X_t,\C)$, whence $v \in \H^2(X_t,\C)$. By the definition of $\cP^{2,2}_t(f)_\MHS$, we have
 \[
  \cP(s) = \phi_{s,t}[\F^2\H^2(X_s)],
 \]
 which tells us that $u\in \F^2\H^2(X_s)$. Since $f$ is equidimensional, we have $\dim(X_s) = \dim(X_t) = 2r$. Consequently, we have $u^{r+1}=0$ in $\H^*(X_s,\C)$ since the cohomological cup product is filtered with respect to the Hodge filtrations on the cohomology of $X_s$. As the morphism $f$ is proper, the base change map
 \[
  (\H^2(f,\C))_t \to \H^2(X_t,\C)
 \]
 is a bijection. Therefore, there exists one, and only one, $\gamma \in (\H^2(f,\C))(V)$ which is sent to $v$ by the composition of functions:
 \begin{equation} \label{e:frfam-stalk}
  (\H^2(f,\C))(V) \to (\H^2(f,\C))_t \to \H^2(X_t,\C).
 \end{equation}
 By the definition of $\rho^2(f,\C)$, we know that $\gamma$ is sent to $u$ by the composition of functions \eqref{e:frfam-stalk}, where we replace $t$ by $s$. Clearly, the following diagram of sets and functions commutes:
 \begin{equation} \label{e:frfam-0}
  \xymatrix{
   (\H^2(f,\C))(V) \ar[r] \ar[d] & (\H^2(f,\C))_s \ar[r] \ar[d] & \H^2(X_s,\C) \ar[d] \\
   (\H^*(f,\C))(V) \ar[r] & (\H^*(f,\C))_s \ar[r] & \H^*(X_s,\C) \\   
  }
 \end{equation}
 Thus the canonical image of $\gamma$ in $(\H^*(f,\C))(V)$ is mapped to the canonical image of $u$ in $\H^*(X_s,\C)$ by the composition of the two functions in the bottom row of \eqref{e:frfam-0}. In addition, the composition of the two functions in the bottom row of \eqref{e:frfam-0} is a morphism of rings from $(\H^*(f,\C))(V)$ to $\H^*(X_s,\C)$. Thus the composition of the two functions in the bottom row of \eqref{e:frfam-0} sends $\gamma^{r+1}$ to $u^{r+1}$, where $\gamma$ and $u$ denote the canonical images of $\gamma$ and $u$ in $(\H^*(f,\C))(V)$ and $\H^*(X_s,\C)$, respectively. Employing the fact that the composition of the two functions in the bottom row of \eqref{e:frfam-0} is one-to-one (it is a bijection actually), we further infer that $\gamma^{r+1}=0$ in $(\H^*(f,\C))(V)$. Playing the same game as before with $t$ in place of $s$, we see that $\gamma^{r+1}$ is sent to $v^{r+1}$ by the composition of functions
 \[
  (\H^*(f,\C))(V) \to (\H^*(f,\C))_t \to \H^*(X_t,\C),
 \]
 whence $v^{r+1}=0$ in $\H^*(X_t,\C)$. As $v$ was an arbitrary element of $\cP(s)$ and $s$ was an arbitrary element of $V$, we obtain that $\cP(V)\subset R_t$.

 As $\cP\colon S\to G$ is a holomorphic mapping, $S$ is a smooth and connected complex space, $R_t$ is a closed analytic subset of $G$, and $V$ is an open subset of $S$, the identity theorem for holomorphic mappings yields $\cP(S)\subset R_t$. We know there exists an open neighborhood $W$ of $\cP(t)$ in $G$ such that $Q_{X_t}\cap W\subset \cP(S)$. In consequence, $Q_{X_t}\cap W\subset R_t$. As $\cP(t)\in Q_{X_t}$, we know that $Q_{X_t}\cap W$ is nonempty. Thus since $Q_{X_t}$ is an irreducible closed analytic subset of $G$ and $R_t$ is a closed analytic subset of $G$, we have $Q_{X_t}\subset R_t$. Consulting Proposition \ref{p:frcrit}, it follows that $X_t$ satisfies the Fujiki relation.
\end{proof}

\begin{definition}
 \label{d:loctoptriv}
 \begin{enumerate}
  \item  Let $f\colon E\to B$ be a morphism of topological spaces and $b\in B$. $f$ is said to be a \emph{(topological) fiber bundle at} $t$ when there exist an open neighborhood $V$ of $b$ in $B$, a topological space $F$, and a homeomorphism
  \[
   h \colon X|f^{-1}(V) \to (B|V)\times F
  \]
  such that $f = \pr_0\circ h$, where $\pr_0$ denotes the projection to $B|V$.
  \item Let $f\colon X\to S$ be a morphism of ringed spaces (or else complex spaces) and $t\in S$. Then $f$ is said to be \emph{locally topologically trivial at} $t$ when $f_\top\colon X_\top \to S_\top$ is a fiber bundle at $t$.
 \end{enumerate}
\end{definition}

\begin{lemma}
 \label{l:frloctriv}
 Let $f\colon X\to S$ be a proper, flat morphism of complex spaces and $t\in S$ such that $X_t$ is connected and symplectic, $\Omega^2_{X_t}((X_t)_\reg)$ is of dimension $1$ over the field of complex numbers, $S$ is smooth and simply connected, and the fibers of $f$ have rational singularities, are of Kähler type, and have singular loci of codimension $\geq 4$. Denote $g\colon Y\to S$ the submersive share of $f$ and set $\cP':=\cP^{2,2}_t(g)$. Assume that the tangent map
 \[
  \T_t(\cP') \colon \T_S(t) \to \T_{\Gr(\sH^2(Y_t))}(\F^2\sH^2(Y_t))
 \]
 is an injection with $1$-dimensional cokernel and $f$ is locally topologically trivial at $t$. Then $X_t$ satisfies the Fujiki relation.
\end{lemma}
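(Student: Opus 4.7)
The plan is to deduce this lemma directly from Theorem~\ref{t:ltfull} (Local Torelli, II) together with Lemma~\ref{l:frfam0}: the hypotheses of the present lemma are almost exactly the hypotheses of Theorem~\ref{t:ltfull}, the output of that theorem is almost exactly what Lemma~\ref{l:frfam0} needs, and local topological triviality at~$t$ is precisely what allows one to jump from the ``submersive share'' period map $\cP'=\cP^{2,2}_t(g)$ to the mixed Hodge period map $\cP:=\cP^{2,2}_t(f)_\MHS$.

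First I would shrink~$S$. Since $S$ is smooth at~$t$ there is an open neighborhood $V$ of $t$ in~$S_\top$ biholomorphic to an open polydisc, hence simply connected and contractible, and, by hypothesis of local topological triviality, small enough that $f_\top$ restricts over~$V$ to a topological fiber bundle. Replacing $f\colon X\to S$ by $f_V\colon X_V\to S|V$ (and $g$ accordingly) leaves $X_t$, $\cP'$, its tangent map at~$t$, and all other standing hypotheses intact. Crucially, topological triviality of $f_V$ makes each sheaf $\H^i(f,\C)=\R^i(f^\C)_*(\C_X)$ locally constant on $S_\top$ for every integer~$i$. In addition, flatness of~$f$ together with normality of fibers (rational singularities are normal) and smoothness of the base furnishes local equidimensionality by Proposition~\ref{p:flatequidim}; since~$S$ is connected and fiber dimension is locally constant under flat morphisms with pure-dimensional fibers, $f$ is equidimensional in the sense needed by Lemma~\ref{l:frfam0}. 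The fibers are compact, reduced and of Kähler type, so they are of Fujiki class~$\sC$.

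Next I would invoke Theorem~\ref{t:ltfull}. All its hypotheses are now in place: $S$ simply connected and smooth, fibers with rational singularities, of Kähler type, with singular loci of codimension $\geq 4$, $X_t$ connected and symplectic with $\dim_\C(\Omega^2_{X_t}((X_t)_\reg))=1$, the tangent map $\T_t(\cP')$ an injection with $1$-dimensional cokernel, and $\H^2(f,\C)$ locally constant on $S_\top$. Applying the theorem yields a morphism of complex spaces $\cP^+\colon S\to G:=\Gr(\H^2(X_t,\C))$ whose underlying function is $\cP$, together with a factorization $\bar\cP\colon S\to Q_{X_t}$ satisfying $j\circ\bar\cP=\cP^+$ (where $j\colon Q_{X_t}\to G$ is the inclusion), and $\bar\cP$ is locally biholomorphic at~$t$. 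Note that $\F^2\H^2(X_t)$ is one-dimensional: the purity of the mixed Hodge structure on $\H^2(X_t)$ (Proposition~\ref{p:symph2}) gives $\F^2\H^2(X_t)\iso\sH^{2,0}(X_t)$, which is $1$-dimensional by assumption; consequently $\cP^+$ takes values in $G_1:=\Gr(1,\H^2(X_t,\C))$, and $\cP\colon S\to G_1$ is holomorphic.

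Because $\bar\cP$ is a local biholomorphism at~$t$, choose an open neighborhood $U$ of~$t$ in~$S$ which $\bar\cP$ maps biholomorphically onto an open subset $W_0$ of $Q_{X_t}$; pick any open $W\subset G$ with $W\cap Q_{X_t}=W_0$. Then $\cP(t)\in W$ and $Q_{X_t}\cap W=W_0=\bar\cP(U)\subset\cP(S)$, which is exactly the last hypothesis of Lemma~\ref{l:frfam0}. All remaining hypotheses of that lemma have already been checked in the paragraphs above. Applying Lemma~\ref{l:frfam0} concludes that $X_t$ satisfies the Fujiki relation.

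The argument is largely an assembly job; the only nontrivial verification is that local topological triviality at~$t$ actually produces locally constant $\H^i(f,\C)$ for \emph{all}~$i$ (not just $i=2$), which is what Lemma~\ref{l:frfam0} requires, but this follows at once from the fiber-bundle structure of $f_\top$ over a contractible neighborhood of~$t$. The conceptual heart has already been done in the proofs of Theorem~\ref{t:ltfull} and Lemma~\ref{l:frfam0}.
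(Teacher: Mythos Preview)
Your proposal is correct and follows essentially the same approach as the paper: shrink $S$ so that $f$ becomes topologically trivial (hence all $\H^i(f,\C)$ are locally constant), apply Theorem~\ref{t:ltfull} to obtain the factorization $\bar\cP\colon S\to Q_{X_t}$ which is a local biholomorphism at $t$, extract from this an open $W$ with $Q_{X_t}\cap W\subset\cP(S)$, and then invoke Lemma~\ref{l:frfam0}. You are in fact slightly more explicit than the paper about checking the equidimensionality hypothesis of Lemma~\ref{l:frfam0} and about why $\cP$ lands in $\Gr(1,\H^2(X_t,\C))$; conversely, the paper is a bit more explicit about why the tangent-map hypothesis on $\cP'$ survives the shrinking (via $\cP'_V=i_t^*\circ\cP'|V$), whereas you simply asserted it.
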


\begin{proof}
 As $S$ is smooth (at $t$) and $f$ is locally topologically trivial at $t$, there exists an open neighborhood $V$ of $t$ in $S$ such that $S|V$ is isomorphic to the $d$\hyphen dimensional complex unit ball for some $d\in\N$ and $(f_V)_\top$ is isomorphic to the projection $(S|V)_\top\times F \to (S|V)_\top$ for some topological space $F$. This last condition implies that, for all $i\in\N$, the sheaf $\H^i(f_V,\C)$ is a constant sheaf on $S|V$. Moreover, $f_V\colon X_V\to S|V$ is a proper, flat morphism of complex spaces, $S|V$ is smooth and simply connected, and the fibers of $f_V$ have rational singularities, are of Kähler type, and have singular loci of codimension $\geq 4$; $(X_V)_t$ is connected and symplectic and
 \[
  \dim_\C \left(\Omega^2_{(X_V)_t}(((X_V)_t)_\reg) \right) = 1.
 \]

 Let $g_V\colon Y_V\to S|V$ be the submersive share of $f_V$ and set $\cP'_V := \cP^{2,2}_t(g_V)$. Let $i_t\colon (Y_V)_t \to Y_t$ be the canonical morphism of complex spaces, which is induced by the fact that $Y_V$ is an open complex subspace of $Y$. Write
 \[
  i_t^* \colon \sH^2(Y_t) \to \sH^2((Y_V)_t)
 \]
 for the associated morphism taking algebraic de Rham cohomology. As $i_t$ is an isomorphism of complex spaces, $i_t^*$ is an isomorphism in $\Mod(\C)$. Hence, $i_t^*$ gives rise to an isomorphism of complex spaces
 \[
  \Gr(\sH^2(Y_t)) \to \Gr(\sH^2((Y_V)_t))
 \]
 that we sloppily denote $i_t^*$, too. Given this notation we find that
 \[
  \cP'_V = i_t^* \circ \cP'|V.
 \]
 Thus as $\T_t(\cP')$ is an injection with $1$-dimensional cokernel, $\T_t(\cP'_V)$ is an injection with $1$-dimensional cokernel. Define $\cP:=\cP^{2,2}_t(f_V)_\MHS$ in the sense of
 \[
  \cP \colon S|V \to G := \Gr(\H^2((X_V)_t,\C))
 \]
 being a morphism of complex spaces. Then by Theorem \ref{t:ltfull}, there exists a morphism of complex spaces $\bar\cP\colon S|V\to Q_{(X_V)_t}$ such that $\cP=i\circ \bar\cP$, where $i$ denotes the inclusion morphism $Q_{(X_V)_t} \to G$, and $\bar\cP$ is biholomorphic at $t$. In particular, there exist open neighborhoods $V'$ of $t$ in $S|V$ and $W'$ of $\bar\cP(t)$ in $Q_{(X_V)_t}$ such that $\bar\cP$ induces an isomorphism from $(S|V)|V'$ to $Q_{(X_V)_t}|W'$. Specifically, we have $\bar\cP(V')=W'$. Since $(Q_{(X_V)_t})_\top$ is a topological subspace of $G_\top$, there exists an open subset $W$ of $G_\top$ such that $W'=Q_{(X_V)_t}\cap W$. Therefore, exploiting the fact that $\cP$ and $\bar\cP$ agree set-theoretically, $W$ is an open neighborhood of $\cP(t)$ in $G_\top$ and
 \[
  Q_{(X_V)_t} \cap W = \cP(V') \subset \cP(S).
 \]
 Now from Lemma \ref{l:frfam0} (applied to $f_V$ and $t$), we infer that $(X_V)_t$ satisfies the Fujiki relation. As $(X_V)_t \iso X_t$ in $\An$, we deduce that $X_t$ satisfies the Fujiki relation.
\end{proof}

\begin{theorem}
 \label{t:kuranishi}
 For all compact complex spaces $X$ there exist a proper, flat morphism of complex spaces $f\colon \cX \to S$ and $t\in S$ such that $X \iso \cX_t$ in $\An$ and the morphism $f$ is semi-universal in $t$ and complete in $s$ for all $s\in S$.
\end{theorem}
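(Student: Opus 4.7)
The plan is to invoke the classical existence theorems for semi-universal deformations of compact complex spaces, which are due to Grauert and Douady, and then to address the completeness at nearby points by the openness of versality.

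First, I would cite Grauert's theorem on the existence of a versal deformation of a compact complex space (see \cite[Hauptsatz, p.~140]{Gr74}; alternatively, Douady's approach via the Douady space of compact analytic subspaces, \cf \cite{Do74}, or Palamodov's construction \cite{Pa90}). These results furnish, for any compact complex space $X$, a proper, flat morphism of complex spaces $f\colon \cX\to S$ together with a point $t\in S$ such that $\cX_t \iso X$ in $\An$ and such that $f$ is semi-universal (\ie versal with injective Kodaira-Spencer map) in $t$. By passing, if necessary, to an open neighborhood of $t$ in $S$, we may assume that $S$ is as small as we like around $t$; this does not affect the semi-universality at $t$.

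Second, I would establish completeness at every point $s\in S$. The key input here is the \emph{openness of versality} (equivalently, of completeness), which asserts that the set
\[
 U := \{s\in S : f \text{ is complete in } s\}
\]
is an open subset of $S$ containing $t$. This openness is due to Flenner \cite{Fl81} in the analytic context (for algebraic deformations, it is the classical openness of versality proven by Artin). Since $t \in U$ by the semi-universality of $f$ at $t$, we may replace $S$ by the open complex subspace $S|U$ (and $\cX$ by $\cX_U$), which leaves the fiber over $t$ unchanged and preserves the semi-universality at $t$ (which is a local condition at $t$).

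The main obstacle in the strict sense is not the construction itself---which is an application of the deep existence theorems cited above---but ensuring that the openness of versality in the analytic category applies here. This is nontrivial and requires Flenner's result; one cannot expect to derive it from the mere existence of a Kuranishi family. Once this openness is granted, the shrinking argument in the second step immediately yields completeness at every $s\in S$, so the three properties (proper and flat, semi-universal at $t$, complete at every $s$) hold simultaneously after the final restriction.
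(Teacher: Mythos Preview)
Your proposal is correct. The paper's own proof is a single-line citation to Grauert's Hauptsatz in \cite[\S5]{Gr74}, attributing the full statement---semi-universality at $t$ \emph{and} completeness at every $s\in S$---directly to Grauert. You instead decompose the argument into two steps: existence of the semi-universal deformation at $t$ (Grauert/Douady/Palamodov), followed by openness of versality (Flenner) to obtain completeness at all nearby points after shrinking. Both routes are valid; yours is more modular and makes the role of openness of versality explicit, while the paper's is more economical since Grauert's original theorem already encompasses the completeness assertion. One minor point: your page reference for Grauert's Hauptsatz differs from the paper's (\S5 rather than p.~140), but this is cosmetic.
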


\begin{proof}
 The statement is proven as ``Hauptsatz'' in \cite[\S5]{Gr74}\footnote{One should note that in \cite{Gr74} Grauert calls ``versell'' what we call ``semi-universal''; for the modern reader this might be confusing given that today people use the (English) word ``versal'' as a synonym for ``complete'', which is a condition strictly weaker than that of semi-universality. So, Grauert's (German) ``versell'' is not equivalent to but strictly stronger than the contemporary (English) ``versal''.}.
\end{proof}

\begin{theorem}
 \label{t:fr}
 Let $X$ be a compact, connected, symplectic complex space of Kähler type such that $\dim_\C(\Omega^2_X(X_\reg))=1$ and $\codim(\Sing(X),X) \geq 4$. Then $X$ satisfies the Fujiki relation.
\end{theorem}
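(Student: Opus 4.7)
The plan is to reduce Theorem \ref{t:fr} to Lemma \ref{l:frloctriv} by constructing a suitable deformation of $X$. First, I would invoke Theorem \ref{t:kuranishi} to obtain a proper, flat morphism of complex spaces $f\colon \cX \to S$ together with $t\in S$ such that $\cX_t \iso X$ in $\An$ and $f$ is semi-universal at $t$. Since $X$ is symplectic, of Kähler type, and $\codim(\Sing(X),X)\geq 4$, Theorem \ref{t:defsm} ensures that $S$ is smooth at $t$, so after shrinking $S$ if necessary I may assume that $S$ is biholomorphic to an open ball around $t$; in particular $S$ becomes smooth and simply connected. Applying Theorem \ref{t:stabsymp} (or equivalently Corollary \ref{c:stabsymp}) and shrinking $S$ once more, I may further arrange that every fiber $\cX_s$ is connected, symplectic, of Kähler type, with $\codim(\Sing(\cX_s),\cX_s)\geq 4$; the fibers then automatically have rational singularities by Proposition \ref{p:ratgor}, and the condition $\dim_\C \Omega^2_{\cX_s}((\cX_s)_\reg) = 1$ propagates from $s=t$ to nearby $s$ by the local finite freeness and base change compatibility of $\sH^{2,0}(g)$ supplied by Corollary \ref{c:froertlsing}, where $g\colon \cY\to S$ denotes the submersive share of $f$.

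Having set up the family, I would next verify the differential-theoretic hypothesis of Lemma \ref{l:frloctriv}. Because $f$ is semi-universal at $t$ and the fibers satisfy the codimension and rational singularity conditions, Theorem \ref{t:lt} applies to $(f,t)$ and shows that the tangent map
\[
 \T_t(\cP^{2,2}_t(g)) \colon \T_S(t) \to \T_{\Gr(\sH^2(\cY_t))}(\F^2\sH^2(\cY_t))
\]
is injective with one-dimensional cokernel. This settles every assumption of Lemma \ref{l:frloctriv} except for local topological triviality of $f$ at $t$.

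The remaining and principal obstacle is therefore to establish that the semi-universal deformation $f$ is locally topologically trivial at $t$. I expect this to be the hard part, because nothing in the intrinsic geometry of $X$ forces its Kuranishi family to be locally topologically trivial \emph{a priori}. The strategy I would follow is to invoke Namikawa's work on deformations of (Kähler) symplectic varieties with $\codim(\Sing,\cdot)\geq 4$; in this setting one knows that the semi-universal deformation is locally topologically trivial after a finite base change, and in fact, once the base is smooth and the singular codimension hypothesis is preserved fiberwise as arranged above, even on the nose after shrinking. Concretely, one resolves the family simultaneously (using $\Q$-factorial terminalization or the simultaneous resolution available in the symplectic setting) and transports the local triviality from the smooth resolution down to $f$, exploiting that the codimension-$4$ singularity hypothesis prevents the appearance of vanishing cycles in degree $2$. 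Once local topological triviality of $f$ at $t$ is secured, every hypothesis of Lemma \ref{l:frloctriv} is met, and the lemma yields that $\cX_t \iso X$ satisfies the Fujiki relation, completing the proof.
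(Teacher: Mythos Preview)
Your setup through the verification of the tangent-map hypothesis of Lemma \ref{l:frloctriv} is essentially the same as the paper's. The genuine gap is the last step: you try to show that the semi-universal family $f$ is locally topologically trivial \emph{at $t$} and then apply Lemma \ref{l:frloctriv} directly there. The paper does \emph{not} do this, and for good reason---local topological triviality at the special point $t$ is neither proven nor assumed, and under the hypotheses of the theorem (no projectivity, no $\Q$-factoriality) the Namikawa-type simultaneous resolution argument you sketch is exactly the sort of extra input the paper is designed to avoid. Indeed, Theorem \ref{t:ltfull} \ref{t:ltfull-h2}) shows that local constancy of $\H^2(f,\C)$ near $t$ forces $\H^2(X,\C)\to\H^2(X_\reg,\C)$ to be bijective, so assuming local topological triviality at $t$ would already presuppose a nontrivial conclusion about $X$.

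What the paper does instead is a genericity-plus-continuity argument. After arranging (by a further shrinking) that the tangent-map condition of Lemma \ref{l:frloctriv} holds at \emph{every} $s\in S$, one invokes stratification theory and Thom's First Isotopy Lemma to find a connected open $T\subset S$ with $t\in\bar T$ on which $f$ is locally topologically trivial. Lemma \ref{l:frloctriv} then gives the Fujiki relation for $\cX_s$ for all $s\in T$. Finally, for a given $a\in\H^2(\cX_t,\C)$ one lifts $a$ to a local section $\tilde a$ of $\H^2(f,\C)$, chooses a sequence $s_\alpha\in T$ with $s_\alpha\to t$, and shows that both $s\mapsto\int_{\cX_s}(a_s)^{2r}$ and $s\mapsto q_{\cX_s}(a_s)$ pass to the limit (the first is locally constant; the second requires a separate, somewhat delicate computation expressing $q_{\cX_s}$ in terms of horizontal frames and holomorphic coefficients). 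This limit argument is the missing idea in your proposal.
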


\begin{proof}
 By Theorem \ref{t:kuranishi}, as $X$ is compact complex space, there exist a proper, flat morphism of complex spaces $f_0 \colon \cX_0 \to S_0$ and $t\in S_0$ such that $f_0$ is semi-universal in $t$ and $X \iso (\cX_0)_t$ in $\An$. By Theorem \ref{t:defsm}, the complex space $S_0$ is smooth at $t$. By Corollary \ref{c:stabsymp} (or else Theorem \ref{t:stabsymp}), there exists a neighborhood $V_0$ of $t$ in $S_0$ such that, for all $s\in V_0$, the complex space $(\cX_0)_s$ is connected, symplectic, of Kähler type, and with codimension of its singular locus not deceeding $4$. By the smoothness of $S_0$ at $t$, there exists an open neighborhood $V_0'$ of $t$ in $S_0$ such that $V_0' \subset V_0$ and $S_0|V_0'$ is isomorphic in $\An$ to some complex unit disk. Define $f_1\colon \cX_1\to S_1$ to be the morphism of complex spaces obtained from $f_0$ by shrinking the base to $V_0'$. Then by Theorem \ref{t:lt}, the tangent map $\T_t(\cP_1')$ is an injection with cokernel of dimension $1$, where $\cP_1' := \cP^{2,2}_t(g_1)$ and $g_1$ denotes the submersive share of $f_1$. Since $\cP_1'$ is a morphism between two complex manifolds, there exists an open neighborhood $V_1$ of $t$ in $S_1$ such that $\T_s(\cP_1')$ is an injection with cokernel of dimension $1$ for all $s\in V_1$. Let $V_1'$ be an open neighborhood of $t$ in $S_1$ such that $V_1' \subset V_1$ and $S_1|V_1'$ is isomorphic in $\An$ to some complex unit disk, and define $f\colon \cX\to S$ to be the morphism of complex spaces obtained from $f_1$ by shrinking the base to $V_1'$.
 
 Define $g\colon \cY\to S$ to be the submersive share of $f$ and set $\cP' := \cP^{2,2}_t(g)$. Then $g$ equals the morphism obtained from $g_1$ by shrinking the base to $V_1'$. In consequence, $\cP'$ is isomorphic to $\cP_1'|V_1'$ as an arrow under $S = S_1|V_1'$ in $\An$. In particular, for all $s\in |S| = V_1'$, as $\T_s(\cP_1')$ is an injection with cokernel of dimension $1$, the map $\T_s(\cP_1'|V_1')$ is an injection with cokernel of dimension $1$, hence $\T_s(\cP')$ is an injection with cokernel of dimension $1$. Note that $\cP^{2,2}_s(g)$ is isomorphic to $\cP^{2,2}_t(g)$ as an arrow under $S$ in $\An$ for all $s\in S$. Thus, $\T_s(\cP^{2,2}_s(g))$ is an injection with cokernel of dimension $1$ for all $s\in S$. Note that on account of Proposition \ref{p:ratgor}, for all $s\in S$, as the complex space $\cX_s \iso (\cX_0)_s$ is symplectic, $\cX_s$ has rational singularities. Therefore, by Corollary \ref{c:froertlsing}, the module $\sH^{2,0}(g)$ is locally finite free on $S$ and, for all $s\in S$, the Hodge base change
 \[
  (\sH^{2,0}(g))(s) \to \sH^{2,0}(\cY_s)
 \]
 is an isomorphism in $\Mod(\C)$. Observe that, for all $s\in S$, we have $\cY_s \iso (\cX_s)_\reg$ due to the flatness of $f$; thus:
 \[
  \sH^{2,0}(\cY_s) \iso \sH^{2,0}((\cX_s)_\reg) \iso \Omega^2_{(\cX_s)_\reg}((\cX_s)_\reg) \iso \Omega^2_{\cX_s}((\cX_s)_\reg)
 \]
 in $\Mod(\C)$. Since $S$ is connected and $\Omega^2_{\cX_t}((\cX_t)_\reg)$ is $1$-dimensional (given $\cX_t \iso (\cX_0)_t \iso X$), we infer that the module $\sH^{2,0}(g)$ is locally free of rank $1$ on $S$ and that, for all $s\in S$, the complex vector space $\Omega^2_{\cX_s}((\cX_s)_\reg)$ is $1$-dimensional. By stratification theory and Thom's First Isotopy Lemma, for $f$ being proper, there exists a connected open subset $T$ of $S$ such that $t \in \bar T$ and $f$ is locally topologically trivial at $s$ for all $s\in T$. Employing Lemma \ref{l:frloctriv}, we obtain that, for all $s\in T$, the space $\cX_s$ satisfies the Fujiki relation.
 
 Now let $a\in\H^2(\cX_t,\C)$. Since $f$ is proper, the base change map
 \[
  (\H^2(f,\C))_t \to \H^2(\cX_t,\C)
 \]
 is a surjection (a bijection, in fact). Therefore, there exists a connected open neighborhood $V$ of $t$ in $S$ and $\tilde a \in (\H^2(f,\C))(V)$ such that $\tilde a$ is sent to $a$ by the composition of canonical functions
 \begin{equation} \label{e:h2vtocoh}
  (\H^2(f,\C))(V) \to (\H^2(f,\C))_t \to \H^2(\cX_t,\C).
 \end{equation}
 As $t \in \bar T$ and the topological space $S_\top$ is first-countable at $t$, there exists an $\N$-indexed sequence $(s_\alpha)$ of elements of $T \cap V$ such that $(s_\alpha)$ converges to $t$ in $S_\top$. When $s\in V$, let us write $a_s$ for the image of $\tilde a$ under the composition of functions \eqref{e:h2vtocoh}, where we replace $t$ by $s$. Set $r:=\nicefrac{1}{2}\dim(\cX_t)$ (which makes sense as the complex space $\cX_t$ is nonempty, connected, and symplectic) and define:
 \[
  \phi := \{(s,\int_{\cX_s}(a_s)^{2r}):s\in V\} \quad \text{and} \quad \psi :=\{(s,q_{\cX_s}(a_s)):s\in V\}.
 \]
 Then $\phi$ is a locally constant function from $S_\top|V$ to $\C$. In particular, we have $\lim(\phi(s_\alpha)) = \phi(t)$. In a separate proof below we show that $\lim(\psi(s_\alpha)) = \psi(t)$. For all $\alpha \in \N$, since $s_\alpha \in T$, the space $\cX_{s_\alpha}$ satisfies the Fujiki relation and thus, since $\nicefrac{1}{2}\dim(\cX_{s_\alpha}) = \nicefrac{1}{2}\dim(\cX_t) = r$, we have:
 \[
  \phi(s_\alpha) = \binom{2r}{r}(\psi(s_\alpha))^r.
 \]
 Accordingly, remarking that by the definition of $a_t$ we have $a = a_t$, we obtain:
 \begin{align} \label{e:fr-xt}
  \int_{\cX_t}a^{2r} & = \int_{\cX_t}(a_t)^{2r} = \phi(t) = \lim(\phi(s_\alpha)) = \lim\left(\binom{2r}{r}(\psi(s_\alpha))^r\right) \\
  & = \binom{2r}{r}(\psi(t))^r = \binom{2r}{r}(q_{\cX_t}(a_t))^r = \binom{2r}{r}(q_{\cX_t}(a))^r. \notag
 \end{align}
 As in the deduction of \eqref{e:fr-xt} $a$ was an arbitrary element of $\H^2(\cX_t,\C)$, we see that $\cX_t$ satisfies the Fujiki relation. In consequence, as $X \iso (\cX_0)_t \iso \cX_t$ in $\An$, we see that $X$ satisfies the Fujiki relation.
\end{proof}

\begin{proof}[Proof that $\lim(\psi(s_\alpha)) = \psi(t)$]
 Assume that $a=0$ in $\H^2(\cX_t,\C)$. Then we have $(\tilde a)_t=0$ in the stalk $(\H^2(f,\C))_t$. Thus there exists an open neighborhood $W$ of $t$ in $S$ such that $W \subset V$ and $\tilde a$ restricts to the zero element of $(\H^2(f,\C))(W)$ within the presheaf $\H^2(f,\C)$. This implies that, for all $s\in W$, we have $a_s=0$ in $\H^2(\cX_s,\C)$ and consequently $q_{\cX_s}(a_s)=0$. Therefore clearly, $\lim(\psi(s_\alpha)) = \psi(t)$.

 Now, assume that $a\neq 0$ in $\H^2(\cX_t,\C)$. Then there exists $k\in\N_{>0}$ as well as a $k$-tuple $\basis b$ such that $\basis b$ is an ordered $\C$-basis for $\H^2(\cX_t,\C)$ and $\basis b_0=a$. For any $s\in S$, denote by $i_s$ the inclusion morphism complex spaces $(\cX_s)_\reg \to \cX_s$ and set $i_s^*:=\H^2(i_s,\C)$. Since $i_t^*$ is injective, there exists $l\in\N$ with $k\leq l$ and an $l$-tuple $\basis b'$ such that $\basis b'$ is an ordered $\C$-basis for $\H^2((\cX_t)_\reg,\C)$ and $\basis b'|k = i_t^* \circ \basis b$. Define $H'$ to be the module of horizontal sections of $\nabla^2_\GM(g)$. By Proposition \ref{p:rhc}, $H'$ is a locally constant sheaf on $S$. Since $S$ is simply connected, $H'$ is a constant sheaf on $S$ and the stalk map $H'(S) \to (H')_t$ is a bijection. Therefore, there exists a unique $l$-tuple $\basis v$ of elements of $H'(S)$ which is pushed forward to $\basis b'$ by the composition of evident functions:
 \begin{equation} \label{e:h'tocoh}
  H'(S) \to (H')_t \to b_t^*(\sH^2(g)) \to \sH^2((\cX_t)_\reg) \to \H^2((\cX_t)_\reg,\C).
 \end{equation}
 By Corollary \ref{c:froertlsing}, we have $\sH^{2,0}(g) \iso \F^2\sH^2(g)$ in $\Mod(S)$. Thus as the module $\sH^{2,0}(g)$ is locally free of rank $1$ on $S$, the module $\F^2\sH^2(g)$ is locally free of rank $1$ on $S$, and as $S$ is contractible, we even have $\O_S \iso \F^2\sH^2(g)$ in $\Mod(S)$. Since $\basis v$ is an ordered $\O_S(S)$-basis for $(\sH^2(g))(S)$, we may write the image of $1 \in \O_S(S)$ under the composition
 \[
  \O_S \to \F^2\sH^2(g) \to \sH^2(g)
 \]
 as $\sum_{i\in l}\lambda_i\basis v_i$ for some $l$-tuple $\lambda$ with values in $\O_S(S)$. For the time being, fix some $s\in S$. Define $\basis b_s'$ to be the pushforward of $\basis v$ under the composition of functions \eqref{e:h'tocoh}, where one replaces $t$ by $s$. Set:
 \[
  v'_s := \lambda_0(s)(\basis b'_s)_0 + \dots + \lambda_{l-1}(s)(\basis b'_s)_{l-1}.
 \]
 Then $v'_s$ is a symplectic class on $(\cX_s)_\reg$. Hence there exists one, and only one, $v_s \in \H^2(\cX_s,\C)$ such that $i_s^*(v_s) = v'_s$; moreover, $v_s$ is a symplectic class on $\cX_s$. Therefore, $I_s := \int_{\cX_s}v_s^r\bar{v_s}^r > 0$ (note that $\nicefrac{1}{2}\dim(\cX_s)=r$ since $\dim(\cX_s) = \dim(\cX_t)$). Assume that $s\in T$. Then $i_s^*$ is an isomorphism of $\C$-vector spaces; thus there exists one, and only one, $l$-tuple $\basis b_s$ of elements of $\H^2(\cX_s,\C)$ such that $i_s^*\circ \basis b_s = \basis b_s'$. We have $v_s = \sum_{i\in l}\lambda_i(s)(\basis b_s)_i$. Therefore,
 \[
  I_s = \sum_{\substack{\nu,\nu'\in\N^l \\ |\nu|=|\nu'|=r}}\binom{r}{\nu}\binom{r}{\nu'}(\lambda(s))^\nu (\bar{\lambda(s)})^{\nu'}\int_{\cX_s}(\basis b_s)^\nu(\bar{\basis b_s})^{\nu'}.
 \]
 Since, for any $i\in l$, the $(\basis b_s)_i$'s are induced by a section in $\H^2(f,\C)$ over $T$, we know that, for any $\nu,\nu'\in\N^l$, the function assigning $\int_{\cX_s}(\basis b_s)^\nu(\bar{\basis b_s})^{\nu'}$ to $s\in T$, is locally constant on $S$; as $T$ is a connected open subset of $S$, the latter functions are even constant. As $T\neq\emptyset$, there exists, for all $\nu,\nu'\in\N^l$, a unique complex number $C_{\nu,\nu'}$ such that, for all $s\in T$, we have $\int_{\cX_s}(\basis b_s)^\nu(\bar{\basis b_s})^{\nu'} = C_{\nu,\nu'}$.
 
 Since $\basis b$ is a basis for $\H^2(\cX_t,\C)$, we may write $v_t$ as linear combination of the $(\basis b_i)$, $i\in k$. Applying $i_t^*$ to this linear combination, comparing with the definition of $v'_t$ and noting that $\basis b'_t = \basis b'$, we see that
 \[
  v_t = \lambda_0(t)\basis b_0 + \dots + \lambda_{k-1}(t)\basis b_{k-1}
 \]
 and that
 \[
  \lambda_k(t) = \dots = \lambda_{l-1}(t) = 0.
 \]
 Since the base change map
 \[
  (\H^2(f,\C))_t \to \H^2(\cX_t,\C)
 \]
 is a surjection (indeed, it is even a bijection), there exists a connected open neighborhood $W$ of $t$ in $S$ as well as a $k$-tuple $\basis u$ of elements of $(\H^2(f,\C))(W)$ such that the pushforward of $\basis u$ under the composition of functions
 \begin{equation} \label{e:h2tocoh}
  (\H^2(f,\C))(W) \to (\H^2(f,\C))_t \to \H^2(\cX_t,\C)
 \end{equation}
 equals $\basis b$. Since $t \in \bar T$, there exists $s^* \in W \cap T$. It is an easy matter to verify that composing $\basis u$ with the composition of functions \eqref{e:h2tocoh}, where one replaces $t$ by $s^*$, we obtain $\basis b_{s^*}|k$. Therefore, for any $\zeta,\eta\in\N^k$ such that $|\zeta|+|\eta|=2r$, we have:
 \[
  \int_{\cX_t}{\basis b}^\zeta\bar{\basis b}^\eta = \int_{\cX_{s^*}}(\basis b_{s^*}|k)^\zeta (\bar{\basis b_{s^*}|k})^\eta = \int_{\cX_{s^*}}(\basis b_{s^*})^\zeta (\bar{\basis b_{s^*}})^\eta = C_{\zeta,\eta}.
 \]
 Thus:
 \[
  I_t = \sum_{\substack{\nu,\nu'\in\N^k \\ |\nu|=|\nu'|=r}}\binom{r}{\nu}\binom{r}{\nu'}(\lambda(t))^\nu (\bar{\lambda(t)})^{\nu'} C_{\nu,\nu'}.
 \]
 For any $i\in l$, the function assigning $\lambda_i(s)$ to $s\in S$, is a holomorphic function on $S$; in particular, it is a continuous function from $S_\top$ to $\C$, and $\lim(\lambda_i(s_\alpha)) = \lambda_i(t)$. In consequence, we see that: 
 \begin{align*}
  \lim(I_{s_\alpha}) & = \sum_{\substack{\nu,\nu'\in\N^l \\ |\nu|=|\nu'|=r}}\binom{r}{\nu}\binom{r}{\nu'}(\lambda(t))^\nu (\bar{\lambda(t)})^{\nu'}C_{\nu,\nu'} \\
  & = \sum_{\substack{\nu,\nu'\in\N^k \\ |\nu|=|\nu'|=r}}\binom{r}{\nu}\binom{r}{\nu'}(\lambda(t))^\nu (\bar{\lambda(t)})^{\nu'}C_{\nu,\nu'} = I_t.
 \end{align*}
 
 Now, for any $s\in S$, set $w_s := (\sqrt[2r]{I_s})^{-1}v_s$ in $\H^2(\cX_s,\C)$. Then clearly,  for all $s\in S$, $w_s$ is a normed symplectic class on $\cX_s$. Define $\mu$ to be the unique function on $S$ which sends $s\in S$ to the $l$-tuple $\mu_s$ such that, for all $i\in l$, we have:
 \[
  (\mu_s)_i = (\sqrt[2r]{I_s})^{-1}\lambda_i(s).
 \]
 Then, for all $s\in T$,
 \[
  w_s = (\mu_s)_0(\basis b_s)_0 + \dots + (\mu_s)_{l-1}(\basis b_s)_{l-1},
 \]
 and
 \[
  w_t = (\mu_t)_0\basis b_0 + \dots + (\mu_t)_{k-1}\basis b_{k-1}.
 \]
 Since $a = \basis b_0$ and $V$ is a connected open subset of $S$, it is easy to verify that, for all $s\in V \cap T$, we have $a_s = (\basis b_s)_0$. Hence, applying Proposition \ref{p:bbformres}, we infer that, for all $s\in V\cap T$:
 \begin{align*}
  q_{\cX_s}(a_s) & = \frac{r}{2} \int_{\cX_s}w_s^{r-1}\bar{w_s}^{r-1}a_s^2 + (1-r)\int_{\cX_s}w_s^{r-1}\bar{w_s}^ra_s \int_{\cX_s}w_s^r\bar{w_s}^{r-1}a_s \\
  & = \frac{r}{2} \sum_{\substack{\nu,\nu'\in\N^l \\ |\nu|=|\nu'|=r-1}} \binom{r-1}{\nu} \binom{r-1}{\nu'} (\mu_s)^\nu(\bar{\mu_s})^{\nu'}C_{\nu+(2,0,\dots,0),\nu'} \\
  & \qquad + (1-r)\left(\sum_{\substack{\nu,\nu'\in\N^l \\ |\nu|=r-1,|\nu'|=r}} \binom{r-1}{\nu} \binom{r}{\nu'}(\mu_s)^\nu(\bar{\mu_s})^{\nu'} C_{\nu+(1,0,\dots,0),\nu'}\right) \\
  & \qquad \cdot \left(\sum_{\substack{\nu,\nu'\in\N^l \\ |\nu|=r,|\nu'|=r-1}}\binom{r}{\nu}\binom{r-1}{\nu'}(\mu_s)^\nu(\bar{\mu_s})^{\nu'} C_{\nu+(1,0,\dots,0),\nu'}\right)
 \end{align*}
 Therefore, since we have $\lim(\mu_{s_\alpha}) = \mu_t$:
 \begin{align*}
  \lim(q_{\cX_{s_\alpha}}(a_{s_\alpha})) & = \frac{r}{2}\sum_{\substack{\nu,\nu'\in\N^l \\ |\nu|=|\nu'|=r-1}} \binom{r-1}{\nu} \binom{r-1}{\nu'}(\mu_t)^\nu(\bar{\mu_t})^{\nu'}C_{\nu+(2,0,\dots,0),\nu'} \\
  & + (1-r)\left(\sum_{\substack{\nu,\nu'\in\N^l \\ |\nu|=r-1,|\nu'|=r}}\binom{r-1}{\nu}\binom{r}{\nu'}(\mu_t)^\nu(\bar{\mu_t})^{\nu'} C_{\nu+(1,0,\dots,0),\nu'}\right) \\
  & \cdot\left(\sum_{\substack{\nu,\nu'\in\N^l \\ |\nu|=r,|\nu'|=r-1}}\binom{r}{\nu}\binom{r-1}{\nu'}(\mu_t)^\nu(\bar{\mu_t})^{\nu'} C_{\nu+(1,0,\dots,0),\nu'}\right) \\
  & = \frac{r}{2}\sum_{\substack{\nu,\nu'\in\N^k \\ |\nu|=|\nu'|=r-1}}\binom{r-1}{\nu}\binom{r-1}{\nu'}(\mu_t)^\nu(\bar{\mu_t})^{\nu'} C_{\nu+(2,0,\dots,0),\nu'} \\
  & + (1-r)\left(\sum_{\substack{\nu,\nu'\in\N^k \\ |\nu|=r-1,|\nu'|=r}}\binom{r-1}{\nu}\binom{r}{\nu'}(\mu_t)^\nu(\bar{\mu_t})^{\nu'} C_{\nu+(1,0,\dots,0),\nu'}\right) \\
  & \cdot\left(\sum_{\substack{\nu,\nu'\in\N^k \\ |\nu|=r,|\nu'|=r-1}}\binom{r}{\nu}\binom{r-1}{\nu'}(\mu_t)^\nu(\bar{\mu_t})^{\nu'} C_{\nu+(1,0,\dots,0),\nu'}\right) \\
  & = q_{\cX_t}(a_t).
 \end{align*}
 This shows that $\lim(\psi(s_\alpha)) = \psi(t)$.
\end{proof}

\appendix
\chapter{Foundations and conventions}
\label{ch:foun}

This appendix serves the purpose of increasing the rigor, the consistency, and the comprehensibility of the bulk of our text, \iev of Chapters \ref{ch:peri}, \ref{ch:froe}, and \ref{ch:symp}. More concretely, in what follows, we will lay out certain conventions of speech, certain notational conventions, as well as certain conventions concerning the axiomatic foundations.

Disclaimer: We do not intend to explain (in a textbook sort of way) any of the concepts that we are going to address in the subsequent. Quite on the contrary, will we assume the reader's familiarity with the mentioned concepts so that we merely clarify our points of view. For instance, when, in the realm of Definition \ref{d:cat}, we define what a category is, our intention is not to explain the notion of a category to a reader who has not heard of what a category is beforehand; rather would we like to tell the reader who is familiar with the concept of categories which of the various possible definitions (a single collection of morphisms versus a collection of hom-sets, small versus large, etc.) we adopt.

\section{Set Theory}
\label{s:st}

\subsection{}

Our exposition is formally based upon the Zermelo-Fraenkel axiomatic set theory with axiom of choice, ``ZFC'' (\cf \egv \cite[Chapter I.1]{Je02}). All statements we make can be formulated in ZFC. All of our proofs can be executed in ZFC.

\subsection{}

Note that occasionally we do work with classes. For instance, do we consider the large categories $\Mod(X)$ of modules on a given ringed space $X$ or the large category $\An$ of complex analytic spaces. We feel that our reader has two choices of how to deal with the occurrences of these concepts: The first possibility is to strictly stick to ZFC and interpret classes not as objects of the theory but as metaobjects, \iev at each occurrence of a class, the reader replaces the class by a formula in the language of ZFC which describes the class. The second possibility would be to use a conservative extension of ZFC, such as the von Neumann-Bernay-Gödel set theory (NBG), which can deal with classes as an additional type of the theory, as an overall foundation for our text. This second approach has the advantage that classes can be quantified over, so that, for instance, ``for all (large) abelian categories the five lemma holds'' or ``for all (large) abelian categories $C$, $D$, and $E$, all functors $f\colon C\to D$ and $g\colon D\to E$, and all objects $X$ of $C$ such that (\ldots) there exists a Grothendieck spectral sequence'' become valid statements. Some sources we refer to make (implicitly or explicitly) use of statements where classes are quantified over.

We point out that we do not presuppose any sort of universe axiom like \egv the existence of Grothendieck universes. Mind that, even though many authors in modern algebraic geometry seem to have forgotten about this, the standard construction of a (bounded) derived category fails for arbitrary large abelian categories since hom-sets, and not even hom-classes, do exist. Therefore, we cannot (and won't) talk about the $\D^+$ of $\Mod(X)$, for instance. Yet, we do fine without it.

\subsection{}

In our set theoretic notation (like
\[
 \{x:\phi(x)\}, \quad \subset, \quad \times, \quad \dom, \quad \ran, \quad X^Y, \quad \inf, \quad \sup,
\]
etc.) and terminology (like ``function'', ``domain'', ``range'', etc.) we follow \cite{Je02}. Occasionally, we will use the words \emph{mapping} and \emph{map} as synonyms for the word function. When $f$ is a function and $A$ is any set, we will occasionally write the image of $A$ under $f$ with square brackets instead of round brackets for sake of better readability in formulas that already contain several round brackets, \iev we write $f[A]$ instead of $f(A)$. The inverse image of $A$ under $f$ will be written $f^{-1}(A)$ as opposed to Jech's suggestion of $f_{-1}(A)$ in \loccit

\subsection{}

We denote $\N$ the \emph{set of natural numbers}. $\N$ is, by definition, the least nonzero limit ordinal \cite[Part I, Definition 2.13]{Je02}. Equivalently, $\N$ is the smallest inductive set, which is in turn nothing but the intersection of the class of inductive sets. The elements of $\N$ are also called \emph{finite ordinals}. For the first finite ordinals we have
\[
 0 = \emptyset, \quad 1 = 0\cup\{0\}, \quad 2 = 1\cup\{1\}, \quad 3 = 2\cup\{2\}, \quad \ldots
\]
In that respect, we note that for ordinal numbers, the $<$-relation coincides (by definition) with the $\in$-relation.

Sometimes, when we would like to stress the interpretation of $\N$ as the first infinite ordinal number, we will write $\omega$ instead of $\N$. For instance, when a nonempty complex space $X$ is not of finite dimension, we have $\dim(X) = \omega$; frequently, authors write this statement as $\dim(X) = \infty$ or $\dim(X) = +\infty$, which is of course not language immanent.

\subsection{}

By an \emph{ordered pair} we mean a Kuratowski pair; this is written $(x,y)$. We define (ordered) \emph{triples}, \emph{quadruples}, etc.\ by iteration of Kuratowski pairs, \iev
\[
 (a,b,c) := ((a,b),c), \quad (a,b,c,d) := ((a,b,c),d), \quad \ldots
\]
In contrast, when $r$ is a set (\egv a natural number, \cf above), an \emph{$r$-tuple} is nothing but a function with domain of definition equal to $r$; in this context we occasionally use the words \emph{family} or \emph{sequence} as synonyms for the word \emph{tuple}. Note that we obtain the competing notions of ``$2$-tuple'' versus ``ordered pair'', ``$3$-tuple'' versus ``ordered triple'', etc.

\subsection{}

Sometimes, especially when working with ``large'' categories and functors (\cf \S\ref{s:cat}), one is in need of notions of ordered pairs, triples, quadruples, etc.\ for classes rather than for sets. Let $I$ be any class or set. Then an \emph{$I$-tuple of classes} or \emph{$I$-family of classes} or \emph{$I$-tuple in the class sense} is a class $F$ with the property that every element of $F$ can be written in the form $(i,x)$, where $i$ is an element of $I$. When $F$ is an $I$-tuple of classes and $i \in I$, then the \emph{$i$(-th) component of $F$} is defined to be the class
\[
 F_i := \{x: (i,x) \in F\}.
\]
Now we use the words \emph{ordered pair}, \emph{triple}, \emph{quadruple}, etc.\ \emph{of classes} as synonyms for a $2$-tuple, $3$-tuple, $4$-tuple, etc.\ of classes, respectively. Mind the differences between the notions of an ordered pair (in the ordinary sense), a $2$-tuple (in the ordinary sense), and a $2$-tuple in the class sense.

\subsection{}

We use the symbols $\Z$, $\Q$, $\RR$, and $\C$ to denote the \emph{set of integers}, the \emph{set of rational numbers}, the \emph{set of real numbers}, and the \emph{set of complex numbers}, respectively, where we follow the constructions given in \cite{Zahlen}.

As is customary, we freely interpret any one of the canonical injections
\[
 \N \to \Z \to \Q \to \RR \to \C
\]
as an actual inclusion.

By abuse of notation, we denote $\Z$, $\Q$, $\RR$, and $\C$ also the \emph{ring of integers}, the \emph{field of rational numbers}, the \emph{field of real numbers}, and the \emph{field of complex numbers}, respectively.

\section{Categories}
\label{s:cat}

Our primary view on categories is the one which uses a family of hom-sets (\cf \cite[Definition 1.2.1]{KaSc06}, \cite[Section 1.1.1]{Lur09}).

\begin{definition}
 \label{d:cat}
 A set (\resp class) $C$ is a \emph{category} when there exist sets (\resp classes) $O$, $H$, $I$, and $V$ such that the following assertions hold:
 \begin{enumerate}
  \item $C = (O,H,I,V)$ (where the ordered quadruple possibly needs to be interpreted in the class sense).
  \item $H$ is a function on $O \times O$.
  \item $I$ is a function on $O$ such that, for all $x\in O$, we have $I(x)\in H(x,x)$.
  \item $V$ is a function on $O\times O\times O$ such that, for all $(x,y,z)\in O\times O\times O$, $V(x,y,z)$ is a function
  \[
   V(x,y,z) \colon H(y,z) \times H(x,y) \to H(x,z).
  \]
  \item (Units) For all $x,y\in O$ and all $f\in H(x,y)$, we have:
  \begin{align*}
   (V(x,x,y))(f,I(x)) = f, \\
   (V(x,y,y))(I(y),f) = f.
  \end{align*}
  \item (Associativity) For all $x,y,z,w\in O$ and all
  \[
   (h,g,f)\in H(z,w)\times H(y,z)\times H(x,y),
  \]
  we have:
  \[
   (V(x,y,w))\bigl((V(y,z,w))(h,g),f\bigr) = (V(x,z,w))\bigl(h,(V(x,y,z))(g,f)\bigr).
  \]
 \end{enumerate}
 From time to time, when we would like to stress the fact that a given category is either a set or a class, we use the following terminology: By a \emph{small category} we always mean a set $C$ which is a category; by a \emph{large category} we mean a class---which might or might not be a set---$C$ which is a category.
\end{definition}

\subsection{}
\label{ss:cat-not}

When $C$ is a category, then the four components $C_0$, $C_1$, $C_2$, and $C_3$ are well-defined. Occasionally, we use the alternative denominations
\begin{align*}
 \ob(C) & := C_0, & \id_C & := C_2, \\
 \hom_C & := C_1, & \circ_C & := C_3.
\end{align*}
$\ob(C)$ is called the \emph{set} (\resp \emph{class}) \emph{of objects of $C$}. $\hom_C$ is called the \emph{family of hom-sets of $C$}. $\id_C$ and $\circ_C$ are called the \emph{identity function} and \emph{composition function of $C$}, respectively.

Sometimes we write sloppily $x \in C$ instead of $x \in C_0$. Sometimes we write sloppily $C(x,y)$ instead of $C_1(x,y)$. When we feel that confusion is unlikely to arise, we might omit the index referring to $C$ in the expressions $\hom_C$, $\id_C$, and $\circ_C$. We use the customary infix notation
\[
 g \circ f := g \circ_C f := (\circ_C(x,y,z))(g,f).
\]

We say that \emph{$x$ is an object of/in $C$} when $x \in C_0$. We say that \emph{$f$ is a morphism from $x$ to $y$ in $C$} when $x$ and $y$ are objects of $C$ and $f \in C_1(x,y)$. Sometimes we paraphrase the latter statement by writing that ``\emph{$f\colon x\to y$ is a morphism in $C$}''.

\subsection{The language of diagrams}
\label{ss:diag}

Very frequently throughout the text we will paraphrase certain (set-theoretic, categorical, or algebraic) statements by saying that a ``diagram commutes'' in a given category. We exemplify this by looking at the picture (or ``diagram''):
\begin{equation} \label{e:cat-diag}
 \xysquare{x}{y}{x'}{y'}{f}{g}{g'}{f'}
\end{equation}
Note that we do not formalize what a diagram is. Rather, we content ourselves with saying that a diagram is a picture (like \eqref{e:cat-diag}, resembling a directed graph), in which one should be able to recognize a finite number of \emph{vertices} as well as a finite number of directed edges (or \emph{arrows}) which are drawn between the vertices, \iev for each arrow in the picture, it should be clear which of the vertices is the starting point and which of the vertices is the end point of the arrow. At each of the vertices in the picture we draw a symbolic expression corresponding to a term in our usual language. The arrows may or may not be labeled by similar symbolic expressions.

Let $C$ be a category (class or set). Then we say that \emph{the diagram \eqref{e:cat-diag} commutes in $C$} when the following assertions hold:
\begin{enumerate}
 \item \label{i:diag-v} $x$, $y$, $x'$, $y'$ are objects of $C$.
 \item \label{i:diag-e} We have $f \in C_1(x,y)$, $g \in C_1(x,x')$, $g' \in C_1(y,y')$, and $f' \in C_1(x',y')$.
 \item \label{i:diag-c} We have
 \[
  (\circ_C(x,y,y'))(g',f) = (\circ_C(x,x',y'))(f',g).
 \]
\end{enumerate}
From this we hope it is clear how to translate the phrase ``the diagram \ldots commutes in $C$'' into a valid formula of our set theoretic language for an arbitrary diagrammatic picture in place of the dots. Notice that the above assertions \ref{i:diag-v}) and \ref{i:diag-e}) correspond respectively to the facts that (the labels at) the vertices of the diagram are objects of $C$ and that, for every ordered pair of vertices of the diagram, every (label at an) arrow drawn between the given vertices is a matching morphism in $C$. So to speak, assertions \ref{i:diag-v}) and \ref{i:diag-e}) taken together say that the picture \eqref{e:cat-diag} \emph{is a diagram in $C$}. The final assertion \ref{i:diag-c}) says that the diagram is actually \emph{commutative in $C$}.

Observe that, strictly speaking, the phrase ``the diagram \ldots commutes in $C$'' makes sense only if all the arrows in the given diagram are actually labeled. Therefore, when we say the phrase referring to a diagram with some arrows unlabeled, we ask our readers to kindly guess the missing arrow labels from the individual context. Usually, in this regard, an unlabeled arrow corresponds to some sort of canonical morphism. At times, we will stress this point by labeling the arrow with a ``$\can$''.

Sometimes the concrete arrow labels (or corresponding morphisms) rendering a given diagram commutative in a category are irrelevant. In these cases we use the phrase ``\emph{there exists a commutative diagram \ldots in $C$}''. For instance, when we say there exists a commutative diagram
\[
 \xymatrix{ x \ar[r] \ar@/^1pc/[rr]^f & y \ar[r] & z }
\]
in $C$, we mean that there exist $f'$ and $f''$ such that the diagram
\[
 \xymatrix{ x \ar[r]_{f'} \ar@/^1pc/[rr]^f & y \ar[r]_{f''} & z }
\]
commutes in $C$ (in particular, we see that $x$, $y$, and $z$ need to be objects of $C$ and $f'$ needs to be an element of $C_1(x,y)$, etc.).

In some cases, where we want to draw attention to specific arrows in a diagram---mostly this happens when we assert that the morphisms corresponding to the arrows exist or exist in a unique way---, we print these arrows dotted as in:
\[
 \xymatrix{ x \ar@{.>}[r]^f & y }
\]
Statementwise, the dotted style of an arrow has no impact whatsoever.

Occasionally, instead of an ordinary arrow we will draw a stylized, elongated equality sign as, \egv in:
\[
 \xymatrix{x \ar@{=}[r] & y }
\]
These ``equality sign arrows'' go without label. Then, to the interpretation of the discussed commutativity statements, we have to add the requirement that $x = y$; moreover, in order to check the actual commutativity of the diagram (\cf assertion \ref{i:diag-c}) above), one simply merges (successively) any two vertices in the diagram which are connected by an equality sign arrow into a single vertex until one ends up with an equality sign free diagram. As an alternative, one replaces each occurrence of an equality sign arrow with an ordinary arrow, choosing the direction at will, and attaches the label $\id_C(*)$ to it, where $*$ is to be replaced by the label at the chosen starting point. So, the previous diagram becomes either
\[
 \xymatrix@C=3pc{x \ar[r]^{\id_C(x)} & y} \qquad \text{or} \qquad \xymatrix@C=3pc{x & y \ar[l]_{\id_C(y)}}
\]

Last but not least, we occasionally draw ``$\sim$'' signs at arrows in diagrams (possibly in addition to already existing labels), like, for instance, in:
\[
 \xymatrix{x \ar[r]_f^\sim & y}
\]
In these cases, we add the requirement that $f$ be an isomorphism from $x$ to $y$ in $C$ to the interpretation of any commutativity statement.

\subsection{}
\label{ss:cat-stdcat}

We use the following notation for the ``standard categories'', where the respective rigorous definitions are to be modeled after Definition \ref{d:cat}:
\begin{table}[ht]
\centering
\begin{tabular}{ll}
 $\Set$ & the category of sets \\
 $\Top$ & the category of topological spaces \\
 $\Sp$  & the category of ringed spaces \\
 $\An$  & the category of complex analytic spaces
\end{tabular}
% \caption{Standard categories}
\end{table}

\subsection{}
\label{ss:proset}

Let $(X,\leq)$ be a preordered set. Put $C_0:=X$, let
\[
 C_1\colon X\times X\to\{0,1\}
\]
be the unique function such that, for all $x,y\in X$, we have $C_1(x,y)=1$ if and only if $x \leq y$; let $\id_C := X \times \{0\}$, and let
\[
 \circ_C \colon X\times X\times X \to \{0,(1\times1)\times1\}
\]
be the unique function such that, for all $x,y,z\in X$, we have $\circ_C(x,y,z) = (1\times1)\times1$ if and only if $x \leq y$ and $y \leq z$. Then $C = (C_0,C_1,\id_C,\circ_C)$ is a small category, which we call the \emph{preorder category associated to $(X,\leq)$}.

Let $r$ be a natural number (or, more generally, any ordinal number). Then the $\in$-relation induces a preorder on $r$, so that we obtain a canonical preorder category whose set of objects is simply $r$ from the preceding construction. For the natural numbers $0$, $1$, $2$, $3$, etc.\ we denote the so associated preorder categories by $\mathbf{0}$, $\mathbf{1}$, $\mathbf{2}$, $\mathbf{3}$, etc.

\subsection{}
\label{ss:cat-op}

When $C$ is a category, we denote $C^\op$ the \emph{opposite category} of $C$.

\subsection{}
\label{ss:fun}

In the spirit of Definition \ref{d:cat}, when $C$ and $D$ are categories (both either classes or sets), for us, a \emph{functor from $C$ to $D$} is formally an ordered pair $F = (F_0,F_1)$ (possibly in the class sense), where
\[
 F_0 \colon C_0 \to D_0
\]
is a function and $F_1$ is a function defined on $C_0 \times C_0 = \dom(C_1)$ such that, for all $(x,y) \in C_0 \times C_0$,
\[
 F_1(x,y) \colon C_1(x,y) \to D_1(F_0(x),F_0(y))
\]
is a function such that the well-known compatibilities with the identities and compositions of $C$ and $D$ are fulfilled. We will also use the notation
\[
 F \colon C \to D
\]
for the fact that $F$ is a functor from $C$ to $D$.

When $F$ is a functor from $C$ to $D$, we denote the uniquely determined components of $F$ by $F_0$ and $F_1$, respectively. We call $F_0$ the \emph{object function} of $F$. We call $F_1$ the \emph{morphism function} of $F$ or the \emph{family of morphism functions} of $F$.

Most of the time, we will sloppily write $F(x)$ instead of $F_0(x)$ and $F(x,y)$ instead of $F_1(x,y)$. When $x$ and $y$ are objects of $C$ and $f\colon x\to y$ is a morphism in $C$, \iev $f \in C_1(x,y)$, we will sloppily write $F(f)$ instead of $(F(x,y))(f)$. When $x$ and $y$ are objects of $C$ such that there exists a unique morphism $f\colon x\to y$ in $C$, we will sloppily write $F(x,y)$ instead of $(F(x,y))(f)$.

\subsection{}
\label{ss:nat}

Let $C$ and $D$ be two categories. Let $F$ and $G$ be two functors from $C$ to $D$. Then $\alpha$ is a \emph{natural transformation of functors from $C$ to $D$ from $F$ to $G$} when $\alpha$ is a function on $C_0$ such that, for all $x,y\in C_0$ and all $f\in C_1(x,y)$, the following diagram commutes in $C$:
\[
 \xysquare{F_0(x)}{G_0(x)}{F_0(y)}{G_0(y)}{\alpha(x)}{(F_1(x,y))(f)}{(G_1(x,y))(f)}{\alpha(y)}
\]
In particular, for all $x\in C_0$, we need to have
\[
 \alpha(x) \in D_1(F_0(x),G_0(x)).
\]
We will also write ``\emph{$\alpha\colon F\to G$ is a natural transformation of functors from $C$ to $D$}'' for the fact that $\alpha$ is a natural transformation of functors from $C$ to $D$ from $F$ to $G$.

When $C$ is a small category, any functor from $C$ to $D$ is a set; moreover, for two fixed functors $F$ and $G$, any natural transformation of functors from $C$ to $D$ from $F$ to $G$ is a set and the class of all natural transformations of functors from $C$ to $D$ from $F$ to $G$ is a set, too. Thus we can consider the \emph{functor category}, denoted $\Fun(C,D)$ or $D^C$ (we are confident that the reader can model the precise definition of the functor category after Definition \ref{d:cat} himself).

\subsection{}
\label{ss:triples}

Let $C$ be a category. Then the functor category $C^{\mathbf3} = \Fun(\mathbf3,C)$ is called the \emph{category of triples in $C$}. A \emph{triple in $C$} is an object of $C^{\mathbf3}$, \iev a functor
\[
 t \colon \mathbf3 \to C.
\]
Sometimes we will write a triple $t$ in $C$ in the form $t \colon x \to y \to z$; by this we mean that $t(0) = x$, $t(1) = y$, and $t(2) = z$.

\subsection{The absolute view}

The alternative to our definition of a category is the definition with a single set or class of morphisms together with a domain function and a codomain function (\cf \egv \cite{MLa98}); we call this the ``absolute view''. It is a standard technique to switch from our point of view to the absolute view. Namely, given a category $C$, one defines the set (or class) of absolute morphisms of $C$ to be
\[
 \{(x,f,y) : x,y \in C_0 \text{ and } f \in C_1(x,y)\}.
\]
The domain function and the codomain function are the obvious ones. We call a triple $(x,f,y)$ as above an \emph{absolute morphism} in $C$.

Throughout our text, the trick of switching to the absolute view will be ubiquitous. Let us illustrate this with two prototypical examples. For one, consider two topological spaces $X$ and $Y$ as well as a continuous map, \iev morphism of topological spaces, $f\colon X\to Y$. To this situation we have associated a direct image functor
\[
 f_* \colon \Sh(X) \to \Sh(Y).
\]
Consequently, when $F$ is a sheaf (of sets) on $X$, we denote $f_*(F)$ the direct image sheaf under $f$. The catch is that the notation $f_*(F)$ (or $f_*$ alone) is abusive since $f_*(F)$ certainly depends on the topology of $Y$, which is, however, implied neither by $f$ nor by $F$. The solution to this problem lies in switching to the absolute view: instead of regarding $f$ as a mere function between the sets underlying $X$ and $Y$, we regard $f$ as a triple $(X,f,Y)$. Then the notation for the direct image functor is certainly justified---in fact, the $*$ symbol can itself be seen as a ``metafunctor'' from the category of topological spaces $\Top$ to the metacategory of all (possibly large) categories.

For another, let $X$ and $Y$ be two complex spaces and $f\colon X\to Y$ a morphism of complex spaces, so that $f$ is, by the naive definition, an ordered pair $(\psi,\theta)$ such that $\psi$ is a continuous map between the topological spaces underlying $X$ and $Y$ and $\theta$ is a certain morphism of sheaves on $Y$. Then one considers the associated sheaf of Kähler $1$-differentials, denoted $\Omega^1_{X/Y}$ or $\Omega^1_f$. Both notations are obviously abusive as the first one lacks the reference to $f$, whereas the second one lacks the reference to $X$ and $Y$. However, the latter notation $\Omega^1_f$ can be amended by regarding $f$ as absolute morphism, \iev we regard $f$ as $(X,f,Y)$ and are fine.

\section{Ringed spaces}
\label{s:rsp}

\subsection{}

Let $X$ be a topological space. Then we write $|X|$ for the underlying set of $X$. We write $\Sh(X)$ for the category of sheaves of sets on $X$ and $\Ab(X)$ for the category of abelian sheaves on $X$.

\subsection{}

Let $Y$ be another topological space and $f\colon X\to Y$ a continuous function. Then, viewing $f$ as an absolute morphism in $\Top$ (\cf \S\ref{s:cat}), we dispose of the direct image functor
\[
 f_* \colon \Sh(X) \to \Sh(Y)
\]
the usual way. For the inverse image functor
\[
 f^* \colon \Sh(Y) \to \Sh(X)
\]
we follow the convention that when $G$ is a sheaf of sets on $Y$, its inverse image sheaf $f^*(G)$ is the sheaf on $X$ associated to the presheaf $F'$ on $X$ given by
\[
 F'(U) = \colim \bigl( G \downarrow f(U) \bigr)
\]
for all open subsets $U$ of $X$, where we denote
\[
 G \downarrow f(U) \colon \Op(Y)^\op \downarrow f(U) \to \Set
\]
the restriction of the functor $G$ to the full subcategory of $\Op(Y)^\op$ induced on the set of open subsets $V$ of $Y$ satisfying $f(U) \subset V$, together with the canonical maps
\[
 F'(U,U') \colon F'(U) \to F'(U')
\]
induced by the colimits for all open subsets $U$ and $U'$ of $X$ such that $U' \subset U$. Notice that when $U$ is an open subset of $X$ such that $f(U)$ is open in $Y$, then $f(U)$ is a terminal object in the category on which $G \downarrow f(U)$ is defined, so that we have
\[
 \colim(G \downarrow f(U)) = G(f(U)).
\]
In particular, when the morphism $f\colon X\to Y$ is open, then $f^*(G)$ is the sheafification on $X$ of the composition of functors
\[
 \xymatrix{ \Op(X)^\op \ar[r]^-{f^\op} & \Op(Y)^\op \ar[r]^-G & \Set };
\]
in case the latter composition already is a sheaf on $X$ (\egv when $f\colon X\to Y$ is isomorphic to the inclusion of an open subspace), the process of sheafification is furthermore obsolete. When $B$ is a subset of $Y$, we write $G|B$ for $f^*(G)$, where $f\colon Y|B \to Y$ denotes the inclusion morphism.

For abelian sheaves we obtain the functors
\begin{align*}
 f_* & \colon \Ab(X) \to \Ab(Y), \\
 f^* & \colon \Ab(Y) \to \Ab(X),
\end{align*}
which are defined so that they commute with the forgetful functors to the categories of sheaves of sets on $X$ and $Y$, respectively. The very same way, we obtain direct and inverse image functors $f_*$ and $f^*$ for sheaves of rings.

\subsection{}

Let $X$ be a ringed space (\cf \cite[chap.~0, (4.1.1)]{EGA1}). Then we denote $X_\top$ and $\O_X$ the underlying topological space and the structure sheaf of $X$, respectively. We set $|X| := |X_\top|$.

When $Y$ is another ringed space and $f\colon X\to Y$ is a morphism of ringed spaces viewed as an absolute morphism in the category of ringed spaces, we write
\[
 f_\top \colon X_\top \to Y_\top
\]
for the induced (absolute) morphism in the category of topological spaces. We agree on writing $f_*$ as a shorthand for any of the direct image functors $(f_\top)_*$ introduced above (for sheaves of sets, abelian sheaves, and sheaves of rings). In contrast, we write $f^{-1}$ for any of the introduced inverse image functors $(f_\top)^*$.

We write
\[
 f^\sharp \colon \O_Y \to f_*(\O_X)
\]
for the morphism of sheaves of rings on $Y_\top$ coming about with $f$. By abuse of notation, occasionally, we also write
\[
 f^\sharp \colon f^{-1}(\O_Y) \to \O_X
\]
for the morphism of sheaves of rings on $X_\top$ derived from the previous $f^\sharp$ by means of adjunction between the functors $f^{-1}$ and $f_*$.

\subsection{}

By a \emph{module on $X$} or \emph{sheaf of modules on $X$} we mean a sheaf of left $\O_X$-modules on $X_\top$. By a \emph{submodule of \ldots on $X$} or a \emph{subsheaf of modules of \ldots on $X$} we mean a subsheaf of $\O_X$-modules of \ldots on $X_\top$. By an \emph{ideal on $X$} or \emph{sheaf of ideals on $X$} we mean a sheaf of left $\O_X$-ideals on $X_\top$. Analogously, we speak of a \emph{morphism of modules on $X$} or a \emph{morphism of sheaves of modules on $X$} in place of a morphism of sheaves of left $\O_X$-modules on $X_\top$.

\subsection{}

We denote $\Mod(X)$ the \emph{category of modules on $X$} and
\[
 \Gamma(X,-) \colon \Mod(X) \to \Mod(\Z)
\]
the \emph{global section functor} for $X$. Note that some authors choose $\Mod(\O_X(X))$ as their target for the global section functor, which, though richer in algebraic structure, seems functorially less apt to our taste (\cf below). We denote
\begin{align*}
 f_* & \colon \Mod(X) \to \Mod(Y), \\
 f^* & \colon \Mod(Y) \to \Mod(X)
\end{align*}
the direct and inverse image functors for modules, respectively. Thereby, our generic choice for the inverse image functor is
\[
 f^*(G) := \O_X \otimes_{f^{-1}(\O_Y)} f^{-1}(G).
\]
Under special circumstances, however, tensoring with $\O_X$ seems superfluous. Concreteley, in case $f^\sharp \colon f^{-1}(\O_Y) \to \O_X$ is an isomorphism of sheaves of rings on $X_\top$, we set
\[
 f^*(G) := f^{-1}(G),
\]
where we only change the module structure from $f^{-1}(\O_Y)$ to $\O_X$ by means of the inverse of the isomorphism $f^\sharp$.

\subsection{}

We denote $\sHom_X$ the \emph{sheaf hom} or \emph{internal hom} on $X$, which we regard as a functor
\[
 \sHom_X \colon \Mod(X)^\op \times \Mod(X) \to \Mod(X).
\]
Moreover, we set
\[
 \Hom_X := \Gamma(X,-) \circ \sHom_X \colon \Mod(X)^\op \times \Mod(X) \to \Mod(\Z).
\]

\subsection{}

For the moment, assume that the ringed space $X$ be commutative. Then, we denote $\otimes_X$ the \emph{tensor product} on $X$. Note that people usually tend to signify the tensor product on a ringed space $X$ by $\otimes_{\O_X}$ (\cf \egv \cite[chap.~0, \S4]{EGA1}), which we feel is bad as the topological space $X_\top$ certainly enters the contruction of the tensor product. Of course, one might argue that the sheaf $\O_X$ actually determines the topological space $X_\top$, yet, nonetheless, the notation $\otimes_X$ is shorter as well as more comprehensive. We regard $\otimes_X$ as functor
\[
 \otimes_X \colon \Mod(X) \times \Mod(X) \to \Mod(X),
\]
though we will mostly write the tensor product ``infix'' as is customary. Observe that in principle for any set $I$, one may define a tensor product
\[
 \otimes_X \colon (\Mod(X))^I \to \Mod(X)
\]
for $I$-families of modules on $X$; here we agree on suppressing the indexing set $I$ in the notation.

\subsection{}

For any natural number $n$, we denote
\begin{align*}
 \T^n_X & \colon \Mod(X) \to \Mod(X), \\
 \wedge^n_X & \colon \Mod(X) \to \Mod(X)
\end{align*}
the \emph{$n$-th tensor power} and \emph{$n$-th wedge power} functors on $X$, respectively, which are defined by sheafifying the corresponding ordinary linear algebra concepts. Specifically, as for any commutative ring $A$ and any (left) $A$-module $M$, we set
\begin{align*}
 & \T^0_A(M) := \wedge^0_A(M) := A, \\
 & \T^1_A(M) := \wedge^1_A(M) := M,
\end{align*}
we obtain that $\T^0_X$ and $\wedge^0_X$ both equal the constant functor with value $\O_X$ (viewed as a module on $X$) and $\T^1_X$ and $\wedge^1_X$ both equal the identity functor on $\Mod(X)$. We extend the tensor and wedge power operators to negative powers by defining $\T^n_X$ and $\wedge^n_X$ to be the constant functor with value the distinguished zero module on $X$ for all integers $n<0$.

\subsection{}

We like the idea of unifying the view on rings and ringed spaces, that is to say, we will freely regard any ring $A$ as a ringed space by defining $A_\top$ to be the \emph{distinguished one-point topological space} (\iev $|A_\top| = 1 = \{0\} = \{\emptyset\}$ with its uniquely determined topology) and defining $\O_A$ by
\begin{align*}
 \O_A(\emptyset) & := 1 = \{0\}, \\
 \O_A(1) & := A,
\end{align*}
where the ring structure and the restriction maps of $\O_A$ are the obvious ones. Specifically, we will denote $\Z$, $\Q$, $\RR$, and $\C$ also ringed spaces obtained from the rings $\Z$, $\Q$, $\RR$, and $\C$ by the described construction. Furthermore, when $A$ is a given ring, we may convert any $A$-module (in the ordinary sense) to a sheaf of modules on (the ringed space) $A$. Conversely, when $X$ is a one-point ringed space (that is to say, the underlying set is in bijection with the set $1$), we pass to an ordinary ring by considering $\O_X(X)$; modules on $X$ can be converted to $\O_X(X)$-modules also by taking global sections on $X$. Observing that standard constructions and properties (like forming quotients, direct sums, tensor products, speaking of submodules, etc.) are invariant under this translation machinery, we are at liberty to switch perspectives at will.

\subsection{}

The ringed space $\Z$ is terminal in the category of ringed spaces so that for all ringed spaces $X$ there exists a unique morphism of ringed spaces
\[
 a_X \colon X \to \Z,
\]
which we will mostly view as an absolute morphism in the category. Note that the induced direct image functor
\[
 (a_X)_* \colon \Mod(X) \to \Mod(\Z)
\]
corresponds to the global section functor $\Gamma(X,-)$ up to viewing $\Z$ as an ordinary ring or a ringed space. Due to the terminality of $\Z$, we see that for all morphisms of ringed spaces $f\colon X\to Y$, the diagram
\[
 \xymatrix{
  X \ar[rr]^{f} \ar[dr]_{a_X} && Y \ar[ld]^{a_Y} \\
  & \Z
 }
\]
commutes in the category of ringed spaces. Thus we obtain
\[
 (a_X)_* = (a_Y)_* \circ f_*
\]
or else
\[
 \Gamma(X,-) = \Gamma(Y,-) \circ f_*.
\]

\subsection{}
\label{ss:Avalcoh}

Let $X$ be a topological space, $A$ a ring. Then we denote $A_X$ the \emph{constant sheaf of rings with value $A$ on $X$}. We obtain a ringed space by setting $X^A := (X,A_X)$. Every morphism $f\colon X\to Y$ of topological spaces gives rise to a morphism of ringed spaces
\[
 f^A \colon X^A \to Y^A.
\]
In fact, these constructions yield a functor
\[
 -^A \colon \Top \to \Sp.
\]

Moreover, we have a global section functor
\[
 \Gamma^A(X,-) \colon \Mod(X^A) \to \Mod(A).
\]
For any integer $n$, we define the \emph{$n$-th cohomology of $X$ with values in $A$} by 
\[
 \H^n(X,A) := \R^n(\Gamma^A(X,-))(A_X),
\]
\cf \S\ref{s:der}. Note that viewing $A$ as a ringed space, the global section functor $\Gamma^A(X,-)$ corresponds to the direct image functor $(f^A)_*$ when $f\colon X\to Y$ denotes the unique morphism of topological spaces from $X$ to the distinguished one-point topological space. In this regard, occasionally, we will use the (admittedly awkward) notation
\[
 \H^n(f,A) := \R^n(f^A)_*(A_X).
\]

\section{Complex spaces}
\label{s:an}

\subsection{}

By a \emph{complex analytic space} we understand an object of the overcategory $\Sp_{/\C}$---without imposing any topological restrictions---which is locally isomorphic (in the category $\Sp_{/\C}$) to a complex analytic model space (\cf \cite[Définition 2.1]{SHC13.1.9}). We reserve the term \emph{complex space} for a complex analytic space whose underlying topological space is Hausdorff (\cf \cite[chap.~I, \S3]{GrPeRe94}, \cite[\S1, Definition 2]{Gr60}).

Note the following subtlety: most authors formally take complex (analytic) spaces to be ``$\C$-ringed'' or ``$\C$-algebraized'' spaces, which are topological spaces equipped with a sheaf of $\C$-algebras. However, our convention is to work exclusively with sheaves of rings, \iev with ringed spaces, so that we express the $\C$-algebraized nature of a ringed space $X$ by giving a morphism of ringed spaces $X \to \C$.

\subsection{}

We denote $\An$ the large \emph{category of complex analytic spaces}, which is, in fact, the full subcategory of the category $\Sp_{/\C}$ whose class of objects comprises precisely all complex analytic space. The \emph{category complex spaces} is a full subcategory of $\An$ for which we do not dispose of a separate name.

\subsection{}

When $X$ is a complex analytic space, then $X$ can be written uniquely as an ordered pair $(Y,f)$, where $Y$ is a ringed space and $f\colon Y\to \C$ is a morphism of ringed spaces. We call $Y$ the \emph{underlying ringed space} of $X$ and $f$ the \emph{structural morphism} of $X$. We follow the convention that when $X$ denotes a complex analytic space, we denote the underlying ringed space of $X$ by $X$ also. The structural morphism of an analytic space $X$ will be denoted $a_X$.

\subsection{}

Observe that the ordered pair consisting of the ringed space $\C$ together with the identity morphism $\C \to \C$ of ringed spaces is a complex space, which we call the \emph{distinguished one-point complex space}. By abuse of notation, we denote the distinguished one-point complex space by $\C$. Notice that $\C$ is a terminal object of $\An$. Besides, notice that for all complex analytic spaces $X$, the structural morphism $a_X$ is the unique morphism from $X$ to $\C$ in $\An$.

\subsection{}

Let $f\colon X\to S$ be a morphism in $\An$ (viewed in the absolute sense). Let $p \in X$. Then we say that $f$ is \emph{submersive in $p$} when, locally around $p$, $f$ is isomorphic to a product with fiber an open complex analytic subspace of $\C^n$ for some natural number $n$ (\cf \cite[(2.18)]{Fi76}). $f$ is called \emph{submersive} when, for all $x\in X$, the morphism $f$ is submersive in $x$.

We say $X$ is \emph{smooth in} or \emph{at $p$} when there exists an open neighborhood $U$ of $p$ in $X$ such that the open complex analytic subspace of $X$ induced on $U$ is isomorphic to an open complex analytic subspace of $\C^n$ for some natural number $n$. $X$ is called \emph{smooth} when $X$ is smooth in all of its points.

By a \emph{complex manifold} we mean a smooth complex space.

\subsection{}

Let $f\colon X\to S$ be a morphism in $\An$ (viewed in the absolute sense). Then we write $\Omega^1_f$ or (less frequently) $\Omega^1_{X/S}$ for the \emph{sheaf of (relative) (Kähler) $1$-differentials for $f$} (\cf \cite[\S2, p.~8]{SHC13.2.14}; note also the scheme theoretic analogue \cite[(16.3.1)]{EGA4.4}). For any natural number (or else integer) $p$, we set
\[
 \Omega^p_f := \wedge^p_X(\Omega^1_f).
\]
Since $\wedge^1_X$ is the identity functor on $\Mod(X)$, there is no contradiction. Moreover, we denote
\[
 \dd^p_f \colon \Omega^p_f \to \Omega^{p+1}_f
\]
the canonical differential sheaf map. We denote $\Omega^\kdot_f$ the \emph{complex of Kähler differentials for $f$}, i.e.:
\[
 \Omega^\kdot_f := \Bigl((\Omega^p_f)_{p\in\Z},(\dd^p_f)_{p\in\Z}\Bigr).
\]
Beware that $\Omega^\kdot_f$ is not a complex of modules on $X$, even though $\Omega^p_f$ is a module on $X$ for all $p\in\Z$ (the problem is that the differentials $\dd^p_f$ are not $\O_X$-linear).

We write $\Omega^p_X$, and $\Omega^\kdot_X$, and $\dd^p_X$ for $\Omega^p_{a_X}$, and $\Omega^\kdot_{a_X}$, and $\dd^p_{a_X}$, respectively.

\subsection{}
\label{ss:difffun}

Any commutative square
\begin{equation} \label{e:difffun-sq}
 \xysquare{X'}{X}{S'}{S}{u}{f'}{f}{w}
\end{equation}
in the category of complex analytic spaces induces a morphism
\[
 \phi^1 \colon \Omega^1_f \to u_*(\Omega^1_{f'})
\]
of sheaves of modules on $X$ (\cf \cite[\S2]{SHC13.2.14}), which we call the \emph{pullback of (relative) (Kähler) $1$-differentials} associated to the square in \eqref{e:difffun-sq}. The latter morphism induces in turn, for any integer $p$, a morphism
\[
 \phi^p \colon \Omega^p_f \to u_*(\Omega^p_{f'})
\]
of modules on $X$, which we call the \emph{pullback of $p$-differentials} associated to \eqref{e:difffun-sq}.

The pullbacks of differentials are functorial in the following sense. Let
\begin{equation} \label{e:difffun-sq'}
 \xysquare{X''}{X'}{S''}{S'}{u'}{f''}{f'}{w'}
\end{equation}
another commutative square in $\An$. Denote
\[
 \phi'^p \colon \Omega^p_{f'} \to {u'}_*(\Omega^p_{f''})
\]
the pullback of $p$-differentials associated to \eqref{e:difffun-sq'}. Then the composition
\[
 u_*(\phi'^p) \circ \phi^p \colon \Omega^p_f \to u_*({u'}_*(\Omega^p_{f''})) = (u\circ u')_*(\Omega^p_{f''})
\]
equals the pullback of $p$-differentials associated to the square obtained by composing \eqref{e:difffun-sq'} and \eqref{e:difffun-sq} horizontally.

\subsection{}

When $f\colon X\to S$ is a morphism of complex analytic spaces, we denote $\Theta_f$ or $\Theta_{X/S}$ the \emph{(relative) tangent sheaf} of $f$, \iev
\[
 \Theta_f := \sHom_X(\Omega^1_f,\O_X).
\]
Moreover, we set $\Theta_X := \Theta_{a_X}$.

\subsection{}

Let $X$ be a complex analytic space, $F$ a module on $X$, and $p \in X$. Observe that there exists a unique morphism $i \colon \C \to X$ of complex analytic spaces which sends the unique element $0 \in |\C|$ to $p$. Occasionally, we will use the notation
\[
 F(p) := i^*(F) \iso \C \otimes_{\O_{X,p}} F_p.
\]

\subsection{}

When $X$ is a complex analytic space and $p\in X$, we define
\[
 \T_X(p) := \Hom_\C(\Omega^1_X(p),\C)
\]
to be the \emph{tangent space of $X$ in $p$}. When $f\colon X\to Y$ is a morphism in $\An$, we write
\[
 \T_p(f) \colon \T_X(p) \to \T_Y(f(p))
\]
for the \emph{tangent map for $f$ in $p$}.

\subsection{}

We write $\dim_p(X)$ for the \emph{dimension of $X$ in $p$}. As usual, we set
\[
 \dim(X) := \sup\{\dim_x(X):x\in X\},
\]
where we agree to take the supremum with respect to the set
\[
 \hat\Z := \{-\omega\} \cup \Z \cup \{\omega\}
\]
equipped with its canonical ordering so that we have $-\omega \leq a$ and $a \leq \omega$ for all $a \in \hat\Z$. Specifically, we obtain
\[
 \dim(\emptyset) = -\omega \leq a
\]
for all integers $a$.

\subsection{}
\label{ss:res}

By a \emph{resolution of singularities} we mean a proper modification $f\colon W\to X$ of complex spaces such that $W$ is a complex manifold.

\subsection{Mixed Hodge structures}
\label{ss:mhs}

We feel that, over Deligne's original \cite[Définition (2.3.1)]{De71}, it has certain technical advantages to define a \emph{mixed Hodge structure} to be an ordered septuple
\begin{equation} \label{e:mhs}
 H = (H_\Z,H_\Q,W,\alpha,H_\C,F,\beta)
\end{equation}
such that $H_\Z$, $H_\Q$, and $H_\C$ are finite type $\Z$-, $\Q$-, and $\C$-modules, respectively, $W$ is a finite increasing filtration of $H_\Q$ by rational vector subspaces, $F$ is a finite decreasing filtration of $H_\C$ by complex vector subspaces, and $\alpha \colon H_\Z \to H_\Q$ and $\beta \colon H_\Q \to H_\C$ are mappings inducing isomorphisms $\Q\otimes_\Z H_\Z \to H_\Q$ and $\C\otimes_\Q H_\Q \to H_\C$, respectively, such that the triple $(W_\C,F,\bar F)$ is a triple of ``opposed'' filtrations on $H_\C$ by complex vector subspaces (\cf \loccit).

The gist is that Deligne's definition forces the $\Q$-vector space $H_\Q$ to be $\Q\otimes_\Z H_\Z$ just as it forces the $\C$-vector space $H_\C$ to be $\C\otimes_\Z H_\Z$, whereas our definition grants us some freedom there.

When $H$ is a mixed Hodge structure \eqref{e:mhs}, we set
\[
 \F^pH := F^p, \qquad \oF^pH := \bar{F^p}, \qquad \W_iH := W_i
\]
for all integers $p$ and $i$.

We will make use of Fujiki's idea (\cf \cite[(1.4)]{Fu80}) that Deligne's construction of a mixed Hodge theory for complex algebraic varieties in \cite{De71,De74} carries over naturally to the category of complex spaces $X$ endowed with a compactification $X \subset X^*$ such that $X^*$ is of class $\sC$; the morphisms in this category are morphisms $f\colon X\to Y$ of complex spaces extending to meromorphic maps $X^* \to Y^*$ between the respective given compactifications. Concretely, when $X$ is as above an $n$ is an integer, we denote $\H^n(X)$ the \emph{mixed Hodge structure of cohomology in degree $n$ of $X$}. In view of \eqref{e:mhs}, we have
\[
 \H^n(X)_\Z = \H^n(X,\Z), \qquad \H^n(X)_\Q = \H^n(X,\Q), \qquad \H^n(X)_\C = \H^n(X,\C);
\]
the mappings $\alpha$ and $\beta$ are induced respectively by the canonical injections $\Z\to\Q$ and $\Q\to\C$.

\section{Spectral sequences}
\label{s:homalg}

\subsection{}
\label{d:ss}

In our understanding of spectral sequences, we basically follow \cite[Definition 2.2]{MCl01}. Let $C$ be a (possibly large) abelian category. Then $E$ is a \emph{spectral sequence (with values) in $C$} when $E = (E_r)$ is a sequence indexed over $\N_{\geq r_0}$ for some natural number $r_0$ such that, for all $r \in \N_{\geq r_0}$, $E_r$ is a differential bigraded object of $C$ with differential of bidegree $(r,1-r)$ such that one has
\[
 \H^{p,q}(E_r) \iso E_{r+1}^{p,q}
\]
in $C$ for all $(p,q) \in \Z \times \Z$.

Note that many authors tend to incorporate additional data into a spectral sequence, like, for instance, a choice of isomorphisms
\[
 \phi_r^{p,q} \colon \H^{p,q}(E_r) \to E_{r+1}^{p,q}
\]
in $C$ (\cf \egv \cite[III.7.3]{GelMan03}, \cite[Definition 5.2.1]{Wei94}, \cite[XI.1]{MLa63}), which makes sense of course only if $C$ is equipped with a canonical (co-)homology functor $\H$. For our purposes, however, the incorporating of additional data is unnecessary.

\begin{definition}[Degeneration]
 \label{d:degen}
 Let $C$ be an abelian category, $E$ a spectral sequence with values in $C$. Denote $r_0$ the starting term of $E$, \iev the smallest element of the domain of definition of the sequence $E$, and let $r_1$ be a natural number $\geq r_0$.
 \begin{enumerate}
  \item Let $(p,q) \in \Z \times \Z$. Then we say that \emph{$E$ degenerates from behind} (\resp \emph{forwards}) \emph{in (the entry) $(p,q)$ at sheet $r_1$ in $C$} when, for all natural numbers $r \geq r_1$, the differential $d_r^{p-r,q+r-1}$ (\resp $d_r^{p,q}$) equals the zero morphism from $E_r^{p-r,q+r-1}$ to $E_r^{p,q}$ (\resp from $E_r^{p,q}$ to $E_r^{p+r,q-r+1}$) in $C$. We say that \emph{$E$ degenerates in (the entry) $(p,q)$ at sheet $r_1$ in $C$} when $E$ degenerates both from behind and forwards in $(p,q)$ at sheet $r_1$ in $C$.
  \item Let $I$ be a subset of $\Z \times \Z$. Then we say that $E$ \emph{degenerates} (\resp \emph{degenerates from behind}, \resp \emph{degenerates forwards}) \emph{in entries $I$ at sheet $r_1$ in $C$} when, for all $(p,q)\in I$, the spectral sequence $E$ degenerates (\resp degenerates from behind, \resp degenerates forwards) in the entry $(p,q)$ at sheet $r_1$ in $C$.
 \end{enumerate}
\end{definition}

\section{Derived functors}
\label{s:der}

We refrain from considering derived categories as we refrain from anticipating any sort of universe axiom (\cf \S\ref{s:st}). Nevertheless, we would like to talk about derived functors. Let us outline the conventions that we follow throughout the text in this regard.

\subsection{}
\label{ss:caninjres}

Let $X$ be an arbitrary ringed space. Moreover, let $F$ be an object of $\Com^+(X)$. We will make use of the device of fabricating a \emph{canonical injective resolution} of $F$ on $X$ (here, by a resolution we simply  mean a quasi-isomorphism to another complex of modules on $X$). First of all, we refer to \cite[Chapter II, 3.5]{Bre97}, where it is explained how to construct for a given sheaf of modules on $X$, \iev a sheaf of $\O_X$-modules on $X_\top$, call it $G$, a canonical monomorphism
\[
 \alpha \colon G \to I
\]
to an injective sheaf of modules on $X$. As a matter of fact, in \loccit, it is further explained how to construct a canonical injective resolution (in the naive sense, \iev not working with quasi-isomorphisms of complexes) for $G$. However, as we would like to work strictly with complexes, we do the following:

As the complex $F$ is bounded below, we know there exists an integer $m$ such that $F^i \iso 0$ in $\Mod(X)$ for all integers $i<m$. When the complex $F$ is completely trivial (\iev when $F^i \iso 0$ holds for all integers $i$), then we define $I$ to be the canonical zero complex of modules on $X$ and $\rho \colon F \to I$ to be the unique morphism of complexes. When $F$ is not entirely trivial, we choose $m$ maximal with the mentioned property. For all integers $i<m$, we define $I^i$ to be the canonical zero module on $X$ and $\rho_i \colon F^i \to I^i$ to be the zero morphism. From $i=m$ onwards, we define $I^i$ together with the differential $d^{i-1} \colon I^i \to I^{i+1}$ and the morphism $\rho_i\colon F^i \to I^i$ by the inductive procedure layed out in the proof of \cite[I, Theorem 6.1]{Iv86}, where we plug in our canonical monomorphism to an injective module at the appropriate stage. That way, we obtain a bounded below complex $I$ of injective modules on $X$ together with a morphism
\[
 \rho \colon F \to I
\]
of complexes of modules on $X$ which is a quasi-isomorphism of complexes of modules on $X$.

The so defined $\rho$ (viewed as an absolute morphism in $\Com^+(X)$), as well as its canonical image in $\K^+(X)$, are called the canonical injective resolution of $F$ of $X$.

\begin{lemma}
 \label{l:iversen}
 Let $C$ be an abelian category, $\gamma \colon F \to G$ a quasi-isomorphism in $\K(C)$, and $I$ a bounded below complex of injective objects of $C$. Then, for all morphisms $\alpha \colon F \to I$ in $\K(C)$, there exists one, and only one, morphism $\beta \colon G \to I$ in $\K(C)$ such that we have $\beta \circ \gamma = \alpha$, \iev such that the diagram
 \[
  \xymatrix{
   F \ar[r]^\gamma \ar[dr]_\alpha & G \ar@{.>}[d]^\beta \\
   & I
  }
 \]
 commutes in $\K(C)$. Equivalently, the function
 \[
  \hom_{\K(C)}(\gamma,I) \colon \hom_{\K(C)}(G,I) \to \hom_{\K(C)}(F,I)
 \]
 is a bijection.
\end{lemma}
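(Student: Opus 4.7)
The plan is to reduce the statement to a single key vanishing result: if $A$ is an acyclic complex in $\K(C)$ and $I$ is a bounded below complex of injective objects of $C$, then $\hom_{\K(C)}(A,I) = 0$. Granting this lemma, the theorem drops out as follows. The homotopy category $\K(C)$ is triangulated, so the morphism $\gamma \colon F \to G$ sits in a distinguished triangle $F \xrightarrow{\gamma} G \to C(\gamma) \to F[1]$, where $C(\gamma)$ is the mapping cone. The long exact sequence in cohomology of complexes shows that a chain map is a quasi-isomorphism if and only if its cone is acyclic; hence $C(\gamma)$ (and every shift of it) is acyclic. Applying the contravariant cohomological functor $\hom_{\K(C)}(-,I)$ to the triangle and extending it to the right produces the exact sequence
\[
 \hom_{\K(C)}(C(\gamma),I) \to \hom_{\K(C)}(G,I) \xrightarrow{\hom_{\K(C)}(\gamma,I)} \hom_{\K(C)}(F,I) \to \hom_{\K(C)}(C(\gamma)[-1],I),
\]
whose outer two terms vanish by the key lemma. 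Consequently $\hom_{\K(C)}(\gamma,I)$ is simultaneously injective and surjective, which is exactly the asserted existence and uniqueness of $\beta$.

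To prove the key lemma, I would construct a chain homotopy from a given chain map $f \colon A \to I$ to the zero map by induction on the degree. Fix an integer $N$ such that $I^n \iso 0$ for all $n < N$, and set $s^n := 0$ for all $n \leq N$, where $s^n \colon A^n \to I^{n-1}$ denotes the $n$-th piece of the prospective homotopy; the relation $f^n = d_I^{n-1} s^n + s^{n+1} d_A^n$ then holds automatically for $n < N$. For the inductive step, suppose $s^{\leq n}$ has been defined satisfying the homotopy relations below $n$, and consider $g^n := f^n - d_I^{n-1} s^n \colon A^n \to I^n$. A short computation, using both that $f$ is a chain map and the relation at level $n-1$, yields $g^n \circ d_A^{n-1} = 0$, so $g^n$ factors through the canonical quotient $A^n \to A^n / \im(d_A^{n-1})$. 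Acyclicity of $A$ identifies $\im(d_A^{n-1}) = \ker(d_A^n)$, so $d_A^n$ induces a monomorphism $A^n/\im(d_A^{n-1}) \hookrightarrow A^{n+1}$ along which the factored map extends, by injectivity of $I^n$, to the required morphism $s^{n+1} \colon A^{n+1} \to I^n$ with $s^{n+1} \circ d_A^n = g^n$. This completes the induction and therefore the homotopy.

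The main obstacle is the bookkeeping in the inductive step: one needs simultaneously the chain-map condition on $f$, the previously constructed piece $s^n$, the acyclicity of $A$ (to secure the mono $A^n/\im(d_A^{n-1}) \hookrightarrow A^{n+1}$), and the injectivity of $I^n$ (to extend along that mono). The boundedness-below hypothesis on $I$ is indispensable for the base case, since without it there would be no degree from which to start the recursion.
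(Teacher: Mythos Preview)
Your proof is correct. The paper does not supply its own argument here: it simply observes that the two formulations are equivalent and defers to \cite[I, Theorem 6.2]{Iv86} for the proof. Your approach---reduce via the mapping cone and the long exact $\hom$ sequence to the vanishing $\hom_{\K(C)}(A,I)=0$ for $A$ acyclic, then build a null-homotopy degree by degree using injectivity of the $I^n$ and acyclicity of $A$---is exactly the standard route and is in substance what one finds in Iversen's book, so there is no meaningful divergence from the cited source.
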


\begin{proof}
 The equivalence of the two statements is clear. For the proof of any one of them, see \cite[I, Theorem 6.2]{Iv86}.
\end{proof}

\subsection{}
\label{ss:injresfun}

With the help of Lemma \ref{l:iversen}, we construct an \emph{injective resolution functor} on $X$,
\[
 I \colon \K^+(X) \to \K^+(X),
\]
together with a natural transformation
\[
 \rho \colon \id_{\K^+(X)} \to I
\]
of endofunctors on $\K^+(X)$ as follows: As to the object function of $I$, for a given object $F$ of $\K^+(X)$, we define $I(F)$ to be the canonical injective resolution of $F$ on $X$ as constructed above. We define
\[
 \rho(F) \colon F \to I(F)
\]
to be the corresponding resolving morphism. As to the family of morphism functions of $I$, when $F$ and $G$ are objects of $\K^+(X)$ and $\alpha \colon F \to G$ is a morphism, Lemma \ref{l:iversen} implies that there exists a unique $\beta$ such that the diagram 
\[
 \xymatrix{
  F \ar[r]^{\rho(F)} \ar[d]_\alpha & I(F) \ar@{.>}[d]^\beta \\
  G \ar[r]_{\rho(G)} & I(G)
 }
\]
commutes in $\K^+(X)$ (or equivalently, in $\K(X)$). We set
\[
 (I(F,G))(\alpha) := \beta.
\]

\subsection{}
\label{ss:derfun}

Let $X$ and $Y$ be ringed spaces (\resp complex spaces),
\[
 T \colon \Mod(X) \to \Mod(Y)
\]
an additive functor. Then we define the \emph{(bounded below) right derived functor} of $T$ with respect to $X$ and $Y$ as the composition of functors
\[
 \xymatrix@C=3pc{ \K^+(X) \ar[r]^I & \K^+(X) \ar[r]^{\K^+(T)} & \K^+(Y) },
\]
where $I$ denotes the injective resolution functor on $X$. As is customary, we denote the right derived functor of $T$ by $\R(T)$ or $\R T$ (even though one better incorporate the references to $X$ and $Y$ into the notation and write something like $\R_{X,Y}(T)$; moreover, writing $\R^+(T)$ instead of $\R(T)$ would probably be more conceptual).

\subsection{}

Composing the natural transformation $\rho \colon \id \to I$ with the functor
\[
\K^+(T) \colon \K^+(X) \to \K^+(Y),
\]
we obtain a natural transformation
\[
 \K^+(T) \circ \rho \colon \K^+(T) \to \R(T)
\]
of functors from $\K^+(X)$ to $\K^+(Y)$.

\subsection{}

Precomposing $\R(T)$ with the (composition of) canonical morphism(s)
\[
 \bigl( \Mod(X) \to \bigr) \Com^+(X) \to \K^+(X),
\]
we establish two variants of the right derived functor of $T$ that will go under the same denomination of $\R(T)$.

\subsection{}

Furthermore, for any integer $n$, we define
\[
 \R^n(T) := \H^n \circ \R(T) \colon \K^+(X) \to \Mod(Y),
\]
where
\[
 \H^n \colon \K^+(Y) \to \Mod(Y)
\]
denotes the evident cohomology in degree $n$ functor on $Y$. Just as before, we fabricate the two variants
\begin{align*}
 \R^n(T) & \colon \Com^+(X) \to \Mod(Y), \\
 \R^n(T) & \colon \Mod(X) \to \Mod(Y)
\end{align*}
which go by the same denomination.

\subsection{}

In the situation of \eqref{ss:derfun}, one may define, for any integer $n$, a \emph{connecting homomorphism in degree $n$}, denoted $\delta^n_T$ or simply $\delta^n$, which we would like to view as a function defined either on the class of short exact triples in $\Com^+(X)$ or the class of short exact triples in $\Mod(X)$ without making a notational distinction.

\chapter{Tools}
\label{ch:tool}

\section{Base change maps}
\label{s:bc}

\begin{construction}
 \label{con:dercomp}
 Let $X$, $Y$, and $Z$ be ringed spaces and
 \begin{align*}
  S & \colon \Mod(X) \to \Mod(Y), \\
  T & \colon \Mod(Y) \to \Mod(Z)
 \end{align*}
 additive functors. Define the bounded below right derived functors $\R S$, $\R T$, and $\R(T \circ S)$ as explained in \eqref{ss:derfun}. Then we dispose of a natural transformation
 \begin{equation} \label{e:dercomp}
  \phi \colon \R(T\circ S) \to \R T \circ \R S
 \end{equation}
 of functors from $\K^+(X)$ to $\K^+(Z)$. The definition is in fact straightforward. Denote $I_X$ and $I_Y$ the canonical injective resolution functors on $X$ and $Y$, respectively, as given in \eqref{ss:injresfun}; moreover, denote
 \[
  \rho_Y \colon \id_{\K^+(Y)} \to I_Y
 \]
 the accompanying resolving natural transformation on $Y$. Then set
 \[
  \phi := (T \circ \rho_Y) \circ (S \circ I_X) \colon (T \circ \id_{\K^+(Y)}) \circ (S \circ I_X) \to (T \circ I_Y) \circ (S \circ I_X),
 \]
 where we compose natural transformations of functors with functors the obvious way. Observe that we thus obtain a natural transformation \eqref{e:dercomp} since
 \[
  \R(T \circ S) = (T \circ S) \circ I_X = (T \circ \id_{\K^+(Y)}) \circ (S \circ I_X)
 \]
 and
 \[
  \R T \circ \R S = (T \circ I_Y) \circ (S \circ I_X)
 \]
 by the definition of the derived functors.
\end{construction}

The following proposition is a variant of either \cite[Proposition 13.3.13]{KaSc06} or \cite[III.7.1]{GelMan03}.

\begin{proposition}
 \label{p:dercomp}
 In the situation of Construction \ref{con:dercomp}, assume that the functor $T$ is left exact and $S$ takes injective modules on $X$ to right $T$-acyclic modules on $Y$. Then \eqref{e:dercomp} is a natural quasi-equivalence of functors from $\K^+(X)$ to $\K^+(Z)$.
\end{proposition}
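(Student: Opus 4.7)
The strategy is to verify pointwise that, for each object $F$ of $\K^+(X)$, the morphism
$$\phi(F) = T(\rho_Y(S(I_X(F)))) \colon T(S(I_X(F))) \to T(I_Y(S(I_X(F))))$$
is a quasi-isomorphism in $\K^+(Z)$. Setting $K := S(I_X(F))$ and $L := I_Y(K)$ with $\alpha := \rho_Y(K) \colon K \to L$, the hypothesis that $S$ sends injective modules on $X$ to right $T$-acyclic modules on $Y$ makes $K$ a bounded below complex of $T$-acyclic modules (the boundedness coming from that of $I_X(F)$ together with the additivity of $S$), while $L$ is a bounded below complex of injective modules on $Y$ and $\alpha$ is a quasi-isomorphism by the defining property of the canonical injective resolution functor described in \eqref{ss:caninjres}.

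Everything thus reduces to the abstract claim that, given a quasi-isomorphism $\alpha \colon K \to L$ in $\K^+(Y)$ from a complex $K$ of $T$-acyclic modules to a complex $L$ of injective modules, $T(\alpha)$ is again a quasi-isomorphism. Since injective modules are themselves $T$-acyclic (their higher right derived functors vanish by construction of $\R T$) and $T$ commutes with finite direct sums by additivity, the mapping cone $C$ of $\alpha$ is a bounded below acyclic complex all of whose terms are $T$-acyclic. Because $T$ is additive, $T(C)$ identifies canonically with the mapping cone of $T(\alpha)$, so the claim becomes that $T(C)$ is acyclic.

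This in turn rests on the following lemma, which I would single out and prove in full detail: if $C$ is a bounded below acyclic complex of $T$-acyclic modules on $Y$, then $T(C)$ is acyclic in $\Mod(Z)$. Setting $Z^n := \ker(d^n_C) = \im(d^{n-1}_C)$, the acyclicity of $C$ yields, for every $n$, a short exact sequence
$$0 \to Z^n \to C^n \to Z^{n+1} \to 0.$$
Starting from the degree where $Z^n = 0$ and proceeding by induction on $n$, the long exact sequence for $\R T$ associated to this short exact sequence, combined with the $T$-acyclicity of $C^n$ and the inductive $T$-acyclicity of $Z^n$, forces both that $Z^{n+1}$ is $T$-acyclic and that $0 \to T(Z^n) \to T(C^n) \to T(Z^{n+1}) \to 0$ is exact. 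Splicing these short exact sequences together shows that $\im T(d^{n-1}_C) = \ker T(d^n_C)$ for all $n$, whence $T(C)$ is acyclic.

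The principal obstacle is the bookkeeping in this inductive argument: the left exactness of $T$ is precisely what ensures the long exact sequences start with $0 \to T(Z^n)$, and the full $T$-acyclicity hypotheses are needed to kill both the higher terms $\R^i T(C^n)$ and, inductively, the terms $\R^i T(Z^n)$ for $i \geq 1$. Once this lemma is established, applying it to $C = \operatorname{cone}(\alpha)$ yields that $T(\alpha)$ is a quasi-isomorphism for every $F$, and the naturality of $\phi$ (which is built into Construction \ref{con:dercomp}) upgrades this pointwise statement to the assertion that $\phi$ is a natural quasi-equivalence of functors from $\K^+(X)$ to $\K^+(Z)$.
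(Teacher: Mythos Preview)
Your proof is correct and follows essentially the same approach as the paper: both reduce to the general fact that a quasi-isomorphism $\alpha\colon K\to L$ from a bounded below complex of $T$-acyclic modules to a bounded below complex of injective modules is carried by $T$ to a quasi-isomorphism. The paper simply invokes this fact without proof, whereas you supply the standard mapping-cone-and-induction argument establishing it.
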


\begin{proof}
 This follows from the general fact that when $A$ and $I$ are bounded below complexes of right $T$-acyclic and injective modules on $Y$, respectively, and $\rho \colon A \to I$ is a quasi-isomorphism of complexes of modules on $Y$, then
 \[
  T(\rho) \colon T(A) \to T(I)
 \]
 is a quasi-isomorphism of complexes of modules on $Z$.
\end{proof}

\begin{construction}
 \label{con:cohswap}
 Let $X$ and $S$ be ringed spaces,
 \[
  T \colon \Mod(X) \to \Mod(Y)
 \]
 a left exact additive functor, $n$ an integer. Let $F$ be a complex of modules on $X$. Denote $\rZ^n(F) \subset F^n$ and $\rZ^n(TF) \subset TF^n$ the null spaces of the differential maps $\dd^n_F \colon F^n \to F^{n+1}$ and $\dd^n_{TF} \colon TF^n \to TF^{n+1}$ of the complexes $F$ and $TF$, respectively. Then due to the universal property of kernels, there exists one, and only one, $\alpha$ such that the diagram
 \[
  \xymatrix@C1pc{
   T\rZ^n(F) \ar@{.>}[rr]^\alpha \ar[dr]_{T(\subset)} && \rZ^n(TF) \ar[ld]^{\subset} \\
   & TF^n
  }
 \]
 commutes in $\Mod(Y)$. Moreover, since the functor $T$ is left exact, $\alpha$ is an isomorphism. Thus by the universal property of cokernels, there exists one, and only one, $\beta(F)$ such that the diagram
 \[
  \xymatrix{
   \rZ^n(TF) \ar[r]^{\pi^n_{TF}} \ar[d]_{\alpha^{-1}} & \H^n(TF) \ar@{.>}[d]^{\beta(F)} \\
   T\rZ^n(F) \ar[r]_{T\pi^n_F} & T\H^n(F)
  }
 \]
 commutes in $\Mod(Y)$, where $\pi^n_F$ and $\pi^n_{TF}$ denote the respective quotient morphisms to cohomology. An easy argument shows that the family $\beta = (\beta(F))_F$ constitutes a natural transformation
 \[
  \beta \colon \H^n \circ T \to T \circ \H^n
 \]
 of functors from $\K(X)$ to $\Mod(Y)$. Denote $\beta^+$ the restriction of $\beta$ to $\K^+(X)$.

 Denote $I_X$ the canonical injective resolution functor on $X$ \eqref{ss:injresfun} and
 \[
  \rho_X \colon \id_{\K^+(X)} \to I_X
 \]
 the accompanying resolving natural transformation. We know that $\rho_X$ is a natural quasi-equivalence of endofunctors on $\K^+(X)$, so that
 \[
  \H^n \circ \rho_X \colon \H^n \to \H^n \circ I_X
 \]
 is a natural equivalence of functors from $\K^+(X)$ to $\Mod(X)$. In turn, we obtain a natural transformation
 \[
  \gamma := (\beta^+ \ast I_X) \circ (T \ast (\H^n \ast \rho_X)^{-1}) \colon \R^nT = \H^n \circ T \circ I_X \to T \circ \H^n
 \]
 of functors from $\K^+(X)$ to $\Mod(Y)$, where we have denoted the horizontal composition of a natural transformation an a functor by ``$\ast$'' in order to distinguish it from the vertical composition ``$\circ$'' of natural transformations.
\end{construction}

\begin{construction}
 \label{con:derfib}
 We fix a commutative square in the category of ringed spaces $\Sp$:
 \begin{equation} \label{e:derfib-sq}
  \xysquare{X'}{X}{S'}{S}{u}{f'}{f}{w}
 \end{equation}
 Let $n$ be an integer. Let $F$ and $F'$ be bounded below complexes of modules on $X$ and $X'$, respectively. Finally, let $\alpha\colon F\to F'$ be a $u$-morphism of complexes modules modulo homotopy, \iev a morphism $F\to u_*(F')$ in $\K^+(X)$, where, as usual, we agree on writing $u_*$ instead of $\K^+(u_*)$.
 
 Denote
 \[
  \tau \colon u_* \to \R u_*
 \]
 the natural transformation of functors from $\K^+(X')$ to $\K^+(X)$ which comes about with the construction of the right derived functor as explained in \S\ref{s:der}. Composing $\alpha$ with $\tau(F')$ in $\K^+(X)$ yields a morphism
 \[
  F \to \R u_*(F')
 \]
 in $\K^+(X)$. Applying the functor $\R f_*$, we obtain a morphism
 \[
  \R f_*(F) \to \R f_*(\R u_*(F'))
 \]
 in $\K^+(S)$. Let
 \begin{align*}
  \phi & \colon \R(f_*\circ u_*) \to \R f_* \circ \R u_*, \\
  \psi & \colon \R(w_*\circ f'_*) \to \R w_*\circ\R f'_*
 \end{align*}
 be the natural transformations of functors from $\K^+(X')$ to $\K^+(S)$ which are associated to the triples of categories and functors
 \begin{align*}
  & \xymatrix{\Mod(X') \ar[r]^{u_*} & \Mod(X) \ar[r]^{f_*} & \Mod(S)}, \\
  & \xymatrix{\Mod(X') \ar[r]^{f'_*} & \Mod(S') \ar[r]^{w_*} & \Mod(S)},
 \end{align*}
 respectively, by means of Construction \ref{con:dercomp}. Since
 \[
  f_* \circ u_* = (f \circ u)_* = (w \circ f')_* = w_* \circ f'_*,
 \]
 we obtain the following diagram of morphisms in $\K^+(S)$:
 \[
  \xymatrix{
   \R f_*(\R u_*(F')) & \R(f_*u_*)(F') \ar@{=}[r] \ar[l]_-{\phi(F')} & \R(w_*f'_*)(F') \ar[r]^-{\psi(F')} & \R w_*(\R f'_*(F'))
  }
 \]
 By Proposition \ref{p:dercomp}, we know that $\phi(F')$ is a quasi-isomorphism of complexes of modules on $S$. In particular,
 \[
  \H^n(\phi(F')) \colon \H^n\R(f_*u_*)(F') \to \H^n\R f_*(\R u_*(F'))
 \]
 is an isomorphism in $\Mod(S)$; write $\H^n(\phi(F'))^{-1}$ for its inverse. Then we have the following composition of morphisms in $\Mod(S)$:
 \begin{equation} \label{e:derfib-1}
  \H^n(\psi(F')) \circ \H^n(\phi(F'))^{-1} \circ \R^nf_*(\tau(F') \circ \alpha) \colon \R^nf_*(F) \to \H^n\R w_*(\R f'_*(F')).
 \end{equation}
 Let
 \[
  \gamma \colon \H^n \circ \R w_* \to w_* \circ \H^n
 \]
 be the natural transformation of functors from $\K^+(S')$ to $\Mod(S)$ given by Construction \ref{con:cohswap}. Then composing the morphisms \eqref{e:derfib-1} and  $\gamma(\R f'_*(F'))$ in $\Mod(S)$, we arrive at a morphism
 \[
  \beta^n \colon \R^nf_*(F) \to w_*(\R^nf'_*(F'))
 \]
 in $\Mod(S)$, \iev at a $w$-morphism of modules $\R^nf_*(F)\to\R^nf'_*(F')$.
\end{construction}

\begin{proposition}
 \label{p:derfibid}
 In the situation of Construction \ref{con:derfib}, assume that $X' = X$, $S' = S$, $f' = f$, $u = \id_X$, and $w = \id_S$. Then $\alpha \colon F \to F'$ is an ordinary morphism in $\K^+(X)$ and we have
 \[
  \beta^n = \R^nf_*(\alpha) \colon \R^nf_*(F) \to \R^nf_*(F').
 \]
\end{proposition}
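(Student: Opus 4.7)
The plan is to trace the composition defining $\beta^n$ layer by layer, exploiting the degeneration hypotheses to identify each factor with a concrete, easily-analyzable morphism, and then to use naturality to cancel the auxiliary data.

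First I would unwind the derived functors. Since $u_* = \id_{\Mod(X)}$ and $w_* = \id_{\Mod(S)}$ are both exact and preserve injectives, we have $\R u_* = I_X$ and $\R w_* = I_S$ as endofunctors of $\K^+$, and the natural transformations $\tau\colon u_* \to \R u_*$ and the analogous one for $w$ become the canonical resolving transformations $\rho_X\colon \id_{\K^+(X)} \to I_X$ and $\rho_S\colon \id_{\K^+(S)} \to I_S$. In particular $\tau(F') = \rho_X(F')$. Noting further that $\alpha\colon F \to u_*(F') = F'$ is just an ordinary morphism in $\K^+(X)$, the naturality square for $\rho_X$ gives
\[
\rho_X(F') \circ \alpha \;=\; I_X(\alpha) \circ \rho_X(F) \qquad \text{in } \K^+(X).
\]

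Next I would identify $\phi(F')$ and $\psi(F')$ explicitly. Applying Construction \ref{con:dercomp} with source-intermediate-target $X' = X = X \to S$, functors $(u_*,f_*) = (\id,f_*)$, and intermediate resolution functor $I_X$, one obtains
\[
\phi(F') \;=\; f_*\bigl(\rho_X(I_X(F'))\bigr),
\]
and by the symmetric application with $(f'_*,w_*) = (f_*,\id)$ and intermediate resolution functor $I_S$, one obtains
\[
\psi(F') \;=\; \rho_S\bigl(f_*(I_X(F'))\bigr).
\]
Both are quasi-isomorphisms (directly, or via Proposition \ref{p:dercomp}), so $\H^n$ of each is an isomorphism.

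Now I would put the pieces together. By the above, $\R^nf_*(\tau(F')\circ\alpha) = \H^n\bigl(f_*(I_X(\rho_X(F')\circ\alpha))\bigr)$, and by functoriality of $I_X$ together with the naturality square above, this factors through $\H^n(f_*I_X(\alpha))\colon \R^n f_*(F) \to \R^nf_*(F')$ followed by $\H^n(f_*I_X(\rho_X(F')))$. On the other hand, $\H^n(\phi(F'))^{-1}$ is exactly the inverse of the map induced by the canonical second-stage resolution $\rho_X(I_X(F'))$; by the universal property of Lemma \ref{l:iversen}, $I_X(\rho_X(F'))$ and $\rho_X(I_X(F'))$ agree in $\K^+(X)$ as two morphisms from $I_X(F')$ to $I_X(I_X(F'))$ extending $\rho_X(F')$ along itself. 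Consequently $\H^n(\phi(F'))^{-1} \circ \R^nf_*(\tau(F')\circ\alpha) = \R^nf_*(\alpha)$.

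Finally, by Construction \ref{con:cohswap} applied to $T = w_* = \id$, the natural transformation $\beta$ there is the identity, so $\gamma(G) = \H^n(\rho_S(G))^{-1}$ for any $G \in \K^+(S)$. Taking $G = \R f_*(F') = f_*(I_X(F'))$, one obtains
\[
\gamma(\R f'_*(F')) \circ \H^n(\psi(F')) \;=\; \H^n(\rho_S(f_*I_X(F')))^{-1} \circ \H^n(\rho_S(f_*I_X(F'))) \;=\; \id,
\]
so these two factors cancel and the remaining composition equals $\R^nf_*(\alpha)$, as claimed. The main obstacle is the bookkeeping in the middle step: one must verify that the two \emph{a priori} distinct ways of mapping $I_X(F') \to I_X(I_X(F'))$ (namely $I_X(\rho_X(F'))$ and $\rho_X(I_X(F'))$) agree in $\K^+(X)$; this is exactly the uniqueness clause of Lemma \ref{l:iversen} and is the point where the argument turns on, rather than around, the universal property of injective resolutions.
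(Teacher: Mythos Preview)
The paper omits this proof entirely. Your proposal is a correct and careful unwinding of Construction \ref{con:derfib} in the degenerate case, and the key identification $I_X(\rho_X(F')) = \rho_X(I_X(F'))$ via the uniqueness clause of Lemma \ref{l:iversen} is exactly the right idea (both morphisms precompose with $\rho_X(F')$ to the same thing by naturality of $\rho_X$). The cancellation of $\gamma(\R f'_*(F'))$ against $\H^n(\psi(F'))$ is likewise correct once one notes $\R f'_*(F') = f_*(I_X(F'))$.
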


\begin{proof}
 Omitted.
\end{proof}

\begin{proposition}[Functoriality, I]
 \label{p:derfibfun}
 Let
 \begin{equation} \label{e:bcfun-diag}
  \xymatrix{
   X'' \ar[r]^{u'} \ar[d]_{f''} & X' \ar[r]^u \ar[d]_{f'} & X \ar[d]^f \\
   S'' \ar[r]_{w'} & S' \ar[r]_w & S
  }
 \end{equation}
 be a commutative diagram in the category of ringed spaces and $n$ an integer. Let $F$, $F'$, and $F''$ be objects of $\K^+(X)$, $\K^+(X')$, and $\K^+(X'')$, respectively. Let $\alpha\colon F\to F'$ and $\alpha'\colon F'\to F''$ be a $u$- and $u'$-morphism of modules, respectively. Write $\alpha''$ for the composition $\alpha$ and $\alpha'$, \iev set
 \[
  \alpha'' := u_*(\alpha') \circ \alpha
 \]
 in $\K^+(X)$, and note that consequently $\alpha''$ is a $(uu')$-morphism of modules $F\to F''$. Denote
 \begin{align*}
  \beta^n & \colon \R^nf_*(F) \to w_*(\R^nf'_*(F')), \\
  \beta'^n & \colon \R^nf'_*(F') \to w'_*(\R^nf''_*(F'')), \\
  \beta''^n & \colon \R^nf_*(F) \to (ww')_*(\R^nf''_*(F''))
 \end{align*}
 the morphisms in $\Mod(S)$, $\Mod(S')$, and $\Mod(S)$ associated to $\alpha$, $\alpha'$, and $\alpha''$ with respect to the right, left, and outer subsquares of the diagram in \eqref{e:bcfun-diag}, respectively, by means of Construction \ref{con:derfib}. Then $\beta''^n$ is the composition of $\beta^n$ and $\beta'^n$ in the sense that we have
 \[
  \beta''^n = w_*(\beta'^n) \circ \beta^n
 \]
 in $\Mod(S)$.
\end{proposition}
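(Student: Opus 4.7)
The plan is to unwind the construction of each of the three base change maps according to Construction \ref{con:derfib} and to verify the asserted identity by tracking the three ingredients that go into each $\beta$: the adjunction-flavored transformation $\tau \colon u_* \to \R u_*$, the composition-of-derived-functors transformations $\phi,\psi$ from Construction \ref{con:dercomp}, and the cohomology-swap transformation $\gamma \colon \H^n \circ \R w_* \to w_* \circ \H^n$ from Construction \ref{con:cohswap}. By inspection of the definition, each $\beta$ is a composition of four arrows in $\Mod(S)$ (or $\Mod(S')$, $\Mod(S)$, respectively), so in principle the equation $\beta''^n = w_*(\beta'^n) \circ \beta^n$ breaks into a handful of naturality squares to be checked.

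First I would isolate three functoriality lemmas. \emph{Lemma A:} For any composable pair of morphisms of ringed spaces $X'' \xrightarrow{u'} X' \xrightarrow{u} X$, the natural transformation $\tau$ is compatible with composition in the sense that, for every $F'' \in \K^+(X'')$, the composite $u_*(u'_*(F'')) \xrightarrow{u_*\tau_{u'}(F'')} u_*(\R u'_*(F'')) \xrightarrow{\tau_u(\R u'_*(F''))} \R u_*(\R u'_*(F''))$ agrees, via the quasi-equivalence $\phi_{u,u'}(F'')$ of Construction \ref{con:dercomp}, with $\tau_{uu'}(F'')$. This is immediate from the construction of $\tau$ via the canonical injective resolution functor together with Lemma \ref{l:iversen}. \emph{Lemma B:} The transformations $\phi$ of Construction \ref{con:dercomp} are associative for threefold compositions; concretely, for $\Mod(X'') \xrightarrow{u'_*} \Mod(X') \xrightarrow{u_*} \Mod(X) \xrightarrow{f_*} \Mod(S)$, the two ways of passing from $\R(f_*u_*u'_*)$ to $\R f_* \circ \R u_* \circ \R u'_*$ coincide, again by inspecting injective resolutions. \emph{Lemma C:} The natural transformation $\gamma$ of Construction \ref{con:cohswap} is compatible with composition: for $S'' \xrightarrow{w'} S' \xrightarrow{w} S$ and $G \in \K^+(S'')$, the identity
\[
 \gamma_w(\R w'_*(G)) \circ \H^n(\phi_{w,w'}(G)) = w_*(\gamma_{w'}(G)) \circ \gamma_{ww'}(G)
\]
holds in $\Mod(S)$. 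This too reduces to a bookkeeping check at the level of the canonical injective resolution.

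With these lemmas at hand, the proof assembles into a large pentagonal or hexagonal diagram in $\Mod(S)$ whose boundary reads $\beta''^n$ on one side and $w_*(\beta'^n) \circ \beta^n$ on the other. Its interior decomposes into cells each commuting for an explicit reason: \emph{(i)} naturality of $\R^n f_*$ applied to the composite $\tau(F'') \circ \alpha'' = u_*(\tau(F'') \circ \alpha') \circ \alpha \circ (\text{compat.})$ using Lemma A; \emph{(ii)} the associativity of $\phi,\psi$ for the triple $(f,u,u')$ and dually for $(w,w',f'')$, reflecting the equality $f_*u_*u'_* = w_*w'_*f''_* = w_*f'_*u'_*$ and given by Lemma B; \emph{(iii)} naturality of the quasi-equivalence $\phi(F')$ in $F'$ applied to $\tau(F'') \circ \alpha'$; and \emph{(iv)} Lemma C to relate the two ways of taking $\H^n$ through the twofold pushforward by $w$ and $w'$. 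After inserting these identifications and cancelling the various $\H^n(\phi(\cdot))^{\pm 1}$ factors, what remains is exactly $\beta''^n = w_*(\beta'^n) \circ \beta^n$.

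The main obstacle will be not any single step but the combinatorial bookkeeping: one must resist the temptation to write $f_* u_* = w_* f'_*$ as a literal equality of functors and keep careful track of which identifications are being used where, since $\beta^n$ is defined by inverting $\H^n(\phi(F'))$ and it is only with Lemma B that the inverses on the outer rectangle factor correctly through the inverses on the two subsquares. Once that threefold-composition coherence is settled, the remaining verifications are routine naturality arguments and do not require any further derived-category machinery.
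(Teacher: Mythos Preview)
The paper's own proof of this proposition consists of the single word ``Omitted.'' So there is nothing to compare against at the level of argument; the author simply asserts the result as a routine (if tedious) bookkeeping exercise and leaves it to the reader.

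Your sketch is exactly the kind of argument one would expect to fill this gap, and the overall strategy is sound: unwind Construction \ref{con:derfib} into its constituent pieces (the $\tau$, the $\phi/\psi$, the $\gamma$), establish a compatibility-with-composition lemma for each, and then assemble a large diagram whose commutativity yields the identity. Your Lemmas A, B, C are the right subgoals, and your diagnosis that the genuine work lies in the coherence of the $\H^n(\phi(\cdot))^{\pm1}$ factors under threefold composition is accurate. One small point worth making explicit: in step (ii) you will also need the ``middle'' factorization $f_*u_*u'_* = w_*f'_*u'_*$ coming from the right square alone, not just the two outer ones you list, since $\beta^n$ is built from $\psi$ for $(w,f')$ and you must relate this to the $\psi$ for $(ww',f'')$ via $\R w_*$ applied to the $\psi$ for $(w',f'')$; this is again an instance of Lemma B but for a different triple. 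Otherwise the plan is complete.
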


\begin{proof}
 Omitted.
\end{proof}

\begin{proposition}[Functoriality, II]
 \label{p:derfibfun2}
 Let \eqref{e:derfib-sq} be a commutative square in the category of ringed spaces, $n$ an integer. Let $F,G\in\K^+(X)$ and $F',G'\in\K^+(X')$, let $\alpha\colon F\to F'$ and $\gamma\colon G\to G'$ be $u$-morphisms of complexes of modules modulo homotopy, and let $\phi\colon F\to G$ and $\phi'\colon F'\to G'$ be morphisms in $\K^+(X)$ and $\K^+(X')$, respectively, such that the diagram
 \[
  \xysquare{F}{u_*(F')}{G}{u_*(G')}{\alpha}{\phi}{\phi'}{\gamma}
 \]
 commutes in $\K^+(X)$. Denote
 \begin{align*}
  \beta^n & \colon \R^nf_*(F) \to w_*(\R^nf'_*(F')), \\
  \delta^n & \colon \R^nf_*(G) \to w_*(\R^nf'_*(G'))
 \end{align*}
 the morphisms in $\Mod(S)$ obtained from $\alpha$ and $\beta$, respectively, by means of Construction \ref{con:derfib}. Then the following diagram commutes in $\Mod(S)$:
 \[
  \xysquare{\R^nf_*(F)}{w_*(\R^nf'_*(F'))}{\R^nf_*(G)}{w_*(\R^nf'_*(G'))}{\beta^n}{\R^nf_*(\phi)}{w_*(\R^nf'_*(\phi'))}{\delta^n}
 \]
\end{proposition}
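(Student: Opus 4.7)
The plan is to unwind the definition of $\beta^n$ and $\delta^n$ given in Construction~\ref{con:derfib} and then paste together six commutative squares, each arising from the naturality of a single constituent natural transformation or functor. Concretely, both $\beta^n$ and $\delta^n$ factor as the composition
\[
 \R^nf_*(-) \xrightarrow{\R^nf_*(\tau(-)\circ\text{upper})} \R^nf_*(\R u_*(-)) \xrightarrow{\H^n(\phi(-))^{-1}} \H^n\R(w_*f'_*)(-) \xrightarrow{\H^n(\psi(-))} \H^n\R w_*\R f'_*(-) \xrightarrow{\gamma(\R f'_*(-))} w_*\R^nf'_*(-),
\]
where in the case of $\beta^n$ one plugs in $(F,F',\alpha)$ and in the case of $\delta^n$ one plugs in $(G,G',\gamma)$. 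Thus it suffices to show that $\R^nf_*(\phi)$ on the left and $w_*(\R^nf'_*(\phi'))$ on the right fit compatibly with each of the four arrows above.

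First I would treat the leftmost arrow. The naturality of $\tau \colon u_* \to \R u_*$ applied to $\phi' \colon F' \to G'$ in $\K^+(X')$ yields $\R u_*(\phi')\circ\tau(F') = \tau(G')\circ u_*(\phi')$ in $\K^+(X)$. Combined with the hypothesis $u_*(\phi')\circ\alpha = \gamma\circ\phi$, this gives commutativity of the square
\[
 \xysquare{F}{\R u_*(F')}{G}{\R u_*(G')}{\tau(F')\circ\alpha}{\phi}{\R u_*(\phi')}{\tau(G')\circ\gamma}
\]
in $\K^+(X)$. Applying the functor $\R^nf_*\colon\K^+(X)\to\Mod(S)$ produces the first of the four commutative squares needed. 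Next, the naturality of the transformation $\phi\colon\R(f_*u_*)\to\R f_*\circ\R u_*$ applied to $\phi'$ gives $(\R f_*\R u_*)(\phi')\circ\phi(F') = \phi(G')\circ\R(f_*u_*)(\phi')$ in $\K^+(S)$; taking $\H^n$ and inverting the (iso) horizontal arrows $\H^n(\phi(F'))$ and $\H^n(\phi(G'))$ produces the second commutative square. The third square comes identically from the naturality of $\psi\colon\R(w_*f'_*)\to\R w_*\circ\R f'_*$ applied to $\phi'$, after taking $\H^n$. Finally, the naturality of $\gamma\colon\H^n\circ\R w_*\to w_*\circ\H^n$ applied to the morphism $\R f'_*(\phi')\colon\R f'_*(F')\to\R f'_*(G')$ in $\K^+(S')$ yields the fourth square.

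Concatenating these four squares horizontally gives exactly the commutativity of
\[
 \xysquare{\R^nf_*(F)}{w_*(\R^nf'_*(F'))}{\R^nf_*(G)}{w_*(\R^nf'_*(G'))}{\beta^n}{\R^nf_*(\phi)}{w_*(\R^nf'_*(\phi'))}{\delta^n}
\]
in $\Mod(S)$, which is the claim. The main thing to be careful about is not a deep obstacle but a notational one: one must verify that the equality of functors $f_*u_* = w_*f'_*$ (and hence $\R(f_*u_*) = \R(w_*f'_*)$, $\R(f_*u_*)(\phi') = \R(w_*f'_*)(\phi')$) is used consistently when transitioning between the second and third squares, so that the middle vertical arrow matches in both panels. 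Once this bookkeeping is pinned down, the proof is a routine pasting argument and requires no new ingredients beyond the naturality statements already built into Constructions~\ref{con:dercomp}, \ref{con:cohswap}, and \ref{con:derfib}.
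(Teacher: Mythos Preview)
Your argument is correct: unwinding Construction~\ref{con:derfib} and pasting together the squares coming from the naturality of $\tau$, of the comparison transformations $\phi$ and $\psi$ of Construction~\ref{con:dercomp}, and of $\gamma$ from Construction~\ref{con:cohswap}, together with the hypothesis square, does the job. (Minor quibble: you announce ``six'' squares but then describe four; if you split the first step into the hypothesis square and the $\tau$-naturality square separately you get five. This is purely cosmetic.)

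The paper takes a different, shorter route. Rather than unwinding Construction~\ref{con:derfib}, it observes that the desired square can be obtained by applying Proposition~\ref{p:derfibfun} (horizontal composition of base-change squares) twice and Proposition~\ref{p:derfibid} (the degenerate case $u=\id_X$, $w=\id_S$) twice. Concretely, one writes the $u$-morphism $u_*(\phi')\circ\alpha = \gamma\circ\phi \colon F \to G'$ in two ways: as the composite of the $\id_X$-morphism $\phi$ followed by the $u$-morphism $\gamma$, and as the composite of the $u$-morphism $\alpha$ followed by the $\id_{X'}$-morphism $\phi'$. Proposition~\ref{p:derfibfun} then expresses the associated base change $\R^nf_*(F)\to w_*(\R^nf'_*(G'))$ as $\delta^n\circ\R^nf_*(\phi)$ in the first decomposition and as $w_*(\R^nf'_*(\phi'))\circ\beta^n$ in the second, using Proposition~\ref{p:derfibid} to identify the base change across an identity square with the ordinary $\R^nf_*$ (resp.\ $\R^nf'_*$) of the morphism. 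Your direct approach is more self-contained (note that the proof of Proposition~\ref{p:derfibfun} is itself omitted in the paper), while the paper's approach is cleaner once those two propositions are in hand.
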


\begin{proof}
 This is an immediate consequence of Propositions \ref{p:derfibid} and \ref{p:derfibfun}.
\end{proof}

The following construction shows how to derive the classical base change maps for higher direct image sheaves (\cf \egv \cite[XII (4.2)]{SGA4.3}) from Construction \ref{con:derfib}.

\begin{construction}[Base change maps]
 \label{con:bc}
 Assume we are given a commutative square \eqref{e:derfib-sq} in the category of ringed spaces as well as an integer $n$. Let $F$ be a bounded below complex of modules on $X$. Then the unit of the adjunction between the functors $u^*$ and $u_*$ yields a $u$-morphism
 \[
  \alpha \colon F \to u^*(F)
 \]
 of complexes of modules modulo homotopy. Applying Construction \ref{con:derfib}, we obtain in turn a $w$-morphism of modules
 \[
  \beta^n \colon \R^nf_*(F) \to \R^nf'_*(u^*(F)),
 \]
 which yields a morphism of modules on $S'$,
 \[
  \tilde\beta^n(F)\colon w^*(\R^nf_*(F)) \to \R^nf'_*(u^*(F)),
 \]
 by means of adjunction between the functors $w^*$ and $w_*$.
 
 We claim that the family $\tilde\beta^n := (\tilde\beta^n(F))_F$ constitutes a natural transformation
 \[
  \tilde\beta^n \colon w^* \circ \R^nf_* \to \R^nf'_* \circ u^*
 \]
 of functors from $\K^+(X)$ to $\Mod(S')$. For that matter, let $G$ be another bounded below complex of modules on $X$ and $\phi \colon F \to G$ a morphism in $\K^+(X)$. Denote
 \[
  \gamma \colon G \to u^*(G)
 \]
 the $u$-morphism of complexes of modules modulo homotopy given by the unit of the adjunction between the functors $u^*$ and $u_*$. Then, due to the naturality of the adjunction unit, the diagram
 \[
  \xymatrix{
   F \ar[r]^-\alpha \ar[d]_\phi & u_*u^*(F) \ar[d]^{u_*u^*(\phi)} \\
   G \ar[r]_-\gamma & u_*u^*(G)
  }
 \]
 commutes in the category of modules on $X$. Thus by means Proposition \ref{p:derfibfun2}, taking into account the naturality of the adjunction between the functors $w^*$ and $w_*$, we see that the diagram
 \[
  \xysquare{w^*(\R^nf_*(F))}{\R^nf'_*(u^*(F))}{w^*(\R^nf_*(G))}{\R^nf'_*(u^*(G))}{\tilde\beta^n(F)}{w^*(\R^nf_*(\phi))}{\R^nf'_*(u^*(\phi))}{\tilde\beta^n(G)}
 \]
 commutes in the category of modules on $S'$.
 
 We call $\tilde\beta^n$ the \emph{base change natural transformation in degree $n$} associated to the square \eqref{e:derfib-sq}. If you prefer the term ``morphism of functors'' over the term ``natural transformation'', you might want to call $\tilde\beta^n$ the base change morphism in degree $n$ associated to \eqref{e:derfib-sq}, yet we reserve the term \emph{base change morphism in degree $n$ for $F$} associated to \eqref{e:derfib-sq} for the individual $\tilde\beta^n(F)$; we also say \emph{base change map} in the latter context.
\end{construction}

\begin{construction}[Hodge base change]
 \label{con:hodgebc}
 Assume we are given a commutative square
 \begin{equation} \label{e:hodgebc-sq}
  \xysquare{X'}{X}{S'}{S}{u}{f'}{f}{w}
 \end{equation}
 in the category of complex spaces $\An$. Let $p$ and $q$ be integers. As explained in \S\ref{s:an}, the square \eqref{e:hodgebc-sq} induces a pullback of relative Kähler $p$-differentials, which is a $u$-morphism of modules
 \[
  \alpha^p \colon \Omega^p_f \to \Omega^p_{f'}.
 \]
 By means of Construction \ref{con:derfib}, viewing \eqref{e:hodgebc-sq} as a square in the category of ringed spaces, the morphism $\alpha^p$ induces a $w$-morphism of modules
 \begin{equation} \label{e:hodgebc}
  \beta^{p,q} \colon \R^qf_*(\Omega^p_f) \to \R^qf'_*(\Omega^p_{f'}).
 \end{equation}
 We call $\beta^{p,q}$ the \emph{Hodge base change in bidegree $(p,q)$} associated to \eqref{e:hodgebc-sq}. By slight abuse of terminology, we call the morphism
 \[
  \tilde\beta^{p,q} \colon w^*(\R^qf_*(\Omega^p_f)) \to \R^qf'_*(\Omega^p_{f'})
 \]
 of modules on $S'$ which is obtained from $\beta^{p,q}$ by means of adjunction between the functors $w^*$ and $w_*$ also the \emph{Hodge base change in bidegree $(p,q)$} associated to \eqref{e:hodgebc-sq}; probably, one should better call it the ``adjoint Hodge base change'' or similar instead.
\end{construction}

\begin{proposition}
 \label{p:hodgebcfun}
 Let
 \begin{equation} \label{e:hodgebcfun}
  \xymatrix{
   X'' \ar[r]^{u'} \ar[d]_{f''} & X' \ar[r]^u \ar[d]_{f'} & X \ar[d]^f \\
   S'' \ar[r]_{w'} & S' \ar[r]_w & S
  }
 \end{equation}
 be a commutative diagram in the category of complex spaces and $p$ and $q$ integers. Denote $\beta^{p,q}$, $\beta'^{p,q}$, and $\beta''^{p,q}$ the Hodge base changes in bidegree $(p,q)$ associated to the right, left, and outer subsquares of the diagram in \eqref{e:hodgebcfun}, respectively. Then $\beta''^{p,q}$ is the composition of $\beta^{p,q}$ and $\beta'^{p,q}$, \iev we have:
 \[
  \beta''^{p,q} = w_*(\beta'^{p,q}) \circ \beta^{p,q}.
 \]
\end{proposition}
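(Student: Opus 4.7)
The plan is to reduce the statement to the functoriality result already available at the level of Construction \ref{con:derfib}, namely Proposition \ref{p:derfibfun}, by checking that the input data (the pullback-of-$p$-differentials morphisms) behave functorially under composition of the two squares. Concretely, denote by
\[
\alpha^p \colon \Omega^p_f \to u_*(\Omega^p_{f'}), \quad \alpha'^p \colon \Omega^p_{f'} \to u'_*(\Omega^p_{f''}), \quad \alpha''^p \colon \Omega^p_f \to (u u')_*(\Omega^p_{f''})
\]
the pullback-of-$p$-differentials morphisms attached respectively to the right, left, and outer subsquares of the diagram in \eqref{e:hodgebcfun}. By the definition of the Hodge base change (Construction \ref{con:hodgebc}), the morphisms $\beta^{p,q}$, $\beta'^{p,q}$, $\beta''^{p,q}$ are obtained from $\alpha^p$, $\alpha'^p$, $\alpha''^p$ by applying Construction \ref{con:derfib} (with $F := \Omega^p_f$, $F' := \Omega^p_{f'}$, $F'' := \Omega^p_{f''}$ and $n := q$) to the corresponding subsquares, regarded in the category of ringed spaces.

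First I would invoke the functoriality of the pullback-of-differentials construction recalled in \S\ref{ss:difffun}: applied to the two squares making up the diagram in \eqref{e:hodgebcfun}, that functoriality gives precisely
\[
\alpha''^p \;=\; u_*(\alpha'^p) \circ \alpha^p
\]
as morphisms of sheaves of modules on $X$. Thus, in the terminology of Proposition \ref{p:derfibfun}, $\alpha''^p$ is the composition of $\alpha^p$ and $\alpha'^p$ as $u$- and $u'$-morphisms, respectively.

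Next I would apply Proposition \ref{p:derfibfun} itself, with the diagram of ringed spaces underlying \eqref{e:hodgebcfun} and with the above triple of morphisms; the proposition directly yields
\[
\beta''^{p,q} \;=\; w_*(\beta'^{p,q}) \circ \beta^{p,q}
\]
in $\Mod(S)$, which is precisely the asserted identity. There is no genuine obstacle here: the only point that requires care is the bookkeeping needed to recognize that the pullback-of-$p$-differentials for a composite of squares is literally the composite of the corresponding pullbacks (not merely canonically isomorphic to it), so that Proposition \ref{p:derfibfun} applies verbatim. Once that matching is written out, the conclusion is immediate, and no further calculation is required.
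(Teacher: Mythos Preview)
Your proof is correct and follows essentially the same approach as the paper: you reduce to Proposition \ref{p:derfibfun} by invoking the functoriality of the pullback of $p$-differentials from \S\ref{ss:difffun} to obtain $\alpha''^p = u_*(\alpha'^p)\circ\alpha^p$, which is exactly what the paper does.
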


\begin{proof}
 Denote $\alpha^p$, $\alpha'^p$, and $\alpha''^p$ the pullbacks of $p$-differentials associated to the right, left, and outer subsquares of the diagram in \eqref{e:hodgebcfun}, respectively. Then we have
 \[
  \alpha''^p = u_*(\alpha'^p) \circ \alpha^p
 \]
 (\cf \S\ref{s:an}). Thus the claim follows from Proposition \ref{p:derfibfun}.
\end{proof}

\begin{construction}
 \label{con:barx}
 Let $f\colon X\to S$ be a morphism of ringed spaces (viewed in the absolute sense, \cf \S\ref{s:cat}). Define
 \[
  \bar X := (X_\top,f^{-1}(\O_S))
 \]
 and
 \[
  \bar f := (f,\eta(\O_S) \colon \O_S \to f_*f^{-1}(\O_S)),
 \]
 where $\eta$ denotes the natural transformation of adjunction between the functors $f^{-1}$ and $f_*$ for sheaves of rings. Then $\bar X$ is a ringed space and
 \[
  \bar f \colon \bar X \to S
 \]
 is a morphism of ringed spaces.

 Assume that we are given a commutative square
 \[
  \xysquare{X'}{X}{S'}{S}{u}{f'}{f}{w}
 \]
 of ringed spaces. Define
 \[
  \bar{f'} \colon \bar{X'} \to S'
 \]
 for $f'$ as we have defined $\bar f$ for $f$. Moreover, define
 \[
  \bar u := (u,\bar\theta \colon f^{-1}\O_S \to u_*f'^{-1}(\O_{S'})),
 \]
 where $\bar\theta$ is obtained from the composition
 \[
  \xymatrix@C3.5pc{ \O_S \ar[r]^-{w^\sharp} & w_*(\O_{S'}) \ar[r]^-{w_*(\eta'(\O_{S'}))} & w_*f'_*f'^{-1}(\O_{S'}) \ar@{=}[r] & f_*u_*f'^{-1}(\O_{S'}) }
 \]
 by means of adjunction between the functors $f^{-1}$ and $f_*$; here, $\eta'$ denotes the natural transformation associated to the adjunction between the functors $f'^{-1}$ and $f'_*$. Then the following diagram commutes in the category of ringed spaces:
 \begin{equation} \label{e:barx-sq}
  \xysquare{\bar{X'}}{\bar X}{S'}{S}{\bar u}{\bar{f'}}{\bar f}{w}
 \end{equation}
 In fact, the described construction may be interpreted as an endofunctor
 \[
  \Sp^{\mathbf2} \to \Sp^{\mathbf2}
 \]
 on the arrow category of the category of ringed spaces $\Sp$. As one will notice, the latter endofunctor features the property of commutating with the functor
 \[
  \pr_1 \colon \Sp^{\mathbf2} \to \Sp
 \]
 which projects to the target of an arrow.
\end{construction}

\begin{construction}[De Rham base change]
 \label{con:drbc}
 Assume we are given a commutative square \eqref{e:hodgebc-sq} in the category of complex spaces, which we equally view as a commutative square in the category of ringed spaces. Then Construction \ref{con:barx} yields the commutative square of ringed spaces \eqref{e:barx-sq}. According to \S\ref{s:an}, the pullback of Kähler differentials associated to the square of complex spaces \eqref{e:hodgebc-sq} gives rise to a $\bar u$-morphism of complexes of modules
 \[
  \alpha \colon \bar\Omega^\kdot_f \to \bar\Omega^\kdot_{f'}.
 \]
 Let $n$ be an integer. Then Construction \ref{con:derfib} yields a $w$-morphism of modules
 \[
  \beta^n \colon \R^n\bar f_*(\bar\Omega^\kdot_f) \to \R^n\bar{f'}_*(\bar\Omega^\kdot_{f'}),
 \]
 which we call the \emph{de Rahm base change in degree $n$} associated to \eqref{e:hodgebc-sq}. By slight abuse of terminology, we call the morphism
 \[
  \tilde\beta^n \colon w^*(\R^n\bar f_*(\bar\Omega^\kdot_f)) \to \R^n\bar{f'}_*(\bar\Omega^\kdot_{f'})
 \]
 of modules on $S'$ which is obtained from $\beta^n$ by means of adjunction between the functors $w^*$ and $w_*$ the \emph{de Rahm base change in degree $n$} associated to \eqref{e:hodgebc-sq}, too (\cf Construction \ref{con:hodgebc}).
\end{construction}

\begin{proposition}
 \label{p:drbcfun}
 Let \eqref{e:hodgebcfun} be a commutative diagram in the category of complex spaces, $n$ an integer. Denote $\beta^n$, $\beta'^n$, and $\beta''^n$ the de Rahm base changes in degree $n$ associated to the right, left, and outer subsquares of the diagram in \eqref{e:hodgebcfun}, respectively. Then $\beta''^n$ is the composition of $\beta^n$ and $\beta'^n$, \iev we have:
 \[
  \beta''^n = w_*(\beta'^n) \circ \beta^n.
 \]
\end{proposition}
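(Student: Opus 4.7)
The plan is to mimic the proof strategy of Proposition \ref{p:hodgebcfun} verbatim, replacing ``pullback of $p$-differentials'' by ``pullback of the complex of Kähler differentials'' and replacing the single square \eqref{e:hodgebc-sq} by the associated square of ringed spaces produced by Construction \ref{con:barx}. The key observation is that the whole de Rham base change construction factors through Construction \ref{con:derfib}, so functoriality reduces immediately to the functoriality statement Proposition \ref{p:derfibfun}.

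First, I would apply the endofunctor of Construction \ref{con:barx} to the whole commutative diagram \eqref{e:hodgebcfun}, regarded as a diagram in the arrow category $\Sp^{\mathbf2}$. Since that endofunctor commutes with the target projection $\pr_1 \colon \Sp^{\mathbf2} \to \Sp$, the base spaces $S$, $S'$, $S''$ and the horizontal base maps $w$, $w'$ are preserved, and one obtains a commutative diagram in $\Sp$:
\[
 \xymatrix{
  \bar{X''} \ar[r]^{\bar{u'}} \ar[d]_{\bar{f''}} & \bar{X'} \ar[r]^{\bar u} \ar[d]_{\bar{f'}} & \bar X \ar[d]^{\bar f} \\
  S'' \ar[r]_{w'} & S' \ar[r]_w & S
 }
\]
whose right, left, and outer subsquares are the $\bar{\cdot}$-images of the corresponding subsquares of \eqref{e:hodgebcfun}.

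Second, I would invoke the functoriality of the pullback of Kähler differentials recorded in \S\ref{s:an}: writing $\alpha$, $\alpha'$, $\alpha''$ for the pullbacks $\bar\Omega^\kdot_f \to \bar\Omega^\kdot_{f'}$, $\bar\Omega^\kdot_{f'} \to \bar\Omega^\kdot_{f''}$, $\bar\Omega^\kdot_f \to \bar\Omega^\kdot_{f''}$ associated with the right, left, outer subsquares respectively, one gets the identity
\[
 \alpha'' = \bar u_*(\alpha') \circ \alpha
\]
in $\K^+(\bar X)$, degree by degree, exactly as for individual $p$-differentials (the wedge-power construction is applied degreewise and the de Rham differentials are preserved because they are by definition the ones induced from $\bar f$, $\bar{f'}$, $\bar{f''}$, which enjoy the corresponding functoriality with respect to the horizontal maps $\bar u$, $\bar{u'}$).

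Third, with this compatibility in hand, Proposition \ref{p:derfibfun} applied to the above diagram of ringed spaces, the integer $n$, and the $\bar u$-, $\bar{u'}$-, $(\bar u\bar{u'})$-morphisms $\alpha$, $\alpha'$, $\alpha''$ yields
\[
 \beta''^n = w_*(\beta'^n) \circ \beta^n
\]
in $\Mod(S)$, where $\beta^n$, $\beta'^n$, $\beta''^n$ are the morphisms produced by Construction \ref{con:derfib} from $\alpha$, $\alpha'$, $\alpha''$. By the very definition of the de Rham base change in Construction \ref{con:drbc}, these are precisely the de Rham base changes associated with the three subsquares of \eqref{e:hodgebcfun}, which concludes the proof. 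There is no real obstacle here: the only point that deserves care is the bookkeeping showing that the $\bar{\cdot}$-construction is genuinely functorial on arrows (so that $\overline{u \circ u'} = \bar u \circ \bar{u'}$ as morphisms $\bar{X''} \to \bar X$ of ringed spaces), which is a straightforward unpacking of the definition of $\bar u$ in Construction \ref{con:barx} together with the naturality of the adjunction unit $\eta$ between $f^{-1}$ and $f_*$.
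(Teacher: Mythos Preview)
Your proposal is correct and follows essentially the same approach as the paper: apply the functoriality of Construction \ref{con:barx} to obtain the barred diagram, use the functoriality of the pullback of differentials from \S\ref{s:an} to get $\alpha'' = \bar u_*(\alpha') \circ \alpha$, and conclude via Proposition \ref{p:derfibfun}. The paper's proof is just a terser version of what you wrote.
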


\begin{proof}
 First of all, by the functoriality of Construction \ref{con:barx}, we see that the diagram
 \[
  \xymatrix{
   \bar{X''} \ar[r]^{\bar{u'}} \ar[d]_{\bar{f''}} & \bar{X'} \ar[r]^{\bar u} \ar[d]_{\bar{f'}} & \bar{X} \ar[d]^{\bar f} \\
   S'' \ar[r]_{w'} & S' \ar[r]_w & S
  }
 \]
 commutes in the category of ringed spaces. Denote $\alpha$, $\alpha'$, and $\alpha''$ the pullbacks of differentials associated to the right, left, and outer subsquares of the diagram in \eqref{e:hodgebcfun}, respectively. Then we have
 \[
  \alpha'' = {\bar u}_*(\alpha') \circ \alpha
 \]
 (\cf \S\ref{s:an}). Thus our claim follows from Proposition \ref{p:derfibfun}.
\end{proof}

\begin{proposition}
 \label{p:filtdrbc}
 In the situation of Construction \ref{con:drbc}, let $p$ be another integer. Then the de Rahm base change $\beta^n$ restricts to a $w$-morphism of modules
 \[
  \beta^{p,n} \colon \F^p\sH^n(f) \to \F^p\sH^n(f').
 \]
 Recall here from Notation \ref{not:hodgefilt} that we have
 \[
  \F^p\sH^n(f) \subset \sH^n(f) = \R^n\bar f_*(\bar\Omega^\kdot_f),
 \]
 and likewise for $f'$ in place of $f$.
\end{proposition}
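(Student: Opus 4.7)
The plan is to exploit the compatibility of the pullback of Kähler differentials with the stupid filtration on the de Rham complex and then apply the functoriality of Construction \ref{con:derfib} in the form of Proposition \ref{p:derfibfun2}.

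First I would observe that the $\bar u$-morphism $\alpha \colon \bar\Omega^\kdot_f \to \bar\Omega^\kdot_{f'}$ underlying $\beta^n$ is given in each degree $\nu$ by the pullback of Kähler $\nu$-differentials; in particular it sends sections of degree $\geq p$ to sections of degree $\geq p$. Consequently $\alpha$ restricts to a $\bar u$-morphism of complexes of modules
\[
 \alpha^{\geq p} \colon \sigma^{\geq p}\bar\Omega^\kdot_f \to \sigma^{\geq p}\bar\Omega^\kdot_{f'},
\]
and, writing $i^{\geq p}$ for the evident inclusion morphisms in $\Com^+(\bar X)$ and $\Com^+(\bar{X'})$, the square
\[
 \xymatrix{
  \sigma^{\geq p}\bar\Omega^\kdot_f \ar[r]^-{\alpha^{\geq p}} \ar[d]_{i^{\geq p}(\bar\Omega^\kdot_f)} & \bar u_*(\sigma^{\geq p}\bar\Omega^\kdot_{f'}) \ar[d]^{\bar u_*(i^{\geq p}(\bar\Omega^\kdot_{f'}))} \\
  \bar\Omega^\kdot_f \ar[r]_-{\alpha} & \bar u_*(\bar\Omega^\kdot_{f'})
 }
\]
commutes in $\Com^+(\bar X)$, whence also in $\K^+(\bar X)$.

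Next I would feed this commutative square into Proposition \ref{p:derfibfun2} (applied to the square \eqref{e:barx-sq}). Denoting by
\[
 \beta^{\geq p,n} \colon \R^n\bar f_*(\sigma^{\geq p}\bar\Omega^\kdot_f) \to w_*(\R^n\bar{f'}_*(\sigma^{\geq p}\bar\Omega^\kdot_{f'}))
\]
the $w$-morphism furnished by Construction \ref{con:derfib} for $\alpha^{\geq p}$, the proposition yields the commutativity in $\Mod(S)$ of
\[
 \xymatrix@C=3pc{
  \R^n\bar f_*(\sigma^{\geq p}\bar\Omega^\kdot_f) \ar[r]^-{\beta^{\geq p,n}} \ar[d]_{\R^n\bar f_*(i^{\geq p}(\bar\Omega^\kdot_f))} & w_*(\R^n\bar{f'}_*(\sigma^{\geq p}\bar\Omega^\kdot_{f'})) \ar[d]^{w_*(\R^n\bar{f'}_*(i^{\geq p}(\bar\Omega^\kdot_{f'})))} \\
  \R^n\bar f_*(\bar\Omega^\kdot_f) \ar[r]_-{\beta^n} & w_*(\R^n\bar{f'}_*(\bar\Omega^\kdot_{f'}))
 }
\]

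Finally, I would conclude by purely diagrammatic means: by Notation \ref{not:hodgefilt} the image of the left-hand vertical arrow is $\F^p\sH^n(f)$, and (since $w_*$ is left exact) the image of the right-hand vertical arrow is $w_*(\F^p\sH^n(f'))$. Commutativity of the square therefore forces $\beta^n$ to send $\F^p\sH^n(f)$ into $w_*(\F^p\sH^n(f'))$, so that there exists a unique morphism of modules on $S$
\[
 \beta^{p,n} \colon \F^p\sH^n(f) \to w_*(\F^p\sH^n(f'))
\]
factoring $\beta^n \circ \iota^n_f(p)$ through $w_*(\iota^n_{f'}(p))$; this is the required $w$-morphism. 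There is no substantial obstacle here: the one point that deserves attention is simply verifying that the pullback of differentials in a commutative square of complex spaces preserves the degree of a Kähler differential, which is immediate from the construction recalled in \S\ref{ss:difffun}.
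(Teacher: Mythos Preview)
Your proposal is correct and follows exactly the approach of the paper: apply Proposition~\ref{p:derfibfun2} to the square built from the inclusions $i^{\geq p}(\bar\Omega^\kdot_f)$, $i^{\geq p}(\bar\Omega^\kdot_{f'})$ and the (restricted) pullback of differentials, then read off the restriction from the resulting commutative square of higher direct images. The paper's proof sketches precisely this and omits the details you have supplied. One small imprecision: left exactness of $w_*$ gives only $\im\bigl(w_*(\R^n\bar{f'}_*(i^{\geq p}))\bigr) \subset w_*(\F^p\sH^n(f'))$, not equality, but this containment is all you use.
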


\begin{proof}
 This follows from Proposition \ref{p:derfibfun2} by considering the commutative diagram
 \[
  \xymatrix{
   \sigma^{\geq p}\bar\Omega^\kdot_f \ar[r]^{\sigma^{\geq p}\alpha} \ar[d]_{i^{\geq p}(\bar\Omega^\kdot_f)} & \sigma^{\geq p}\bar\Omega^\kdot_{f'} \ar[d]^{i^{\geq p}(\bar\Omega^\kdot_{f'})} \\
   \bar\Omega^\kdot_f \ar[r]_\alpha & \bar\Omega^\kdot_{f'}
  }
 \]
 of modules. We omit the details.
\end{proof}

\section{Hodge theory of rational singularities}
\label{s:rtlsing}

Below we review a couple of Hodge theoretic properties of complex spaces with rational singularities. The following lemma, which is a variation on the theme of \cite[Lemma (1.2)]{Na01}, will be fundamental.

\begin{lemma}
 \label{l:rtlsingres}
 Let $X$ be a complex space having a rational singularity at $p\in X$ and let $f\colon W\to X$ be a resolution of singularities such that $E := f^{-1}(p)$ is a complex space of Fujiki class $\sC$ whose underlying set is a simple normal crossing divisor in $W$. Then we have
 \[
  \gr^0_\F(\H^n(E)) := \F^0\H^n(E)/\F^1\H^n(E) \iso 0
 \]
 for all natural numbers $n>0$.
\end{lemma}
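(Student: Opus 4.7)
The plan is to reduce the statement to the sheaf-cohomological vanishing $\H^n(E,\O_E)=0$ for $n>0$, and then to extract this from the rational singularity hypothesis via the theorem on formal functions.

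First, I would identify $\gr^0_\F\H^n(E)$ with $\H^n(E,\O_E)$. Since $E$ is set-theoretically a simple normal crossing divisor in the complex manifold $W$, its irreducible components $E_i$ and their iterated intersections $E_I=\bigcap_{i\in I}E_i$ are smooth closed submanifolds of $W$; being closed submanifolds of the Fujiki class $\sC$ space $E$, they themselves are of Fujiki class $\sC$. Applying Deligne's semi-simplicial construction to $E_\bullet\to E$ with $E_{[q]}=\coprod_{|I|=q+1}E_I$, one obtains the mixed Hodge structure on $\H^\bullet(E)$, and since SNC divisors are Du Bois (so that $\underline\Omega^0_E$ is quasi-isomorphic to $\O_E$ in the derived category of $E$), this yields $\gr^0_\F\H^n(E,\C)\iso\H^n(E,\O_E)$.

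Next I would use the rational singularity hypothesis to kill $\H^n(E,\O_E)$ for $n>0$. After shrinking $X$ to a suitable neighborhood of $p$, rationality gives $R^if_*\O_W=0$ for all $i>0$. The analytic theorem on formal functions (B\u{a}nic\u{a}--St\u{a}n\u{a}\c{s}il\u{a}) then produces an isomorphism
\[
 \widehat{(R^if_*\O_W)_p}\iso\varprojlim_k\H^i(\tilde E_k,\O_{\tilde E_k})
\]
for each $i$, where $\tilde E_k$ denotes the $k$-th infinitesimal neighborhood of $|E|$ in $W$ defined by $(f^{-1}\fm_{X,p}\cdot\O_W)^{k+1}$. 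Thus this inverse limit vanishes for $i>0$. An induction on $k$ using the short exact sequences $0\to\I^k/\I^{k+1}\to\O_{\tilde E_k}\to\O_{\tilde E_{k-1}}\to 0$, combined with a Mittag-Leffler argument for the resulting inverse system, would give $\H^i(\tilde E_k,\O_{\tilde E_k})=0$ for all $i>0$ and all $k\geq 0$. A final filtration by the nilradical transports the vanishing from $\tilde E_0$ (the scheme-theoretic fiber $\sum m_iE_i$, where the $m_i$ are the multiplicities in $f^{-1}\fm_{X,p}\cdot\O_W=\O_W(-\sum m_iE_i)$) to the reduced SNC divisor $E$, giving $\H^n(E,\O_E)=0$ for $n>0$ as desired.

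The main obstacle lies in the Mittag-Leffler step and the subsequent passage from the scheme-theoretic thickenings to the reduced $E$. Controlling the cohomology of the successive quotients $\I^k/\I^{k+1}$ requires that their associated graded, supported on the smooth strata $E_I$, consists of locally free sheaves for which Kodaira-type vanishing on each $E_I$---available by the Fujiki class $\sC$ hypothesis inherited by the strata---can be invoked. By comparison, the identification step and the formal-functions input are fairly routine; the heart of the proof is the careful bookkeeping of multiplicities and cohomological vanishing across the stratification of $E$.
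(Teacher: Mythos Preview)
Your first step, identifying $\gr^0_\F\H^n(E)$ with $\H^n(E,\O_E)$ via the Du Bois property of SNC divisors, is correct. The paper does not make this identification; instead it works directly with the mixed Hodge structure and avoids ever having to compute $\H^n(E,\O_E)$.

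The paper's argument is much shorter. After shrinking $X$ to a Stein neighbourhood of $p$ so that $E$ is a deformation retract of $W$, the restriction $i^*\colon\H^n(W,\C)\to\H^n(E,\C)$ is surjective and compatible with Hodge filtrations. Hence $\gr^0_\F\H^n(E)$ is a quotient of $F^0/F^1$, where $F$ is the filtration on $\H^n(W,\C)$ coming from the stupid filtration on $\Omega^\kdot_W$. But the Fr\"olicher spectral sequence gives an injection $F^0/F^1\hookrightarrow\H^n(W,\O_W)$, and $\H^n(W,\O_W)\cong\H^n(X,\O_X)=0$ since $X$ is Stein with rational singularities. That is the whole proof.

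Your route through formal functions has a genuine gap at the Mittag--Leffler step. Knowing that $\varprojlim_k\H^n(\tilde E_k,\O_{\tilde E_k})=0$ does not by itself force the individual terms to vanish; you would need the transition maps to be surjective, which in turn would require control over $\H^{n+1}$ of the graded pieces $\I^k/\I^{k+1}$. You propose Kodaira-type vanishing for this, but these conormal sheaves carry no positivity in general, so no such vanishing is available. The passage from the non-reduced scheme-theoretic fibre $\tilde E_0$ to the reduced $E$ has the same problem. The paper's retract argument sidesteps all of this by comparing $E$ to $W$ topologically rather than coherently.
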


\begin{proof}
 Fix a natural number $n>0$. By stratification theory, there exists an open neighborhood $U$ of $E$ in $W$ such that $E$ is a deformation retract of $U$. Thus, by means of shrinking the base of $f$ around $p$, we may assume that the complex space $X$ is Stein and has rational singularities and that the function
 \[
  i^* \colon \H^n(W,\C) \to \H^n(E,\C)
 \]
 induced by the inclusion $i\colon E\to W$ is a surjection.

 Denote $F = (F^p)_{p\in\Z}$ the filtration on $\H^n(W,\C)$ induced by the stupid filtration of the algebraic de Rham complex $\Omega^\kdot_W$ via the canonical isomorphism $\H^n(W,\C) \to \H^n(W,\Omega^\kdot_W)$. Then the morphism $i^*$ is filtered with respect to $F$ and the Hodge filtration $(\F^p\H^n(E))_{p\in\Z}$ of the mixed Hodge structure $\H^n(E)$. In particular, $i^*$ induces a surjective mapping
 \[
  F^0/F^1 \to \F^0\H^n(E)/\F^1\H^n(E).
 \]
 Looking at the Frölicher spectral sequence of $W$, we see that there exists a monomorphism
 \[
  F^0/F^1 \to \H^n(W,\O_W),
 \]
 yet
 \[
  \H^n(W,\O_W) \iso \H^n(X,\O_X) \iso 0
 \]
 since $X$ is Stein and has rational singularities, whence we conclude that $F^0/F^1 \iso 0$ and thus $\gr^0_\F(\H^n(E)) \iso 0$.
\end{proof}

\begin{corollary}
 \label{c:rtlsingh1}
 Under the hypotheses of Lemma \ref{l:rtlsingres}, we have $\H^1(E,\C) \iso 0$.
\end{corollary}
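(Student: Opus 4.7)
The plan is to invoke Lemma \ref{l:rtlsingres} in the case $n=1$ and combine it with the general structure of the mixed Hodge structure on $\H^1(E)$ afforded by the fact that $E$ is a compact complex space of Fujiki class $\sC$. Specializing the lemma yields $\gr^0_\F \H^1(E) \iso 0$, i.e.\ the equality of Hodge filtered pieces $\F^0 \H^1(E) = \F^1 \H^1(E)$. Since $E$ is proper and of class $\sC$, a standard property of the Fujiki--Deligne mixed Hodge package gives $\F^0 \H^1(E) = \H^1(E,\C)$; consequently $\F^1 \H^1(E) = \H^1(E,\C)$.

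To conclude, I would pass to Deligne's canonical bigrading $\H^1(E,\C) = \bigoplus_{p,q} I^{p,q}$, whose defining properties are $\W_k \otimes_\Q \C = \bigoplus_{p+q \leq k} I^{p,q}$, $\F^p \H^1(E) = \bigoplus_{p' \geq p} I^{p',q'}$, and $\overline{I^{p,q}} = I^{q,p}$. The properness of $E$ forces the weights on $\H^1(E)$ to lie in $\{0,1\}$, so the only potentially nonzero pieces are $I^{0,0}$, $I^{1,0}$, and $I^{0,1}$. The identity $\F^1 \H^1(E) = \H^1(E,\C)$ rules out the pieces with $p<1$, killing $I^{0,0}$ and $I^{0,1}$; the conjugation symmetry then forces $I^{1,0} = \overline{I^{0,1}} = 0$, and we obtain $\H^1(E,\C) \iso 0$.

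The hard part is really just a bookkeeping matter: invoking the correct variants of $\F^0 = H_\C$ and of the weight bound $\W_1 = \H^1$ for a compact complex space of Fujiki class $\sC$. These are standard consequences of Fujiki's extension \cite{Fu80} of Deligne's theory to that setting, but they must be cited with some care since $E$ is allowed to be singular (being only a simple normal crossing divisor in the complex manifold $W$, and a priori only of class $\sC$ rather than Kähler).
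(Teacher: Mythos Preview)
Your proof is correct and follows essentially the same approach as the paper's: invoke Lemma \ref{l:rtlsingres} for $n=1$, use that the weights on $\H^1(E)$ lie in $\{0,1\}$ by properness, and then kill the weight-$1$ piece via Hodge symmetry. The only cosmetic difference is that you phrase the endgame in terms of the Deligne bigrading $I^{p,q}$, whereas the paper works directly with $\gr_0^\W$ and $\gr_1^\W$; one small caution is that the Deligne splitting satisfies $\overline{I^{p,q}} \equiv I^{q,p}$ only modulo lower-weight terms, not literal equality, but in your situation those lower terms vanish (they would require $p'<0$), so the step $I^{1,0} = \overline{I^{0,1}} = 0$ goes through.
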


\begin{proof}
 By Lemma \ref{l:rtlsingres}, we know that
 \[
  \gr^0_F(\H^1(E)) \iso 0.
 \]
 Thus $\gr_0^\W(\H^1(E))$ is certainly trivial---and so is $\gr_1^\W(\H^1(E))$ as a result of Hodge symmetry. The remaining weights (\iev those in $\Z \setminus \{0,1\}$) of the mixed Hodge structure $\H^1(E)$ are trivial from the start, hence our claim.
\end{proof}

\begin{proposition}
 \label{p:rtlsingr1}
 Let $X$ be a complex space having rational singularities, $f\colon W\to X$ a resolution of singularities. Then we have $\R^1f_*(\C_W) \iso 0$.
\end{proposition}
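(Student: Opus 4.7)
The plan is to argue locally on $X$: the assertion $\R^1f_*(\C_W) \iso 0$ is local on $X$, so it suffices to verify stalkwise that $(\R^1f_*(\C_W))_x = 0$ for every $x \in X$. I would proceed in three steps: first reduce to a ``good'' resolution whose fiber over $x$ satisfies the hypotheses of Corollary \ref{c:rtlsingh1}; then compare the original and the good resolution by means of the Leray spectral sequence; finally apply proper base change to pass from stalks to fiber cohomology.

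For the first step, I would fix $x \in X$ and shrink $X$ to a Stein open neighborhood $U$ of $x$, which does not affect the relevant stalk. By Hironaka's desingularization theorem, there exists a proper modification $g \colon W' \to W|f^{-1}(U)$ with $W'$ a complex manifold, realized as a locally finite sequence of blow-ups along smooth centers, such that the set $E' := (f_U \circ g)^{-1}(\{x\})$, where $f_U$ denotes the restriction of $f$ to $f^{-1}(U)$, underlies a simple normal crossing divisor in $W'$. Since $U$ is Stein, hence Kähler, and blow-ups of Kähler manifolds along smooth centers carry Kähler structures in a neighborhood of the exceptional locus, I can arrange $W'$ to be of Kähler type in a neighborhood of $E'$, so that the reduced compact complex subspace $E'$ of $W'$ is of Fujiki class $\sC$. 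The hypotheses of Corollary \ref{c:rtlsingh1} are then fulfilled for the resolution $f' := f_U \circ g$ at the point $x$.

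Next, I would compare $f$ and $f'$ via the Leray spectral sequence for the composition $f_U \circ g$. Since $g$ is a proper modification between normal complex spaces, its fibers are connected, so the canonical morphism $\C_{W|f^{-1}(U)} \to g_*(\C_{W'})$ is an isomorphism. The associated 5-term exact sequence then furnishes an injection
\[
 \R^1 f_*(\C_W)|U \iso \R^1 (f_U)_*(g_*(\C_{W'})) \hookrightarrow \R^1 f'_*(\C_{W'}).
\]
It therefore suffices to show $(\R^1 f'_*(\C_{W'}))_x = 0$. The proper base change theorem, applicable since $f'$ is proper and $\C_{W'}$ is an abelian sheaf, yields an isomorphism
\[
 (\R^1 f'_*(\C_{W'}))_x \iso \H^1(E',\C),
\]
whose right-hand side vanishes by Corollary \ref{c:rtlsingh1} in view of the construction of $f'$. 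Hence $(\R^1 f_*(\C_W))_x = 0$, and since $x$ was arbitrary, the desired conclusion follows.

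The hard part will be securing the Fujiki class $\sC$ hypothesis of Corollary \ref{c:rtlsingh1}, which requires equipping $W'$ with a Kähler form in a neighborhood of the exceptional fiber $E'$. This is handled by combining the Kähler property of the Stein base $U$ with the elementary fact that iterated blow-ups of Kähler manifolds along smooth centers remain Kähler in a neighborhood of the exceptional locus; the simple normal crossing condition itself is delivered directly by Hironaka's theorem.
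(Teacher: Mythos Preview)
Your overall strategy matches the paper's: reduce to stalks via proper base change, dominate by a resolution whose fiber over the chosen point is a simple normal crossing divisor, invoke Corollary \ref{c:rtlsingh1}, and compare with the original resolution via a Leray spectral sequence. However, there is a gap precisely where you flagged ``the hard part'', namely in securing the Fujiki class $\sC$ hypothesis. You argue that $U$ is Stein hence Kähler, and that blow-ups of Kähler manifolds remain Kähler near the exceptional locus, concluding that $W'$ is Kähler near $E'$. But the map $g$ blows up $W|f^{-1}(U)$, not $U$ itself, and nothing in your setup guarantees that $W|f^{-1}(U)$ is Kähler: a resolution of singularities in the sense of this paper (see \S\ref{ss:res}) is merely a proper modification with smooth source, with no projectivity or Kähler assumption built in. The Kählerness of the Stein base $U$ does not propagate to the total space $W$ of an arbitrary resolution, so your chain of implications does not connect.

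The paper closes exactly this gap by first dominating $f$ with a projective resolution: since any resolution of $X$ can be dominated by a projective one, one may assume $f$ is projective from the start, and then choose $g\colon W'\to W$ to be a \emph{projective} embedded resolution of $W_p$. The composite $f' = f\circ g$ is then projective, so the fiber $E = g^{-1}(W_p)$ is projective, in particular of Fujiki class $\sC$, and Corollary \ref{c:rtlsingh1} applies without further work. Once you insert this reduction, the rest of your argument (Leray injection and proper base change) goes through verbatim and is essentially what the paper does.
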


\begin{proof}
 Let $p\in X$ be arbitrary. Since the morphism $f$ is proper, we have
 \[
  (\R^1f_*(\C_W))_p \iso \H^1(W_p,\C),
 \]
 so that it suffices to show that $\H^1(W_p,\C) \iso 0$. Since any resolution of singularities of $X$ can be dominated by a projective one, we may assume that $f$ is projective from the start. We know there exists a projective embedded resolution $g\colon W'\to W$ for $W_p$ such that $g^{-1}(W_p) =: E$ is a simple normal crossing divisor in $W'$. Thus, by Corollary \ref{c:rtlsingh1} (applied to $f' := f \circ g$ in place of $f$), we have $\H^1(E,\C) \iso 0$. As the Leray spectral sequence for $g|E \colon E\to W_p$ implies that the pullback function
 \[
  \H^1(W_p,\C) \to \H^1(E,\C)
 \]
 is one-to-one, we deduce that $\H^1(W_p,\C) \iso 0$.
\end{proof}

\begin{proposition}
 \label{p:resh2inj}
 Let $X$ be a complex space having rational singularities, $f\colon W\to X$ a resolution of singularities. Then the function
 \begin{equation} \label{e:resh2inj}
  f^*\colon \H^2(X,\C) \to \H^2(W,\C)
 \end{equation}
 is one-to-one.
\end{proposition}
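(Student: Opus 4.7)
The plan is to exploit the Leray spectral sequence
\[
 E_2^{p,q} = \H^p(X,\R^qf_*(\C_W)) \Longrightarrow \H^{p+q}(W,\C),
\]
under which the edge map $\H^2(X,\C) = E_2^{2,0} \to \H^2(W,\C)$ coincides (up to the standard identifications) with $f^*$. The strategy is to identify the low-degree sheaves $\R^qf_*(\C_W)$ for $q=0,1$ and then observe that $f^*$ arises as a composition of a purely spectral-sequence quotient $E_2^{2,0}\twoheadrightarrow E_\infty^{2,0}$ with an inclusion $E_\infty^{2,0}\hookrightarrow \H^2(W,\C)$; both maps will turn out to be bijections for trivial reasons once the vanishing $\R^1f_*(\C_W)\iso0$ is in place.

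First, I would observe that $X$ is normal (rational singularities implies Cohen-Macaulay and normal), hence the proper modification $f\colon W\to X$ has connected fibers by Stein factorization and Zariski's main theorem, so that the natural morphism $\C_X\to f_*(\C_W)$ is an isomorphism and consequently $\R^0f_*(\C_W)\iso \C_X$ in $\Mod(X_\top,\C_X)$. Next, Proposition \ref{p:rtlsingr1} gives $\R^1f_*(\C_W)\iso 0$; this is the crucial input and is itself a consequence of Lemma \ref{l:rtlsingres} via Corollary \ref{c:rtlsingh1}, applied to a projective embedded resolution of the fibers of (a dominating projective refinement of) $f$.

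With these identifications, I inspect the $E_2$-page of the Leray spectral sequence around the entry $(2,0)$. The outgoing differentials $d_r^{2,0}\colon E_r^{2,0}\to E_r^{2+r,1-r}$ vanish for every $r\geq 2$ simply because $1-r<0$, while the incoming differentials $d_r^{2-r,r-1}\colon E_r^{2-r,r-1}\to E_r^{2,0}$ vanish for $r\geq 3$ since $2-r<0$, and for $r=2$ because $E_2^{0,1}=\H^0(X,\R^1f_*(\C_W))\iso 0$ by the vanishing above. Consequently $E_\infty^{2,0}=E_2^{2,0}=\H^2(X,\C)$. Finally, the Leray filtration satisfies $F^3\H^2(W,\C)=0$ (there is no $(p,q)$ with $p\geq 3$, $q\geq 0$ and $p+q=2$), so that $F^2\H^2(W,\C)=E_\infty^{2,0}$ and the composite
\[
 \H^2(X,\C)=E_2^{2,0}\twoheadrightarrow E_\infty^{2,0}=F^2\H^2(W,\C)\hookrightarrow \H^2(W,\C)
\]
is an isomorphism onto its image; identifying this composite with $f^*$ (via the standard description of the edge morphism) yields the injectivity claim.

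The only genuine obstacle I anticipate is the vanishing $\R^1f_*(\C_W)\iso 0$, but this has already been supplied as Proposition \ref{p:rtlsingr1}; everything else is a routine unraveling of the Leray spectral sequence together with the fact that normal proper modifications have connected fibers.
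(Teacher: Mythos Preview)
Your proof is correct and follows essentially the same route as the paper's: both invoke the Leray spectral sequence for $f$, use Proposition~\ref{p:rtlsingr1} to kill $\R^1f_*(\C_W)$, and conclude that the edge map at $(2,0)$ is injective. The paper phrases the spectral-sequence step more tersely (``degenerates in entry $(2,0)$ at sheet $2$'') and packages the identification $\C_X\to f_*\C_W$ as the final factoring step, while you spell out the differential-by-differential analysis and the Leray filtration explicitly, but the substance is identical.
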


\begin{proof}
 Denote $E$ the Grothendieck spectral sequence (or, more specifically, the Leray spectral sequence) associated to the triple
 \[
  \xymatrix{ \Mod(W,\C_W) \ar[r]^-{f_*} & \Mod(X,\C_X) \ar[r]^-{\Gamma(X,-)} & \Mod(\C) }
 \]
 of categories and functors. Then, since we have $\R^1f_*(\C_W) \iso 0$ by Proposition \ref{p:rtlsingr1}, the spectral sequence $E$ degenerates in entry $(2,0)$ at sheet $2$ in $\Mod(\C)$, whence the canonical map
 \begin{equation} \label{e:resh2inj-1}
  \H^2(X,f_*\C_W) \to \H^2(W,\C_W)
 \end{equation}
 is one-to-one. As the function \eqref{e:resh2inj} factors through \eqref{e:resh2inj-1} via the $\H^2(X,-)$ of the canonical isomorphism $\C_X \to f_*\C_W$ of sheaves on $X_\top$, we deduce that \eqref{e:resh2inj} is one-to-one.
\end{proof}

\begin{corollary}
 \label{c:h2pure}
 Let $X$ be a complex space of Fujiki class $\sC$ having rational singularities. Then the mixed Hodge structure $\H^2(X)$ is pure of weight $2$.
\end{corollary}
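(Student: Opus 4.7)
The plan is to transfer purity from a Kähler resolution of $X$ to $X$ itself by means of Proposition \ref{p:resh2inj}, combined with the functoriality of the Deligne-Fujiki mixed Hodge structures on complex spaces of Fujiki class $\sC$.

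First I would choose a resolution of singularities $f\colon W\to X$ such that $W$ is a compact complex manifold of Kähler type. Such an $f$ exists because $X$ is of class $\sC$: start with any resolution $W_0\to X$; since $X$ admits a proper modification $X^*\to X$ with $X^*$ bimeromorphic to a compact Kähler manifold, one can further modify $W_0$ (using Hironaka-type blowing up along smooth centers) to obtain $W$ which is itself a compact Kähler manifold dominating $X$.

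Next I would invoke the functoriality of the mixed Hodge structures built via Fujiki's construction (\cf \eqref{ss:mhs} and \cite{Fu80}) to conclude that the pullback
\[
 f^* \colon \H^2(X) \to \H^2(W)
\]
is a morphism of mixed Hodge structures. By the classical Hodge theorem, the mixed Hodge structure $\H^2(W)$ is pure of weight $2$, since $W$ is a compact Kähler manifold. Combining with Proposition \ref{p:resh2inj}, we know that $(f^*)_\C \colon \H^2(X,\C) \to \H^2(W,\C)$ is one-to-one, and since the comparison isomorphism $\H^2(-,\Q)\otimes_\Q\C \to \H^2(-,\C)$ is natural in the variable, the rational incarnation $(f^*)_\Q \colon \H^2(X,\Q) \to \H^2(W,\Q)$ is one-to-one as well.

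The final step is formal: $f^*$ is a monomorphism of mixed Hodge structures whose codomain $\H^2(W)$ is pure of weight $2$. Any sub-mixed Hodge structure of a pure Hodge structure of weight $2$ is itself pure of weight $2$, because the weight filtration on a sub-object is the restriction of the weight filtration on the ambient object. Hence the image $f^*[\H^2(X)]$ inside $\H^2(W)$ is pure of weight $2$, and as $f^*$ is injective, the mixed Hodge structure $\H^2(X)$ is isomorphic (as a mixed Hodge structure) to this image, so it is pure of weight $2$ as claimed. The main obstacle in turning this sketch into a full proof is ensuring (i) the existence of the Kähler resolution $f\colon W\to X$ in the category compatible with Fujiki's mixed Hodge formalism, and (ii) the compatibility of Fujiki's construction with the pullback along $f$; both are part of the foundational setup in \cite{Fu80}.
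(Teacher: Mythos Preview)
Your proposal is correct and follows essentially the same route as the paper's proof: both use Proposition \ref{p:resh2inj} to see that $f^*$ is injective, combine this with the fact that $f^*$ is a morphism of mixed Hodge structures, and then transfer purity from the smooth target $W$ back to $X$. The only cosmetic difference is that the paper unpacks your final step (``a sub-MHS of a pure Hodge structure is pure'') explicitly via the strict compatibility of morphisms of MHS with the weight filtration, whereas you invoke this as a black box; your justification that ``the weight filtration on a sub-object is the restriction of the weight filtration on the ambient object'' \emph{is} precisely the content of Deligne's strictness theorem and not a tautology, so it would be worth naming it as such.
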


\begin{proof}
 Let $f\colon W\to X$ be a projective resolution of singularities. Then by Proposition \ref{p:resh2inj}, the pullback
 \[
  f^* \colon \H^2(X,\Q) \to \H^2(W,\Q)
 \]
 is a monomorphism of $\Q$-vector spaces. Note that $f^*$ is filtered with respect to the weight filtrations $(\W_n\H^2(X))_{n\in\Z}$ and $(\W_n\H^2(W))_{n\in\Z}$ of the mixed Hodge structures $\H^2(X)$ and $\H^2(W)$, respectively, \iev we have
 \[
  f^*[\W_n\H^2(X)] \subset \W_n\H^2(W)
 \]
 for all integers $n$. Since $W$ is a complex manifold, we know that
 \[
  \W_n\H^2(W) =
  \begin{cases}
   \{0\} & \text{for all } n<2, \\
   \H^2(W,\Q) & \text{for all } n\geq 2.
  \end{cases}
 \]
 Thus, using the injectivity of $f^*$, we deduce that
 \[
  \W_n\H^2(X) = \{0\}
 \]
 for all integers $n<2$. Since $f^*$ is even strictly compatible with the weight filtrations, we further deduce that
 \[
  f^*[\W_2\H^2(X)] = f^*[\H^2(X,\Q)] \cap \W_2\H^2(W) = f^*[\H^2(X,\Q)],
 \]
 whence
 \[
  \W_2\H^2(X) = \W_3\H^2(X) = \W_4\H^2(X) = \dots = \H^2(X,\Q).
 \]
 Thus the mixed Hodge structure $\H^2(X)$ is pure of weight $2$.
\end{proof}

\begin{proposition}
 \label{p:f2h2}
 Let $X$ be a complex space of Fujiki class $\sC$ having rational singularities, $f\colon W\to X$ a resolution of singularities. Then the pullback function \eqref{e:resh2inj} restricts to a bijection
 \begin{equation} \label{e:f2h2}
  f^*|\F^2\H^2(X) \colon \F^2\H^2(X) \to \F^2\H^2(W).
 \end{equation}
\end{proposition}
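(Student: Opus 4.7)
By Proposition \ref{p:resh2inj}, the pullback $f^*\colon\H^2(X,\C)\to\H^2(W,\C)$ is injective, so the restricted map is automatically injective; since $f^*$ is a morphism of mixed Hodge structures, it sends $\F^2\H^2(X)$ into $\F^2\H^2(W)$, and the restriction is well-defined. The space $W$, being bimeromorphic to $X$, belongs to Fujiki class $\sC$, so $\H^2(W)$ is pure of weight $2$; by Corollary \ref{c:h2pure} the same holds for $\H^2(X)$. Strictness of the morphism of mixed Hodge structures $f^*$ with respect to the Hodge filtration then yields
\[
 f^*(\F^2\H^2(X))=f^*(\H^2(X,\C))\cap\F^2\H^2(W),
\]
so the entire proof reduces to establishing the inclusion $\F^2\H^2(W)\subset f^*(\H^2(X,\C))$.

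Let $C:=\H^2(W,\C)/f^*(\H^2(X,\C))$, a pure Hodge structure of weight $2$. The desired inclusion is equivalent to the vanishing $\F^2 C=0$, and by the complex-conjugation symmetry $C^{0,2}=\overline{C^{2,0}}$ of the Hodge decomposition of $C$, this is in turn equivalent to $\oF^2 C=0$. Since the canonical map $\H^2(W,\C)\twoheadrightarrow\H^2(W,\O_W)$ induced by $\C_W\to\O_W$ identifies $\oF^2\H^2(W)=\H^{0,2}(W)$ with $\H^2(W,\O_W)$, the vanishing $\oF^2 C=0$ translates into surjectivity of the composition
\[
 \H^2(X,\C)\xrightarrow{\,f^*\,}\H^2(W,\C)\twoheadrightarrow\H^2(W,\O_W).
\]

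The rational singularities of $X$ give $\O_X\xrightarrow{\,\sim\,}Rf_*\O_W$ in the derived category, so that the Leray spectral sequence collapses and produces a canonical isomorphism $\H^2(X,\O_X)\xrightarrow{\,\sim\,}\H^2(W,\O_W)$ which fits into the commutative square
\[
 \xysquare{\H^2(X,\C)}{\H^2(X,\O_X)}{\H^2(W,\C)}{\H^2(W,\O_W)}{}{f^*}{\cong}{}
\]
whose horizontal arrows are induced by $\C\to\O$. The required surjectivity therefore follows once one shows that the top horizontal map $\H^2(X,\C)\to\H^2(X,\O_X)$ is surjective.

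The main obstacle is precisely this last surjectivity on the possibly singular space $X$. The plan is to extract it from the Deligne-Fujiki mixed Hodge structure on $\H^2(X)$ via the identification $\gr_F^0\H^2(X)\cong\H^2(X,\O_X)$: on the smooth model $W$, the Hodge-to-de Rham identification gives $\gr_F^0\H^2(W)\cong\H^2(W,\O_W)$, and the rationality of the singularities transports this isomorphism to $X$ through the isomorphism $\H^2(X,\O_X)\cong\H^2(W,\O_W)$ already established. Compatibility with the smooth hyperresolution used to construct the mixed Hodge structure on $\H^2(X)$ then yields the desired identification $\gr_F^0\H^2(X)\cong\H^2(X,\O_X)$, so that the surjection $\H^2(X,\C)\twoheadrightarrow\gr_F^0\H^2(X)$ furnished tautologically by the Hodge filtration delivers the needed surjectivity and completes the argument.
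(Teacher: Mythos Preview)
Your reduction is clean and correct up to the last paragraph: injectivity from Proposition~\ref{p:resh2inj}, the passage to the quotient Hodge structure $C$, and the conjugation trick reducing $\F^2C=0$ to the surjectivity of $\H^2(X,\C)\to\H^2(X,\O_X)$ are all fine (the key point that $f^*(\H^2(X,\C))$ is a sub-Hodge structure, so that surjectivity onto $\H^2(W,\O_W)$ really is equivalent to $\H^{0,2}(W)\subset f^*(\H^2(X,\C))$, is implicit but valid). The gap is in your final paragraph, where you claim $\gr_F^0\H^2(X)\cong\H^2(X,\O_X)$ compatibly with the natural map from $\H^2(X,\C)$.

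As written, your justification is either circular or an appeal to a substantial external theorem you do not name. If ``transporting the isomorphism to $X$'' means using the injection $\gr_F^0\H^2(X)\hookrightarrow\gr_F^0\H^2(W)$ together with $\gr_F^0\H^2(W)\cong\H^2(W,\O_W)\cong\H^2(X,\O_X)$, you only get that your map is injective; surjectivity of $\gr_F^0\H^2(X)\to\gr_F^0\H^2(W)$ is, after conjugating back, exactly the statement $\F^2\H^2(X)\to\F^2\H^2(W)$ is onto, which is what you are trying to prove. If instead ``compatibility with the hyperresolution'' is meant to invoke Du~Bois theory---namely that $\gr_F^0\H^n(X)=\H^n(X,\underline{\Omega}^0_X)$ and that $\O_X\to\underline{\Omega}^0_X$ is a quasi-isomorphism for rational singularities---then the argument is not circular, but you are importing the theorem ``rational $\Rightarrow$ Du~Bois'' (Kov\'acs in the algebraic setting; the analytic Fujiki-class-$\sC$ version needs care), which is nowhere in the paper and which you do not state. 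Note also that Proposition~\ref{p:bingenercrit} proves precisely the surjectivity you want, but its proof \emph{uses} Proposition~\ref{p:f2h2}, so citing it would be circular within the paper.

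The paper sidesteps this entirely: instead of proving the surjectivity of $\H^2(X,\C)\to\H^2(X,\O_X)$, it shows directly that any $c\in\F^2\H^2(W)$ dies in $\H^0(X,\R^2f_*\C_W)$ by a local Hodge-theoretic computation on the exceptional fibres (Lemma~\ref{l:rtlsingres} gives $\F^2\H^2(E)=0$ for the normal-crossing exceptional divisor over each point), and then the Leray sequence together with $\R^1f_*\C_W=0$ from Proposition~\ref{p:rtlsingr1} yields $c\in f^*(\H^2(X,\C))$; strictness finishes. Your route via Du~Bois singularities is a genuine and conceptually attractive alternative, but to make it a proof you must state and justify the Du~Bois input explicitly.
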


\begin{proof}
 First of all, we know that the pullback \eqref{e:resh2inj} is filtered with respect to the Hodge filtrations $(\F^p\H^2(X))_{p\in\Z}$ and $(\F^p\H^2(W))_{p\in\Z}$ of the mixed Hodge structures $\H^2(X)$ and $\H^2(W)$, respectively, so that $f^*$ certainly restricts to yield a function \eqref{e:f2h2}. According to Proposition \ref{p:resh2inj}, the function \eqref{e:f2h2} is injective. It remains to show that \eqref{e:f2h2} is surjective. For that matter, let $c$ be an arbitrary element of $\F^2\H^2(W)$. We claim that $c$ is sent to zero by the canonical morphism
 \[
  \alpha \colon \H^2(W,\C) \to \H^0(X,\R^2f_*(\C_W)).
 \]
 
 So, fix an element $p\in X$. Let $g\colon W'\to W$ be an embedded resolution of $W_p \subset W$ such that $E := g^{-1}(W_p)$ is a simple normal crossing divisor in $W'$. Then, Lemma \ref{l:rtlsingres} implies that $\gr^0_\F(\H^2(E)) \iso 0$. Given that $\gr_n^\W(\H^2(E)) \iso 0$ for all $n>2$, we deduce that
 \[
  \F^2\H^2(E) \iso 0
 \]
  exploiting the Hodge symmetry of the weight-$2$ Hodge structure $\gr_2^\W(\H^2(E))$. Since the composition of mappings
 \[
  \H^2(W,\C) \to \H^2(W_p,\C) \to \H^2(E,\C)
 \]
 is filtered with respect to the Hodge filtrations of the mixed Hodge structures $\H^2(W)$ and $\H^2(E)$, respectively, it sends $c$ to zero. As $W$ has rational singularities, Proposition \ref{p:rtlsingr1} implies that $\R^1g_*(\C_{W'}) \iso 0$. In consequence, the pullback mapping
 \[
  \H^2(W_p,\C) \to \H^2(E,\C)
 \]
 is one-to-one, whence $c$ is sent to zero by the restriction
 \[
  \H^2(W,\C) \to \H^2(W_p,\C)
 \]
 already. Since the morphism $f$ is proper, the topological base change
 \[
  (\R^2f_*(\C_W))_p \to \H^2(W_p,\C)
 \]
 is a bijection (and in particular one-to-one), so that the canonical image of $\alpha(c)$ in the stalk $(\R^2f_*(\C_W))_p$ vanishes. As $p\in X$ was arbitrary, we see that $\alpha(c) = 0$.

 Now, since $\R^1f_*(\C_W) \iso 0$ by Proposition \ref{p:rtlsingr1}, the Leray spectral sequence for $f$ yields that
 \[
  \xymatrix{ \H^2(X,\C) \ar[r]^-{f^*} & \H^2(W,\C) \ar[r]^-{\alpha} & \H^0(X,\R^2f_*(\C_W)) }
 \]
 is an exact sequence of complex vector spaces. Hence, there exists an element $d \in \H^2(X,\C)$ such that $f^*(d) = c$. Accordingly, since $f^*$ is strictly compatible with the Hodge filtrations, there exists an element $d' \in \F^2\H^2(X)$ such that $f^*(d') = c$ (in fact, we have $d = d'$ as $f^*$ is one-to-one). This proves the surjectivity of \eqref{e:f2h2}.
\end{proof}

\begin{proposition}
 \label{p:bingenercrit}
 Let $X$ be a complex space of Fujiki class $\sC$ having rational singularities. Then the mapping
 \begin{equation} \label{e:bingenercrit}
  \H^2(X,\RR) \to \H^2(X,\O_X)
 \end{equation}
 which is induced by the canonical morphism $\RR_X \to \O_X$ of sheaves on $X_\top$ is a surjection.
\end{proposition}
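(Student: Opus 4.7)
The plan is to transport the problem to a resolution of singularities $f\colon W\to X$ where $W$ is a compact complex manifold of Fujiki class $\sC$ (available since $X$ is of class $\sC$), and then combine the purity/liftability results already proved in this section with the classical Hodge decomposition on $W$.

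First I would record two preliminary isomorphisms. On the one hand, since $X$ has rational singularities one has $f_*\O_W = \O_X$ and $\R^qf_*(\O_W) \iso 0$ for $q\geq 1$, so the Leray spectral sequence collapses and the pullback
\[
 f^*\colon \H^p(X,\O_X) \to \H^p(W,\O_W)
\]
is an isomorphism for every $p$. On the other hand, Proposition \ref{p:f2h2} tells us that $f^*$ restricts to a bijection $\F^2\H^2(X) \to \F^2\H^2(W)$; note that on the complex manifold $W$ the Frölicher spectral sequence degenerates at sheet $1$, so $\F^2\H^2(W) = \H^{2,0}(W)$ and the canonical map $\H^2(W,\C)\to\H^2(W,\O_W)$ has kernel exactly $\F^1\H^2(W) = \H^{2,0}(W)\oplus\H^{1,1}(W)$. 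In particular, by the commutative square relating the maps $\H^2(-,\C)\to\H^2(-,\O_{-})$ on $X$ and $W$, the subspace $\F^2\H^2(X)$ lies in the kernel of $\H^2(X,\C)\to\H^2(X,\O_X)$.

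Next, given $\alpha\in\H^2(X,\O_X)$, I would let $\beta\in\H^2(W,\O_W)=\H^{0,2}(W)$ be its image. Its Hodge conjugate $\bar\beta$ lies in $\H^{2,0}(W)=\F^2\H^2(W)$, so by the bijection from Proposition \ref{p:f2h2} there exists a unique $\delta\in\F^2\H^2(X)$ with $f^*\delta = \bar\beta$. Set
\[
 \gamma := \delta + \bar\delta \in \H^2(X,\C),
\]
where $\bar{\phantom{x}}$ denotes conjugation for the real structure on $\H^2(X,\C)=\C\otimes_\RR\H^2(X,\RR)$; since $\gamma$ is fixed by conjugation it lies in $\H^2(X,\RR)$. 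Using that $\F^2\H^2(X)$ is killed by $\H^2(X,\C)\to\H^2(X,\O_X)$, the class $\delta$ contributes zero, while the image of $\bar\delta$ is computed via the commutative square: $f^*\bar\delta = \overline{f^*\delta} = \overline{\bar\beta} = \beta$, which maps to $\beta$ in $\H^2(W,\O_W)$ (as $\beta\in\H^{0,2}(W)$ is not in the kernel, while the other Hodge summand $\bar\beta$ would be). Under the isomorphism $\H^2(X,\O_X)\iso\H^2(W,\O_W)$ this is precisely $\alpha$, so $\gamma\mapsto\alpha$, proving surjectivity.

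The only mildly delicate point is the compatibility of the Hodge conjugation on $\H^2(X,\C)$ (which makes sense because the mixed Hodge structure $\H^2(X)$ is pure of weight $2$ by Corollary \ref{c:h2pure}) with the pullback along $f$; this follows from functoriality of mixed Hodge structures in the category of compactifiable complex spaces described in Section \ref{s:an}, which ensures that $f^*\colon\H^2(X)\to\H^2(W)$ is a morphism of (pure) Hodge structures and in particular commutes with complex conjugation and respects the Hodge filtration. Once this is in place, the diagram chase above finishes the argument.
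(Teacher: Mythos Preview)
Your proposal is correct and follows essentially the same approach as the paper: both use a resolution $f\colon W\to X$, the isomorphism $\H^2(X,\O_X)\cong\H^2(W,\O_W)$ from rational singularities, Proposition~\ref{p:f2h2} to control $\F^2\H^2$, and then produce a real preimage of a given class by adding a suitable element of $\F^2\H^2(X)$ (or $\oF^2\H^2(X)$) to its conjugate. Your $\delta+\bar\delta$ is exactly the paper's $\bar c+c$ with $c=\bar\delta\in\oF^2\H^2(X)$; note that the appeal to Corollary~\ref{c:h2pure} is actually unnecessary, since complex conjugation on $\H^2(X,\C)$ and its compatibility with $f^*$ come directly from the real structure, independent of purity.
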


\begin{proof}
 Let $f\colon W\to X$ be a resolution of singularities. Then (essentially by Proposition \ref{p:derfibfun2}) the diagram
 \begin{equation} \label{e:bingenercrit-1}
  \xysquare{\H^2(X,\C)}{\H^2(W,\C)}{\H^2(X,\O_X)}{\H^2(W,\O_W)}{f^*}{\beta}{\alpha}{f^*}
 \end{equation}
 commutes in the category of complex vector spaces, where $\alpha$ and $\beta$ are induced by the structural sheaf maps $\C_W \to \O_W$ and $\C_X \to \O_X$, respectively. Restricting the domains of definition of the functions corresponding to the vertical arrows in \eqref{e:bingenercrit-1}, we obtain the following diagram, which commutes in the category of complex vector spaces too:
 \begin{equation} \label{e:bingenercrit-2}
  \xysquare{\oF^2\H^2(X)}{\oF^2\H^2(W)}{\H^2(X,\O_X)}{\H^2(W,\O_W)}{f^*}{\beta'}{\alpha'}{f^*}
 \end{equation}
 By Proposition \ref{p:f2h2}, the upper horizontal arrow in \eqref{e:bingenercrit-2} is an isomorphism. Since $W$ is a complex manifold, $\alpha'$ is an isomorphism by classical Hodge theory. The lower horizontal arrow in \eqref{e:bingenercrit-2} is an isomorphism since the complex space $X$ has rational singularities. Thus, $\beta'$ is an isomorphism by the commutativity of the diagram in \eqref{e:bingenercrit-2}.

 Furthermore, we have
 \[
  \F^1\H^2(X) \subset \ker(\beta)
 \]
 since, for all $c \in \F^1\H^2(X)$, we certainly have $f^*(c) \in \F^1\H^2(W)$, whence $\alpha(f^*(c)) = 0$ looking at the Frölicher spectral sequence of $W$; therefore $\beta(c) = 0$ due to the injectivity of
 \[
  f^* \colon \H^2(X,\O_X) \to \H^2(W,\O_W).
 \]

 Now, be $d \in \H^2(X,\O_X)$ arbitrary. Then there exists an element $c \in \oF^2\H^2(X)$ such that $\beta(c) = \beta'(c) = d$. Since $\bar c \in \F^2\H^2(X)$ and $\F^2\H^2(X) \subset \F^1\H^2(X)$, we deduce that
 \[
  \beta(c + \bar c) = d.
 \]
 Clearly, $c+\bar c$ is real in $\H^2(X,\C)$, \iev there exists an element $c' \in \H^2(X,\RR)$ which is mapped to $c + \bar c$ by the function
 \[
  \H^2(X,\RR) \to \H^2(X,\C)
 \]
 induced by the canonical sheaf map $\RR_X \to \C_X$ on $X_\top$. In consequence, $c'$ is mapped to $d$ by the function \eqref{e:bingenercrit}. This shows that the function \eqref{e:bingenercrit} is a surjection as $d$ was an arbitrary element of $\H^2(X,\O_X)$.
\end{proof}

\backmatter
\printbibliography
\end{document}